\tikzset{
	MyPersp/.style={scale=2,x={(0.8cm,0cm)},y={(0cm,0.25cm)},
    z={(0cm,1cm)}},
	MyPoints/.style={fill=white,draw=black,thick}
		}
 \definecolor{darkgreen}{HTML}{336633}
 \definecolor{darkred}{HTML}{993333}
\definecolor{myred}{rgb}{0.75,0,0}
\definecolor{mygreen}{rgb}{0,0.5,0}
\definecolor{myblue}{rgb}{0,0,0.65}
\newcommand{\C}{\mathbb{C}}
\newcommand{\Q}{\mathbb{Q}}
\newcommand{\F}{\mathbb{F}}
\newcommand{\bbL}{\mathbb{L}}
\newcommand{\K}{\mathbb{K}}
\newcommand{\bk}{\Bbbk}
\newcommand{\Z}{\mathbb{Z}}
\newcommand{\cO}{\mathcal{O}}
\newcommand{\Gm}{\mathbb{G}_m}
\DeclareMathOperator{\Spec}{Spec}
\newcommand{\pt}{\mathrm{pt}}
\newcommand{\cone}{\mathrm{cone}}
\newcommand{\Mod}{\mathsf{-Mod}}
\newcommand{\Tilt}{\mathsf{Tilt}}
\newcommand{\Db}{D^{\mathrm{b}}}
\newcommand{\Kb}{K^{\mathrm{b}}}
\newcommand{\Par}{\mathsf{Parity}}
\newcommand{\Gv}{G^{\vee}}
\newcommand{\Tv}{T{^\vee}}
\newcommand{\Bv}{B^{\vee}}
\newcommand{\Gw}{G^{\wedge}}
\newcommand{\Tw}{T{^\wedge}}
\newcommand{\Bw}{B^{\wedge}}
\newcommand{\Iv}{I^{\vee}}
\newcommand{\Iw}{I^{\wedge}}
\newcommand{\GvK}{\Gv(\mathscr{K})}
\newcommand{\GvO}{\Gv(\mathscr{O})}
\newcommand{\GwK}{\Gw(\mathscr{K})}
\newcommand{\GwO}{\Gw(\mathscr{O})}
\newcommand{\fg}{\mathfrak{g}}
\newcommand{\fh}{\mathfrak{h}}
\newcommand{\GKM}{\mathscr{G}}
\newcommand{\BKM}{\mathscr{B}}
\newcommand{\PKM}{\mathscr{P}}
\newcommand{\TKM}{\mathscr{T}}
\newcommand{\UKM}{\mathscr{U}}
\newcommand{\LKM}{\mathscr{L}}
\newcommand{\bX}{\mathbf{X}}
\newcommand{\hdot}{{\hspace{3.5pt} \raisebox{2.5pt}{\text{\circle*{2.5}}}} \hspace{0.5pt}}
\newcommand{\hhdot}{\, \raisebox{2.5pt}{\text{\circle*{2.5}}}}
\newcommand{\la}{\lambda}
\newcommand{\Flag}{\mathscr{X}}
\newcommand{\Gr}{{\EuScript Gr}}
\newcommand{\Fl}{{\EuScript Fl}}
\newcommand{\cF}{\mathcal{F}}
\newcommand{\cE}{\mathcal{E}}
\newcommand{\cG}{\mathcal{G}}
\newcommand{\cH}{\mathcal{H}}
\newcommand{\cL}{\mathcal{L}}
\newcommand{\cA}{\mathcal{A}}
\newcommand{\simto}{\xrightarrow{\sim}}
\newcommand{\End}{\mathrm{End}}
\def\lotimes{\@ifnextchar_{\@lotimessub}{\@lotimesnosub}}
\def\@lotimessub_#1{\mathchoice{\mathbin{\mathop{\otimes}^L}_{#1}}%
  {\otimes^L_{#1}}{\otimes^L_{#1}}{\otimes^L_{#1}}}
\def\@lotimesnosub{\mathbin{\mathop{\otimes}^L}}
\newcommand{\cD}{\mathcal{D}}
\newcommand{\onto}{\twoheadrightarrow}
\newcommand{\into}{\hookrightarrow}
\newcommand{\id}{\mathrm{id}}
\newcommand{\cC}{\mathcal{C}}
\newcommand{\bbH}{\mathbb{H}}
\DeclareMathOperator{\Hom}{Hom}
\DeclareMathOperator{\Ext}{Ext}
\DeclareMathOperator{\Ind}{Ind}
\DeclareMathOperator{\cok}{cok}
\newcommand{\triright}{\xrightarrow{[1]}}
\newcommand{\excise}[1]{}
\newcommand{\Xbb}{\mathbb{X}}
\newcommand{\Tbb}{\mathbb{T}}
\newcommand{\Trans}{\mathsf{T}}
\newcommand{\JW}{{}^J \hspace{-1pt} W}
\newcommand{\Whit}{\mathrm{Whit}}
\newcommand{\Av}{\mathsf{Av}}
\newcommand{\quo}{/ \hspace{-2.5pt} / \hspace{1pt}}
\newtheorem*{conj*}{Conjecture}
\newtheorem*{thm*}{Theorem}
\newtheorem*{cor*}{Corollary}
\numberwithin{equation}{section}
\newtheorem{thm}{Theorem}[section]
\newtheorem{lem}[thm]{Lemma}
\newtheorem{prop}[thm]{Proposition}
\newtheorem{cor}[thm]{Corollary}
\newtheorem{conj}[thm]{Conjecture}
\theoremstyle{definition}
\newtheorem{defn}[thm]{Definition}
\theoremstyle{remark}
\newtheorem{rmk}[thm]{Remark}
\newtheorem*{rmk*}{Remark}
\DeclareMathOperator{\Rep}{Rep}
\DeclareMathOperator{\Tr}{Tr}
\newcommand{\un}{\underline}
\def\nat{\mathrm{nat}}
\renewcommand{\a}{\alpha}
\newcommand{\hg}{\mathfrak{h}}
\newcommand{\hgf}{\mathfrak{h}_{\mathrm{f}}}
\newcommand{\om}{\omega}
\newcommand{\Repp}{\mathsf{R}}
\newcommand{\Sim}{\mathbb{L}}  
\newcommand{\Sta}{\Delta}  
\newcommand{\Cos}{\nabla}  
\newcommand{\Til}{\mathbb{T}}  
\newcommand{\Wf}{W_{\mathrm{f}}}
\newcommand{\Waff}{W}
\newcommand{\fW}{{}^{\mathrm{f}} W}
\newcommand{\Sf}{S_{\mathrm{f}}}
\newcommand{\Saff}{S}
\newcommand{\asph}{\mathrm{asph}}
\newcommand{\uw}{\underline{w}}
\newcommand{\uv}{\underline{v}}
\newcommand{\uu}{\underline{u}}
\newcommand{\ux}{\underline{x}}
\newcommand{\uy}{\underline{y}}
\newcommand{\uz}{\underline{z}}
\newcommand{\ue}{\underline{e}}
\newcommand{\uf}{\underline{f}}
\newcommand{\Haff}{\mathcal{H}}
\newcommand{\Hf}{\mathcal{H}_{\mathrm{f}}}
\newcommand{\Masph}{\mathcal{M}^{\asph}}
\newcommand{\Msph}{\mathcal{M}^{\mathrm{sph}}}
\newcommand{\puH}{{}^p \hspace{-1pt} \underline{H}}
\newcommand{\puN}{{}^p \hspace{-1pt} \underline{N}}
\newcommand{\puM}{{}^p \hspace{-1pt} \underline{M}}
\newcommand{\ppn}{{}^p \hspace{-1pt} n}
\newcommand{\ppm}{{}^p \hspace{-1pt} m}
\newcommand{\uM}{\underline{M}}
\newcommand{\uH}{\underline{H}}
\newcommand{\uN}{\underline{N}}
\newcommand{\sgn}{\mathsf{sgn}}
\newcommand{\BSvar}{\mathsf{BS}}
\newcommand{\Diag}{\mathcal{D}}
\newcommand{\DiagBS}{\mathcal{D}_{\mathrm{BS}}}
\newcommand{\DiagBSZ}{\mathcal{D}_{\mathrm{BS},\Z'}}
\newcommand{\DiagBSk}{\mathcal{D}_{\mathrm{BS},\Bbbk}}
\newcommand{\Dasph}{\mathcal{D}^{\asph}}
\newcommand{\DasphBS}{\mathcal{D}^{\asph}_{\mathrm{BS}}}
\newcommand{\DasphBSk}{\mathcal{D}^{\asph}_{\mathrm{BS},\Bbbk}}
\newcommand{\oB}{\overline{B}}
\newcommand{\LL}{L}
\newcommand{\adj}{\mathrm{adj}}
\newcommand{\rel}{\,\text{---}\,}
\newcommand{\norel}{\hspace{4pt}\slash{\hspace{-7pt}}\text{---} \,}
\newcommand{\hgl}{\widehat{\mathfrak{gl}}}
\newcommand{\gl}{\mathfrak{gl}}
\newcommand{\e}{\varepsilon}
\def\g{\gamma}
\newcommand{\ext}{\mathrm{ext}}
\newcommand{\sfK}{\mathsf{K}}
\title{Tilting modules and the $p$-canonical basis}
\thanks{S.R. was partially supported by ANR Grant No.~ANR-13-BS01-0001-01. This project has received
funding from the European Research Council (ERC) under the European Union's Horizon 2020
research and innovation programme (grant agreement No 677147).}
\author{Simon Riche}
\address{Universit\'e Clermont Auvergne, CNRS, LMBP, F-63000 Clermont-Ferrand, France.}
\email{simon.riche@uca.fr}
\author{Geordie Williamson}
\address{Max-Planck-Institut f\"ur Mathematik, Vivatsgasse 7, 53111,
  Bonn, Germany.
}
\email{geordie@mpim-bonn.mpg.de}
\begin{document}

\begin{abstract}
In this paper we propose a new approach to tilting
  modules for reductive algebraic groups in positive
  characteristic. We conjecture that translation functors give an
  action of the (diagrammatic) Hecke category of the affine Weyl group
  on the principal block. Our conjecture
  implies character formulas for the simple and tilting modules
 in terms of the $p$-canonical basis, as well as a description of the
  principal block as the anti-spherical quotient of the Hecke
  category. We prove our conjecture for
  $\mathrm{GL}_n(\bk)$ using the theory of $2$-Kac--Moody actions.
  Finally, we prove that the diagrammatic Hecke category of a general
  crystallographic Coxeter group may be described in terms of parity
  complexes on the flag variety of the corresponding Kac--Moody
  group.
\end{abstract}

\maketitle

\tableofcontents

\ifdefined\PARTCOMPILE{

\section{Introduction}

\subsection{Overview}
\label{ss:intro-overview}

In this paper we give new 
conjectural character formulas for simple and
indecomposable tilting modules for a connected reductive algebraic group in
characteristic $p$,\footnote{This paper is written under the
  assumption that $p$ is (strictly) larger than the Coxeter number, and so $p$ cannot be too small. However
  there is a variant of our conjectures for any $p$ involving
  singular variants of the Hecke category. In particular, it seems likely that
  $p$-Kazhdan--Lusztig polynomials give correct character formulas for
  tilting modules in \emph{any} characteristic (see Conjecture
  \ref{conj:conj-tiltings-small-p}).  We hope to return to this
  subject in a future work.} and we prove our 
conjectures in the case of the group $\textrm{GL}_n(\bk)$ when $n \geq 3$.
 These conjectures are formulated in terms of the
$p$-canonical basis of the corresponding affine Hecke algebra.
They should be regarded as evidence
for the philosophy that Kazhdan--Lusztig polynomials should be
replaced by \emph{$p$-Kazhdan--Lusztig polynomials} in modular representation
theory. 
From this point of view
several conjectures (Lusztig's conjecture,
James' conjecture, Andersen's conjecture) become the
question of agreement between canonical (or Kazhdan--Lusztig) and
$p$-canonical bases.

In the general setting
we prove that the new character formulas follow from a
very natural conjecture of a more categorical nature, which has remarkable structural
consequences for the representation theory of reductive algebraic
groups. It is a classical observation that wall-crossing functors
provide an action of the affine Weyl group $W$ on the Grothendieck group of the
principal block; in this way, the principal block gives a
categorification of the antispherical module for $W$.
We conjecture that this action can be categorified:
namely, that the action of wall-crossing functors on the principal
block gives rise to an action of the diagrammatic Bott--Samelson Hecke category attached to $W$ as in~\cite{ew}. From this conjecture we deduce the following properties.
\begin{enumerate}
\item 
\label{it:intro-consequences-1}
The principal block is equivalent (as a module category over the
  diagrammatic Bott--Samelson Hecke category) to a categorification of the anti-spherical module
  defined by diagrammatic generators and relations.
\item 
\label{it:intro-consequences-2}
The principal block admits a grading. Moreover, this graded
  category arises via extension of scalars from a category defined
  over the integers. Thus the principal block of any reductive
  algebraic group admits a ``graded integral form''.
\item 
\label{it:intro-consequences-3}
The (graded) characters of the simple and tilting modules are determined
  by the $p$-canonical basis in the anti-spherical module for the
  Hecke algebra of $W$.
\end{enumerate}
From~\eqref{it:intro-consequences-1} one may describe the principal
block in terms of parity sheaves on the affine flag variety, which
raises the possibility of calculating simple and tilting characters
topologically. Point~\eqref{it:intro-consequences-2} gives a strong form of ``independence of 
$p$''. Finally, point~\eqref{it:intro-consequences-3} implies Lusztig's character formula for large $p$.

We prove this ``categorical'' conjecture (hence in particular the character formulas) for the groups $\textrm{GL}_n(\bk)$ using the Khovanov--Lauda--Rouquier theory of
$2$-Kac--Moody algebra actions. We view this, together with the agreement
with Lusztig's conjecture for large $p$ and character formulas of
Soergel and Lusztig in the context of quantum groups (see~\S\ref{ss:variants} for details) as strong evidence for our conjecture.


\subsection{The ``categorical'' conjecture}
\label{ss:intro-cat-conj}

Let $G$ be a connected reductive algebraic group over an algebraically closed
field $\Bbbk$ of characteristic $p$ with simply connected derived subgroup. We assume that $p > h$, where $h$ is the
Coxeter number of $G$. Let $T \subset B \subset G$ be a maximal
torus and a Borel subgroup in $G$.
Denote by $\bX:=X^*(T)$ the lattice of characters of $T$ and by $\bX^+
\subset \bX$ the subset of dominant weights. Consider a regular block
$\Rep_0(G)$ of the category of finite-dimensional algebraic $G$-modules,
corresponding to a weight $\lambda_0 \in \bX$ in the fundamental
alcove, with its natural highest weight structure. Then if $\Phi$ is the root system of $(G,T)$, $\Wf=N_G(T)/T$
is the corresponding Weyl group, and $\Waff:=\Wf \ltimes \Z \Phi$ is
the affine Weyl group, then the simple, standard, costandard and indecomposable
tilting objects in the highest weight category $\Rep_0(G)$ are all parametrized by $\Waff \hdot
\lambda_0 \cap \bX^+$. (Here ``$\hdot$" denotes ``$p$-dilated dot action
of $\Waff$ of $\bX$'', see \S\ref{ss:definitions-G}.) 
 If $\fW \subset \Waff$ is the subset of elements $w$ which are
 minimal in their coset $\Wf w$, then there is a natural bijection
\[
\fW \simto \Waff \hdot \lambda_0 \cap \bX^+
\]
sending $w$ to $w \hdot \lambda_0$. In this way we can parametrize
the  simple, standard, costandard and indecomposable tilting objects in $\Rep_0(G)$
by $\fW$, and denote them by $\Sim(x \hdot \lambda_0)$, $\Sta(x \hdot \lambda_0)$, $\Cos(x \hdot \lambda_0)$ and
$\Til(x \hdot \lambda_0)$ respectively. In particular, on the level of Grothendieck groups we have
\begin{align}
\label{anti1}
[\Rep_0(G)] = \bigoplus_{x \in \fW} \Z [\Cos(x\hdot \lambda_0)].
\end{align}

Let $\Saff \subset \Waff$ denote the simple reflections. To any
$s \in S$ one can associate a ``wall-crossing"
functor $\Xi_s$ by translating to and from an $s$-wall
of the fundamental alcove. 
Consider the ``anti-spherical'' right $\Z [\Waff]$-module $\Z_\varepsilon \otimes_{\Z [\Wf]} \Z [\Waff]$, where $\Z_\varepsilon$ denotes the sign module for the finite Weyl
group $\Wf$ (viewed as a right $\Z[\Wf]$-module). This module has a basis (as a $\Z$-module) consisting of the elements $1 \otimes w$ with $w \in \fW$. Then
we can reformulate~\eqref{anti1} as an isomorphism
\begin{equation}
\label{eqn:anti2}
\phi \colon
 \Z_\varepsilon \otimes_{\Z [\Wf]} \Z [\Waff] \simto [\Rep_0(G)] 
\end{equation}
defined by $\phi(1 \otimes w)=[\Cos(w \hdot \lambda_0)]$ for $w \in \fW$.
The advantage of this
formulation over~\eqref{anti1} is that $\phi$ becomes an isomorphism of right
$\Z[ \Waff]$-modules if we let $1 + s$ act on the right by $[\Xi_s]$ for
any $s \in S$ (as follows easily from standard translation functors
combinatorics).

Let $\Haff$ be the Hecke algebra of $(\Waff,\Saff)$ over $\Z[v,v^{-1}]$, with standard basis $\{H_w : w \in \Waff\}$ and Kazhdan--Lusztig basis $\{\underline{H}_w :
w \in \Waff\}$. (Our normalisation is as in~\cite{soergel-comb-tilting}.)
Let $\Diag$ denote the diagrammatic Hecke category over $\Bbbk$: this is an
additive monoidal $\Bbbk$-linear category introduced by B.~Elias and the second author in~\cite{ew}. This category is defined by diagrammatic generators
and relations. If $\Bbbk$ is a field of characteristic zero, then
$\Diag$ is equivalent to the category of Soergel bimodules for $\Waff$~\cite{soergel-bim}. In the modular case (and for affine Weyl groups as here) Soergel bimodules are not expected to be well behaved; but $\Diag$ provides a convenient replacement for these objects. (Some piece of evidence for this idea is provided by the fact that $\Diag$ can also be described in topological terms using parity sheaves on an affine flag variety; see~\S\ref{ss:Hecke-cat-parity} below.)

The fundamental properties of $\Diag$, which generalize well-known properties of Soergel bimodules, are the following (see~\cite{ew}):
\begin{enumerate}
\item $\Diag$ is graded with shift functor $\langle 1
  \rangle$;
\item $\Diag$ is idempotent complete and Krull--Schmidt;
\item $\Diag$ is generated as a graded monoidal category by some objects $B_s$ for $s \in S$;
\item 
\label{it:Diag-Hecke-alg}
we have a canonical isomorphism of $\Z[v,v^{-1}]$-algebras
\begin{equation}
\label{eqn:groth-D}
\Haff \simto [\Diag] \colon \underline{H}_s \mapsto [B_s]
\end{equation}
where $[\Diag]$ denotes
the split Grothendieck group of $\Diag$ (a $\Z[v,v^{-1}]$-module via $v \cdot
[M] := [M\langle 1 \rangle]$);
\item 
for each $w \in W$ there exists an object $B_w \in \Diag$
   (well-defined up to isomorphism) such that the map $(w, n) \mapsto B_w
   \langle n \rangle$ gives an identification between $W \times \Z$ and
   the isomorphism classes of indecomposable objects in $\Diag$.
\end{enumerate}
In particular,~\eqref{it:Diag-Hecke-alg} tells us that $\Diag$ provides a categorification
of $\Haff$. Let us note also that $\Diag$ is defined as the Karoubi envelope of the additive hull of a category $\DiagBS$ (which we will call the diagrammatic Bott--Samelson Hecke category) which can be obtained by base change to $\bk$ from a category $\DiagBSZ$ defined over a ring $\Z'$ which is either $\Z$ or $\Z[\frac{1}{2}]$. (In other words, $\DiagBS$ has a natural integral form.)

The following conjecture says that the above shadow of translation
functors on the Grothendieck group can be upgraded to a categorical action.

\begin{conj}[Rough version] \label{conj:roughfirst}
The assignment of $B_s$ to a wall-crossing functor $\Xi_s$ for all $s
\in S$ induces a right action of the diagrammatic Bott--Samelson Hecke category $\DiagBS$ on $\Rep_0(G)$.
\end{conj}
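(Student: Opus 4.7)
The plan is to define a monoidal functor $\Psi \colon \DiagBS \to \End(\Rep_0(G))^{\mathrm{rev}}$ sending each generating object $B_s$ to the wall-crossing functor $\Xi_s$. Since $\DiagBS$ is presented by explicit diagrammatic generators and relations (those of~\cite{ew}), the task splits into two parts: (i) attach to each generating morphism of $\DiagBS$ a natural transformation between the corresponding compositions of wall-crossing functors; (ii) verify that these natural transformations satisfy every defining relation of $\DiagBS$.

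For the elementary generators I would exploit adjunction and the center. The functor $\Xi_s$ factors as $T \circ T'$, where $T'$ translates $\Rep_0(G)$ to its $s$-adjacent singular block and $T$ translates back; the functors $T$ and $T'$ are biadjoint up to a shift. The units and counits of these adjunctions directly produce the upper/lower dots $\Xi_s \Rightarrow \id\langle 1\rangle$ and $\id\langle -1\rangle \Rightarrow \Xi_s$, and by horizontal composition the trivalent morphisms $\Xi_s\Xi_s \Rightarrow \Xi_s\langle 1\rangle$ and $\Xi_s\langle -1\rangle \Rightarrow \Xi_s\Xi_s$. The polynomial (``box'') generators are supplied by the canonical action of $R = \mathrm{Sym}(\ft^*)$ on $\Rep_0(G)$ via the Harish--Chandra center, together with the compatibility of this action with translation functors. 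The one-colour relations (Frobenius, needle, barbell) then reduce to standard identities for biadjoint pairs of translation functors.

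The hard part is constructing the rank-two $2m_{st}$-valent generators and verifying the Jones--Wenzl and Zamolodchikov relations among them, since these do not come directly from the translation-functor formalism. For $G = \mathrm{GL}_n(\bk)$ the plan is to use the Khovanov--Lauda--Rouquier theory of $2$-Kac--Moody actions: translation functors between the principal block and adjacent singular blocks decompose into $i$-induction and $i$-restriction functors $F_i, E_i$ (for $i \in \Z/p\Z$), and the results of Chuang--Rouquier endow a suitable direct sum of blocks of $\Rep(G)$ with a $2$-representation of the $2$-Kac--Moody algebra of $\hsl_p$. I would then express the Bott--Samelson composites $\Xi_{s_1}\cdots\Xi_{s_r}$ and the rank-two generators as explicit composites of the $E_i$'s, $F_i$'s and KLR dots and crossings, and deduce the remaining $\DiagBS$ relations from the KLR relations together with the categorical $\mathfrak{sl}_2$ relations.

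The main obstacle is the verification of the Zamolodchikov (three-colour) relations: these encode categorical coherence for long braids and must be extracted, diagram by diagram, from the KLR presentation, with careful tracking of grading shifts and of signs so as to match the integral form $\DiagBSZ$. A secondary difficulty is ensuring that the resulting functor interacts correctly with the Karoubi completion producing $\Diag$ from $\DiagBS$, so that one obtains an action of $\Diag$ on the idempotent-complete category $\Rep_0(G)$. For general reductive $G$ the essential obstruction is the absence of a natural $2$-Kac--Moody action playing the role it does in type~$A$, which is why I expect this approach to establish the conjecture only for $\mathrm{GL}_n$.
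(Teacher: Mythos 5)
The statement you are addressing is a conjecture, and the paper does not prove it in general either: it only makes it precise (Conjecture~\ref{conj:main}, where the dots and trivalent vertices are required to come from fixed adjunctions and the $2m_{st}$-valent morphisms are extra data) and then proves it for $\mathrm{GL}_n(\bk)$ with $p>n\geq 3$. Your outline of that precise form, and of the translation-functor origin of the one-colour generators, matches the paper. Two remarks on the generators: the polynomial ``box'' morphisms need not be supplied by the Harish--Chandra center, since under $p>h$ the ring $R$ is generated by the coroots and each coroot is the composite of a lower dot with an upper dot (see~\eqref{eqn:generators-R} and Remark~\ref{rmk:morphisms-Dasph}), so they are redundant data; and the relevant $2$-Kac--Moody algebra is $\hgl_p$ including the degree operator $d$, not $\hsl_p$ --- without $d$ the weight spaces of $\bigwedge^n\nat_p$ do not separate the blocks of $\Rep(G)$.

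The substantive gap in your plan for the $\mathrm{GL}_n$ case is that you propose to express the affine wall-crossing functor as the long composite $F_nF_{n+1}\cdots F_{p-1}F_0E_0E_{p-1}\cdots E_n$ inside the $\hgl_p$-categorification and verify the $\DiagBS$ relations directly there. This is exactly the route of Mackaay--Sto{\v s}i{\'c}--Vaz and Mackaay--Thiel, which the paper explicitly identifies as prohibitively complicated and avoids. The paper's key intermediate step --- which your proposal omits --- is to first \emph{restrict} the $2$-representation of $\mathcal{U}(\hgl_p)$ on $\Rep^{[n]}(G)$ to a $2$-representation of $\mathcal{U}(\hgl_n)$ (Section~\ref{sec:restriction}, following Maksimau), after which every wall-crossing functor, including the affine one, takes the uniform form $F_aE_a$ for a single Chevalley pair, and the one-, two- and three-colour relations (including Zamolodchikov) can be checked uniformly from the KLR relations in $\mathcal{U}^{[n]}(\hgl_n)$. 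Without this restriction step your verification of the rank-two and rank-three relations for any pair or triple involving the affine reflection would require manipulating composites of length $O(p)$, and it is not clear the computation closes. Your final point --- that the absence of a Kac--Moody action obstructs the general case --- is consistent with the paper, which leaves the conjecture open outside type $\mathbf{A}$.
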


The reason that this formulation of the conjecture is only a rough statement is
that we also need to define the images of certain generating morphisms in
$\DiagBS$. We require that most of these morphisms arise from fixed choices of
adjunctions between translation functors. See Conjecture~\ref{conj:main} for a precise statement, and Remark~\ref{rmk:main-conj} for comments.

\subsection{Tilting modules and the anti-spherical module}
\label{ss:intro-Dasph}

We now explain the main results of the first part of this
paper. In short, we prove that Conjecture~\ref{conj:roughfirst} leads to an explicit
description of the regular block $\Rep_0(G)$ in terms of the diagrammatic Hecke category $\Diag$, 
which provides a character formula for tilting modules and allows us to construct
graded and integral forms of $\Rep_0(G)$.

Let $\Haff$ be the Hecke algebra of $\Waff$ as above, and let
$\Hf \subset \Haff$ be the Hecke algebra of $\Wf$ (i.e.~the
subalgebra with basis $\{H_w : w \in \Wf\}$). Let 
\[
\Masph:=\sgn \otimes_{\Hf} \Haff 
\]
be the corresponding antispherical (right) $\Haff$-module. (Here
$\sgn=\Z[v,v^{-1}]$, with $H_s$ acting via multiplication by $-v$ for
simple reflections $s$ in $\Wf$; the module $\Masph$ is denoted
$\mathcal{N}^0$ in~\cite{soergel-comb-tilting}.) This module has a
standard basis $\{ N_w : w \in \fW \}$ and a
Kazhdan--Lusztig basis $\{ \underline{N}_w : w \in \fW \}$; by definition we have $N_w = 1 \otimes H_w$, and it is not difficult to check that $\underline{N}_w = 1 \otimes \underline{H}_w$. The anti-spherical $\Haff$-module $\Masph$ is a
quantization of the anti-spherical $\Z[\Waff]$-module considered in~\S\ref{ss:intro-cat-conj} in the
sense that the specialization $v \mapsto 1$ gives a canonical isomorphism
\[
\Z \otimes_{\Z[v,v^{-1}]} \Masph \cong \Z_\varepsilon \otimes_{\Z [\Wf]} \Z [\Waff].
\]


Now define 
$\Dasph$ as the quotient of $\Diag$ by the morphisms which factor through
a sum of indecomposable objects of the form $B_w \langle n \rangle$
with $w \notin \fW$. Then $\Dasph$ is naturally a right $\Diag$-module
category and we have a canonical isomorphism of right $\Haff$-modules
\begin{equation}
\label{eqn:groth-Dasph}
\Masph \simto [\Dasph].
\end{equation}
In particular, $\Dasph$ provides a categorification of $\Masph$.

The categorified anti-spherical module $\Dasph$ has the following properties:
\begin{enumerate}
\item $\Dasph$ is a graded
  category (with shift functor $\langle 1 \rangle$) and its indecomposable objects (up to isomorphism) are the
  images $\oB_w \langle n \rangle$ of the
  objects $B_w\langle n \rangle$ for $w \in \fW$ and $n \in \Z$;
\item $\Dasph$ may be obtained as the Karoubi envelope of the additive hull of a right
  $\DiagBS$-module category obtained by base change from a category defined over an explicit localization $\mathfrak{R}$ of $\Z'$.
\end{enumerate}
Let us denote by $\Dasph_{\deg}$ the ``degrading" of $\Dasph$, i.e.~the category with the same
  objects as $\Dasph$ but with morphisms given by
\[
\Hom_{\Dasph_{\deg}}(M,N) := \bigoplus_{n \in \Z}
\Hom_{\Dasph}(M,N\langle n \rangle).
\]
The map on Grothendieck groups induced by the obvious
functor $\Dasph \to \Dasph_{\deg}$ is the specialization $v \mapsto 1$.

Let $\Tilt(\Rep_0(G))$ denote the additive category of tilting
modules in $\Rep_0(G)$. Because translation functors preserve tilting
modules, the wall-crossing functors preserve $\Tilt(\Rep_0(G))$; hence if
Conjecture~\ref{conj:roughfirst} holds (or in fact its more precise formulation in Conjecture~\ref{conj:main}) then the diagrammatic Bott--Samelson Hecke category $\DiagBS$
acts on $\Tilt(\Rep_0(G))$.

Our first main result is the following.

\begin{thm}
\label{thm:intro-main}
  Suppose that Conjecture~{\rm \ref{conj:main}} holds. Then we have an
  equivalence of additive right $\DiagBS$-module categories
\[
\Dasph_{\deg} \simto \Tilt(\Rep_0(G))
\]
which sends $\oB_w$ to $[\Til(w \hdot \lambda_0)]$ for any $w \in \fW$.
\end{thm}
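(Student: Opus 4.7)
The plan is to construct a right $\DiagBS$-module functor by acting with $\DiagBS$ on the distinguished tilting module $T_0 := \Til(\lambda_0)$ (attached to $e \in \fW$), and then bootstrap this into the sought equivalence.

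\emph{Construction.} Let $\star$ denote the right $\DiagBS$-action on $\Rep_0(G)$ supplied by Conjecture~\ref{conj:main}. Define $\widetilde F : \DiagBS \to \Rep_0(G)$ on objects and morphisms by $M \mapsto T_0 \star M$. Because every wall-crossing $\Xi_s$ preserves tilting modules, $\widetilde F$ lands in $\Tilt(\Rep_0(G))$ and is tautologically a morphism of right $\DiagBS$-module categories. Passing to additive hulls and then to the Karoubi envelope (invoking Krull--Schmidt for $\Tilt(\Rep_0(G))$) yields $F : \Diag \to \Tilt(\Rep_0(G))$. Combining \eqref{eqn:anti2} with \eqref{eqn:groth-Dasph} identifies the induced Grothendieck-group map $\Haff \cong [\Diag] \to [\Tilt(\Rep_0(G))]$ with the surjection $\Haff \onto \Masph$; in particular $[F(B_w)] = 0$ for $w \notin \fW$. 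Since a tilting module is determined by its Grothendieck class through its standard multiplicities $(T : \Sta(\mu))$, this forces $F(B_w) = 0$, whence $F$ descends to $\Dasph$; collapsing the grading (absent on $\Tilt(\Rep_0(G))$) produces $\overline F : \Dasph_{\deg} \to \Tilt(\Rep_0(G))$. A short induction on $\ell(w)$, carving $\oB_w$ out of a Bott--Samelson object for a reduced expression of $w$ and transporting the corresponding idempotent, identifies $\overline F(\oB_w) \cong \Til(w \hdot \lambda_0)$.

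\emph{Equivalence.} Essential surjectivity is then immediate, since every indecomposable in $\Tilt(\Rep_0(G))$ is some $\Til(w \hdot \lambda_0)$ with $w \in \fW$. Writing $\Til_w := \Til(w \hdot \lambda_0)$ for brevity, full faithfulness amounts to showing that
\[
\bigoplus_{n \in \Z} \Hom_{\Dasph}(\oB_w, \oB_v \langle n \rangle) \longrightarrow \Hom_G(\Til_w, \Til_v)
\]
is an isomorphism. The two sides have matching finite dimension: on the left via the graded character of Hom spaces in $\Dasph$ in terms of the $p$-canonical basis of $\Masph$, and on the right via the reciprocity $\dim \Hom_G(\Til_w, \Til_v) = \sum_\mu (\Til_w : \Sta(\mu)) \cdot (\Til_v : \Cos(\mu))$; agreement of the totals follows from compatibility of the Grothendieck isomorphism with the natural bilinear forms on $\Masph$. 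Faithfulness is then extracted from the adjunction data prescribed by Conjecture~\ref{conj:main}: the generating degree-zero morphisms of $\DiagBS$ act as non-zero natural transformations (units and counits of translation adjunctions), and the light-leaves / double-leaves basis propagates this to general morphisms. The matching of dimensions upgrades faithfulness to fullness.

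\emph{Main obstacle.} The delicate point is full faithfulness, since $\Tilt(\Rep_0(G))$ carries no intrinsic grading and one cannot match graded Hom pieces individually. Controlling the collapse of graded information relies on knowing that the generating degree-zero morphisms of $\DiagBS$ act non-trivially on tilting modules---precisely the extra data packaged into Conjecture~\ref{conj:main} beyond the objectwise match $B_s \leftrightarrow \Xi_s$.
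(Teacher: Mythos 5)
Your construction of the functor is the paper's: act with $\DiagBS$ on $\Til(\lambda_0)$, observe the image consists of tilting modules, kill the objects indexed by $w \notin \fW$, and descend to $\Dasph_{\deg}$. (The paper kills them more directly — $\Theta_s\Til(\lambda_0)=0$ for $s\in\Sf$ together with the fact that every $B_w$ with $w\notin\fW$ is a summand of some $B_s\cdot X$, $s\in\Sf$ — but your Grothendieck-group argument also works, since a tilting module with vanishing class is zero.) The problem is in the second half.

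The genuine gap is full faithfulness. You assert that the graded dimension of $\bigoplus_n\Hom_{\Dasph}(\oB_w,\oB_v\langle n\rangle)$ is computed by the pairing of $p$-canonical basis elements in $\Masph$ and then "match totals." But at this stage no such formula is available: the antispherical light-leaves construction (Proposition~\ref{prop:ASLL}) only produces a \emph{spanning set} of $\Hom^\bullet_{\DasphBS}(\oB_{\uw},\oB_\varnothing)$ indexed by subexpressions avoiding $W\smallsetminus\fW$, hence only an \emph{upper bound} on dimensions (Lemma~\ref{lem:dim-diag}); that this set is a basis is exactly Remark~\ref{rmk:morphisms-Dasph-LLbasis}, which the authors can only prove geometrically, and which is in effect a consequence of the theorem you are trying to prove. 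Your argument is therefore circular on the diagrammatic side. Relatedly, you treat \emph{faithfulness} as the substantive step ("non-vanishing of generating morphisms... propagated by double leaves") and expect fullness to follow from dimension matching; this is backwards and the faithfulness argument as stated is not a proof (non-vanishing of generators does not prevent composites from dying). The actual hard content is \emph{surjectivity} of $\Hom^\bullet_{\Dasph}(\oB_{\ux},\oB_{\uv})\to\Hom_{\Rep_0(G)}(\Til(\ux),\Til(\uv))$, which the paper establishes by an induction on $\ell(\uv)$ using the machinery of sections of the $\Cos$-flag and translation functors (Propositions~\ref{prop:translation-section-to}, \ref{prop:translation-section-from}, \ref{prop:morphisms-BS}, culminating in Proposition~\ref{prop:morphisms-Phit}); once surjectivity is known, the upper bound from the light leaves combined with the exact count $\dim\Hom(\Til(\lambda_0),\Til(\uw)) = (\Til(\uw):\Cos(\lambda_0))$ (Lemma~\ref{lem:dim-tilt}) forces an isomorphism. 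Your proposal contains no substitute for this surjectivity argument. Finally, your identification $\overline F(\oB_w)\cong\Til(w\hdot\lambda_0)$ by "transporting the idempotent" presupposes that the image of $\oB_w$ is indecomposable, which is only available after full faithfulness (via the graded local ring argument); so the order of the steps cannot be as you propose.
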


In rough terms, this result says that once we have the action of
$\DiagBS$, we automatically know that $\Tilt(\Rep_0(G))$ ``is'' the
(categorified) antispherical module. Or in other words, we can upgrade
the isomorphism between Grothendieck groups deduced from~\eqref{eqn:anti2} and~\eqref{eqn:groth-Dasph} to an equivalence of categories.

In view of the above properties of $\Dasph_{\deg}$, the following
corollary is immediate. (See Theorem~\ref{thm:integral} for a more precise version.)

\begin{cor}
Suppose that $p> N \geq h$ and that Conjecture~{\rm \ref{conj:main}} holds in any characteristic $p'>N$. Then $\Tilt(\Rep_0(G))$
admits $\Dasph$ as a graded enhancement. Moreover, this
graded enhancement may be obtained as the Karoubi envelope of the additive hull of a graded $\DiagBS$-module category coming via base change from a
category defined over $\Z[\frac{1}{N!}]$.
\end{cor}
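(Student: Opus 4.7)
The plan is to combine Theorem~\ref{thm:intro-main} with the two structural properties of $\Dasph$ listed immediately before it. For the first assertion, Theorem~\ref{thm:intro-main} provides an equivalence $\Dasph_{\deg} \simto \Tilt(\Rep_0(G))$ of additive right $\DiagBS$-module categories, and $\Dasph_{\deg}$ is by construction the degrading of the graded category $\Dasph$. Hence $\Dasph$ itself is a graded enhancement of $\Tilt(\Rep_0(G))$, which establishes the first claim.

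For the integral assertion, property~(2) in the list preceding Theorem~\ref{thm:intro-main} already asserts that $\Dasph$ is the Karoubi envelope of the additive hull of a $\DiagBS$-module category obtained by base change from a category defined over some explicit localization $\fR$ of $\Z'$. The real content of the corollary is therefore that, under the hypothesis $p > N \geq h$ together with the validity of Conjecture~\ref{conj:main} in every characteristic $p' > N$, one can arrange that $\fR \subseteq \Z[\frac{1}{N!}]$. To prove this I would trace through the construction and identify which primes must be inverted: the denominators come from two sources, namely the defining relations of $\DiagBS$ (which introduce at worst $\tfrac{1}{2}$, and so are already controlled by $\Z'$) and the idempotents used to cut the indecomposables $\oB_w$ (for $w \in \fW$) out of Bott--Samelson objects. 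The hypothesis that Conjecture~\ref{conj:main} holds in every characteristic $p' > N$ ensures via Theorem~\ref{thm:intro-main} that the Krull--Schmidt decomposition of Bott--Samelson objects has the same combinatorial shape in every such characteristic; a standard lifting argument then shows that the required idempotents can be realized over a ring obtained from $\Z'$ by inverting only primes $\leq N$, and such a ring embeds in $\Z[\frac{1}{N!}]$.

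The main obstacle is precisely this denominator bookkeeping: one must verify that no prime $> N$ ever appears in a denominator of such an idempotent, and that the resulting integral category truly base-changes back to $\Dasph$ in each characteristic $p > N$ (so that the graded enhancement obtained this way recovers the one provided by Theorem~\ref{thm:intro-main}). This is the content of the more precise statement cited as Theorem~\ref{thm:integral}, to which we refer for a sharper formulation.
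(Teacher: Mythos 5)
Your treatment of the first assertion matches the paper: Theorem~\ref{thm:intro-main} identifies $\Tilt(\Rep_0(G))$ with the degrading of $\Dasph$, so $\Dasph$ is a graded enhancement, and the precise version of the integral claim is indeed Theorem~\ref{thm:integral}. The arithmetic point you isolate is also correct as far as it goes: the localization $\fR$ of $\Z'$ inverts only $2$ and the primes dividing the determinant of the Cartan matrix, all of which are at most $h \leq N$, so $\fR \subseteq \Z[\frac{1}{N!}]$.

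However, your route to the integral statement misplaces the difficulty. The corollary does \emph{not} require the idempotents cutting out the indecomposables $\oB_w$ to be defined over $\Z[\frac{1}{N!}]$: the category over $\Z[\frac{1}{N!}]$ is the Bott--Samelson quotient $\mathcal{D}^{\mathrm{asph}}_{\mathrm{BS},\fR}$ (no idempotent completion), and the Karoubi envelope and additive hull are taken only \emph{after} base change to $\bk$, which is a complete local ring. Your proposed ``standard lifting argument'' over a localization of $\Z$ would in fact fail --- the paper explicitly notes after Theorem~\ref{thm:integral} that idempotent lifting is unavailable over $\fR$ precisely because it is not complete local --- and your premise that the Krull--Schmidt decomposition of Bott--Samelson objects has the same shape in every characteristic $p'>N$ is false in general, since that decomposition is governed by the $p$-canonical basis, which varies with $p$ even above $h$. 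The genuine technical content, which your proposal skips entirely, is Lemma~\ref{lem:morphisms-Dasph-integral}: the Hom-spaces of $\mathcal{D}^{\mathrm{asph}}_{\mathrm{BS},\fR}$ are free over $\fR$ and base-change isomorphically to those of $\DasphBSk$. This is proved by producing a light-leaves spanning set for the antispherical quotient (Proposition~\ref{prop:ASLL}) and using the hypothesis that Conjecture~\ref{conj:main} holds in infinitely many characteristics (via Lemma~\ref{lem:morph-Dasp-LL-k}) to show that this spanning set is a basis and that specialization introduces no collapse. Without that freeness and base-change statement, the phrase ``coming via base change from a category defined over $\Z[\frac{1}{N!}]$'' has no content.
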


Because tilting modules have no higher extensions and generate
$\Rep_0(G)$ the inclusion functor gives an equivalence of triangulated
categories:\footnote{Indeed, it is from this property that tilting modules derive
their name.}
\[
\Kb(\Tilt(\Rep_0(G))) \simto \Db(\Rep_0(G)).
\]
The results above tell us that $\Tilt(\Rep_0(G))$ has a
graded $\Z[\frac{1}{N!}]$-form, and hence so does $\Kb(\Tilt(\Rep_0(G)))$. 
We show (see Theorems~ \ref{thm:rep0grading} and~\ref{thm:repintform} below) that the t-structure defining $\Rep_0(G)
\subset \Db(\Rep_0(G))$ may be lifted to the graded integral form of
$\Kb(\Tilt(\Rep_0(G)))$. Hence (under the assumption of our
conjecture in any characteristic $p>N$) there exists a graded abelian category from which
$\Rep_0(G)$ is obtained by extension of scalars and ``degrading'' for any field
$\Bbbk$ of characteristic $p>N$. This gives a strong form of the ``independence of $p$'' property of
\cite{ajs}.

\begin{rmk}
  In \cite{libedinsky-williamson} Libedinsky and the second author
  study the analogue of the $\Dasph$ for any Coxeter system and any
  subset of simple reflections. For such categories,
  structural results analogous to those of this paper are
  proved. (Indeed, some of the statements were motivated by the
  current work.) The
  techniques are different and do not rely on geometry or
  representation theory. In particular the authors deduce the
  positivity of Deodhar's signed Kazhdan-Lusztig polynomials.
\end{rmk}

\subsection{Tilting characters}
\label{ss:intro-tilting-characters}

The preceding theorem giving a description of the category
$\Tilt(\Rep_0(G))$ has a ``combinatorial shadow'',  namely a character formula
for tilting modules. First, recall that the \emph{$p$-canonical basis} $\{\puH_w :
w \in \Waff\}$ of $\Haff$ is defined as the inverse image
under~\eqref{eqn:groth-D} of the basis of $[\Diag]$ consisting of the
classes of the objects $\{B_w : w \in \Waff\}$. 
This basis has many
favorable properties similar to the usual Kazhdan--Lusztig basis
$\{\uH_w : w \in \Waff\}$ of $\Haff$, see~\cite{jw}. In particular: 
\begin{itemize}
\item
for each $w \in W$, we have $\puH_w = \uH_w$ if $p \gg 0$;
\item
the elements $\puH_w$
can be computed algorithmically, though this algorithm is much more
complicated than the algorithm for computing the usual
Kazhdan--Lusztig basis.
\end{itemize}
(Note that our category $\Diag$ does not coincide with the category used to define the $p$-canonical basis in~\cite{jw}; however the two choices lead to the same basis of $\Haff$; see Remark~\ref{rmk:p-can-basis-realization} for details.)

Similarly, the $p$-canonical basis $\{\puN_w : w \in \fW\}$ of $\Masph$ can be defined as the inverse image under~\eqref{eqn:groth-Dasph} of the basis of $[\Dasph]$ consisting of the classes of the objects $\overline{B}_w$. By construction, for any $w \in \fW$ we have
\[
\puN_w = 1 \otimes \puH_w.
\]
In particular, this basis is easy to compute if we know the $p$-canonical basis of $\Haff$.
We can then define the \emph{antispherical $p$-Kazhdan--Lusztig polynomials} $\ppn_{w,y}$ (for $w,y \in \fW$) via the formula
\[
\puN_w = \sum_{y \in \fW} \ppn_{y,w} N_y.
\]

As an immediate corollary of Theorem~\ref{thm:intro-main} we obtain the following result.

\begin{cor}
\label{cor:characters-tiltings}
Assume that Conjecture~{\rm \ref{conj:main}} holds.
Then for any $w \in \fW$ the isomorphism $\phi$ of~\eqref{eqn:anti2} satifies
\begin{equation}
\label{eqn:char-formula-tilt}
\phi^{-1}([\Til(w \hdot \lambda_0)]) = 1 \otimes \puN_w.
\end{equation}
In other words, for any $w,y \in \fW$ we have
\begin{equation}
\label{eqn:char-formula-tilt-2}
(\Til(w \hdot \lambda_0) : \Cos(y \hdot \lambda_0)) = \ppn_{y,w}(1).
\end{equation}
\end{cor}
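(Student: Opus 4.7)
The plan is to deduce the corollary from Theorem~\ref{thm:intro-main} by a Grothendieck-group computation; the only subtlety is the identification of two \emph{a priori} different right $\Z[\Waff]$-module isomorphisms.

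By Theorem~\ref{thm:intro-main} we have an equivalence of right $\DiagBS$-module categories $F \colon \Dasph_{\deg} \simto \Tilt(\Rep_0(G))$ sending $\oB_w$ to $\Til(w \hdot \lambda_0)$. Passing to split Grothendieck groups, and using that the natural functor $\Kb(\Tilt(\Rep_0(G))) \simto \Db(\Rep_0(G))$ induces an identification $K_0(\Tilt(\Rep_0(G))) \simto [\Rep_0(G)]$, we obtain $[F] \colon [\Dasph_{\deg}] \simto [\Rep_0(G)]$. Combined with the canonical isomorphism $\Masph \simto [\Dasph]$ of~\eqref{eqn:groth-Dasph}, which specializes at $v \mapsto 1$ to an isomorphism $\Z_\varepsilon \otimes_{\Z[\Wf]} \Z[\Waff] \simto [\Dasph_{\deg}]$ (since, as noted above, the map induced by the degrading $\Dasph \to \Dasph_{\deg}$ is precisely $v \mapsto 1$), this produces
\[
\psi \colon \Z_\varepsilon \otimes_{\Z[\Wf]} \Z[\Waff] \simto [\Rep_0(G)]
\]
which by construction sends $1 \otimes \puN_w$ to $[\Til(w \hdot \lambda_0)]$.

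The crucial step is to show $\psi = \phi$, with $\phi$ as in~\eqref{eqn:anti2}. Both are isomorphisms of right $\Z[\Waff]$-modules: for $\psi$ this uses that $F$ is $\DiagBS$-equivariant, together with the fact that the $\DiagBS$-action on $\Rep_0(G)$ supplied by Conjecture~\ref{conj:main} categorifies the action of wall-crossing classes $[\Xi_s]$, which is precisely the right $\Z[\Waff]$-module structure used to define $\phi$. The source being cyclic as a right $\Z[\Waff]$-module (generated by $1 \otimes e$), it suffices to check agreement at this generator. Since $\puN_e = N_e$, and since for the minimal element $e \in \fW$ the objects $\Sim(\lambda_0)$, $\Sta(\lambda_0)$, $\Cos(\lambda_0)$ and $\Til(\lambda_0)$ all coincide, we obtain $\psi(1 \otimes e) = [\Til(\lambda_0)] = [\Cos(\lambda_0)] = \phi(1 \otimes e)$, hence $\psi = \phi$. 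This proves~\eqref{eqn:char-formula-tilt}; expanding $\puN_w = \sum_y \ppn_{y,w} N_y$ and specializing at $v=1$ before applying $\phi$ then yields $[\Til(w \hdot \lambda_0)] = \sum_y \ppn_{y,w}(1) [\Cos(y \hdot \lambda_0)]$, which gives~\eqref{eqn:char-formula-tilt-2}. The main obstacle is the identification $\psi = \phi$; once this is granted, the remainder is routine bookkeeping.
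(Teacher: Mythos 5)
Your proposal is correct, and it is the natural elaboration of what the paper leaves implicit when it says the corollary is "immediate" from Theorem~\ref{thm:intro-main}: pass to (split) Grothendieck groups, identify $[\Dasph_{\deg}]$ with the $v=1$ specialization of $\Masph$, and match the resulting isomorphism with $\phi$ by a cyclicity argument. The key verification you supply --- that both $\psi$ and $\phi$ are right $\Z[\Waff]$-module maps (for $\psi$ this follows because $F$ is an equivalence of right $\DiagBS$-module categories and the $\DiagBS$-action categorifies the wall-crossing action $[\Theta_s]$), and that they agree on the cyclic generator $1\otimes e$ since $\puN_e = N_e$ and $\Til(\lambda_0)=\Cos(\lambda_0)$ --- is exactly the bookkeeping needed, so I consider this essentially the same argument as the paper's.
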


Note that the characters of the costandard modules are known, and given by the Weyl character formula. Hence the formula~\eqref{eqn:char-formula-tilt-2} for multiplicities provides a character formula for tilting modules; this is 
the conjectural character formula for tilting modules referred to in~\S\ref{ss:intro-overview}.
Of course, there already exists a conjecture, due to Andersen, for the multiplicities
of indecomposable tilting modules whose weights lie in the lowest
$p^2$-alcove; see~\cite{andersen2}. Recall that $\{\underline{N}_w : w \in \fW\}$ is the Kazhdan--Lusztig
basis of $\Masph$. Then Andersen's conjecture can be expressed as follows:
\begin{equation}
\label{eqn:andersen-conj}
\begin{array}{c}
\text{if $w \in \fW$ and $\forall \alpha \in \Phi^+, \, \langle w \hdot \lambda_0 + \rho, \alpha^\vee \rangle < p^2$,}\\
\text{ then }
\phi^{-1}([\Til(w \hdot \lambda_0)]) = 1 \otimes \underline{N}_w.
\end{array}
\end{equation}
(Here $\Phi^+ \subset \Phi$ denotes the set of positive roots, and $\rho$ is the halfsum of positive roots.)
It is known that if $p \geq 2h-2$ and if this conjecture is true over $\bk$, then Lusztig's conjecture~\cite{lusztig}
holds over $\bk$; see~\S\ref{ss:intro-simple-characters} below for details. Hence the counterexamples to the expected bound in Lusztig's conjecture found by the second author~\cite{williamson} show that this
conjecture does not hold, except perhaps if $p$ is very large.

There are two important differences between the multiplicity
formula~\eqref{eqn:char-formula-tilt} and Andersen's
conjecture~\eqref{eqn:andersen-conj}. The first one is that the
Kazhdan--Lusztig basis has been replaced by the $p$-canonical
basis. The second one is that our formula applies to \emph{all}
tilting modules in the principal block, and not only to those in the
lowest $p^2$-alcove.

\begin{rmk}
\begin{enumerate}
\item
To see a concrete example where~\eqref{eqn:char-formula-tilt-2} differs from Andersen's formula, one can consider the case $G=\mathrm{SL}(2)$ and $p=3$. If $s$ and $s_0$ are the unique elements in $\Sf$ and $\Saff \smallsetminus \Sf$ respectively, one can easily check that $(\Til(s_0 s s_0 s \bullet 0) : \Cos(s_0 s \bullet 0)) = 1$. On the other hand, the coefficient of $\uN_{s_0 s s_0 s}$ on $N_{s_0 s}$ is $0$, while ${}^3 \hspace{-1pt}n_{s_0 s, s_0 s s_0 s}=1$ (see~\cite[\S 5.3]{jw}). Of course, $s_0 s s_0 s \bullet 0$ does not satisfy the condition in~\eqref{eqn:andersen-conj}.
\item
As noted several times already,
Andersen's conjecture is only concerned with weights in the lowest
$p^2$-alcove. There exists a ``tensor product theorem'' for tilting
modules, but with assumptions which are different from the assumptions
of Steinberg's tensor product theorem for simples modules. In
particular, whereas one can obtain the characters of \emph{all} simple
modules if one knows the characters of restricted simple modules, one
\emph{cannot} deduce from Andersen's formula character formulas for
all indecomposable tilting modules in $\Rep_0(G)$, not even when $p$
is large; see~\cite{lw} for an investigation of which characters can
be obtained in this way.
\item
 For
  fixed $w \in \fW$, since $\puH_w = \uH_w$ for large $p$, comparing our definition of $\puN_w$ with~\cite[Proposition~3.4]{soergel-comb-tilting} we see that also $\puN_w = \underline{N}_w$ for large $p$. However,
  this observation alone does not imply Andersen's conjecture for
  large $p$ as the set
\[
\{ w \in \fW \mid w \hdot \lambda_0 \text{ lies in the lowest
  $p^2$-alcove} \}
\]
grows with $p$. Hence as $p$ grows we would need the equality
$\puN_w = \underline{N}_w$ for an increasing number of elements $w$. Therefore, even
for large $p$, Andersen's conjecture would imply a highly
non-trivial stability property of the $p$-canonical
basis. (Note that in the setting of affine Weyl groups it is known
that there is no $p$ such that $\puH_w = \underline{H}_w$ for all $w \in W$.)
\item
Conjecture~\ref{conj:roughfirst} provides a very direct way to obtain
the multiplicity formula~\eqref{eqn:char-formula-tilt}. However this
conjecture might be difficult to prove in general (see in particular Remark~\ref{rmk:intro-other-types} below), and there might be other, more indirect, ways to prove this formula.
In fact, a proof will appear in~\cite{amrw} (a joint work with P.~Achar and S.~Makisumi), building on the results of Part~\ref{pt:parities} and those obtained by P.~Achar and the first author in~\cite{ar}.
\end{enumerate}
\end{rmk}

Corollary~\ref{cor:characters-tiltings} (contingent on our
Conjecture~\ref{conj:roughfirst}) has a reformulation which makes
sense for any $p$ and which we would like to state as a conjecture. So let
us temporarily allow $p$ to be arbitrary. Fix a weight $\lambda_0
\in \bX$ in the closure of the fundamental alcove. (Note that $\lambda_0$ will
be neither regular nor dominant in general.) Let $I \subset \Saff$ be such
that $\Waff_I$ is the stabiliser of $\lambda_0$ under the action
$\lambda_0 \mapsto w \hdot \lambda_0$ of $\Waff$. We have a bijection
\[
\fW^I \simto \Waff \hdot \lambda_0 \cap \bX^+
\]
where $\fW^I$ denotes the subset of elements $w \in \Waff$ which both belong
to $\fW$ and are maximal in their coset $w \Waff_I$. The bijection is
given by $w \mapsto w \hdot \lambda_0$.

\begin{conj}
\label{conj:conj-tiltings-small-p}
For any $w,y \in \fW^I$ we have
\begin{equation}
\label{eqn:conj-tiltings-small-p}
(\Til(w \hdot \lambda_0) : \Cos(y \hdot \lambda_0)) = \ppn_{y,w}(1).
\end{equation}
\end{conj}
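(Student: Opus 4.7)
My plan is to deduce Conjecture~\ref{conj:conj-tiltings-small-p} from Corollary~\ref{cor:characters-tiltings} (and hence from Conjecture~\ref{conj:main}) by applying translation functors from a regular block to the singular block $\Rep_{\lambda_0}(G)$. This strategy works for $p > h$; for $p \leq h$ a singular version of Conjecture~\ref{conj:main} is needed, and this is the main obstacle.

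First, assume $p > h$ and fix a regular weight $\lambda_1$ in the interior of the fundamental alcove whose closure contains $\lambda_0$. Consider the translation functor $T := T_{\lambda_1}^{\lambda_0} \colon \Rep_0(G) \to \Rep_{\lambda_0}(G)$. By the classical translation principle (cf.~Jantzen, II.7), $T$ sends costandards in $\Rep_0(G)$ to costandards in $\Rep_{\lambda_0}(G)$ (with regular costandards $\Cos(z \hdot \lambda_1)$ and $\Cos(z' \hdot \lambda_1)$ in the same $\Waff_I$-coset mapping to the same $\Cos(\hat z \hdot \lambda_0)$, where $\hat z \in \fW^I$ is the maximum of $z \Waff_I$); and it sends the indecomposable tilting $\Til(w \hdot \lambda_1)$ to $\Til(w \hdot \lambda_0)$ for any $w \in \fW^I$. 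Applying $T$ to the identity of Corollary~\ref{cor:characters-tiltings} in the Grothendieck group and collecting terms by $\Waff_I$-coset yields, for $w, y \in \fW^I$,
\[
(\Til(w \hdot \lambda_0) : \Cos(y \hdot \lambda_0)) = \sum_{z \in y \Waff_I \cap \fW} \ppn_{z,w}(1).
\]

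To conclude~\eqref{eqn:conj-tiltings-small-p}, one must establish the identity $\sum_{z \in y \Waff_I \cap \fW} \ppn_{z,w}(1) = \ppn_{y,w}(1)$ for $w, y \in \fW^I$, equivalently the vanishing $\ppn_{z,w}(1) = 0$ for $z \in (y \Waff_I \cap \fW) \smallsetminus \{y\}$. Categorically, this reduces to the absorption property that for $w \in \fW^I$ and every $s \in I$ one has $\oB_w \cdot B_s \cong \oB_w \langle 1 \rangle \oplus \oB_w \langle -1 \rangle$ in $\Dasph$. This is a structural property of the $p$-canonical basis reflecting the maximality of $w$ in its $\Waff_I$-coset, and should be provable from the Krull--Schmidt and hom-formula properties of $\Diag$ established in the paper, by a parity/degree-theoretic argument.

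For $p \leq h$ no regular weight exists in the interior of the fundamental alcove and the translation approach breaks down entirely. The natural substitute is a singular diagrammatic Hecke category associated to the parabolic $\Waff_I \subset \Waff$, conjecturally acting on $\Rep_{\lambda_0}(G)$ via translation-to-a-wall functors, together with a singular analogue of Theorem~\ref{thm:intro-main}. Formulating this precisely (extending existing work on singular Soergel bimodules) and proving the analogous categorical conjecture is where the real difficulty lies; once established, the character formula~\eqref{eqn:conj-tiltings-small-p} would follow by exactly the same reasoning as in the regular case. This is the obstacle alluded to in the footnote of~\S\ref{ss:intro-overview}.
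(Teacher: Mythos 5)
Your overall strategy---deduce the singular case from the regular case~\eqref{eqn:char-formula-tilt-2} by a translation argument when $p > h$, and acknowledge that $p \leq h$ would require a genuinely singular version of the Hecke-category machinery---is exactly the route indicated by the remark following the conjecture (which invokes~\cite[Proposition~5.2]{andersen-sum}). However, the central technical claim in your argument is false, and everything downstream fails with it.

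You assert that $T := T_{\lambda_1}^{\lambda_0}$ (translation \emph{to} the wall) sends $\Til(w\hdot\lambda_1)$ to the indecomposable tilting $\Til(w\hdot\lambda_0)$ for $w\in\fW^I$. This is wrong. Take $G = \mathrm{SL}_2$, $\lambda_1 = 0$, $\lambda_0 = p-1$, so $I = \Saff\smallsetminus\Sf$ and the stabiliser of $\lambda_0$ is $\{1, s_0\}$. For $w = s_0 \in \fW^I$ one has $w\hdot\lambda_1 = 2p-2$ and $w\hdot\lambda_0 = p-1$; the module $\Til(2p-2)$ has Weyl factors $\Sta(2p-2)$ and $\Sta(0)$, and both of these translate to $\Sta(p-1)$, so $T(\Til(2p-2)) \cong \Til(p-1)^{\oplus 2}$, not $\Til(p-1)$. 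What \emph{is} true is the opposite direction: translation \emph{from} the wall, $T_{\lambda_0}^{\lambda_1}$, sends $\Til(w\hdot\lambda_0)$ to the indecomposable regular tilting $\Til(w\hdot\lambda_1)$ for $w\in\fW^I$ (in the example, $T_{p-1}^{0}(\Til(p-1)) = \Til(2p-2)$). Combining this with the biadjunction of translation functors and the isomorphism $T_{\lambda_1}^{\lambda_0}\Sta(y\hdot\lambda_1) \cong \Sta(y\hdot\lambda_0)$ for $y\in\fW^I$ gives at once
\[
(\Til(w\hdot\lambda_0):\Cos(y\hdot\lambda_0)) = \dim\Hom\bigl(\Sta(y\hdot\lambda_1), T_{\lambda_0}^{\lambda_1}\Til(w\hdot\lambda_0)\bigr) = (\Til(w\hdot\lambda_1):\Cos(y\hdot\lambda_1)) = \ppn_{y,w}(1),
\]
which is precisely what~\cite[Proposition~5.2]{andersen-sum} packages and what the remark in the paper relies on. Your argument uses the translation in the wrong direction.

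Accordingly the rest does not hold up. Your sum $\sum_{z\in y\Waff_I\cap\fW}\ppn_{z,w}(1)$ is the multiplicity of $\Cos(y\hdot\lambda_0)$ in the (generally decomposable) module $T(\Til(w\hdot\lambda_1))$, which is too big. And the proposed ``absorption vanishing'' $\ppn_{z,w}(1) = 0$ for $z\in(y\Waff_I\cap\fW)\smallsetminus\{y\}$ is also false. The categorical absorption $\oB_w\cdot B_s \cong \oB_w\langle 1\rangle\oplus\oB_w\langle -1\rangle$ for $w\in\fW^I$, $s\in I$ is correct (it already holds in $\Diag$ whenever $ws<w$, as one sees for instance from the parity-sheaf description of Part~\ref{pt:parities}), but what it gives in the Grothendieck group is $\puN_w\cdot\uH_s=(v+v^{-1})\puN_w$, hence a recursion $\ppn_{zs,w}=v^{\pm 1}\ppn_{z,w}$ along the coset, hence $\ppn_{z,w}(1) = \ppn_{y,w}(1)$ for every $z\in y\Waff_I\cap\fW$---\emph{equality}, not vanishing. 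Concretely, for $\mathrm{SL}_2$ one has $\ppn_{1,s_0}=v$, so $\ppn_{1,s_0}(1)=1$, and your formula would give $(\Til(p-1):\Cos(p-1)) = \ppn_{1,s_0}(1)+\ppn_{s_0,s_0}(1) = 2$ rather than the correct value $1$.
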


\begin{rmk}
In the case when regular weights exist (i.e.~when $p \geq h$), the general case of Conjecture~\ref{conj:conj-tiltings-small-p} follows from the case when $\lambda_0$ is regular (in which case it coincides with~\eqref{eqn:char-formula-tilt-2}) using~\cite[Proposition~5.2]{andersen-sum}.
\end{rmk}

\subsection{The case of the group $\mathrm{GL}_n(\bk)$}

In the second part of the paper we restrict to the case $G=\mathrm{GL}_n(\bk)$ (in which case $h=n$), and $\lambda_0$ is the weight $(n, \cdots, n)$ (under the standard identification $\bX = \Z^n$).

The main result of this part, proved in~\S\ref{ss:strategy}, is the following.

\begin{thm}
\label{thm:intro-GLn}
Let $n \in \Z_{\geq 3}$.
If $G=\mathrm{GL}_n(\bk)$ and $\lambda_0=(n, \cdots, n)$, then Conjecture~{\rm \ref{conj:main}} holds.
\end{thm}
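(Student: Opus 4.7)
The plan is to construct the required $\DiagBS$-action on $\Rep_0(\mathrm{GL}_n(\bk))$ out of the Chuang--Rouquier 2-representation of $\widehat{\mathfrak{sl}}_p$ on $\bigoplus_{k \geq 0} \Rep(\mathrm{GL}_k(\bk))$. Under this 2-representation, the biadjoint functors $E_i, F_i$ (for $i \in \Z/p\Z$) realise $i$-restriction and $i$-induction of the natural module, and their composites (restricted to a single block) categorify wall-crossing on the level of Grothendieck groups. A first, combinatorial step sets up a bijection between $\Saff$ and the set of colors $i \in \Z/p\Z$ for which the corresponding endofunctor of $\Rep_0(\mathrm{GL}_n(\bk))$ is non-trivial, under which $\Xi_s$ becomes, up to canonical direct summand corrections, the restriction of $F_{i(s)}E_{i(s)}$ to the principal block. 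The special shape $\lambda_0 = (n,\ldots,n)$ makes this matching particularly transparent, since the corresponding Fock-space weight has a very symmetric combinatorial description.

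The heart of the proof is to define a monoidal functor $\DiagBS \to \mathrm{End}(\Rep_0(\mathrm{GL}_n(\bk)))$ sending $B_s$ to $\Xi_s$, by producing images for the Elias--Williamson generating $2$-morphisms out of natural transformations available in the Khovanov--Lauda--Rouquier $2$-category $\mathcal{U}(\widehat{\mathfrak{sl}}_p)$. Dots come from the biadjunction units/counits composed with the nil-Hecke dot action, trivalent vertices come from associativity of these adjunctions, and the $2m_{st}$-valent ``braid'' vertices are assembled from the explicit KLR intertwiners which implement categorified braid relations in $\mathcal{U}(\widehat{\mathfrak{sl}}_p)$. One must then check that the resulting natural transformations factor through the correct Bott--Samelson summands — that is, through the copies of $B_s$ rather than larger summands of $E_iF_i$-composites — and that they match the specific units and counits of translation functors demanded by the precise formulation of Conjecture~\ref{conj:main}. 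Both checks rest on uniqueness (up to scalar) of morphisms between indecomposable objects in $\Rep_0(\mathrm{GL}_n(\bk))$, together with standard translation-functor combinatorics.

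The final and most delicate step is to verify the Elias--Williamson defining relations of $\DiagBS$. One-color relations (polynomial action, idempotent decomposition of $B_sB_s$, cyclicity) follow formally from the nil-Hecke calculus and the biadjunction $(E_i,F_i)$. Two-color braid relations between commuting pairs are immediate, and between non-commuting pairs they reduce to categorical braid equivalences for $(E_i,F_j)$ in $\mathcal{U}(\widehat{\mathfrak{sl}}_p)$. The main obstacle is the Zamolodchikov ``$2m_{st}$-valent vertex'' relations involving three or more interacting colors. I would handle these by combining (i) Elias--Williamson's freeness theorem for $\Hom$-spaces between Bott--Samelson objects, which reduces each Zamolodchikov identity to an equality of explicit scalars, with (ii) Rouquier's uniqueness theorem for minimal $2$-representations, which pins down the possible scalars on the KLR side. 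The hypothesis $n \geq 3$ ensures that only a finite, uniform list of three-color subdiagrams can appear, so that the scalar comparison can be carried out by a direct calculation in $\mathcal{U}(\widehat{\mathfrak{sl}}_p)$ that is independent of $p > n$.
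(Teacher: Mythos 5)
There is a genuine gap in your plan: you claim a bijection between $\Saff$ and a set of colors $i \in \Z/p\Z$ under which each $\Xi_s$ becomes (a summand of) $F_{i(s)}E_{i(s)}$. This fails for the affine simple reflection. In the $\hgl_p$ picture the finite wall-crossings do become the composites $F_aE_a$ for $a\in\{1,\dots,n-1\}$ (after reindexing), but $\Theta_{s_\infty}$ restricted to $\Rep_0(G)$ is the long composite
\[
F_nF_{n+1}\cdots F_{p-1}F_0\,E_0E_{p-1}\cdots E_{n+1}E_n,
\]
not a single $F_aE_a$, and not a direct summand of one. So there is no such bijection, and any attempt to define the dot, trivalent and $2m_{st}$-valent generators for the affine color out of a single $(E_i,F_i)$-pair does not get off the ground. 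This is exactly the obstruction the paper confronts: Section~\ref{sec:restriction} constructs, for $p\geq n\geq 3$, a \emph{restriction} of the $\mathcal{U}(\hgl_p)$ 2-representation on $\Rep^{[n]}(G)$ to a 2-representation of $\mathcal{U}(\hgl_n)$ (setting $E_\infty := E_0E_{p-1}\cdots E_n$, $F_\infty := F_n\cdots F_{p-1}F_0$, and verifying the KLR relations for the new ``$\infty$'' color by a long induction on $m$ in Lemmas~\ref{lem:vanishing-infty}--\ref{lem:dotsmove}). Only after this restriction do \emph{all} wall-crossings, affine one included, take the form $F_aE_a$ for $a\in\{1,\dots,n-1,\infty\}$, and only then can one define $\sigma\colon \DiagBS\to\End_{\mathcal{U}^{[n]}(\hgl_n)}(\omega)$ generator by generator as you intended. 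Without this step you would be forced to check the Elias--Williamson relations directly for the long composite, which the paper explicitly identifies as the route that is ``very complicated'' (and is closer to what Mackaay--Sto\v{s}i\'c--Vaz and Mackaay--Thiel do in different normalizations).

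Your treatment of the Zamolodchikov relations is also not a proof. Elias--Williamson's freeness theorem controls graded \emph{ranks} of $\Hom$-spaces between Bott--Samelson objects, and ``uniqueness of minimal categorification'' is a statement about 2-representations of $\mathcal{U}(\mathfrak{g})$, not about $\DiagBS$-module categories: invoking these does not by itself reduce the three-color identity to an equality of scalars you can read off. The paper does not use any uniqueness-of-categorification theorem here; instead it verifies the Zamolodchikov (and two-color) relations by explicit diagrammatic computation in the quotient $2$-category $\mathcal{U}^{[n]}(\hgl_n)$ (Section~\ref{sec:g2D}), making essential use of the vanishing of diagrams whose regions carry weights outside the admissible set $P(\bigwedge^n\nat_n)$, together with the sign rules and Reidemeister-type lemmas in $\mathcal{U}(\hgl_n)$. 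You would need to supply some comparable concrete verification; the abstract reduction you sketch is not enough. (Two smaller points: the action is of $\hgl_p$, not $\hsl_p$ — the degree operator $d$ is needed so that blocks match weight spaces; and the 2-representation used is on the single category $\Rep(\mathrm{GL}_n(\bk))$, not on $\bigoplus_k\Rep(\mathrm{GL}_k(\bk))$.)
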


Our proof of this result uses in a crucial way the Khovanov--Lauda--Rouquier $2$-category $\mathcal{U}(\mathfrak{h})$ associated with a Kac--Moody Lie algebra $\mathfrak{h}$. In a sense, to check the relations between wall-crossing functors we decompose them in terms of ``simpler'' functors appearing in the KLR $2$-category, and use the known relations between these ``simpler functors'' to prove the desired relations. More precisely, our proof of Theorem~\ref{thm:intro-GLn} consists of 3 steps:
\begin{enumerate}
\item
\label{it:proof-GLn-1}
define an action of $\mathcal{U}(\hgl_p)$ on the category $\Rep(G)$ of finite dimensional algebraic $G$-modules;
\item
\label{it:proof-GLn-2}
show that one can restrict this action to an action of $\mathcal{U}(\hgl_n)$;
\item
\label{it:proof-GLn-3}
show that the categorical action of $\mathcal{U}(\hgl_n)$ induces an action of $\DiagBS$ on an appropriate weight space equal to $\Rep_0(G)$.
\end{enumerate}
Each of these steps is known: \eqref{it:proof-GLn-1} is due to Chuang--Rouquier~\cite{CR}, \eqref{it:proof-GLn-2} follows from a result of Maksimau~\cite{m}, and~\eqref{it:proof-GLn-3} follows from results of Mackaay--Sto{\v s}i{\'c}--Vaz and Mackaay--Thiel, see~\cite{msv, mt1, mt2}. However, as these authors do not all use the same conventions, and for the benefit of readers not used to the KLR formalism (like the authors), we give a detailed account of each step. (Our proof of step~\eqref{it:proof-GLn-3} is slightly different from the proof in~\cite{msv, mt1, mt2}, which does not use our step~\eqref{it:proof-GLn-2}.) We also make an essential use of a recent result of Brundan~\cite{Brundan} proving that the $2$-categories defined by Khovanov--Lauda and by Rouquier are equivalent.

In particular, this theorem implies that the character formula~\eqref{eqn:char-formula-tilt} is proved for the group $G=\mathrm{GL}_n(\bk)$ when $p>n \geq 3$. This case is especially interesting because, due to work of Donkin~\cite{donkin}, Erdmann~\cite{erdmann} and Mathieu~\cite{mathieu}, the knowledge of characters of indecomposable tilting modules for all groups $\mathrm{GL}_n(\bk)$ provides (in theory) a dimension formula for the irreducible representations of all the symmetric groups ${S}_m$ over $\Bbbk$. 
To use this idea in practice we would need a character formula valid for \emph{all} characteristics, not only when $p>n$. But our formula might still have interesting applications in this direction (which we have not investigated yet).

\begin{rmk}
\label{rmk:intro-other-types}
\begin{enumerate}
\item
\label{it:rmk-n2}
Theorem~\ref{thm:intro-GLn} is also trivially true in case $n=1$. The case $n=2$ can be proved by methods similar to those we use in Part~\ref{pt:GLn}. However, since the extended Dynkin diagram looks differently in this case, some arguments have to be modified, and for simplicity we decided to exclude this case. (In fact, in this case there are only ``one color relations'', so that one only needs to adapt the considerations of~\S\ref{ss:one-color}.) In any case, the tilting modules for the group $\mathrm{GL}_2(\bk)$ are essentially the same as for $\mathrm{SL}_2(\bk)$, and in the latter case they are well understood thanks to work of Donkin~\cite{donkin}.
\item
  From the above discussion it should be clear that the extra
  structure provided by the categorical Kac--Moody action on $\Rep (\mathrm{GL}_n(\bk))$ is absolutely
  central to our proof in this case. There has been recent work by
  several authors on possible replacements for Kac--Moody actions in
  other classical types (see in particular~\cite{bsww}); however it is unclear whether these works will
  open the way to a proof of our conjecture in these cases similar to our proof for $\mathrm{GL}_n(\bk)$. 
\end{enumerate}
\end{rmk}

\subsection{The diagrammatic Hecke category and parity sheaves}
\label{ss:Hecke-cat-parity}

As explained above,
in this paper a central role is played by the diagrammatic Hecke category
$\Diag$, viewed as a monoidal category
defined by certain diagrammatic generators and relations. 
In the third part of the paper we show that $\Diag$ is equivalent to the
``geometric" (or ``topological") Hecke category, i.e.~the additive category of Iwahori-equivariant parity
sheaves on the Langlands dual affine flag variety. This provides an
alternative and more intrinsic description of $\Diag$, and also allows
us (under the assumption that Conjecture~\ref{conj:main} holds) to relate $\Rep_0(G)$
to parity sheaves.

In fact, in view of other expected applications, we consider more generally the diagrammatic Hecke category $\cD^\K(\GKM)$ associated with the Weyl group $W$ of a Kac--Moody group $\GKM$, with coefficients in a Noetherian (commutative) complete local ring $\K$ (assuming that $2$ is invertible in $\K$ in some cases, see~\S\ref{ss:Diag-GKM}). We denote by $\BKM \subset \GKM$ the Borel subgroup, and by $\Flag:=\GKM/\BKM$ the associated flag variety. Then we can consider the category $\Par_{\BKM}(\Flag, \K)$ of $\BKM$-equivariant parity complexes on $\Flag$ with coefficients in $\K$ (in the sense of~\cite{jmw}). This category is a full subcategory in the $\BKM$-equivariant derived category $\Db_{\BKM}(\Flag, \K)$ which is closed under the convolution product $\star^\BKM$ and is graded, with shift functor $[1]$.

The main result of the third part of the paper is the following.

\begin{thm}
\label{thm:intro-parities}
There exists an equivalence of monoidal graded additive categories
\[
\Diag^\K(\GKM) \simto \Par_{\BKM}(\Flag,\K).
\]
\end{thm}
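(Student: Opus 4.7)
The plan is to construct an explicit monoidal functor from the Bott--Samelson version of the diagrammatic category to parity complexes, verify it is fully faithful by comparing Hom spaces, and then pass to the Karoubi envelope. I write $\DiagBS^\K(\GKM)$ for the Bott--Samelson category whose Karoubi envelope (after additive completion) is $\Diag^\K(\GKM)$, and similarly $\Par^{\mathrm{BS}}_{\BKM}(\Flag,\K)$ for the full additive subcategory generated by (shifts and convolutions of) the parity sheaves $\cE_s$ for $s \in \Saff$, where $\cE_s$ is the skyscraper-pushforward from the $1$-dimensional Schubert variety $\overline{\BKM s \BKM / \BKM} \cong \bP^1$.

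The first task is to define a monoidal functor
\[
\Psi \colon \DiagBS^\K(\GKM) \to \Par_\BKM(\Flag,\K)
\]
on generators. On objects, I send the generator $B_s$ to $\cE_s$; tensor products of $B_s$'s go to convolution products, which are Bott--Samelson parity sheaves. For morphisms, the generators of $\DiagBS^\K$ are (i) polynomial endomorphisms of the identity (coming from $\ft^\ast$), (ii) for each $s$, the dot morphisms $B_s \to \mathbf{1}\langle 1 \rangle$ and $\mathbf{1} \to B_s \langle 1 \rangle$ and the trivalent vertices $B_s \to B_s B_s \langle -1\rangle$ and $B_s B_s \to B_s \langle 1 \rangle$, and (iii) for each pair $(s,t)$ with $m_{st} < \infty$, a $2m_{st}$-valent vertex. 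The polynomial generators are realized via the Borel isomorphism $\K[\ft] \simto H^\bullet_\BKM(\pt,\K)$ acting on every object. The dot and trivalent morphisms are built from the (co)units of adjunction for the push/pull along the parabolic projection $\Flag \to \GKM/\PKM_s$, using that this projection is a $\bP^1$-bundle. The $2m_{st}$-valent morphism will be constructed inside the rank $2$ subgroup generated by $s,t$: the relevant partial flag variety is a classical finite-dimensional smooth projective variety, and the required morphism can be pinned down there (up to scalar) by degree considerations.

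The second task is to check the defining relations of $\DiagBS^\K(\GKM)$ hold in $\Par_\BKM(\Flag,\K)$. The key reduction, already exploited in~\cite{ew}, is that every relation involves at most two colors; so I only need to verify one-color relations in the geometry of $\bP^1$-bundles (which reduces to a direct computation in $H^\bullet_\BKM(\bP^1)$) and two-color relations in the geometry of the partial flag variety $\GKM_{\{s,t\}}/\BKM$ of the rank $2$ parabolic, which is an honest finite-dimensional flag variety for a reductive group. This second verification is the only part that requires the hypothesis on $2$ being invertible when a type $\mathrm{B}_2$ or $\mathrm{G}_2$ bond is present. Here one can either invoke the finite-type results of the second author (for $\mathrm{SL}_3$, $\mathrm{Sp}_4$, $G_2$) or re-derive the relations by exhibiting both sides as Hom-spaces in a well-understood category and comparing generators.

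Once $\Psi$ is defined, I need to show it is fully faithful on Bott--Samelson objects. On the diagrammatic side, the double leaves basis of Libedinsky--Elias--Williamson gives a $\K[\ft]$-basis of $\Hom^\bullet_{\DiagBS}(B_\uw, B_\uv)$ of explicit graded rank. On the parity side, the standard parity arguments (hyperbolic localization, the fact that Bott--Samelson resolutions are iterated $\bP^1$-bundles with even-dimensional cells) show that $\Hom^\bullet_{\Par_\BKM}(\cE_\uw, \cE_\uv)$ is free of the same graded rank over $H^\bullet_\BKM(\pt,\K)$. A local-to-global / Mayer--Vietoris style argument, combined with the fact that the double leaves are mapped to a generating set, promotes this rank agreement to an isomorphism. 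The main obstacle in the whole proof is this step: in the Kac--Moody setting the flag variety is an ind-variety, so care is needed with the parity formalism (following~\cite{jmw} and its Kac--Moody extensions) and with the verification that the images of the double leaves form a $\K[\ft]$-basis, not merely a generating family. Finally, passing to the additive hull and Karoubi envelope, together with the Krull--Schmidt property on both sides and the fact that every indecomposable parity complex appears as a summand of some Bott--Samelson $\cE_\uw$, upgrades $\Psi$ to the desired equivalence $\Diag^\K(\GKM) \simto \Par_\BKM(\Flag,\K)$ of graded monoidal additive categories.
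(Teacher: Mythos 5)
Your architecture matches the paper's: define a monoidal functor on the Bott--Samelson categories by specifying images of the generating morphisms, check the relations by reduction to finite parabolic subgroups, compare graded ranks of Hom spaces, and pass to Karoubi envelopes. Two points, one minor and one serious. The minor one: it is not true that every defining relation involves at most two colors --- the presentation of \cite{ew} includes the three-color Zamolodchikov relations, so the reduction is to finite \emph{rank-three} parabolic subgroups as well. Your method survives this correction, but note that the paper does not verify the relations directly in the geometry of partial flag varieties: it first uses freeness of the Hom modules to reduce to $\Q$-coefficients, then applies the faithful equivariant-cohomology functor to graded $R$-bimodules and checks the relations among the corresponding maps of Soergel bimodules (by localization), which sidesteps any delicate geometry in bad characteristic.

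The serious issue is the fully-faithfulness step. Your argument is circular where it matters most: you propose to combine the rank agreement with ``the fact that the double leaves are mapped to a generating set,'' but that generation statement \emph{is} the surjectivity of the functor on Hom spaces, and it is precisely the content that has to be proved --- once you have it, rank-counting over a field finishes the job immediately, with no Mayer--Vietoris needed. The paper's proof of this surjectivity is an induction on the length of $\uv$ for $\Hom^\bullet(\cE_\K(\uw),\cE_\K(\uv))$ after restriction to the open sets $\Flag_{\geq w}$, and it rests on a genuinely new piece of machinery: ``sections of the $!$-flag'' and the statements that pushing forward along $q^s$ and pulling back by $(q^s)^!$ transform such sections in a controlled way (Propositions \ref{prop:section-!-push} and \ref{prop:section-!-pull}), which together yield the generation statement Proposition \ref{prop:morphisms-BS-parity} used in the induction (Proposition \ref{prop:surjectivity-parity}). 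Without an argument of this kind (or an equivalent substitute showing that morphisms between Bott--Samelson parity complexes are generated under convolution by the images of the elementary diagrams), the proof is incomplete.
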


Coming back to the setting of~\S\ref{ss:intro-cat-conj}, 
we can define $\Gw$ as the simply connected cover of the derived subgroup of the complex reductive group which is Langlands dual to $G$, and consider an Iwahori subgroup $\Iw \subset \Gw \bigl( \C[ \hspace{-1.5pt} [z] \hspace{-1.5pt}] \bigr)$ and the associated affine flag variety $\Fl^\wedge := \Gw \bigl( \C( \hspace{-1.5pt} (z) \hspace{-1.5pt}) \bigr)/\Iw$. Then we have a monoidal category $\Par_{\Iw}(\Fl^\wedge,\bk)$ of parity complexes on $\Fl^\wedge$ with coefficients in $\bk$. The same methods as for Theorem~\ref{thm:intro-parities} provide a ``topological'' description of the category $\Diag$ of~\S\ref{ss:intro-cat-conj}, in the form of an equivalence of monoidal additive graded categories
\[
\Diag \simto \Par_{\Iw}(\Fl^\wedge,\bk).
\]

One can also deduce a topological description of $\Dasph$ in terms of Iwahori--Whittaker (\'etale) sheaves on $\Fl^\wedge$ (or rather a version of $\Fl^\wedge$ over an algebraically closed field of positive characteristic different from $p$); see Theorem~\ref{thm:Dasph-parities} for details.

\begin{rmk}
As explained above, if Conjecture~\ref{conj:main} holds, combining Theorem~\ref{thm:intro-main} with the above ``topological'' description of $\Dasph$ one obtains a description of $\Rep_0(G)$ in terms of constructible sheaves on $\Fl^\wedge$. This relation is \emph{not} the same as the relation conjectured by Finkelberg--Mirkovi{\'c} in~\cite{fm}; in fact it is Koszul dual (see~\cite{ar} for details).
\end{rmk}




\subsection{Variants}
\label{ss:variants}

Although we will not treat this in detail, methods similar to those of this paper apply in the following contexts:
\begin{enumerate}
\item
\label{it:cat-O}
regular block of category $\mathcal{O}$ of a reductive complex Lie algebra;
\item
\label{it:qgroups}
regular block of the category of finite-dimensional representations of
Lusz\-tig's quantum groups at a root of unity;
\item
\label{it:mod-cat-O}
Soergel's modular category $\mathcal{O}$.
\end{enumerate}
(In cases~\eqref{it:cat-O} and~\eqref{it:mod-cat-O} one needs to
replace the affine Weyl group $\Waff$ by $\Wf$, the antispherical module $\Masph$ by
the regular right module $\Hf$, and the affine flag variety by the
finite flag variety; in cases~\eqref{it:cat-O} and~\eqref{it:qgroups}
one needs to replace $\Bbbk$ by the appropriate field of
characteristic $0$.)

In each case one can formulate an analogue of our
Conjecture~\ref{conj:main} and show that this conjecture implies a
description of the category of tilting objects as
in~Theorem~\ref{thm:intro-main}, and a character formula for these tilting objects as in Corollary~\ref{cor:characters-tiltings}.
In case~\eqref{it:cat-O} one can show (using a Radon transform as in~\cite{bbm}, or an algebraic analogue) that this formula is equivalent to the Kazhdan--Lusztig conjecture proved by Be{\u\i}linson--Bernstein and Brylinski--Kashiwara. In case~\eqref{it:qgroups} this formula is Soergel's conjecture~\cite{soergel-comb-tilting} proved by Soergel~\cite{soergel-char-tilting}, which implies in particular Lusztig's conjecture on characters of simple representations of quantum groups at a root of unity~\cite{lusztig-q}, see~\S\ref{ss:comparison-lusztig} below. And in case~\eqref{it:mod-cat-O} these statements are equivalent to the main results of~\cite{soergel}.

In cases~\eqref{it:cat-O} and~\eqref{it:qgroups}, the appropriate
variant of our results for the group $G=\mathrm{GL}_n(\bk)$ also
apply. (In case~\eqref{it:cat-O} one uses the action of
$\mathcal{U}(\mathfrak{gl}_\infty)$ on the category $\mathcal{O}$
constructed in~\cite{CR}. In case~\eqref{it:qgroups} one uses an
action of $\mathcal{\hgl_\ell}$ where $\ell$ is the order of the root
of unity under consideration, assuming that $\ell > n$ and $\ell$ is odd; the
existence of such an action is suggested in~\cite[Remark~7.27]{CR}.)
Combining these approaches with the main result of~\cite{ew1}, in this way one can
obtain direct algebraic proofs of the Kazhdan--Lusztig conjecture and
of Lusztig's and Soergel's quantum conjectures in type $\mathbf{A}$, which bypass geometry
completely (and even use few results from Representation Theory).

In case~\eqref{it:mod-cat-O} the similar approach does not apply
(because we cannot embed the category in a larger category as in the
other cases). However, since we have checked the appropriate relations
on $\Rep(G)$, and since the translation functors in Soergel's modular
category $\mathcal{O}$ are obtained from the translation functors on
$\Rep(G)$ by restricting to a subcategory and then taking the induced
functors on the quotient, from the relations on $\Rep(G)$ one can
deduce the relations on the modular category $\mathcal{O}$. In this
way one gets an alternative proof of the main result
of~\cite{soergel}, which does not rely on any result from~\cite{ajs}
(contrary to Soergel's proof). Note that these results are the main
ingredient in the second author's construction of counterexamples to
the expected bound in Lusztig's conjecture in~\cite{williamson}.

\begin{rmk}
  Case~\eqref{it:qgroups} above has recently been considered by Andersen--Tubben\-hauer~\cite{AT}. In particular, they have obtained by more explicit methods a diagrammatic description of the principal block in type $\mathbf{A}_1$ which is similar to the one that can be deduced from our methods.
\end{rmk}

\subsection{Simple characters}
\label{ss:intro-simple-characters}

We conclude this introduction with some comments on
another motivation for the current work, which is to establish a formula for
the simple characters in $\Rep_0(G)$ in terms of $p$-Kazhdan--Lusztig
polynomials. (See also~\cite{williamson-takagi} for more details and references.)

The question of computing the characters of simple modules has a long
history. For a fixed characteristic $p$, Steinberg's tensor product theorem
reduces this question to the calculation of the characters of the
(finitely many) simple modules with restricted highest
weight. As long as $p \ge h$, classical results of Jantzen and Andersen
reduce this question further to the calculation of simple characters
corresponding to restricted highest weights in a regular block (a
finite set which is independent of $p \ge h$). As for tilting modules (see~\S\ref{ss:intro-tilting-characters}), 
it is then natural to seek an
expression (in the Grothendieck group of such a regular block) for
the classes of simple modules in terms of costandard 
(dual Weyl) 
modules, since the characters of the latter modules are known.

In 1979, Lusztig~\cite{lusztig} gave a conjectural expression for this
decomposition in terms of affine Kazhdan--Lusztig polynomials. This
conjecture was then proved in 1994/95, combining works of
Andersen--Jantzen--Soergel~\cite{ajs}, Kashiwara--Tani\-saki~\cite{kt},
Kazhdan--Lusztig~\cite{kl} and~Lusztig~\cite{LUSMon}, but only under the assumption that $p$
is bigger than a non-explicit bound depending on the root system of $G$. More recently
Fiebig~\cite{fiebig} obtained an explicit bound above which the
conjecture holds. However this bound is difficult to compute and in any
case several orders of magnitude bigger than $h$.

On the other hand, the second author~\cite{williamson} has recently
exhibited examples showing that Lusztig's conjecture \emph{cannot}
hold for all $p \geq h$. In fact these examples
 show that there does not exist any polynomial $P \in \Z[X]$ such that
 Lusztig's conjecture holds for the group $G=\mathrm{GL}_n(\bk)$ in all
 characteristics $p > P(n)$.\footnote{More precisely, to prove this statement one also needs some non-trivial results from number theory, see the appendix to \cite{williamson}.} 
 On the other hand, in this case the
 Coxeter number $h$ is equal to $n$.

Following a strategy due to Andersen, we can use a small part of
the information on tilting characters above to deduce character
formulas for simple modules. Of course, it is enough to do this when $G$ is quasi-simple, which we will assume for the rest of this subsection. Let us assume that $p \geq 2h-2$. As above, we denote by $\rho$ the half sum of positive roots, by $\alpha_0^\vee$ the highest coroot of $G$, and we set
\[
\fW_{0} := \{w \in \fW \mid \langle w \hdot \lambda_0 + \rho, \alpha_0^\vee \rangle < p(h-1) \}.
\]
(More concretely, this means that $w \hdot \lambda_0$ belongs to an alcove which lies below the hyperplane orthogonal to $\alpha_0^\vee$ containing the ``Steinberg weight'' $(p-1)\rho$.
Clearly, this subset of $\fW$ does not depend on the choice of $\lambda_0$, nor on $p$.) 
Following Soergel~\cite{soergel-comb-tilting} we define a map
\begin{equation}
\label{eqn:hat-fW}
\fW \to \fW \colon y \mapsto \widehat{y}
\end{equation}
as follows. Let $\Sigma$ be the set of simple roots, and let
\[
\bX_1^+:= \{\lambda \in \bX^+ \mid \forall \alpha \in \Sigma, \, \langle \lambda, \alpha^\vee \rangle \leq p-1\}
\]
be the set of restricted weights.
Then, for $\lambda \in (p-1)\rho + \bX^+$, we set $\widecheck{\lambda}=(p-1)\rho + p\gamma + w_0 \eta$ where $\gamma \in \bX^+$ and $\eta \in \bX^+_1$ are characterized by the fact that $\lambda=(p-1)\rho+p\gamma+\eta$. This map induces a bijection $(p-1)\rho + \bX^+ \simto \bX^+$, and we denote its inverse by $\mu \mapsto \widehat{\mu}$. Then the map~\eqref{eqn:hat-fW} is characterized by the fact that $\widehat{y \hdot \lambda_0} = \widehat{y} \hdot \lambda_0$.

The following result is closely related to~\cite[Proposition~2.6]{andersen2}.

\begin{prop}
\label{prop:multiplicities-simples}
For any $x,y \in \fW_0$ we have
\[
[\Sta(x \hdot \lambda_0) : \Sim(y \hdot \lambda_0)] = (\Til(\widehat{y} \hdot \lambda_0) : \Cos(x \hdot \lambda_0)).
\]
\end{prop}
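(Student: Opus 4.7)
The proof is classical, following Andersen \cite[Prop.~2.6]{andersen2}; the strategy is to identify $\Til(\widehat{y} \hdot \lambda_0)$ with the injective envelope of $\Sim(y \hdot \lambda_0)$ in an appropriate truncated highest weight subcategory and then apply BGG reciprocity.

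\textbf{Step 1.} Since $\Til(\widehat{y} \hdot \lambda_0)$ is tilting, it admits both a standard and a costandard filtration, with equal multiplicities. The standard identity
\[
(\Til(\widehat{y} \hdot \lambda_0) : \Cos(x \hdot \lambda_0)) = \dim \Hom\bigl(\Sta(x \hdot \lambda_0), \Til(\widehat{y} \hdot \lambda_0)\bigr),
\]
which uses $\Hom(\Sta(\mu), \Cos(\nu)) = \delta_{\mu, \nu} \Bbbk$ and $\Ext^1(\Sta(\mu), \Cos(\nu)) = 0$ in a highest weight category, rewrites the right-hand side of the proposition as a Hom-dimension.

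\textbf{Step 2.} Choose a dominant weight $\mu_0$ with $x \hdot \lambda_0 \leq \mu_0$ and $\widehat{y} \hdot \lambda_0 \leq \mu_0$, and consider the truncated highest weight subcategory $\Rep_0(G)^{\leq \mu_0}$ of modules whose weights are all $\leq \mu_0$. Let $\bI(y \hdot \lambda_0)$ denote the injective envelope of $\Sim(y \hdot \lambda_0)$ therein. The key classical input is the isomorphism
\[
\Til(\widehat{y} \hdot \lambda_0) \cong \bI(y \hdot \lambda_0)
\]
in $\Rep_0(G)^{\leq \mu_0}$, valid for any $y \in \fW_0$ provided $\mu_0$ contains $\widehat{y} \hdot \lambda_0$. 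Its proof, essentially Andersen's, exploits Steinberg's tensor product theorem to factor $\Sim(y \hdot \lambda_0) \cong \Sim(\eta) \otimes \Sim(\gamma)^{[1]}$ for $y \hdot \lambda_0 = \eta + p\gamma$ with $\eta \in \bX_1^+$ and $\gamma \in \bX^+$, Jantzen's analysis of $\mathrm{St} \otimes \Sim(\eta)$ (where $\mathrm{St} = \Sim((p-1)\rho)$ is the Steinberg module), the self-duality of $\mathrm{St}$, and Donkin's theorem on tensor products of tilting modules with Frobenius twists. The passage from $y \hdot \lambda_0$ to $\widehat{y} \hdot \lambda_0$ precisely encodes the shift by $(p-1)\rho$ together with the $w_0$-twist on the restricted part of the $p$-adic expansion.

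\textbf{Step 3.} In any Artinian abelian category in which each simple object $L$ has $\End(L) = \Bbbk$, the injective envelope $\bI(L)$ satisfies $\dim \Hom(M, \bI(L)) = [M : L]$ for every finite-length object $M$ (by exactness of $\Hom(-, \bI(L))$ and $\Hom(L', \bI(L)) = \delta_{L, L'} \Bbbk$). Applied with $L = \Sim(y \hdot \lambda_0)$ and $M = \Sta(x \hdot \lambda_0)$, and combined with Steps 1--2, this yields
\[
(\Til(\widehat{y} \hdot \lambda_0) : \Cos(x \hdot \lambda_0)) = \dim \Hom\bigl(\Sta(x \hdot \lambda_0), \bI(y \hdot \lambda_0)\bigr) = [\Sta(x \hdot \lambda_0) : \Sim(y \hdot \lambda_0)],
\]
which is the proposition.

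The principal obstacle is Step 2, the identification of $\Til(\widehat{y} \hdot \lambda_0)$ with the injective envelope of $\Sim(y \hdot \lambda_0)$ for $y \in \fW_0$. This is the heart of Andersen's analysis in the Jantzen region, and is where the hypothesis $p \geq 2h - 2$ enters, through the application of Donkin's tilting tensor product theorem.
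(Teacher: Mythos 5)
Your overall strategy — rewrite the right-hand side as a Hom-dimension, identify $\Til(\widehat{y} \hdot \lambda_0)$ with an injective hull of $\Sim(y \hdot \lambda_0)$, and apply the injective-hull multiplicity formula — is the right spirit, and is indeed what the paper does (following Andersen). But Step 2 as you've stated it is false, and the gap is precisely the nontrivial point that the paper's proof is careful about.

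You claim $\Til(\widehat{y} \hdot \lambda_0) \cong \bI(y \hdot \lambda_0)$ in $\Rep_0(G)^{\leq \mu_0}$ for \emph{any} $\mu_0 \geq \widehat{y} \hdot \lambda_0$. This cannot hold: the left-hand side is a fixed finite-dimensional module, whereas the injective hull of $\Sim(y \hdot \lambda_0)$ in the truncated category grows strictly as $\mu_0$ increases (the category $\Rep(G)$ of finite-dimensional modules has no injectives, so the injective hulls in the truncations are unbounded). The correct statement is not an isomorphism but a truncation: writing $\Pi_{\Omega}$ for the functor sending a module to its largest submodule lying in the Serre subcategory $\Rep_{\Omega}(G)$, one shows that $\Pi_{<p(h-1)}(\Til(\widehat{y} \hdot \lambda_0))$ is the injective hull of $\Sim(y \hdot \lambda_0)$ in $\Rep_{<p(h-1)}(G)$. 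Here $\Rep_{<p(h-1)}(G)$ is the Serre subcategory generated by $\Sim(\lambda)$ with $\langle \lambda, \alpha_0^\vee \rangle < p(h-1)$. This truncation is chosen so that $\Sta(x \hdot \lambda_0)$ lies in it for every $x \in \fW_0$, which is what lets you pass from Hom into $\Pi_{<p(h-1)}(\Til(\widehat{y} \hdot \lambda_0))$ to Hom into $\Til(\widehat{y} \hdot \lambda_0)$ itself (by adjunction). Your Step 3 silently relies on this adjunction step, which only works once the truncation is fixed correctly.

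The second thing your sketch elides is the two-level structure needed to establish the injectivity claim. Writing $y \hdot \lambda_0 = \mu^0 + p\mu^1$ with $\mu^0$ restricted, Donkin's tensor product theorem gives $\Til(\widehat{y}\hdot\lambda_0) \cong \Til(\widehat{\mu^0}) \otimes \Sim(\mu^1)^{[1]}$, and the classical Jantzen/Donkin result (using $p \geq 2h-2$) says $\Til(\widehat{\mu^0})$ is injective in $\Rep_{<2p(h-1)}(G)$ — note $2p(h-1)$, not $p(h-1)$. One then needs an $\Ext^1$-vanishing computation (twisting by $\Sim(-w_0\mu^1)^{[1]}$ and tracking weights) to push injectivity of the Frobenius-twisted tensor product down into $\Rep_{<p(h-1)}(G)$. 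This interplay of two different truncation bounds is essential and is what makes the proof work; a single ad hoc $\mu_0$ does not carry this through.
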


\begin{proof}\footnote{This proof has benefitted from discussions with H.~H.~Andersen.}
Let us consider
\[
\bX^+_{<p(h-1)}:=\{\lambda \in \bX^+ \mid \langle \lambda, \alpha_0^\vee \rangle < p(h-1) \}.
\]
(Then $\bX^+_{<p(h-1)}$ contains the weights $x \hdot \lambda_0$ for $w \in \fW$.)
Let $\Rep(G)$ be the category of finite dimensional algebraic $G$-modules,
and denote by $\Rep_{<p(h-1)}(G)$ the Serre subcategory of $\Rep(G)$ generated by the simple objects $\Sim(\lambda)$ with $\lambda \in \bX^+_{<p(h-1)}$. Since $\bX^+_{<p(h-1)}$ is an ideal in $(\bX^+,\uparrow)$, $\Rep_{<p(h-1)}(G)$ has a natural highest weight structure, see Lemma~\ref{lem:hw-quotient}\eqref{it:hw-subcat}. We define in a similar way the subcategory $\Rep_{<2p(h-1)}(G)$.

Let us denote by
\[
\Pi_{<p(h-1)} : \Rep(G) \to \Rep_{<p(h-1)}(G)
\]
the functor which sends an object to its largest subobject which belongs to the subcategory $\Rep_{<p(h-1)}(G)$. This functor is right adjoint to the inclusion functor $\Rep_{<p(h-1)}(G) \to \Rep(G)$. To prove the proposition it suffices to prove that for $y \in \fW_0$, the object $\Pi_{<p(h-1)}(\Til(\widehat{y} \hdot \lambda_0))$ is the injective hull of $\Sim(y \hdot \lambda_0)$ in $\Rep_{<p(h-1)}(G)$. Indeed, if this is known then we obtain that for $x,y \in \fW_0$ we have
\begin{multline*}
[\Sta(x \hdot \lambda_0),\Sim(y \hdot \lambda_0)] = \dim_\bk \Hom_{\Rep_{<p(h-1)}(G)}(\Sta(x \hdot \lambda_0), \Pi_{<p(h-1)}(\Til(\widehat{y} \hdot \lambda_0))) \\
= \dim_\bk \Hom_{\Rep(G)}(\Sta(x \hdot \lambda_0), \Til(\widehat{y} \hdot \lambda_0))
= (\Til(\widehat{y} \hdot \lambda_0) : \Cos(x \hdot \lambda_0)).
\end{multline*}

So, what remains is to prove that $\Pi_{<p(h-1)}(\Til(\widehat{y} \hdot \lambda_0))$ is the injective hull of $\Sim(y \hdot \lambda_0)$ in $\Rep_{<p(h-1)}(G)$. Let $\mu=y \hdot \lambda_0$, and write $\mu=\mu^0+p\mu^1$ with $\mu^0 \in \bX_1^+$ and $\mu^1 \in \bX^+$. Then $\widehat{y} \hdot \lambda_0 = \widehat{\mu^0} + p\mu^1$. Moreover, the fact that $y \in \fW_0$ implies that $p \langle \mu^1, \alpha_0^\vee \rangle \leq p(h-1)$, hence that $\langle \mu^1, \alpha_0^\vee \rangle \leq (h-1)$. Since $p\geq 2(h-1)$, this shows that 
\[
\Sta(\mu^1)=\Cos(\mu^1)=\Sim(\mu^1) = \Til(\mu^1),
\]
see~\cite[Corollary~II.5.6]{jantzen}.
Now by Donkin's tensor product theorem (see~\cite[\S E.9]{jantzen}), under our assumption that $p \geq 2h-2$ we have $\Til(\widehat{\mu^0} + p \mu^1) \cong \Til(\widehat{\mu^0}) \otimes \Til(\mu^1)^{[1]}$, where $(-)^{[1]}$ is the Frobenius twist, see~\cite[\S II.3.16]{jantzen}. We deduce that
\begin{equation}
\label{eqn:tilting-hat}
\Til(\widehat{y} \hdot \lambda_0) \cong \Til(\widehat{\mu^0}) \otimes \Sim(\mu^1)^{[1]}.
\end{equation}

We claim that
\begin{enumerate}
\item
\label{it:til-inj}
$\Til(\widehat{\mu^0})$ belongs to $\Rep_{<2p(h-1)}(G)$, and is injective therein; 
\item
\label{it:til-soc}
the socle of $\Til(\widehat{\mu^0})$ as a $G_1$-module is $\Sim(\mu^0)$.
\end{enumerate}
Indeed, by~\cite[\S 4.5--4.6]{jantzen-darstellungen}, under our assumptions the injective hull of $\Sim(\mu^0)$ as a $G_1$-module has a unique $G$-module structure, and such a $G$-module is the injective hull of $\Sim(\mu^0)$ in $\Rep_{<2p(h-1)}(G)$. By an observation of Donkin (see~\cite[Example~1 on p.~47]{donkin} or~\cite[II.E.9(1)]{jantzen}), this $G$-module is isomorphic to $\Til(\widehat{\mu^0})$, proving our claims.

It follows from Claim~\eqref{it:til-soc} above that the socle of $\Pi_{<p(h-1)}(\Til(\widehat{y} \hdot \lambda_0))$ is $\Sim(y \hdot \lambda_0)$. Indeed,
since $\Sim(\mu^0) \otimes \Sim(\mu^1)^{[1]} \cong \Sim(y \hdot \lambda_0)$ by Steinberg's tensor product theorem, using~\eqref{eqn:tilting-hat} and~\cite[Lemma~4.6]{ak} we obtain that the socle of $\Til(\widehat{y} \hdot \lambda_0)$ is $\Sim(y \hdot \lambda_0)$, which implies our claim on $\Pi_{<p(h-1)}(\Til(\widehat{y} \hdot \lambda_0))$.

Now if $\nu \in \bX^+_{<p(h-1)}$ we have
\begin{multline*}
\Ext^1_{\Rep(G)}(\Sim(\nu),\Til(\widehat{y} \hdot \lambda_0)) \stackrel{\eqref{eqn:tilting-hat}}{=} \Ext^1_{\Rep(G)}(\Sim(\nu),\Til(\widehat{\mu^0}) \otimes \Sim(\mu^1)^{[1]}) \\
\cong
\Ext^1_{\Rep(G)}(\Sim(\nu) \otimes \Sim(-w_0 \mu^1)^{[1]},\Til(\widehat{\mu^0})),
\end{multline*}
where $w_0$ is the longest element in $\Wf$. We have
\[
\langle \nu - p w_0 \mu^1, \alpha_0^\vee \rangle < p(h-1) + p(h-1) = 2p(h-1),
\]
hence any composition factor of $\Sim(\nu) \otimes \Sim(-w_0 \mu^1)^{[1]}$ belongs to $\Rep_{<2p(h-1)}(G)$. This implies that
\[
\Ext^1_{\Rep(G)}(\Sim(\nu),\Til(\widehat{y} \hdot \lambda_0))=0
\]
by Claim~\eqref{it:til-inj} above.
From this vanishing we deduce that 
$\Pi_{<p(h-1)}(\Til(\widehat{y} \hdot \lambda_0))$ is injective in $\Rep_{<p(h-1)}(G)$, which finishes the proof.
\end{proof}

\begin{rmk}
As explained in~\cite[\S 4.6]{jantzen-darstellungen}, the statement from~\cite{jantzen-darstellungen} that we use above does not hold in general if $h \leq p < 2h-2$. See~\cite[Corollary~6.3.1]{kildetoft} for a statement similar to Proposition~\ref{prop:multiplicities-simples} which holds under slightly weaker assumptions.
\end{rmk}

From~\eqref{eqn:char-formula-tilt-2} and Proposition~\ref{prop:multiplicities-simples} it follows that if Conjecture~\ref{conj:main} holds then
\begin{equation}
\label{eqn:character-formula-simples-pre}
[\Sta(x \hdot \lambda_0)] = \sum_{y \in \fW_0} \ppn_{x, \widehat{y}}(1) \cdot [\Sim(y \hdot \lambda_0)]
\end{equation}
in $[\Rep_0(G)]$. 
Inverting the matrix $(\ppn_{x, \widehat{y}}(1))_{x,y \in \fW^0}$ (which is upper-triangular with $1$'s on the diagonal for an appropriate order) we can express the classes of the modules $\Sim(y \hdot \lambda_0)$ for $y \in \fW_0$ in terms of standard modules, hence deduce their characters. Since these modules include the simple modules $\Sim(y \hdot \lambda_0)$ with $y \hdot \lambda_0 \in \bX_1^+$, this solves the problem of computing the characters of simple modules in $\Rep(G)$ (under the assumptions that $p \geq 2(h-1)$ and that Conjecture~\ref{conj:main} holds, or at least that~\eqref{eqn:char-formula-tilt-2} holds).

\begin{rmk}
\begin{enumerate}
\item
As explained in~\cite[Remark~2.7(ii)]{andersen2} and~\cite[\S E.10]{jantzen}, there is another way to obtain a character formula for $\Sim(y)$ out of a tilting character formula, this time in terms of baby Verma modules for the Lie algebra of $G$ (again under the assumption that $p \geq 2h-2$). 
In the context of Lusztig's conjecture, the relation between the two formulas one can obtain in this way is explained in~\cite[\S 3.3]{fiebig-moment-graph}.
\item
In the case of the group $\mathrm{GL}_n(\bk)$, there is still another way to obtain a simple character formula out of a tilting character formula using a ``reciprocity formula'' due to Donkin; see~\cite[E.10(4)]{jantzen}.
\end{enumerate}
\end{rmk}


\subsection{Comparison with Lusztig's conjecture}
\label{ss:comparison-lusztig}

To explain the relation between the formula for characters of simple modules obtained in~\S\ref{ss:intro-simple-characters} and
Lusztig's conjecture~\cite{lusztig}, we first consider the
characteristic $0$
situation, which corresponds to representations of Lusztig's quantum
groups at a $p$-th root of unity (see~\S\ref{ss:variants}). (The relation between character formulas for tilting and simple modules in this case is investigated by Soergel in~\cite[p.106--107]{soergel-comb-tilting}, and we essentially copy Soergel's discussion.) We assume that $p \geq h$ is odd (but not necessarily a prime number), and denote by $\Sta_q(\lambda)$, $\Sim_q(\lambda)$, etc.~the analogues of $\Sta(\lambda)$, $\Sim(\lambda)$, etc.~in this context.

Consider the ordinary Kazhdan--Lusztig basis $\{\uN_w : w\in \fW\}$ of $\Masph$ and the corresponding Kazhdan--Lusztig polynomials $\{n_{y,w} : y,w \in \fW\}$. Then, due to work of Andersen, we have
\[
[\Sta_q(x \hdot \lambda_0) : \Sim_q(y \hdot \lambda_0)] = (\Til_q(\widehat{y} \hdot \lambda_0) : \Cos_q(x \hdot \lambda_0))
\]
for \emph{all} $x,y \in \fW$. This implies in particular that the right-hand side vanishes unless $y \hdot \lambda_0 \preceq x \hdot \lambda_0$ for the standard order on dominant weights, hence that from Soergel's character formula for the tilting modules $\Til_q(w \hdot \lambda_0)$ one deduces that
\begin{equation}
\label{eqn:character-formula-simples-quantum}
[\Sta_q(x \hdot \lambda_0)] = \sum_{y \in \fW} n_{x, \widehat{y}}(1) \cdot [\Sim_q(y \hdot \lambda_0)],
\end{equation}
where now the sum runs over all $y \in \fW$. Introducing the
inverse Kazhdan--Lusztig polynomials $\{n^{y,w} : y,w \in \fW\}$ as in~\cite{soergel-comb-tilting}, from~\eqref{eqn:character-formula-simples-quantum} one can deduce the formula
\begin{equation*}
[\Sim_q(y)] = \sum_{x \in \fW} (-1)^{\ell(x) + \ell(\widehat{y})} \cdot n^{x,\widehat{y}}(1) \cdot [\Sta_q(x)].
\end{equation*}

Consider now the \emph{spherical} (right) $\Haff$-module
\[
\Msph:=\mathsf{triv} \otimes_{\Hf} \Haff,
\]
where $\mathsf{triv}$ is the ``trivial'' $\Hf$-module, i.e.~the free rank one $\Z[v,v^{-1}]$-module on which $H_s$ acts by multiplication by $v^{-1}$. Let $\{\uM_w : w\in \fW\}$ be the corresponding Kazhdan--Lusztig basis and $\{m_{y,w} : y,w \in \fW\}$ be the corresponding Kazhdan--Lusztig polynomials, see~\cite{soergel-comb-tilting}. Then~\cite[Theorem~5.1]{soergel-comb-tilting} implies that
\begin{equation}
\label{eqn:soergel-thm-klp}
n^{x,\widehat{y}}(1) = (-1)^{\ell(y) + \ell(\widehat{y})} m_{x,y}(1).
\end{equation}
Hence, in this setting, from~\eqref{eqn:character-formula-simples-quantum} one obtains the character formula 
\[
[\Sim_q(y)] = \sum_{x \in \fW} (-1)^{\ell(x) + \ell(y)} \cdot m_{x,y}(1) \cdot [\Sta_q(x)],
\]
which is exactly the formula conjectured by Lusztig and proved by the combination of works of Kazhdan--Lusztig~\cite{kl}, Lusztig~\cite{LUSMon} and Kashiwara--Tanisaki~\cite{kt}.

If one wants to generalize this analysis to the modular setting, the first difficulty is that we do not know whether one can replace $\fW_0$ by $\fW$ in the sum in~\eqref{eqn:character-formula-simples-pre}. The second difficulty is that we do not have an analogue of~\cite[Theorem~5.1]{soergel-comb-tilting} for $p$-canonical bases. (Note that one can define the $p$-canonical basis of $\Msph$ as follows. Consider the injective morphism $\zeta$ of~\cite[Proof of Proposition~3.4]{soergel-comb-tilting}. Then using~\cite[Lemma~4.3]{jw} it is not difficult to check that, for $w \in \fW$, the element $\puH_{w_0 w}$ belongs to the image of $\zeta$; then we can define $\puM_w$ by the property that $\zeta(\puM_w) = \puH_{w_0 w}$. One can also define the ``dual'' basis of $\Hom_{\Z[v,v^{-1}]}(\Msph, \Z[v,v^{-1}])$ as in~\cite{soergel-comb-tilting}, so that one can at least make sense of the $p$-analogues of all the ingredients in~\cite[Theorem~5.1]{soergel-comb-tilting}.)

But in any case one shouldn't expect to express the classes $[\Sim(y \hdot \lambda)]$ in terms of the classes $[\Sta(x \hdot \lambda_0)]$ using the polynomials $\ppm_{x,y}$. In fact, recall that
the polynomials $m_{x,y}$ encode the dimensions of cohomology groups of stalks of Iwahori-constructible simple $\mathbb{Q}$-perverse sheaves on the affine Grassmannian
\[
\Gr=\Gw \bigl( \C( \hspace{-1.5pt} (t) \hspace{-1.5pt} ) \bigr) / \Gw \bigl( \C[ \hspace{-1.5pt} [t] \hspace{-1.5pt} ] \bigr),
\]
where $\Gw$ is as in~\S\ref{ss:Hecke-cat-parity}.
With this in mind, Lusztig's character formula can be interpreted as the ``combinatorial shadow'' of the quantum analogue of the Finkelberg--Mirkovi{\'c} conjecture~\cite{fm} which relates $\Rep_0(G)$ to perverse sheaves on $\Gr$. (This quantum version was proved by Arkhipov--Bezukavnikov--Ginzburg~\cite{abg}.)

In the modular case,
it can be easily deduced from the results presented in~\S\ref{ss:Hecke-cat-parity} that
the polynomials $\ppm_{x,y}$ encode the dimensions of cohomology
groups of stalks of Iwahori-constructible \emph{parity sheaves} on $\Gr$. On
the other hand, according to the Finkelberg--Mirkovi{\'c} conjecture~\cite{fm},
the principal block $\Rep_0(G)$ should be equivalent, as a highest
weight category, to the category of Iwahori-constructible
$\Bbbk$-perverse sheaves on $\Gr$. If this conjecture is true, then
one would be able to express the classes of simple modules $\Sim(x \hdot \lambda_0)$ in terms of standard modules $\Sta(y \hdot \lambda_0)$ with
coefficients
given (up to sign) by the Euler characteristic of the stalk at
the point corresponding to $y$ of the simple perverse sheaf
corresponding to $x$. Now parity sheaves and simple perverse sheaves
on $\Gr$ 
do not coincide, so the coefficient considered above will \emph{not} be equal to $\ppm_{x,y}(1)$.

\subsection{Acknowledgements}

We thank Henning Haahr Andersen, Ben Elias, Thor\-ge Jensen, Nicolas Libedinsky, Daniel Tubbenhauer and Ben Webster
for useful discussions on the subject of this paper. We would also
like to thank Jon Brundan for not only admitting the existence of
\cite{Brundan} but also sending a preliminary version, and
Henning Haahr Andersen, Jens Carsten Jantzen and Weiqiang Wang for helpful remarks on preliminary versions of this paper.
Finally we thank Shrawan Kumar, Olivier Mathieu and Guy Rousseau for useful correspondence on the construction of Kac--Moody groups and their flag varieties.

After we started working on this project, we were informed by Ivan Losev that Ben Elias and he have obtained results similar to ours in type $\mathbf{A}$, and a generalization in a different direction, see~\cite{el}. Their approach is different, and uses deeper results in the theory of categorical actions of Kac--Moody Lie algebras (in particular, ``unicity of categorification" results). As far as we understand, it does not make sense (at least at this point) for a general reductive group. We thank Ivan Losev for keeping us informed of their work.

The crucial idea that a presentation of the category of Soergel
bimodules should help defining actions and equivalences of categories,
in particular in the context of modular representations of reductive
groups and quantum groups, comes from discussions of the second author with
Rapha{\"e}l Rou\-quier. This paper also owes a debt to the
ideas of Roman Bezrukavnikov and his collaborators, in particular to \cite{ab} (where categorifications of the antispherical module also play a crucial role).

\subsection{Organization of the paper}

The paper is divided into 3 parts. In Part~\ref{pt:general-conj} we study our main conjecture on the categorical action of $\DiagBS$ on $\Rep_0(G)$ (for a general reductive group $G$ as above) and study its consequences presented in~\S\ref{ss:intro-Dasph}. In Part~\ref{pt:GLn} we restrict to the special case $G=\mathrm{GL}_n(\bk)$, and prove our main conjecture in this case using the Khovanov--Lauda--Rouquier theory of categorical Lie algebra actions. Finally, in Part~\ref{pt:parities} we prove an equivalence of monoidal categories between the diagrammatic Hecke category attached to a Kac--Moody group and the category of parity sheaves on the corresponding flag variety, and deduce a topological description of the categories $\Diag$ and $\Dasph$.

We begin each part with a more detailed overview of its contents.

\else { \newpage }
\fi

\part{General conjecture}
\label{pt:general-conj}

\ifdefined\PARTCOMPILE{\textbf{Overview}.
This part is concerned with the study of our main conjecture and its
consequences, in the context of a general reductive group with simply
connected derived subgroup. In Section~\ref{sec:tilting-hw} we recall
the basic notions related to tilting objects in highest weight
categories and introduce the concept of a ``section of the $\Cos$-flag", which will play a key technical role in our approach. In Section~\ref{sec:blocks} we concentrate on the case of regular blocks of representations of reductive groups; our main result is the technical Proposition~\ref{prop:morphisms-BS} which describe how morphisms between ``Bott--Samelson type" tilting modules can be generated inductively. (The proof of this result involves the study of sections of the $\Cos$-flag.) In Section~\ref{sec:Diag} we recall the construction of the diagrammatic Hecke category, and explain the construction of the categorical antispherical module. Finally, in Section~\ref{sec:main-conj} we state precisely our ``categorical'' conjecture, and study its main applications.

\section{Tilting objects and sections of the $\Cos$-flag}
\label{sec:tilting-hw}

\subsection{Highest weight categories}
\label{ss:hwcat}

Let $\Bbbk$ be a field, and let $\cA$ be a finite length $\Bbbk$-linear abelian category. We let $\Lambda$ be a set which parametrizes isomorphism classes of simple objects in $\cA$, and for any $\lambda \in \Lambda$ we fix a representative $\Sim(\lambda)$ of the corresponding isomorphism class of simple objects. 
We assume that $\Lambda$ is equipped with a partial order $\preceq$. In this setting, an \emph{ideal} of $\Lambda$ is a subset $\Omega \subset \Lambda$ such that for $\lambda,\mu \in \Lambda$,
\[
(\mu \in \Omega \ \ \& \ \ \lambda \preceq \mu) \ \ \Rightarrow \ \ \lambda \in \Omega.
\]
A \emph{coideal} is a subset $\Omega \subset \Lambda$ such that $\Lambda \smallsetminus \Omega$ is an ideal.

For $\Omega \subset \Lambda$, we denote by $\cA_{\Omega}$ the Serre subcategory of $\cA$ generated by the objects $\Sim(\la)$ for $\la \in \Omega$. We write $\cA_{\preceq \lambda}$ for $\cA_{\{\mu \in \Lambda \mid  \mu \preceq \lambda\}}$, and similarly for $\cA_{\prec \lambda}$.
Finally, we assume that we are given, for any $\lambda \in \Lambda$, objects $\Sta(\lambda)$ and $\Cos(\lambda)$, and nonzero morphisms $\Sta(\lambda) \to \Sim(\lambda)$ and $\Sim(\lambda) \to \Cos(\lambda)$.

The following definition is due to Cline--Parshall--Scott~\cite{cps}, although our version of it is closer to the some ideas developed in~\cite{bgs}.

\begin{defn}
\label{defn:hwcat}
The category $\cA$ (with the above data) is said to be a \emph{highest weight category} if the following conditions hold.
\begin{enumerate}
\item 
\label{it:hw-def-fin}
For any $\la \in \Lambda$, the set $\{\mu \in \Lambda \mid \mu \preceq \lambda\}$ is finite.
\item 
\label{it:hw-def-split}
For each $\lambda \in \Lambda$, we have 
$\Hom(\Sim(\la),\Sim(\la)) =
\Bbbk$.
\item 
\label{it:hw-def-proj-inj}
For any ideal $\Omega \subset \Lambda$ such that $\lambda \in \Omega$ is maximal, $\Sta(\la) \to \Sim(\la)$ is a projective cover in $\cA_{\Omega}$ and $\Sim(\lambda) \to \Cos(\lambda)$ is an injective envelope in $\cA_{\Omega}$.
\item 
\label{it:hw-def-ker}
The kernel of $\Sta(\la) \to \Sim(\la)$ and the cokernel of $\Sim(\la) \to \Cos(\la)$ belong to $\cA_{\prec \la}$.
\item
\label{it:hw-def-ext2}
We have $\Ext^2_\cA(\Sta(\la), \Cos(\mu)) = 0$ for all $\lambda, \mu \in \Lambda$.
\end{enumerate}
The poset $(\Lambda, \preceq)$ is called the \emph{weight poset} of this highest weight category.
\end{defn}

In the rest of this section we fix a category $\cA$ which satisfies Definition~\ref{defn:hwcat}.
The objects $\Sta(\lambda)$ are called \emph{standard objects}, and the objects $\Cos(\lambda)$ are called \emph{costandard objects}. These objects satisfy
\begin{equation}
\label{eqn:morph-Sta-Cos}
\Ext^n_\cA(\Sta(\la), \Cos(\mu)) = \begin{cases}
\Bbbk & \text{if $\la=\mu$ and $n=0$;} \\
0 & \text{otherwise.}
\end{cases}
\end{equation}
When $\la=\mu$ and $n=0$, the only non zero morphism $\Sta(\la) \to \Cos(\la)$ (up to scalar) is the composition $\Sta(\la) \to \Sim(\la) \to \Cos(\la)$.

\begin{defn}
Let $X$ be an object of $\cA$.  We say that $X$ \emph{admits a standard} (resp.~\emph{costandard}) \emph{filtration} if there exists a finite filtration $F_\bullet X$ of $X$ such that each $F_i X / F_{i-1} X$ is isomorphic to some $\Sta(\lambda)$ (resp.~$\Cos(\la)$) with $\la \in \Lambda$.
\end{defn}

If $X$ admits a standard, resp.~costandard, filtration, we denote by $(X : \Sta(\la))$, resp.~$(X : \Cos(\la))$, the number of times $\Sta(\la)$, resp.~$\Cos(\la)$, appears in a standard, resp.~costandard, filtration of $X$. It is well known that this integer does not depend on the choice of filtration; in fact by~\eqref{eqn:morph-Sta-Cos} it equals $\dim_{\Bbbk} \Hom_\cA(X, \Cos(\la))$, resp.~$\dim_{\Bbbk} \Hom_\cA(\Sta(\la), X)$.

The following well known lemma is proved e.g.~in~\cite[Lemma~2.2]{modrap3}. (More precisely,~\cite{modrap3} considers \emph{graded} highest weight categories, but the proof in the ungraded setting is identical.)

\begin{lem}
\label{lem:hw-quotient}
\begin{enumerate}
\item
\label{it:hw-subcat}
Let $\Omega \subset \Lambda$ be an ideal.
The subcategory $\cA_{\Omega} \subset \cA$ is a highest weight category with weight poset $(\Omega, \preceq)$ and standard (resp.~costandard) objects $\Sta(\omega)$ (resp.~$\Cos(\omega)$) for $\omega \in \Omega$. 
\item
\label{it:hw-quotient}
Let $\Omega \subset \Lambda$ be a coideal.
The Serre quotient $\cA^{\Omega}:=\cA / \cA_{\Lambda \smallsetminus \Omega}$ is a highest weight category with weight poset $(\Omega, \preceq)$. The standard (resp.~costandard) objects are the images in the quotient of the objects $\Sta(\omega)$ (resp.~$\Cos(\omega)$) for $\omega \in \Omega$.
\end{enumerate}
\end{lem}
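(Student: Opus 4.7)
My plan is to verify, in each case, the five axioms of Definition \ref{defn:hwcat}. The overall strategy exploits the fact that an ideal (resp.\ coideal) $\Omega$ interacts well with the order: if $\Omega$ is an ideal, then $\prec \omega$ (computed in $\Lambda$) lies inside $\Omega$ whenever $\omega \in \Omega$, so $(\cA_\Omega)_{\prec \omega} = \cA_{\prec \omega}$; dually, if $\Omega$ is a coideal, then for any ideal $\Omega' \subseteq \Omega$ (in the induced order), the set $\widetilde{\Omega}' := \Omega' \cup (\Lambda \setminus \Omega)$ is an ideal of $\Lambda$ in which the maximal elements of $\Omega'$ remain maximal. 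These observations will reduce most axioms to the corresponding statement in $\cA$.

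For part \eqref{it:hw-subcat}, the simple objects of $\cA_\Omega$ are $\{\Sim(\omega) : \omega \in \Omega\}$ by definition of the Serre subcategory, and axioms \eqref{it:hw-def-fin} and \eqref{it:hw-def-split} are inherited trivially. For axiom \eqref{it:hw-def-ker}, the kernel of $\Sta(\omega) \to \Sim(\omega)$ lies in $\cA_{\prec \omega}$, which by the ideal property is contained in $\cA_\Omega$ and hence equals $(\cA_\Omega)_{\prec \omega}$; the cokernel is handled similarly. For axiom \eqref{it:hw-def-proj-inj}, an ideal of $(\Omega, \preceq)$ is automatically an ideal of $(\Lambda, \preceq)$, so $(\cA_\Omega)_{\Omega'} = \cA_{\Omega'}$ and the projective/injective statement transfers. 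Axiom \eqref{it:hw-def-ext2} follows from the fact that $\cA_\Omega \subseteq \cA$ is closed under extensions, so $\Ext^1$ is preserved, and Yoneda's description realizes any class in $\Ext^2_{\cA_\Omega}$ as a 2-extension in $\cA$; thus the map $\Ext^2_{\cA_\Omega}(\Sta(\lambda), \Cos(\mu)) \to \Ext^2_\cA(\Sta(\lambda), \Cos(\mu))$ is defined and, in view of the vanishing in $\cA$, gives the desired conclusion once one verifies injectivity using a standard pull-back/push-out equivalence argument.

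For part \eqref{it:hw-quotient}, I will use the exact quotient functor $\pi : \cA \to \cA^\Omega$. The simple objects of $\cA^\Omega$ are $\{\pi(\Sim(\omega)) : \omega \in \Omega\}$, since $\pi(\Sim(\nu)) = 0$ for $\nu \notin \Omega$. Axioms \eqref{it:hw-def-fin} and \eqref{it:hw-def-split} are again immediate (for \eqref{it:hw-def-split} one uses the standard formula $\Hom_{\cA^\Omega}(\pi X, \pi Y) = \varinjlim \Hom_\cA(X', Y/Y')$ and checks that $\Hom_\cA(\Sim(\omega), \Sim(\omega)) = \bk$ survives). Axiom \eqref{it:hw-def-ker} follows by applying the exact functor $\pi$ to the defining exact sequences in $\cA$ and noting that composition factors $\Sim(\nu)$ with $\nu \notin \Omega$ are killed, while the surviving factors have $\nu \prec \omega$. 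For axiom \eqref{it:hw-def-proj-inj}, given an ideal $\Omega' \subseteq \Omega$ with $\omega$ maximal, the remark above provides an ideal $\widetilde{\Omega}'$ of $\Lambda$ with the same property; the quotient $\pi$ identifies $(\cA^\Omega)_{\Omega'}$ with $\cA_{\widetilde{\Omega}'} / \cA_{\Lambda \setminus \Omega}$, and universal properties of Serre quotients transport the projective cover and injective envelope statements from $\cA_{\widetilde{\Omega}'}$.

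The hard part will be axiom \eqref{it:hw-def-ext2} in the quotient case, since a priori $\Ext^2$ can grow under Serre localization. The plan is to first strengthen axiom \eqref{it:hw-def-ext2} in $\cA$ to the vanishing $\Ext^n_\cA(\Sta(\lambda), \Cos(\mu)) = 0$ for all $n \geq 1$ (when $\lambda \neq \mu$, and for $n \geq 1$ when $\lambda = \mu$), by a dévissage induction on $\lambda$ and $\mu$ using axiom \eqref{it:hw-def-ker}. One then shows, using the standard characterization of morphisms and extensions in a Serre quotient together with the fact that no subobject or quotient of $\Cos(\mu)$ (resp.\ $\Sta(\lambda)$) for $\mu, \lambda \in \Omega$ can lie in $\cA_{\Lambda \setminus \Omega}$ except for the zero object (since composition factors of $\Cos(\mu)$ and $\Sta(\lambda)$ that come from $\Omega$ are never killed at the top/socle level when $\omega$ is in the appropriate position), that the natural map $\Ext^n_\cA(\Sta(\lambda), \Cos(\mu)) \to \Ext^n_{\cA^\Omega}(\pi(\Sta(\lambda)), \pi(\Cos(\mu)))$ is an isomorphism, whence axiom \eqref{it:hw-def-ext2} in $\cA^\Omega$. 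This Ext-comparison is the technical heart of the proof; everything else is bookkeeping.
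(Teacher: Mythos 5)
The paper does not give its own proof of this lemma: it refers to \cite[Lemma~2.2]{modrap3}, remarking only that the argument there (written for graded highest weight categories) works identically in the ungraded setting. So there is no paper-internal proof to compare against, and I assess your plan on its own terms.

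Your overall strategy --- direct verification of the five axioms, using that ideals and coideals interact well with $\preceq$ --- is the standard one and should work. For part \eqref{it:hw-subcat} your treatment is essentially complete: the injectivity of $\Ext^2_{\cA_\Omega}(\Sta(\lambda), \Cos(\mu)) \to \Ext^2_\cA(\Sta(\lambda), \Cos(\mu))$ you invoke can be proved by splitting a given $2$-extension $0 \to \Cos(\mu) \to A \to B \to \Sta(\lambda) \to 0$ at the middle image $Z$ into $e_1 \colon 0 \to \Cos(\mu) \to A \to Z \to 0$ and $e_2 \colon 0 \to Z \to B \to \Sta(\lambda) \to 0$; vanishing in $\cA$ means $e_2$ lifts to some $0 \to A \to C \to \Sta(\lambda) \to 0$, and $C$ lies in $\cA_\Omega$ because $\cA_\Omega$ is closed under extensions, so (since $\Ext^1_{\cA_\Omega} = \Ext^1_\cA$) the class already vanishes in $\cA_\Omega$.

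Two things need repair in part \eqref{it:hw-quotient}. First, the auxiliary claim that ``no subobject or quotient of $\Cos(\mu)$ (resp.\ $\Sta(\lambda)$) can lie in $\cA_{\Lambda \setminus \Omega}$ except for the zero object'' over-claims by half: it is true that $\Cos(\mu)$ has no nonzero subobject in $\cA_{\Lambda \setminus \Omega}$ (it is an essential extension of its socle $\Sim(\mu)$ with $\mu \in \Omega$) and that $\Sta(\lambda)$ has no nonzero quotient in $\cA_{\Lambda \setminus \Omega}$ (its top is $\Sim(\lambda)$ with $\lambda \in \Omega$), and these two true halves are exactly what the colimit formula for $\Hom$ in a Serre quotient requires. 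But the other two halves are generally false: for instance, taking $\Omega = \{\nu : \nu \not\prec \mu\}$, a coideal containing $\mu$, the quotient $\Cos(\mu)/\Sim(\mu)$ can have all of its composition factors outside $\Omega$ and so lie in $\cA_{\Lambda \setminus \Omega}$. Second, and more seriously, the passage from the $\Hom$-level statement to the isomorphism $\Ext^n_\cA(\Sta(\lambda), \Cos(\mu)) \simto \Ext^n_{\cA^\Omega}(\pi\Sta(\lambda), \pi\Cos(\mu))$ is where the real work lies, and as written it remains a plan rather than a proof: the colimit description of morphisms in a Serre quotient does not carry over naively to higher $\Ext$, and one needs either the derived-category formalism (identifying $\Db(\cA^\Omega)$ with a Verdier quotient of $\Db(\cA)$ via formula \eqref{eqn:morph-Sta-Cos} and Be{\u\i}linson's lemma) or a dévissage argument more careful than the sketch you give. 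The right statement is indeed the isomorphism you aim for, but this is precisely the technical content that the cited reference supplies.
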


In the setting of Lemma~\ref{lem:hw-quotient}\eqref{it:hw-quotient}, we will usually omit the quotient functor $\cA \to \cA^\Omega$ from the notation. This statement has the following consequence.

\begin{cor}
\label{cor:morph-quotient}
Let $\Omega \subset \Lambda$ be a coideal. If $M \in \cA$ admits a standard filtration and $N \in \cA$ admits a costandard filtration, the morphism
\[
\Hom_{\cA}(M,N) \to \Hom_{\cA^\Omega}(M,N)
\]
is surjective.
\end{cor}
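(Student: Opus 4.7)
The plan is to argue by double induction on $\ell(M) + \ell(N)$, where $\ell(M)$ denotes the length of the standard filtration of $M$ and $\ell(N)$ that of the costandard filtration of $N$. The quotient functor $\pi \colon \cA \to \cA^\Omega$ is exact, so it will carry the chosen filtrations to filtrations of the same kind in $\cA^\Omega$ (with some subquotients possibly becoming zero).

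For the base case I take $M = \Sta(\lambda)$ and $N = \Cos(\mu)$. If $\lambda \notin \Omega$ or $\mu \notin \Omega$, then $\pi(M) = 0$ or $\pi(N) = 0$, so the target of the map vanishes and surjectivity is trivial. If instead $\lambda, \mu \in \Omega$, I invoke Lemma~\ref{lem:hw-quotient}\eqref{it:hw-quotient} to identify the images of $\Sta(\lambda)$ and $\Cos(\mu)$ with the corresponding standard and costandard objects of the highest weight category $\cA^\Omega$. Then formula~\eqref{eqn:morph-Sta-Cos}, applied in both $\cA$ and $\cA^\Omega$, shows that both Hom spaces are one-dimensional when $\lambda = \mu$ and zero otherwise; moreover, the canonical generator $\Sta(\lambda) \twoheadrightarrow \Sim(\lambda) \hookrightarrow \Cos(\lambda)$ in $\cA$ is sent by $\pi$ to the analogous, hence non-zero, generator in $\cA^\Omega$.

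For the inductive step I assume $\ell(M) \geq 2$ (the case $\ell(N) \geq 2$ is handled symmetrically by filtering $N$) and choose a short exact sequence $0 \to M' \to M \to M'' \to 0$ in which $M'$ and $M''$ inherit standard filtrations of strictly smaller length. Applying $\Hom_{\cA}(-, N)$ and $\Hom_{\cA^\Omega}(-, \pi(N))$ yields a commutative ladder whose rows are both short exact, provided $\Ext^1_\cA(M'', N) = 0$ and $\Ext^1_{\cA^\Omega}(\pi(M''), \pi(N)) = 0$. Granting this, the outer vertical arrows are surjective by the induction hypothesis, and the five lemma forces surjectivity of the middle vertical arrow.

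The genuinely delicate point is the $\Ext^1$-vanishing that makes these rows exact. In $\cA$ this is the standard fact that $\Ext^1_\cA(X, Y) = 0$ whenever $X$ is standard-filtered and $Y$ is costandard-filtered, which follows by induction on filtration lengths from $\Ext^1_\cA(\Sta(\lambda), \Cos(\mu)) = 0$ (itself a consequence of~\eqref{eqn:morph-Sta-Cos} together with condition~\eqref{it:hw-def-ext2} of Definition~\ref{defn:hwcat}). The same argument applies inside $\cA^\Omega$ thanks to Lemma~\ref{lem:hw-quotient}\eqref{it:hw-quotient}, once one observes that $\pi(M'')$ and $\pi(N)$ remain standard- respectively costandard-filtered in $\cA^\Omega$ after collapsing steps indexed by $\Lambda \smallsetminus \Omega$. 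This is the only non-formal input the proof requires.
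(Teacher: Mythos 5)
Your proof is correct and follows essentially the same route as the paper: the paper reduces to the case $M=\Sta(\la)$, $N=\Cos(\mu)$ via the four-lemma (your five-lemma dévissage on filtration lengths is the same reduction spelled out) and then concludes from Lemma~\ref{lem:hw-quotient}\eqref{it:hw-quotient} and~\eqref{eqn:morph-Sta-Cos}, exactly as in your base case.
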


\begin{proof}
Using the four-lemma, it is enough to prove the claim when $M=\Sta(\la)$ and $N=\Cos(\mu)$ for some $\lambda, \mu \in \Lambda$. In this case, it follows from Lemma~\ref{lem:hw-quotient}\eqref{it:hw-quotient} and~\eqref{eqn:morph-Sta-Cos}.
\end{proof}

\subsection{Canonical $\Cos$-flags}
\label{ss:canonical-cos-flag}

In the rest of the paper we will usually prefer costandard filtrations over standard filtrations. However, very similar constructions can be considered for standard filtrations; we leave the necessary modifications to the reader.

Let $X$ be an object of $\cA$ which admits a costandard filtration. A \emph{canonical $\Cos$-flag} of $X$ is the data, for any ideal $\Omega \subset \Lambda$, of a subobject $\Gamma_\Omega X \subset X$, such that:
\begin{itemize}
\item
$\bigcup_{\Omega} \Gamma_{\Omega} M = M$ and $\bigcap_\Omega \Gamma_{\Omega} M = 0$;
\item
$\Omega \subset \Omega' \ \Rightarrow \ \Gamma_\Omega X \subset \Gamma_{\Omega'} X$;
\item
for any ideal $\Omega$ and any $\la \in \Omega$ maximal, setting $\Omega':=\Omega \smallsetminus \{\la\}$ we have that $\Gamma_\Omega X / \Gamma_{\Omega'} X$ is isomorphic to a direct sum of copies of
$\Cos(\la)$.
\end{itemize}

\begin{lem}
\label{lem:can-nabla-flag}
Let $X$ be an object of $\cA$ which admits a costandard filtration.
A canonical $\Cos$-flag of $X$ exists and is unique.
\end{lem}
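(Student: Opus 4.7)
The plan is twofold: I would define $\Gamma_\Omega X$ explicitly as the largest subobject of $X$ all of whose composition factors $\Sim(\nu)$ satisfy $\nu \in \Omega$ (this exists by finite length, being the sum of all such subobjects), and then verify separately that this gives a canonical $\Cos$-flag and that it is the only one. Monotonicity, $\Gamma_\emptyset X = 0$, and $\Gamma_{\Omega_0} X = X$ for $\Omega_0$ the (finite, by axiom~\eqref{it:hw-def-fin}) downward closure of the composition factors of $X$ are all immediate from this description.

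For uniqueness, let $\Gamma'_\bullet$ be any canonical $\Cos$-flag. A straightforward induction on $|\Omega|$, based on $\Gamma'_\emptyset X = 0$ (which follows from the intersection condition plus monotonicity) and the third defining property combined with downward-closedness of $\Omega$, shows that every composition factor of $\Gamma'_\Omega X$ lies in $\Omega$; hence $\Gamma'_\Omega X \subseteq \Gamma_\Omega X$. For the reverse inclusion I would check that $\soc(X/\Gamma'_\Omega X)$ contains no $\Sim(\mu)$ with $\mu \in \Omega$. Given any nonzero map $\Sim(\mu) \hookrightarrow X/\Gamma'_\Omega X$ with $\mu \in \Omega$, the union condition forces the image to lie in some $\Gamma'_{\tilde\Omega} X/\Gamma'_\Omega X$ for a finite ideal $\tilde\Omega \supseteq \Omega$. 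Taking $\tilde\Omega$ minimal (which makes sense since intersections of ideals are ideals) and picking $\nu$ maximal in $\tilde\Omega \smallsetminus \Omega$ (which is then automatically maximal in $\tilde\Omega$, as $\Omega$ is downward closed), the further composition with $\Gamma'_{\tilde\Omega} X/\Gamma'_\Omega X \twoheadrightarrow \Gamma'_{\tilde\Omega} X/\Gamma'_{\tilde\Omega \smallsetminus \{\nu\}} X \cong \Cos(\nu)^{\oplus m}$ is forced to be nonzero by minimality, yielding $\mu = \nu$ and contradicting $\mu \in \Omega$.

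The heart of existence is the third property. Fix $\la \in \Omega$ maximal, $\Omega' = \Omega \smallsetminus \{\la\}$, and set $Q := \Gamma_\Omega X/\Gamma_{\Omega'} X$. By maximality of $\Gamma_{\Omega'} X$ as a subobject of $X$ with composition factors in $\Omega'$, $Q$ has no nonzero subobject in $\cA_{\Omega'}$, so $\soc Q = \Sim(\la)^{\oplus j}$ for some $j$; and since $\la$ is maximal in $\Omega$, axiom~\eqref{it:hw-def-proj-inj} makes $\Cos(\la)$ the injective envelope of $\Sim(\la)$ in $\cA_\Omega$, yielding an embedding $Q \hookrightarrow \Cos(\la)^{\oplus j}$. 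To upgrade to equality, I will invoke the standard fact that both $\Gamma_\Omega X$ and $\Gamma_{\Omega'} X$ inherit costandard filtrations from $X$: applying $\Hom(\Sta(\mu), -)$ to $0 \to \Gamma_\Omega X \to X \to X/\Gamma_\Omega X \to 0$ gives $\Ext^1(\Sta(\mu), \Gamma_\Omega X) \hookrightarrow \Hom(\Sta(\mu), X/\Gamma_\Omega X)$ thanks to $\Ext^1(\Sta(\mu), X) = 0$, and the right-hand side vanishes for $\mu \in \Omega$ by maximality of $\Gamma_\Omega X$, while the case $\mu \notin \Omega$ is disposed of by a d\'evissage through the composition factors of $\Gamma_\Omega X$ using axiom~\eqref{it:hw-def-ext2}. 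Consequently $Q$ admits a costandard filtration, which after rearrangement (bringing all $\Cos(\la)$-steps to the bottom, using $\Ext^1(\Cos(\la), \Cos(\mu)) = 0$ for $\mu \neq \la$ below or incomparable to $\la$, a standard HWC fact) gives an inclusion $\Cos(\la)^{\oplus k} \hookrightarrow Q$ with $k = (Q : \Cos(\la))$. A length count then closes the argument: only the $\Cos(\la)$-steps contribute $\Sim(\la)$-composition factors, so $(Q : \Sim(\la)) = k$, and this forces $j = k$ and $Q \cong \Cos(\la)^{\oplus k}$.

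The main technical obstacle is the inheritance of a costandard filtration by $\Gamma_\Omega X$ (especially for the values $\mu \notin \Omega$ in the $\Ext$-vanishing argument, which requires genuine use of axiom~\eqref{it:hw-def-ext2}), together with the subsequent rearrangement of the costandard filtration of $Q$ to isolate the $\Cos(\la)$-steps at the bottom. Both are standard in the theory of highest weight categories but require careful $\Ext$-vanishing bookkeeping anchored in the axioms of Definition~\ref{defn:hwcat}.
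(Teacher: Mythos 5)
Your construction is genuinely different from the paper's. The paper proves existence by induction on the length of a costandard filtration, sorting the layers via the vanishing $\Ext^1_\cA(\Cos(\la),\Cos(\mu))=0$ unless $\mu \prec \la$, and proves uniqueness by characterizing only the bottom layer (for $\la$ minimal with $(X:\Cos(\la))\neq 0$) as the sum of the images of all morphisms $\Cos(\la)\to X$. You instead give a closed-form description of \emph{every} layer, $\Gamma_\Omega X = $ the largest subobject of $X$ lying in $\cA_\Omega$, and verify the axioms; this buys a more transparent uniqueness argument and an intrinsic characterization of the whole flag, at the cost of a longer existence verification (you need the $\Ext^1(\Sta(\mu),-)$-criterion for costandard filtrations, hence axiom~\eqref{it:hw-def-ext2}, which the paper's route avoids).

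One step is wrong as stated. To produce $\Cos(\la)^{\oplus k} \hookrightarrow Q$ you invoke ``$\Ext^1(\Cos(\la),\Cos(\mu))=0$ for $\mu\neq\la$ below or incomparable to $\la$''. The ``below'' case is false: the correct statement --- the one the paper uses --- is that $\Ext^1(\Cos(\la),\Cos(\mu))=0$ \emph{unless} $\mu\prec\la$, and it is precisely for $\mu$ strictly below $\la$ that a nonsplit extension with $\Cos(\mu)$ as the subobject can occur (e.g.\ the indecomposable tilting $T(0)$ in the principal block of category $\mathcal{O}$ for $\mathfrak{sl}_2$), so one cannot in general push a $\Cos(\la)$-layer underneath a $\Cos(\mu)$-layer with $\mu\prec\la$. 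Fortunately the rearrangement is unnecessary: you have already shown that $Q$ has no nonzero subobject in $\cA_{\Omega'}$, and since $(Q:\Cos(\mu))=\dim_\Bbbk\Hom_\cA(\Sta(\mu),Q)$ while any morphism $\Sta(\mu)\to Q$ with $\mu\in\Omega'$ has image in $\cA_{\preceq\mu}\subseteq\cA_{\Omega'}$, the \emph{only} costandard subquotient of $Q$ is $\Cos(\la)$ itself. Then $\Ext^1(\Cos(\la),\Cos(\la))=0$ (as $\la\not\prec\la$) gives $Q\cong\Cos(\la)^{\oplus k}$ outright, and your socle/injective-hull half becomes superfluous as well. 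Two smaller points: the long exact sequence gives a surjection $\Hom(\Sta(\mu),X/\Gamma_\Omega X)\twoheadrightarrow\Ext^1(\Sta(\mu),\Gamma_\Omega X)$, not an injection the other way (your conclusion is unaffected); and the inductions ``on $|\Omega|$'' and the choice of a ``minimal $\tilde\Omega$'' need a word about infinite ideals, e.g.\ by minimizing $|\tilde\Omega\smallsetminus\Omega|$ and using that $X$ has finite length.
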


\begin{proof}
Existence follows, by induction on the length of a costandard filtration, from the property that $\Ext^1_{\cA}(\Cos(\la), \Cos(\mu))=0$ unless $\mu \prec \lambda$ (which itself follows from Property~\eqref{it:hw-def-proj-inj} in Definition~\ref{defn:hwcat}).

To prove unicity, it is enough to prove that if $\la \in \Lambda$ is minimal for the property that $(X : \Cos(\la)) \neq 0$, then there exists a unique subobject $X' \subset X$ which is isomorphic to a direct sum of copies of $\Cos(\la)$ and such that $X/X'$ admits a  costandard filtration such that $(X/X' : \Cos(\la))=0$. However we observe that
\[
\Hom_{\cA}(\Cos(\la), \Cos(\mu)) \neq 0 \quad \Rightarrow \quad \mu \preceq \la
\]
(because the image of any non-zero morphism $\Cos(\la) \to \Cos(\mu)$ must contain the socle $\Sim(\mu)$ of $\Cos(\mu)$), so that $X'$ is characterized as the sum of the images of all morphisms $\Cos(\la) \to X$.
\end{proof}

Because of Lemma~\ref{lem:can-nabla-flag}, we can consider \emph{the} canonical $\Cos$-flag of an object $X$ which admits a costandard filtration, and use the notation $\Gamma_\Omega X$ unambiguously (for $\Omega$ an ideal).

\subsection{Tilting objects and sections of the $\Cos$-flag}
\label{ss:tiltings}

Recall that
we say that an object $X$ of $\cA$ is \emph{tilting} if it admits both a standard and a costandard filtration.
We denote by $\Tilt(\cA)$ the additive full subcategory of $\cA$ whose objects are the tilting objects. It is well known that this category is Krull--Schmidt, and that its isomorphism classes of indecomposable objects are parametrized by $\Lambda$. In fact, for $\la \in \Lambda$, the corresponding indecomposable object $\Til(\la)$ is characterized (up to isomorphism) by the properties that
\[
(\Til(\la) : \Cos(\mu)) \neq 0 \ \Rightarrow \ \mu \preceq \la \qquad \text{and} \qquad (\Til(\la) : \Cos(\la)) \neq 0.
\]
We also note that $(\Til(\la) : \Sta(\mu)) \neq 0$ implies that $\mu \preceq \la$, and that $(\Til(\la) : \Cos(\la))=(\Til(\la) : \Sta(\la))=1$.

The following properties of tilting objects are well known.

\begin{lem}
\label{lem:Hom-Til-Sta-Cos}
Let $\lambda \in \Lambda$.
\begin{enumerate}
\item
We have $\Hom_{\cA}(\Sta(\la), \Til(\la)) \cong \bk$, and any nonzero morphism $\Sta(\la) \to \Til(\la)$ is injective.
\item
We have $\Hom_{\cA}(\Til(\la), \Cos(\la)) \cong \bk$, and any nonzero morphism $\Til(\la) \to \Cos(\la)$ is surjective.
\item
If $\varphi \colon \Sta(\la) \to \Til(\la)$ and $\psi \colon \Til(\la) \to \Cos(\la)$ are nonzero, then the composition $\psi \circ \varphi$ is nonzero.
\end{enumerate}
\end{lem}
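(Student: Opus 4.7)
The plan is to establish parts~(1) and~(2) by producing canonical morphisms via a rearrangement of the (co)standard filtrations of $\Til(\lambda)$, and to deduce part~(3) from a $\Hom$--$\Ext$ long exact sequence applied to one of those rearranged filtrations.

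For part~(1), the dimension count is immediate from the identity noted in~\S\ref{ss:hwcat}: since $\Til(\lambda)$ admits a costandard filtration,
\[
\dim_\bk \Hom_\cA(\Sta(\lambda), \Til(\lambda)) = (\Til(\lambda) : \Cos(\lambda)) = 1.
\]
To exhibit a concrete injection I would first establish the vanishing $\Ext^1_\cA(\Sta(\mu), \Sta(\nu)) = 0$ whenever $\nu \not\succ \mu$. This follows from Definition~\ref{defn:hwcat}: choosing an ideal $\Omega$ that contains $\nu$ and in which $\mu$ is maximal, the object $\Sta(\mu)$ is projective in $\cA_\Omega$, and any extension of $\Sta(\mu)$ by $\Sta(\nu)$ remains in $\cA_\Omega$ by Serre. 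Since $\lambda$ is maximal among the weights appearing in a standard filtration of $\Til(\lambda)$, a standard inductive swap argument then allows one to move the unique copy of $\Sta(\lambda)$ to the bottom of the filtration, yielding a short exact sequence
\[
0 \to \Sta(\lambda) \to \Til(\lambda) \to Q \to 0
\]
in which $Q$ admits a standard filtration with subquotients $\Sta(\mu)$, $\mu \prec \lambda$. The resulting nonzero embedding spans the one-dimensional $\Hom$-space, so every nonzero morphism $\Sta(\lambda) \to \Til(\lambda)$ is a nonzero scalar multiple of it and in particular injective. Part~(2) is dual: $\dim_\bk \Hom_\cA(\Til(\lambda), \Cos(\lambda)) = (\Til(\lambda) : \Sta(\lambda)) = 1$, and rearranging the costandard filtration to put $\Cos(\lambda)$ on top produces a short exact sequence $0 \to K \to \Til(\lambda) \to \Cos(\lambda) \to 0$ whose surjection spans the $\Hom$-space.

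For part~(3), I would apply $\Hom_\cA(-, \Cos(\lambda))$ to the sequence from part~(1). The term $\Hom_\cA(Q, \Cos(\lambda))$ vanishes because the composition factors of $Q$ lie in $\cA_{\prec \lambda}$, while any nonzero map into $\Cos(\lambda)$ must hit its socle $\Sim(\lambda)$. The term $\Ext^1_\cA(Q, \Cos(\lambda))$ vanishes by dévissage along the standard filtration of $Q$ using~\eqref{eqn:morph-Sta-Cos}. Hence restriction along the canonical embedding yields an isomorphism $\Hom_\cA(\Til(\lambda), \Cos(\lambda)) \simto \Hom_\cA(\Sta(\lambda), \Cos(\lambda))$; in particular any nonzero $\psi$ has a nonzero restriction to $\Sta(\lambda)$. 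Since $\varphi$ is a nonzero scalar multiple of the canonical embedding, this forces $\psi \circ \varphi \neq 0$, and it coincides up to scalar with the canonical composition $\Sta(\lambda) \onto \Sim(\lambda) \into \Cos(\lambda)$.

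The main technical point is the rearrangement of the standard (resp.\ costandard) filtration so that $\Sta(\lambda)$ (resp.\ $\Cos(\lambda)$) appears as the smallest subobject (resp.\ largest quotient). This is a well-known fact in highest-weight theory, but it is the one step that is not a one-line dimension count; all remaining ingredients are routine applications of~\eqref{eqn:morph-Sta-Cos} and long exact sequences.
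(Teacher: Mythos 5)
The paper states this lemma without proof (it is introduced as ``well known''), so there is no argument in the text to compare against. Your proof is correct and is the standard one: the dimension counts via $(\Til(\la):\Cos(\la))=(\Til(\la):\Sta(\la))=1$, the splitting of extensions $\Ext^1_{\cA}(\Sta(\mu),\Sta(\nu))=0$ for $\nu\not\succ\mu$ (via projectivity of $\Sta(\mu)$ in a suitable $\cA_\Omega$) to reorder the filtration and exhibit the canonical embedding $\Sta(\la)\into\Til(\la)$ with cokernel filtered by $\Sta(\mu)$, $\mu\prec\la$, and the long exact sequence for $\Hom_\cA(-,\Cos(\la))$ together with~\eqref{eqn:morph-Sta-Cos} to get part~(3) all check out. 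The only step worth flagging as nontrivial is exactly the one you flag yourself, the reordering of the $\Sta$-flag, and your justification of it is complete.
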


For $\la \in \Lambda$ we set
\begin{equation}
\label{eqn:Asucceq}
\cA^{\succeq \la}:=\cA^{\{\mu \in \Lambda \mid \mu \succeq \la\}} = \cA/\cA_{\{\mu \in \Lambda \mid \mu \not\succeq \lambda\}}.
\end{equation}
Note that the images in $\cA^{\succeq \la}$ of the objects
\[
\Sta(\la), \quad \Cos(\la), \quad \Sim(\la), \quad \Til(\la)
\]
all coincide, and are equal to the standard object attached to $\la$ in $\cA^{\succeq \la}$.

\begin{defn}
Let $X$ be an object of $\cA$ which admits a costandard filtration. A \emph{section of the $\Cos$-flag} of $X$ is a triple $(\Pi, e, (\varphi_\pi^X)_{\pi \in \Pi})$ where:
\begin{itemize}
\item
$\Pi$ is a finite set;
\item
$e \colon \Pi \to \Lambda$ is a map;
\item
for each $\pi \in \Pi$, $\varphi^X_\pi$ is an element in $\Hom_{\cA}(\Til(e(\pi)), X)$
\end{itemize}
such that for any $\la \in \Lambda$, the images of the morphisms
\[
\{\varphi_\pi^X \colon \Til(\la) \to X : \pi \in e^{-1}(\la)\}
\]
form a basis of $\Hom_{\cA^{\succeq \la}}(\Til(\la), X) = \Hom_{\cA^{\succeq \la}}(\Cos(\la), X)$ (where, as usual, we omit the quotient functor $\cA \to \cA^{\succeq \la}$ from the notation). 
\end{defn}

Note that, by Lemma~\ref{lem:hw-quotient}\eqref{it:hw-quotient}, we have
\[
\dim_\Bbbk \Hom_{\cA^{\succeq \la}}(\Cos(\la), X) = \dim_\Bbbk \Hom_{\cA^{\succeq \la}}(\Sta(\la), X) = (X : \Cos(\la)),
\]
so that the number of maps $\varphi^X_\pi$ such that $e(\pi)=\lambda$ in a section of the $\Cos$-flag of $X$ is $(X : \Cos(\la))$. Note also that Corollary~\ref{cor:morph-quotient} guarantees that sections of the $\Cos$-flag always exist (for objects admitting a costandard filtration); they are far from unique, however.


\begin{rmk}
\label{rmk:AST}
As was pointed out to us by H.~H.~Andersen, our notion of a section of the $\Cos$-flag is not unrelated to the constructions in~\cite[\S 4.1]{ast}. In fact, our condition on the morphisms $\varphi^X_\pi$ can be equivalently stated as the requirement that for all $\la \in \Lambda$, the compositions
\[
\Sta(\la) \to \Til(\la) \xrightarrow{\varphi_\pi^X} X
\]
for $\pi \in e^{-1}(\la)$ (where the first morphism is a fixed nonzero morphism, which is unique up to scalar and injective) form a basis of $\Hom_{\cA}(\Sta(\la), X)$. From this point of view, a section of the $\Cos$-flag is the same as a choice of morphisms $\overline{g}_i^\lambda$ for all $\lambda$, using the notation of~\cite[\S 4.1]{ast}.
\end{rmk}

We conclude the generalities with two easy lemmas.

\begin{lem}
\label{lem:section-Gamma}
Let $X$ be an object which admits a costandard filtration, and let $(\Pi, e, (\varphi_\pi^X)_{\pi \in \Pi})$ be a section of the $\Cos$-flag of $X$. Let also $\Omega \subset \Lambda$ be an ideal. Then for any $\pi$ in
$\Pi_\Omega := \{ \pi \in \Pi \mid e(\pi) \in \Omega \}$,
the map $\varphi^X_\pi$ factors through a map $\varphi_\pi^{\Gamma_\Omega X} \colon \Til(e(\pi)) \to \Gamma_\Omega X$, and setting $e_\Omega := e_{| \Pi_\Omega}$, the triple
\[
(\Pi_\Omega, e_\Omega, (\varphi_\pi^{\Gamma_\Omega X})_{\pi \in \Pi_\Omega})
\]
is a section of the $\Cos$-flag of $\Gamma_\Omega X$.
\end{lem}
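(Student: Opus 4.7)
The plan is to establish the two assertions in turn, both reducing to a vanishing $\Hom_{\cA}(\Til(\la), X/\Gamma_\Omega X) = 0$ for $\la \in \Omega$. As a preliminary observation, the canonical $\Cos$-flag of $X$ endows $X/\Gamma_\Omega X$ with a costandard filtration whose subquotients are copies of $\Cos(\mu)$ for $\mu \notin \Omega$; this follows by induction from the defining property of the canonical flag applied to successively larger ideals $\Omega'$ with $\Omega \subset \Omega' \subset \Lambda$.

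For the factorization claim, fix $\pi \in \Pi_\Omega$ and set $\la := e(\pi) \in \Omega$. Since $\Til(\la)$ admits a standard filtration with subquotients $\Sta(\nu)$, $\nu \preceq \la$, the orthogonality relations in~\eqref{eqn:morph-Sta-Cos} imply that $\Hom_{\cA}(\Til(\la), \Cos(\mu)) \neq 0$ forces $\mu \preceq \la$, and that $\Ext^1_\cA(\Til(\la), \Cos(\mu)) = 0$ for all $\mu$. Because $\Omega$ is an ideal, $\mu \preceq \la$ forces $\mu \in \Omega$; hence no subquotient $\Cos(\mu)$ of $X/\Gamma_\Omega X$ receives a nonzero map from $\Til(\la)$, and the $\Ext^1$-vanishing lets one induct along the filtration to conclude $\Hom_\cA(\Til(\la), X/\Gamma_\Omega X) = 0$. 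Therefore the composition $\Til(\la) \xrightarrow{\varphi_\pi^X} X \onto X/\Gamma_\Omega X$ vanishes, so $\varphi_\pi^X$ factors through a unique morphism $\varphi_\pi^{\Gamma_\Omega X} \colon \Til(\la) \to \Gamma_\Omega X$.

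For the section condition I must verify, for every $\la \in \Lambda$, that the maps $\{\varphi_\pi^{\Gamma_\Omega X} : \pi \in e_\Omega^{-1}(\la)\}$ form a basis of $\Hom_{\cA^{\succeq \la}}(\Til(\la), \Gamma_\Omega X)$. If $\la \notin \Omega$, the index set is empty; moreover $\Gamma_\Omega X$ belongs to the Serre subcategory $\cA_\Omega$, and the ideal property of $\Omega$ forces $\cA_\Omega \subset \cA_{\{\mu \mid \mu \not\succeq \la\}}$, so $\Gamma_\Omega X$ is killed in $\cA^{\succeq \la}$ and the Hom also vanishes. If $\la \in \Omega$, then $e_\Omega^{-1}(\la) = e^{-1}(\la)$; applying the exact quotient functor $\cA \to \cA^{\succeq \la}$ to the inclusion $\Gamma_\Omega X \into X$ and then $\Hom_{\cA^{\succeq \la}}(\Til(\la), -)$ yields an injection whose cokernel embeds into $\Hom_{\cA^{\succeq \la}}(\Til(\la), X/\Gamma_\Omega X)$, which by Corollary~\ref{cor:morph-quotient} is a quotient of $\Hom_\cA(\Til(\la), X/\Gamma_\Omega X) = 0$ and hence zero. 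Consequently the inclusion induces an isomorphism $\Hom_{\cA^{\succeq \la}}(\Til(\la), \Gamma_\Omega X) \simto \Hom_{\cA^{\succeq \la}}(\Til(\la), X)$ sending $\varphi_\pi^{\Gamma_\Omega X}$ to $\varphi_\pi^X$, so the basis property transfers directly from the section given on $X$.

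The only mildly delicate step is the identification of the subquotients of $X/\Gamma_\Omega X$ in the preliminary observation; once that is set up, everything reduces to formal homological bookkeeping, and I do not expect any real obstacle.
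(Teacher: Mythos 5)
Your proof is correct and follows essentially the same route as the paper's: both arguments rest on the vanishing of $\Hom_{\cA}(\Til(\la), X/\Gamma_\Omega X)$ for $\la \in \Omega$, obtained by pairing the standard filtration of $\Til(\la)$ (supported in $\Omega$) against the costandard filtration of $X/\Gamma_\Omega X$ (supported outside $\Omega$), and then transfer the basis property through the resulting isomorphism $\Hom_{\cA^{\succeq \la}}(\Til(\la), \Gamma_\Omega X) \simto \Hom_{\cA^{\succeq \la}}(\Til(\la), X)$. You merely spell out two points the paper leaves implicit (the identification of the subquotients of $X/\Gamma_\Omega X$ and the trivial case $\la \notin \Omega$), which is fine.
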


\begin{proof}
For any $\la \in \Omega$, the exact sequence $\Gamma_\Omega X \hookrightarrow X \twoheadrightarrow X/\Gamma_\Omega X$ induces an exact sequence
\[
\Hom_\cA(\Til(\la), \Gamma_\Omega X) \hookrightarrow \Hom_\cA(\Til(\la), X) \twoheadrightarrow \Hom_\cA(\Til(\la), X/\Gamma_\Omega X).
\]
Now $(\Til(\la) : \Sta(\mu)) = 0$ unless $\mu \in \Omega$, hence the third term in this exact sequence vanishes. Hence the first arrow in an isomorphism, which shows that, for any $\pi \in \Pi_\Omega$, $\varphi^X_\pi$ factors through a map $\varphi_\pi^{\Gamma_\Omega X} \colon \Til(e(\pi)) \to \Gamma_\Omega X$. Similar arguments show that for any $\la \in \Omega$ the natural morphism
\[
\Hom_{\cA^{\succeq \la}}(\Til(\la), \Gamma_\Omega X) \to \Hom_{\cA^{\succeq \la}}(\Til(\la), X)
\]
is an isomorphism, which implies that indeed $(\Pi_\Omega, e_{| \Pi_\Omega}, (\varphi_\pi^{\Gamma_\Omega X})_{\pi \in \Pi_\Omega})$
is a section of the $\Cos$-flag of $\Gamma_\Omega X$.
\end{proof}

The proof of the following lemma is similar to that of Lemma~\ref{lem:section-Gamma}, hence left to the reader.

\begin{lem}
\label{lem:section-Gamma-2}
Let $X$ be an object which admits a costandard filtration, and let $(\Pi, e, (\varphi_\pi^X)_{\pi \in \Pi})$ be a section of the $\Cos$-flag of $X$. Let also $\Omega \subset \Lambda$ be an ideal. For any $\pi$ in
$\Pi^\Omega := \{ \pi \in \Pi \mid e(\pi) \notin \Omega \}$, consider the composition
\[
\varphi^{X/\Gamma_\Omega X}_\pi \colon \Til(e(\pi)) \xrightarrow{\varphi^X_\pi} X \twoheadrightarrow X/\Gamma_\Omega X.
\]
Setting $e^\Omega := e_{| \Pi^\Omega}$, the triple
\[
(\Pi^\Omega, e^\Omega, (\varphi_\pi^{X/\Gamma_\Omega X})_{\pi \in \Pi^\Omega})
\]
is a section of the $\Cos$-flag of $X/ \Gamma_\Omega X$. \qed
\end{lem}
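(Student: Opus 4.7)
The plan is to verify the defining property of a section of the $\Cos$-flag for $(\Pi^\Omega, e^\Omega, (\varphi_\pi^{X/\Gamma_\Omega X})_{\pi \in \Pi^\Omega})$, by checking, for each $\lambda \in \Lambda$, that the images of the morphisms indexed by $(e^\Omega)^{-1}(\lambda)$ form a basis of $\Hom_{\cA^{\succeq \lambda}}(\Til(\lambda), X/\Gamma_\Omega X)$. The argument will split naturally into the cases $\lambda \in \Omega$ and $\lambda \notin \Omega$, and in each case it will suffice to compute the relevant Hom space in the quotient category $\cA^{\succeq \lambda}$, using exactness of the quotient functor applied to $0 \to \Gamma_\Omega X \to X \to X/\Gamma_\Omega X \to 0$.

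First I would handle the case $\lambda \in \Omega$, where $(e^\Omega)^{-1}(\lambda) = \emptyset$, so the claim reduces to showing that $\Hom_{\cA^{\succeq \lambda}}(\Til(\lambda), X/\Gamma_\Omega X) = 0$. Since the image of $\Til(\lambda)$ in $\cA^{\succeq \lambda}$ coincides with that of $\Sta(\lambda)$ and $X/\Gamma_\Omega X$ admits a costandard filtration, Corollary~\ref{cor:morph-quotient} produces a surjection $\Hom_{\cA}(\Sta(\lambda), X/\Gamma_\Omega X) \twoheadrightarrow \Hom_{\cA^{\succeq \lambda}}(\Sta(\lambda), X/\Gamma_\Omega X)$. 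But the source has dimension $(X/\Gamma_\Omega X : \Cos(\lambda))$, which vanishes because the canonical $\Cos$-flag of $X/\Gamma_\Omega X$ has subquotients $\Cos(\mu)$ with $\mu \notin \Omega$ only (by Lemma~\ref{lem:can-nabla-flag} applied to the quotient $X/\Gamma_\Omega X$).

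Next I would treat the case $\lambda \notin \Omega$, where $(e^\Omega)^{-1}(\lambda) = e^{-1}(\lambda)$. The main observation is that $\Gamma_\Omega X$ becomes zero in $\cA^{\succeq \lambda}$: indeed any composition factor $\Cos(\mu)$ of its canonical $\Cos$-flag has $\mu \in \Omega$, and if one had $\mu \succeq \lambda$ then the ideal condition on $\Omega$ would force $\lambda \in \Omega$, a contradiction. Consequently the quotient map induces an isomorphism
\[
\Hom_{\cA^{\succeq \lambda}}(\Til(\lambda), X) \simto \Hom_{\cA^{\succeq \lambda}}(\Til(\lambda), X/\Gamma_\Omega X),
\]
under which $\varphi_\pi^X$ corresponds to $\varphi_\pi^{X/\Gamma_\Omega X}$, and the basis property for the latter collection follows directly from the basis property assumed for $(\Pi, e, (\varphi_\pi^X)_{\pi \in \Pi})$.

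I do not expect any real obstacle: the whole argument is essentially a dual or ``mirror'' version of the proof of Lemma~\ref{lem:section-Gamma}, with the subobject $\Gamma_\Omega X$ replaced by the quotient $X/\Gamma_\Omega X$ and the role of the ideal $\Omega$ flipped accordingly. The only point to keep in mind is to use the ideal property of $\Omega$ correctly when analyzing which $\Cos(\mu)$ survive in $\cA^{\succeq \lambda}$; the rest is formal manipulation with the quotient functor and Corollary~\ref{cor:morph-quotient}.
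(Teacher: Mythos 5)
Your proof is correct, and it is exactly the mirror argument the paper intends when it leaves this lemma to the reader with the remark that the proof is similar to that of Lemma~\ref{lem:section-Gamma}: the ideal property of $\Omega$ forces $\Gamma_\Omega X$ to vanish in $\cA^{\succeq\lambda}$ when $\lambda\notin\Omega$, giving the required isomorphism of $\Hom$-spaces, while the case $\lambda\in\Omega$ is handled by the vanishing of the relevant $\Cos$-multiplicity.
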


\section{Regular and subregular blocks of reductive groups}
\label{sec:blocks}

\subsection{Definitions}
\label{ss:definitions-G}

In this section we fix an algebraically closed field $\Bbbk$ of characteristic $p$ and a connected reductive algebraic group $G$ over $\Bbbk$ with simply-connected derived subgroup. We will assume that $p \geq h$, where $h$ is the Coxeter number of $G$ (i.e.~the maximum of the Coxeter numbers of all simple components of the root system of $G$, see~\cite[\S II.6.2]{jantzen}).

We fix a Borel subgroup $B \subset G$ and a maximal torus $T \subset B$, and denote by $\bX=X^*(T)$ the lattice of characters of $T$. We denote by $\Phi$ the root system of $(G,T)$, by $\Phi^\vee$ the corresponding coroots, and by $\Phi^+ \subset \Phi$ the system of positive roots consisting of the roots which do \emph{not} appear in the Lie algebra of $B$ (so that $B$ is the ``negative'' Borel subgroup). This choice determines a basis $\Sigma$ of $\Phi$, and a subset $\bX^+ \subset \bX$ of \emph{dominant weights}. 
We denote by $\rho \in \frac{1}{2}\bX$ the half sum of positive roots; then we have
$\langle \rho, \alpha^\vee\rangle = 1$ for all $\alpha \in \Sigma$, where $\alpha^\vee$ is the coroot associated with the root $\alpha$.

We denote by $\Wf:=N_G(T)/T$ the Weyl group of $(G,T)$. We will also consider the affine Weyl group $\Waff:=\Wf \ltimes \Z \Phi$.
To avoid confusions, for $\lambda \in \Z \Phi$ we denote by $t_\la$ the corresponding element of $\Waff$. The group $\Waff$ acts on $\bX$ via the ``dot-action'' defined by
\[
w t_\lambda \hdot \mu := w(\mu + p\la + \rho) -\rho.
\]
Let $\Saff \subset \Waff$ be the set of simple reflections, i.e.~the reflections whose associated hyperplane (for the dot-action) meets
\[
\overline{C}:=\{\la \in \bX \otimes_\Z \mathbb{R} \mid \forall \alpha \in \Phi^+, \ 0 \leq \langle \la + \rho, \alpha^\vee \rangle \leq p\}
\]
in a codimension $1$ facet.
Let also
$\Sf = \Saff \cap \Wf$; this set is in a natural bijection with $\Sigma$. The pairs $(\Waff, \Saff)$ and $(\Wf, \Sf)$ are Coxeter groups, and we denote by $\ell$ their length function. We will denote by $\fW$ the subset of $\Waff$ consisting of the elements $w$ which are of minimal length in their coset $\Wf w$.

We will denote by $\Rep(G)$ the abelian category of finite dimensional (algebraic) representations of $G$. It is well known that the simple objects in this category are parametrized by $\bX^+$; for any $\la \in \bX^+$ we denote by $\Sim(\la)$ a choice of a corresponding simple object.

Let us consider
\[
C_\Z := \{\la \in \bX \mid \forall \alpha \in \Phi^+, \ 0 < \langle \la + \rho, \alpha^\vee \rangle < p\}.
\]
Since $p \geq h$, we have $C_\Z \neq \varnothing$, see~\cite[\S II.6.2]{jantzen}, so that we can choose some $\lambda_0 \in C_\Z$. (A typical choice would be $\lambda_0=0$, but we allow an arbitrary choice.)
Then we set
\[
\bX_0^+ := (\Waff \hdot \la_0) \cap \bX^+.
\]
Recall that for $w \in \Waff$ we have $w \hdot \la_0 \in \bX^+$ iff $w \in \fW$, so that this set is a natural bijection with $\fW$.
We denote by
\[
\Rep_0(G)
\]
the Serre subcategory of $\Rep(G)$ generated by the simple objects $\Sim(\la)$ for $\la \in \bX^+_0$. For $\la \in \Waff \hdot \la_0$, by our choice of $\la_0$ there exists a \emph{unique} $w \in \Waff$ such that $\la = w \hdot \la_0$. Then for $s \in \Saff$ we set
\[
\la^s := ws \hdot \la_0.
\]

For any $s \in \Saff$, we fix a weight $\mu_s$ in 
\[
\overline{C}_\Z := \{\la \in \bX \mid \forall \alpha \in \Phi^+, \ 0 \leq \langle \la + \rho, \alpha^\vee \rangle \leq p\}
\]
which
lies on the reflection hyperplane of $s$ and is contained in no other
reflection hyperplane; such a weight exists under our assumptions by~\cite[\S II.6.3]{jantzen}. Then we set
\[
\bX^+_s := (\Waff \hdot \mu_s) \cap \bX^+,
\]
and denote by
\[
\Rep_s(G)
\]
the Serre subcategory of $\Rep(G)$ generated by the simple objects $\Sim(\mu)$ for $\mu \in \bX^+_s$.

For any $\la \in \bX^+$ we consider the $G$-modules
\[
\Cos(\la) := \mathrm{Ind}_B^G(\la), \qquad \Sta(\la) := (\Cos(-w_0 \la))^*,
\]
where $w_0 \in \Wf$ is the longest element. It is well known that for $\la \in \bX^+$ there exists (up to scalar) a unique non zero morphism $\Sta(\la) \to \Cos(\la)$. We fix such a morphism; its image is isomorphic to $\Sim(\la)$, so that it factors as a composition $\Sta(\la) \twoheadrightarrow \Sim(\la) \hookrightarrow \Cos(\la)$.
Recall the order $\uparrow$ on $\bX^+$ defined in~\cite[\S II.6.4]{jantzen}. (We will also write $\lambda \downarrow \mu$ for $\mu \uparrow \lambda$.)
It is well known that $\Rep(G)$, endowed with the parametrization of simple objects by the poset $(\bX^+,\uparrow)$, and with the collections of objects $\Cos(\la)$, $\Sta(\la)$, $\Sim(\la)$ and the morphisms $\Sta(\la) \to \Sim(\la) \to \Cos(\la)$ considered above, is a highest weight category. 
Since the subsets $\bX_0^+$ and $\bX_s^+$ are both ideals of the poset $(\bX^+,\uparrow)$, the subcategories
$\Rep_0(G)$ and $\Rep_s(G)$ also have canonical highest weight structures (see Lemma~\ref{lem:hw-quotient}\eqref{it:hw-subcat}), and we can consider the corresponding tilting objects $\Til(\la)$ (see~\S\ref{ss:tiltings}).

\subsection{Translation functors}
\label{ss:translation-functors}

Let us fix a simple reflection $s \in S$.
Then one can consider the \emph{translation functors}
\[
T_{\la_0}^{\mu_s} \colon \Rep_0(G) \to \Rep_s(G), \qquad T^{\la_0}_{\mu_s} \colon \Rep_s(G) \to \Rep_0(G),
\]
see~\cite[\S II.7]{jantzen}. It is more reasonable to consider that these functors are defined only up to isomorphism: they depend at least on the choice of the module by which one tensors, see in particular~\cite[Remark~II.7.6(1)]{jantzen}. So we fix some functors
\[
\Trans^s \colon \Rep_0(G) \to \Rep_s(G), \qquad \Trans_s \colon \Rep_s(G) \to \Rep_0(G)
\]
which are \emph{isomorphic} to some translation functors $T_{\la_0}^{\mu_s}$ and $T^{\la_0}_{\mu_s}$ respectively. We also fix some (arbitrary) adjunctions $(\Trans_s, \Trans^s)$ and $(\Trans^s, \Trans_s)$;
we will simply write
\[
\mathrm{id} \xrightarrow{\adj} \Trans_s \Trans^s, \quad \mathrm{id} \xrightarrow{\adj} \Trans^s \Trans_s, \quad \Trans_s \Trans^s \xrightarrow{\adj} \mathrm{id}, \quad \Trans^s \Trans_s \xrightarrow{\adj} \mathrm{id}
\]
for the corresponding adjunction morphisms.

\begin{rmk}
If the functors $T_{\la_0}^{\mu_s}$ and $T^{\la_0}_{\mu_s}$ are defined using the tensor product with a $G$-module and with its dual respectively, which is allowed, then there exists natural adjunctions $(T^{\la_0}_{\mu_s},T_{\la_0}^{\mu_s})$ and $(T_{\la_0}^{\mu_s},T^{\la_0}_{\mu_s})$; see~\cite[Lemma~II.7.6]{jantzen}. So, at least the adjunctions $(\Trans_s, \Trans^s)$ and $(\Trans^s, \Trans_s)$ exist. But we do \emph{not} assume that our chosen adjunctions are obtained in this way.
\end{rmk}

It is well known (see e.g.~\cite[Propositions~II.7.11 \&~II.7.19]{jantzen}) that the functors $\Trans_s$ and $\Trans^s$ send standard, resp.~costandard, objects to objects which admit a standard, resp.~costandard, filtration. Hence they send objects which admit a standard, resp.~costandard, filtration to objects which admit a standard, resp.~costandard, filtration. In particular, they send tilting objects to tilting objects.

Let $\mu \in \bX_s^+$, and let $\la$ be the unique weight in $\bX_0^+$ which belongs to an alcove containing $\mu$ in its closure and such that $\la \uparrow \la^s$. Then by~\cite[\S II.E.11]{jantzen} we have
\begin{equation}
\label{eqn:translation-tilting-1}
\Trans_s \Til(\mu) \cong \Til(\la^s).
\end{equation}
We fix such an isomorphism once and for all. Similarly, we have $(\Trans^s \Til(\la) : \Cos(\mu)) =1$, and $\mu$ is maximal in the collection of weights $\nu$ such that $(\Trans^s \Til(\la) : \Cos(\nu)) \neq 0$. Therefore $\Til(\mu)$ is a direct summand of $\Trans^s \Til(\la)$, with multiplicity $1$. We fix once and for all a split embedding and a split surjection
\begin{equation}
\label{eqn:translation-tilting-2}
\Til(\mu) \hookrightarrow \Trans^s \Til(\la) \twoheadrightarrow \Til(\mu).
\end{equation}

In Section~\ref{sec:main-conj} we will need the following fact (where we use the notation introduced in~\eqref{eqn:Asucceq}).

\begin{lem}
\label{lem:Hom-Dta-y-ys}
Let $y \in \fW$ and $s \in \Saff$, and assume that $ys>y$ the Bruhat order and $ys \in \fW$. Then, if $\lambda:=y \bullet \lambda_0$, then the morphism
\[
\Hom_{\Rep_0(G)}(\Sta(\la),\Trans_s \Trans^s \Sta(\la)) \to \Hom_{\Rep_0(G)^{\downarrow \la}}(\Sta(\la),\Trans_s \Trans^s \Sta(\la))
\]
induced by the quotient functor is an isomorphism, and both vector spaces are $1$-dimensional.
\end{lem}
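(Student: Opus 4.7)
The plan is to compute both Hom spaces by first using the adjunction in the ambient category, then descending the adjunction to the Serre quotients. Let me fix some notation: set $\mu := y \bullet \mu_s \in \bX^+_s$. Since $\lambda=y \bullet \lambda_0$ is regular dominant with $\mu$ in the closure of its alcove, by the standard translation functor computation (Jantzen II.7.11) we have $\Trans^s \Sta(\lambda) \cong \Sta(\mu)$. Applying the fixed adjunction $(\Trans^s, \Trans_s)$ in $\Rep(G)$ yields
\[
\Hom_{\Rep_0(G)}(\Sta(\lambda), \Trans_s \Trans^s \Sta(\lambda)) \cong \Hom_{\Rep_s(G)}(\Sta(\mu), \Sta(\mu)) = \bk,
\]
the last equality because $\Sta(\mu)$ has simple head $\Sim(\mu)$.

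Next I will check that this adjunction descends to a well-defined adjoint pair on the Serre quotients. Let $\cB \subset \Rep_0(G)$ be the Serre subcategory generated by $\{\Sim(\nu) : \nu \not\uparrow \lambda\}$, so that $\Rep_0(G)/\cB = \Rep_0(G)^{\downarrow \lambda}$; analogously let $\tilde\cB \subset \Rep_s(G)$ be generated by $\{\Sim(\mu') : \mu' \not\uparrow \mu\}$. I claim $\Trans^s(\cB) \subset \tilde\cB$ and $\Trans_s(\tilde\cB) \subset \cB$. For simples, $\Trans^s \Sim(w \bullet \lambda_0)$ is either $0$ or $\Sim(w \bullet \mu_s)$ (with the latter occurring when $w \bullet \lambda_0$ lies in the upper closure of its alcove, i.e.\ $w > ws$). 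By the translation principle (Jantzen II.6.5), for such $w$, the relation $w \bullet \mu_s \succeq y \bullet \mu_s$ in $\bX^+_s$ is equivalent to $w \bullet \lambda_0 \succeq y_+ \bullet \lambda_0 = \lambda^s$, where $y_+$ denotes the upper representative in $\{y, ys\}$ (here $y_+=ys$ because $y<ys$). The lifting property of the Bruhat order says that for $w > ws$ and $y < ys$ we have $w \geq y \Leftrightarrow w \geq ys$, hence $w \bullet \lambda_0 \not\succeq \lambda$ forces $w \bullet \lambda_0 \not\succeq \lambda^s$, i.e.\ $w \bullet \mu_s \not\succeq \mu$. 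A dual argument (using that all composition factors of $\Trans_s \Sim(\mu')$ have the form $\Sim(\alpha)$ with $\alpha \preceq v \bullet \lambda_0$ or $\alpha \preceq vs \bullet \lambda_0$ where $\mu'=v \bullet \mu_s$) handles $\Trans_s(\tilde\cB) \subset \cB$. This is the main technical obstacle, and the hypothesis $ys > y$ and $ys \in \fW$ is used crucially to invoke the lifting property correctly.

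Once the adjunction descends, write $\bar\Trans^s, \bar\Trans_s$ for the induced functors and $\pi$ for the quotient functors. By Lemma~\ref{lem:hw-quotient}\eqref{it:hw-quotient}, $\lambda$ is the minimum of the weight poset of $\Rep_0(G)^{\downarrow \lambda}$, so its standard and simple objects agree: $\pi(\Sta(\lambda))=\Sim(\lambda)$; the analogous statement holds for $\mu$ in the quotient of $\Rep_s(G)$, giving $\bar\Trans^s \pi(\Sta(\lambda)) = \pi(\Sta(\mu)) = \Sim(\mu)$. Applying the descended adjunction then yields
\[
\Hom_{\Rep_0(G)^{\downarrow \lambda}}(\Sta(\lambda), \Trans_s \Trans^s \Sta(\lambda)) \cong \Hom(\Sim(\mu), \Sim(\mu)) = \bk.
\]
Naturality of the adjunction with respect to the quotient functors ensures that the canonical map on Hom spaces corresponds under both adjunctions to the identity endomorphism of $\Sim(\mu)$ pulled back from $\id_{\Sta(\mu)}$; in particular the quotient map $\bk \to \bk$ is an isomorphism, completing the proof.
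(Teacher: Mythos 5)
Your proof is correct in outline but takes a genuinely different route from the paper's for the second $\Hom$ space. The first half is the same: the adjunction $(\Trans^s,\Trans_s)$ and $\Trans^s\Sta(\la)\cong\Sta(\mu)$ give one-dimensionality of $\Hom_{\Rep_0(G)}(\Sta(\la),\Theta_s\Sta(\la))$. For the quotient, however, the paper does \emph{not} descend the adjunction. It instead uses the fixed chain $\Sta(\la)\to\Sim(\la)\to\Cos(\la)$: by~\cite[Proposition~II.7.15]{jantzen} the induced maps on $\Hom(\Sta(\la),\Theta_s(-))$ are isomorphisms; the maps $\Theta_s\Sta(\la)\to\Theta_s\Sim(\la)\to\Theta_s\Cos(\la)$ become isomorphisms in $\Rep_0(G)^{\downarrow\la}$ because $\Theta_s$ of the kernel and cokernel only has composition factors $\Sim(\alpha)$ with $\alpha$ strictly below $\la$ (read off from the standard filtration of $\Theta_s\Sta(\nu)$); and finally $\Hom(\Sta(\la),\Theta_s\Cos(\la))\to\Hom_{\Rep_0(G)^{\downarrow\la}}(\Sta(\la),\Theta_s\Cos(\la))$ is an isomorphism since $\Theta_s\Cos(\la)$ has a costandard filtration with $(\Theta_s\Cos(\la):\Cos(\la))=1$, by Lemma~\ref{lem:hw-quotient}\eqref{it:hw-quotient} (cf.\ Corollary~\ref{cor:morph-quotient}).

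The point where your route costs more than you have paid is the claim that $\Trans^s(\cB)\subset\widetilde{\cB}$ and $\Trans_s(\widetilde{\cB})\subset\cB$, which is exactly what makes the adjunction descend. Unwinding it, $\Trans^s(\cB)\subset\widetilde{\cB}$ requires the implication $y\hdot\mu_s\uparrow w\hdot\mu_s\Rightarrow\la\uparrow w\hdot\la_0$ (for $w$ the upper representative of its coset), i.e.\ the ``singular $\Rightarrow$ regular'' direction of the compatibility of $\uparrow$ with translation. Jantzen's lemma on this compatibility gives only the opposite, easy direction (regular $\Rightarrow$ singular); the direction you need is true, but it rests on identifying $\uparrow$ on both orbits with the Bruhat order on suitable coset representatives (plus the lifting property, as you note), and this combinatorial input is precisely what the paper's argument avoids: it only ever needs to locate composition factors of $\Theta_s$ applied to standard and costandard objects, which follows from the known standard filtration of $\Trans_s\Sta(\mu')$ alone. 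So your approach is viable and arguably cleaner once the descent is established (the statement then reduces to $\Sta=\Sim$ at a minimal weight), but as written the translation-principle step is asserted rather than proved and the citation does not cover it. (A minor slip: $\cB$ should be generated by the $\Sim(\nu)$ with $\la\not\uparrow\nu$, not $\nu\not\uparrow\la$; your subsequent computations use the correct convention.)
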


\begin{proof}
By adjunction we have
\[
\Hom_{\Rep_0(G)}(\Sta(\la),\Trans_s \Trans^s \Sta(\la)) \cong \Hom_{\Rep_0(G)}(\Trans^s \Sta(\la),\Trans^s \Sta(\la)),
\]
and $\Trans^s \Sta(\la) \cong \Sta(\mu)$ where $\mu$ is the only weight in $\bX^+_s$ which belongs to the closure of the alcove of $\la$ (see~\cite[Proposition~II.7.11]{jantzen}). Hence the left-hand side is one-dimensional. Similar arguments, using~\cite[Proposition~II.7.15]{jantzen}, show that the morphisms
\begin{multline*}
\Hom_{\Rep_0(G)}(\Sta(\la),\Trans_s \Trans^s \Sta(\la)) \to \Hom_{\Rep_0(G)}(\Sta(\la),\Trans_s \Trans^s \Sim(\la)) \\
\to \Hom_{\Rep_0(G)}(\Sta(\la),\Trans_s \Trans^s \Cos(\la))
\end{multline*}
induced by our fixed morphisms $\Sta(\la) \to \Sim(\la) \to \Cos(\la)$ are isomorphisms.

Now we observe that the morphisms $\Trans_s \Trans^s \Sta(\la) \to \Trans_s \Trans^s \Sim(\la) \to \Trans_s \Trans^s \Cos(\la)$ are isomorphisms in $\Rep_0(G)^{\downarrow \la}$, so that we also have a canonical isomorphism
\[
\Hom_{\Rep_0(G)^{\downarrow \la}}(\Sta(\la),\Trans_s \Trans^s \Sta(\la)) \cong \Hom_{\Rep_0(G)^{\downarrow \la}}(\Sta(\la),\Trans_s \Trans^s \Cos(\la)),
\]
and what remains is to prove that the morphism
\[
\Hom_{\Rep_0(G)}(\Sta(\la),\Trans_s \Trans^s \Cos(\la)) \to \Hom_{\Rep_0(G)^{\downarrow \la}}(\Sta(\la),\Trans_s \Trans^s \Cos(\la))
\]
induced by the quotient functor is an isomorphism. This follows from the well-know fact that $\Trans_s \Trans^s \Cos(\la)$ admits a costandard filtration with $(\Trans_s \Trans^s \Cos(\la) : \Cos(\la)) = 1$ and from Lemma~\ref{lem:hw-quotient}\eqref{it:hw-quotient}.
\end{proof}

Finally we recall the following well-known fact, which follows e.g.~from~\cite[Proposition~II.7.19(a)]{jantzen}.

\begin{lem}
\label{lem:translation-Cos}
For any $\lambda \in \bX_0^+$ such that $\lambda^s \notin \bX^+$ and any $M \in \Rep_s(G)$ which admits a costandard filtration, we have $(\Trans_s M : \nabla(\la))=0$.
\end{lem}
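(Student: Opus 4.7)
The plan is to reduce to a single costandard module and then combine Jantzen's explicit filtration formula for translation of costandards with a small alcove-geometry observation.

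First I would observe that $\Trans_s$ is exact and preserves the property of admitting a costandard filtration, and that for costandard-filtered objects the multiplicity $(-\,:\,\Cos(\lambda))$ is additive on short exact sequences. An induction on the length of a costandard filtration of $M$ then reduces the statement to the single case $M = \Cos(\mu)$ with $\mu \in \bX^+_s$.

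Next, writing $\mu = v \hdot \mu_s$ (with $v$ unique up to right multiplication by $s$, since the stabiliser of $\mu_s$ under the dot-action of $\Waff$ is $\{e, s\}$), I would invoke \cite[Proposition~II.7.19(a)]{jantzen}: $\Trans_s \Cos(\mu)$ admits a costandard filtration whose subquotients are exactly those among $\Cos(v \hdot \lambda_0)$ and $\Cos(vs \hdot \lambda_0)$ whose highest weight lies in $\bX^+$, each with multiplicity one. Writing $\lambda = w \hdot \lambda_0$ for the unique $w \in \fW$ (using regularity of $\lambda_0$), nonvanishing of $(\Trans_s \Cos(\mu) : \Cos(\lambda))$ forces $v \in \{w, ws\}$; after possibly swapping $v$ with $vs$, I may assume $v = w$, so $\mu = w \hdot \mu_s$.

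The step I expect to be the most delicate, though still elementary, is to contradict this last equality using the hypothesis $\lambda^s \notin \bX^+$. The idea is that $\mu = w \hdot \mu_s$ lies on the facet of the alcove of $\lambda$ that is shared with the alcove of $\lambda^s = ws \hdot \lambda_0$ (indeed the affine reflection across that facet acts on weights as $(wsw^{-1}) \hdot$, sending $\lambda$ to $\lambda^s$). If $\lambda^s \notin \bX^+$, the neighbouring alcove escapes the dominant chamber, so the shared facet must lie entirely on a wall of the dominant chamber, of the form $\{\nu : \langle \nu + \rho, \alpha^\vee \rangle = 0\}$ with $\alpha \in \Sigma$. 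Then $\langle \mu + \rho, \alpha^\vee \rangle = 0$, hence $\langle \mu, \alpha^\vee \rangle = -1$, contradicting $\mu \in \bX^+$. This contradiction yields the vanishing of the multiplicity and completes the proof.
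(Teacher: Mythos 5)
Your proof is correct, and it follows the same route the paper itself indicates: the paper proves this lemma by simply citing \cite[Proposition~II.7.19(a)]{jantzen}, and what you have written supplies the reduction and the alcove-combinatorics verification that make that citation bite. One streamlining worth noting for your final step: since $\lambda^s \notin \bX^+$ is equivalent to $ws \notin \fW$, relation~\eqref{eqn:not-in-fW} gives $r \in \Sf$ with $ws = rw$; then $r\hdot\mu = (rw)\hdot\mu_s = (ws)\hdot\mu_s = w\hdot(s\hdot\mu_s) = w\hdot\mu_s = \mu$, so $\mu$ is a $\hdot$-fixed point of $r$, whence $\langle\mu+\rho,\alpha_r^\vee\rangle = 0$ and $\langle\mu,\alpha_r^\vee\rangle = -1$, contradicting $\mu \in \bX^+$ — the same contradiction you reach, but without needing to argue about which hyperplane the shared facet sits on.
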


\subsection{Sections of the $\Cos$-flag and translation to a wall}
\label{ss:sections-translation-to}

As in~\S\ref{ss:translation-functors}, let $s$ be a simple reflection.
In this subsection we explain how, given an object $M$ in $\Rep_0(G)$ which admits a costandard filtration and a section of the $\Cos$-flag of $M$, one can construct a section of the $\Cos$-flag of $\Trans^s M$. This construction is not canonical, but depends on the choices of morphisms in~\S\ref{ss:translation-functors}. The basic idea of this construction is inspired by a construction due to Libedinsky in the setting of Soergel bimodules, see~\cite{libedinsky}.

Let $M \in \Rep_0(G)$ be an object which admits a costandard filtration, and let $(\Pi, e, (\varphi^M_\pi)_{\pi \in \Pi})$ be a section of the $\Cos$-flag of $M$. We set
\[
\Pi' := \{\pi \in \Pi \mid e(\pi)^s \in \bX^+\},
\]
and for $\pi \in \Pi'$ we define $e'(\pi)$ as the unique weight in $\bX^+_s$ which lies in the closure of the alcove containing $e(\pi)$. (Our definition of $\Pi'$ ensures that such an element exists.) This defines a map $e' \colon \Pi' \to \bX_s^+$. Now we explain how to define, for any $\pi \in \Pi'$, a morphism $\varphi_{\pi}^{\Trans^s M} \colon \Til(e'(\pi)) \to \Trans^s M$.

First, let us consider the case where $e(\pi) \downarrow e(\pi)^s$. In this setting we have fixed an isomorphism 
$\Trans_s \Til(e'(\pi)) \cong \Til(e(\pi))$,
see~\eqref{eqn:translation-tilting-1}. We define $\varphi^{\Trans^s M}_{\pi}$ as the image of $\varphi_\pi^M$ under the composition
\begin{multline*}
\Hom_{\Rep_0(G)}(\Til(e(\pi)), M) \cong \Hom_{\Rep_0(G)}(\Trans_s \Til(e'(\pi)), M) \\
\cong \Hom_{\Rep_s(G)}(\Til(e'(\pi)), \Trans^s M),
\end{multline*}
where the second isomorphism is obtained from our chosen adjunction $(\Trans_s, \Trans^s)$. In other words, $\varphi^{\Trans^s M}_{\pi}$ is the composition
\[
\Til(e'(\pi)) \xrightarrow{\adj} \Trans^s \Trans_s (\Til(e'(\pi))) \cong \Trans^s \Til(e(\pi)) \xrightarrow{\Trans^s(\varphi^M_\pi)} \Trans^s M.
\]

Now, let us consider the case where $e(\pi) \uparrow e(\pi)^s$. In this case, we have fixed a split embedding $\Til(e'(\pi)) \hookrightarrow \Trans^s \Til(e(\pi))$, see~\eqref{eqn:translation-tilting-2}. We define $\varphi^{\Trans^s M}_\pi$ as the composition
\[
\Til(e'(\pi)) \hookrightarrow \Trans^s \Til(e(\pi)) \xrightarrow{\Trans^s(\varphi^M_\pi)} \Trans^s M.
\]

\begin{rmk}
\label{rmk:translation-section-to}
A point which will be important for us later is that in both cases the morphism $\varphi^{\Trans^s M}_\pi$ factors through the morphism $\Trans^s (\varphi^M_\pi) \colon \Trans^s \Til(e(\pi)) \to \Trans^s M$.
\end{rmk}

The main result of this subsection is the following.

\begin{prop}
\label{prop:translation-section-to}
The triple $(\Pi', e', (\varphi_\pi^{\Trans^s M})_{\pi \in \Pi'})$ constructed above is a section of the $\Cos$-flag of $\Trans^s M$.
\end{prop}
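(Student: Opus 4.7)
The plan is to verify the two defining conditions of a section of the $\Cos$-flag for $\Trans^s M$: first, the cardinality condition $|(e')^{-1}(\mu)| = (\Trans^s M : \Cos(\mu))$ for each $\mu \in \bX^+_s$, and second, the linear independence of the images of $\{\varphi_\pi^{\Trans^s M} : \pi \in (e')^{-1}(\mu)\}$ in $\Hom_{\Rep_s(G)^{\succeq \mu}}(\Til(\mu), \Trans^s M)$. The cardinality condition follows from the classical facts (consequences of~\cite[Propositions~II.7.11 and~II.7.15]{jantzen}) that $\Trans^s \Cos(\nu) \cong \Cos(\mu_\nu)$ when the wall-weight $\mu_\nu$ is dominant and $\Trans^s \Cos(\nu) = 0$ otherwise, combined with the observation that---under our assumption $p \geq h$ and our choice of $\mu_s$ lying on exactly one reflection hyperplane---each $\mu \in \bX^+_s$ has exactly two dominant preimages $\{\lambda, \lambda^s\}$ under the wall-weight map. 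Both sides then equal $(M:\Cos(\lambda))+(M:\Cos(\lambda^s))$.

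For linear independence, I would proceed by induction on the length of the canonical $\Cos$-flag of $M$. Pick a maximal $\xi$ in the support of the flag, set $\Omega := \bX^+_0 \smallsetminus \{\xi\}$, and consider the short exact sequence $0 \to \Gamma_\Omega M \to M \to M/\Gamma_\Omega M \to 0$ whose third term is a direct sum of copies of $\Cos(\xi)$. Lemmas~\ref{lem:section-Gamma} and~\ref{lem:section-Gamma-2} split our section of $M$ into compatible sections of the two ends, and applying the exact functor $\Trans^s$ yields a corresponding short exact sequence in $\Rep_s(G)$. Applying $\Hom_{\Rep_s(G)^{\succeq \mu}}(\Til(\mu),-)$ remains right-exact here, thanks to $\Ext^1_{\Rep_s(G)}(\Til(\mu),-)=0$ on costandard-filtered objects together with Corollary~\ref{cor:morph-quotient}. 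A standard spanning argument then reduces linear independence for $\Trans^s M$ to the inductive hypothesis for $\Trans^s \Gamma_\Omega M$ and the base case for $\Trans^s (M/\Gamma_\Omega M)$.

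For the base case $M = \Cos(\xi)$: if $\xi^s \notin \bX^+$ then $\Trans^s \Cos(\xi) = 0$ and there is nothing to check. Otherwise let $\mu$ be the wall-weight of $\xi$ and $\pi_0$ the unique element of $(e')^{-1}(\mu)$. In Case 1a ($\xi \downarrow \xi^s$), the adjunction isomorphism directly identifies $\varphi^{\Trans^s \Cos(\xi)}_{\pi_0}$ with the nonzero composition $\Trans_s \Til(\mu) \simto \Til(\xi) \to \Cos(\xi)$. The main obstacle is Case 1b ($\xi \uparrow \xi^s$), where $\varphi^{\Trans^s \Cos(\xi)}_{\pi_0}$ is the composition $\Til(\mu) \hookrightarrow \Trans^s \Til(\xi) \twoheadrightarrow \Cos(\mu)$, the second map being $\Trans^s$ applied to $\Til(\xi) \twoheadrightarrow \Cos(\xi)$. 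To verify this is nonzero, decompose $\Trans^s \Til(\xi) \cong \Til(\mu) \oplus Y$ as tilting modules; from $(\Trans^s \Til(\xi) : \Cos(\mu)) = 1 = (\Trans^s \Til(\xi) : \Sta(\mu))$ (both being computed via the analogue of the wall-weight formula applied to the standard filtration of $\Til(\xi)$) we deduce $(Y : \Sta(\mu)) = 0$, hence $\Hom(Y, \Cos(\mu)) = 0$. Therefore the nonzero surjection $\Trans^s \Til(\xi) \twoheadrightarrow \Cos(\mu)$ must vanish on $Y$ and be nonzero on the $\Til(\mu)$ summand.
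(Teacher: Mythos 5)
Your argument is correct, but it is organized differently from the paper's. The paper also begins with the special case where $M$ is a direct sum of copies of a single $\Cos(\la)$ (its Lemma~\ref{lem:translation-section-to}), but then, instead of inducting on the length of the flag, it fixes $\mu$ and chooses a chain of ideals $\Omega' \subset \Omega \subset \Omega''$ with $\Omega'' \smallsetminus \Omega' = \{\la,\la^s\}$, so that the two subquotients $\Gamma_\Omega M/\Gamma_{\Omega'}M$ and $\Gamma_{\Omega''}M/\Gamma_\Omega M$ are exactly the $\Cos(\la)$- and $\Cos(\la^s)$-layers; since $(\Trans^s \Gamma_{\Omega'}M : \Cos(\mu)) = (\Trans^s(M/\Gamma_{\Omega''}M) : \Cos(\mu)) = 0$, the space $\Hom_{\Rep_s(G)^{\downarrow\mu}}(\Til(\mu),\Trans^s M)$ is identified with the corresponding space for $\Trans^s(\Gamma_{\Omega''}M/\Gamma_{\Omega'}M)$, and a two-term exact sequence then yields a \emph{basis} directly from the special case, with no dimension count and no induction. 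Your route instead establishes the equality $|(e')^{-1}(\mu)| = (\Trans^s M : \Cos(\mu))$ up front (your observation that both weights adjacent to a dominant wall-weight $\mu$ are dominant is exactly the point needed, and follows from~\eqref{eqn:not-in-fW}), which converts ``basis'' into ``linear independence'' and lets you peel off one maximal layer at a time; this is more modular but requires the same compatibility checks between the sections of $M$, $\Gamma_\Omega M$ and $M/\Gamma_\Omega M$ and their images under $\Trans^s$ that the paper spells out. Your treatment of the base case when $\la \uparrow \la^s$ also differs: you use $(\Trans^s\Til(\la) : \Sta(\mu)) = 1$ to get $\Hom(Y,\Cos(\mu))=0$ for the complement $Y$ of $\Til(\mu)$, whereas the paper argues via the socle $\Sim(\mu)$ of $\Cos(\mu)$; both are valid. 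One small imprecision: $\bX_0^+ \smallsetminus \{\xi\}$ is not an ideal of $(\bX_0^+,\uparrow)$ in general; you should instead take for $\Omega$ an ideal containing the support of the flag of $M$ minus $\{\xi\}$ but not $\xi$ itself (which exists precisely because $\xi$ is maximal in that support).
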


Before proving this result in general, we consider the special case where there exists $\la \in \bX^+_0$ such that $M$ is a direct sum of objects $\Cos(\la)$.

\begin{lem}
\label{lem:translation-section-to}
Let $\lambda \in \bX_0^+$. If
$M$ is isomorphic to a direct sum of objects $\Cos(\la)$, the triple $(\Pi', e', (\varphi_\pi^{\Trans^s M})_{\pi \in \Pi'})$ constructed above is a section of the $\Cos$-flag of $\Trans^s M$.
\end{lem}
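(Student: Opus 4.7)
The plan is to reduce the statement to two elementary computations, one for each branch of the construction of $\varphi^{\Trans^s M}_\pi$ in~\S\ref{ss:sections-translation-to}.

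First I would dispose of the degenerate case $\la^s \notin \bX^+$, in which $\Pi' = \varnothing$ and the claim amounts to showing that $(\Trans^s M : \Cos(\nu)) = 0$ for every $\nu \in \bX^+_s$. Since $M \cong \Cos(\la)^{\oplus n}$ with $n := (M:\Cos(\la))$, by adjunction and standard highest-weight Hom-computations one has
\[
(\Trans^s \Cos(\la) : \Cos(\nu)) \ = \ \dim\Hom(\Trans_s \Sta(\nu), \Cos(\la)) \ = \ (\Trans_s \Sta(\nu) : \Sta(\la)),
\]
which agrees with $(\Trans_s \Cos(\nu) : \Cos(\la))$ because standard and costandard filtrations of $\Trans_s(-)$ carry the same character; the latter multiplicity vanishes by Lemma~\ref{lem:translation-Cos}.

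In the remaining case $\la^s \in \bX^+$, set $\mu := e'(\la) \in \bX^+_s$. Then $\Trans^s M \cong \Cos(\mu)^{\oplus n}$ and the two Hom spaces
\[
\Hom_{\Rep_0(G)^{\succeq \la}}(\Til(\la), M), \qquad \Hom_{\Rep_s(G)^{\succeq \mu}}(\Til(\mu), \Trans^s M)
\]
are both $n$-dimensional; moreover, by Corollary~\ref{cor:morph-quotient}, the quotient morphisms from the un-quotiented Homs $\Hom_{\Rep_0(G)}(\Til(\la), M)$ and $\Hom_{\Rep_s(G)}(\Til(\mu),\Trans^s M)$ are surjections between spaces of the same dimension, hence are isomorphisms. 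So I may work directly in $\Rep_0(G)$ and $\Rep_s(G)$. Inspecting~\S\ref{ss:sections-translation-to}, both branches yield
\[
\varphi^{\Trans^s M}_\pi \ = \ \Trans^s(\varphi^M_\pi) \circ \iota
\]
for a fixed morphism $\iota \colon \Til(\mu) \to \Trans^s \Til(\la)$: the composition of the adjunction unit with~\eqref{eqn:translation-tilting-1} when $\la \downarrow \la^s$, and the split embedding~\eqref{eqn:translation-tilting-2} when $\la \uparrow \la^s$. The whole statement therefore reduces to showing that the linear map
\[
\tau \colon \Hom_{\Rep_0(G)}(\Til(\la), M) \to \Hom_{\Rep_s(G)}(\Til(\mu), \Trans^s M), \qquad f \mapsto \Trans^s(f) \circ \iota,
\]
is an isomorphism. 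Since $\tau$ is natural in $M$ and both Hom spaces are additive in $M$, it suffices to treat $M = \Cos(\la)$ (i.e.~$n=1$), where both sides are $1$-dimensional and where I only need to check that $\tau(q) \neq 0$ for some nonzero $q \colon \Til(\la) \to \Cos(\la)$.

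When $\la \downarrow \la^s$ this is the triangle identity for $(\Trans_s,\Trans^s)$: under the adjunction isomorphism $\Hom(\Til(\mu), \Trans^s(-)) \cong \Hom(\Trans_s \Til(\mu), -)$, $\tau(q)$ corresponds to $\Trans_s \Til(\mu) \cong \Til(\la) \xrightarrow{q} \Cos(\la)$, which is clearly nonzero. The main obstacle lies in the case $\la \uparrow \la^s$, where one must show that the composition
\[
\Til(\mu) \hookrightarrow \Trans^s \Til(\la) \xrightarrow{\Trans^s(q)} \Trans^s \Cos(\la) \cong \Cos(\mu)
\]
is nonzero. I plan to argue by decomposing $\Trans^s \Til(\la)$ into its indecomposable tilting summands in $\Rep_s(G)$: since $(\Trans^s \Til(\la) : \Cos(\mu)) = 1$ with $\mu$ maximal among the weights appearing in a costandard filtration of $\Trans^s \Til(\la)$ (this is precisely the fact underpinning~\eqref{eqn:translation-tilting-2}), every summand other than the fixed $\Til(\mu)$ is of the form $\Til(\nu)$ with $\nu \prec \mu$. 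For such $\nu$ one has $\Hom(\Til(\nu), \Cos(\mu)) = (\Til(\nu) : \Sta(\mu)) = 0$, so the nonzero surjection $\Trans^s(q)$ necessarily has nonzero restriction to the $\Til(\mu)$-summand, and this restriction is a nonzero scalar multiple of the canonical surjection $\Til(\mu) \twoheadrightarrow \Cos(\mu)$. This yields $\tau(q) \neq 0$; the remainder is routine bookkeeping with tilting filtrations and the fixed adjunction morphisms of~\S\ref{ss:translation-functors}.
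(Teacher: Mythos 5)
Your proof is correct and follows essentially the same route as the paper's: reduce to $M=\Cos(\la)$, dispose of the case $\la^s\notin\bX^+$ by $\Trans^s\Cos(\la)=0$, note the case $\la\downarrow\la^s$ is immediate from the adjunction, and in the case $\la\uparrow\la^s$ show that no summand of $\Trans^s\Til(\la)$ other than the distinguished $\Til(\mu)$ can carry the surjection onto $\Cos(\mu)$ (the paper phrases this via the socle $\Sim(\mu)$, you via $\Hom(\Til(\nu),\Cos(\mu))=(\Til(\nu):\Sta(\mu))=0$ — equivalent). Only cosmetic caveat: a summand $\Til(\nu)\neq\Til(\mu)$ need not satisfy $\nu\prec\mu$ (it could be incomparable), but your Hom-vanishing conclusion still holds since $(\Trans^s\Til(\la):\Cos(\mu))=1$ forces $(\Til(\nu):\Sta(\mu))=0$ for every other summand.
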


\begin{proof}
For any $\pi \in \Pi$ we denote by $M_\pi$ the image of $\varphi_\pi^M$. Then each $M_\pi$ is isomorphic to $\Cos(\la)$, and we have $M=\bigoplus_{\pi \in \Pi} M_\pi$. Hence it is sufficient to prove the lemma in the case $M=\Cos(\la)$.

By Lemma~\ref{lem:Hom-Til-Sta-Cos}, a section of the $\Cos$-flag of $\Cos(\la)$ is unique up to scalar, so that we can assume that $\Pi=\{\la\}$, $e(\la)=\la$ and $\varphi_\la^{\Cos(\la)} \colon \Til(\la) \to \Cos(\la)$ is a non-zero (hence surjective) morphism. If $\lambda^s \notin \bX^+$ then $\Trans^s \Cos(\la)=0$, and there is nothing to prove.

Now, assume that $\lambda^s \in \bX^+$. Then $\Pi'=\{\la\}$, and we set $\mu:=e'(\la)$. In this case, by~\cite[Proposition~II.7.11]{jantzen} we have $\Trans^s \Cos(\la) \cong \Cos(\mu)$, so that to prove the claim it suffices to prove that $\varphi^{\Trans^s \Cos(\la)}_\la \neq 0$.
If $\la \downarrow \la^s$, then by construction $\varphi^{\Trans^s \Cos(\la)}_\la$ is non-zero, hence there is nothing to prove. If $\la \uparrow \la^s$, then $\Til(\mu)$ is the only direct summand of $\Trans^s \Til(\la)$ which has $\Sim(\mu)$ as a composition factor. Since the morphism $\Trans^s(\varphi^{\Cos(\la)}_\la)$ is non zero (in fact it is surjective, since $\varphi^{\Cos(\la)}_\la$ is surjective and $\Trans^s$ is exact), its image contains the socle $\Sim(\mu)$ of $\Trans^s \Cos(\la) \cong \Cos(\mu)$, so that its restriction to $\Til(\mu)$ must be non zero, proving the claim in this case also.
\end{proof}

\begin{proof}[Proof of Proposition~{\rm \ref{prop:translation-section-to}}]
We have to prove that, for any $\mu \in \bX_s^+$, the images of the morphisms $\varphi^{\Trans^s M}_\pi$ with $e'(\pi)=\mu$ form a basis of $\Hom_{\Rep_s(G)^{\downarrow \mu}}(\Til(\mu), \Trans^s M)$. If this $\Hom$-space is nonzero, then there exists a unique $\la$ in $\bX_0^+$ which belongs to an alcove containing $\mu$ in its closure and such that $\la \uparrow \la^s$ and $\la^s \in \bX^+$ (see Lemma~\ref{lem:translation-Cos}). Then $e'(\pi)=\mu$ iff $e(\pi) \in \{\la, \la^s\}$. Let also $\Omega \subset \bX_0^+$ be an ideal such that $\Omega \cap \{\la, \la^s\} = \{\la\}$, and such that both
\[
\Omega':=\Omega \smallsetminus \{\la\} \quad \text{and} \quad \Omega'':= \Omega \cup \{\la^s\}
\]
are ideals. (For instance, $\Omega = \{\nu \in \bX_0^+ \mid \nu \uparrow \la^s\} \smallsetminus \{\la^s\}$ satisfies these conditions.) Then we have inclusions
\[
\Gamma_{\Omega'} M \hookrightarrow \Gamma_\Omega M \hookrightarrow \Gamma_{\Omega''} M \hookrightarrow M,
\]
which induce inclusions
\[
\Trans^s(\Gamma_{\Omega'} M) \hookrightarrow \Trans^s(\Gamma_\Omega M) \hookrightarrow \Trans^s(\Gamma_{\Omega''} M) \hookrightarrow \Trans^s(M).
\]

By Lemma~\ref{lem:section-Gamma}, every morphism $\varphi^M_\pi$ with $e(\pi)=\la$, resp.~$e(\pi)=\la^s$, factors through a morphism $\varphi_\pi^{\Gamma_\Omega M} \colon \Til(\la) \to \Gamma_\Omega M$, resp.~$\varphi_\pi^{\Gamma_{\Omega''} M} \colon \Til(\la^s) \to \Gamma_{\Omega''} M$. Then by construction the corresponding morphisms $\varphi^{\Trans^s M}_\pi \colon \Til(\mu) \to \Trans^s M$ factor through morphisms
\[
\varphi^{\Trans^s (\Gamma_\Omega M)}_\pi \colon \Til(\mu) \to \Trans^s (\Gamma_\Omega M), \quad \text{resp.} \quad \varphi^{\Trans^s (\Gamma_{\Omega''} M)}_\pi \colon \Til(\mu) \to \Trans^s (\Gamma_{\Omega''} M).
\]
It is clear that these morphisms coincide with the morphisms obtained by the same procedure applied to the section of the $\Cos$-flag $(\Pi_\Omega, e_\Omega, (\varphi^{\Gamma_\Omega M}_\pi)_{\pi \in \Pi_\Omega})$ of $\Gamma_\Omega M$, resp.~the section of the $\Cos$-flag $(\Pi_{\Omega''}, e_{\Omega''}, (\varphi^{\Gamma_{\Omega''} M}_\pi)_{\pi \in \Pi_{\Omega''}})$ of $\Gamma_{\Omega''} M$.

Similarly (see Lemma~\ref{lem:section-Gamma-2}), if $e(\pi)=\la$, resp.~$e(\pi)=\la^s$, considering the compositions 
\[
\varphi^{\Gamma_\Omega M /\Gamma_{\Omega'} M}_\pi \colon \Til(\la) \xrightarrow{\varphi^{\Gamma_\Omega M}_\pi} \Gamma_\Omega M \twoheadrightarrow \Gamma_\Omega M /\Gamma_{\Omega'} M,
\]
resp.
\[
\varphi^{\Gamma_{\Omega''} M/\Gamma_{\Omega} M}_\pi \colon \Til(\la^s) \xrightarrow{\varphi_\pi^{\Gamma_{\Omega''} M}} \Gamma_{\Omega''} M \twoheadrightarrow \Gamma_{\Omega''} M/\Gamma_{\Omega} M,
\]
the morphisms
\[
\varphi^{\Trans^s (\Gamma_\Omega M/\Gamma_{\Omega'} M)}_\pi \colon \Til(\mu) \xrightarrow{\varphi^{\Trans^s (\Gamma_\Omega M)}_\pi} \Trans^s (\Gamma_\Omega M) \twoheadrightarrow \Trans^s (\Gamma_\Omega M/\Gamma_{\Omega'} M),
\]
resp.
\[
\varphi^{\Trans^s (\Gamma_{\Omega''} M/\Gamma_{\Omega} M)}_\pi \colon \Til(\mu) \xrightarrow{\varphi^{\Trans^s (\Gamma_{\Omega''} M)}_\pi} \Trans^s (\Gamma_{\Omega''} M) \twoheadrightarrow \Trans^s (\Gamma_{\Omega''} M/\Gamma_{\Omega} M),
\]
coincide with the morphisms obtained by the same procedure applied to the section of the $\Cos$-flag $(e^{-1}(\la), e_{|e^{-1}(\la)}, (\varphi^{\Gamma_\Omega M /\Gamma_{\Omega'} M}_\pi)_{\pi \in e^{-1}(\la)})$ of $\Gamma_\Omega M /\Gamma_{\Omega'} M$, resp.~the section of the $\Cos$-flag $(e^{-1}(\la^s), e_{|e^{-1}(\la^s)}, (\varphi^{\Gamma_{\Omega''} M/\Gamma_{\Omega} M}_\pi)_{\pi \in e^{-1}(\la^s)})$ of $\Gamma_{\Omega''} M/\Gamma_{\Omega} M$.

Finally, for $\pi$ in $e^{-1}(\la)$, resp.~$e^{-1}(\la^s)$, we will consider the compositions
\[
\varphi^{\Gamma_{\Omega''} M /\Gamma_{\Omega'} M}_\pi \colon \Til(\la) \xrightarrow{\varphi^{\Gamma_\Omega M /\Gamma_{\Omega'} M}_\pi} \Gamma_\Omega M /\Gamma_{\Omega'} M \hookrightarrow \Gamma_{\Omega''} M /\Gamma_{\Omega'} M,
\]
resp.
\[
\varphi^{\Gamma_{\Omega''} M/\Gamma_{\Omega'} M}_\pi \colon \Til(\la^s) \xrightarrow{\varphi_\pi^{\Gamma_{\Omega''} M}} \Gamma_{\Omega''} M \twoheadrightarrow \Gamma_{\Omega''} M/\Gamma_{\Omega'} M,
\]
and the corresponding morphisms
\[
\varphi^{\Trans^s (\Gamma_{\Omega''} M/\Gamma_{\Omega'} M)}_\pi \colon \Til(\mu) \to \Trans^s (\Gamma_{\Omega''} M/\Gamma_{\Omega'} M).
\]
Note that by construction for $\pi \in e^{-1}(\la^s)$, the composition
\[
\Til(\mu) \xrightarrow{\varphi^{\Trans^s (\Gamma_{\Omega''} M/\Gamma_{\Omega'} M)}_\pi} \Trans^s (\Gamma_{\Omega''} M/\Gamma_{\Omega'} M) \twoheadrightarrow \Trans^s (\Gamma_{\Omega''} M/\Gamma_{\Omega} M)
\]
coincides with $\varphi^{\Trans^s (\Gamma_{\Omega''} M/\Gamma_{\Omega} M)}_\pi$.

Consider now the exact sequence
\[
\Gamma_\Omega M / \Gamma_{\Omega'} M \hookrightarrow \Gamma_{\Omega''} M / \Gamma_{\Omega'} M \twoheadrightarrow \Gamma_{\Omega''} M / \Gamma_\Omega M,
\]
its image under $\Trans^s$. and the induced exact sequence
\begin{multline}
\label{eqn:es-Hom-Tilmu}
\Hom_{\Rep_s(G)^{\downarrow \mu}}(\Til(\mu), \Trans^s(\Gamma_\Omega M / \Gamma_{\Omega'} M)) \\
\hookrightarrow \Hom_{\Rep_s(G)^{\downarrow \mu}}(\Til(\mu), \Trans^s(\Gamma_{\Omega''} M / \Gamma_{\Omega'} M)) \\
\twoheadrightarrow \Hom_{\Rep_s(G)^{\downarrow \mu}}(\Til(\mu), \Trans^s(\Gamma_{\Omega''} M / \Gamma_\Omega M)).
\end{multline}
Since $\Gamma_\Omega M / \Gamma_{\Omega'} M$, resp.~$\Gamma_{\Omega''} M / \Gamma_\Omega M$, is a direct sum of copies of $\Cos(\la)$, resp.~of $\Cos(\la^s)$,
it follows from Lemma~\ref{lem:translation-section-to} that the images of the morphisms $\varphi^{\Trans^s(\Gamma_\Omega M /\Gamma_{\Omega'} M)}_\pi$ for $\pi \in e^{-1}(\la)$, resp.~$\varphi^{\Trans^s(\Gamma_{\Omega''} M /\Gamma_{\Omega} M)}_\pi$ for $\pi \in e^{-1}(\la^s)$, form a basis of the first, resp.~third, space in~\eqref{eqn:es-Hom-Tilmu}
Therefore, the images of the morphisms $\varphi^{\Trans^s (\Gamma_{\Omega''} M/\Gamma_{\Omega'} M)}_\pi$ for $\pi \in e^{-1}(\{\la, \la^s\})=(e')^{-1}(\mu)$ form a basis of the second term in the exact sequence.
Now,
by definition of $\Omega'$ and $\Omega''$, we have
\[
(\Trans^s(\Gamma_{\Omega'} M), \Cos(\mu)) = (\Trans^s(M) / \Trans^s(\Gamma_{\Omega''} M), \Cos(\mu)) = 0,
\]
so that we have canonical isomorphisms
\begin{multline*}
\Hom_{\Rep_s(G)^{\downarrow \mu}}(\Til(\mu), \Trans^s M) \cong \Hom_{\Rep_s(G)^{\downarrow \mu}}(\Til(\mu), \Trans^s (\Gamma_{\Omega''} M)) \\
\cong \Hom_{\Rep_s(G)^{\downarrow \mu}}(\Til(\mu), \Trans^s (\Gamma_{\Omega''} M) / \Trans^s(\Gamma_{\Omega'} M)).
\end{multline*}
Under these isomorphisms, for $\pi \in (e')^{-1}(\mu)$ the image of $\varphi^{\Trans^s M}_\pi$ in the first term correspond to the image of $\varphi^{\Trans^s (\Gamma_{\Omega''} M/\Gamma_{\Omega'} M)}_\pi$ in the third term. Therefore the image of the collection $(\varphi^{\Trans^s M}_\pi)_{\pi \in (e')^{-1}(\mu)}$ forms a basis of $\Hom_{\Rep_s(G)^{\downarrow \mu}}(\Til(\mu), \Trans^s M)$, and the proof is complete.
\end{proof}

\subsection{Sections of the $\Cos$-flag and translation from a wall}
\label{ss:sections-translation-from}

As in~\S\ref{ss:translation-functors}, let $s$ be a simple reflection.
In this subsection we explain how, given an object $M$ in $\Rep_s(G)$ which admits a costandard filtration and a section of the $\Cos$-flag of $M$, one can construct a section of the $\Cos$-flag of $\Trans_s M$. Again, this construction depends on the choices of morphisms in~\S\ref{ss:translation-functors}, and the idea goes back to~\cite{libedinsky}.

Let $M \in \Rep_s(G)$ be an object which admits a costandard filtration, and let $(\Pi, e, (\varphi^M_\pi)_{\pi \in \Pi})$ be a section of the $\Cos$-flag of $M$. We set
\[
\Pi':= \Pi \times \{0,1\}.
\]
We define the map $e' \colon \Pi' \to \bX^+_0$ as follows.
Given $\pi \in \Pi$, the images $e'(\pi, 0)$ and $e'(\pi, 1)$ are characterized by the following properties:
\begin{itemize}
\item
$e(\pi)$ belongs to the closures of the alcoves containing $e'(\pi,0)$ and $e'(\pi,1)$;
\item
$e'(\pi, 0)^s = e'(\pi,1)$;
\item
$e'(\pi, 0) \uparrow e'(\pi, 1)$.
\end{itemize}
Finally we need to define, for any $\pi \in \Pi$, morphisms $\varphi^{\Trans_s M}_{(\pi, 0)}$ and $\varphi^{\Trans_s M}_{(\pi, 1)}$. First, recall that we have fixed an isomorphism $\Trans_s \Til(e(\pi)) \cong \Til(e'(\pi,1))$, see~\eqref{eqn:translation-tilting-1}. Using this isomorphism we simply define $\varphi^{\Trans_s M}_{(\pi, 1)}$ as the composition
\[
\Til(e'(\pi, 1)) \simto \Trans_s \Til(e(\pi)) \xrightarrow{\Trans_s (\varphi_\pi^M)} \Trans_s M.
\]
On the other hand, we have also fixed a projection $\Trans^s \Til(e'(\pi, 0)) \twoheadrightarrow \Til(e(\pi))$, see~\eqref{eqn:translation-tilting-2}. We define $\varphi^{\Trans_s M}_{(\pi, 0)}$ as the composition
\[
\Til(e'(\pi, 0)) \xrightarrow{\adj} \Trans_s \Trans^s \Til(e'(\pi, 0)) \twoheadrightarrow \Trans_s \Til(e(\pi)) \xrightarrow{\Trans_s(\varphi_\pi^M)} \Trans_s M.
\]
In other words, $\varphi^{\Trans_s M}_{(\pi, 0)}$ is the image of the composition
\[
\Trans^s \Til(e'(\pi, 0)) \twoheadrightarrow \Til(e(\pi)) \xrightarrow{\varphi_\pi^M} M
\]
under the isomorphism
\[
\Hom_{\Rep_s(G)}(\Trans^s \Til(e'(\pi, 0)),M) \cong
\Hom_{\Rep_0(G)}(\Til(e'(\pi, 0)),\Trans_s M)
\]
induced by our adjunction $(\Trans^s, \Trans_s)$.

\begin{rmk}
\label{rmk:translation-section-from}
As in~\S\ref{ss:sections-translation-to},
an important point for us is that both $\varphi^{\Trans_s M}_{(\pi,0)}$ and $\varphi^{\Trans_s M}_{(\pi,1)}$ factor through the morphism $\Trans_s (\varphi^M_\pi) \colon \Trans_s \Til(e(\pi)) \to \Trans_s M$.
\end{rmk}

The main result of this subsection is the following.

\begin{prop}
\label{prop:translation-section-from}
The triple $(\Pi', e', (\varphi_\pi^{\Trans_s M})_{\pi \in \Pi'})$ constructed above is a section of the $\Cos$-flag of $\Trans_s M$.
\end{prop}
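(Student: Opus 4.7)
The proof follows the same strategy as Proposition~\ref{prop:translation-section-to}, with $\Trans_s$ and $\Trans^s$ interchanged, and proceeds in two stages: a basic case analogous to Lemma~\ref{lem:translation-section-to}, and a filtration argument reducing to it.

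\textbf{Basic case.} First treat $M = \nabla(\mu)$ for $\mu \in \bX^+_s$ (the general direct-sum case reduces immediately to this by decomposing along summands). Then $\Pi = \{\pi\}$ with $e(\pi) = \mu$, and $\Pi' = \{(\pi, 0), (\pi, 1)\}$ with $e'(\pi, i) = \lambda_i$, where $\lambda_0 \uparrow \lambda_1$ are the two weights of $\bX^+_0$ whose alcoves contain $\mu$ in their closure and satisfy $\lambda_0^s = \lambda_1$. By~\cite[\S II.7.19]{jantzen}, $\Trans_s \nabla(\mu)$ sits in a short exact sequence $\nabla(\lambda_0) \hookrightarrow \Trans_s \nabla(\mu) \twoheadrightarrow \nabla(\lambda_1)$. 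By construction $\varphi^{\Trans_s \nabla(\mu)}_{(\pi, 1)}$ is the surjection $\Til(\lambda_1) \cong \Trans_s \Til(\mu) \twoheadrightarrow \Trans_s \nabla(\mu)$ obtained by applying $\Trans_s$ to $\varphi^{\nabla(\mu)}_\pi$; in $\Rep_0(G)^{\downarrow \lambda_1}$ the target reduces to $\nabla(\lambda_1)$ and this map is plainly nonzero. For the harder map $\varphi^{\Trans_s \nabla(\mu)}_{(\pi, 0)}$, since $(\Til(\lambda_0) : \nabla(\lambda_1)) = 0$ the map factors through the subobject $\nabla(\lambda_0) \subset \Trans_s \nabla(\mu)$, as $\Til(\lambda_0) \xrightarrow{\widetilde\varphi} \nabla(\lambda_0) \hookrightarrow \Trans_s \nabla(\mu)$; by Lemma~\ref{lem:Hom-Til-Sta-Cos} the morphism $\widetilde\varphi$ is a scalar multiple of the unique (up to scalar) nonzero morphism $\Til(\lambda_0) \twoheadrightarrow \nabla(\lambda_0)$. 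That this scalar is nonzero follows from the adjoint description of $\varphi^{\Trans_s \nabla(\mu)}_{(\pi, 0)}$ under $(\Trans^s, \Trans_s)$, which is the surjective composition $\Trans^s \Til(\lambda_0) \twoheadrightarrow \Til(\mu) \twoheadrightarrow \nabla(\mu)$. Since both $\Til(\lambda_0)$ and $\nabla(\lambda_0)$ reduce to $\Sim(\lambda_0)$ in $\Rep_0(G)^{\downarrow \lambda_0}$, nonvanishing of $\widetilde\varphi$ persists in the quotient, yielding a basis of the $1$-dimensional space $\Hom_{\cA^{\downarrow \lambda_0}}(\Til(\lambda_0), \Trans_s \nabla(\mu))$.

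\textbf{Filtration argument.} For general $M$, fix $\lambda \in \bX^+_0$; if $\lambda^s \notin \bX^+$ both sides vanish by Lemma~\ref{lem:translation-Cos}, so assume $\lambda^s \in \bX^+$ and let $\mu \in \bX^+_s$ be the unique weight on the $s$-wall of the alcove of $\lambda$. Take the ideal $\Omega = \{\nu \in \bX^+_s : \nu \uparrow \mu\}$, in which $\mu$ is maximal, and set $\Omega' := \Omega \smallsetminus \{\mu\}$. By Lemmas~\ref{lem:section-Gamma} and~\ref{lem:section-Gamma-2} the given section restricts to compatible sections on the chain $\Gamma_{\Omega'} M \subset \Gamma_\Omega M \subset M$ and on the quotient $\Gamma_\Omega M / \Gamma_{\Omega'} M$, which is a direct sum of $|e^{-1}(\mu)|$ copies of $\nabla(\mu)$. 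By Remark~\ref{rmk:translation-section-from}, each morphism $\varphi^{\Trans_s M}_{(\pi, i)}$ for $\pi \in e^{-1}(\mu)$ factors through $\Trans_s \Gamma_\Omega M$, and modulo $\Trans_s \Gamma_{\Omega'} M$ coincides with the corresponding morphism produced by the basic case applied to $\Gamma_\Omega M / \Gamma_{\Omega'} M$. For any $\mu' \in \Omega' \cup (\bX^+_s \setminus \Omega)$ one has $\mu' \neq \mu$, so the associated type-$s$ wall (and hence the two alcoves containing $\mu'$ in their closure) differs from that of $\mu$; consequently $(\Trans_s \nabla(\mu') : \nabla(\lambda)) = 0$, whence
\[
(\Trans_s \Gamma_{\Omega'} M : \nabla(\lambda)) = 0 = (\Trans_s (M / \Gamma_\Omega M) : \nabla(\lambda)).
\]
Since $\dim \Hom_{\cA^{\downarrow \lambda}}(\Til(\lambda), N) = (N : \nabla(\lambda))$ for any $N$ admitting a $\nabla$-filtration, left-exactness of $\Hom_{\cA^{\downarrow \lambda}}(\Til(\lambda), -)$ applied to the exact sequences relating $\Trans_s \Gamma_{\Omega'} M$, $\Trans_s \Gamma_\Omega M$, $\Trans_s(\Gamma_\Omega M / \Gamma_{\Omega'} M)$ and $\Trans_s M$ yields canonical isomorphisms
\[
\Hom_{\cA^{\downarrow \lambda}}(\Til(\lambda), \Trans_s(\Gamma_\Omega M / \Gamma_{\Omega'} M)) \simto \Hom_{\cA^{\downarrow \lambda}}(\Til(\lambda), \Trans_s M),
\]
under which the basis produced by the basic case transports to the desired basis on the right. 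The principal technical point is the basic-case verification that $\varphi^{\Trans_s \nabla(\mu)}_{(\pi, 0)}$ remains nonzero in $\Rep_0(G)^{\downarrow \lambda_0}$: here one must combine the factorization through $\nabla(\lambda_0)$ (available because $\nabla(\lambda_1)$ does not appear in the tilting module $\Til(\lambda_0)$) with the adjoint description to pin down that the adjunction-unit construction hits the subobject $\nabla(\lambda_0)$ nontrivially rather than landing entirely in its complement.
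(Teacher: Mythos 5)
Your proof is correct and follows essentially the same route as the paper's: the same reduction of the basic case to $M=\Cos(\mu)$, the same use of the short exact sequence $\Cos(\lambda)\hookrightarrow\Trans_s\Cos(\mu)\twoheadrightarrow\Cos(\lambda^s)$ together with the adjoint description of $\varphi^{\Trans_s\Cos(\mu)}_{(\pi,0)}$ to get nonvanishing, and the same filtration argument via $\Gamma_{\Omega'}M\subset\Gamma_\Omega M\subset M$ with the vanishing of $(\Trans_s\Gamma_{\Omega'}M:\Cos(\lambda))$ and $(\Trans_s(M/\Gamma_\Omega M):\Cos(\lambda))$. The only cosmetic slip is that the factorization of $\varphi_{(\pi,0)}$ through $\Cos(\lambda_0)$ is governed by the vanishing of $\Hom(\Til(\lambda_0),\Cos(\lambda_1))$, i.e.\ of the standard multiplicity $(\Til(\lambda_0):\Sta(\lambda_1))$, rather than the costandard one you cite; both vanish since $\lambda_1\not\preceq\lambda_0$, so the argument is unaffected.
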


As in~\S\ref{ss:sections-translation-to},
before proving this result in general, we consider the special case where there exists $\mu \in \bX^+_s$ such that $M$ is a direct sum of objects $\Cos(\mu)$ for some $\mu \in \bX^+_s$.

\begin{lem}
\label{lem:translation-section-from}
Let $\mu \in \bX^+_s$. If
$M$ is isomorphic to a direct sum of objects $\Cos(\mu)$, the triple $(\Pi', e', (\varphi_\pi^{\Trans_s M})_{\pi \in \Pi'})$ constructed above is a section of the $\Cos$-flag of $\Trans_s M$.
\end{lem}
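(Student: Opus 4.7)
The plan is to mirror the strategy used in the proof of Lemma~\ref{lem:translation-section-to}. First I would reduce to the case $M=\Cos(\mu)$ by a direct-sum argument. Setting $M_\pi := \im(\varphi^M_\pi)$, each $M_\pi$ is isomorphic to $\Cos(\mu)$. The section condition, evaluated at $\mu$ and combined with the identification $\dim_\bk \Hom_{\Rep_s(G)^{\succeq \mu}}(\Cos(\mu), M) = (M : \Cos(\mu))$, shows that the maps $\varphi^M_\pi$ correspond via the projection $M \twoheadrightarrow \Sim(\mu)^{\oplus (M:\Cos(\mu))}$ to a basis of the socle-like quotient, which forces $M = \bigoplus_{\pi \in \Pi} M_\pi$. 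Since $\Trans_s$ is additive and the construction of~\S\ref{ss:sections-translation-from} is compatible with this direct-sum decomposition, it suffices to treat $M = \Cos(\mu)$; by Lemma~\ref{lem:Hom-Til-Sta-Cos} we may then take $\Pi = \{\mu\}$, $e(\mu)=\mu$, and $\varphi^{\Cos(\mu)}_\mu : \Til(\mu) \twoheadrightarrow \Cos(\mu)$ any fixed nonzero (automatically surjective) morphism.

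By the standard structure of translation from a wall (see \cite[\S II.E.11]{jantzen}), $\Trans_s \Cos(\mu)$ admits a costandard filtration with factors $\Cos(\la)$ and $\Cos(\la^s)$, each of multiplicity one. Via Lemma~\ref{lem:hw-quotient}\eqref{it:hw-quotient}, both of the target spaces
\[
\Hom_{\Rep_0(G)^{\succeq \la}}(\Til(\la), \Trans_s \Cos(\mu))
\quad\text{and}\quad
\Hom_{\Rep_0(G)^{\succeq \la^s}}(\Til(\la^s), \Trans_s \Cos(\mu))
\]
are one-dimensional, so it only remains to verify that $\varphi^{\Trans_s \Cos(\mu)}_{(\mu, 0)}$ and $\varphi^{\Trans_s \Cos(\mu)}_{(\mu, 1)}$ survive (i.e.~are nonzero in) the respective quotients.

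For $(\mu, 1)$: by construction the morphism equals the composition $\Til(\la^s) \simto \Trans_s \Til(\mu) \twoheadrightarrow \Trans_s \Cos(\mu)$, which is surjective by exactness of $\Trans_s$. Composing further with the canonical quotient $\Trans_s \Cos(\mu) \twoheadrightarrow \Cos(\la^s)$ produces a surjection $\Til(\la^s) \twoheadrightarrow \Cos(\la^s)$, hence the morphism is nonzero in $\Rep_0(G)^{\succeq \la^s}$.

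For $(\mu, 0)$: unwinding the adjunction $(\Trans^s,\Trans_s)$, the morphism $\varphi^{\Trans_s \Cos(\mu)}_{(\mu, 0)}$ is the image under the adjunction isomorphism of the composition $\Trans^s \Til(\la) \twoheadrightarrow \Til(\mu) \twoheadrightarrow \Cos(\mu)$, which is surjective and in particular nonzero; hence $\varphi^{\Trans_s \Cos(\mu)}_{(\mu, 0)} \neq 0$ in $\Hom_{\Rep_0(G)}(\Til(\la), \Trans_s \Cos(\mu))$. The standard dimension formula $\dim \Hom(\Til(\la), N) = \sum_\nu (\Til(\la) : \Sta(\nu))(N : \Cos(\nu))$ for $N$ with a costandard filtration, together with $(\Til(\la) : \Sta(\la^s)) = 0$ (since $\la^s \not\preceq \la$), shows that this full $\Hom$-space is also one-dimensional. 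Thus the surjection of Corollary~\ref{cor:morph-quotient} onto the quotient space is an isomorphism, and the morphism remains nonzero in $\Rep_0(G)^{\succeq \la}$. The mildest subtlety is precisely this last dimension count, which ensures that the adjointly-defined morphism is not killed when passing to $\Rep_0(G)^{\succeq \la}$.
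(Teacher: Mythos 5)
Your proof is correct and follows essentially the same route as the paper's: reduce to $M=\Cos(\mu)$, use Jantzen's exact sequence $\Cos(\la) \hookrightarrow \Trans_s\Cos(\mu) \twoheadrightarrow \Cos(\la^s)$, and verify non-vanishing of the two morphisms, for $(\mu,1)$ via surjectivity of $\Trans_s(\varphi^{\Cos(\mu)}_\mu)$ and for $(\mu,0)$ via adjunction. Your substitution of a dimension count (using $(\Til(\la):\Sta(\la^s))=0$) for the paper's factoring-through-$\Cos(\la)$ argument in the $(\mu,0)$ case is a cosmetic variation, not a different approach; note only that your citation of \cite[\S II.E.11]{jantzen} for the costandard filtration of $\Trans_s\Cos(\mu)$ should be \cite[Proposition~II.7.19]{jantzen}, and the sentence in the reduction step about a ``projection $M \twoheadrightarrow \Sim(\mu)^{\oplus(M:\Cos(\mu))}$'' is garbled (no such quotient map exists; the correct point is that each $\varphi_\pi^M$ factors through the canonical surjection $\Til(\mu) \twoheadrightarrow \Cos(\mu)$, so the $\varphi_\pi^M$ are given by vectors in $\bk^n \cong \Hom(\Cos(\mu), \Cos(\mu)^{\oplus n})$, and the section condition plus Lemma~\ref{lem:Hom-Til-Sta-Cos}(3) forces these to be linearly independent).
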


\begin{proof}
As in the proof of Lemma~\ref{lem:translation-section-to}, we can assume $M=\Cos(\mu)$, and then that $\Pi=\{\mu\}$, $e(\mu)=\mu$, and $\varphi_\mu^{\Cos(\mu)}$ is a non-zero (hence surjective) morphism $\Til(\mu) \to \Cos(\mu)$. Let $\la=e'(\mu, 0)$, so that $\la^s = e'(\mu,1)$ and $\la \uparrow \la^s$.
By~\cite[Proposition~II.7.19]{jantzen}, there exists a short exact sequence
\[
\Cos(\la) \hookrightarrow \Trans_s \Cos(\mu) \twoheadrightarrow \Cos(\la^s).
\]
We have morphisms $\varphi_{(\mu, 0)}^{\Trans_s \Cos(\mu)} \colon \Til(\la) \to \Trans_s \nabla(\mu)$ and  $\varphi_{(\mu, 1)}^{\Trans_s \Cos(\mu)} \colon \Til(\la^s) \to \Trans_s \nabla(\mu)$, and to prove the lemma it suffices to prove that the compositions
\[
\Sta(\la) \hookrightarrow \Til(\la) \xrightarrow{\varphi_{(\mu, 0)}^{\Trans_s \Cos(\mu)}} \Trans_s \Cos(\mu) \quad \text{and} \quad \Sta(\la^s) \hookrightarrow \Til(\la^s) \xrightarrow{\varphi_{(\mu, 1)}^{\Trans_s \Cos(\mu)}} \Trans_s \Cos(\mu)
\]
are non zero (see Remark~\ref{rmk:AST}).

Consider first the case of $\varphi_{(\mu, 1)}^{\Trans_s \Cos(\mu)}$. By construction and exactness of $\Trans_s$, the morphism $\varphi_{(\mu, 1)}^{\Trans_s \Cos(\mu)}$ is surjective. Hence the composition with the embedding of $\Sta(\la^s)$ cannot vanish, since $[\Til(\la^s)/\Sta(\la^s) : \Sim(\la^s)]=0$, so that no morphism $\Til(\la^s)/\Sta(\la^s) \to \Trans_s \Cos(\mu)$ can be surjective.

Consider now $\varphi_{(\mu, 0)}^{\Trans_s \Cos(\mu)}$. By the same arguments as in the proof of Lemma~\ref{lem:section-Gamma}, this morphism must factor through a morphism $\Til(\la) \to \Cos(\la)$, so that we only have to prove that $\varphi_{(\mu, 0)}^{\Trans_s \Cos(\mu)} \neq 0$ (see Lemma~\ref{lem:Hom-Til-Sta-Cos}). However, 
the composition
\[
\Trans^s \Til(\la) \twoheadrightarrow \Til(\mu) \xrightarrow{\varphi_\mu^{\Cos(\mu)}} \Cos(\mu)
\]
is surjective, hence non zero. By construction, this implies that $\varphi_{(\mu, 0)}^{\Trans_s \Cos(\mu)} \neq 0$, and finishes the proof.
\end{proof}

\begin{proof}[Proof of Proposition~{\rm \ref{prop:translation-section-from}}]
This proof is very similar to the proof of Proposition~\ref{prop:translation-section-to}.

Let $\la \in \bX_0^+$. If $\la^s \notin \bX_0^+$, then $(\Trans_s M : \Cos(\la))=0$ and $\la \notin e'(\Pi')$, so that there is nothing to prove for this $\la$. Assume now that $\la^s \in \bX^+$, and let $\mu \in \bX_s^+$ be the unique element which belongs to the closure of the alcoves of $\la$ and $\la^s$. Replacing $\la$ by $\la^s$ if necessary, we can assume that $\la \uparrow \la^s$, so that for $\pi \in \Pi$ we have $e(\pi,0)=\la$ iff $e(\pi,1)=\la^s$ iff $e(\pi)=\mu$. We have to prove that, for such $\pi$'s, the images of the morphisms $\varphi_{(\pi,0)}^{\Trans_s M}$ in $\Hom_{\Rep_0(G)^{\downarrow \la}}(\Til(\la), \Trans_s M)$, resp.~the images of the morphisms $\varphi_{(\pi,1)}^{\Trans_s M}$ in $\Hom_{\Rep_0(G)^{\downarrow \la^s}}(\Til(\la^s), \Trans_s M)$, form a basis.

Let $\Omega \subset \bX_s^+$ be an ideal such that $\mu \in \Omega$ is maximal; then $\Omega':=\Omega \smallsetminus \{\mu\}$ is also an ideal. We have embeddings
\[
\Gamma_{\Omega'} M \hookrightarrow \Gamma_\Omega M \hookrightarrow M,
\]
and $\Gamma_\Omega M / \Gamma_{\Omega'} M$ is a direct sum of copies of $\Cos(\mu)$. By Lemma~\ref{lem:section-Gamma}, for $\pi \in e^{-1}(\mu)$ the morphism $\varphi^M_\pi$ factors through a morphism $\varphi^{\Gamma_\Omega M}_\pi \colon \Til(\mu) \to \Gamma_\Omega M$, and using also Lemma~\ref{lem:section-Gamma-2}, if we consider the composition
\[
\varphi^{\Gamma_\Omega M / \Gamma_{\Omega'} M}_\pi \colon \Til(\mu) \xrightarrow{\varphi^{\Gamma_\Omega M}_\pi} \Gamma_\Omega M \twoheadrightarrow \Gamma_\Omega M / \Gamma_{\Omega'} M,
\]
then $(e^{-1}(\mu), e_{|e^{-1}(\mu)}, (\varphi^{\Gamma_\Omega M / \Gamma_{\Omega'} M}_\pi)_{\pi \in e^{-1}(\mu)})$ is a section of the $\Cos$-flag of the object $\Gamma_\Omega M / \Gamma_{\Omega'} M$.
By Lemma~\ref{lem:translation-section-from}, the images of the corresponding morphisms
\[
\varphi_{(\pi,0)}^{\Trans_s(\Gamma_\Omega M / \Gamma_{\Omega'} M)} \colon \Til(\la) \to \Trans_s(\Gamma_\Omega M / \Gamma_{\Omega'} M)
\]
form a basis of $\Hom_{\Rep_0(G)^{\downarrow \la}}(\Til(\la), \Trans_s (\Gamma_\Omega M / \Gamma_{\Omega'} M))$, and the images of the corresponding morphisms
\[
\varphi_{(\pi,1)}^{\Trans_s(\Gamma_\Omega M / \Gamma_{\Omega'} M)} \colon \Til(\la^s) \to \Trans_s(\Gamma_\Omega M / \Gamma_{\Omega'} M)
\]
form a basis of $\Hom_{\Rep_0(G)^{\downarrow \la^s}}(\Til(\la^s), \Trans_s (\Gamma_\Omega M / \Gamma_{\Omega'} M))$.

Now we have
\[
(\Trans_s (\Gamma_{\Omega'} M) : \Cos(\la)) = (\Trans_s (\Gamma_{\Omega'} M) : \Cos(\la^s)) = 0
\]
and
\[
(\Trans_s (M/\Gamma_{\Omega} M) : \Cos(\la)) = (\Trans_s (M/\Gamma_{\Omega} M) : \Cos(\la^s)) = 0,
\]
so that we have natural isomorphisms
\[
\Hom_{\Rep_0(G)^{\downarrow \la}}(\Til(\la), \Trans_s M ) \cong \Hom_{\Rep_0(G)^{\downarrow \la}}(\Til(\la), \Trans_s (\Gamma_\Omega M / \Gamma_{\Omega'} M))
\]
and
\[
\Hom_{\Rep_0(G)^{\downarrow \la^s}}(\Til(\la^s), \Trans_s M) \cong \Hom_{\Rep_0(G)^{\downarrow \la^s}}(\Til(\la^s), \Trans_s (\Gamma_\Omega M / \Gamma_{\Omega'} M)).
\]
Under these isomorphisms, the images of the morphisms $\varphi_{(\pi,0)}^{\Trans_s M}$, resp.~$\varphi_{(\pi,1)}^{\Trans_s M}$, correspond to the images of the morphisms $\varphi_{(\pi,0)}^{\Trans_s(\Gamma_\Omega M / \Gamma_{\Omega'} M)}$, resp.~$\varphi_{(\pi,1)}^{\Trans_s(\Gamma_\Omega M / \Gamma_{\Omega'} M)}$. We deduce the expected properties of these morphisms, which concludes the proof.
\end{proof}

\subsection{Morphisms between ``Bott--Samelson type'' tilting modules}
\label{ss:BStilting}

If $s \in S$, we will consider the ``wall crossing" functor
\[
\Theta_s := \Trans_s \Trans^s \colon \Rep_0(G) \to \Rep_0(G).
\]

As in~\cite{ew}, an \emph{expression} is a sequence $\uw = (s_1,s_2, \cdots ,s_m)$ with $s_i \in
\Saff$. The length $\ell(\uw)$ is the number $m$ of reflections appearing in this sequence. Omitting the underline denotes the product $s_1 \cdots s_m \in
\Waff$. We will often abuse notation and write expressions as $\uw = s_1 \cdots
s_m$, with the underline there to remind us that we consider an expression
and not the product in $\Waff$. An expression $\uw = s_1 \cdots s_m$ is \emph{reduced} if
$\ell(\un{w}) = \ell(w)$.

If $\uw = s_1 \cdots s_m$ is an expression, we set
\[
\Til(\uw) := \Theta_{s_m} \circ \cdots \circ \Theta_{s_1}(\Til(\la_0)).
\]
(Note the inversion in the order of the simple reflections.) This object is a tilting module in $\Rep_0(G)$. Note also that if $\uw$ is a reduced expression of an element $w \in \fW$, then $\Til(\uw)$ contains $\Til(w \hdot \lambda_0)$ as a direct summand (with multiplicity $1$).

We will now derive from the results of~\S\S\ref{ss:sections-translation-to}--\ref{ss:sections-translation-from} some properties of the morphism spaces between such objects, which will play a crucial role in Section~\ref{sec:main-conj}.

\begin{prop}
\label{prop:morphisms-BS}
Let $\ux$ and $\uv$ be expressions, and assume that $\ux$ is a reduced expression for some element $x \in \fW$. Let also $\lambda = x \hdot \lambda_0$.
\begin{enumerate}
\item
\label{it:morphisms-BS-1}
Assume that $\lambda \uparrow \lambda^s$ (so that $\ux s$ is a reduced expression for $xs \in \fW$). Let $(f_i)_{i \in I}$ be a family of morphisms in $\Hom_{\Rep_0(G)}(\Til(\ux), \Til(\uv))$ whose images span the vector space $\Hom_{\Rep_0(G)^{\downarrow \la}}(\Til(\ux), \Til(\uv))$, and let $(g_j)_{j \in J}$ be a family of morphisms in $\Hom_{\Rep_0(G)}(\Til(\ux s), \Til(\uv))$ whose images span the vector space $\Hom_{\Rep_0(G)^{\downarrow \la^s}}(\Til(\ux s), \Til(\uv))$. There exists morphisms $f_i' \colon \Til(\ux) \to \Theta_s \Til(\ux)=\Til(\ux s)$ (for $i \in I$) and $g_j' \colon \Til(\ux) \to \Theta_s \Til(\ux s) = \Til(\ux s s)$ (for $j \in J$) such that the images of the compositions
\[
\Til(\ux) \xrightarrow{f_i'} \Theta_s \Til(\ux) \xrightarrow{\Theta_s(f_i)} \Theta_s \Til(\uv) = \Til(\uv s)
\]
together with the images of the compositions
\[
\Til(\ux) \xrightarrow{g_j'} \Theta_s \Til(\ux s) \xrightarrow{\Theta_s(g_j)} \Theta_s \Til(\uv) = \Til(\uv s)
\]
span $\Hom_{\Rep_0(G)^{\downarrow \la}}(\Til(\ux), \Til(\uv s))$.
\item
\label{it:morphisms-BS-2}
Assume that $\ux=\uy s$ for some $\uy$ which is a reduced expression for an element $y \in \fW$. (Then $\lambda^s=y \hdot \lambda_0 \in \bX^+$, and
$\la^s \uparrow \la$.) 
Let $(f_i)_{i \in I}$ be a family of morphisms in $\Hom_{\Rep_0(G)}(\Til(\ux), \Til(\uv))$ whose images span 
the vector space 
$\Hom_{\Rep_0(G)^{\downarrow \la}}(\Til(\ux), \Til(\uv))$, and let $(g_j)_{j \in J}$ be a family of morphisms in $\Hom_{\Rep_0(G)}(\Til(\uy), \Til(\uv))$ whose images span 
$\Hom_{\Rep_0(G)^{\downarrow \la^s}}(\Til(\uy), \Til(\uv))$. There exists morphisms $f_i' \colon \Til(\ux) \to \Theta_s \Til(\ux) = \Til(\ux s)$ and $g_j' \colon \Til(\ux) \to \Theta_s \Til(\uy) = \Til(\ux)$ such that the images of the compositions
\[
\Til(\ux) \xrightarrow{f_i'} \Theta_s \Til(\ux) \xrightarrow{\Theta_s(f_i)} \Theta_s \Til(\uv) = \Til(\uv s)
\]
together with the images of the compositions
\[
\Til(\ux) \xrightarrow{g_j'} \Theta_s \Til(\uy) \xrightarrow{\Theta_s(g_j)} \Theta_s \Til(\uv) = \Til(\uv s)
\]
span $\Hom_{\Rep_0(G)^{\downarrow \la}}(\Til(\ux), \Til(\uv s))$.
\end{enumerate}
\end{prop}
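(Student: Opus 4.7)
The plan is to build a section of the $\nabla$-flag of $\Til(\uv s) = \Theta_s\Til(\uv)$ by transporting a section of $\Til(\uv)$ through the two translation functors using Propositions~\ref{prop:translation-section-to} and~\ref{prop:translation-section-from}, and then to read off the required spanning set from the resulting formulas. Concretely, fix a section $(\Sigma, f, (\psi^{\Til(\uv)}_\sigma))$ of the $\nabla$-flag of $\Til(\uv)$; applying Proposition~\ref{prop:translation-section-to} with $M = \Til(\uv)$ will produce a section of the $\nabla$-flag of $\Trans^s\Til(\uv)$, and then applying Proposition~\ref{prop:translation-section-from} with $M = \Trans^s\Til(\uv)$ will produce a section of the $\nabla$-flag of $\Trans_s \Trans^s \Til(\uv) = \Til(\uv s)$. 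The essential input is that, by Remarks~\ref{rmk:translation-section-to} and~\ref{rmk:translation-section-from}, each resulting section morphism of $\Til(\uv s)$ factors through $\Theta_s(\psi^{\Til(\uv)}_\sigma) \colon \Theta_s\Til(f(\sigma)) \to \Theta_s\Til(\uv)$, pre-composed with an auxiliary morphism built from the fixed adjunction units and from the splittings of~\eqref{eqn:translation-tilting-2}.

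Next, I will use that $\ux$ is a reduced expression for $x \in \fW$: this forces $\la$ to be the unique maximum weight of $\Til(\ux)$ and $(\Til(\ux) : \Cos(\la)) = 1$, so that the image of $\Til(\ux)$ in the Serre quotient $\Rep_0(G)^{\downarrow \la}$ is $\Til(\la)$; similarly, in part~(1) the image of $\Til(\ux s)$ in $\Rep_0(G)^{\downarrow \la^s}$ is $\Til(\la^s)$, and in part~(2) the image of $\Til(\uy)$ in $\Rep_0(G)^{\downarrow \la^s}$ is $\Til(\la^s)$. Under these identifications, the families $(f_i)$ and $(g_j)$ become spanning families of $\Hom_{\Rep_0(G)^{\downarrow \la}}(\Til(\la), \Til(\uv))$ and $\Hom_{\Rep_0(G)^{\downarrow \la^s}}(\Til(\la^s), \Til(\uv))$ respectively, of which the section morphisms $\{\psi^{\Til(\uv)}_\sigma\}_{f(\sigma) = \la}$ and $\{\psi^{\Til(\uv)}_\sigma\}_{f(\sigma) = \la^s}$ form bases. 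In part~(1), where $\la \uparrow \la^s$, the section morphisms of $\Til(\uv s)$ giving a basis of $\Hom_{\Rep_0(G)^{\downarrow \la}}(\Til(\la), \Til(\uv s))$ are precisely those indexed by $(\sigma, 0)$ with $f(\sigma) \in \{\la, \la^s\}$, factoring respectively through $\Theta_s\Til(\la)$ and through $\Theta_s\Til(\la^s)$.

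To conclude, I will expand each $\psi^{\Til(\uv)}_\sigma$ as a linear combination of the quotient-images of $(f_i)$ (for $f(\sigma) = \la$) or $(g_j)$ (for $f(\sigma) = \la^s$), assemble the auxiliary morphisms accordingly, and lift the resulting maps from the quotients back to $\Rep_0(G)$ using Corollary~\ref{cor:morph-quotient}; this yields the desired morphisms $f_i' \colon \Til(\ux) \to \Theta_s\Til(\ux)$ and $g_j' \colon \Til(\ux) \to \Theta_s\Til(\ux s)$, and bilinearity of composition then guarantees that the corresponding compositions $\Theta_s(f_i) \circ f_i'$ and $\Theta_s(g_j) \circ g_j'$ span $\Hom_{\Rep_0(G)^{\downarrow \la}}(\Til(\ux), \Til(\uv s))$. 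Part~(2) runs by the same argument with $(\sigma, 0)$ replaced by $(\sigma, 1)$ throughout, exploiting the identification $\Theta_s\Til(\uy) = \Til(\ux)$ to interpret the $g_j'$ as endomorphisms of $\Til(\ux)$. The main obstacle I expect is the case-by-case tracking of the auxiliary morphisms produced by Propositions~\ref{prop:translation-section-to}--\ref{prop:translation-section-from}, whose explicit form depends on whether $e(\sigma) \uparrow e(\sigma)^s$ or $e(\sigma) \downarrow e(\sigma)^s$; once this bookkeeping is in place, the spanning claim is a direct consequence of the section construction.
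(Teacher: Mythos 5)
Your plan is essentially the paper's own proof: both transport a section of the $\Cos$-flag of $\Til(\uv)$ through $\Trans^s$ and then $\Trans_s$ via Propositions~\ref{prop:translation-section-to} and~\ref{prop:translation-section-from}, use Remarks~\ref{rmk:translation-section-to} and~\ref{rmk:translation-section-from} to see that the resulting section morphisms of $\Til(\uv s)$ factor through $\Theta_s$ applied to the original ones, and exploit the isomorphisms $\Til(\ux) \cong \Til(\la)$ in $\Rep_0(G)^{\downarrow \la}$ and $\Til(\ux s) \cong \Til(\la^s)$ in $\Rep_0(G)^{\downarrow \la^s}$ coming from the reducedness of $\ux$. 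Your identification of which components of the transported section ($(\sigma,0)$ in part~(1), $(\sigma,1)$ in part~(2)) give the basis of the relevant quotient Hom space is also correct.

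The one point where your ordering needs care is the final step: you first transport an arbitrary section and only afterwards expand each $\psi^{\Til(\uv)}_\sigma$ in terms of the $f_i$ and $g_j$. The identity $\overline{\psi_\sigma} = \sum_i c_{\sigma i} \, \overline{f_i \circ \iota}$ holds only in the Serre quotient $\Rep_0(G)^{\downarrow \la}$, and it is not clear a priori that the transport construction (applying $\Theta_s$ and composing with the auxiliary adjunction/splitting morphisms) is compatible with passage to this quotient, since $\Theta_s$ need not descend to $\Rep_0(G)^{\downarrow \la}$; so you cannot simply replace $\Theta_s(\psi_\sigma)$ by $\sum_i c_{\sigma i}\,\Theta_s(f_i \circ \iota)$ after the fact. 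The clean fix --- which is what the paper does --- is to perform the substitution \emph{before} transporting: after passing to subfamilies whose images form bases, the compositions $\Til(\la) \hookrightarrow \Til(\ux) \xrightarrow{f_i} \Til(\uv)$ and $\Til(\la^s) \hookrightarrow \Til(\ux s) \xrightarrow{g_j} \Til(\uv)$ may themselves be taken as the $\la$- and $\la^s$-components of the section of $\Til(\uv)$ (being a section depends only on quotient images), and then the two transport propositions together with the factorization remarks yield the desired $f_i'$ and $g_j'$ directly, after composing with a fixed surjection $\Til(\ux) \to \Til(\la)$.
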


\begin{proof}
\eqref{it:morphisms-BS-1}
We have $\Til(\ux) \cong \Til(\la)$ in $\Rep_0(G)^{\downarrow \la}$, and $\Til(\ux s) \cong \Til(\la^s)$ in $\Rep_0(G)^{\downarrow \la^s}$. Hence we can fix split embeddings $\Til(\la) \hookrightarrow \Til(\ux)$ and $\Til(\la^s) \hookrightarrow \Til(\ux s)$ and assume that the compositions
\[
\Til(\la) \hookrightarrow \Til(\ux) \xrightarrow{f_i} \Til(\uv) \quad \text{and} \quad \Til(\la^s) \hookrightarrow \Til(\ux s) \xrightarrow{g_j} \Til(\uv)
\]
are part of a section of the $\Cos$-flag of $\Til(\uv)$. Let $\mu \in \bX_s^+$ be the unique element which belongs to the closures of the alcoves of $\la$ and $\la^s$. Then Proposition~\ref{prop:translation-section-to} provides a section of the $\Cos$-flag of $\Trans^s \Til(\uv)$ whose morphisms $\Til(\mu) \to \Trans^s \Til(\uv)$ are parametrized by $I \sqcup J$, in such a way that the morphism corresponding to $i \in I$ factors through the morphism $\Trans^s(f_i) \colon \Trans^s \Til(\ux) \to \Trans^s \Til(\uv)$, and that the morphism corresponding to $j \in J$ factors through the morphism $\Trans^s(g_j) \colon \Trans^s \Til(\ux s) \to \Trans^s \Til(\uv)$ (see in particular Remark~\ref{rmk:translation-section-to}). Applying Proposition~\ref{prop:translation-section-from}, we then obtain a section of the $\Cos$-flag of $\Til(\uv s) = \Theta_s \Til(\uv)$ whose morphisms $\Til(\la) \to \Til(\uv s)$ are parametrized by $I \sqcup J$, in such a way that the morphism corresponding to $i \in I$ factors through the morphism $\Theta_s(f_i) \colon \Theta_s \Til(\ux) \to \Theta_s \Til(\uv)$, and that the morphism corresponding to $j \in J$ factors through the morphism $\Theta_s(g_j) \colon \Theta_s \Til(\ux s) \to \Theta_s \Til(\uv)$ (see in particular Remark~\ref{rmk:translation-section-from}). Composing with a fixed surjection $\Til(\ux) \to \Til(\la)$ we deduce the desired morphisms.

\eqref{it:morphisms-BS-2} The proof is identical to the proof of~\eqref{it:morphisms-BS-1}, and therefore omitted.
\end{proof}

\section{Diagrammatic Hecke category and the antispherical module}
\label{sec:Diag}

\subsection{The affine Hecke algebra and the antispherical module}
\label{ss:Haff-Masph}

To the Coxeter groups $(\Waff,\Saff)$ and $(\Wf, \Sf)$ one can associate the Hecke algebras $\Haff$ and $\Hf$ over $\Z[v,v^{-1}]$. We will follow the notation of~\cite{soergel} rather closely: in particular $\Haff$ has a ``standard'' basis $\{H_w : w \in \Waff\}$ and a ``Kazhdan--Lusztig'' basis $\{\uH_w : w \in \Waff\}$, and $\Hf$ identifies with the $\Z[v,v^{-1}]$-subalgebra spanned (as a $\Z[v,v^{-1}]$-module) by the element $H_w$ for $w \in \Wf$. If $\uw=s_1 \cdots s_m$ is an expression, we set
\[
\uH_{\uw} := \uH_{s_1} \cdots \uH_{s_m} \ \in \Haff.
\]

The algebra $\Hf$ has a natural ``sign'' right module $\mathsf{sgn}$ such that $\mathsf{sgn}=\Z[v,v^{-1}]$, and $H_s$ acts as multiplication by $-v$ for $s \in \Sf$. Then we can consider the ``antispherical'' right $\Haff$-module defined as
\[
\Masph := \mathsf{sgn} \otimes_{\Hf} \Haff.
\]
(This module is denoted $\mathcal{N}^0$ in \cite[\S 5]{soergel}.) 
This module has a ``standard'' basis $\{N_w : w \in \fW\}$, where $N_w=1 \otimes H_w$ for $w \in \fW$, and a ``Kazhdan--Lusztig'' basis
$\{\uN_w : w \in \fW \}$, see \cite[Theorem~3.1]{soergel}. Let $\varphi \colon \Haff \to \Masph$ be the morphism of right $\Haff$-modules sending $H$ to $1 \otimes H$. Then it is explained in \cite[Proof of Proposition 3.4]{soergel} that
\begin{equation}
\label{eqn:canonical-basis-asph}
\varphi(\uH_w) = \begin{cases}
\uN_w & \text{if $w \in \fW$;} \\
0 & \text{otherwise.}
\end{cases}
\end{equation}
Recall also that
if $s \in \Saff$ and $w \in \fW$ then we have
\begin{equation} \label{eq:sact}
N_w \cdot \uH_s = \begin{cases} N_{ws} + vN_w & \text{if $ws \in \fW$ and $ws > w$}, \\
N_{ws} + v^{-1}N_w & \text{if $ws \in \fW$ and $ws < w$}, \\
0 & \text{otherwise,} \end{cases}
\end{equation}
see~\cite[p.~86]{soergel}. The third case in this formula relies on the observation that, for $w \in \fW$ and $s \in S$,
\begin{equation}
\label{eqn:not-in-fW}
ws \notin \fW \ \Rightarrow \ (\exists r \in \Sf, \ ws=rw), 
\end{equation}
see~\cite{soergel}.

Fix an expression $\un{w} = s_1s_2 \cdots s_m$.
A \emph{subexpression}, denoted $\un{e} \subset \un{w}$, is a sequence $e_1
\cdots e_m$ with $e_i \in \{ 0, 1 \}$. Its \emph{end-point} is
$\un{w}^{\un{e}} := s_1^{e_1} \cdots s_m^{e_m} \in \Waff$; we will also say that $\ue$ \emph{expresses} $\un{w}^{\un{e}}$. The \emph{Bruhat
  stroll} of $\un{e}$ is the sequence
\[
x_0 := 1, \, x_1 := s_1^{e_1}, \, x_2 := s_1^{e_1}s_2^{e_2}, \ \cdots, \ x_m := \un{w}^{\un{e}}
\]
of elements of $\Waff$. To each index $i \in \{1, \cdots, m\}$ we assign a symbol:
\begin{itemize}
\item
U1 if $e_i=1$ and $x_i = x_{i-1} s_i > x_{i-1}$;
\item
D1 if $e_i=1$ and $x_i = x_{i-1} s_i < x_{i-1}$;
\item
U0 if $e_i=0$ and $x_{i-1} s_i > x_{i-1}$;
\item
D0 if $e_i=0$ and $x_{i-1} s_i < x_{i-1}$.
\end{itemize}
(Here ``U'' stands for Up, and ``D'' stands for Down.) The defect $d(\ue) \in \Z$ of $\ue$ is the difference between the number of symbols U0 and the number of symbols D0 in this list (see e.g.~\cite[\S 2.4]{ew}).

Given subexpressions $\un{e}', \un{e}'' \subset
\un{w}$, denote their Bruhat strolls by $x_0', x_1', \dots, x_m'$ and
$x_0'', x_1'', \dots, x_m''$ respectively.
We say that $\un{e}' \le \un{e}''$ if $x_i' \le x_i''$ in the Bruhat order for all $0 \le
i \le m$. This gives a partial order on the set of subexpressions of $\un{w}$
which we call the \emph{path dominance order} (see \cite[\S 2.4]{ew}).

Finally,
given a subset $K \subset W$ we will say that $\un{e}$ \emph{avoids}
$K$ if
\[
x_{i-1}s_i \notin K \quad \text{for $1 \le i \le m$}.
\]
(In particular, this condition is automatically satisfied if $m=0$.)


The following easy lemma is the analogue in our antispherical setting of~\cite[Lemma~2.10]{ew} (which is due to Deodhar).

\begin{lem}
\label{lem:number-subexpr-avoids}
  For any expression $\un{w}$, in $\Masph$ we have
\[
N_{1} \cdot \uH_{\un{w}} = \sum_{\substack{\un{e} \subset \un{w} \\ \text{$\un{e}$
    avoids $W \smallsetminus \fW$ }}} v^{d(\un{e})}N_{\un{w}^{\un{e}}}.
\]
\end{lem}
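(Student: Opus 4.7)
The plan is a straightforward induction on the length $m := \ell(\un{w})$, modelled on Deodhar's proof of the classical version (\cite[Lemma~2.10]{ew}) and exploiting the fact that the formulas~\eqref{eq:sact} were engineered exactly for this kind of bookkeeping. The base case $m=0$ is immediate: the unique (empty) subexpression has endpoint $1$ and defect $0$, so both sides equal $N_1$.

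For the inductive step, I would factor $\un{w} = \un{w}' s$ and compute
\[
N_1 \cdot \uH_{\un{w}} = (N_1 \cdot \uH_{\un{w}'}) \cdot \uH_s,
\]
applying the inductive hypothesis to the first factor to rewrite it as a sum indexed by subexpressions $\un{e}' \subset \un{w}'$ avoiding $W \smallsetminus \fW$, with endpoints $x := \un{w}'^{\un{e}'}$. Every subexpression $\un{e} \subset \un{w}$ is obtained from a unique such $\un{e}'$ by appending $0$ (new endpoint $x$) or $1$ (new endpoint $xs$), and the only new step of the Bruhat stroll is from $x$ to $xs$; hence $\un{e}$ avoids $W \smallsetminus \fW$ if and only if $\un{e}'$ does and $xs \in \fW$. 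This is precisely the condition for $N_x \cdot \uH_s$ to be nonzero in~\eqref{eq:sact}, so the vanishing case on the algebraic side corresponds exactly to the disallowed case on the combinatorial side.

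It then remains to match the two nontrivial cases term by term. When $xs \in \fW$ with $xs > x$, appending $0$ produces a $\mathrm{U}0$ symbol (defect increases by $1$) and appending $1$ a $\mathrm{U}1$ (defect unchanged), so the two new terms sum to
\[
v^{d(\un{e}')}\bigl( v N_x + N_{xs} \bigr) = v^{d(\un{e}')} \cdot N_x \cdot \uH_s
\]
by the first case of~\eqref{eq:sact}. Symmetrically, when $xs \in \fW$ with $xs < x$, appending $0$ gives a $\mathrm{D}0$ (defect decreases by $1$) and appending $1$ a $\mathrm{D}1$, yielding $v^{d(\un{e}')}(v^{-1} N_x + N_{xs})$, which matches the second case of~\eqref{eq:sact}. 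Summing over all $\un{e}'$ completes the induction.

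There is no real obstacle here; the lemma is essentially a verification that the definitions of the defect and of the condition ``avoids $W \smallsetminus \fW$'' have been set up to be compatible with~\eqref{eq:sact}. The only mildly delicate point is the invocation of~\eqref{eqn:not-in-fW} to see that the third case of~\eqref{eq:sact} (namely $xs \notin \fW$) is exactly the obstruction to extending $\un{e}'$ while continuing to avoid $W \smallsetminus \fW$, so that both sides vanish together.
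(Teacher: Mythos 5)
Your proof is correct and follows essentially the same route as the paper's: induction on $\ell(\un{w})$, applying the inductive hypothesis to $\un{w}'$ and then matching the three cases of~\eqref{eq:sact} with the defect bookkeeping for the appended letter (U0/U1 versus D0/D1 versus the vanishing case $xs \notin \fW$). The observation that the avoidance condition for the extended subexpression is independent of the appended bit, and that the two extensions of a given $\un{e}'$ recombine into $v^{d(\un{e}')} N_x \cdot \uH_s$, is exactly the paper's argument.
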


\begin{proof}
  The formula is obvious if $\ell(\un{w}) = 0$.
  Now, let $\un{w}$ be an
  expression of length $m \ge 0$ and let $s \in \Saff$. By induction and
  \eqref{eq:sact} we have
\begin{gather*}
N_{1} \cdot \uH_{\uw s} = (N_{1} \cdot \uH_{\un{w}}) \cdot \uH_s = \left( \sum_{\substack{\un{e} \subset \un{w} \\ \text{$\un{e}$
    avoids $\Waff \smallsetminus \fW$} }}
  v^{d(\un{e})}N_{\un{w}^{\un{e}}} \right) \cdot \uH_s = \\
   \sum_{\substack{\un{e} \subset \un{w}, \\ \text{$\un{e}$
    avoids $\Waff \smallsetminus \fW$,} \\ \un{w}^{\un{e}}s \in \fW, \un{w}^{\un{e}} s
  > \un{w}^{\un{e}}}}(v^{d(\un{e})}N_{\un{w}^{\un{e}}s} + v^{d(\un{e})+1}N_{\un{w}^{\un{e}}}) + 
\sum_{\substack{\un{e} \subset \un{w}, \\ \text{$\un{e}$
    avoids $\Waff \smallsetminus \fW$,} \\ \un{w}^{\ue}s \in \fW, \un{w}^{\un{e}} s
  < \un{w}^{\un{e}}}} (v^{d(\un{e})}N_{\un{w}^{\un{e}}s} +
v^{d(\un{e})-1}N_{\un{w}^{\un{e}}}) \\
=  \sum_{\substack{\un{e}' \subset \un{w}s, \\ \text{$\un{e}'$
    avoids  $\Waff \smallsetminus \fW$} \\ \un{w}^{\tau(\un{e}')} s
  >\un{w}^{\tau(\un{e}')}  }} v^{d(\un{e}')} N_{(\un{w}s)^{\un{e}'}}+
\sum_{\substack{\un{e}' \subset \un{w}s, \\ \text{$\un{e}'$
    avoids  $\Waff \smallsetminus \fW$} \\ \un{w}^{\tau(\un{e}')} s
  <\un{w}^{\tau(\un{e}')}  }} v^{d(\un{e}')} N_{(\un{w}s)^{\un{e}'}} \\
= \sum_{\substack{\un{e}' \subset \un{w}s, \\ \text{$\un{e}'$
    avoids  $\Waff \smallsetminus \fW$}}} v^{d(\un{e}')} N_{(\un{w}s)^{\un{e}'}}
\end{gather*}
where in the third line $\tau(\un{e}') = e'_1 \cdots e'_m$ is the
subexpression of $\un{w}$ obtained by omitting the last term in $\un{e}'$. The lemma follows.
\end{proof}

\subsection{Diagrammatic Soergel bimodules}
\label{ss:diag-SB}

From now on in this section we fix an integral domain $\K$. We set $\fh := \K \otimes_\Z \Z \Phi$, and define for any $s \in S$ elements $\alpha_s \in \fh^*:=\Hom_{\K}(\fh,\K)$ and $\alpha_s^\vee \in \fh$ as follows:
\begin{itemize}
\item
if $s \in \Sf$, then $\alpha_s$ and $\alpha_s^\vee$ are the images in $\fh^*$ and $\fh$ of the simple coroot and simple root associated with $s$ respectively;
\item
if $s \in \Saff \smallsetminus \Sf$, then the image of $s$ under the natural projection $\Waff \to \Wf$ is a reflection $s_\gamma$ for some $\gamma \in \Phi^+$; we define $\alpha_s$ as the image in $\fh^*$ of $-\gamma^\vee$, and $\alpha_s^\vee$ as the image in $\fh$ of $-\gamma$.
\end{itemize}
(This notation might be misleading, but it will be abandoned very soon.) We make the following assumptions:
\begin{gather}
\label{eqn:assumption-pairing}
\text{the natural pairing $(\K \otimes_\Z \Z\Phi) \times (\K \otimes_\Z \Z\Phi^\vee) \to \K$ is a perfect pairing;} \\
\label{eqn:assumption-Dem-surjectivity}
\text{for any $s \in \Saff$, the morphisms $\alpha_s \colon \fh \to \K$ and $\alpha_s^\vee \colon \fh^* \to \K$ are surjective.}
\end{gather}
Assumption~\eqref{eqn:assumption-pairing} is equivalent to requiring that the determinant of the Cartan matrix of the root system $\Phi$ is invertible in $\K$, and assumption~\eqref{eqn:assumption-Dem-surjectivity} is clearly always satisfied if $2$ is invertible in $\K$. Note also that if these assumptions hold for a commutative ring $\K$, then they hold for all commutative $\K$-algebras.

\begin{rmk}
\label{rmk:condition-p-h}
In our main application of the results of this section, the ring $\K$ will be the field $\bk$ of Section~\ref{sec:blocks}. In this case, the condition $p \geq h$ ensures that~\eqref{eqn:assumption-Dem-surjectivity} holds, except maybe when $\Phi$ is a (non empty) direct sum of root systems of type $\mathbf{A}_1$. To ensure that~\eqref{eqn:assumption-pairing} holds, we will assume that $p>h$.
\end{rmk}

The datum of $\fh$ and the subsets $\{\alpha_s : s \in S\} \subset \fh^*$ and $\{\alpha_s^\vee : s \in S\} \subset \fh$ defines a realization of $(\Waff,\Saff)$ over $\K$ in the sense of~\cite[Definition~3.1]{ew}.
(In fact it is easy to check that the technical condition in~\cite[(3.3)]{ew} is satisfied.). Moreover, this realization is balanced in the sense of~\cite[Definition~3.6]{ew}. The representation of $\Waff$ on $\fh$ associated with this realization factors through the natural representation of $\Wf$. Finally, our assumption~\eqref{eqn:assumption-Dem-surjectivity} precisely says that Demazure Surjectivity (see~\cite[Assumption~3.7]{ew}) holds for this realization. (This condition is necessary for the category $\DiagBS$ to be well behaved, see~\cite[Remark~5.5]{ew}.) As in~\cite{ew}, we denote by $R$ the symmetric algebra (over $\K$) of $\fh^*$. Our assumption~\eqref{eqn:assumption-pairing} implies that
\begin{equation}
\label{eqn:generators-R}
\text{$R$ is generated, as a $\K$-algebra, by the images of the coroots $\alpha^\vee$ for $\alpha \in \Sigma$.}
\end{equation}


Let us consider the category associated with this realization as defined
in~\cite[Definition~5.2]{ew}. In the present paper we will denote this category by $\DiagBS$. (Here ``$\mathrm{BS}$'' stands for ``Bott--Samelson''; this category is denoted $\mathcal{D}$ in~\cite{ew}.) We will consider $\DiagBS$ as a category endowed with a ``shift of the grading'' autoequivalence $\langle 1 \rangle$ (denoted $(1)$ in~\cite{ew}) rather than as a graded category; therefore this category has objects $B_{\uw} \langle n \rangle$ parametrized by pairs consisting of an expression $\uw$ and an integer $n \in \Z$, and we have $(B_{\uw} \langle n \rangle) \langle 1 \rangle = B_{\uw} \langle n+1 \rangle$. We will write $B_{\uw}$ instead of $B_{\uw} \langle 0 \rangle$. The category $\DiagBS$ is monoidal, with product defined on objects by
\[
(B_{\uv} \langle n \rangle) \cdot (B_{\uw} \langle m \rangle) = B_{\uv \uw} \langle n + m \rangle,
\]
where $\uv \uw$ is the concatenation of the expressions $\uv$ and $\uw$.

As in~\cite{ew} we will use diagrams to denote morphisms in $\DiagBS$:
a morphism in
$\Hom_{\DiagBS}(B_{\uv} \langle n \rangle, B_{\uw} \langle m \rangle)$
is a $\K$-linear combination of certain equivalence classes of diagrams whose bottom has strands labeled by the simple reflections appearing in $\uv$, and whose top has strands labeled by the simple reflections appearing in $\uw$. (In particular, diagrams should be read from bottom to top.)
Diagrammatically, the product corresponds to horizontal concatenation, and composition corresponds to vertical concatenation.
The diagrams are constructed by (horizontal and vertical) concatenation of images under powers of $\langle 1 \rangle$ of $4$ different types of generators:
\begin{enumerate}
\item
\label{it:morphisms-DBS-f}
morphisms
$B_{\varnothing} \to B_{\varnothing} \langle \deg(f) \rangle$
for any homogeneous $f \in R$, represented diagrammatically as the diagram
\[
    \begin{tikzpicture}[thick,scale=0.07,baseline]
      \node at (0,0) {$f$};
    \end{tikzpicture}
\]
with empty top and bottom;
\item
\label{it:morphisms-DBS-dots}
the upper and lower dots
$B_s \to B_{\varnothing}\langle 1 \rangle$ and $B_{\varnothing} \to B_s \langle 1 \rangle$
(for $s \in \Saff$), represented diagrammatically as
\[
    \begin{tikzpicture}[thick,scale=0.07,baseline]
      \draw (0,-5) to (0,0);
      \node at (0,0) {$\bullet$};
      \node at (0,-6.7) {\tiny $s$};
    \end{tikzpicture}
    \qquad
    \text{and}
    \qquad
      \begin{tikzpicture}[thick,baseline,xscale=0.07,yscale=-0.07]
      \draw (0,-5) to (0,0);
      \node at (0,0) {$\bullet$};
      \node at (0,-6.7) {\tiny $s$};
    \end{tikzpicture};
\]
\item
\label{it:morphisms-DBS-trivalent}
the trivalent vertices
$B_s \to B_{ss}\langle -1 \rangle$ and $B_{ss} \to B_s\langle -1 \rangle$
(again for $s \in \Saff$), represented diagrammatically as
\[
    \begin{tikzpicture}[thick,baseline,scale=0.07]
      \draw (-4,5) to (0,0) to (4,5);
      \draw (0,-5) to (0,0);
      \node at (0,-6.7) {\tiny $s$};
      \node at (-4,6.7) {\tiny $s$};
      \node at (4,6.7) {\tiny $s$};      
    \end{tikzpicture}
    \qquad
    \text{and}
    \qquad
        \begin{tikzpicture}[thick,baseline,scale=-0.07]
      \draw (-4,5) to (0,0) to (4,5);
      \draw (0,-5) to (0,0);
      \node at (0,-6.7) {\tiny $s$};
      \node at (-4,6.7) {\tiny $s$};
      \node at (4,6.7) {\tiny $s$};    
    \end{tikzpicture};
\]
\item
\label{it:morphisms-DBS-2mvalent}
for pairs $(s,t)$ of distinct elements of $\Saff$ such that $st$ has finite order $m_{st}$ in $\Waff$, the $2m_{st}$-valent vertex
$B_{st \cdots} \to B_{ts \cdots}$
(where each index has $m_{st}$ simple reflections appearing), represented diagrammatically as
\[
\begin{tikzpicture}[scale=0.5,baseline,thick]
\draw (-0.5,-1) to (0,0) to (0.5,1);
\draw[red] (0.5,-1) to (0,0) to (-0.5,1);
\node at (-0.5,-1.3) {\tiny $s$};
\node at (0.5,1.3) {\tiny $s$};
\node at (0.5,-1.3) {\tiny $t$};
\node at (-0.5,1.3) {\tiny $t$};
    \end{tikzpicture},
    \qquad
\begin{tikzpicture}[yscale=0.5,xscale=0.4,baseline,thick]
\draw (-1,-1) to (0,0) to (0,1);
\draw (1,-1) to (0,0);
\draw[red] (0,-1) to (0,0) to (-1,1);
\draw[red] (0,0) to (1,1);
\node at (-1,-1.3) {\tiny $s$};
\node at (0,1.3) {\tiny $s$};
\node at (1,-1.3) {\tiny $s$};
\node at (0,-1.3) {\tiny $t$};
\node at (-1,1.3) {\tiny $t$};
\node at (1,1.3) {\tiny $t$};
    \end{tikzpicture},
    \qquad
\begin{tikzpicture}[yscale=0.5,xscale=0.3,baseline,thick]
\draw (-1.5,-1) to (0,0) to (-0.5,1);
\draw (0.5,-1) to (0,0) to (1.5,1);
\draw[red] (-0.5,-1) to (0,0) to (-1.5,1);
\draw[red] (1.5,-1) to (0,0) to (0.5,1);
\node at (-1.5,-1.3) {\tiny $s$};
\node at (0.5,-1.3) {\tiny $s$};
\node at (-0.5,1.3) {\tiny $s$};
\node at (1.5,1.3) {\tiny $s$};
\node at (-0.5,-1.3) {\tiny $t$};
\node at (1.5,-1.3) {\tiny $t$};
\node at (0.5,1.3) {\tiny $t$};
\node at (-1.5,1.3) {\tiny $t$};
    \end{tikzpicture},
    \qquad \text{or} \qquad
\begin{tikzpicture}[yscale=0.5,xscale=0.3,baseline,thick]
\draw (-2.5,-1) to (0,0) to (-1.5,1);
\draw (-0.5,-1) to (0,0) to (0.5,1);
\draw (1.5,-1) to (0,0) to (2.5,1);
\draw[red] (-1.5,-1) to (0,0) to (-2.5,1);
\draw[red] (0.5,-1) to (0,0) to (-0.5,1);
\draw[red] (2.5,-1) to (0,0) to (1.5,1);
\node at (-2.5,-1.3) {\tiny $s$};
\node at (-1.5,1.3) {\tiny $s$};
\node at (0.5,1.3) {\tiny $s$};
\node at (2.5,1.3) {\tiny $s$};
\node at (-0.5,-1.3) {\tiny $s$};
\node at (1.5,-1.3) {\tiny $s$};
\node at (-1.5,-1.3) {\tiny $t$};
\node at (0.5,-1.3) {\tiny $t$};
\node at (2.5,-1.3) {\tiny $t$};
\node at (-2.5,1.3) {\tiny $t$};
\node at (-0.5,1.3) {\tiny $t$};
\node at (1.5,1.3) {\tiny $t$};
    \end{tikzpicture}
\]
if $m_{st}$ is $2$, $3$, $4$ or $6$.
\end{enumerate}
(The colors used in the diagrams have no particular significance: they are here only to make it easier to see which strands have the same label.)
These generators satisfy a number of relations described in~\cite[\S 5]{ew}, which we will not repeat here. These relations define the ``equivalence relation'' considered above; let us only recall that isotopic diagrams are equivalent.

\begin{rmk}
\label{rmk:morphisms-Dasph}
Let $\alpha \in \Sigma$, and let $s=s_\alpha \in \Sf$. By~\cite[(5.1)]{ew}, the morphism associated with the image of $\alpha^\vee$ in $\fh^* \subset R$ as in~\eqref{it:morphisms-DBS-f} above is the composition
\[
B_{\varnothing} \to B_s \langle 1 \rangle \to B_{\varnothing}\langle 2 \rangle
\]
where the first morphism is the ``lower dot'' morphism for $s$ and the second one is the shift by $\langle 1 \rangle$ of the ``upper dot" morphism for $s$ (see~\eqref{it:morphisms-DBS-dots}). Therefore, by~\eqref{eqn:generators-R}, the morphisms defined in~\eqref{it:morphisms-DBS-dots}, \eqref{it:morphisms-DBS-trivalent} and~\eqref{it:morphisms-DBS-2mvalent} are sufficient to generate all the morphisms in~$\DiagBS$.
\end{rmk}

For $X, Y$ in $\DiagBS$ we set
\[
\Hom^\bullet_{\DiagBS}(X,Y) := \bigoplus_{n \in \Z} \Hom_{\DiagBS}(X, Y \langle n \rangle).
\]
This graded $\K$-module is a graded bimodule over $R$, where for $f \in R$ homogeneous and $\phi \in \Hom^\bullet_{\DiagBS}(X,Y)$, the element $f \cdot \phi$, resp.~$\phi \cdot f$, is the composition of the morphism
\begin{multline*}
X = B_\varnothing \cdot X \to (B_\varnothing \langle \deg(f) \rangle) \cdot X = X \langle \deg(f) \rangle, \\
\text{resp.} \quad X = X \cdot B_\varnothing \to X \cdot (B_\varnothing \langle \deg(f) \rangle)= X \langle \deg(f) \rangle,
\end{multline*}
induced by the morphism in~\eqref{it:morphisms-DBS-f} above, with $\phi \langle \deg(f) \rangle$. (Of course, one can equivalently compose with the similar morphisms for $Y$.) It follows from~\cite[Corollary~6.13]{ew} that $\Hom^\bullet_{\DiagBS}(X,Y)$ is free of finite rank as a left $R$-module and as a right $R$-module.

The category $\DiagBS$ admits an autoequivalence
\[
\imath \colon \DiagBS \simto \DiagBS
\]
which sends the object $B_{s_1 \cdots s_m} \langle n \rangle$ to $B_{s_m \cdots s_1} \langle n \rangle$, and acts on a morphism corresponding to a diagram by reflecting this diagram along a vertical axis. This autoequivalence satisfies
\[
\imath(X \cdot Y) = \imath(Y) \cdot \imath(X) \quad \text{for all $X,Y$ in $\DiagBS$.}
\]
There also exists a monoidal equivalence
\[
\tau \colon \DiagBS \simto \DiagBS^{\mathrm{op}}
\]
which sends $B_{\uw} \langle n \rangle$ to $B_{\uw} \langle -n \rangle$ and reflects diagrams along an horizontal axis; see~\cite[\S 6.3]{ew}.

In the rest of this subsection we assume that $\K$ is in addition a complete local ring. In this case,
we denote by $\Diag$ the Karoubi envelope of the additive hull of $\DiagBS$. (This category is denoted $\mathrm{Kar}(\mathcal{D})$ in~\cite{ew}.) Again this category is monoidal, and has an autoequivalence $\langle 1 \rangle$. By~\cite[Theorem~6.25]{ew}, for any $w \in \Waff$ there exists a unique indecomposable object $B_w$ in $\Diag$ which is a direct summand of $B_{\uw}$ for any reduced expression $\uw$ for $w$, but is not a direct summand of any $B_{\uv} \langle n \rangle$ with $\ell(\uv) < \ell(\uw)$. Moreover, any indecomposable object in $\Diag$ is isomorphic to $B_w \langle n \rangle$ for a unique pair $(w,n) \in \Waff \times \Z$.

We define $\Hom^\bullet_{\Diag}(X,Y)$ in a similar way as for $\Hom^\bullet_{\DiagBS}(X,Y)$.
We will again denote by $\imath$ the autoequivalence of $\Diag$ induced by $\imath$, and by $\tau$ the anti-autoequivalence of $\Diag$ induced by $\tau$. 
We have
\[
\imath(B_w) = B_{w^{-1}}, \quad \tau(B_w)=B_w \qquad \text{for all $w \in \Waff$.}
\]

In~\S\ref{ss:hwcat} we have considered the Serre quotient of an abelian category by a Serre subcategory. In the proof below (and in the rest of the section) we will consider another, more naive, notion of quotient of an additive category $\cC$ by a full additive subcategory $\cC'$: we denote by $\cC \quo \cC'$ the category which has the same objects as $\cC$, and such that for $M,N$ in $\cC$ the  group $\Hom_{\cC \quo \cC'}(M,N)$ is the quotient of $\Hom_{\cC}(M,N)$ by the subgroup of morphisms which factor through an object of $\cC'$.

\begin{lem}
\label{lem:BS-not-minimal}
Let $\underline{w} = s_1 \cdots s_m$ be an expression.
Any indecomposable summand $B_x \langle k \rangle$ of $B_{\un{w}}$ satisfies $s_1 x < x$.
\end{lem}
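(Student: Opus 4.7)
The plan is to translate the claim into the Hecke algebra via the isomorphism $[\Diag] \simto \Haff$ and exploit the structure of the left ideal generated by $\uH_{s_1}$. Writing $s := s_1$ and $\uv := s_2 \cdots s_m$, the factorization $B_{\uw} = B_s \cdot B_{\uv}$ in $\Diag$ yields $[B_{\uw}] = \uH_s \cdot [B_{\uv}]$ after passing to Grothendieck rings, so $[B_{\uw}]$ lies in the left ideal $\uH_s \cdot \Haff$. Using the standard product formulas $\uH_s \uH_w = \uH_{sw} + \sum_{z<w,\, sz<z} \mu(z,w)\, \uH_z$ when $sw > w$ (note that $\uH_{sw}$ itself has $s$ as a left descent, since $s(sw) = w < sw$) and $\uH_s \uH_w = (v+v^{-1})\,\uH_w$ when $sw < w$, one sees that $\uH_s \cdot \Haff \subset I$ where $I := \mathrm{span}_{\Z[v,v^{-1}]}\{\uH_y : sy < y\}$. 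Conversely, for $sw < w$ the identity $\uH_w = \uH_s \uH_{sw} - \sum_{z<sw,\, sz<z}\mu(z,sw)\,\uH_z$ together with an induction on $\ell(w)$ yields $I \subset \uH_s \cdot \Haff$; hence $[B_{\uw}] \in I$.

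Next I would write $[B_{\uw}] = \sum_x m_x(v)\,\puH_x$ with $m_x(v) \in \Z_{\geq 0}[v,v^{-1}]$ recording the graded multiplicity of the indecomposable $B_x$ appearing in $B_{\uw}$; the lemma is then equivalent to the vanishing $m_x(v) = 0$ whenever $sx > x$. Expanding each $\puH_x$ in the Kazhdan--Lusztig basis as $\puH_x = \uH_x + \sum_{y<x} \pph_{y,x}(v)\, \uH_y$ and collecting terms,
\[
[B_{\uw}] = \sum_z \Bigl( m_z(v) + \sum_{x > z} m_x(v)\, \pph_{z,x}(v) \Bigr)\, \uH_z.
\]
The membership $[B_{\uw}] \in I$ then forces the bracketed coefficient to vanish for every $z$ with $sz > z$.

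The decisive input is now the positivity of the $p$-canonical basis in the Kazhdan--Lusztig basis, i.e.\ the fact that $\pph_{y,x}(v) \in \Z_{\geq 0}[v,v^{-1}]$; this is one of the ``favorable properties'' of the $p$-canonical basis established in~\cite{jw}. Granted this, the vanishing bracket above is a sum of nonnegative Laurent polynomials, so each individual summand must be zero; in particular $m_z(v) = 0$ whenever $sz > z$, which is the claim. The essential obstacle is thus the positivity statement itself: without it, cancellations between the various contributions $m_x(v)\, \pph_{z,x}(v)$ could not be ruled out and the numerical argument would collapse.
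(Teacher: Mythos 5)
Your argument is correct in substance but takes a genuinely different route from the paper's. You work entirely in the split Grothendieck group: $[B_{\uw}]$ lies in the left ideal $\uH_{s_1}\Haff=\mathrm{span}_{\Z[v,v^{-1}]}\{\uH_y : s_1y<y\}$, and you rule out unwanted summands by expanding in the $p$-canonical basis and then in the Kazhdan--Lusztig basis, using that both families of coefficients lie in $\Z_{\geq 0}[v,v^{-1}]$. The paper instead argues inside the category: after reducing via the autoequivalence $\imath$ to the right-descent statement $xs_m<x$, it passes to the quotient of $\Diag$ by all $B_y\langle j\rangle$ with $y\not\geq x$, where $\Hom^\bullet(B_{\uw},B_{\ux})$ is spanned by light leaves; if $xs_m>x$, every subexpression of $\uw$ expressing $x$ ends in U0 or D1, so every light leaf $B_{\uw}\to B_{\ux}\langle j\rangle$ factors through the upper dot $B_{\ux}B_{s_m}\langle j-1\rangle\to B_{\ux}\langle j\rangle$, and a degree count (there is no nonzero map $B_{\ux s_m}\to B_{\ux}\langle -1\rangle$ in that quotient) contradicts the splitting of $B_x\langle k\rangle$ off $B_{\uw}$. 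The trade-off is significant: the paper's proof is elementary and purely diagrammatic, valid for the given realization with no geometric input, whereas yours hinges on the positivity of the expansion of $\puH_x$ in the Kazhdan--Lusztig basis. That positivity is indeed one of the properties recorded in~\cite{jw}, but it is proved there via parity sheaves and the decomposition theorem (base change from a complete local ring to its characteristic-zero fraction field), and transporting it to the $p$-canonical basis defined by \emph{this} paper's realization $\bk\otimes_\Z\Z\Phi$ requires the comparison of Remark~\ref{rmk:p-can-basis-realization}, which is only justified at the end of Part~\ref{pt:parities}. There is no circularity, since neither input depends on this lemma, but your proof imports essentially the whole geometric theory to establish a statement the paper needs, and proves, with bare hands in Part~\ref{pt:general-conj}.
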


\begin{proof}
Using the autoequivalence $\imath$, it is equivalent to prove that any indecomposable summand $B_x \langle k \rangle$ of $B_{\uw}$ satisfies $x s_m < x$; this is the statement we will actually prove.

Let us fix an indecomposable direct summand $B_x \langle k \rangle$ appearing in $B_{\uw}$.
Let $\Diag^{\geq x}$ be the quotient (in the sense explained just before the statement) of $\Diag$ by the full subcategory whose objects are direct sums of object $B_y \langle j \rangle$ with $y \not\geq x$ in the Bruhat order. The ``shift of the grading'' functor $\langle 1 \rangle$ induces a similar functor on $\Diag^{\geq x}$, and we define $\Hom^\bullet_{\Diag^{\geq x}}(-,-)$ as in $\Diag$. Let us fix some reduced expression $\ux$ for $x$; then the images in $\Diag^{\geq x}$ of $B_x$ and $B_{\ux}$ coincide; in particular (the image of) $B_{\ux} \langle k \rangle$ is a direct summand of (the image of) $B_{\uw}$ in $\Diag^{\geq x}$, so that we have morphisms
\[
B_{\ux} \langle k \rangle \xrightarrow{f} B_{\uw} \xrightarrow{g} B_{\ux} \langle k \rangle
\]
whose composition is $\id_{B_{\ux} \langle k \rangle}$ in $\Diag^{\geq x}$.

If 
\[
I_{\ux} \subset \Hom_{\Diag}^\bullet(B_{\uw}, B_{\ux})
\]
is the submodule considered in~\cite[\S 7.2]{ew}, then it is clear that the canonical surjection $\Hom_{\Diag}^\bullet(B_{\uw}, B_{\ux}) \twoheadrightarrow \Hom_{\Diag^{\geq x}}^\bullet(B_{\uw}, B_{\ux})$ factors through a surjection
\[
\Hom_{\Diag}^\bullet(B_{\uw}, B_{\ux}) / I_{\ux} \twoheadrightarrow \Hom_{\Diag^{\geq x}}^\bullet(B_{\uw}, B_{\ux}).
\]
On the other hand, by~\cite[Proposition~7.6]{ew}, any choice of ``light leaves morphisms'' $L_{\uw, \ue} \in \Hom_{\Diag}^\bullet(B_{\uw}, B_{\ux})$ (where $\ue$ runs over subexpressions of $\uw$ 
expressing $x$) provides a (graded) basis of $\Hom_{\Diag}^\bullet(B_{\uw}, B_{\ux}) / I_{\ux}$ as a left $R$-module. In particular, the images of these morphisms generate $\Hom_{\Diag^{\geq x}}^\bullet(B_{\uw}, B_{\ux})$. We will fix a choice of such morphisms.


Now assume for a contradiction that $x s_m > x$. Then for any subexpression $\ue \subset \uw$ expressing $x$, the symbol associated with $m$ as in~\S\ref{ss:Haff-Masph} is either D1 or U0. It follows that, by construction, any light leaves morphism $B_{\uw} \to B_{\ux} \langle j \rangle$ (with $j \in \Z$) factors as a composition
\[
B_{\un{w}} \to B_{\ux}B_{s_m} \langle j-1 \rangle \to B_{\ux} \langle j \rangle,
\]
where the second morphism is induced by the ``upper dot'' morphism $B_{s_m} \to B_{\varnothing} \langle 1 \rangle$. We deduce that this property holds for any morphism in $\Diag^{\geq x}$ from $B_{\uw}$ to some shift of $B_{\ux}$, and in particular for the image of $g$. We fix such a factorization.


The composition
\[
B_{\ux} \langle k \rangle \xrightarrow{f} B_{\uw} \to B_{\ux} B_{s_m} \langle k-1 \rangle
\]
is nonzero in $\Diag^{\geq x}$, since its composition with the morphism $B_{\ux} B_{s_m} \langle k-1 \rangle \to B_{\ux} \langle k \rangle$ induced by the upper dot is nonzero. However we have
\begin{multline*}
\Hom_{\Diag^{\geq x}}(B_{\ux} \langle k \rangle, B_{\ux s_m} \langle k-1 \rangle) \cong \Hom_{\Diag^{\geq x}}(B_{\ux s_m} \langle 1-k \rangle, B_{\ux} \langle -k \rangle) \\
\cong \Hom_{\Diag^{\geq x}}(B_{\ux s_m}, B_{\ux} \langle -1 \rangle),
\end{multline*}
where the first isomorphism is induced by $\tau$. As above $\Hom_{\Diag^{\geq x}}^\bullet(B_{\ux s_m}, B_{\ux})$ is spanned by light leaves morphisms, and the only such morphism is of degree $1$ (corresponding to the only subexpression of $\ux s_m$ expressing $x$). Hence
\[
\Hom_{\Diag^{\geq x}}(B_{\ux s_m}, B_{\ux} \langle -1 \rangle) = \{0\},
\]
which provides the desired contradiction.
\end{proof}

%

\subsection{Two lemmas on $\DiagBS$}
\label{ss:rex-moves-DBS}

Let $\K$ be again an arbitrary integral domain satisfying~\eqref{eqn:assumption-pairing} and~\eqref{eqn:assumption-Dem-surjectivity}.

The following result is observed in~\cite[Equation~(5.14)]{ew}.

\begin{lem}
\label{lem:BsBs}
For $s \in \Saff$, in the additive hull of $\DiagBS$ we have
$B_s \cdot B_s \cong B_s \langle 1 \rangle \oplus B_s \langle -1 \rangle$.
\end{lem}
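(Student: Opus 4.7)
The plan is to write down four explicit morphisms in $\DiagBS$ realizing the decomposition, then verify the splitting using the one-color relations of~\cite{ew}. Concretely, let $\iota_+ \colon B_s\langle 1\rangle \to B_s \cdot B_s$ be the (shift of the) ``lower'' trivalent vertex $B_s \to B_{ss}\langle -1\rangle$, let $\pi_- \colon B_s \cdot B_s \to B_s\langle -1\rangle$ be the ``upper'' trivalent vertex $B_{ss} \to B_s\langle -1\rangle$, let $\iota_- \colon B_s\langle -1\rangle \to B_s \cdot B_s$ be the morphism obtained by horizontally concatenating a lower dot on one strand with an identity strand, and let $\pi_+ \colon B_s \cdot B_s \to B_s\langle 1\rangle$ be the analogous concatenation using an upper dot.

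I would then verify the ``unit'' identities $\pi_+ \circ \iota_+ = \id_{B_s\langle 1\rangle}$ and $\pi_- \circ \iota_- = \id_{B_s\langle -1\rangle}$, together with the ``completeness'' identity
\[
\iota_+ \circ \pi_+ \; + \; \iota_- \circ \pi_- \; = \; \id_{B_s \cdot B_s}.
\]
The first two identities are immediate consequences of the one-color ``needle'' relation of~\cite[\S 5]{ew}, which states that a trivalent vertex capped by a dot on one of its outer legs is equal to an identity strand. Once these three identities are in hand, a purely formal argument shows that $\iota_+ \circ \pi_+$ and $\iota_- \circ \pi_-$ are mutually orthogonal idempotents (orthogonality follows from completeness plus the unit identities, since $e_1 = e_1(e_1+e_2) = e_1 + e_1 e_2$ forces $e_1 e_2 = 0$) whose images are isomorphic to $B_s\langle 1\rangle$ and $B_s\langle -1\rangle$ respectively. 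The direct sum decomposition in the additive hull of $\DiagBS$ then follows.

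The only substantive step, and the main obstacle, is the completeness identity, which is a planar relation between two diagrams on two parallel strands of the same color $s$. It is an axiom (or immediate consequence of the axioms) of the one-color part of the Elias--Williamson diagrammatic calculus, expressing the Frobenius compatibility between the dots (unit/counit) and the trivalent vertices (multiplication/comultiplication); this is precisely the content recorded as equation~(5.14) of~\cite{ew}. No further input is required, and in particular neither the hypothesis~\eqref{eqn:assumption-pairing} nor any computation beyond the defining relations of $\DiagBS$ is used.
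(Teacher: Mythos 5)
Your overall strategy (explicit split injections and surjections plus a resolution of the identity) is the right one, and your two unit identities $\pi_+\circ\iota_+=\id$ and $\pi_-\circ\iota_-=\id$ do follow from the one-colour Frobenius relations --- although the relation you are actually invoking there is ``a dot contracts a trivalent vertex to the identity strand'' (\cite[(5.3)--(5.4)]{ew}), not the needle relation \cite[(5.5)]{ew}, which says that a needle is \emph{zero}. The genuine gap is the completeness identity: with the four morphisms exactly as you have defined them, it is \emph{false}. Compute the full Gram matrix: besides the two unit identities one has $\pi_-\circ\iota_+=0$ (merging the two legs of a split involves $\partial_s(1)=0$), but $\pi_+\circ\iota_-$ is a barbell sitting next to an identity strand, i.e.\ the degree-$2$ endomorphism of $B_s$ given by multiplication by $\alpha_s$ (cf.\ Remark~\ref{rmk:morphisms-Dasph} and \cite[(5.1)]{ew}), which is nonzero. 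Your own formal argument shows that completeness would force $e_1e_2=0$, and pre- and post-composing $e_1 e_2 = \iota_+ (\pi_+ \iota_-) \pi_-$ with the splittings would then give $\pi_+\circ\iota_-=0$, a contradiction. One can also see the failure directly in the bimodule model $B_s\cdot B_s=R\otimes_{R^s}R\otimes_{R^s}R\langle 2\rangle$ used in \S\ref{ss:verification-Diag-parity}: for $\delta$ of degree $2$ with $\partial_s(\delta)=1$ one finds $(\iota_+\pi_+ + \iota_-\pi_-)(1\otimes\delta\otimes 1)=1\otimes\delta\otimes1+2\,\delta\otimes1\otimes1 \neq 1 \otimes \delta \otimes 1$.

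What \cite[(5.14)]{ew} actually records is the \emph{corrected} resolution of the identity: one of the two pairs must be modified by a term involving a polynomial $\delta$ with $\partial_s(\delta)=1$ (in the balanced case $\delta=\alpha_s/2$), e.g.\ one replaces $\pi_+$ by $\pi_+-\delta\cdot\pi_-$, after which the Gram matrix becomes the identity and your formal argument goes through verbatim. Verifying the corrected completeness identity uses the polynomial forcing relation \cite[(5.2)]{ew} together with the existence of such a $\delta$, i.e.\ Demazure surjectivity~\eqref{eqn:assumption-Dem-surjectivity}; so, contrary to your closing sentence, that hypothesis is genuinely used --- it is precisely why the paper imposes it on the realization. (Alternatively one may keep your four morphisms and note that the Gram matrix is unitriangular, hence invertible, so that $(\iota_+,\iota_-)$ is a split injection; but to conclude it is an isomorphism one still needs the corrected completeness identity, so the correction term cannot be avoided.)
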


Let us now recall the notion of ``rex move'' from~\cite[\S 4.2]{ew}. (Here, ``rex'' stands for ``reduced expression''.) Consider an element $w \in \Waff$, and define the \emph{rex graph} $\Gamma_w$ as the graph with vertices the reduced expressions for $w$ and edges connecting reduced expressions if they differ by one application of a braid relation. By definition, if $\ux$ and $\uy$ are two reduced expressions for $w$, a \emph{rex move} $\ux \leadsto \uy$ is a directed path in $\Gamma_w$ from $\ux$ to $\uy$. To such a path one can associate a morphism from $B_{\ux}$ to $B_{\uy}$ in $\DiagBS$, obtained by composing the $2m_{st}$-valent morphisms associated with all the braid relations encountered in this path.

Fix now two reduced expressions $\ux$ and $\uy$ for the same element, and consider a rex move $\ux \leadsto \uy$. Let us denote by $\gamma \colon B_{\ux} \to B_{\ux}$ the morphism associated with the concatenation $\ux \leadsto \uy \leadsto \ux$, where the second portion is obtained by following the same path as for the rex move we started with, but in the reversed order.

\begin{lem}
\label{lem:rex-move-smaller}
There exists a finite set $J$ and, for any $j \in J$, a morphism $\phi_j \colon B_{\ux} \to B_{\ux}$ which factors through an object of the form $B_{\uz_j} \langle k_j \rangle$ where $\uz_j$ is obtained from $\ux$ by omitting at least $2$ simple reflections (so that, in particular, $\ell(\uz_j) \leq \ell(\ux)-2$), such that $\gamma=\id_{B_{\ux}} + \sum_{j \in J} \phi_j$.
\end{lem}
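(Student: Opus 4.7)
The proof proceeds by induction on the number $n$ of braid moves comprising the rex move $\ux \leadsto \uy$. The base case $n=0$ is trivial, so assume $n \geq 1$ and write the rex move as $\ux = \ux_0 \leadsto \ux_1 \leadsto \cdots \leadsto \ux_n = \uy$. Let $\alpha_i \colon B_{\ux_{i-1}} \to B_{\ux_i}$ and $\beta_i \colon B_{\ux_i} \to B_{\ux_{i-1}}$ denote the $2m_i$-valent morphisms (tensored with the identity on the unchanged letters) attached to the $i$-th braid move and its reverse, so that $\gamma = \beta_1 \circ \cdots \circ \beta_n \circ \alpha_n \circ \cdots \circ \alpha_1$.

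The heart of the argument is the following local claim: if $\ux_{i-1} = \underline{a}\cdot(s_1 s_2 \cdots s_m)\cdot\underline{b}$ with $m = m_{s_1 s_2}$ and $\alpha_i, \beta_i$ act by the $2m$-valent vertex on the middle block, then
\[
\beta_i \circ \alpha_i \;=\; \id_{B_{\ux_{i-1}}} + \sum_r \psi_r,
\]
where each $\psi_r$ factors through $B_{\underline{a}\cdot\uz'_r\cdot\underline{b}}\langle k_r\rangle$ for some sub-expression $\uz'_r$ of $s_1 s_2 \cdots s_m$ of length $\leq m-2$. This reduces to a dihedral calculation inside $\langle s_1, s_2\rangle$: iterating the Kazhdan--Lusztig product formula yields $\uH_{s_1}\uH_{s_2}\cdots\uH_{s_m} = \uH_{w_0^{s_1 s_2}} + (\text{$\Z[v,v^{-1}]$-combination of $\uH_u$ with $\ell(u)\leq m-2$})$, so every indecomposable summand of $B_{s_1 \cdots s_m}$ other than $B_{w_0^{s_1 s_2}}$ is a shift of some $B_u$ with $u \in \langle s_1, s_2\rangle$ and $\ell(u) \leq m-2$; such a $B_u$ is a direct summand of $B_{\uz'}$ for any reduced expression $\uz'$ of $u$, necessarily a sub-expression of the braid block of length $\leq m-2$. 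On the $B_{w_0^{s_1 s_2}}$-summand, $\beta_i \circ \alpha_i$ acts as the identity by the normalization of the $2m$-valent vertex in the presentation of~\cite[\S 5]{ew} (combined with Lemma~\ref{lem:BsBs} for one-color contributions), giving the claim.

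Apply this local claim to the innermost pair $\beta_n \circ \alpha_n$ and decompose $\gamma = \gamma' + R$, where $\gamma' = \beta_1 \circ \cdots \circ \beta_{n-1} \circ \alpha_{n-1} \circ \cdots \circ \alpha_1$ is the round-trip for the shorter rex move $\ux \leadsto \ux_{n-1} \leadsto \ux$ (to which the inductive hypothesis applies), while each summand of $R$ factors through $B_{\uz}\langle k\rangle$ for $\uz$ a sub-expression of $\ux_{n-1}$ of length $\leq \ell(\ux)-2$. The main obstacle will be translating these factorizations through sub-expressions of the intermediate expression $\ux_{n-1}$ into factorizations through sub-expressions of $\ux$ itself: since each intermediate braid move $\alpha_j, \beta_j$ only modifies its own braid block and leaves the other letters untouched, one restricts the rex move $\ux \leadsto \ux_{n-1}$ to the letters that remain in $\uz$ to obtain a rex move from a sub-expression $\uz^{\ast}$ of $\ux$ (of the same length as $\uz$, hence with $\geq 2$ letters omitted) to $\uz$, and then uses the light-leaves calculus of~\cite[\S 6]{ew} to rewrite the conjugating morphisms $\beta_1 \circ \cdots \circ \beta_{n-1}$ and $\alpha_{n-1} \circ \cdots \circ \alpha_1$ composed against the factorization through $B_{\uz}$ as a sum of compositions that factor through $B_{\uz^{\ast}}\langle k^{\ast}\rangle$, completing the induction.
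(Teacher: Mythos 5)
Your overall strategy -- induct on the number of braid moves in the rex move and reduce everything to the round trip across a single braid relation -- is exactly the paper's strategy. The gap is in your proof of the local claim. The paper disposes of the single-braid-move case by citing the (more precise) diagrammatic statement \cite[Claim~7.1]{ew}, which is proved in \emph{loc.\ cit.}\ by an explicit computation with the $2m_{st}$-valent vertices. Your substitute argument via the Kazhdan--Lusztig product formula and a decomposition of $B_{s_1\cdots s_m}$ into indecomposables does not work in the generality in which the lemma is stated and used: here $\K$ is an arbitrary integral domain satisfying~\eqref{eqn:assumption-pairing} and~\eqref{eqn:assumption-Dem-surjectivity}, so $\DiagBS$ need not be Krull--Schmidt and the objects $B_u$ are not even defined. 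Even over a complete local ring, the decomposition of $B_{s_1\cdots s_m}$ is governed by the $p$-canonical basis of the dihedral group (which can differ from the Kazhdan--Lusztig basis for $m_{st}\in\{4,6\}$ in small characteristic), not by the formula you iterate. Finally, the assertion that $\beta_i\circ\alpha_i$ ``acts as the identity on the $B_{w_0^{s_1s_2}}$-summand'' is precisely the nontrivial content of the claim; it does not follow from ``the normalization of the $2m$-valent vertex'' together with Lemma~\ref{lem:BsBs}, and no diagrammatic computation is offered in its place.

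A secondary issue: in your last paragraph, ``restricting the rex move $\ux\leadsto\ux_{n-1}$ to the letters that remain in $\uz$'' is not a well-defined operation, since the omitted letters may lie inside a braid block that the rex move rearranges. If one only needs the length bound $\ell(\uz_j)\le\ell(\ux)-2$ (which is all that is used in Lemma~\ref{lem:rex} and Lemma~\ref{lem:LL-order}), no translation step is needed at all; where the subexpression property matters (as in Lemma~\ref{lem:Bx-Demazure-product}), what saves the induction is that braid relations preserve the Demazure product, cf.~\eqref{eqn:Dem-product-1}--\eqref{eqn:Dem-product-2}, not a light-leaves rewriting of the conjugating morphisms.
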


\begin{proof}
Using induction on the length of the original rex move $\ux \leadsto \uy$, it suffices to prove the claim when $\ux$ is a reduced expression for the longest element $x$ in a finite parabolic subgroup $W_{s,t}$ of $W$ generated by two distinct simple relctions $s$ and $t$, and the rex move consists of the braid relation in $W_{s,t}$. In this case,
a more precise claim is stated as~\cite[Claim~7.1]{ew}.
\end{proof}

\subsection{Categorified antispherical module}
\label{ss:cat-Masph}

As in~\S\ref{ss:rex-moves-DBS},
let $\K$ be an integral domain satisfying~\eqref{eqn:assumption-pairing} and~\eqref{eqn:assumption-Dem-surjectivity}.
We define the category $\DasphBS$ with objects $\oB_{\uw} \langle n \rangle$ parametrized by pairs consisting of an expression $\uw$ and an integer $n$, and with morphisms
\[
\Hom_{\DasphBS}(\oB_{\uw}\langle n \rangle, \oB_{\uv} \langle m \rangle)
\]
defined as the quotient of $\Hom_{\DiagBS}(B_{\uw}\langle n \rangle, B_{\uv} \langle m \rangle)$ by the subspace spanned by the morphisms which factor through an object of the form $B_{\uu} \langle k \rangle$ where $\uu$ is an expression starting with a simple reflection in $\Sf$. There exists a natural functor
\[
\DiagBS \to \DasphBS
\]
sending the object $B_{\uw}$ to $\oB_{\uw}$, and the functor $\langle 1 \rangle$ induces a functor denoted similarly on $\DasphBS$.

As for objects of $\DiagBS$, for $X, Y$ in $\DasphBS$, we set
\[
\Hom^\bullet_{\DasphBS}(X,Y) := \bigoplus_{n \in \Z} \Hom_{\DasphBS}(X, Y \langle n \rangle).
\]

Let $X,Y$ be objects of $\DiagBS$, and let $\overline{X}$ and $\overline{Y}$ be their images in $\DasphBS$. Consider the natural (surjective) morphism
\begin{equation}
\label{eqn:morph-Hom-Dasph}
\Hom^\bullet_{\DiagBS}(X,Y) \to \Hom^\bullet_{\DasphBS}(\overline{X}, \overline{Y}).
\end{equation}
Using Remark~\ref{rmk:morphisms-Dasph} we see that
for any simple root $\alpha$, with associated simple reflection $s=s_\alpha \in \Sf$, and any $f \in \Hom_{\DiagBS}(X,Y \langle n \rangle)$, the morphism $\alpha^\vee \cdot f \colon X \to Y \langle n+2 \rangle$ (where by abuse we still denote by $\alpha^\vee$ the image of this coroot in $R$) factors through the object $B_s \cdot X \langle 1 \rangle$.
Hence the image of this morphism in $\Hom^\bullet_{\DasphBS}(\overline{X}, \overline{Y})$ vanishes. From this remark and~\eqref{eqn:generators-R} we deduce that~\eqref{eqn:morph-Hom-Dasph} factors through the natural surjection
\[
\Hom^\bullet_{\DiagBS}(X,Y) \to \K \otimes_R \Hom^\bullet_{\DiagBS}(X,Y)
\]
(where $\K$ is considered as the trivial $R$-module). Since $\Hom^\bullet_{\DiagBS}(X,Y)$ is finitely generated as a left $R$-module (see~\S\ref{ss:diag-SB}), it follows that $\Hom^\bullet_{\DasphBS}(\overline{X}, \overline{Y})$ is a finitely generated $\K$-module.

In the case when $\K$ is (in addition) a complete local ring,
we set
\[
\Dasph = \Diag \quo \Diag_{W \smallsetminus \fW},
\]
where $\Diag_{W \smallsetminus \fW}$ is the additive full subcategory of $\Diag$ whose objects are the direct sums of objects $B_w \langle n \rangle$ where $w \in \Waff \smallsetminus \fW$. If $w \in \fW$, we will denote by $\oB_w$ the image of $B_w$ under the canonical functor $\Diag \to \Dasph$. Then the objects $\oB_w \langle n \rangle$ with $w \in \fW$ and $n \in \Z$ are precisely the indecomposable objects in $\Dasph$.
It follows from Lemma~\ref{lem:BS-not-minimal} that $\Dasph$ is the Karoubi envelope of the additive hull of $\DasphBS$.

\subsection{Morphisms in the categorified antispherical module}
\label{ss:cat-Masph-morph}


Let $\K$ be again an arbitrary integral domain satisfying~\eqref{eqn:assumption-pairing} and~\eqref{eqn:assumption-Dem-surjectivity}.
Let us consider again the ``light leaves morphisms'' that we encountered (when $\K$ is complete local) in the proof of Lemma~\ref{lem:BS-not-minimal}, now in the special case of the empty expression. More precisely, let $\uw=(s_1, \cdots, s_m)$ be any expression. Then, in this special case, the light leaves morphisms are certain morphisms $\LL_{\uw, \ue} \colon B_{\uw} \to B_{\varnothing} \langle k \rangle$, parametrized by the set of subexpressions $\ue \subset \uw$ expressing $1$, and which form a basis of the left $R$-module $\Hom^\bullet_{\DiagBS}(B_{\uw}, B_{\varnothing})$ (see~\cite[Proposition~6.12]{ew}). The construction of these morphisms is explained in~\cite[\S 6.1]{ew};
it depends on some choices.
In particular, if $x_0, x_1, \cdots, x_m$ is the Bruhat stroll associated with $\ue$ (as in~\S\ref{ss:Haff-Masph}), then the construction depends on the choice of reduced expressions $\ux_0, \ux_1, \cdots, \ux_m$ for these elements. (In our case $x_0=x_m=1$, so that we necessarily have $\ux_0=\ux_m=\varnothing$.)

The main result of this subsection is the following counterpart of~\cite[Proposition~6.12]{ew} for $\DasphBS$.

\begin{prop}
\label{prop:ASLL}
 For any expression $\uw$, the light leaves morphisms $\LL_{\uw,\ue}$ can be chosen in such a way that the $\K$-module $\Hom^\bullet_{\DasphBS}(\overline{B}_{\uw},
 \overline{B}_{\varnothing})$ is spanned by the images of the morphisms
$\LL_{\uw, \ue}$ where $\ue$ is a subexpression of $\uw$ expressing $1$ and avoiding $W
    \smallsetminus \fW$.
\end{prop}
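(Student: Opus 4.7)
The strategy is to arrange, with appropriate choices of reduced expressions and rex moves in the inductive construction of the light leaves of~\cite[\S 6.1]{ew}, that the morphism $\LL_{\uw, \ue}$ factors through an object of the form $B_r \cdot B_\uu \langle k \rangle$ with $r \in \Sf$ (and hence vanishes in $\DasphBS$) whenever $\ue$ is a subexpression of $\uw$ expressing $1$ that does not avoid $\Waff \smallsetminus \fW$. Granting this, the proposition will follow by combining~\cite[Proposition~6.12]{ew}, which provides a left $R$-basis of $\Hom^\bullet_{\DiagBS}(B_\uw, B_\varnothing)$ consisting of the light leaves, with the observation from Remark~\ref{rmk:morphisms-Dasph} that the $R$-action on $\Hom^\bullet_{\DasphBS}(\oB_\uw, \oB_\varnothing)$ factors through the augmentation $R \twoheadrightarrow \K$. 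Under these two facts, the images of all light leaves $\K$-span this hom space in $\DasphBS$, while the non-avoiding ones die, leaving the avoiding ones as a spanning set.

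The argument proceeds by induction on $\ell(\uw)$. If $\uw$ starts with some $s_1 \in \Sf$ then $\id_{B_\uw}$ factors through $B_\uw$ itself (which starts with $B_{s_1}$), so $\oB_\uw$ is a zero object in $\DasphBS$ and the statement is vacuous; we may thus assume $s_1 \in \fW$. Fix a non-avoiding $\ue$ and let $i_0$ be the smallest index with $x_{i_0-1}s_{i_0} \notin \fW$. By minimality $x_{i_0-1} \in \fW$, and~\eqref{eqn:not-in-fW} produces $r \in \Sf$ with $x_{i_0-1}s_{i_0} = rx_{i_0-1}$. If step $i_0$ is of type U1, so $x_{i_0} = rx_{i_0-1}$ has length $\ell(x_{i_0-1})+1$, choose the reduced expression $\ux_{i_0} := r \cdot \ux_{i_0-1}$; then the partial light leaf at step $i_0$ lands in $B_{\ux_{i_0}} = B_r \cdot B_{\ux_{i_0-1}}$, which manifestly starts with $B_r$. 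All subsequent steps act only on strands to the right of $B_{\ux_{i_0}}$, so the $B_r$ prefix survives throughout, yielding the desired factorization. The D1 case is symmetric: take $\ux_{i_0-1} := r \cdot \ux_{i_0}$ and use Lemma~\ref{lem:rex-move-smaller} to control the rex move that arises in the construction.

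The U0 and D0 cases (where $e_{i_0} = 0$) are more delicate since the Bruhat stroll does not actually visit $rx_{i_0-1}$, and the equality $\ux_{i_0} = \ux_{i_0-1}$ offers no direct room for a $B_r$ factor in the target at step $i_0$. The key observation is that the identity $x_{i_0-1}s_{i_0} = rx_{i_0-1}$ provides reduced expressions $\ux_{i_0-1} s_{i_0}$ and $r \ux_{i_0-1}$ for the \emph{same} element, hence a rex move $\rho \colon B_{\ux_{i_0-1}} \cdot B_{s_{i_0}} \simto B_r \cdot B_{\ux_{i_0-1}}$ in $\DiagBS$; post-composing with $\mathrm{dot}_r \otimes \id_{B_{\ux_{i_0-1}}}$ realizes an alternative to the ``dot on the $s_{i_0}$-strand'' at step $i_0$ which visibly factors through $B_r \cdot B_{\ux_{i_0-1}}$. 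The difference between this alternative and the dot used in the original light-leaf construction factors through Bott--Samelson objects associated with strictly shorter expressions (by the two-colored analysis underlying Lemma~\ref{lem:rex-move-smaller}), so can be absorbed by a nested induction, for instance on the pair $(\ell(\uw), \#\{\text{bad steps in }\ue\})$. The fact that in these cases the factor of $B_r$ must be extracted from the diagrammatic structure rather than from an actual step in the Bruhat stroll is the main obstacle and the technical heart of the argument.
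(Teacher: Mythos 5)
Your high-level strategy — arrange that non-avoiding light leaves factor through objects starting with $B_r$, $r \in \Sf$, and hence die in $\DasphBS$ — is exactly the paper's, and your treatment of the U1 case matches (modulo the imprecision that ``all subsequent steps act only on strands to the right'': what actually makes the argument go through is that once the step-$i_0$ morphism factors through $B_r \cdot X$, post-composing with anything preserves this factorization). For D1 you deviate from the paper by choosing $\ux_{i_0-1} = r\cdot\ux_{i_0}$; the paper instead chooses, for both U1 and D1, the \emph{rex move} $\alpha$ at step $i_0$ to pass through a reduced expression for $x_{i_0-1}s_{i_0}$ beginning with $r$, so that the step itself visibly factors — this is cleaner and actually verifiable from~\eqref{eqn:not-in-fW}. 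Your move of changing $\ux_{i_0-1}$ doesn't obviously force the step-$i_0$ morphism (a rex move followed by a cap) to factor through $B_r\cdot X$, since a morphism whose source starts with $B_r$ need not itself factor through $B_r\cdot X$.

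There is a real gap in how you handle the $e_{i_0}=0$ cases. First, D0 is not delicate at all: the paper observes that $x_{i_0-1}s_{i_0} < x_{i_0-1}$ together with $x_{i_0-1}s_{i_0} \notin \fW$ forces $x_{i_0-1}\notin\fW$, so one chooses the rex move $\beta$ through a reduced expression for $x_{i_0-1}$ starting in $\Sf$ and the light leaf factors just as in D1/U1. Only U0 is genuinely hard. For U0, your sketch — replace the $s_{i_0}$-dot by $\mathrm{dot}_r\circ\rho$ and argue the difference factors through shorter expressions, to be absorbed by a ``nested induction on $(\ell(\uw),\#\{\text{bad steps}\})$'' — does not close the argument. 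The difficulty is not producing \emph{some} error term factoring through shorter expressions; it is showing that the error is a linear combination of \emph{light leaves} $\LL_{\uw,\ue'}$ with $\ue'$ strictly smaller in the \emph{path dominance order}, so that a well-founded induction applies. The paper establishes this in Lemma~\ref{lem:LL-order} by a localization argument in $\Diag_Q$ (comparing with the path-dominance upper-triangularity of light leaves from~\cite[Prop.~6.6]{ew}), then runs an induction on path dominance starting from $\ue_{\min}=00\cdots 0$ (where the bad light leaf visibly factors through some $B_{s_j}$, $s_j\in\Sf$). Your proposed induction parameter doesn't obviously decrease: inserting the round-trip rex move $\gamma$ leaves $\ell(\uw)$ unchanged, and the ``lower-order'' terms need not have fewer bad steps. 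Without Lemma~\ref{lem:LL-order} (or a substitute) identifying the error as a combination of strictly smaller light leaves, the reduction never terminates.
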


\begin{rmk}
\label{rmk:morphisms-Dasph-LLbasis}
It will follow from 
Theorem~\ref{thm:Dasph-parities}
below (and its proof) that 
the morphisms considered in Proposition~\ref{prop:ASLL} actually form a \emph{basis} of the $\K$-module $\Hom^\bullet_{\DasphBS}(\oB_{\uw}, \oB_{\varnothing})$,
see \S\ref{ss:LL-basis-Dasph}. However, we don't know any ``diagrammatic'' proof of this fact, and it will not be needed for our applications below.
\end{rmk}

\begin{proof}
Recall the \emph{path dominance order} on the set of subexpressions of $\uw$ defined in~\S\ref{ss:Haff-Masph}.
First
we explain how to choose the morphisms $\LL_{\uw, \ue}$ in such a way that
\begin{equation}
\label{eqn:property-LL}
\begin{array}{c}
\text{if $\ue$ does not avoid $W \smallsetminus \fW$, then the image of $\LL_{\uw, \ue}$ in} \\
\text{$\Hom^\bullet_{\DasphBS}(\overline{B}_{\uw},
 \overline{B}_{\varnothing})$ is a linear combination of images of morphisms} \\
\text{$\LL_{\uw, \ue'}$ for subexpressions $\ue' \subset \uw$ expressing $1$ and satisfying $\ue'<\ue$.}
\end{array}
\end{equation}
We will see later that this condition is sufficient to ensure that the conclusion of the proposition holds.

Let $\ue \subset \un{w}$ be a subexpression expressing $1$. If $\ue$ avoids $W \smallsetminus \fW$, we choose $\LL_{\uw, \ue}$ arbitrarily. Now, let us assume that
$\ue$ does not avoid $\Waff \smallsetminus
\fW$. By definition, with the notation used in~\S\ref{ss:Haff-Masph}, there
exists $i \in \{1, \cdots, m\}$ such that $x_{i-1} s_i \notin \fW$; we fix such an index once and for all.

If the symbol assigned to $i$ is D1 or U1 (so that in particular $x_i=x_{i-1} s_i$), then in the construction of the light leaves
morphism corresponding to $\ue$ (as illustrated
in~\cite[Figure~2]{ew}), at the $i$-th step we choose the rex move ``$\alpha$'' in
such a way that it passes through a reduced expression for $x_{i-1}
s_i$ starting with a reflection in $\Sf$. (All the other choices can be arbitrary.) Then $\LL_{\uw, \ue}$ factors
through an object of the form $B_s \cdot X$ with $s \in \Sf$, hence it
vanishes in $\DasphBS$; a fortiori it satisfies the desired property.

If 
the symbol assigned to $i$ as in~\S\ref{ss:Haff-Masph} is D0 (so that in particular $x_{i-1} s_i < x_{i-1}$, which implies that $x_{i-1} \notin \fW$, see~\eqref{eqn:not-in-fW}), then at the $i$-th step we choose the
rex move ``$\beta$'' (again with the notation of~\cite[Figure~2]{ew}) such that it passes through a reduced expression for $x_{i-1}$ starting with a reflection in $\Sf$. Then as above $\LL_{\uw, \ue}$
vanishes in $\DasphBS$, which proves the claim.

Finally, consider the case where 
the symbol assigned to $i$ is U0. Then we choose $\LL_{\uw,\ue}$ arbitrarily. 
Using once again the notation of ~\cite[Figure~2]{ew}, at the $i$-th step of the construction the morphism looks like
\[
\begin{array}{c}
\begin{tikzpicture}[scale=0.6]
  \draw (-0.2,1) rectangle (3.2,0);
  \node (z) at (1.5,0.5) {$\alpha$};
  \draw (0,1.3) -- (0,1);
  \draw (.5,1.3) -- (.5,1);
  \draw (1,1.3) -- (1,1);
  \draw (3,1.3) -- (3,1);
  \draw (0,-.3) -- (0,0);
  \draw (.5,-.3) -- (.5,0);
  \draw (1,-.3) -- (1,0);
  \draw (3,-.3) -- (3,0);
  \draw (3.5,-.3) -- (3.5,.5);
  \node at (3.5,.5) {$\bullet$};
  \node at (2,1.2) {$\dots$};
  \node at (2,-.2) {$\dots$};
\end{tikzpicture}
\end{array},
\]
where the top is a reduced expression $\ux_i$ for $x_i=x_{i-1}$, and the bottom is of the form $\ux_{i-1} s_i$ where $\ux_{i-1}$ is a reduced expression for $x_{i-1}$. Let us choose a reduced expression $\uy$ for $x_{i-1} s_i$ starting with an element in $\Sf$ and a rex move $\ux_{i-1} s_i \leadsto \uy$. Consider the ``reversed'' rex move $\uy \leadsto \ux_{i-1} s_i$, and denote by $\gamma \colon B_{\ux_{i-1} s_i} \to B_{\ux_{i-1} s_i}$ the morphism associated with the concatenation $\ux_{i-1} s_i \leadsto \uy \leadsto \ux_{i-1} s_i$.

By Lemma~\ref{lem:rex-move-smaller},
in $\DiagBS$ we have
\begin{equation}
\label{eqn:JWrelation}
\begin{array}{c}
\begin{tikzpicture}[scale=0.6]
  \draw (-0.2,1) rectangle (3.2,0);
  \node (z) at (1.5,0.5) {$\alpha$};
  \draw (0,1.3) -- (0,1);
  \draw (.5,1.3) -- (.5,1);
  \draw (1,1.3) -- (1,1);
  \draw (3,1.3) -- (3,1);
  \draw (0,-.3) -- (0,0);
  \draw (.5,-.3) -- (.5,0);
  \draw (1,-.3) -- (1,0);
  \draw (3,-.3) -- (3,0);
  \draw (3.5,-.3) -- (3.5,.5);
  \node at (3.5,.5) {$\bullet$};
  \node at (2,1.2) {$\dots$};
  \node at (2,-.2) {$\dots$};
\end{tikzpicture}
\end{array}
=
\begin{array}{c}
\begin{tikzpicture}[scale=0.6]
  \draw (-0.2,1) rectangle (3.2,0);
  \node (z) at (1.5,0.5) {$\alpha$};
  \node (z) at (2,-.8) {$\gamma$};
  \draw (0,1.3) -- (0,1);
  \draw (.5,1.3) -- (.5,1);
  \draw (1,1.3) -- (1,1);
  \draw (3,1.3) -- (3,1);
  \draw (0,-.3) -- (0,0);
  \draw (.5,-.3) -- (.5,0);
  \draw (1,-.3) -- (1,0);
  \draw (3,-.3) -- (3,0);
  \draw (3.5,-.3) -- (3.5,.5);
  \node at (3.5,.5) {$\bullet$};
 \draw (-0.2,-.3) rectangle (3.7,-1.3);
  \draw (0,-1.3) -- (0,-1.6);
  \draw (.5,-1.3) -- (.5,-1.6);
  \draw (1,-1.3) -- (1,-1.6);
  \draw (3,-1.3) -- (3,-1.6);
  \draw (3.5,-1.3) -- (3.5,-1.6);
  \node at (2,1.2) {$\dots$};
  \node at (2,-.15) {$\dots$};
  \node at (2.15,-1.45) {$\dots$};
\end{tikzpicture}
\end{array}
+ \ \sum_{j \in J} \begin{array}{c}
\begin{tikzpicture}[scale=0.6]
  \draw (-0.2,1) rectangle (3.2,0);
  \node (z) at (1.5,0.5) {$\alpha$};
  \node (z) at (2,-.8) {$M_j$};
  \draw (0,1.3) -- (0,1);
  \draw (.5,1.3) -- (.5,1);
  \draw (1,1.3) -- (1,1);
  \draw (3,1.3) -- (3,1);
  \draw (0,-.3) -- (0,0);
  \draw (.5,-.3) -- (.5,0);
  \draw (1,-.3) -- (1,0);
  \draw (3,-.3) -- (3,0);
 \draw (-0.2,-.3) rectangle (3.7,-1.3);
  \draw (0,-1.3) -- (0,-1.6);
  \draw (.5,-1.3) -- (.5,-1.6);
  \draw (1,-1.3) -- (1,-1.6);
  \draw (3,-1.3) -- (3,-1.6);
  \draw (3.5,-1.3) -- (3.5,-1.6);
  \node at (2,1.2) {$\dots$};
  \node at (2,-.15) {$\dots$};
  \node at (2.15,-1.45) {$\dots$};
\end{tikzpicture}
\end{array}
\end{equation}
where 
$J$ is a finite set and
for any $j \in J$, $M_j$ is a homogeneous element in $\Hom^\bullet_{\DiagBS}(B_{\ux_{i-1} s_i},B_{\ux_{i-1}})$ which factors through a shift of an object $B_{\uz_j}$ where $\uz_j$ is an expression with $\ell(\uz_j) < \ell(\ux_{i-1})=\ell(\ux_{i-1} s_i)-1$.
Performing the other steps of the construction of $\LL_{\uw,\ue}$, we obtain that this morphism can be written as a sum of morphisms of the form
\begin{equation}
\label{eqn:LL-U0}
\begin{array}{c}
\begin{tikzpicture}[scale=0.5]
\draw (0,0) -- (10,0);
\draw (0,0) -- (0,4);
\draw (0,4) -- (10,4);
\draw (10,0) -- (10,4);
\draw (0,2.5) -- (10,2.5);
\draw (5,0) -- (5,2.5);
\draw (0,1.5) -- (5,1.5);
\node at (2.5,0.75) {$L' \cdot \id_{B_i}$};
\node at (7.5,1.25) {$I$};
\node at (2.5,2) {$?$};
\node at (5,-0.35) {$\uw$};
\node at (5,3.25) {$L''$};
\node at (5,4.35) {$\varnothing$};
\end{tikzpicture}
\end{array}
\end{equation}
where $I$ denotes the identity on the object $B_{(s_{i+1}, \cdots, s_{m})}$, $L'$ and $L''$ are homogeneous elements in $\Hom^\bullet_{\DiagBS}(B_{(s_1, \cdots, s_{i-1})}, B_{\ux_{i-1}})$ and $\Hom^\bullet_{\DiagBS}(B_{(\ux_i, s_{i+1}, \cdots, s_m)}, B_{\varnothing})$ respectively, and the question mark takes as values all the morphisms appearing in the right-hand side of~\eqref{eqn:JWrelation}.

As above, the morphism involving $\gamma$ vanishes in $\DasphBS$, hence using Lemma~\ref{lem:LL-order} below we obtain that the image of $\LL_{\uw,\ue}$ in $\DasphBS$ coincides with the image of a linear combination of morphisms $\LL_{\uw,\ue'}$ with $\ue' < \ue$, and the proof of~\eqref{eqn:property-LL} is complete.


Now we prove that, with this choice of light leaves morphisms, the statement of the proposition holds. For this it suffices to prove, by induction on the path dominance order, that the image of $\LL_{\uw,\ue}$ belongs to the span of the images of the morphisms $\LL_{\uw,\ue'}$ where $\ue'$ avoids $W \smallsetminus \fW$.

The path dominance order on subexpressions of $\uw$ expressing
$1$ has a unique minimum, namely $\un{e}_{\min} := 00 \cdots
0$. Moreover, the morphism $\LL_{\uw,\ue_{\min}}$ is a sequence of upper dots. If $\ue_{\min}$ avoids $W \smallsetminus \fW$ there is nothing to prove, and if $\ue_{\min}$ does not avoid $W \smallsetminus \fW$, i.e.~if $s_i \in \Sf$ for some $i \in \{1, \cdots, m\}$, then $\LL_{\uw,\ue_{\min}}$ factors through $B_{s_i}$ (since we can ``pull'' the corresponding dot above all the other dots), hence vanishes in $\DasphBS$.

Now let $\ue$ be arbitrary. If $\ue$ avoids $W \smallsetminus \fW$ there is nothing to prove. And if $\ue$ does not avoid $W \smallsetminus \fW$ then by~\eqref{eqn:property-LL} the image of $\LL_{\uw,\ue}$ is a linear combination of images of light leaves morphisms corresponding to subexpressions which are strictly smaller that $\ue$, and induction allows to conclude.
%
\end{proof}

\begin{lem}
\label{lem:LL-order}
For any $j \in J$, the morphism~\eqref{eqn:LL-U0} (in $\DiagBS$) obtained from the diagram
\[
\begin{array}{c}
\begin{tikzpicture}[scale=0.6]
  \draw (-0.2,1) rectangle (3.2,0);
  \node (z) at (1.5,0.5) {$\alpha$};
  \node (z) at (2,-.8) {$M_j$};
  \draw (0,1.3) -- (0,1);
  \draw (.5,1.3) -- (.5,1);
  \draw (1,1.3) -- (1,1);
  \draw (3,1.3) -- (3,1);
  \draw (0,-.3) -- (0,0);
  \draw (.5,-.3) -- (.5,0);
  \draw (1,-.3) -- (1,0);
  \draw (3,-.3) -- (3,0);
 \draw (-0.2,-.3) rectangle (3.7,-1.3);
  \draw (0,-1.3) -- (0,-1.6);
  \draw (.5,-1.3) -- (.5,-1.6);
  \draw (1,-1.3) -- (1,-1.6);
  \draw (3,-1.3) -- (3,-1.6);
  \draw (3.5,-1.3) -- (3.5,-1.6);
  \node at (2,1.2) {$\dots$};
  \node at (2,-.15) {$\dots$};
  \node at (2.15,-1.45) {$\dots$};
\end{tikzpicture}
\end{array}
\]
of~\eqref{eqn:JWrelation}
belongs to the $R$-span of the morphisms $\LL_{\uw,\uf}$ with $\uf<\ue$ in the path dominance order (for any choice of the latter morphisms).
\end{lem}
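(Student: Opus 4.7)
The plan is as follows. Denote by $N_j$ the morphism~\eqref{eqn:LL-U0} obtained with the $M_j$ insertion. By~\cite[Proposition~6.12]{ew}, the light leaves $\{\LL_{\uw,\uf}\}$, where $\uf$ ranges over subexpressions of $\uw$ expressing $1$, form a basis of $\Hom^{\bullet}_{\DiagBS}(B_{\uw}, B_{\varnothing})$ as a free left $R$-module. Thus we can expand
\[
N_j = \sum_{\uf} r_{\uf}\, \LL_{\uw,\uf}
\]
uniquely, with $r_{\uf} \in R$ homogeneous, and the claim reduces to showing $r_{\uf}=0$ whenever $\uf \not< \ue$.

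The crucial structural input is that $N_j$ factors through an object of the form $B_{\uz_j} \cdot B_{(s_{i+1}, \dots, s_m)} \langle k \rangle$, whose underlying expression has length at most $(i-2)+(m-i)=m-2$, i.e., strictly less than the length of the expression $(\ux_{i-1} s_i)(s_{i+1},\dots,s_m)$ through which $\LL_{\uw,\ue}$ itself factors. My plan is to exploit this via the filtration-by-path-dominance on $\Hom^{\bullet}_{\DiagBS}(B_{\uw}, B_{\varnothing})$ implicit in the light leaves construction of~\cite[\S 6]{ew}: for each subexpression $\ug$ of $\uw$ expressing $1$, I let $F_{\leq \ug}$ be the $R$-submodule spanned by $\{\LL_{\uw,\ug'} : \ug' \leq \ug\}$, and I show that $N_j \in F_{<\ue} := \sum_{\uf < \ue} F_{\leq \uf}$.

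The way to see this is to unpack the light leaves construction at step $i$ and track the Bruhat strolls. Before step $i$, the morphism $N_j$ agrees with the initial segment of $\LL_{\uw,\ue}$ (via $L'$), so any light leaf appearing with nonzero coefficient in its expansion has Bruhat stroll $x_0,\dots,x_{i-1}$ agreeing with $\ue$'s through step $i-1$. At step $i$ itself, the factorization through $B_{\uz_j}$ with $\ell(\uz_j) < \ell(\ux_{i-1})$ forces the effective intermediate element to have length strictly below $\ell(x_{i-1})$; any light leaf with this property corresponds to a subexpression whose Bruhat stroll at step $i$ visits an element strictly below $x_{i-1}$ in the Bruhat order, and such a subexpression is strictly smaller than $\ue$ in the path dominance order. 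Beyond step $i$, we simply follow the tail of $\ue$, which can only keep the subexpression strictly below.

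The main obstacle will be making the ``intrinsic'' characterization of $F_{\leq \ug}$ precise enough to verify that $N_j \in F_{<\ue}$. A cleaner alternative, which I would ultimately pursue, is an induction on $\ell(\uw)$, splitting off the last strand $s_m$: if $m=i$, then we only need to apply the light leaves basis to the factorization $B_{\uw} \to B_{\uz_j} \to B_\varnothing$ and use that $\ell(\uz_j) < \ell(\ux_{i-1})$ directly; otherwise $s_m$ is not involved in $M_j$, so the $N_j$ morphism restricts to an analogous morphism for the expression $(s_1,\dots,s_{m-1})$ (with the same $i$ and $M_j$) composed with a local morphism on the last strand, and the inductive hypothesis combined with the standard light-leaves recursion in~\cite[\S 6.1]{ew} expresses the result as a sum of $\LL_{\uw,\uf}$ with $\uf < \ue$.
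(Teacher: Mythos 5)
Your reduction is the right one — expand $N_j$ in the light leaves basis via~\cite[Proposition~6.12]{ew} and show the coefficients vanish off $\{\uf : \uf < \ue\}$ — and you have correctly isolated the structural input (the factorization of $N_j$ through $B_{\uz_j}\cdot B_{(s_{i+1},\dots,s_m)}$ with $\ell(\uz_j)<\ell(\ux_{i-1})$). But the proposal never supplies the device that converts ``factors through a shorter object at step $i$'' into ``lies in the $R$-span of light leaves strictly below $\ue$,'' and this is precisely the content of the lemma. Your filtration $F_{\le \ug}$ is defined only as the span of certain basis elements; without an \emph{intrinsic} characterization of it, the statement $N_j \in F_{<\ue}$ is exactly what must be proved, and you acknowledge this is ``the main obstacle'' without resolving it. The alternative induction on $\ell(\uw)$ does not close the gap: in the base case you still need to explain why a morphism factoring through $B_{\uz_j}$ with $\ell(\uz_j)<\ell(\ux_{i-1})$ has expansion supported on $\uf<\ue$, and in the inductive step the composite of a light leaf for $(s_1,\dots,s_{m-1})$ with a local morphism on the last strand is not itself a light leaf, so re-expanding it requires the same control over path dominance that you are trying to establish. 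Also, the intermediate claim that light leaves occurring in the expansion have Bruhat stroll \emph{agreeing} with that of $\ue$ through step $i-1$ is false as stated; the correct relation is domination, and proving even that requires the mechanism below.

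The missing ingredient is the passage to the localized category $\Diag_Q$ (where $Q$ is the fraction field of $R$): there $B_{\uw}$ decomposes as $\bigoplus_{\uf\subset\uw} Q_{\uf}$, and one evaluates $N_j$ on each summand, writing $N_j\circ i_{\uf}=q_{\uf}\cdot\mathrm{can}_{\uf}$ for subexpressions $\uf$ expressing $1$. The factorizations $N_j = (P_k\cdot\id_{B_{(s_{k+1},\dots,s_m)}})\circ(\cdots)$ show $q_{\uf}=0$ unless $x'_k\le x_k$ for all $k$, and at $k=i$ the factorization of $P_i$ through $B_{\uz_j}$ forces $\ell(x'_i)\le\ell(\uz_j)<\ell(x_i)$, whence $q_{\uf}=0$ unless $\uf<\ue$ (in particular $q_{\ue}=0$). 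One then invokes the path-dominance upper-triangularity of the light leaves basis (\cite[Proposition~6.6]{ew}) to deduce that the coefficients $a_{\ue'}$ in $N_j=\sum a_{\ue'}\LL_{\uw,\ue'}$ vanish unless $\ue'<\ue$. Without this localization-plus-triangularity step (or an equivalent substitute), the argument is incomplete.
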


\begin{proof}
We denote by $N_j$ the morphism under consideration.

Let $\Diag_Q$ be the Karoubi envelope of
  the category obtained from $\DiagBS$ by tensoring all $\Hom$ spaces by $Q$, the fraction field of $R$.
  (This category is denoted $Kar(\cD_Q)$ in~\cite[\S
  5.4]{ew}). In
  $\Diag_Q$ we have objects $Q_{\uw}$ for any expression $\uw$, which satisfy
  \[
   \Hom_{\Diag_Q}(Q_{\uw}, Q_{\uw'}) = \begin{cases}
                                      Q & \text{if $w=w'$;} \\
                                      0 & \text{otherwise,}
                                     \end{cases}
  \]
  see~\cite[Theorem~4.8 and Theorem~5.17]{ew}. (Here, $w$ and $w'$ are the elements of $\Waff$ obtained by multiplying the simple reflections appearing in $\uw$ and $\uw'$ respectively.)
  After choosing, for any $s \in S$, a decomposition of the image of $B_s$ in $\Diag_Q$ as the direct sum $Q_{\varnothing} \oplus Q_s$ (e.g.~as in~\cite[Equation~(5.24)]{ew}), the image of $B_{\uw}$ splits canonically
  into a direct sum of objects $Q_{\uf}$ parametrized by all
  subexpressions $\uf \subset \uw$. (Here by abuse we denote by $Q_{\uf}$ the object $Q_{\ux}$ where $\ux$ is the expression obtained by omitting from $\uw$ the indices $k$ such that $f_k=0$.) Write $i_{\un{f}} \colon Q_{\un{f}}
  \to B_{\uw}$ for the inclusion of this summand in $\Diag_Q$.
For any subexpression $\un{f}$ of $\un{x}$ expressing $1$ we can write
\[
N_j \circ i_{\un{f}} = q_{\un{f}} \cdot \mathrm{can}_{\uf}
\]
where $\mathrm{can}_{\uf}$ is the canonical isomorphism $Q_{\un{f}} \simto Q_\varnothing$ and
$q_{\un{f}} \in Q$.

We claim that
\begin{equation}
\label{eqn:path-dominance-lemma}
q_{\uf}=0 \text{ unless $\uf < \ue$.}
\end{equation}
In fact this follows from the same arguments as in the proof of~\cite[Proposition~6.6]{ew}. Fix some subexpression $\uf$ expressing $1$, and let $x'_0, \cdots, x'_m$ denote the corresponding Bruhat stroll. For any $k \in \{1, \cdots, m\}$, by construction of the light leaves morphisms, $N_j$ factors through a morphism of the form $P_k \cdot \id_{B_{(s_{k+1}, \cdots, s_m)}}$, where $P_k$ is a morphism from $B_{(s_1, \cdots, s_k)}$ to $B_{\ux_k}$. If $\uf_{\leq k}:=(f_1, \cdots, f_k)$, then $P_k$ vanishes on $Q_{\uf_{\leq k}}$ unless $x'_k \leq x_k$; a fortiori, $N_j$ vanishes on $Q_{\uf}$ unless $x'_k \leq x_k$. This implies already that $q_{\uf}=0$ unless $\uf \leq \ue$. Now in the special case $k=i$, we use the fact that $M_j$ factors through an object $B_{\uz_j}$ with $\ell(\uz_j) < \ell(\ux_{i-1})$ to factor $N_j$ through a morphism of the form $P_i \cdot \id_{B_{(s_{i+1}, \cdots, s_m)}}$ where $P_i$ is a morphism from $B_{(s_1, \cdots, s_i)}$ to $B_{\uz_j}$. Then $P_i$ vanishes on $Q_{\uf_{\leq i}}$ unless $\ell(x'_i) \leq \ell(\uz_j) < \ell(x_{i-1})=\ell(x_{i})$, which shows that $q_{\ue}=0$, and the proof of~\eqref{eqn:path-dominance-lemma} is complete.

Now we can conclude: applying \cite[Proposition~6.12]{ew} we can write
\[
N_j = \sum a_{\ue'} \cdot \LL_{\uw, \ue'}
\]
where the sum is over subexpressions $\ue' \subset \ux$ expressing $1$, and $a_{\ue'} \in R$. Comparing the property~\eqref{eqn:path-dominance-lemma} with the ``path dominance upper-triangularity'' of light leaves morphisms, see~\cite[Proposition
6.6]{ew},
we obtain that $a_{\un{e}'} = 0$ unless $\un{e}' <
\un{e}$. The proposition follows.
\end{proof}

\begin{rmk}
\label{rmk:asph-general}
 The construction of the categorified antispherical module is purely diagrammatic, hence makes sense in the more general setting of~\cite{ew}. More specifically, let $(\mathcal{W}, \mathcal{S})$ be a Coxeter system, with Hecke algebra $\mathcal{H}_{(\mathcal{W}, \mathcal{S})}$. Let also $\mathcal{S}_f \subset \mathcal{S}$ be a subset, and $\mathcal{W}_f$ be the (parabolic) subgroup of $\mathcal{W}$ it generates. Then as in~\cite{soergel-comb-tilting} we have an associated antispherical right $\mathcal{H}_{(\mathcal{W}, \mathcal{S})}$-module $\mathcal{N}^f$, with a $\Z[v,v^{-1}]$-basis parametrized by the subset ${}^f \hspace{-1pt} \mathcal{W} \subset \mathcal{W}$ consisting of elements $w$ which are minimal in $\mathcal{W}_f \cdot w$. Let now $\bk$ be an integral domain, and let $\fh$ be a balanced realization of $(\mathcal{W}, \mathcal{S})$ over $\bk$ in the sense of~\cite[Definition~3.1]{ew}. We assume that this realization satisfies Demazure surjectivity. Then we have a diagrammatic category $\DiagBS$ associated with $(\mathcal{W}, \mathcal{S})$ and $\fh$ as in~\cite{ew}, and we can define the graded category $\DasphBS$ by quotienting morphisms in $\DiagBS$ by the $\bk$-span of those morphisms which either are of the form $a \cdot \varphi$ with $a \in \fh^*$, or factor through an object of the form $B_{\uv} \langle n \rangle$ with $\uv$ an expression starting with a simple reflection in $\mathcal{S}_f$. It is easily seen that the split Grothendieck group of $\DasphBS$ identifies with $\mathcal{N}^f$. Moreover, 
an analogue of Proposition~\ref{prop:ASLL} holds (with the same proof): 

\emph{For any expression $\uw$, the light leaves morphisms $\LL_{\uw,\ue}$ in $\Hom^\bullet_{\DiagBS}(B_{\uw}, B_{\varnothing})$ can be chosen in such a way that the $\bk$-module $\bigoplus_{n \in \Z} \Hom_{\DasphBS}(\oB_{\uw}, \oB_{\varnothing} \langle n \rangle)$ is spanned by the images of the morphisms
$\LL_{\uw, \ue}$ where $\ue$ is a subexpression of $\uw$ expressing $1$ and avoiding $\mathcal{W} \smallsetminus {}^f \hspace{-1pt} \mathcal{W}$.}

(Here, $\oB_{\uw}$ is the image of $B_{\uw}$ in $\DasphBS$.)
\end{rmk}





\section{Main conjecture and consequences}
\label{sec:main-conj}

\subsection{Statement of the conjecture}
\label{ss:conjecture}

Now we come back to the setting of Section~\ref{sec:blocks}. In
particular, $\bk$ is an algebraically closed field of characteristic
$p$. In order to be able to also use the results of Section~\ref{sec:Diag}, we assume from now on that $p>h$; see Remark~\ref{rmk:condition-p-h}.

Note that, for $s \in \Saff$, any choice of an adjunction $(\Trans^s,\Trans_s)$ defines morphisms $\id \to \Trans_s \Trans^s$ and $\Trans^s \Trans_s \to \id$, hence morphisms $\id \to \Theta_s$ and $\Theta_s \Theta_s \to \Theta_s$ (by composing the second morphism on the right with $\Trans^s$ and on the left with $\Trans_s$). Similarly, any choice of an adjunction $(\Trans_s, \Trans^s)$ defines morphisms $\Theta_s \to \id$ and $\Theta_s \to \Theta_s \Theta_s$.

Our conjecture roughly states that the category $\DiagBS$ acts on the right on the category $\Rep_0(G)$ via the wall-crossing functors $\Theta_s$. More formally, this conjecture can be stated as follows.

\begin{conj}
\label{conj:main}
There exists, for any $s \in \Saff$, functors
\[
\Trans^s \colon \Rep_0(G) \to \Rep_s(G) \quad \text{and} \quad \Trans_s \colon \Rep_s(G) \to \Rep_0(G)
\]
isomorphic to the translations functors $T_{\lambda_0}^{\mu_s}$ and $T^{\lambda_0}_{\mu_s}$ respectively,
together with adjunctions $(\Trans^s,\Trans_s)$ and $(\Trans_s, \Trans^s)$ and, for any pair $(s,t)$ of distinct elements in $\Saff$ such that $st \in \Waff$ has finite order $m_{st}$, a morphism
\begin{equation}
\label{eqn:2mst-vertex}
\underbrace{\cdots \Theta_t \Theta_s}_{m_{st}} \to \underbrace{\cdots \Theta_s \Theta_t}_{m_{st}},
\end{equation}
such that the assignment defined by $B_s \langle n \rangle \mapsto \Theta_s$ for all $n \in \Z$ and sending
\begin{itemize}
\item
the upper and lower dots to the morphisms
\[
\Theta_s \to \id \quad \text{and} \quad \id \to \Theta_s
\]
defined by the adjunctions $(\Trans_s, \Trans^s)$ and $(\Trans^s,\Trans_s)$ respectively;
\item
the trivalent vertices to the morphisms
\[
\Theta_s \to \Theta_s \Theta_s \quad \text{and} \quad \Theta_s \Theta_s \to \Theta_s
\]
defined by the adjunctions $(\Trans_s, \Trans^s)$ and $(\Trans^s,\Trans_s)$ respectively;
\item
the $2m_{st}$-valent vertex to~\eqref{eqn:2mst-vertex}
\end{itemize}
defines a right action of $\DiagBS$ on $\Rep_0(G)$.
\end{conj}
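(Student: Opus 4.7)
My plan is to reduce the conjecture to a verification on the full additive subcategory of ``Bott--Samelson'' tilting modules $\Til(\uw)$, using the fine control on morphism spaces provided by Proposition~\ref{prop:morphisms-BS}, and then extend by idempotent completion. First I would fix translation functors $\Trans^s$ and $\Trans_s$ realized (as in the remark preceding the conjecture) by tensoring with a fixed representation $V_s$ and with its dual $V_s^*$, which automatically produces a biadjunction $(\Trans_s,\Trans^s,\Trans_s)$ from the standard unit/counit of the tensor-hom pair. These adjunctions then automatically supply the images of the upper/lower dots and of the two trivalent vertices. The polynomial generators $f\in R$ act through the isomorphism $R/R^{\Waff}_+ \cong Z(\Rep_0(G))$ furnished by the Harish-Chandra-type description of the center of the principal block, and one checks compatibility with the dot/trivalent morphisms from the explicit form of the translation functors.

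The core of the construction is then the $2m_{st}$-valent vertex. I would build this by induction on the length of the word $st\cdots$ (of length $m_{st}$), using Proposition~\ref{prop:morphisms-BS} to produce, for each $x\in\fW$ and each reduced expression $\ux$ for $x$, a compatible system of maps
\[
\underbrace{\cdots\Theta_t\Theta_s}_{m_{st}}\Til(\ux)\;\longrightarrow\;\underbrace{\cdots\Theta_s\Theta_t}_{m_{st}}\Til(\ux),
\]
which are uniquely determined (up to a tractable scalar) by their image in the quotient $\Rep_0(G)^{\succeq x\hdot\lambda_0}$, because in that quotient both sides agree with the standard object attached to the longest element of the parabolic $\langle s,t\rangle$ applied to $\ux$. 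The section-of-the-$\Cos$-flag constructions of \S\ref{ss:sections-translation-to}--\S\ref{ss:sections-translation-from} package exactly the data needed to transport such a map along the wall-crossings, and Proposition~\ref{prop:morphisms-BS} guarantees that the natural candidate extends from $\Til(\ux)$ to $\Til(\ux s)$ (and $\Til(\uy)$ with $\ux=\uy s$) compatibly.

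Once the images of all generators are fixed, verifying the relations of $\DiagBS$ splits into three blocks. The one-color relations (Frobenius, needle and barbell, polynomial slide past dots) are local: they reduce to the Frobenius algebra structure on $\Theta_s$ induced by biadjunction, combined with the translation functor identity $\Theta_s\Theta_s\cong\Theta_s\langle 1\rangle\oplus\Theta_s\langle -1\rangle$ (a categorified version of Lemma~\ref{lem:BsBs}, well known for translation functors since Jantzen). The polynomial relations (images of $R$) are straightforward from the construction. The two-color relations split into the Reidemeister-II move for the $2m_{st}$-valent vertex, the ``dot--$2m_{st}$-vertex'' breaking relation, and the polynomial-slide past a $2m_{st}$-vertex; each of these one can reduce, via Proposition~\ref{prop:morphisms-BS} and Lemma~\ref{lem:Hom-Dta-y-ys}, to an equality of maps in a principal series quotient $\Rep_0(G)^{\succeq x\hdot\lambda_0}$, where uniqueness reduces the check to a rank computation which is pinned down by the action on Grothendieck groups together with the Weyl-group combinatorics of~\eqref{eq:sact}.

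The main obstacle, and the reason this argument is only sketched rather than carried through in general type, is the three-color Zamolodchikov relation, which corresponds to isotopies of $2m_{st}$-valent vertices past each other in a rank-three finite parabolic subsystem. Length-and-uniqueness reductions of the previous paragraph do not by themselves pin down such a scalar, because both sides of the relation can live in a two-dimensional $\Hom$ space (for a suitable ``lowest'' element in a rank-three subgroup) and there is no obvious diagonal weight where only one side is nonzero. In type $\mathbf{A}$, this obstacle is bypassed in Part~\ref{pt:GLn} by factoring each $\Theta_s$ through the Chuang--Rouquier categorical $\hgl_n$-action and importing the corresponding KLR relations; outside of type $\mathbf{A}$ the plan would have to be completed either by a hypothetical analogous categorical action (cf.\ Remark~\ref{rmk:intro-other-types}) or by a direct computation of the relevant two-parameter family of maps on a small test object, which at present we do not know how to carry out.
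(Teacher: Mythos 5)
The statement you are trying to establish is a conjecture: the paper never proves it for a general reductive group, and its only proof is for $G=\mathrm{GL}_n(\bk)$ with $p>n\geq 3$, obtained in Part~\ref{pt:GLn} by factoring the wall-crossing functors through the Chuang--Rouquier categorical $\hgl_p$-action, restricting to $\hgl_n$, and importing every relation of $\DiagBS$ from the known Khovanov--Lauda--Rouquier relations (Theorem~\ref{thm:2KM-SBim}). You correctly identify the three-color Zamolodchikov relations as an obstruction your method cannot overcome, but there are genuine gaps well before that point. The most serious one is the construction of the $2m_{st}$-valent vertex itself: the conjecture requires a morphism of \emph{functors} $\cdots\Theta_t\Theta_s\to\cdots\Theta_s\Theta_t$, and a family of maps chosen object-by-object on the modules $\Til(\ux)$ is far weaker data. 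Proposition~\ref{prop:morphisms-BS} only produces spanning sets of morphism spaces, not canonical elements; the spaces $\Hom(\Theta_t\cdots\Theta_s\Til(\ux),\Theta_s\cdots\Theta_t\Til(\ux))$ are not one-dimensional, are not determined by their images in $\Rep_0(G)^{\downarrow x\hdot\lambda_0}$ (in that quotient both objects are iterated extensions of several standard objects, not a single standard object as you assert); and nothing in your sketch addresses naturality with respect to morphisms $\Til(\ux)\to\Til(\ux')$, nor the consistency of the scalars across different $x$. Your treatment of the polynomial generators via an isomorphism $R/R_+^{\Waff}\cong Z(\Rep_0(G))$ is also both unjustified (no such description of the center of a regular block is available in characteristic $p$) and unnecessary: by Remark~\ref{rmk:main-conj} the images of the polynomials are forced by the dot morphisms, which is why the conjecture does not ask for them.

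Even granting the generators, your verification of the relations does not go through. Reducing the two-color relations to ``a rank computation pinned down by the action on Grothendieck groups'' cannot work: two natural transformations with equal classes on Grothendieck groups need not coincide, and the two-color associativity relation of~\cite[(5.6)]{ew} is precisely an identity between two a priori distinct elements of a $\Hom$-space that is typically of dimension at least two; the paper's type-$\mathbf{A}$ verification of this single relation occupies an entire subsection of KLR computations. Similarly, the identity $\Theta_s\Theta_s\cong\Theta_s\oplus\Theta_s$ holds only after forgetting the grading, so it cannot by itself certify the graded Frobenius relations. In short, your plan correctly isolates where the difficulty lies and why the paper retreats to type $\mathbf{A}$, but as written it neither constructs the required natural transformations nor verifies the relations; it is a restatement of the problem rather than a proof.
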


\begin{rmk}
\label{rmk:main-conj}
\begin{enumerate}
\item
We stated the conjecture in terms of an action on the category $\Rep_0(G)$. But in fact this statement is equivalent to the similar statement where the categories $\Rep_0(G)$ and $\Rep_s(G)$ are replaced by the subcategories $\Tilt(\Rep_0(G))$ and $\Tilt(\Rep_s(G))$. For instance, if we have defined a functor
\[
\widetilde{T}^s \colon \Tilt(\Rep_0(G)) \to \Tilt(\Rep_s(G))
\]
isomorphic to the functor induced by $T_{\lambda_0}^{\mu_s}$, then one can 
consider the diagram
\[
\xymatrix@R=0.5cm@C=1.5cm{
\Kb \Tilt(\Rep_0(G)) \ar[r]^-{\Kb(\widetilde{T}^s)} \ar[d]_-{\wr} & \Kb \Tilt(\Rep_s(G)) \ar[d]^-{\wr} \\
\Db \Rep_0(G) \ar[r] & \Db \Rep_s(G)
}
\]
where the vertical arrows are the canonical equivalences and the lower horizontal arrow is defined in such a way that the diagram commutes. Here $\Kb(\widetilde{T}^s)$ is isomorphic to $\Kb(T_{\lambda_0}^{\mu_s})$, hence the lower arrow is isomorphic to $\Db(T_{\lambda_0}^{\mu_s})$; in particular it is exact, hence it induces a functor $\Trans^s \colon \Rep_0(G) \to \Rep_s(G)$ isomorphic to $T_{\lambda_0}^{\mu_s}$. Similar remarks apply to the functor $\Trans_s$ and to the morphisms between the compositions of these functors.
\item
As explained in Remark~\ref{rmk:morphisms-Dasph}, to define the action we do not need to specify the image of the morphisms $B_{\varnothing} \to B_{\varnothing} \langle \deg(f) \rangle$ associated with homogeneous elements $f \in R$; they are determined by the other morphisms.
\item
In the proof of Theorem~\ref{thm:main} below, we will not use the fact that the morphisms $\Theta_s \to \id$, $\id \to \Theta_s$, $\Theta_s \to \Theta_s \Theta_s$, $\Theta_s \Theta_s \to \Theta_s$ are induced by adjunction, nor that the functors $\Theta_s$ can be written as a composite $\Trans_s \Trans^s$. (We will only use the fact that $\Theta_s$ is isomorphic to a composite $T_{\mu_s}^{\lambda_0} T_{\lambda_0}^{\mu_s}$.) We added these requirements in the conjecture since it is the most natural way to construct these morphisms, and since they automatically imply that certain relations hold (see~\eqref{it:rmks-conj} below). But removing these conditions one can formulate a version of Conjecture~\ref{conj:main} which is only in terms of the regular block $\Rep_0(G)$.
\item
\label{it:rmks-conj}
To prove the conjecture, the task is clear: one needs to choose the functors $\Trans^s$ and $\Trans_s$ and the appropriate adjunctions, together with the morphisms~\eqref{eqn:2mst-vertex}, and then check that the relations of~\cite[Definition~5.2]{ew} are satisfied (with~\cite[(5.1)]{ew} omitted, and ``$f$'' replaced by the composition of a lower dot and an upper dot in~\cite[(5.2)]{ew}). The relations~\cite[(5.3) and (5.4)]{ew} follow immediately from the fact that our morphisms are induced by some adjunctions. So, the first non-trivial relations one needs to consider are relations~\cite[(5.2) and (5.5)]{ew}.
\item
The statement of the conjecture depends on the choice of the weights $\lambda_0$ and $\mu_s$ ($s \in \Saff$). It is likely that the statements for different choices are equivalent, although we were not able to prove this fact. Anyway, at the level of characters (see~\eqref{eqn:char-formula-tilt}), the statement only depends on $\lambda_0$, and it is well known that the formulas for different choices of $\lambda_0$ are all equivalent.
\item
The relations defining $\DiagBS$ are ``local'' in the sense that they involve at most $3$ different simple reflections. However, the conjecture also has some ``global'' flavor since before considering the relations we first need to define the adjunctions and the morphisms~\eqref{eqn:2mst-vertex} ``globally''; they cannot vary when we change the subset of cardinality $\leq 3$ containing the simple reflections under consideration.
\item
\label{it:rmk-left-right}
It will be convenient for us (to simplify the comparison of labelings) to require that the action of $\DiagBS$ is a \emph{right} action. However, using the autoequivalence $\imath$ of~\S\ref{ss:diag-SB}, we see that it is equivalent to construct a right or a left action.
\end{enumerate}
\end{rmk}

\subsection{Tilting modules and antispherical Soergel bimodules}
\label{ss:tilting-antispherical}

The rest of this section is devoted to the study of the implications of Conjecture~\ref{conj:main} on the structure of the category $\Tilt(\Rep_0(G))$. More precisely, we will prove the following theorem (already stated in a slightly different form in~\S\ref{ss:intro-Dasph}).

\begin{thm}
\label{thm:main}
Assume that Conjecture~{\rm \ref{conj:main}} holds. Then there exists an additive functor
\[
\Psi \colon \Dasph \to \Tilt(\Rep_0(G))
\]
and an isomorphism $\zeta \colon \Psi \circ \langle 1 \rangle \simto \Psi$
which satisfy the following properties:
\begin{enumerate}
\item
\label{it:main-fully-faithful}
for any $X,Y$ in $\Dasph$, $\Psi$ and $\zeta$ induce an isomorphism of $\Bbbk$-vector spaces
\[
\Hom^\bullet_{\Dasph}(X, Y) \simto \Hom_{\Rep_0(G)}(\Psi(X), \Psi(Y));
\]
\item
\label{it:main-indec}
for any $w \in \fW$, $\Psi(\overline{B}_w) \cong \Til(w \hdot \lambda_0)$.
\end{enumerate}
\end{thm}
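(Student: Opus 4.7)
The plan is to construct $\Psi$ by combining the action from Conjecture~\ref{conj:main} with the sections-of-$\Cos$-flag machinery of Section~\ref{sec:tilting-hw} and the antispherical light-leaves spanning family of Proposition~\ref{prop:ASLL}. First, using the right action of $\DiagBS$ on $\Tilt(\Rep_0(G))$ (Remark~\ref{rmk:main-conj}(i)) and writing $F_X$ for the endofunctor associated with $X \in \DiagBS$, I would set $\widetilde\Psi^{\mathrm{BS}}(X) := F_X(\Til(\lambda_0))$; this sends $B_{\uw}$ to $\Til(\uw)$, and since grading shifts do not alter the underlying endofunctor, there is a canonical isomorphism $\widetilde\zeta \colon \widetilde\Psi^{\mathrm{BS}} \circ \langle 1 \rangle \simto \widetilde\Psi^{\mathrm{BS}}$. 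The functor $\widetilde\Psi^{\mathrm{BS}}$ descends to $\DasphBS$ because $\Theta_s \Til(\lambda_0) = 0$ for every $s \in \Sf$: writing $s = s_\alpha$ for a simple root $\alpha$, we have $\langle \mu_s, \alpha^\vee \rangle = -1$ so $\mu_s \notin \bX^+$, whence the translation principle (Jantzen Prop.~II.7.11) forces $\Trans^s \Cos(\lambda_0) = \Cos(\mu_s) = 0$ (the induced module on a non-dominant weight vanishes). Hence $\widetilde\Psi^{\mathrm{BS}}(B_\uu) = 0$ whenever $\uu$ starts with a reflection in $\Sf$, and every morphism killed in $\DasphBS$ maps to $0$. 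Since $\Tilt(\Rep_0(G))$ is Krull--Schmidt, the induced $\Psi^{\mathrm{BS}} \colon \DasphBS \to \Tilt(\Rep_0(G))$ extends uniquely by Karoubi universality to $\Psi \colon \Dasph \to \Tilt(\Rep_0(G))$, and $\widetilde\zeta$ induces the desired $\zeta$.

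To prove~\eqref{it:main-fully-faithful}, it suffices to check that the natural map $\Hom^\bullet_{\DasphBS}(\oB_\uw, \oB_\uv) \to \Hom_{\Rep_0(G)}(\Til(\uw), \Til(\uv))$ is bijective for all pairs of expressions. I would proceed by induction on $\ell(\uv)$. The inductive step $\uv = \uv' s$ reduces to $\uv'$ via the compatible self-biadjunctions of $B_s$ in $\DasphBS$ (given by the dots) and of $\Theta_s$ in $\Tilt(\Rep_0(G))$ (coming from the adjunctions chosen in Conjecture~\ref{conj:main}), which are identified by $\Psi$ by construction. For the base case $\uv = \varnothing$, Proposition~\ref{prop:ASLL} supplies a spanning family of the source by antispherical light leaves $\LL_{\uw, \ue}$ indexed by subexpressions $\ue \subset \uw$ expressing $1$ and avoiding $W \smallsetminus \fW$; an induction on $\ell(\uw)$ using Proposition~\ref{prop:morphisms-BS} in parallel with the diagrammatic light-leaves construction shows that the images $\Psi(\LL_{\uw, \ue})$ span $\Hom(\Til(\uw), \Til(\lambda_0))$. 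A dimension count closes the argument: $\dim_\bk \Hom(\Til(\uw), \Til(\lambda_0)) = (\Til(\uw) : \Sta(\lambda_0))$, and a separate induction via Jantzen's translation-functor formulas (II.7.15 and~II.7.19), together with the vanishing $\Theta_s \Til(\lambda_0) = 0$ for $s \in \Sf$, matches this multiplicity with the cardinality of the avoiding-subexpression set of Lemma~\ref{lem:number-subexpr-avoids} (which categorifies~\eqref{eq:sact}), forcing a basis and thus bijectivity.

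Property~\eqref{it:main-indec} then drops out of fully-faithfulness: for $w \in \fW$ and a reduced expression $\uw$, $\oB_w$ is the unique indecomposable summand of $\oB_\uw$ in $\Dasph$ which is not a summand of $\oB_\uy \langle n \rangle$ for any reduced expression $\uy$ with $\ell(\uy) < \ell(\uw)$ (the analogue of~\cite[Theorem~6.25]{ew} in the antispherical quotient), so $\Psi(\oB_w)$ is the corresponding summand of $\Til(\uw)$, which by standard tilting Bott--Samelson theory is $\Til(w \hdot \lambda_0)$. The hardest step will be the base case of the fully-faithfulness induction, where one must simultaneously upgrade Proposition~\ref{prop:ASLL}'s spanning family to a basis by matching it with a parallel spanning family of $\Hom(\Til(\uw), \Til(\lambda_0))$ built recursively via Proposition~\ref{prop:morphisms-BS}, and prove the dimensional identity $(\Til(\uw) : \Sta(\lambda_0)) = \#\{\ue \subset \uw : \uw^\ue = 1, \ \ue \text{ avoids } W \smallsetminus \fW\}$ by a term-by-term comparison of the translation-functor combinatorics with formula~\eqref{eq:sact}.
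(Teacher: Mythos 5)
Your proposal is correct and follows essentially the same route as the paper: the same construction of $\Psi$ via $B \mapsto \Til(\lambda_0)\cdot B$ and its factorization through $\Dasph$, the same surjectivity-plus-dimension-count argument for full faithfulness (your spanning and counting steps are exactly Proposition~\ref{prop:morphisms-Phit} and Lemmas~\ref{lem:dim-tilt}--\ref{lem:dim-diag}, with source and target interchanged via the self-adjunctions), and the same identification of the indecomposables. The only points to tighten are that your base case should first be dualized so that the reduced expression sits on the side where Proposition~\ref{prop:morphisms-BS} actually applies (it grows the target, not the source), and that deducing indecomposability of $\Psi(\oB_w)$ from full faithfulness is not automatic: one needs that the graded ring $\End^\bullet_{\Dasph}(\oB_w)$, having local degree-zero part, is itself local, for which the paper cites \cite[Theorem~3.1]{gg}.
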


Theorem~\ref{thm:main} can be understood as stating that the category $\Dasph$ is a ``graded version'' of the category $\Tilt(\Rep_0(G))$; see~\cite{soergel-icc} for more details on this point of view.

The next subsections (until~\S\ref{ss:proof-thm}) are devoted to the proof of Theorem~\ref{thm:main}. \emph{In~\S\S{\rm \ref{ss:tilting-antispherical}}--{\rm \ref{ss:proof-thm}} we assume that Conjecture~{\rm \ref{conj:main}} holds, in other words that we have constructed a right action of $\DiagBS$ on $\Rep_0(G)$.} This assumption allows to consider the functor
\[
\widetilde{\Psi} \colon \DiagBS \to \Tilt(\Rep_0(G))
\]
defined by
\[
\widetilde{\Psi}(B) := \Til(\lambda_0) \cdot B.
\]
By construction there exists a natural isomorphism $\widetilde{\Psi} \circ \langle 1 \rangle \cong \widetilde{\Psi}$. Moreover, using the notation from~\S\ref{ss:BStilting}, for any expression $\uw$ we have $\Til(\uw) = \widetilde{\Psi}(B_{\uw})$. Finally, since the category $\Tilt(\Rep_0(G))$ is Karoubian, the functor $\widetilde{\Psi}$ extends to a functor $\Diag \to \Tilt(\Rep_0(G))$, which we will denote similarly.


Recall the notion of a ``rex move'' considered in~\S\ref{ss:rex-moves-DBS}. Later we will need the following property of the image under $\widetilde{\Psi}$ of these morphisms.

\begin{lem}
\label{lem:rex}
Let $\ux$ and $\uy$ be reduced expressions for the same element $w \in \fW$, and consider a rex move $\ux \leadsto \uy$. Let also $\la:=w \hdot \la_0$. The image under $\widetilde{\Psi}$ of the associated morphism $B_{\ux} \to B_{\uy}$ is invertible
in $\Rep_0(G)^{\downarrow \la}$.
\end{lem}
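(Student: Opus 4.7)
The plan is to deduce the invertibility from Lemma~\ref{lem:rex-move-smaller}. Let $r \colon B_{\ux} \to B_{\uy}$ denote the morphism associated with the given rex move and $\check r \colon B_{\uy} \to B_{\ux}$ the one associated with the same path traversed in reverse. Lemma~\ref{lem:rex-move-smaller} gives
\[
\check r \circ r = \id_{B_{\ux}} + \sum_{j \in J} \phi_j,
\]
where every $\phi_j$ factors through some $B_{\uz_j} \langle k_j \rangle$ with $\ell(\uz_j) \leq \ell(\ux) - 2 < \ell(w)$. Applying $\widetilde{\Psi}$ and working in $\Rep_0(G)^{\downarrow \la}$, it will suffice to prove that $\widetilde{\Psi}(\phi_j)$ becomes zero in that quotient for every $j$: we will then obtain $\widetilde{\Psi}(\check r) \circ \widetilde{\Psi}(r) = \id_{\Til(\ux)}$ there, and a symmetric application of the same argument to the reversed rex move $\uy \leadsto \ux$ will yield $\widetilde{\Psi}(r) \circ \widetilde{\Psi}(\check r) = \id_{\Til(\uy)}$, so that $\widetilde{\Psi}(r)$ will be invertible as required.

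Since $\widetilde{\Psi}(\phi_j)$ factors through $\widetilde{\Psi}(B_{\uz_j} \langle k_j \rangle) \cong \Til(\uz_j)$, the key step is to show that $\Til(\uz_j)$ vanishes in $\Rep_0(G)^{\downarrow \la}$ whenever $\ell(\uz_j) < \ell(w)$. I plan to verify this by checking that every composition factor of $\Til(\uz_j)$ lies in the Serre subcategory being quotiented out. The costandard filtration of $\Til(\uz_j)$, produced by iterating wall-crossing functors starting from $\Cos(\la_0)$, has factors $\Cos(v \hdot \la_0)$ with $v \in \fW$ and $\ell(v) \leq \ell(\uz_j) < \ell(w)$ (this is the combinatorial content of how translation functors act on costandard modules, and is encoded, in the antispherical Grothendieck group, by the formula of Lemma~\ref{lem:number-subexpr-avoids}). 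The composition factors of each $\Cos(v \hdot \la_0)$ are simples $\Sim(u \hdot \la_0)$ with $u \in \fW$ and $u \leq v$ in the Bruhat order, so every such $u$ satisfies $\ell(u) < \ell(w)$, forcing $u \not\geq w$.

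The last ingredient I will invoke is the standard identification between the Bruhat order on $\fW$ and the restriction of the $\uparrow$ order to the orbit $\fW \hdot \la_0$, valid because $\la_0$ lies in the fundamental alcove (see~\cite[\S II.6]{jantzen}). Under this poset isomorphism, $u \not\geq w$ in Bruhat translates into $u \hdot \la_0 \not\succeq \la$, which is exactly the condition for $\Sim(u \hdot \la_0)$ to be killed in the Serre quotient defining $\Rep_0(G)^{\downarrow \la}$. Every composition factor of $\Til(\uz_j)$ is therefore killed, so $\Til(\uz_j) = 0$ in the quotient, and the lemma follows as outlined in the first paragraph. The only delicate point requiring care will be the length bound on the $v$'s appearing in the costandard filtration of $\Til(\uz_j)$; modulo that routine translation-functor bookkeeping, the rest of the proof is a formal consequence of Lemma~\ref{lem:rex-move-smaller} combined with the Jantzen compatibility between the two orders.
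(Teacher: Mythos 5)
Your proof follows the same route as the paper: apply Lemma~\ref{lem:rex-move-smaller} to both the rex move and its reverse, and observe that the error terms die in $\Rep_0(G)^{\downarrow\la}$ because they factor through $\Til(\uz_j)$ with $\ell(\uz_j)<\ell(w)$. The paper's proof is terser and leaves the vanishing of $\widetilde{\Psi}(\phi_j)$ implicit; your filling-in of that step is correct in substance. One caveat: you invoke an ``identification'' of the Bruhat order on $\fW$ with the restriction of $\uparrow$ to the orbit, which is stronger than what you need and not something you should lean on as a black box. All your argument actually requires is the one implication $w\hdot\la_0 \uparrow u\hdot\la_0 \Rightarrow \ell(w)\le\ell(u)$ for $u,w\in\fW$, which follows from the compatibility of $\uparrow$ with the distance function $d$ of~\cite[\S II.6.6]{jantzen} (each step of $\uparrow$ increases $d$, and $d(v\hdot\la_0)=\ell(v)$ for $v\in\fW$); combined with your length bound $\ell(v)\le\ell(\uz_j)<\ell(w)$ on the costandard factors and the strong linkage principle, this already forces every composition factor of $\Til(\uz_j)$ to be killed in the quotient. (Note also that for invertibility alone it would suffice that the error term be nilpotent, but the vanishing is clean and is what the paper asserts.)
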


\begin{proof}
Denote by $\phi_{\ux,\uy} \colon B_{\ux} \to B_{\uy}$ the rex move morphism under consideration, and by $\phi_{\uy,\ux} \colon B_{\uy} \to B_{\ux}$ the morphism associated with the ``reversed'' rex move as in~\S\ref{ss:rex-moves-DBS}. Then by Lemma~\ref{lem:rex-move-smaller} we have
\[
\phi_{\uy,\ux} \circ \phi_{\ux,\uy} = \id_{B_{\ux}} + f_{\ux,\uy},
\]
where $f_{\ux,\uy}$ is a sum of morphisms which factor through various objects $B_{\ux'} \langle k \rangle$ where $\ux'$ is an expression with $\ell(\ux')<\ell(\ux)$. Hence the image of $\widetilde{\Psi}(\phi_{\uy,\ux}) \circ \widetilde{\Psi}(\phi_{\ux,\uy})$ in $\End_{\Rep_0(G)^{\downarrow \la}}(\Til(\ux))$ is the identity morphism. Similarly, the image of $\widetilde{\Psi}(\phi_{\ux,\uy}) \circ \widetilde{\Psi}(\phi_{\uy,\ux})$ in $\End_{\Rep_0(G)^{\downarrow \la}}(\Til(\uy))$ is the identity morphism, which shows that the image of $\widetilde{\Psi}(\phi_{\ux,\uy})$ in $\Rep_0(G)^{\downarrow \la}$ is an isomorphism (with inverse the image of $\widetilde{\Psi}(\phi_{\uy,\ux})$), and finishes the proof.
\end{proof}

To conclude this subsection we note the following easy fact.

\begin{lem}
\label{lem:lower-dot-not0}
Let $M \in \Rep_0(G)$, and assume that $\Theta_s(M) \neq 0$. Then the morphism $M \to \Theta_s(M)$ obtained as the image under our action of the ``lower dot'' morphism $B_\varnothing \to B_s \langle 1 \rangle$ (applied to $M$) is nonzero.
\end{lem}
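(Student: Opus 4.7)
The strategy is extremely short once one unwinds the definitions of Conjecture~\ref{conj:main}. By the specification of the action in Conjecture~\ref{conj:main}, the lower dot $B_\varnothing\to B_s\langle1\rangle$ is sent to the unit of the adjunction $(\Trans^s,\Trans_s)$, i.e.\ to the natural transformation $\eta\colon\id\Rightarrow\Trans_s\Trans^s=\Theta_s$. So the morphism in question is simply $\eta_M\colon M\to\Theta_s(M)$, and the task reduces to showing $\eta_M\neq 0$ under the hypothesis $\Theta_s(M)\neq 0$.

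The plan is to invoke the adjunction isomorphism $\Hom_{\Rep_0(G)}(M,\Trans_s\Trans^s M)\cong\Hom_{\Rep_s(G)}(\Trans^s M,\Trans^s M)$, under which $\eta_M$ corresponds to $\id_{\Trans^s M}$. Since $\Theta_s(M)=\Trans_s\Trans^s M\neq 0$, the object $\Trans^s M$ must itself be nonzero (if it were zero, applying $\Trans_s$ would yield zero). In any $\bk$-linear additive category a nonzero object has nonzero identity endomorphism, so $\id_{\Trans^s M}\neq 0$, and therefore $\eta_M\neq 0$ as required.

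There is essentially no obstacle; the argument is a one-line application of adjunction. The only thing to be careful about is to use the correct adjunction: the lower dot corresponds to the unit $\id\to\Trans_s\Trans^s$ coming from the adjunction $(\Trans^s,\Trans_s)$ (the one in which $\Trans^s$ is the left adjoint), which is exactly the convention fixed in the statement of Conjecture~\ref{conj:main} and recalled in~\S\ref{ss:translation-functors}.
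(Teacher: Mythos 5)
Your argument is correct, but it takes a different route from the paper's proof, and in fact the paper anticipates your route: the remark immediately following Lemma~\ref{lem:lower-dot-not0} says precisely that ``if we use the fact that $\Theta_s = \Trans_s \Trans^s$ and that the image of the lower dot morphism is induced by adjunction, then Lemma~\ref{lem:lower-dot-not0} is obvious.'' That is exactly your proof. The paper instead argues purely inside the diagrammatic calculus: if the image of the lower dot at $M$ vanished, then so would the image of the zigzag (lower dot composed with a trivalent and an upper dot), but by the zigzag relation in $\DiagBS$ that zigzag equals $\id_{B_s}$, whose image is $\id_{\Theta_s(M)}\neq 0$. The reason the authors prefer this is explained in Remark~\ref{rmk:main-conj}(3): in the proof of Theorem~\ref{thm:main} they deliberately do \emph{not} use that $\Theta_s$ is literally a composite $\Trans_s\Trans^s$ nor that the dot and trivalent morphisms come from actual adjunctions --- they only use that $\Theta_s$ is \emph{isomorphic} to a wall-crossing functor, so that a weaker form of Conjecture~\ref{conj:main} still implies the main theorem. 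Your proof is shorter, but it relies on the extra structure the paper is intentionally not assuming; the paper's proof only needs the $\DiagBS$-module relations, which is what makes it the ``right'' proof in that more parsimonious setting.
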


\begin{proof}
If the morphism under consideration vanishes, then so does the image under our action of the morphism
\[
\mathord{
\begin{tikzpicture}[baseline = 0,xscale=-0.5,yscale=0.25]
	\draw[-,thick] (-2,-3) to (-1,1);
	\draw[-,thick] (-1,1) to (0,-1);
	\draw[-,thick] (0,-1) to (1,3);
	\draw[-,thick] (0,-2) to (0,-1);
	\draw[-,thick] (-1,1) to (-1,2);
	\node at (-1,2) {$\bullet$};
	\node at (0,-2) {$\bullet$};
	\node at (-2,-3.5) {\tiny $s$};
	\node at (1,3.5) {\tiny $s$};
\end{tikzpicture} .
}
\]
However, this morphism is equal to $\id_{B_s}$ (by the ``zigzag relation''), hence its image is $\id_{\Theta_s(M)}$, which does not vanish by assumption.
\end{proof}

\begin{rmk}
Of course, if we use the fact that $\Theta_s = \Trans_s \Trans^s$ and that the image of the lower dot morphism is induced by adjunction, then Lemma~\ref{lem:lower-dot-not0} is obvious.
\end{rmk}

\subsection{Surjectivity}
\label{ss:surjectivity}

For two expressions $\ux$ and $\uy$, we denote by
\[
\alpha_{\ux,\uy} \colon \Hom^\bullet_{\DiagBS}(B_{\ux}, B_{\uy}) \to \Hom_{\Rep_0(G)}(\Til(\ux), \Til(\uy))
\]
the morphism induced by $\widetilde{\Psi}$. If $\ux$ is a reduced expression for an element $x \in \fW$, and if $\la:=x \hdot \la_0$, we also denote by
\[
\beta_{\ux,\uy} \colon \Hom^\bullet_{\DiagBS}(B_{\ux}, B_{\uy}) \to \Hom_{\Rep_0(G)^{\downarrow \la}}(\Til(\ux), \Til(\uy))
\]
the composition of $\alpha_{\ux,\uy}$ with the morphism induced by the quotient functor
to $\Rep_0(G)^{\downarrow \la}$.

%

\begin{prop}
\label{prop:morphisms-Phit}
Let $\ux$ and $\uy$ be expressions, and assume that $\ux$ is a reduced expression for some element $x \in \fW$. Then the morphism $\beta_{\ux,\uy}$ is surjective.
\end{prop}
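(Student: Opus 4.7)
The plan is to proceed by induction on $\ell(\uy)$. As a preliminary observation, Lemma~\ref{lem:rex} together with a standard composition argument shows that surjectivity of $\beta_{\ux,\uy}$ depends only on $x$ and not on the chosen reduced expression $\ux$: given a rex move $\ux \leadsto \ux'$ in the rex graph of $x$, its image under $\widetilde\Psi$ is invertible in $\Rep_0(G)^{\downarrow\la}$, so any morphism $\Til(\ux)\to\Til(\uy)$ in the quotient can be converted to a morphism from $\Til(\ux')$ and back. Hence I am free to replace $\ux$ with any reduced expression for $x$ when convenient.

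For the base case $\uy = \varnothing$, if $\ux = \varnothing$ then $\Til(\ux)$ becomes the simple $\Sim(\la_0)$ in the quotient $\Rep_0(G)^{\downarrow\la_0}$, the target Hom-space is one-dimensional, and the identity $\id_{B_\varnothing}$ maps to a nonzero scalar multiple of $\id_{\Sim(\la_0)}$; otherwise $\la \neq \la_0$, so $\Sim(\la_0)$ is killed in $\Rep_0(G)^{\downarrow\la}$ and the target is zero.

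For the inductive step, write $\uy = \uy' s$ with $\ell(\uy') < \ell(\uy)$ and $s \in \Saff$, and split into three cases according to the relation between $x$ and $s$. First, if $xs > x$ but $xs \notin \fW$, then $\la^s \notin \bX^+$; applying Lemma~\ref{lem:translation-Cos} to the tilting $\Trans^s\Til(\uy')$ yields $(\Til(\uy's):\Cos(\la))=0$, and since $\Til(\ux)$ identifies with $\Cos(\la)$ in the quotient the target of $\beta_{\ux,\uy}$ vanishes. Second, if $xs > x$ and $xs \in \fW$, then $\ux s$ is a reduced expression for $xs\in\fW$ and I apply Proposition~\ref{prop:morphisms-BS}\eqref{it:morphisms-BS-1} with $\uv = \uy'$: the induction hypothesis applied to the pairs $(\ux,\uy')$ and $(\ux s,\uy')$ allows me to choose the spanning families $(f_i)$ and $(g_j)$ to be $\widetilde\Psi$-images of diagrammatic morphisms $\hat f_i \in \Hom_{\DiagBS}(B_\ux,B_{\uy'})$ and $\hat g_j \in \Hom_{\DiagBS}(B_{\ux s},B_{\uy'})$, so that $\Theta_s(f_i)$ and $\Theta_s(g_j)$ lift to $\hat f_i\cdot\id_{B_s}$ and $\hat g_j\cdot\id_{B_s}$; for the auxiliary morphisms I then take $f_i'\colon \Til(\ux)\to\Til(\ux s)$ to be the $\widetilde\Psi$-image of $\id_{B_\ux}$ tensored with the lower dot $B_\varnothing\to B_s\langle 1\rangle$, and $g_j'\colon \Til(\ux)\to\Til(\ux s s)$ to be the image of two such dots in succession. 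Third, if $xs < x$, then $xs\in\fW$ automatically, and after the rex-move reduction I may assume $\ux = \uz s$ for a reduced expression $\uz$ for $xs$; Proposition~\ref{prop:morphisms-BS}\eqref{it:morphisms-BS-2} now applies symmetrically, with the endomorphism $g_j'\colon \Til(\ux)\to\Til(\ux)$ realized as the $\widetilde\Psi$-image of the endomorphism of $B_\uz\cdot B_s = B_\ux$ obtained by composing an upper dot (folding $B_s$ to $B_\varnothing$) with a lower dot.

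The hard part will be verifying, in the last two cases, that these explicit diagrammatic lifts of $f_i'$ and $g_j'$ really produce, after postcomposition with $\Theta_s(f_i)$ or $\Theta_s(g_j)$ and projection to $\Rep_0(G)^{\downarrow\la}$, a spanning set for the target Hom-space. Since Proposition~\ref{prop:morphisms-BS} only asserts the existence of some such $f_i',g_j'$, one must unwind the translation-to/from-the-wall constructions of Propositions~\ref{prop:translation-section-to} and~\ref{prop:translation-section-from} and identify the adjunction units and counits appearing there with $\widetilde\Psi$-images of the dot generators of $\DiagBS$. Precisely this identification is built into Conjecture~\ref{conj:main}; the remaining discrepancies involve only rex moves between different reduced expressions, which by Lemma~\ref{lem:rex} are invertible in $\Rep_0(G)^{\downarrow\la}$ and can be absorbed without changing the span.
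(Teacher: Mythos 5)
Your overall architecture matches the paper's: induction on $\ell(\uy)$, the same three-case split according to whether $\la^s\notin\bX^+$, $xs>x$ with $xs\in\fW$, or $xs<x$, the reduction via Lemma~\ref{lem:rex} to a reduced expression of $x$ ending in $s$ in the last case, and the appeal to Proposition~\ref{prop:morphisms-BS}. But there is a genuine gap in how you handle the auxiliary morphisms $f_i'$ and $g_j'$. Proposition~\ref{prop:morphisms-BS} only asserts that \emph{some} $f_i'$, $g_j'$ with the spanning property exist, and the ones it produces come from the section-of-the-$\Cos$-flag machinery of Propositions~\ref{prop:translation-section-to} and~\ref{prop:translation-section-from}; these involve not only the adjunction units and counits (which are indeed images of dots) but also the arbitrarily chosen splittings~\eqref{eqn:translation-tilting-1} and~\eqref{eqn:translation-tilting-2} of the indecomposable tilting $\Til(\mu)$ off $\Trans^s\Til(\la)$. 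Those are idempotent splittings in $\Tilt(\Rep_0(G))$, not rex moves, and there is no reason at this stage that they lie in the image of $\widetilde\Psi$ restricted to $\DiagBS$ (that would essentially be the fullness one is trying to prove). So declaring $f_i'$ to be the image of a single lower dot and $g_j'$ the image of two dots does not give you the conclusion of Proposition~\ref{prop:morphisms-BS} for free; note in particular that $\Hom_{\Rep_0(G)^{\downarrow\la}}(\Til(\ux),\Til(\ux s s))$ is typically two-dimensional, so one fixed choice of $g_j'$ cannot account for the needed family.

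The paper's fix, which you are missing, is Lemma~\ref{lem:morphisms-Phit}: for $\uy$ reduced with $ys>y$ and $ys\in\fW$, the maps $\beta_{\uy s,\uy s}$, $\beta_{\uy s,\uy s s}$, $\beta_{\uy,\uy s}$, $\beta_{\uy,\uy s s}$ are surjective. Granting this, one does not need to identify the abstract $f_i'$, $g_j'$ with anything diagrammatic: since composition is well defined in the Serre quotient $\Rep_0(G)^{\downarrow\la}$, one may replace $f_i'$ and $g_j'$ by morphisms in the image of $\alpha$ having the same class in the relevant quotient Hom-space, without changing the span of the images of the compositions. Lemma~\ref{lem:morphisms-Phit} itself is not automatic; it rests on reducing $\Hom_{\Rep_0(G)^{\downarrow\la}}(\Til(\uy),\Til(\uy s))$ to the one-dimensional space $\Hom(\Sta(\la),\Theta_s\Sta(\la))$ (Lemma~\ref{lem:Hom-Dta-y-ys}) and on the non-vanishing of the image of the lower dot (Lemma~\ref{lem:lower-dot-not0}), together with $B_sB_s\cong B_s\langle 1\rangle\oplus B_s\langle -1\rangle$ to handle the $\uy s s$ cases. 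You should insert this lemma (or an equivalent statement) before the induction; with it, your argument closes.
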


Before proving the proposition in general, we consider some special cases.

\begin{lem}
\label{lem:morphisms-Phit}
Let $\uy$ be a reduced expression for an element $y \in \fW$, and let $s \in S$ be a simple reflection such that $ys>y$ in the Bruhat order and $ys \in \fW$. Then the morphisms $\beta_{\uy s, \uy s}$, $\beta_{\uy s, \uy s s}$, $\beta_{\uy,\uy s}$ and $\beta_{\uy,\uy s s}$ are surjective.
\end{lem}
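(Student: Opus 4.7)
My plan is to split the four surjectivity claims into an easy pair and a harder pair, linked by Lemma~\ref{lem:BsBs}. That lemma gives $B_s\cdot B_s\cong B_s\langle 1\rangle\oplus B_s\langle -1\rangle$ in the additive hull of $\DiagBS$; tensoring on the left with $B_\uy$ yields $B_{\uy s s}\cong B_{\uy s}\langle 1\rangle\oplus B_{\uy s}\langle -1\rangle$, and $\widetilde\Psi$ transports this to a direct sum splitting of $\Til(\uy s s)$. Both the source and target of $\beta_{\uy s,\uy s s}$ (resp.\ $\beta_{\uy,\uy s s}$) then decompose as two shifted copies of the source and target of $\beta_{\uy s,\uy s}$ (resp.\ $\beta_{\uy,\uy s}$), so surjectivity of the former pair reduces to that of the latter. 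The case $\beta_{\uy s,\uy s}$ is straightforward: setting $\lambda=ys\hdot\lambda_0$ and decomposing $\Til(\uy s)=\Til(\lambda)\oplus N$ with $N$ a sum of $\Til(\mu)$ for $\mu<\lambda$, the summand $N$ vanishes in $\Rep_0(G)^{\downarrow\lambda}$ while $\Til(\lambda)\cong\Sim(\lambda)$ there (since $(\Til(\lambda):\Sim(\lambda))=1$, as all other $\Sta(\mu)$ in the standard filtration of $\Til(\lambda)$ satisfy $\mu<\lambda$). The target is therefore one-dimensional, and $\widetilde\Psi(\id_{B_{\uy s}})=\id_{\Til(\uy s)}$ provides a nonzero preimage.

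The principal case is $\beta_{\uy,\uy s}$ with $\lambda=y\hdot\lambda_0$. The candidate preimage is $\widetilde\Psi(\varphi)$, where $\varphi:=\id_{B_\uy}\otimes\delta\colon B_\uy\to B_{\uy s}\langle 1\rangle$ and $\delta\colon B_\varnothing\to B_s\langle 1\rangle$ is the lower dot; by Conjecture~\ref{conj:main}, $\widetilde\Psi(\varphi)$ is the unit of the adjunction $(\Trans^s,\Trans_s)$ evaluated at $\Til(\uy)$. The same argument as in the easy case gives $\Til(\uy)\cong\Sim(\lambda)\cong\Sta(\lambda)$ in $\Rep_0(G)^{\downarrow\lambda}$. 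Corollary~\ref{cor:morph-quotient}, together with the fact that any nonzero map out of $\Sta(\lambda)$ has $\Sim(\lambda)$ in its head (which survives the quotient), yields
\[
\Hom_{\Rep_0(G)^{\downarrow\lambda}}(\Til(\uy),\Til(\uy s))\;\cong\;\Hom_{\Rep_0(G)}(\Sta(\lambda),\Til(\uy s)),
\]
whose dimension equals $(\Til(\uy s):\Cos(\lambda))$ by~\eqref{eqn:morph-Sta-Cos}. Evaluating the class $[\Til(\uy s)]$ in the antispherical module via Lemma~\ref{lem:number-subexpr-avoids} at $v=1$, this multiplicity counts subexpressions of $\uy s$ expressing $y$ and avoiding $W\smallsetminus\fW$; since $\uy s$ is reduced for $ys$, the only such subexpression is $(1,\dots,1,0)$, so the target has dimension $1$.

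For nonvanishing of $\widetilde\Psi(\varphi)$ in the quotient, I decompose $\Til(\uy)=\Til(\lambda)\oplus N$ (Krull--Schmidt, $N$ a sum of $\Til(\mu)$ with $\mu<\lambda$). Naturality of the unit applied to the split inclusion $\iota\colon\Til(\lambda)\hookrightarrow\Til(\uy)$ gives $\widetilde\Psi(\varphi)\circ\iota=\Theta_s(\iota)\circ u_{\Til(\lambda)}$, where $u$ denotes the adjunction unit. The unit $u_{\Til(\lambda)}$ is adjoint to $\id_{\Trans^s\Til(\lambda)}\neq 0$; chasing naturality once more through $\Til(\lambda)\twoheadrightarrow\Sim(\lambda)$ and using that $u_{\Sim(\lambda)}$ is injective (since its adjoint is the identity of a nonzero object), the image of $u_{\Til(\lambda)}$ contains $\Sim(\lambda)$ as a composition factor. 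Since $\Theta_s(\iota)$ is a split injection, the image of $\widetilde\Psi(\varphi)$ in $\Theta_s\Til(\uy)$ still contains $\Sim(\lambda)$, which survives in $\Rep_0(G)^{\downarrow\lambda}$. Hence $\widetilde\Psi(\varphi)$ is nonzero in the quotient and, by the dimension count, spans the target.

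The chief technical hurdle is this last nonvanishing step: one must weave together naturality of the adjunction unit, the Krull--Schmidt splitting of $\Til(\uy)$, and the isomorphism furnished by Lemma~\ref{lem:Hom-Dta-y-ys} in order to realize $\Sim(\lambda)$ as a genuine composition factor of the image of $\widetilde\Psi(\varphi)$, rather than merely arguing that $\widetilde\Psi(\varphi)$ is nonzero in $\Rep_0(G)$.
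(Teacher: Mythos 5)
Your reduction of the two ``$ss$'' cases via Lemma~\ref{lem:BsBs} and your treatment of $\beta_{\uy s,\uy s}$ coincide with the paper's, and your candidate generator (the image of the lower dot, i.e.\ the adjunction unit $u_{\Til(\uy)}$) is the right one. Your proof that the target of $\beta_{\uy,\uy s}$ is one-dimensional is correct but takes a different route from the paper: you identify it with $\Hom_{\Rep_0(G)}(\Sta(\la),\Til(\uy s))$ and count $(\Til(\uy s):\Cos(\la))$ by Deodhar's subexpression formula in the antispherical module (Lemma~\ref{lem:number-subexpr-avoids}), whereas the paper identifies the same space with $\Hom(\Sta(\la),\Theta_s\Sta(\la))$ and quotes Lemma~\ref{lem:Hom-Dta-y-ys}. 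Both computations are valid.

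The gap is in your nonvanishing step: you chase naturality of the unit through a surjection $\Til(\la)\twoheadrightarrow\Sim(\la)$, and no such surjection exists in general, because the head of an indecomposable tilting module is usually not $\Sim(\la)$. Concretely, for $G=\mathrm{SL}_2$, $p=3$, $\la_0=0$, $y=s_0$ and $\la=s_0\hdot 0=4$, the module $\Til(4)$ has composition factors $\Sim(0),\Sim(4),\Sim(0)$ with head and socle $\Sim(0)$, so $\Hom(\Til(4),\Sim(4))=0$; this is exactly an instance of the lemma. The object with head $\Sim(\la)$ is $\Sta(\la)$, and your argument is repaired by routing through $\Sta(\la)\hookrightarrow\Til(\la)\hookrightarrow\Til(\uy)$ and $\Sta(\la)\twoheadrightarrow\Sim(\la)$ instead: naturality gives $\Theta_s(q)\circ u_{\Sta(\la)}=u_{\Sim(\la)}\circ q$, whose image is a copy of $\Sim(\la)$ (here you still need $\Trans^s\Sim(\la)\neq 0$, which holds because $\la\uparrow\la^s$ and the $s$-wall lies in the upper closure of the alcove of $\la$), and then exactness of $\Theta_s$ applied to the injections propagates this composition factor into the image of $\widetilde{\Psi}(\varphi)$. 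This repaired argument is in substance what the paper does: it starts from a nonzero $f\colon\Sta(\la)\to\Til(\uy)$, reduces the nonvanishing to that of the single morphism $\Sta(\la)\to\Theta_s\Sta(\la)$, and concludes with Lemma~\ref{lem:Hom-Dta-y-ys} together with Lemma~\ref{lem:lower-dot-not0}.
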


\begin{proof}
By Lemma~\ref{lem:BsBs}, in $\DiagBS$ we have $B_{\uy s s} \cong B_{\uy s} \langle 1 \rangle \oplus B_{\uy s} \langle -1 \rangle$. We deduce a commutative diagram
\[
\xymatrix{
\Hom^\bullet(B_{\ux}, B_{\uy s s}) \ar[r]^-{\sim} \ar[d]_-{\alpha_{\ux,\uy s s}} & \Hom^{\bullet-1}(B_{\ux}, B_{\uy s}) \oplus \Hom^{\bullet+1}(B_{\ux}, B_{\uy s}) \ar[d]|-{\alpha_{\ux,\uy s} \oplus \alpha_{\ux,\uy s}} \\
\Hom(\Til(\ux), \Til(\uy s s)) \ar[r]^-{\sim} & \Hom(\Til(\ux), \Til(\uy s)) \oplus \Hom(\Til(\ux), \Til(\uy s))
}
\]
where $\ux$ is either $\uy$ or $\uy s$. This remark reduces the claim to the cases of $\beta_{\uy s, \uy s}$ and $\beta_{\uy, \uy s}$. The case of $\beta_{\uy s, \uy s}$ is obvious since
\[
\Hom_{\Rep_0(G)^{\downarrow \lambda}}(\Til(\uy s), \Til(\uy s)) = \bk \cdot \id
\]
(where here $\lambda = ys \hdot \lambda_0$).
So, only the surjectivity of $\beta_{\uy, \uy s}$ remains to be proved.

To simplify notation, we now set $\lambda=y \hdot \lambda_0$, and we fix a non-zero morphism $f \colon \Sta(\lambda) \to \Til(\uy)$. (Such a morphism is unique up to scalar.) Then we have a commutative diagram
\[
\xymatrix{
\Sta(\lambda) \ar[d]_-{f} \ar[r] & \Theta_s \Sta(\lambda) \ar[d]^-{\Theta_s(f)} \\
\Til(\uy) \ar[r] & \Til(\uy s),
}
\]
where both horizontal maps are induced by the image of the ``lower dot'' morphism of $\DiagBS$.
The image of $f$
in $\Rep_0(G)^{\downarrow \lambda}$ is an isomorphism, so that we have
\[
\Hom_{\Rep_0(G)^{\downarrow \lambda}}(\Til(\uy), \Til(\uy s)) \cong \Hom_{\Rep_0(G)^{\downarrow \lambda}}(\Sta(\lambda), \Til(\uy s))
\]
Now since
$\Hom_{\Rep_0(G)^{\downarrow \lambda}}(\Sta(\lambda), \cok(\Theta_s(f)))=0$,
the morphism $\Theta_s(f)$ induces an isomorphism
\[
\Hom_{\Rep_0(G)^{\downarrow \lambda}}(\Sta(\lambda), \Til(\uy s)) \simto \Hom_{\Rep_0(G)^{\downarrow \lambda}}(\Sta(\lambda), \Theta_s \Sta(\lambda)).
\]
Moreover, by Lemma~\ref{lem:Hom-Dta-y-ys}, the vector space $\Hom_{\Rep_0(G)^{\downarrow \lambda}}(\Sta(\lambda), \Theta_s \Sta(\lambda))$ is one-dimensional, and the image of the lower dot morphism in this space coincides with the image of the lower dot morphism in $\Hom_{\Rep_0(G)^{\downarrow \lambda}}(\Til(\uy), \Til(\uy s))$ under the composed isomorphism
\[
\Hom_{\Rep_0(G)^{\downarrow \lambda}}(\Til(\uy), \Til(\uy s)) \cong \Hom_{\Rep_0(G)^{\downarrow \lambda}}(\Sta(\lambda), \Theta_s \Sta(\lambda)).
\]
Hence to conclude it suffices to prove that the image of the ``lower dot'' morphism in $\Hom_{\Rep_0(G)^{\downarrow \lambda}}(\Sta(\lambda), \Theta_s \Sta(\lambda))$ does not vanish.
This fact follows from Lemma~\ref{lem:Hom-Dta-y-ys} and Lemma~\ref{lem:lower-dot-not0}.
\end{proof}

\begin{proof}[Proof of Proposition~{\rm \ref{prop:morphisms-Phit}}]
We will prove by induction on $\ell(\uy)$ that the statement holds for all reduced expressions $\ux$.
First, assume that $\ell(\uy)=0$. Then $\Til(\uy)=\Til(\varnothing)=\Til(\lambda_0)$, and 
\[
\Hom_{\Rep_0(G)^{\downarrow \lambda}}(\Til(\ux), \Til(\varnothing)) = 0
\]
unless $\ux=\varnothing$, in which case the claim is obvious since
\[
\Hom_{\Rep_0(G)^{\downarrow \lambda_0}}(\Til(\varnothing), \Til(\varnothing)) = \Bbbk \cdot \id.
\]

Now let $\uy$ be an expression such that $\ell(\uy)>0$. Write $\uy=\uv s$ where $s \in S$, so that $\Til(\uy) = \Theta_s \Til(\uv)$, and assume that the result is known for $\uv$. As in the statement, let also $\ux$ be a reduced expression for an element $x \in \fW$, and let $\lambda:=x \hdot \lambda_0$. We distinguish three cases.

\emph{Case~1: $\lambda^s \notin \bX^+$.} In this case we have
\[
\Hom_{\Rep_0(G)^{\downarrow \lambda}}(\Til(\ux), \Til(\uy)) = 0
\]
since $(\Til(\uy): \Cos(\la)) = (\Theta_s \Til(\uv) : \Cos(\la))=0$ by Lemma~\ref{lem:translation-Cos},
and there is nothing to prove.

\emph{Case~2: $\lambda^s \in \bX^+$ and $\lambda^s \uparrow \lambda$}.
In this case $xs<x$ in the Bruhat order, so that $x$ has a reduced expression ending with $s$, and $xs \in \fW$. Using Lemma~\ref{lem:rex}, we can assume that $\ux=\uu s$ for some word $\uu$ which is a reduced expression for $xs$. By induction, there exists a family $(f_i)_{i \in I}$ of elements in the image of $\alpha_{\ux,\uv}$ whose image spans $\Hom_{\Rep_0(G)^{\downarrow \la}}(\Til(\ux), \Til(\uv))$, and a family $(g_j)_{j \in J}$ of elements in the image of $\alpha_{\uu,\uv}$ whose image spans $\Hom_{\Rep_0(G)^{\downarrow \la^s}}(\Til(\uu), \Til(\uv))$. Then by Proposition~\ref{prop:morphisms-BS}\eqref{it:morphisms-BS-2} there exists morphisms $f_i' \colon \Til(\ux) \to \Til(\ux s)$ and $g_j' \colon \Til(\ux) \to \Til(\uu s) = \Til(\ux)$ such that the images of the compositions
\begin{equation}
\label{eqn:beta-surjective}
\Theta_s(f_i) \circ f'_i \quad \text{and} \quad \Theta_s(g_j) \circ g_j'
\end{equation}
span $\Hom_{\Rep_0(G)^{\downarrow \la}}(\Til(\ux), \Til(\uy))$.  By Lemma~\ref{lem:morphisms-Phit} (applied to the reduced expression $\uu$) the morphisms $\beta_{\ux,\ux}$ and $\beta_{\ux,\ux s}$ are surjective; hence we can assume that the morphisms $f_i'$ are in the image of $\alpha_{\ux,\ux s}$, and that the morphisms $g_j'$ are in the image of $\alpha_{\ux,\ux}$. Then all the morphisms in~\eqref{eqn:beta-surjective} are in the image of $\alpha_{\ux,\uy}$, and the proof is complete in this case.



\emph{Case~3: $\lambda \uparrow \lambda^s$}. In this case $xs>x$ in the Bruhat order and $xs \in \fW$.
The proof is similar to (and slightly simpler than) the proof of Case~2. In fact, by induction
there exists a family $(f_i)_{i \in I}$ of elements in the image of
$\alpha_{\ux,\uv}$ whose image spans $\Hom_{\Rep_0(G)^{\downarrow
    \la}}(\Til(\ux), \Til(\uv))$, and a family $(g_j)_{j \in J}$ of
elements in the image of $\alpha_{\ux s,\uv}$ whose image spans
$\Hom_{\Rep_0(G)^{\downarrow \la^s}}(\Til(\ux s), \Til(\uv))$. Then by
Proposition~\ref{prop:morphisms-BS}\eqref{it:morphisms-BS-1} there
exists morphisms $f_i' \colon \Til(\ux) \to \Til(\ux s)$ and
$g_j' \colon \Til(\ux) \to \Til(\ux s s)$ such
that the images of the compositions
\begin{equation}
\label{eqn:beta-surjective-2}
\Theta_s(f_i) \circ f'_i \quad \text{and} \quad \Theta_s(g_j) \circ g_j'
\end{equation}
span $\Hom_{\Rep_0(G)^{\downarrow \la}}(\Til(\ux), \Til(\uy))$. By Lemma~\ref{lem:morphisms-Phit} (applied to the reduced expression $\ux$) the morphism $\beta_{\ux, \ux s}$ and $\beta_{\ux, \ux s s}$ are surjective; hence we can assume that the morphisms $f_i'$ are in the image of $\alpha_{\ux, \ux s}$ and that the morphisms $g_j'$ are in the image of $\alpha_{\ux, \ux s s}$. Then all the morphisms in~\eqref{eqn:beta-surjective-2} are in the image of $\alpha_{\ux,\uy}$, and the proof is complete in this case also.
\end{proof}

\subsection{Dimensions of morphism spaces}
\label{ss:dim}

When one specializes the parameter $v$ of the Hecke algebra $\Haff$ to $1$, the Hecke algebra specializes to the group algebra $\Z[\Waff]$, and the antispherical module $\Masph$ specializes to the antispherical right module
\[
\mathsf{M}^{\asph} := \Z_\varepsilon \otimes_{\Z[\Wf]} \Z[\Waff]
\]
of $\Z[\Waff]$. (Here $\Z_{\varepsilon}$ is a the rank one free $\Z$-module where $\Wf$ acts via the sign character $\varepsilon$.) This $\Z$-module has a natural basis parametrized by $\fW$: for $w \in \Waff$ we denote by $N'_w$ the element $1 \otimes w$. For $\uw=s_1 \cdots s_r$ an expression, we also set
\[
\uN'_{\uw} = 1 \otimes (1+s_1) \cdots (1+s_r).
\]

\begin{lem}
\label{lem:dim-tilt}
For any expression $\uw$, the integer
\[
(\Til(\uw) : \Cos(\lambda_0))=\dim_{\Bbbk} \left( \Hom_{\Rep_0(G)}(\Til(\lambda_0), \Til(\uw)) \right)
\]
is equal to the coefficient of $\uN'_{\uw}$ on $N'_{1}$.
\end{lem}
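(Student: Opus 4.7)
The plan is to translate both sides of the claimed equality into statements about the Grothendieck group $[\Rep_0(G)]$ via the specialization at $v=1$ of the isomorphism $\phi$ from~\eqref{eqn:anti2}. Recall that $\phi \colon \mathsf{M}^{\asph} \simto [\Rep_0(G)]$ is an isomorphism of right $\Z[\Waff]$-modules sending $N'_w \mapsto [\Cos(w \hdot \lambda_0)]$ and under which $1+s$ acts on the right by $[\Theta_s]$.

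First I would handle the left-hand side. Since $\lambda_0$ lies in the interior of the fundamental alcove, it is the unique minimal element of $(\bX_0^+, \uparrow)$, so
\[
\Til(\lambda_0) = \Sta(\lambda_0) = \Sim(\lambda_0) = \Cos(\lambda_0).
\]
Because $\Til(\uw)$ admits a costandard filtration, the standard identification recalled in~\S\ref{ss:hwcat} yields
\[
\dim_{\Bbbk} \Hom_{\Rep_0(G)}(\Til(\lambda_0), \Til(\uw)) = \dim_{\Bbbk} \Hom_{\Rep_0(G)}(\Sta(\lambda_0), \Til(\uw)) = (\Til(\uw) : \Cos(\lambda_0)).
\]

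Next I would compute $\phi(\uN'_{\uw})$ using the right action. Writing $\uw = s_1 \cdots s_r$, and using $\phi(N'_1) = [\Cos(\lambda_0)] = [\Til(\lambda_0)]$, the intertwining property gives
\[
\phi(\uN'_{\uw}) = \phi(N'_1) \cdot (1+s_1) \cdots (1+s_r) = [\Theta_{s_r} \circ \cdots \circ \Theta_{s_1} (\Til(\lambda_0))] = [\Til(\uw)],
\]
the final equality being the definition of $\Til(\uw)$ from~\S\ref{ss:BStilting}. The reversal of the order of composition is precisely what the right-action convention demands (compare Remark~\ref{rmk:main-conj}\eqref{it:rmk-left-right}); this is the only bookkeeping point and presents no serious obstacle. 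Expanding $\uN'_{\uw} = \sum_{w \in \fW} c_w N'_w$ and applying $\phi$, one deduces
\[
[\Til(\uw)] = \sum_{w \in \fW} c_w [\Cos(w \hdot \lambda_0)]
\]
in $[\Rep_0(G)]$, from which $(\Til(\uw) : \Cos(\lambda_0)) = c_1$ --- exactly the coefficient of $N'_1$ on $\uN'_{\uw}$. Combining with the previous paragraph gives the lemma.
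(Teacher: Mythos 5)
Your argument is correct and follows the same route as the paper: both reduce the claim to the fact that $\phi$ intertwines right multiplication by $1+s$ with $[\Theta_s]$, then read off the coefficient of $N'_1$. The only difference in emphasis is that the paper's proof actually verifies the intertwining (via the explicit formula for $N'_w \cdot (1+s)$ and~\eqref{eqn:not-in-fW}) rather than citing~\S\ref{ss:intro-cat-conj}, where this compatibility is only asserted without proof, and conversely compresses the final step you spell out into "the lemma follows"; so you should either also supply the short verification of $N'_w \cdot (1+s)$ or note explicitly that you are invoking an unproved assertion from the introduction.
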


\begin{proof}
For any $s \in \Saff$ and $w \in \fW$, using~\eqref{eqn:not-in-fW} we see that
\[
N'_w \cdot (1 + s) =
\begin{cases}
N'_w + N'_{ws} & \text{if $ws \in \fW$,} \\
0 & \text{otherwise.}
\end{cases}
\]
These formulas show that the isomorphism of $\Z$-modules
\[
\mathsf{M}^{\asph} \simto [\Rep_0(G)]
\]
sending $N'_w$ to $[\Cos(w \hdot \lambda_0)]$ intertwines right multiplication by $(1+s)$ with the morphism $[\Theta_s]$. The lemma follows.
\end{proof}

\begin{lem}
\label{lem:dim-diag}
For any expression $\uw$, the dimension of the $\Bbbk$-vector space
\[
\Hom_{\Dasph}^\bullet(\oB_\varnothing, \oB_{\uw})
\]
is at most the coefficient of $\uN'_{\uw}$ on $N'_{1}$.
\end{lem}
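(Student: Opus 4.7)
The plan is to assemble three ingredients already developed in the paper: the identification of $\Dasph$ with the Karoubi envelope of $\DasphBS$, the anti-autoequivalence $\tau$ of $\DiagBS$ from \S\ref{ss:diag-SB}, and the spanning statement of Proposition~\ref{prop:ASLL}. First, because $\Dasph$ is by construction the Karoubi envelope of the additive hull of $\DasphBS$, for any objects $X,Y$ coming from $\DasphBS$ we have $\Hom^\bullet_{\Dasph}(X,Y) = \Hom^\bullet_{\DasphBS}(X,Y)$, and in particular this applies to $X = \oB_\varnothing$, $Y = \oB_{\uw}$.

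Next, I will invoke the anti-autoequivalence $\tau$, which acts on objects only by shifting the grading and which reflects diagrams across a horizontal axis, swapping sources and targets. The subcategory of morphisms factoring through some $B_{\uu}\langle k \rangle$ with $\uu$ starting with a simple reflection in $\Sf$ is preserved by $\tau$, because a factorization through $B_{\uu}\langle k \rangle$ is sent to a factorization through $\tau(B_{\uu}\langle k \rangle) = B_{\uu}\langle -k \rangle$, and the condition on $\uu$ is intrinsic to the expression. Hence $\tau$ descends to an anti-autoequivalence of $\DasphBS$ and produces a $\Bbbk$-linear isomorphism
\[
\Hom^\bullet_{\DasphBS}(\oB_\varnothing, \oB_{\uw}) \;\simto\; \Hom^\bullet_{\DasphBS}(\oB_{\uw}, \oB_\varnothing).
\]

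The right-hand side is directly controlled by Proposition~\ref{prop:ASLL}: for a suitable choice of light leaves, the $\Bbbk$-module $\Hom^\bullet_{\DasphBS}(\oB_{\uw}, \oB_\varnothing)$ is spanned by the images of the morphisms $\LL_{\uw, \ue}$ where $\ue$ runs through the subexpressions of $\uw$ which express $1$ and avoid $W \smallsetminus \fW$. Consequently,
\[
\dim_{\Bbbk} \Hom^\bullet_{\Dasph}(\oB_\varnothing, \oB_{\uw}) \;\leq\; \#\bigl\{ \ue \subset \uw \,\bigm|\, \uw^{\ue} = 1,\; \ue \text{ avoids } W \smallsetminus \fW \bigr\}.
\]

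Finally, specializing $v = 1$ in Lemma~\ref{lem:number-subexpr-avoids} yields
\[
\uN'_{\uw} \;=\; \sum_{\substack{\ue \subset \uw \\ \ue \text{ avoids } W \smallsetminus \fW}} N'_{\uw^{\ue}}
\]
in $\mathsf{M}^{\asph}$, so the coefficient of $N'_1$ in $\uN'_{\uw}$ is exactly the cardinality of this indexing set. This matches the upper bound above and concludes the argument. The only real checkpoint is verifying that $\tau$ descends to $\DasphBS$; everything else is bookkeeping, with Proposition~\ref{prop:ASLL} carrying the genuine content.
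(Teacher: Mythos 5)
Your argument is correct and is essentially identical to the paper's proof: reduce via the anti-autoequivalence $\tau$ to $\Hom^\bullet_{\Dasph}(\oB_{\uw},\oB_{\varnothing})$, bound its dimension by the spanning set of Proposition~\ref{prop:ASLL}, and identify the cardinality of that set with the coefficient of $N'_1$ in $\uN'_{\uw}$ via Lemma~\ref{lem:number-subexpr-avoids} specialized at $v=1$. Your extra verification that $\tau$ descends to $\DasphBS$ is a correct (if routine) detail the paper leaves implicit.
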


\begin{proof}
Using the antiequivalence $\tau$ of~\S\ref{ss:diag-SB}, it is
equivalent to prove the similar claim for
$\Hom_{\Dasph}^\bullet(\overline{B}_{\uw}, \overline{B}_\varnothing)$.
By Proposition~\ref{prop:ASLL}, the dimension of
$\Hom^\bullet_{\Dasph}(\overline{B}_{\un{w}}, \overline{B}_{\varnothing})$ is at most
the cardinality of $\{ \un{e} \subset \un{w} \mid \un{w}^{\un{e}} = 1 \text{ and
  $\un{e}$ avoids $W \smallsetminus \fW$} \}.$
By
Lemma~\ref{lem:number-subexpr-avoids} this number is the
coefficient of $\uN'_{\uw}$ on $N'_{1}$, and the lemma follows. \end{proof}
\subsection{Proof of Theorem~\ref{thm:main}}
\label{ss:proof-thm}

We can finally give the proof of Theorem~\ref{thm:main}.
For any $s \in \Sf$ we have
\[
\widetilde{\Psi}(B_s) = \Til(\lambda_0) \cdot B_s = \Theta_s(\Til(\lambda_0)) = 0.
\]
Since any object $B_w$ with $w \notin \fW$ is a direct summand in a shift of an object of the form $B_s \cdot X$ with $s \in \Sf$, it follows that $\widetilde{\Psi}$ factors through a functor
\[
\Psi \colon \Dasph \to \Tilt(\Rep_0(G)).
\]

First, let us prove~\eqref{it:main-fully-faithful}.
For this it suffices to prove that for any expressions $\ux$ and $\uw$ the functor $\Psi$ induces an isomorphism
\[
\Hom_{\Dasph}^\bullet(\oB_{\ux}, \oB_{\uw}) \simto \Hom_{\Rep_0(G)}(\Til(\ux), \Til(\uw)).
\]
We prove this claim by induction on $\ell(\ux)$. If $\ell(\ux)=0$, then $\ux=\varnothing$. In this case, Proposition~\ref{prop:morphisms-Phit} implies that the morphism induced by $\Psi$ is surjective. Comparing the dimensions using Lemmas~\ref{lem:dim-tilt} and~\ref{lem:dim-diag}, it follows that this morphism is an isomorphism, and the desired claim is proved.

Now assume that $\ell(\ux)>0$, and write
$\ux=\uy s$ with $s \in \Saff$. Consider the morphisms
\[
B_{\varnothing} \to B_s B_s \quad \text{and} \quad B_s B_s \to B_{\varnothing}
\]
obtained by composing the appropriate trivalent vertex with the appropriate dot morphism, and their images
\[
\id \to \Theta_s \Theta_s \quad \text{and} \quad \Theta_s \Theta_s \to \id.
\]
These morphisms satisfy the zigzag relations, hence are adjunction morphisms for some adjunctions $((-) \cdot B_s, (-) \cdot B_s)$ and $(\Theta_s, \Theta_s)$ respectively. Using these adjunctions we obtain 
the following commutative diagram, where vertical morphisms are induced by $\Psi$:
\[
\xymatrix{
\Hom_{\Dasph}^\bullet(\oB_{\uy s}, \oB_{\uw}) \ar[d] \ar[r]^-{\sim} & \Hom_{\Dasph}^\bullet(\oB_{\uy}, \oB_{\uw s}) \ar[d] \\
\Hom_{\Rep_0(G)}(\Til(\uy s), \Til(\uw)) \ar[r]^-{\sim} & \Hom_{\Rep_0(G)}(\Til(\uy), \Til(\uw s)).
}
\]
By induction we know that the right vertical arrow is an isomorphism, and we deduce that the left vertical arrow is also an isomorphism.

%

Now we consider~\eqref{it:main-indec}. Clearly, it is enough to prove that $\Psi(\oB_w)$ is indecomposable for any $w \in \fW$. For this we observe that by~\eqref{it:main-fully-faithful} we have an algebra isomorphism
\[
\Hom_{\Rep_0(G)}(\Psi(\oB_w), \Psi(\oB_w)) \cong \Hom^\bullet_{\Dasph}(\oB_w, \oB_w).
\]
Since $\oB_w$ is indecomposable in $\Dasph$, the subring
\[
\Hom_{\Dasph}(\oB_w, \oB_w) \subset \Hom^\bullet_{\Dasph}(\oB_w, \oB_w)
\]
is local. By~\cite[Theorem~3.1]{gg}, we deduce that the ring $\Hom^\bullet_{\Dasph}(\oB_w, \oB_w)$ is local, and then that the ring $\Hom_{\Rep_0(G)}(\Psi(\oB_w), \Psi(\oB_w))$ is local. This implies that $\Psi(\oB_w)$ is indecomposable, and finishes the proof.

\subsection{Graded form of $\Rep_0(G)$}
\label{ss:forms_of_rep0}

In this section we preserve the setup of~\S\ref{ss:conjecture}, and explain how Conjecture~\ref{conj:main} implies that the block $\Rep_0(G)$ admits a grading in the sense of~\cite[Definition~4.3.1]{bgs}.
\emph{In this subsection, we assume that Conjecture~{\rm \ref{conj:main}} holds.}

From the fundamental vanishing \eqref{eqn:morph-Sta-Cos} we deduce the vanishing
\begin{equation} \label{eq:tiltvanish}
\Ext^i_{\Rep_0(G)}(\Til(\lambda),\Til(\mu)) = 0 \quad \text{for all $i > 0$ and
  $\lambda, \mu \in \bX_0^+$.}
\end{equation}
Now $\Db(\Rep_0(G))$ is generated by $\Tilt(\Rep_0(G))$. These considerations and
Be{\u\i}lin\-son's lemma imply that the inclusion functor provides an equivalence
\[
\Kb(\Tilt(\Rep_0(G))) \simto \Db(\Rep_0(G)).
\]
Then, combining the above equivalence with Theorem \ref{thm:main}, we obtain the following result.

\begin{cor}
\label{cor:rep0}
There exists
a triangulated functor
\[
\Psi \colon \Kb(\Dasph) \to \Db(\Rep_0(G))
\]
and an isomorphism $\zeta \colon \Psi \circ \langle 1 \rangle \simto \Psi$
which satisfy the following properties:
\begin{enumerate}
\item
\label{it:cor-fully-faithful}
for any $X,Y$ in $\Kb(\Dasph)$, $\Psi$ and $\zeta$ induce an isomorphism of $\Bbbk$-vector spaces
\[
\bigoplus_{m \in \Z}
\Hom_{\Kb(\Dasph)}(X, Y \langle m \rangle) \simto \Hom_{\Db(\Rep_0(G))}(\Psi(X), \Psi(Y));
\]
\item
\label{it:cor-indec}
for any $w \in \fW$, $\Psi(\overline{B}_w) \cong \Til(w \hdot \lambda_0)$.
\end{enumerate}
\end{cor}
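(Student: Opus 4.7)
The plan is essentially to package the ingredients already assembled into a homotopy-categorical statement. First I would define $\Psi$ as a composition: let $\Psi_0 \colon \Dasph \to \Tilt(\Rep_0(G))$ and $\zeta_0 \colon \Psi_0 \circ \langle 1 \rangle \simto \Psi_0$ denote the functor and isomorphism supplied by Theorem~\ref{thm:main}, and apply the homotopy category construction to obtain a triangulated functor $\Kb(\Psi_0) \colon \Kb(\Dasph) \to \Kb(\Tilt(\Rep_0(G)))$ together with an induced isomorphism $\Kb(\Psi_0) \circ \langle 1 \rangle \simto \Kb(\Psi_0)$ (where the shift $\langle 1 \rangle$ on $\Kb(\Dasph)$ is defined termwise). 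Composing with the Be{\u\i}linson equivalence $\Kb(\Tilt(\Rep_0(G))) \simto \Db(\Rep_0(G))$ recalled in the paragraph preceding the statement, I obtain a triangulated functor $\Psi \colon \Kb(\Dasph) \to \Db(\Rep_0(G))$ and the desired natural isomorphism $\zeta$.

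To prove~\eqref{it:cor-fully-faithful}, I would pass from the graded full faithfulness provided by Theorem~\ref{thm:main}\eqref{it:main-fully-faithful} to the level of bounded complexes. For $X^\bullet, Y^\bullet \in \Kb(\Dasph)$, a homomorphism $X^\bullet \to Y^\bullet \langle m \rangle$ in $\Kb(\Dasph)$ is a chain map modulo homotopy, and such chain maps and homotopies are determined by their components $X^i \to Y^j \langle m \rangle$ in each bidegree. Summing over $m \in \Z$ and applying the isomorphisms $\Hom^\bullet_{\Dasph}(X^i, Y^j) \simto \Hom_{\Rep_0(G)}(\Psi_0(X^i), \Psi_0(Y^j))$ componentwise then yields an identification
\[
\bigoplus_{m \in \Z}\Hom_{\Kb(\Dasph)}(X^\bullet, Y^\bullet \langle m \rangle) \simto \Hom_{\Kb(\Tilt(\Rep_0(G)))}(\Kb(\Psi_0)(X^\bullet), \Kb(\Psi_0)(Y^\bullet)).
\]
Post-composing with the Be{\u\i}linson equivalence (which is fully faithful by the vanishing~\eqref{eq:tiltvanish}) gives~\eqref{it:cor-fully-faithful}. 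Property~\eqref{it:cor-indec} is then immediate: viewing $\oB_w$ as a complex concentrated in degree $0$, its image under $\Kb(\Psi_0)$ is $\Psi_0(\oB_w) \cong \Til(w \hdot \lambda_0)$ by Theorem~\ref{thm:main}\eqref{it:main-indec}, concentrated in degree $0$, whose image under the Be{\u\i}linson equivalence is just $\Til(w \hdot \lambda_0) \in \Db(\Rep_0(G))$.

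There is no substantial obstacle here; the only point requiring a sentence of justification is the passage from graded full faithfulness on $\Dasph$ to full faithfulness on $\Kb(\Dasph)$ (and analogously for the vertical shift $\langle 1 \rangle$ intertwining), which is a formal consequence of how morphisms and homotopies in $\Kb$ are built out of morphisms in the underlying additive category. The nontrivial work has already been done in Theorem~\ref{thm:main} (for the equivalence on tilting objects) and in the classical Be{\u\i}linson-type equivalence $\Kb(\Tilt(\Rep_0(G))) \simto \Db(\Rep_0(G))$ coming from~\eqref{eq:tiltvanish}.
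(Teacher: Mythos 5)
Your proposal is correct and is exactly the argument the paper intends: the paper's proof consists of the single sentence "combining the above equivalence with Theorem~\ref{thm:main}, we obtain the following result," i.e.\ composing $\Kb(\Psi_0)$ with the Be{\u\i}linson equivalence coming from~\eqref{eq:tiltvanish}. Your additional remark that graded full faithfulness passes formally to the bounded homotopy categories (since Hom-complexes in $\Kb$ are built componentwise from Homs in the additive category) is precisely the routine verification the paper leaves implicit.
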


In particular, $\Kb(\Dasph)$ gives a ``graded version'' of
$\Db(\Rep_0(G))$. The following proposition implies that the
standard and costandard objects may be lifted to this graded version.

\begin{prop} \label{prop:stdcostd}
 For all $x \in \fW$ there exist objects $\widetilde{\Sta}_x, 
  \widetilde{\Cos}_x \in \Kb(\Dasph)$ and maps
\[
i_x \colon \widetilde{\Sta}_x \to \overline{B}_x \quad \text{and} \quad p_x :
\overline{B}_x \to \widetilde{\Cos}_x
\]
such that:
\begin{enumerate}
\item
we have isomorphisms $\Psi(\widetilde{\Sta}_x) \cong \Sta(x
  \hdot \lambda_0)$
  and $\Psi(\widetilde{\Cos}_x) \cong \Cos(x\hdot \lambda_0)$;
\item
the map $\Psi(i_x)$, resp.~$\Psi(p_x)$, identifies, up to a nonzero scalar, with
  the unique nonzero (and injective) morphism $\Delta(x\hdot \lambda_0) \into \Til(x\hdot \lambda_0)$, resp.~with the unique nonzero (and surjective) morphism
  $\Til(x\hdot \lambda_0) \onto \nabla(x\hdot \lambda_0)$.
\end{enumerate}
\end{prop}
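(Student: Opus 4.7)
The plan is to argue by induction on $\ell(x)$. For the base case $x = 1$, one sets $\widetilde{\Sta}_1 = \widetilde{\Cos}_1 = \oB_\varnothing$ and $i_1 = p_1 = \id_{\oB_\varnothing}$; since $\Til(\lambda_0) = \Sta(\lambda_0) = \Cos(\lambda_0)$, the claims are immediate. For the inductive step, write $x = ys$ with $y \in \fW$, $s \in \Saff$, $\ell(ys) = \ell(y)+1$, and $ys \in \fW$, and assume the data $\widetilde{\Sta}_y$, $\widetilde{\Cos}_y$, $i_y$, $p_y$ have already been constructed.

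To build $\widetilde{\Sta}_{ys}$ I would exploit the standard short exact sequence
\[
0 \to \Sta(y \hdot \lambda_0) \xrightarrow{\iota} \Theta_s \Sta(y \hdot \lambda_0) \to \Sta(ys \hdot \lambda_0) \to 0
\]
of \cite[Proposition~II.7.19]{jantzen}, which identifies $\Sta(ys \hdot \lambda_0)$ with the cone of the adjunction unit $\iota$ inside $\Db(\Rep_0(G))$. Since $\Psi(\widetilde{\Sta}_y) \cong \Sta(y \hdot \lambda_0)$ and $\Psi(\widetilde{\Sta}_y \cdot \oB_s) \cong \Theta_s \Sta(y \hdot \lambda_0)$, Corollary~\ref{cor:rep0}(\ref{it:cor-fully-faithful}) identifies the one-dimensional space $\Hom_{\Rep_0(G)}(\Sta(y \hdot \lambda_0), \Theta_s \Sta(y \hdot \lambda_0))$ with $\bigoplus_m \Hom_{\Kb(\Dasph)}(\widetilde{\Sta}_y, \widetilde{\Sta}_y \cdot \oB_s \langle m \rangle)$, so $\iota$ lifts to a (necessarily unique, up to scalar) morphism $\tilde{\iota} \colon \widetilde{\Sta}_y \langle 1 \rangle \to \widetilde{\Sta}_y \cdot \oB_s$ in $\Kb(\Dasph)$. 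The grading shift $\langle 1 \rangle$ is pinned down by the identity $N_y \cdot \uH_s = N_{ys} + v \, N_y$ from~\eqref{eq:sact}, which forces $[\widetilde{\Sta}_{ys}] = N_{ys}$ in the cone triangle. Setting $\widetilde{\Sta}_{ys} := \cone(\tilde{\iota})$ and applying $\Psi$ then recovers exactly the triangle associated with the displayed short exact sequence, whence $\Psi(\widetilde{\Sta}_{ys}) \cong \Sta(ys \hdot \lambda_0)$. The object $\widetilde{\Cos}_{ys}$ is constructed dually from the surjection $\Theta_s \Cos(y \hdot \lambda_0) \twoheadrightarrow \Cos(y \hdot \lambda_0)$, or by applying the anti-autoequivalence $\tau$ of \S\ref{ss:diag-SB}.

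For the morphisms $i_{ys}$ and $p_{ys}$, a second application of Corollary~\ref{cor:rep0}(\ref{it:cor-fully-faithful}) identifies $\bigoplus_m \Hom_{\Kb(\Dasph)}(\widetilde{\Sta}_{ys}, \oB_{ys} \langle m \rangle)$ with the one-dimensional space $\Hom_{\Rep_0(G)}(\Sta(ys \hdot \lambda_0), \Til(ys \hdot \lambda_0))$, so a lift exists and is unique up to scalar, and lives in a single graded piece $m = m_0$. The remaining point is to show $m_0 = 0$: I would combine the decomposition $\oB_y \cdot \oB_s \cong \oB_{ys} \oplus \bigoplus_i \oB_{z_i} \langle n_i \rangle$ with $z_i < ys$ (which holds in $\Dasph$ and can be read off the Kazhdan--Lusztig basis, cf.~Lemma~\ref{lem:BS-not-minimal}) with the inductively-given $i_y$ to produce an explicit candidate $i_{ys}$, namely the morphism $\widetilde{\Sta}_{ys} \to \oB_{ys}$ induced from $i_y \cdot \id_{\oB_s} \colon \widetilde{\Sta}_y \cdot \oB_s \to \oB_y \cdot \oB_s \twoheadrightarrow \oB_{ys}$ after checking that its precomposition with $\tilde\iota$ is null-homotopic (this uses that $\oB_{ys}$ does not appear as a summand of $\oB_y \langle n \rangle$ for any $n$). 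The analogous construction gives $p_{ys}$.

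The main obstacle, in my view, is the grading bookkeeping: one has to check throughout that the lifted adjunction morphism $\tilde\iota$ and the lifted map $i_{ys}$ genuinely live in the expected graded degrees (rather than some shift), and that the cone construction interacts correctly with the Krull--Schmidt decomposition of $\oB_y \cdot \oB_s$ in $\Dasph$. No further input beyond Corollary~\ref{cor:rep0}, formula~\eqref{eq:sact}, and the standard short exact sequence for translation functors is needed, but verifying these shifts and the compatibility of the construction with the choice of $s$ (so that $\widetilde{\Sta}_x$, $\widetilde{\Cos}_x$ do not depend on which reduced expression for $x$ is used in the induction) is the delicate part.
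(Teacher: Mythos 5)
Your overall strategy (induction on length, realizing $\widetilde{\Sta}_x$ and $\widetilde{\Cos}_x$ as cones of dot morphisms on $\widetilde{\Sta}_{xs} \cdot \oB_s$, and producing $i_x$, $p_x$ from the split projection/inclusion of the summand $\oB_x$ of $\oB_{xs}\cdot\oB_s$) is the paper's, but you have the two translation short exact sequences backwards, and this breaks the construction. For $\lambda \uparrow \lambda^s$ the sequences are
\[
0 \to \Sta(\lambda^s) \to \Theta_s \Sta(\lambda) \to \Sta(\lambda) \to 0
\qquad\text{and}\qquad
0 \to \Cos(\lambda) \to \Theta_s \Cos(\lambda) \to \Cos(\lambda^s) \to 0,
\]
i.e.\ for standard objects the \emph{larger} weight sits at the bottom (this is \cite[Proposition~II.7.19]{jantzen}, recorded in the paper as Lemma~\ref{lem:delnabseq}: the counit/upper dot $\Theta_s\Sta(\lambda)\to\Sta(\lambda)$ is surjective with kernel $\Sta(\lambda^s)$, and the unit/lower dot $\Cos(\lambda)\to\Theta_s\Cos(\lambda)$ is injective with cokernel $\Cos(\lambda^s)$). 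Your displayed sequence $0 \to \Sta(y\hdot\lambda_0) \to \Theta_s\Sta(y\hdot\lambda_0) \to \Sta(ys\hdot\lambda_0) \to 0$ cannot hold: it would exhibit $\Theta_s\Sta(y\hdot\lambda_0)$ as an extension of $\Sta(ys\hdot\lambda_0)$ by $\Sta(y\hdot\lambda_0)$, which is forced to split because $\Ext^1(\Sta(\mu),\Sta(\nu))=0$ unless $\nu \succ \mu$, whereas $\Theta_s\Sta(y\hdot\lambda_0)$ is in general indecomposable. Concretely, already for $y=1$ one has $\Theta_s\Sta(\lambda_0)=\Til(\lambda_0^s)$, and the cokernel of the unit $\Sta(\lambda_0)\hookrightarrow\Til(\lambda_0^s)$ is $\Cos(\lambda_0^s)$, not $\Sta(\lambda_0^s)$. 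So your $\widetilde{\Sta}_{ys}:=\cone(\tilde\iota)$ satisfies $\Psi(\widetilde{\Sta}_{ys})\cong\Cos(ys\hdot\lambda_0)$ rather than $\Sta(ys\hdot\lambda_0)$ at the first step (the Grothendieck-group check $N_y\uH_s - vN_y = N_{ys}$ cannot detect this). The same reversal affects your dual construction of $\widetilde{\Cos}_{ys}$.

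The fix is to swap which dot morphism you take the cone of: define $\widetilde{\Sta}_x := \cone\bigl(\widetilde{\Sta}_{xs}\cdot\oB_s \to \widetilde{\Sta}_{xs}\langle 1\rangle\bigr)[-1]$ using the \emph{upper} dot (so that applying $\Psi$ produces the kernel of the counit, i.e.\ $\Sta(x\hdot\lambda_0)$), and $\widetilde{\Cos}_x := \cone\bigl(\widetilde{\Cos}_{xs}\langle -1\rangle \to \widetilde{\Cos}_{xs}\cdot\oB_s\bigr)$ using the \emph{lower} dot. This is exactly what the paper does. A pleasant side effect of the correct orientation is that $i_x$ comes for free: $\widetilde{\Sta}_x$ now maps canonically \emph{to} $\widetilde{\Sta}_{xs}\cdot\oB_s$ (it is the shifted cone, i.e.\ the ``kernel''), so $i_x$ is simply the composition $\widetilde{\Sta}_x \to \widetilde{\Sta}_{xs}\cdot\oB_s \xrightarrow{i_{xs}\cdot\oB_s} \oB_{xs}\cdot\oB_s \to \oB_x$ — no null-homotopy verification is needed, unlike in your set-up where $\widetilde{\Sta}_{ys}$ is a ``cokernel'' and a map out of it has to be induced. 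Finally, the independence of the choice of $s$ (or of reduced expression) that you flag as a delicate point is not needed for the proposition, which only asserts existence; uniqueness up to isomorphism is noted separately in the remark following it.
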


\begin{rmk}
It can be easily checked that the objects $\widetilde{\Sta}_x$ and $\widetilde{\Cos}_x$ as in Proposition~\ref{prop:stdcostd} are unique up to isomorphism.
\end{rmk}

Before giving the proof of Proposition~\ref{prop:stdcostd} we note the following fact.

\begin{lem}
\label{lem:delnabseq}
Let $x \in \fW$ and $s \in \Saff$ be such that $xs \in \fW$ and $x<xs$ in the Bruhat order.
\begin{enumerate}
\item
\label{lem:delnabseq-1}
The morphism $\Cos(x \hdot \lambda_0) \to \Theta_s(\Cos(x \bullet \lambda_0))$ given by the image under the action of Conjecture~{\rm \ref{conj:main}} of the lower dot morphism $B_\varnothing \to B_s \langle 1 \rangle$ (applied to $\Cos(x \hdot \lambda_0)$) is injective, and its cokernel is isomorphic to $\Cos(xs \hdot \lambda_0)$.
\item
\label{lem:delnabseq-2}
The morphism $\Theta_s(\Sta(x \hdot \lambda_0)) \to \Sta(x \bullet \lambda_0)$ given by the image under the action of Conjecture~{\rm \ref{conj:main}} of the upper dot morphism $B_s \to B_\varnothing \langle 1 \rangle$ (applied to $\Sta(x \hdot \lambda_0)$) is surjective, and its kernel is isomorphic to $\Sta(xs \hdot \lambda_0)$.
\end{enumerate}
\end{lem}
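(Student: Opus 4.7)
The plan is to identify the two dot morphisms, up to nonzero scalar, with the canonical inclusion and quotient arising from the standard two-step filtrations of $\Theta_s \Cos(x \hdot \lambda_0)$ and $\Theta_s \Sta(x \hdot \lambda_0)$ produced by translation theory.

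Setting $\lambda := x \hdot \lambda_0$, the translation principle (\cite[Proposition~II.7.19]{jantzen}) applied under our hypothesis ($x, xs \in \fW$ with $x < xs$) tells us that $\Theta_s \Cos(\lambda)$ has a costandard filtration with the single factors $\Cos(\lambda)$ and $\Cos(xs \hdot \lambda_0)$, and similarly $\Theta_s \Sta(\lambda)$ has a standard filtration with single factors $\Sta(\lambda)$ and $\Sta(xs \hdot \lambda_0)$. Since $\lambda \uparrow xs \hdot \lambda_0$, the canonical $\Cos$-flag (Lemma~\ref{lem:can-nabla-flag}) picks out a short exact sequence
\[
0 \to \Cos(\lambda) \xrightarrow{\iota} \Theta_s \Cos(\lambda) \to \Cos(xs \hdot \lambda_0) \to 0,
\]
while its standard counterpart gives
\[
0 \to \Sta(xs \hdot \lambda_0) \to \Theta_s \Sta(\lambda) \xrightarrow{q} \Sta(\lambda) \to 0.
\]

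For (1), let $\phi$ denote the lower dot morphism $\Cos(\lambda) \to \Theta_s \Cos(\lambda)$. Lemma~\ref{lem:lower-dot-not0} gives $\phi \neq 0$, since $\Theta_s \Cos(\lambda) \neq 0$ (it has $\Cos(xs \hdot \lambda_0)$ as a quotient). Apply $\Hom(\Cos(\lambda), -)$ to the first sequence: the vanishing $\Hom(\Cos(\lambda), \Cos(xs \hdot \lambda_0)) = 0$, which holds because any such morphism has image containing the socle $\Sim(xs \hdot \lambda_0)$ of the target, forcing $xs \hdot \lambda_0 \preceq \lambda$ contradictory to $xs \hdot \lambda_0 \succ \lambda$ (this is precisely the observation used in the proof of Lemma~\ref{lem:can-nabla-flag}), shows that post-composition with $\iota$ induces an isomorphism $\End(\Cos(\lambda)) \simto \Hom(\Cos(\lambda), \Theta_s \Cos(\lambda))$. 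Since $\Cos(\lambda)$ has simple socle $\Sim(\lambda)$, restriction to the socle embeds $\End(\Cos(\lambda))$ into $\End(\Sim(\lambda)) = \bk$, so $\End(\Cos(\lambda)) = \bk$. Hence $\phi$ is a non-zero scalar multiple of $\iota$, which proves (1).

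For (2) I would run the mirror argument. Let $\psi \colon \Theta_s \Sta(\lambda) \to \Sta(\lambda)$ be the upper dot morphism. Non-vanishing of $\psi$ follows from the zigzag identity of Lemma~\ref{lem:lower-dot-not0} reflected across a horizontal axis (equivalently, applied via the anti-autoequivalence $\tau$ of~\S\ref{ss:diag-SB}): the composition using the trivalent vertex and $\psi$ equals $\id_{B_s}$, so its image acting on $\Sta(\lambda)$ is $\id_{\Theta_s \Sta(\lambda)} \neq 0$. Applying $\Hom(-, \Sta(\lambda))$ to the second sequence and using $\Hom(\Sta(xs \hdot \lambda_0), \Sta(\lambda)) = 0$ (any such morphism has image whose head is $\Sim(xs \hdot \lambda_0)$, but $\Sta(\lambda)$ has no such composition factor, forcing the morphism to be zero), pre-composition with $q$ yields an isomorphism $\End(\Sta(\lambda)) \simto \Hom(\Theta_s \Sta(\lambda), \Sta(\lambda))$. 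The head-projection argument (dual to the socle argument above) gives $\End(\Sta(\lambda)) = \bk$. Hence $\psi$ is a non-zero scalar multiple of $q$, which proves (2). The main (and only) obstacle anticipated is the non-vanishing of the dot morphisms; in both cases this is already supplied by a zigzag identity in $\DiagBS$, so no substantive difficulty is expected.
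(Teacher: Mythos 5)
Your proof is correct and takes essentially the same route as the paper: non-vanishing of the dot morphism via the zigzag relation, together with one-dimensionality of the relevant $\Hom$-space, forces the dot morphism to coincide up to a nonzero scalar with the canonical inclusion/quotient of the two-step (co)standard filtration coming from~\cite[Proposition~II.7.19]{jantzen}. The one minor variation is in how you get the one-dimensionality: the paper invokes Lemma~\ref{lem:Hom-Dta-y-ys} (whose proof runs the computation through $\Sta(\lambda)$ and the chain $\Sta(\lambda)\to\Sim(\lambda)\to\Cos(\lambda)$), whereas you read it off directly from the short exact sequence by applying $\Hom(\Cos(\lambda),-)$ and using $\Hom(\Cos(\lambda),\Cos(xs\hdot\lambda_0))=0$; this is cleaner and entirely self-contained, and is arguably what the paper's citation is a shorthand for.
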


\begin{proof}
We prove~\eqref{lem:delnabseq-1}; the proof of~\eqref{lem:delnabseq-2} is similar. By Lemma~\ref{lem:lower-dot-not0}, the morphism under consideration is nonzero. And by Lemma~\ref{lem:Hom-Dta-y-ys}, there exists only one such nonzero morphism, up to scalar. Hence this morphism must coincide (up to scalar) with the first morphism of the exact sequence of~\cite[Proposition~II.7.19(a)]{jantzen}, and the claim follows.
\end{proof}

\begin{proof}[Proof of Proposition~{\rm \ref{prop:stdcostd}}]
  We prove the proposition by induction on $\ell(x)$. If $\ell(x)=0$ then $x=1$, and one can take $\widetilde{\Delta}_1 =
  \widetilde{\nabla}_1 = \overline{B}_1$ and $i_{1} = p_{1} = \id_{\overline{B}_1}$.

Now fix $x \in \fW$, and assume that we have proved the proposition for
all $y \in \fW$ with $\ell(y) < \ell(x)$. Certainly we can find $s \in S$ with $xs
< x$ in the Bruhat order; then $xs \in \fW$, see~\eqref{eqn:not-in-fW}. We define $\widetilde{\Delta}_x$ and $\widetilde{\nabla}_x$ as
follows:
\[
  \widetilde{\Delta}_{x} := \cone(\widetilde{\Delta}_{xs}B_s \to
  \widetilde{\Delta}_{xs}\langle 1 \rangle)[-1], \quad
  \widetilde{\nabla}_{x} := \cone( \widetilde{\nabla}_{xs}\langle -1 \rangle \to \widetilde{\nabla}_{xs}B_s),
\]
where the morphisms are induced by the upper and lower dot maps
respectively. 
It follows from induction and Lemma \ref{lem:delnabseq} below that
$\Psi(\widetilde{\Delta}_{x}) \cong \Delta(x \hdot \lambda_0)$ and
$\Psi(\widetilde{\nabla}_{x}) \cong \nabla(x \hdot \lambda_0)$.

Now consider the compositions
\[
  \widetilde{\Delta}_{xs}B_s \xrightarrow{i_{xs}B_s}
  \overline{B}_{xs}B_s
  \to \overline{B_{x}} \quad \text{and} \quad
\overline{B}_{x} \to \overline{B}_{xs}B_s \xrightarrow{p_{xs} B_s} \widetilde{\nabla}_{xs} B_s,
\]
where the first (resp.~second) unlabelled map is a choice of
projection onto (resp.~inclusion of) the indecomposable summand
$\overline{B}_x$ of $\overline{B}_{xs} B_s$. These maps induce maps
\[
i_x \colon \widetilde{\Delta}_x \to \overline{B}_x \quad \text{and} \quad
p_x \colon  \overline{B}_x \to \widetilde{\nabla}_x.
\]
It is easily seen that $\Psi(i_x)$ and $\Psi(p_x)$ are nonzero, and the proposition follows.
\end{proof}

Let $(D^{\le 0}, D^{\ge 0})$ denote the standard t-structure on
$\Db(\Rep_0(G))$ with heart $\Rep_0(G)$. Then it is standard (and easy
to check from the axioms of a highest weight category)
that we have
\begin{gather*}
D^{\le 0} = \langle \Sta(x \hdot \lambda_0)[n] \mid
x \in \fW, \, n \in \Z, \, n \ge 0 \rangle_\ext, \\
D^{\ge 0} = \langle \Cos(x \hdot \lambda_0)[n] \mid
x \in \fW, \, n \in \Z, \, n \le 0 \rangle_\ext,
\end{gather*}
where $\langle - \rangle_\ext$ denotes the smallest full additive subcategory of
$\Db(\Rep_0(G))$ containing the specified objects and closed under
extensions.\footnote{Recall that $B$ is an extension of $A, C$ if there
  exists a distinguished triangle $A \to B \to C \triright$.}

We now explain how to lift this t-structure to $\Kb(\Dasph)$. From
Corollary \ref{cor:rep0} and the fact that $\Rep_0(G)$ is a highest weight category we deduce that for $x,y \in \fW$ and $m,n \in \Z$ we have
\begin{gather}
\label{eqn:exceptional-seq-1}
\Hom_{\Kb(\Dasph)}(\widetilde{\Cos}_x, \widetilde{\Cos}_y \langle m \rangle [n])=0 \quad \text{unless $y \hdot \lambda_0 \uparrow x \hdot \lambda_0$,}
\\
\label{eqn:exceptional-seq-2}
\Hom_{\Kb(\Dasph)}(\widetilde{\Cos}_x, \widetilde{\Cos}_x \langle m \rangle [n])= \begin{cases}
\Bbbk & \text{if $m=n=0$;} \\
0 & \text{otherwise,}
\end{cases}
\\
\label{eqn:exceptional-seq-3}
\Hom_{\Kb(\Dasph)}(\widetilde{\Cos}_x, \widetilde{\Cos}_y \langle m \rangle [n])=\Hom_{\Kb(\Dasph)}(\widetilde{\Sta}_x, \widetilde{\Sta}_y \langle m \rangle [n])=0 \text{ if $n<0$.}
\end{gather}
Similarly, using~\eqref{eqn:morph-Sta-Cos} we obtain that
\begin{equation}
  \label{eqn:morph-Sta-Cos-graded}
\Hom_{\Kb(\Dasph)}(\widetilde{\Sta}_x, \widetilde{\Cos}_y\langle m \rangle [n]) = \begin{cases}
\Bbbk & \text{if $x=y$ and $m = n=0$;} \\
0 & \text{otherwise.}
\end{cases}
\end{equation}

If we choose a total order on $\fW$ which refines the order defined by $y \preceq x$ iff $y \hdot \lambda_0 \uparrow x \hdot \lambda_0$ and such that $\fW$, endowed with this order, is isomorphic to $\Z_{\geq 0}$ (for the standard order), then properties~\eqref{eqn:exceptional-seq-1} and~\eqref{eqn:exceptional-seq-2} show that the objects $\widetilde{\Cos}_x$ form a graded exceptional sequence in $\Kb(\Dasph)$ with respect to this order (in the sense of~\cite[\S 2.1.5]{be}), and~\eqref{eqn:morph-Sta-Cos-graded} shows that the objects $\widetilde{\Sta}_x$ form the dual graded exceptional sequence.
Hence, by~\cite[Proposition~4]{be}, if we set
\begin{gather*}
K^{\le 0} := \langle \widetilde{\Sta}_x\langle m \rangle[n] \mid
x \in \fW, \, m, n \in \Z, \, n \ge 0 \rangle_\ext, \\
K^{\ge 0} := \langle \widetilde{\Cos}_x\langle m \rangle[n] \mid
x \in \fW, \, m, n \in \Z, \, n \le 0 \rangle_\ext,
\end{gather*}
then $(K^{\le 0}, K^{\ge 0})$ is a bounded t-structure on
$\Kb(\Dasph)$; we denote its heart by $\widetilde{\Rep}_0(G)$. The shift functor $\langle 1 \rangle$ obviously stabilizes this subcategory.

Property~\eqref{eqn:exceptional-seq-3} and~\cite[Proposition~4(c)]{be} show that the objects $\widetilde{\Sta}_x$ and $\widetilde{\Cos}_x$ belong to $\widetilde{\Rep}_0(G)$. Using this, standard arguments show that the category $\widetilde{\Rep}_0(G)$ is graded quasi-hereditary in the sense of~\cite[Definition~2.1]{modrap3} (with the obvious replacement of condition~(1) of \emph{loc.}~\emph{cit.}~by the condition of Definition~\ref{defn:hwcat}),
and that the realization functor provides an equivalence of triangulated categories
\[
\Db \widetilde{\Rep}_0(G) \simto \Kb(\Dasph).
\]

The following theorem is a standard consequence of these remarks and Corollary~\ref{cor:rep0}.

\begin{thm}
\label{thm:rep0grading}
There exists an exact functor
\[
\Psi \colon \widetilde{\Rep}_0(G) \to \Rep_0(G)
\]
of abelian categories and an isomorphism $\zeta \colon \Psi \circ \langle 1 \rangle \simto \Psi$
which satisfy the following properties:
\begin{enumerate}
\item
\label{it:cor-fully-faithful-blah-rep0}
for any $X,Y$ in $\widetilde{\Rep}_0(G)$ and $n \in \Z$, $\Psi$ and $\zeta$ induce an isomorphism of $\Bbbk$-vector spaces
\[
\bigoplus_{m \in \Z}
\Ext^n_{\widetilde{\Rep}_0(G)}(X, Y \langle m \rangle) \simto \Ext^n_{\Rep_0(G)}(\Psi(X), \Psi(Y));
\]
\item
\label{it:lifts}
for any $x \in \fW$, there exists objects $\overline{B}_x$,
$\widetilde{\Sta}_x$, $\widetilde{\Cos}_x$, $\widetilde{\Sim}_x$ in
$\widetilde{\Rep}_0(G)$ such that $\widetilde{\Sim}_x$ is simple, as well
as a diagram of morphisms
\[
\xymatrix@R=0.2cm@C=1.5cm{
& \oB_x \ar@{->>}[rd] & \\
\widetilde{\Sta}_x \ar@{^{(}->}[ru] \ar@{->>}[rd] & & \widetilde{\Cos}_x \\
& \widetilde{\Sim}_x \ar@{^{(}->}[ru]
}
\]
which after applying $\Psi$ becomes the diagram:
\[
\xymatrix@R=0.2cm@C=1.5cm{
& \Til(x \hdot \lambda_0) \ar@{->>}[rd] & \\
\Sta(x \hdot \lambda_0) \ar@{^{(}->}[ru] \ar@{->>}[rd] & & \Cos(x \hdot \lambda_0). \\
& \Sim(x \hdot \lambda_0) \ar@{^{(}->}[ru]
}
\]
\end{enumerate}
\end{thm}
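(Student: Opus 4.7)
The plan is to obtain the desired exact functor between abelian categories by showing that the triangulated functor $\Psi \colon \Kb(\Dasph) \to \Db(\Rep_0(G))$ of Corollary~\ref{cor:rep0} is t-exact with respect to the t-structure $(K^{\leq 0}, K^{\geq 0})$ on the source and the standard t-structure $(D^{\leq 0}, D^{\geq 0})$ on the target. Once this t-exactness is established, restricting $\Psi$ to the hearts gives the sought-for functor $\widetilde{\Rep}_0(G) \to \Rep_0(G)$, and the isomorphism $\zeta$ is simply the one from Corollary~\ref{cor:rep0} restricted to the hearts.

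To establish t-exactness, it suffices (since both t-structures are defined as extension closures of explicit generating sets) to verify that $\Psi$ sends generators of $K^{\leq 0}$ to $D^{\leq 0}$ and generators of $K^{\geq 0}$ to $D^{\geq 0}$. By Proposition~\ref{prop:stdcostd} we have $\Psi(\widetilde{\Sta}_x\langle m\rangle[n]) \cong \Sta(x \hdot \lambda_0)[n]$ and $\Psi(\widetilde{\Cos}_x\langle m\rangle[n]) \cong \Cos(x \hdot \lambda_0)[n]$, which immediately implies the required containments (using the explicit descriptions of $D^{\leq 0}$ and $D^{\geq 0}$ recalled just before Theorem~\ref{thm:rep0grading}). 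Property~\eqref{it:cor-fully-faithful-blah-rep0} then follows from the equivalence $\Db \widetilde{\Rep}_0(G) \simto \Kb(\Dasph)$ provided by the realization functor, combined with the graded $\Hom$-isomorphism of Corollary~\ref{cor:rep0}\eqref{it:cor-fully-faithful}; the shift $[n]$ on the right-hand side corresponds on the left-hand side to extensions of degree $n$ in the heart.

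For part~\eqref{it:lifts}, the objects $\widetilde{\Sta}_x$ and $\widetilde{\Cos}_x$ are those of Proposition~\ref{prop:stdcostd} and lie in $\widetilde{\Rep}_0(G)$ by construction of the t-structure. The key step is to show that $\oB_x \in \Dasph$, viewed as a complex concentrated in degree $0$ in $\Kb(\Dasph)$, lies in the heart $\widetilde{\Rep}_0(G)$. Unwinding the inductive construction in the proof of Proposition~\ref{prop:stdcostd}, one builds compatibly a filtration by standards $\widetilde{\Sta}_y$ and a filtration by costandards $\widetilde{\Cos}_y$ on $\oB_x$ in $\Kb(\Dasph)$, lifting the corresponding filtrations of $\Til(x \hdot \lambda_0)$ in $\Rep_0(G)$; these two filtrations together certify that $\oB_x$ belongs to $K^{\leq 0} \cap K^{\geq 0}$. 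The maps $\widetilde{\Sta}_x \to \oB_x$ and $\oB_x \to \widetilde{\Cos}_x$ are the morphisms $i_x$ and $p_x$ of Proposition~\ref{prop:stdcostd}; they are respectively mono and epi in the abelian heart because their images under the faithful exact functor $\Psi$ are so.

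Finally, $\widetilde{\Sim}_x$ is defined as the image of the composition $p_x \circ i_x$ in the abelian category $\widetilde{\Rep}_0(G)$. That it is simple, and that $\Psi(\widetilde{\Sim}_x) \cong \Sim(x \hdot \lambda_0)$, follows from the exactness of $\Psi$ together with the standard characterization of the simple quotient of a standard (resp.~simple subobject of a costandard) object in a graded quasi-hereditary category, using the $\Hom$-isomorphism of~\eqref{it:cor-fully-faithful-blah-rep0} to transfer the analogous statement in $\Rep_0(G)$. The main obstacle I anticipate is the verification that $\oB_x$ lies in the heart with the claimed standard and costandard filtrations, since this requires a careful inductive argument in $\Kb(\Dasph)$ that is compatible with the construction of $\widetilde{\Sta}_x$ and $\widetilde{\Cos}_x$ in Proposition~\ref{prop:stdcostd}; everything else is formal from t-exactness and from the general theory of graded quasi-hereditary categories.
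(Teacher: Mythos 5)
Your proposal is correct and is essentially the "standard consequence" argument the paper has in mind: t-exactness of $\Psi$ is checked on the generators of the two aisles using Proposition~\ref{prop:stdcostd}, property~\eqref{it:cor-fully-faithful-blah-rep0} follows from the realization equivalence together with Corollary~\ref{cor:rep0}\eqref{it:cor-fully-faithful}, and the simple objects are produced as images of the nonzero maps $\widetilde{\Sta}_x \to \widetilde{\Cos}_x$.

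The one place where you take a harder road than necessary is the step you yourself flag as the main obstacle, namely showing that $\oB_x$ lies in the heart. You do not need to build standard and costandard filtrations of $\oB_x$ inside $\Kb(\Dasph)$ (which would require a separate induction and is not quite what Proposition~\ref{prop:stdcostd} provides, since that proposition only constructs the extreme maps $i_x$ and $p_x$, not full filtrations). Instead, use the orthogonality characterization of the glued t-structure from~\cite[Proposition~4]{be}: one has $X \in K^{\le 0}$ if and only if $\Hom_{\Kb(\Dasph)}(X, \widetilde{\Cos}_y \langle m \rangle [n]) = 0$ for all $n<0$, and $X \in K^{\ge 0}$ if and only if $\Hom_{\Kb(\Dasph)}(\widetilde{\Sta}_y \langle m \rangle [n], X) = 0$ for all $n>0$. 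By Corollary~\ref{cor:rep0} these Hom-spaces for $X = \oB_x$ compute $\Ext^n_{\Rep_0(G)}(\Til(x \hdot \lambda_0), \Cos(y \hdot \lambda_0))$ and $\Ext^{-n}_{\Rep_0(G)}(\Sta(y \hdot \lambda_0), \Til(x \hdot \lambda_0))$ respectively, and both vanish in all nonzero degrees by~\eqref{eqn:morph-Sta-Cos} and the existence of standard and costandard filtrations of $\Til(x \hdot \lambda_0)$ in $\Rep_0(G)$. This gives $\oB_x \in K^{\le 0} \cap K^{\ge 0}$ directly, and the rest of your argument (monomorphy and epimorphy of $i_x$ and $p_x$ via faithful exactness of $\Psi$, and the identification of $\widetilde{\Sim}_x$ as the image of $p_x \circ i_x$) goes through unchanged.
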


\begin{rmk}
Theorem~\ref{thm:rep0grading} can be rephrased as stating that $\widetilde{\Rep}_0(G)$ is a grading on $\Rep_0(G)$ in the sense of~\cite[Definition~4.3.1]{bgs} and that
the standard, costandard and tilting modules, as well
as the canonical maps between them, all lift
to this grading.
\end{rmk}

%

\subsection{Integral form of $\Tilt(\Rep_0(G))$}
\label{ss:intforms}

Throughout this section we have assumed that $\Bbbk$ is a field. In this subsection we
describe how one can use an integral form of $\DiagBS$ to define a
graded integral form of $\Tilt(\Rep_0(G))$. Since we will have to consider the categories $\DiagBS$ and $\DasphBS$ over different rings of coefficients, from now on we indicate this ring in the notation. In particular, if $\bk$ is as in~\S\ref{ss:conjecture}, we can consider the categories $\DiagBSk$ and $\DasphBSk$ over $\bk$.

Recall that the connected reductive group $G$ may be obtained via extension of scalars from of reductive group scheme $G_\Z$ over $\Z$. Then for any field $\Bbbk$ as in~\S\ref{ss:conjecture} we can consider the extension of scalars $G_\bk$ of $G_\Z$ to $\bk$, and all the combinatorial data attached to the group in Sections~\ref{sec:blocks} and~\ref{sec:Diag}, in particular the groups $\Waff$ and $\Wf$ 
are independent of the choice of $\bk$.

We start with the following observation, which is clear from the proof of Theorem~\ref{thm:main} (see~\S\ref{ss:proof-thm}).

\begin{lem}
\label{lem:morph-Dasp-LL-k}
Let $\bk$ be as in~\S{\rm \ref{ss:conjecture}}, and assume that Conjecture~{\rm \ref{conj:main}} holds for $G_\bk$ and some choice of $\lambda_0$. Then for any expression $\uw$, the elements considered in Proposition~{\rm \ref{prop:ASLL}} actually form a basis of $\Hom_{\DasphBSk}(\oB_{\uw}, \oB_{\varnothing})$.
\end{lem}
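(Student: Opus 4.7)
The plan is to combine the spanning statement of Proposition~\ref{prop:ASLL} with a dimension count coming from Theorem~\ref{thm:main}. Since the light leaves morphisms indexed by subexpressions avoiding $\Waff \smallsetminus \fW$ already generate the $\bk$-module in question, the task reduces to showing that their number equals $\dim_\bk \Hom^\bullet_{\DasphBSk}(\oB_\uw, \oB_\varnothing)$.

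The cleanest way to compute that dimension is to pass first to $\Hom^\bullet_{\DasphBSk}(\oB_\varnothing, \oB_\uw)$. Since the Hom space in the Karoubi envelope $\Dasph$ agrees with the one in $\DasphBSk$ for objects of the latter, and since $\Psi(\oB_\varnothing) = \Til(\la_0)$ and $\Psi(\oB_\uw) = \Til(\uw)$, Theorem~\ref{thm:main}(1) produces a $\bk$-linear isomorphism
\[
\Hom^\bullet_{\DasphBSk}(\oB_\varnothing, \oB_\uw) \simto \Hom_{\Rep_0(G_\bk)}(\Til(\la_0), \Til(\uw)).
\]
Lemma~\ref{lem:dim-tilt} then identifies the $\bk$-dimension of the right-hand side with the coefficient of $\uN'_\uw$ on $N'_1$ in $\mathsf{M}^{\asph}$, and Lemma~\ref{lem:number-subexpr-avoids} specialized at $v = 1$ identifies that coefficient with the cardinality of $\{\ue \subset \uw \mid \uw^\ue = 1 \text{ and } \ue \text{ avoids } \Waff \smallsetminus \fW\}$. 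This matches the cardinality of the spanning set from Proposition~\ref{prop:ASLL}.

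To close the loop I would transport the dimension back to $\Hom^\bullet_{\DasphBSk}(\oB_\uw, \oB_\varnothing)$ via the antiequivalence $\tau$ of~\S\ref{ss:diag-SB}. Observe that $\tau$ descends to $\DasphBSk$: it fixes Bott--Samelson objects and merely reflects diagrams horizontally, so a morphism factoring through an object $B_\uu \langle k \rangle$ with $\uu$ beginning with an element of $\Sf$ is sent (in the opposite direction) to a morphism factoring through the same object with shifted grading. Hence $\tau$ yields a $\bk$-linear isomorphism
\[
\Hom^\bullet_{\DasphBSk}(\oB_\uw, \oB_\varnothing) \cong \Hom^\bullet_{\DasphBSk}(\oB_\varnothing, \oB_\uw),
\]
and so a spanning family of the expected cardinality is forced to be a basis.

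The proof should be entirely formal once these identifications are in place; there is no substantial obstacle. The only small point needing care is that $\tau$ is genuinely well defined on $\DasphBSk$ (addressed above). As an alternative to invoking $\tau$, one could instead use the equality $(\Til(\uw) : \Sta(\la_0)) = (\Til(\uw) : \Cos(\la_0))$ for tilting modules (arising from $[\Sta(\mu)] = [\Cos(\mu)]$ in the Grothendieck group), which translates $\dim \Hom(\Til(\uw), \Til(\la_0))$ directly into the desired coefficient via Lemma~\ref{lem:dim-tilt}.
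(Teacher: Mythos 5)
Your argument is correct and is essentially the same one the paper uses: the paper states that the lemma is ``clear from the proof of Theorem~\ref{thm:main}'' (\S\ref{ss:proof-thm}), and what you have written out is precisely the $\ux=\varnothing$ step of that proof, combining Proposition~\ref{prop:morphisms-Phit} (or equivalently Theorem~\ref{thm:main}\eqref{it:main-fully-faithful}) with Lemmas~\ref{lem:dim-tilt} and~\ref{lem:number-subexpr-avoids} and the transport via $\tau$ already implicit in the proof of Lemma~\ref{lem:dim-diag}. Your remark that $\tau$ descends to $\DasphBSk$ is a small but worthwhile clarification that the paper leaves implicit.
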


%
%

We denote by $k_0'$ the product of the prime numbers which divide the determinant of the Cartan matrix of $\Phi$, and set $k_0=k_0'$ if condition~\eqref{eqn:assumption-Dem-surjectivity} holds for $\K=\Z[\frac{1}{k_0'}]$, and $k_0=2k_0'$ otherwise. Then we set $\mathfrak{R}:=\Z[\frac{1}{k_0}]$. This ring satisfies both~\eqref{eqn:assumption-pairing} and~\eqref{eqn:assumption-Dem-surjectivity}, so that we can consider the categories $\mathcal{D}_{\mathrm{BS},\mathfrak{R}}$ and $\mathcal{D}_{\mathrm{BS},\mathfrak{R}}^{\mathrm{asph}}$ over $\mathfrak{R}$. Moreover, for any field $\bk$ as in~\S\ref{ss:conjecture} there exists a unique morphism $\mathfrak{R} \to \bk$. (Note that unless $\Phi$ has a component of type $\mathbf{A}$, we have $k'_0 \leq 3$.)

\begin{lem}
\label{lem:morphisms-Dasph-integral}
Assume that there exists infinitely many primes $p>h$ such that Conjecture~{\rm \ref{conj:main}} holds for $G_{\overline{\mathbb{F}}_p}$ and some choice of $\lambda_0$.

For any expressions $\uw$ and $\uv$ and any $m \in \Z$, the $\mathfrak{R}$-module
\[
\Hom_{\mathcal{D}_{\mathrm{BS},\mathfrak{R}}^{\mathrm{asph}}}(\oB_{\uw}\langle n \rangle, \oB_{\uv} \langle m \rangle)
\]
is free of finite rank over $\mathfrak{R}$. Moreover, for any field $\bk$ as in~\S{\rm \ref{ss:conjecture}} such that Conjecture~{\rm \ref{conj:main}} holds for $G_\bk$ and some choice of $\lambda_0$, the natural morphism
\begin{equation}
\label{eqn:Hom-Dasph-basechange}
\bk \otimes_{\mathfrak{R}} \Hom_{\mathcal{D}_{\mathrm{BS},\mathfrak{R}}^{\mathrm{asph}}}(\oB_{\uw}\langle n \rangle, \oB_{\uv} \langle m \rangle) \to \Hom_{\DasphBSk}(\oB_{\uw}\langle n \rangle, \oB_{\uv} \langle m \rangle)
\end{equation}
is an isomorphism.
\end{lem}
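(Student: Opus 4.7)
The plan is to prove the lemma in four steps: first reduce to the case of target $\oB_\varnothing$, then use Proposition~\ref{prop:ASLL} over $\mathfrak{R}$ to produce a spanning set, then use the infinitely-many-primes hypothesis and a Tor-computation to promote spanning to freeness, and finally deduce the base change statement by a dimension count.

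First I would observe that, for each $s \in \Saff$, the self-adjunction of $B_s$ in $\DiagBS$ is encoded diagrammatically by the dot and trivalent morphisms and therefore makes sense already over $\mathfrak{R}$. Since $\mathcal{D}^{\asph}_{\mathrm{BS},\mathfrak{R}}$ is a right $\mathcal{D}_{\mathrm{BS},\mathfrak{R}}$-module category, these adjunctions descend and yield canonical isomorphisms
\[
\Hom^\bullet_{\mathcal{D}^{\asph}_{\mathrm{BS},\mathfrak{R}}}(\oB_{\uw}, \oB_{\uv'}\cdot B_s) \;\cong\; \Hom^\bullet_{\mathcal{D}^{\asph}_{\mathrm{BS},\mathfrak{R}}}(\oB_{\uw s}, \oB_{\uv'})
\]
up to grading shift, which iterate to a reduction to the case $\uv = \varnothing$. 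Because the adjunction morphisms are defined over $\mathfrak{R}$, this reduction is compatible with base change to any field $\bk$ of the kind considered in~\S\ref{ss:conjecture}, so we are reduced to proving both claims for the module $M := \Hom^\bullet_{\mathcal{D}^{\asph}_{\mathrm{BS},\mathfrak{R}}}(\oB_{\uw}, \oB_\varnothing)$.

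Next I would apply Proposition~\ref{prop:ASLL} to $\mathfrak{R}$ itself (its proof is purely diagrammatic and uses only properties~\eqref{eqn:assumption-pairing} and~\eqref{eqn:assumption-Dem-surjectivity}, both of which hold for $\mathfrak{R}$ by construction). This produces a finite family of antispherical light leaves that spans $M$ over $\mathfrak{R}$; let $d$ be its cardinality. One then has a short exact sequence
\[
0 \to K \to \mathfrak{R}^d \to M \to 0,
\]
and since $\mathfrak{R} = \Z[1/k_0]$ is a PID, the submodule $K$ of the free module $\mathfrak{R}^d$ is itself free, of some rank $r \leq d$.

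The heart of the argument is showing $r=0$. For any prime $p > h$ where Conjecture~\ref{conj:main} holds and $\bk = \overline{\mathbb{F}}_p$, Lemma~\ref{lem:morph-Dasp-LL-k} gives $\dim_\bk \Hom^\bullet_{\DasphBSk}(\oB_{\uw}, \oB_\varnothing) = d$. Tensoring the above sequence with $\bk$ produces
\[
0 \to \Tor_1^\mathfrak{R}(\bk, M) \to \bk \otimes_\mathfrak{R} K \to \bk^d \to \bk \otimes_\mathfrak{R} M \to 0,
\]
and since the light leaves span $\Hom^\bullet_{\DasphBSk}(\oB_{\uw}, \oB_\varnothing)$ (Proposition~\ref{prop:ASLL} applied over $\bk$), the rightmost surjection factors through the composition $\bk^d \twoheadrightarrow \bk \otimes M \twoheadrightarrow \Hom^\bullet_{\DasphBSk}(\oB_{\uw}, \oB_\varnothing)$, which by dimension count is an isomorphism. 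Hence $\bk \otimes K = \Tor_1^\mathfrak{R}(\bk, M)$, and the left-hand side equals $\bk^r$. For $M$ finitely generated over the Dedekind domain $\mathfrak{R}$, $\Tor_1^\mathfrak{R}(\overline{\mathbb{F}}_p, M)$ is controlled by the $p$-torsion of $M$ and therefore vanishes for all but finitely many primes $p$. The hypothesis that Conjecture~\ref{conj:main} holds for infinitely many $p$ forces $r = 0$, whence $K = 0$ and $M$ is free of rank $d$. Finally, for any $\bk$ as in the statement where the conjecture holds, the map~\eqref{eqn:Hom-Dasph-basechange} is a surjection between $\bk$-vector spaces of dimension $d$ (using freeness of $M$ on the left and Lemma~\ref{lem:morph-Dasp-LL-k} on the right), hence an isomorphism.

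The main obstacle is the third step: one must be confident that Proposition~\ref{prop:ASLL} and Lemma~\ref{lem:morph-Dasp-LL-k} extend word-for-word to $\mathfrak{R}$ (purely diagrammatic; the localization argument in Lemma~\ref{lem:LL-order} still works since $R = \mathfrak{R}[\fh^*]$ is an integral domain), and one must be careful that the Tor-computation genuinely uses the infinitely-many-primes hypothesis—a single residue field would only trap $K$ inside $p\mathfrak{R}^d$ and not force $K = 0$.
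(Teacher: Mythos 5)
Your proof is correct, and it follows a genuinely different route from the paper's, though both draw on the same ingredients. Both arguments reduce to $\uv = \varnothing$ via the diagrammatic self-adjunction of $B_s$ (which descends to $\mathcal{D}_{\mathrm{BS},\mathfrak{R}}^{\mathrm{asph}}$ because right multiplication by $B_s$ preserves the ideal being quotiented out), both invoke Proposition~\ref{prop:ASLL} over $\mathfrak{R}$ to produce a $d$-element spanning family, and both deploy the infinitely-many-primes hypothesis together with Lemma~\ref{lem:morph-Dasp-LL-k} at the same juncture. The difference lies in where the work of turning spanning into freeness is done. The paper stays one level up, in $\Hom^\bullet_{\mathcal{D}_{\mathrm{BS},\mathfrak{R}}}(B_{\uw}, B_\varnothing)$: using the corrected light leaves $\varphi_{\ue}$ arising from~\eqref{eqn:property-LL} together with an $\mathfrak{R}$-basis $(f_i)$ of the augmentation ideal of $R$, it exhibits $\{\varphi_{\ue}\} \cup \{f_i \varphi_{\ue}\}$ as an explicit $\mathfrak{R}$-basis of the Bott--Samelson Hom space, and shows the kernel of the quotient to the antispherical category is precisely the $\mathfrak{R}$-span of the non-avoiding $\varphi_{\ue}$ and the $f_i\varphi_{\ue}$; the key point being that a nonzero avoiding coefficient of an element of the kernel would survive reduction modulo a well-chosen prime, contradicting Lemma~\ref{lem:morph-Dasp-LL-k}. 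Freeness, an explicit basis $\{\overline{\varphi_{\ue}}\}$, and the base-change statement then drop out at once. You instead work directly with a presentation $0 \to K \to \mathfrak{R}^d \to M \to 0$ of the antispherical Hom module and combine the PID structure of $\mathfrak{R}$ with the long exact $\mathrm{Tor}$ sequence to force $K=0$. This is more abstract but avoids the bookkeeping with the $(f_i)$. Both proofs use the infinitely-many-primes hypothesis identically: to locate a single prime $p$ escaping a finite, not-known-in-advance bad set, at which the conjecture also holds; your closing remark about why one fixed prime would not suffice is accurate. One small correction to your self-assessment: you never need Lemma~\ref{lem:morph-Dasp-LL-k} over $\mathfrak{R}$ — it is applied only over the fields $\overline{\mathbb{F}}_p$ and $\bk$, exactly as stated. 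It is only Proposition~\ref{prop:ASLL} that needs to hold over $\mathfrak{R}$, and it does, since $\mathfrak{R}$ is an integral domain satisfying~\eqref{eqn:assumption-pairing} and~\eqref{eqn:assumption-Dem-surjectivity} by construction.
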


\begin{proof}
Using adjunction as in~\S\ref{ss:proof-thm}, we can assume that $\uv=\varnothing$. In this case, consider the ``light leaves basis" considered in the proof of Proposition~\ref{prop:ASLL} for the coefficients $\mathfrak{R}$. By~\eqref{eqn:property-LL}, adding if necessary to the morphisms $\LL_{\uw,\ue}$ where $\ue$ does not avoid $\Waff \smallsetminus \fW$ some linear combinations of morphisms $\LL_{\uw,\ue'}$ with $\ue'<\ue$ in the path dominance order, one can construct a basis $(\varphi_{\ue} : \ue \subset \uw, \, \uw^{\ue}=1)$ for the left $R$-module $\Hom^\bullet_{\mathcal{D}_{\mathrm{BS},\mathfrak{R}}}(B_{\uw},B_\varnothing)$ such that $\varphi_{\ue}$ vanishes in $\mathcal{D}_{\mathrm{BS},\mathfrak{R}}^{\mathrm{asph}}$ if $\ue$ does not avoid $\Waff \smallsetminus \fW$ and such that the images of the morphism $\varphi_{\ue}$ for $\ue$ avoiding $\Waff \smallsetminus \fW$ span $\Hom_{\mathcal{D}_{\mathrm{BS},\mathfrak{R}}^{\mathrm{asph}}}(\oB_{\uw}\langle n \rangle, \oB_{\uv} \langle m \rangle)$ as an $\mathfrak{R}$-module. Then if we choose an $\mathfrak{R}$-basis $(f_i : i \in I)$ for the ideal of $R$ generated by $\fh$, then elements $\varphi_{\ue}$ and $f_i \cdot \varphi_{\ue}$ with $\ue$ as above and $i \in I$ form an $\mathfrak{R}$-basis of $\Hom^\bullet_{\mathcal{D}_{\mathrm{BS},\mathfrak{R}}}(B_{\uw},B_\varnothing)$.

We claim that, under our assumption, the kernel of the morphism
\begin{equation}
\label{eqn:Hom-Diag-Dasph}
\Hom^\bullet_{\mathcal{D}_{\mathrm{BS},\mathfrak{R}}}(B_{\uw},B_\varnothing) \to \Hom^\bullet_{\mathcal{D}_{\mathrm{BS},\mathfrak{R}}^{\mathrm{asph}}}(\oB_{\uw},\oB_\varnothing)
\end{equation}
is spanned by the elements $\varphi_{\ue}$ where $\ue$ does not avoid $\Waff \smallsetminus \fW$ and the elements $f_i \cdot \varphi_{\ue}$ for all $\ue$ and $i \in I$. Indeed, by construction all these elements are annihilated by~\eqref{eqn:Hom-Diag-Dasph}. Now let $g \in \Hom^\bullet_{\mathcal{D}_{\mathrm{BS},\mathfrak{R}}}(B_{\uw},B_\varnothing)$ be in the kernel of~\eqref{eqn:Hom-Diag-Dasph}. Consider the decomposition of $g$ on the $\mathfrak{R}$-basis considered above, and assume for a contradiction that the coefficient on some $\varphi_{\ue}$ where $\ue$ avoids $\Waff \smallsetminus \fW$ is nonzero. Choose a prime $p > h$ such that $p$ does not divide this coefficient and such that Conjecture~{\rm \ref{conj:main}} holds for $G_{\overline{\mathbb{F}}_p}$ and some choice of $\lambda_0$. Then if $g'$ is the image of $g$ in
\[
\bk \otimes_{\mathfrak{R}} \Hom^\bullet_{\mathcal{D}_{\mathrm{BS},\mathfrak{R}}}(B_{\uw},B_\varnothing) \cong \Hom^\bullet_{\DiagBSk}(B_{\uw},B_\varnothing)
\]
(where the isomorphism is canonical and follows from~\cite[Theorem~6.11]{ew}), the image of $g'$ in $\DasphBSk$ is nonzero by Lemma~\ref{lem:morph-Dasp-LL-k}; this provides the desired contradiction.

From this claim we deduce the properties stated in the lemma: the $\mathfrak{R}$-module $\Hom^\bullet_{\mathcal{D}_{\mathrm{BS},\mathfrak{R}}^{\mathrm{asph}}}(\oB_{\uw},\oB_\varnothing)$ is free and has a basis formed by the images of the elements $\varphi_{\ue}$ where $\ue$ avoids $\Waff \smallsetminus \fW$. And if Conjecture~\ref{conj:main} holds for some $\bk$, then~\eqref{eqn:Hom-Dasph-basechange} sends a basis of the left-hand side to a basis of the right-hand side (see Lemma~\ref{lem:morph-Dasp-LL-k}), hence is an isomorphism.
\end{proof}

\begin{rmk}
Lemma~\ref{lem:morphisms-Dasph-integral} can also be deduced from Theorem~\ref{thm:Dasph-parities} below without having to assume the validity of Conjecture~\ref{conj:main}; see \S\ref{ss:LL-basis-Dasph} for details.
\end{rmk}


The following theorem (an easy consequence of Theorem~\ref{thm:main} and Lemma~\ref{lem:morphisms-Dasph-integral})
shows that $\mathcal{D}_{\mathrm{BS},\mathfrak{R}}^{\mathrm{asph}}$ provides a ``graded integral form'' of
$\Tilt(\Rep_0(G))$. 

\begin{thm}
\label{thm:integral} 
Assume that there exists infinitely many primes $p>h$ such that Conjecture~{\rm \ref{conj:main}} holds for $G_{\overline{\mathbb{F}}_p}$ and some choice of $\lambda_0$.

Let $\Bbbk$ and $\lambda_0$ be as in~\S{\rm \ref{ss:conjecture}}, and assume that Conjecture~{\rm \ref{conj:main}} holds for these data. Then there exists an additive functor
\[
\mathsf{F} \colon \mathcal{D}_{\mathrm{BS},\mathfrak{R}}^{\mathrm{asph}} 
\to \Tilt(\Rep_0(G_\Bbbk))
\]
and an isomorphism $\zeta \colon \mathsf{F} \circ \langle 1 \rangle \simto
\mathsf{F}$ such that for any $X,Y$ in $\mathcal{D}_{\mathrm{BS},\mathfrak{R}}^{\mathrm{asph}}$, $\mathsf{F}$ and $\zeta$ induce an isomorphism of $\Bbbk$-vector spaces
\[
\bk \otimes_{\mathfrak{R}}
\Hom^\bullet_{\mathcal{D}_{\mathrm{BS},\mathfrak{R}}^{\mathrm{asph}}}(X, Y) \simto \Hom_{\Rep_0(G)}(\mathsf{F}(X), \mathsf{F}(Y)).
\]
\end{thm}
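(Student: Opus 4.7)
The plan is to construct $\mathsf{F}$ as the composition of two already available functors. The first is the natural base change functor
\[
\mathrm{bc} \colon \mathcal{D}_{\mathrm{BS},\mathfrak{R}}^{\mathrm{asph}} \to \DasphBSk
\]
associated with the unique ring homomorphism $\mathfrak{R} \to \bk$. This functor sends $\oB_{\uw}\langle n \rangle$ to $\oB_{\uw}\langle n \rangle$, commutes (up to canonical isomorphism) with $\langle 1 \rangle$, and induces on morphism spaces the canonical $\bk$-linear maps
\[
\bk \otimes_{\mathfrak{R}} \Hom^\bullet_{\mathcal{D}_{\mathrm{BS},\mathfrak{R}}^{\mathrm{asph}}}(X,Y) \longrightarrow \Hom^\bullet_{\DasphBSk}(\mathrm{bc}(X),\mathrm{bc}(Y)).
\]
Under the standing hypotheses, Lemma~\ref{lem:morphisms-Dasph-integral} (which applies precisely because Conjecture~\ref{conj:main} is assumed to hold for infinitely many $p > h$) tells us that this map is an isomorphism.

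The second ingredient is the functor $\Psi \colon \Dasph \to \Tilt(\Rep_0(G_\bk))$ supplied by Theorem~\ref{thm:main} together with the isomorphism $\zeta' \colon \Psi \circ \langle 1 \rangle \simto \Psi$; restricting $\Psi$ along the canonical embedding $\DasphBSk \hookrightarrow \Dasph$ (which exists because $\Dasph$ is the Karoubi envelope of the additive hull of $\DasphBSk$) gives an additive functor $\Psi_{\mathrm{BS}} \colon \DasphBSk \to \Tilt(\Rep_0(G_\bk))$ equipped with an induced isomorphism $\Psi_{\mathrm{BS}} \circ \langle 1 \rangle \simto \Psi_{\mathrm{BS}}$. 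I would then set $\mathsf{F} := \Psi_{\mathrm{BS}} \circ \mathrm{bc}$, and take $\zeta$ to be the isomorphism built out of $\zeta'$ and the compatibility of $\mathrm{bc}$ with shifts.

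To verify the Hom-space assertion, I would factor the map in the statement as
\[
\bk \otimes_{\mathfrak{R}} \Hom^\bullet_{\mathcal{D}_{\mathrm{BS},\mathfrak{R}}^{\mathrm{asph}}}(X, Y) \xrightarrow{\;\sim\;} \Hom^\bullet_{\DasphBSk}(\mathrm{bc}(X), \mathrm{bc}(Y)) \xrightarrow{\;\sim\;} \Hom_{\Rep_0(G_\bk)}(\mathsf{F}(X), \mathsf{F}(Y)),
\]
where the first isomorphism is Lemma~\ref{lem:morphisms-Dasph-integral} and the second is (the restriction to $\DasphBSk$ of) Theorem~\ref{thm:main}\eqref{it:main-fully-faithful}. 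Since both ingredients are in hand, there is essentially no obstacle: the theorem is a matter of bookkeeping, and the real content of the statement was already absorbed into Lemma~\ref{lem:morphisms-Dasph-integral}, whose proof in turn depended on the combination of the light-leaves construction with the assumption that Conjecture~\ref{conj:main} holds over infinitely many $\overline{\mathbb{F}}_p$.
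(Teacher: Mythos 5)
Your proposal is correct and coincides with the paper's intended argument: the paper states Theorem~\ref{thm:integral} as "an easy consequence of Theorem~\ref{thm:main} and Lemma~\ref{lem:morphisms-Dasph-integral}", which is precisely your factorization through the base change functor $\mathrm{bc}$ followed by the restriction of $\Psi$ to $\DasphBSk$. The two required isomorphisms on Hom-spaces are exactly Lemma~\ref{lem:morphisms-Dasph-integral} and Theorem~\ref{thm:main}\eqref{it:main-fully-faithful}, as you say.
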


\begin{rmk}
  As discussed above, over any field or complete local ring \cite[Theorem 6.25]{ew} gives
  a description of the split Grothendieck group of the 
  Karoubi envelope of the additive hull of $\DiagBS$ in terms of $W$. We gave a similar
  description of the indecomposable objects in $\Dasph$ in terms of $\fW$ in~\S\ref{ss:cat-Masph}. We have no idea
  if a similar classification of the indecomposable objects in the
  Karoubi envelope of $\mathcal{D}_{\mathrm{BS},\mathfrak{R}}$ or $\mathcal{D}_{\mathrm{BS},\mathfrak{R}}^{\mathrm{asph}}$ is possible. (As
  $\mathfrak{R}$ is not a complete local ring, it is not possible to apply idempotent
  lifting arguments.)
\end{rmk}

\subsection{Integral form of $\Rep_0(G)$}
\label{ss:repintforms}

As in~\S\ref{ss:intforms}, in this subsection we assume that $G_\Z$ is a split connected reductive
group defined over $\Z$, with simply-connected derived subgroup. Given any algebraically closed field $\bk$ we let
$G_{\bk}$ denote the extension of scalars of $G_{\Z}$ to $\Bbbk$.
\emph{In this subsection we fix $N \geq h$, and 
assume that for all fields $\bk$ of characteristic $p > N$, there exists $\lambda_0$ as in~\S{\rm \ref{ss:definitions-G}} such that Conjecture~{\rm \ref{conj:main}} holds for $G_\bk$ and $\lambda_0$.}

We set $\mathfrak{S}:=\Z[\frac{1}{N!}]$, and consider the
graded $\mathfrak{S}$-linear category $\mathcal{D}_{\mathrm{BS},\mathfrak{S}}^{\mathrm{asph}}$ and its natural right action of the monoidal category $\mathcal{D}_{\mathrm{BS},\mathfrak{S}}$.
%
We define the triangulated category $\sfK$ as the bounded homotopy category of the additive hull of the category
$\mathcal{D}_{\mathrm{BS},\mathfrak{S}}^{\mathrm{asph}}$.
For any algebraically closed field $\bk$ of characteristic $p > N$, we will denote by $\Dasph_{\bk}$ the antispherical diagrammatic category of~\S\ref{ss:cat-Masph} over $\bk$, and set
\[
\sfK_{\bk} := \Kb(\Dasph_{\bk}).
\]
The functor $\sfK \to \sfK_\bk$ given by tensoring all
homomorphism spaces with $\bk$ gives rise to a triangulated functor
\[
\bk \colon \sfK \to \sfK_{\Bbbk}.
\]
We denote the objects $\widetilde{\Sta}_x,
\widetilde{\Cos}_x, \overline{B}_x, \widetilde{\Sim}_x \in \sfK_{\Bbbk}$  of~\S\ref{ss:forms_of_rep0} by $\widetilde{\Sta}_x^{\bk},
\widetilde{\Cos}_x^{\bk}, \overline{B}_x^{\bk}, \widetilde{\Sim}_x^{\bk}$ to
emphasize their dependence on the base field $\bk$.

\begin{rmk}
It might seem strange that we do not take a Karoubi envelope in the definition of $\sfK$. To make this more natural, one can remark that, for any $\bk$ as above, the natural fully-faithful functor from the bounded homotopy category of the additive hull of $\DasphBSk$ to $\sfK_\bk$ is an equivalence. In fact it is easily checked by induction on the Bruhat order that the essential image of this functor contains all objects $\oB_w$ with $w \in \fW$, and the claim follows.
\end{rmk}

In the considerations below, the order $\leq$ we consider on $W$ is the Bruhat order.
For any ideal $I \subset \Waff$ we denote by $\sfK_I$ the full triangulated graded
subcategory generated by the objects $\oB_{\un{x}}$ for $\un{x}$ a reduced
expression for $x \in I$. 

To state the next lemma we need to use the Hecke product (sometimes called the Demazure product) in $\Waff$ as considered e.g.~in~\cite[\S 3]{bm}. In fact, recall from~\cite[Proposition~3.1]{bm} that there exists a unique associative product $*$ on $\Waff$ which satisfies
\[
w * s = \begin{cases}
ws & \text{if $ws>w$;} \\
w & \text{otherwise}
\end{cases}
\]
for $w \in \Waff$ and $s \in \Saff$.
Given an expression $\un{x} = s_1 \cdots s_m$ we set
\[
*\un{x} := s_1 * s_2 * \cdots * s_m.
\]
Below we will use the following properties:
\begin{gather}
\label{eqn:Dem-product-1}
\text{if $\uw$ and $\ux$ differ by a braid relation, then $*\uw=*\uv$;} \\
\label{eqn:Dem-product-2}
\text{if $\uw$ is obtained from $\ux$ by omitting some reflections, then $*\uw \leq *\ux$.}
\end{gather}
In fact,~\eqref{eqn:Dem-product-1} is clear from associativity, and~\eqref{eqn:Dem-product-2} follows from~\cite[Proposition~3.1(c)]{bm}.

\begin{lem}
\label{lem:Bx-Demazure-product}
  For any expression $\un{x}$ we have $\overline{B}_{\un{x}} \in \sfK_{\le *\un{x}}$.
\end{lem}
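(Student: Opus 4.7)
The plan is to proceed by induction on $\ell(\underline{x})$. The base case $\ell(\underline{x})=0$ is immediate from $\overline{B}_\varnothing \in \sfK_{\leq 1}$. For the inductive step, write $\underline{x}=\underline{y}s$. By the induction hypothesis applied to $\underline{y}$, we have $\overline{B}_{\underline{y}} \in \sfK_{\leq *\underline{y}}$, so $\overline{B}_{\underline{y}}$ lies in the triangulated subcategory generated by $\overline{B}_{\underline{z}}$ for reduced expressions $\underline{z}$ of elements $z \leq *\underline{y}$. Since $-\cdot B_s$ is exact, $\overline{B}_{\underline{x}}=\overline{B}_{\underline{y}}\cdot B_s$ lies in the triangulated subcategory generated by the objects $\overline{B}_{\underline{z}s}$ for such $\underline{z}$. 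It therefore suffices to check $\overline{B}_{\underline{z}s}\in \sfK_{\leq *\underline{x}}$ for each such $\underline{z}$.

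If $zs>z$, then $\underline{z}s$ is a reduced expression for $zs=z*s$, and monotonicity of the Hecke product in the first variable (from $z\leq *\underline{y}$) gives $zs \leq *\underline{y}*s=*\underline{x}$, so $\overline{B}_{\underline{z}s}\in \sfK_{\leq *\underline{x}}$. If $zs<z$, then $z*s=z$, so $z\leq *\underline{y}\leq *\underline{y}*s=*\underline{x}$, and it suffices to prove the stronger statement $\overline{B}_{\underline{z}s}\in \sfK_{\leq z}$. Choose a reduced expression $\underline{z}'=\underline{v}s$ of $z$ ending in $s$. Then Lemma~\ref{lem:BsBs} gives $\overline{B}_{\underline{z}'s}\cong \overline{B}_{\underline{z}'}\langle 1\rangle \oplus \overline{B}_{\underline{z}'}\langle -1\rangle \in \sfK_{\leq z}$, so the task reduces to comparing $\overline{B}_{\underline{z}s}$ with $\overline{B}_{\underline{z}'s}$.

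To do so, fix a rex move $\underline{z}\leadsto \underline{z}'$ and denote by $\phi\colon \overline{B}_{\underline{z}}\to \overline{B}_{\underline{z}'}$ and $\psi\colon \overline{B}_{\underline{z}'}\to \overline{B}_{\underline{z}}$ the corresponding morphism and its reverse. Applying Lemma~\ref{lem:rex-move-smaller}, $\psi\circ\phi=\mathrm{id}+\sum_j \phi_j$ where each $\phi_j$ factors through some $\overline{B}_{\underline{u}_j}\langle k_j\rangle$ with $\underline{u}_j$ obtained from $\underline{z}$ by omitting at least $2$ simple reflections; in particular $\ell(\underline{u}_j)\leq \ell(z)-2$. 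Since $\underline{u}_j$ is a subexpression of the reduced word $\underline{z}$, property~\eqref{eqn:Dem-product-2} gives $*\underline{u}_j\leq *\underline{z}=z$ with $\ell(*\underline{u}_j)<\ell(z)$, hence $*\underline{u}_j<z$. Applying the induction hypothesis to $\underline{u}_j$ (whose length is $<\ell(\underline{x})$), we get $\overline{B}_{\underline{u}_j}\in \sfK_{<z}$, so each $\phi_j$ factors through $\sfK_{<z}$; the symmetric argument handles $\phi\circ\psi$. Hence $\phi$ becomes an isomorphism in the Verdier quotient $\sfK/\sfK_{<z}$, i.e.\ $\cone(\phi)\in \sfK_{<z}$.

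Tensoring the triangle $\overline{B}_{\underline{z}}\xrightarrow{\phi}\overline{B}_{\underline{z}'}\to \cone(\phi)\to$ with $B_s$, we get $\overline{B}_{\underline{z}s}\xrightarrow{\phi\cdot \mathrm{id}}\overline{B}_{\underline{z}'s}\to \cone(\phi)\cdot B_s\to$. The third term $\cone(\phi)\cdot B_s$ lies in the triangulated subcategory generated by $\overline{B}_{\underline{w}s}$ for reduced $\underline{w}$ with $w<z$; each such $\overline{B}_{\underline{w}s}$ has $\ell(\underline{w}s)\leq \ell(z)\leq \ell(\underline{y})<\ell(\underline{x})$, so by the induction hypothesis $\overline{B}_{\underline{w}s}\in \sfK_{\leq w*s}$, and $w*s\leq z*s=z$ by monotonicity; hence $\cone(\phi)\cdot B_s\in \sfK_{\leq z}$. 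Combined with $\overline{B}_{\underline{z}'s}\in \sfK_{\leq z}$, the triangle yields $\overline{B}_{\underline{z}s}\in \sfK_{\leq z}$, completing the inductive step. The main technical obstacle is the rex move argument in the case $zs<z$: we cannot invoke isomorphism of $\overline{B}_{\underline{z}}$ and $\overline{B}_{\underline{z}'}$ in $\sfK$ directly (we are not in a Karoubi envelope over a complete local ring), so we must use Lemma~\ref{lem:rex-move-smaller} and the induction hypothesis to control the cone in $\sfK_{<z}$.
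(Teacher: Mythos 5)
Your proof is correct (modulo the same implicit assumption both your argument and the paper's make at the Verdier-quotient step, namely that the subcategories $\sfK_{<z}$ behave well enough that an isomorphism in $\sfK/\sfK_{<z}$ lets one transfer membership in $\sfK_{\leq z}$), and it follows a genuinely different route from the paper's.

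The paper runs a double induction, first on $\ell(*\un{x})$ and then on $\ell(\un{x})$, and dichotomizes on whether $\un{x}$ already contains a consecutive $ss$: if so, it splits off via Lemma~\ref{lem:BsBs}; if not, it invokes the combinatorial fact that a non-reduced word is connected by braid moves alone to a word containing a consecutive $ss$, and then compares $\oB_{\un{x}}$ with that target via a rex-move argument, which requires an ``obvious generalization'' of Lemma~\ref{lem:rex-move-smaller} to braid moves between non-reduced expressions. Your proof instead runs a single induction on $\ell(\un{x})$, peels off the last letter via $\un{x}=\un{y}s$, applies the inductive hypothesis to $\un{y}$, and then analyzes the generators $\oB_{\uz s}$ for reduced $\uz$ with $z \leq *\un{y}$ (with the further subdivision $zs>z$ or $zs<z$). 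This has two advantages: you only ever use Lemma~\ref{lem:rex-move-smaller} in its stated form (a rex move between two reduced expressions for the same element), and you avoid the combinatorial claim about reaching an $ss$-word by braid moves. The cost is the extra step of passing a triangle through $-\cdot B_s$ and the use of monotonicity of the Demazure product in the first variable, which is not explicitly stated in the paper but does follow from~\eqref{eqn:Dem-product-2} by the subword characterization of the Bruhat order. Both proofs otherwise draw on the same key tools: Lemma~\ref{lem:BsBs}, Lemma~\ref{lem:rex-move-smaller}, and the properties~\eqref{eqn:Dem-product-1}--\eqref{eqn:Dem-product-2}.
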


\begin{proof}
  We prove the lemma by induction; first on the length $\ell(*\un{x})$
  of $*\un{x}$, and then 
on the length $\ell(\un{x})$ of
  $\un{x}$. If $\ell(*\un{x}) = 0$ then $\un{x}=\varnothing$
  and the lemma is obvious. Also, if $\ell(\un{x}) = \ell( *\un{x})$ then $\un{x}$ is
  reduced and the lemma follows from the definitions. So we fix an
  expression $\un{x}$ with $\ell(\un{x}) > \ell(*\un{x})$ and assume that the lemma is known for all
  expressions $\un{y}$ with $\ell(*\un{y}) < \ell(*\un{x})$ and all expressions
  $\un{x}'$ with $\ell(*\un{x}') = \ell(*\un{x})$ and $\ell(\un{x}') < \ell(\un{x})$.
 
If $\un{x}$ contains a subexpression of the form $ss$ then we can apply
Lemma~\ref{lem:BsBs} to deduce an isomorphism
\[
\overline{B}_{\un{x}} \cong \overline{B}_{\un{x}'}\langle 1 \rangle
\oplus \overline{B}_{\un{x}'}\langle -1 \rangle
\]
with $\ell(\un{x}') < \ell(\un{x})$ and $*\ux=*\ux'$, and we are done by
induction. If $\un{x}$ contains no such subexpression then
because $\ell(\un{x}) > \ell(*\un{x})$ there exists an expression $\uy$ and a sequence of braid relations which connects $\ux$ and $\uy$,
such that $\un{y}$ has a subexpression
of the form $ss$. If we denote by $f\colon \overline{B}_{\un{x}} \to \overline{B}_{\un{y}}$ the associated
morphism (as in~\S\ref{ss:rex-moves-DBS}, except that now we do not require that the expression is reduced) then, 
using the obvious generalization of Lemma~\ref{lem:rex-move-smaller},~\eqref{eqn:Dem-product-2}, and induction,
we conclude that $f$ is an isomorphism in the Verdier quotient $\sfK/\sfK_{<*\un{x}}$ (in the sense of~\cite[\S 2.1]{neeman}). Since $*\ux=*\uy$ by~\eqref{eqn:Dem-product-1},
$\overline{B}_{\un{y}}$ belongs to $\sfK_{\leq *\un{x}}$ by the case treated above, hence so does 
$\overline{B}_{\un{x}}$. The lemma follows.
\end{proof}

\begin{lem} \label{lem:stdgen}
Let $I \subset W$ be an ideal. Choose, for any $x \in I \cap \fW$, a reduced expression $\uw(x)$ for $x$. Then $\sfK_I$ is generated, as a graded triangulated category, by the objects $\{\oB_{\uw(x)} : x \in I \cap \fW\}$. 
\end{lem}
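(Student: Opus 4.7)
The plan is to prove by induction on $\ell(x)$ that for every reduced expression $\ux$ of every $x \in I$, the object $\oB_{\ux}$ lies in the thick triangulated subcategory $\mathcal{C} \subset \sfK$ generated by $\{\oB_{\uw(y)} : y \in I \cap \fW\}$. The base case $\ell(x) = 0$ is immediate, since then $x = 1 \in \fW$, $\ux = \varnothing$ and $\oB_{\ux} = \oB_{\uw(1)}$. In the inductive step, writing $\sfK_{<x}$ for the triangulated subcategory generated by $\oB_{\uy}$ for $\uy$ a reduced expression of $y < x$, the induction hypothesis immediately yields $\sfK_{<x} \subset \mathcal{C}$, because $\{z \in W : z < x\}$ is an ideal contained in $I$ and each such $y$ satisfies $\ell(y) < \ell(x)$.

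In the main step I would compare $\oB_{\ux}$ to a second reduced expression for $x$ via a rex move. In Case A, when $x \in \fW$, I pick any rex move $\ux \leadsto \uw(x)$ with associated morphism $g$ and let $h$ be the morphism coming from the reverse rex move. Applied in both directions, Lemma~\ref{lem:rex-move-smaller} expresses $h \circ g - \id_{\oB_{\ux}}$ and $g \circ h - \id_{\oB_{\uw(x)}}$ as finite sums of maps factoring through $\oB_{\uz}\langle k \rangle$ with $\uz$ obtained from a reduced expression for $x$ by omitting at least two simple reflections. By~\eqref{eqn:Dem-product-2} such $\uz$ satisfy $*\uz \leq x$, and $\ell(*\uz) \leq \ell(\uz) < \ell(x)$ forces $*\uz < x$, so by Lemma~\ref{lem:Bx-Demazure-product} each $\oB_{\uz}$ lies in $\sfK_{\le *\uz} \subset \sfK_{<x}$. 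Hence $g$ becomes invertible in the Verdier quotient $\sfK/\sfK_{<x}$, so $\cone(g) \in \sfK_{<x}$, and the distinguished triangle
\[
\oB_{\ux} \xrightarrow{g} \oB_{\uw(x)} \to \cone(g) \triright
\]
together with $\oB_{\uw(x)} \in \mathcal{C}$ and $\cone(g) \in \sfK_{<x} \subset \mathcal{C}$ gives $\oB_{\ux} \in \mathcal{C}$.

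In Case B, when $x \notin \fW$, I use~\eqref{eqn:not-in-fW} to produce $r \in \Sf$ with $rx < x$ and choose a reduced expression $\ux' = (r,\dots)$ for $x$. Then $\oB_{\ux'} = 0$ in $\DasphBS$, because already $\oB_r = 0$ there (its identity tautologically factors through $\oB_r$ with $r \in \Sf$, which is killed by the quotient defining $\DasphBS$). Running the same rex-move argument between $\ux$ and $\ux'$ gives $h \circ g = \id_{\oB_{\ux}} + \phi$ with $\phi$ factoring through $\sfK_{<x}$; but now $h \circ g$ also factors through $\oB_{\ux'} = 0$ and is therefore zero, so $\id_{\oB_{\ux}}$ itself factors through $\sfK_{<x}$, forcing $\oB_{\ux} \in \sfK_{<x} \subset \mathcal{C}$.

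The main (and essentially only) subtle point is the implication ``$g$ is an isomorphism in $\sfK/\sfK_{<x}$ implies $\cone(g) \in \sfK_{<x}$'' --- and in Case B the analogous ``$\id_{\oB_{\ux}}$ factors through an object of $\sfK_{<x}$ implies $\oB_{\ux} \in \sfK_{<x}$''. Both are the standard fact that $\sfK_{<x}$ must be thick (closed under direct summands) in $\sfK$; this is the natural reading of ``triangulated subcategory generated by'' in the statement of the lemma, and the argument has to be interpreted in this thick sense throughout. Once this convention is fixed, everything else in the proof is automatic from Lemmas~\ref{lem:rex-move-smaller} and~\ref{lem:Bx-Demazure-product} together with~\eqref{eqn:Dem-product-2} and~\eqref{eqn:not-in-fW}.
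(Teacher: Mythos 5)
Your proof is correct and takes essentially the same route as the paper's: induction on $\ell(x)$, a rex move to $\uw(x)$ if $x \in \fW$ or to a reduced expression starting with a reflection in $\Sf$ if not, and Lemmas~\ref{lem:rex-move-smaller} and~\ref{lem:Bx-Demazure-product} to show the rex-move morphism becomes an isomorphism in $\sfK/\sfK_{<x}$. The thickness point you raise is a genuine subtlety that the paper passes over with ``the claim follows,'' and your reformulation---that the rex-move equation exhibits $\oB_{\ux}$ as a direct summand of $\oB_{\uw(x)} \oplus Z$ (or of $Z$ alone when $x \notin \fW$) with $Z \in \sfK_{<x}$---is a good way to make the step airtight.
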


\begin{proof}
  Let us write $\sfK_I'$ for the category generated by the objects as in the lemma. We have to show that if
  $\un{x}$ if a reduced expression for $x \in I$ then
  $\overline{B}_{\un{x}} \in \sfK_I'$. We do so by induction on
  $\ell(\un{x}) = \ell(x)$, the case $x=1$ being obvious. 
  
  There exists a
  rex move $\un{x} \leadsto \un{y}$ such that $\un{y}$ is either $\uw(x)$ (if $x \in \fW$), or of the form
  $s_1 s_2 \cdots s_m$ with $s_1 \in \Sf$ (if $x \notin \fW$). As in the proof of Lemma~\ref{lem:Bx-Demazure-product}, using Lemma~\ref{lem:rex-move-smaller} we see that the associated morphism $\overline{B}_{\un{x}} \to
  \overline{B}_{\un{y}}$ is an isomorphism in $\sfK/\sfK_{<x}$. On the
  other hand, if $x \in \fW$ then $\oB_{\uy}$ belongs to $\sfK_{I}'$, and if $x \notin \fW$ then $\overline{B}_{\un{y}} = 0$ in $\sfK$ by definition. Since $\sfK_{<x} \subset \sfK'_I$ by induction, the claim follows.
\end{proof}

\begin{rmk}
It follows in particular from Lemma~\ref{lem:stdgen} that the subcategory $\sfK_I$ depends only on the intersection $I \cap \fW$.
\end{rmk}

\begin{prop} \label{prop:integral-standard} For all reduced
  expressions $\un{x}$ for $x \in \fW$ there exist objects $\Delta^{\mathfrak{S}}_{\un{x}}$ and $\nabla^{\mathfrak{S}}_{\un{x}}$ in $\sfK$, and maps
\[
i_{\un{x}} \colon \Delta^{\mathfrak{S}}_{\un{x}} \to \overline{B}_{\underline{x}} \quad \text{and} \quad p_{\un{x}} :
\overline{B}_{\underline{x}} \to \nabla^{\mathfrak{S}}_{\un{x}},
\]
such that:
\begin{enumerate}
\item $\Delta^{\mathfrak{S}}_{\un{x}}$ and $\nabla^{\mathfrak{S}}_{\un{x}}$ belong to $\sfK_{\le x}$;
\item $\cone(i_{\un{x}})$ and $\cone(p_{\un{x}})$ belong to $\sfK_{< x}$;
\item for any algebraically closed field $\Bbbk$ of characteristic $p > N$, we have
  isomorphisms
\[
\bk(\Delta^{\mathfrak{S}}_{\un{x}}) \cong \widetilde{\Delta}_x^{\bk}, \quad
\bk( \nabla^{\mathfrak{S}}_{\un{x}}) \cong \widetilde{\Cos}_x^{\bk}.
\]
\end{enumerate}
\end{prop}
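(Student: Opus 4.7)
The plan is to mimic the construction in the proof of Proposition~\ref{prop:stdcostd}, but performed entirely in $\sfK$, by induction on $\ell(x)$. The base case $x = 1$ (so $\un{x} = \varnothing$) is immediate: set $\Delta^{\mathfrak{S}}_{\varnothing} = \nabla^{\mathfrak{S}}_{\varnothing} := \overline{B}_{\varnothing}$ with $i_{\varnothing} = p_{\varnothing} = \id$. For the inductive step, fix a reduced expression $\un{x}$, write $\un{x} = \un{y} s$ (so $\un{y}$ is a reduced expression for $xs < x$), and define
\[
\Delta^{\mathfrak{S}}_{\un{x}} := \cone(\Delta^{\mathfrak{S}}_{\un{y}} \cdot B_s \to \Delta^{\mathfrak{S}}_{\un{y}} \langle 1 \rangle)[-1], \qquad \nabla^{\mathfrak{S}}_{\un{x}} := \cone(\nabla^{\mathfrak{S}}_{\un{y}} \langle -1 \rangle \to \nabla^{\mathfrak{S}}_{\un{y}} \cdot B_s),
\]
where the maps are induced by the upper and lower dot morphisms in $\mathcal{D}_{\mathrm{BS},\mathfrak{S}}$. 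The map $i_{\un{x}}$ is obtained as the composition $\Delta^{\mathfrak{S}}_{\un{x}} \to \Delta^{\mathfrak{S}}_{\un{y}} \cdot B_s \xrightarrow{i_{\un{y}} \cdot B_s} \overline{B}_{\un{y}} \cdot B_s = \overline{B}_{\un{x}}$, and $p_{\un{x}}$ is defined dually.

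For property (1), note that by induction $\Delta^{\mathfrak{S}}_{\un{y}} \in \sfK_{\leq xs}$, so by Lemma~\ref{lem:stdgen} it suffices to check that $\overline{B}_{\un{z}} \cdot B_s \in \sfK_{\leq x}$ for every reduced expression $\un{z}$ for $z \leq xs$. This follows from Lemma~\ref{lem:Bx-Demazure-product}, because $\overline{B}_{\un{z}s} \in \sfK_{\leq *(\un{z}s)} = \sfK_{\leq z * s}$ and $z * s \leq (xs) * s = x$ by monotonicity of the Hecke product. For property (2), apply the octahedral axiom to the composition $\Delta^{\mathfrak{S}}_{\un{x}} \to \Delta^{\mathfrak{S}}_{\un{y}} \cdot B_s \xrightarrow{i_{\un{y}} \cdot B_s} \overline{B}_{\un{x}}$: this produces a triangle
\[
\Delta^{\mathfrak{S}}_{\un{y}} \langle 1 \rangle \to \cone(i_{\un{x}}) \to \cone(i_{\un{y}}) \cdot B_s \triright .
\]
The first term lies in $\sfK_{\leq xs} \subset \sfK_{<x}$; by induction $\cone(i_{\un{y}}) \in \sfK_{<xs}$, so the third term lies in $\sfK_{<xs} \cdot B_s$, which a lifting-property argument (of the same form as in (1), now with strict inequality: if $z < xs$ then $z * s = x$ would force $z = xs$) shows is contained in $\sfK_{<x}$. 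Hence $\cone(i_{\un{x}}) \in \sfK_{<x}$, and similarly for $\cone(p_{\un{x}})$.

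For property (3), observe that the functor $\Bbbk \colon \sfK \to \sfK_{\Bbbk}$ is triangulated, commutes with the action of $\mathcal{D}_{\mathrm{BS},\mathfrak{S}}$ by base change, and sends dot morphisms to dot morphisms. Hence $\Bbbk(\Delta^{\mathfrak{S}}_{\un{x}})$ is the cone formed by the same formula starting from $\Bbbk(\Delta^{\mathfrak{S}}_{\un{y}})$, which is isomorphic to $\widetilde{\Delta}_{xs}^{\Bbbk}$ by induction. One then checks (by another application of the octahedral axiom, or directly from uniqueness of the objects $\widetilde{\Delta}_x^{\Bbbk}$ up to isomorphism as noted after Proposition~\ref{prop:stdcostd}) that the cone agrees with the construction of $\widetilde{\Delta}_x^{\Bbbk}$; similarly for $\nabla^{\mathfrak{S}}_{\un{x}}$.

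The only genuinely delicate point is the argument that $\cone(i_{\un{y}}) \cdot B_s \in \sfK_{<x}$, where the strict inequality must be tracked through the Hecke product combinatorics; this is handled by the case analysis $zs > z$ versus $zs < z$ on generators and the exclusion of the degenerate case $z = xs$. Everything else is a mechanical unwinding of the cone and octahedral constructions, exactly parallel to the $\Bbbk$-linear argument in the proof of Proposition~\ref{prop:stdcostd} but executed before base change.
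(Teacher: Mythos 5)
Your proof is correct and is precisely the adaptation of the proof of Proposition~\ref{prop:stdcostd} that the paper leaves to the reader: the same cone construction on dot morphisms, with the summand projections of the $\Bbbk$-linear argument replaced by maps to the Bott--Samelson objects $\overline{B}_{\un{x}}$ (so that no Karoubi completeness over $\mathfrak{S}$ is needed), and with Lemmas~\ref{lem:Bx-Demazure-product} and~\ref{lem:stdgen} supplying the support conditions. The Demazure-product bookkeeping for the strict inequality in (2) is handled correctly.
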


\begin{rmk}
  We will see in Lemma~\ref{lem:CZ-DZ-indep} below that the objects $\Delta^{\mathfrak{S}}_{\un{x}}$ do not depend
  on the reduced expression $\un{x}$ for $x$ (up to canonical isomorphism).
\end{rmk}

\begin{proof}
  The proof (an easy adaption of the proof of Proposition \ref{prop:stdcostd})
  is left to the reader.
\end{proof}

Let us fix a reduced expression $\un{x}$ for all $x \in \fW$ and define
\[
\Delta^{\mathfrak{S}}_x := \Delta^{\mathfrak{S}}_{\un{x}}, \quad \nabla^{\mathfrak{S}}_x := \nabla^{\mathfrak{S}}_{\un{x}}, \quad p_x := p_{\un{x}},
\quad i_x := i_{\un{x}}.
\]
The following is immediate from Lemma \ref{lem:stdgen} and Proposition \ref{prop:integral-standard}.

\begin{cor} \label{cor:dcgen}
  For any ideal $I \subset W$, the subcategory $\sfK_I$ is generated, as a graded triangulated category, by the objects $\Delta^{\mathfrak{S}}_x$ for $x \in I \cap \fW$, and also by the objects $\nabla^{\mathfrak{S}}_x$ with $x \in I \cap \fW$. 
\end{cor}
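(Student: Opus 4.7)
The plan is a direct combination of the two cited results by induction on the Bruhat order. Let $\sfK_I'$ denote the graded triangulated subcategory of $\sfK$ generated by $\{\Delta^{\mathfrak{S}}_x : x \in I \cap \fW\}$; I will sketch why $\sfK_I' = \sfK_I$, and the argument for $\nabla^{\mathfrak{S}}_x$ will be entirely analogous.

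The inclusion $\sfK_I' \subset \sfK_I$ is immediate from Proposition~\ref{prop:integral-standard}(1) together with the ideal property of $I$: every $\Delta^{\mathfrak{S}}_x$ with $x \in I \cap \fW$ lies in $\sfK_{\le x}$, and $\{y \in \Waff : y \le x\} \subset I$ for such $x$. For the reverse containment, I would invoke Lemma~\ref{lem:stdgen} to reduce to showing $\overline{B}_{\un{x}} \in \sfK_I'$ for every $x \in I \cap \fW$, where $\un{x}$ is the fixed reduced expression used to define $\Delta^{\mathfrak{S}}_x$ (this alignment of choices is harmless since we can take the reduced expressions in Lemma~\ref{lem:stdgen} to be the same as those chosen before the corollary).

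This reduction I would handle by induction on $x$ in the Bruhat order. The base case $x = 1$ is trivial since $\Delta^{\mathfrak{S}}_1 \cong \overline{B}_\varnothing$ (as provided by the construction in the proof of Proposition~\ref{prop:stdcostd}, which sets $\widetilde{\Sta}_1 = \overline{B}_1$). For the inductive step, Proposition~\ref{prop:integral-standard} furnishes the distinguished triangle
\[
\Delta^{\mathfrak{S}}_x \xrightarrow{i_x} \overline{B}_{\un{x}} \to \cone(i_x) \triright
\]
in which $\cone(i_x) \in \sfK_{<x}$. Applying Lemma~\ref{lem:stdgen} to the ideal $\{y \in \Waff : y < x\}$, the category $\sfK_{<x}$ is generated by the objects $\overline{B}_{\un{y}}$ for $y \in \fW$ with $y < x$; these all lie in $I$ (as $I$ is an ideal containing $x$) and hence in $\sfK_I'$ by the inductive hypothesis. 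Thus $\cone(i_x) \in \sfK_I'$, and the triangle places $\overline{B}_{\un{x}} \in \sfK_I'$ as well.

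The $\nabla$-half of the corollary runs verbatim, this time using the triangle $\cone(p_x)[-1] \to \overline{B}_{\un{x}} \to \nabla^{\mathfrak{S}}_x \triright$ coming from Proposition~\ref{prop:integral-standard}. No serious obstacle is anticipated; the one point to keep an eye on is the consistency of the reduced expression choices between Lemma~\ref{lem:stdgen} and the definitions of $\Delta^{\mathfrak{S}}_x$, $\nabla^{\mathfrak{S}}_x$, which is just a matter of convention.
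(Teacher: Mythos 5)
Your proof is correct and uses exactly the two ingredients (Lemma~\ref{lem:stdgen} and Proposition~\ref{prop:integral-standard}) that the paper cites as making the corollary immediate, organized as the expected induction on the Bruhat order via the triangles $\Delta^{\mathfrak{S}}_x \to \oB_{\un{x}} \to \cone(i_x)$ and $\cone(p_x)[-1] \to \oB_{\un{x}} \to \nabla^{\mathfrak{S}}_x$ with cones in $\sfK_{<x}$. The only small slip is citing Proposition~\ref{prop:stdcostd} (the field-coefficient version over $\Bbbk$) for the base case rather than Proposition~\ref{prop:integral-standard}, but since $\cone(i_1) \in \sfK_{<1} = 0$ forces $\oB_{\varnothing} \cong \Delta^{\mathfrak{S}}_1$ directly, the argument stands.
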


We now ``lift" the vanishing statements of~\S\ref{ss:forms_of_rep0} to the case of $\mathfrak{S}$.

\begin{prop}
\label{prop:vanishings-KZ'}
For $x,y \in \fW$ and $m,n \in \Z$ we have:
\begin{gather}
\label{eqn:Zexceptional-seq-1}
\Hom_{\sfK}(\nabla^{\mathfrak{S}}_x, \nabla^{\mathfrak{S}}_y \langle m \rangle [n])=0 \quad
\text{unless $x \ge y$,}
\\
\label{eqn:Zexceptional-seq-2}
\Hom_{\sfK}(\nabla^{\mathfrak{S}}_x, \nabla^{\mathfrak{S}}_x \langle m \rangle [n])= \begin{cases}
\mathfrak{S} & \text{if $m=n=0$;} \\
 0 & \text{otherwise,}
\end{cases}
\\
\label{eqn:Zexceptional-seq-3}
\Hom_{\sfK}(\nabla^{\mathfrak{S}}_x, \nabla^{\mathfrak{S}}_y \langle m \rangle [n])=\Hom_{\sfK}(\Delta^{\mathfrak{S}}_x, \Delta^{\mathfrak{S}}_y \langle m \rangle [n])=0 \text{ if $n<0$.}
\end{gather}
\begin{equation}
  \label{eqn:Zmorph-Sta-Cos-graded}
\Hom_{\sfK}(\Delta^{\mathfrak{S}}_x, \nabla^{\mathfrak{S}}_y\langle m \rangle[n]) = \begin{cases}
\mathfrak{S} & \text{if $x=y$ and $m = n=0$;} \\
0 & \text{otherwise.}
\end{cases}
\end{equation}  
\end{prop}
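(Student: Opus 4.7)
The plan is to reduce each assertion to the already established analogue over each algebraically closed field $\bk$ of characteristic $p > N$ (namely \eqref{eqn:exceptional-seq-1}--\eqref{eqn:morph-Sta-Cos-graded}), by a universal coefficient argument with respect to the base change $\mathfrak{S} \to \bk$. First I would observe that, since every prime factor of $k_0$ is at most $h \le N$, we have $\mathfrak{R} \subset \mathfrak{S}$, so Lemma~\ref{lem:morphisms-Dasph-integral} (whose hypothesis is implied by our stronger assumption on $N$) transfers to $\mathfrak{S}$: for any expressions $\uw, \uv$ the graded $\mathfrak{S}$-module $\Hom^{\bullet}_{\mathcal{D}^{\asph}_{\mathrm{BS},\mathfrak{S}}}(\oB_{\uw}, \oB_{\uv})$ is free of finite rank and its formation commutes with base change to any such $\bk$. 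Fixing bounded complexes $X^{\bullet}, Y^{\bullet}$ in the additive hull of $\mathcal{D}^{\asph}_{\mathrm{BS},\mathfrak{S}}$ representing $\Delta^{\mathfrak{S}}_x$ and $\nabla^{\mathfrak{S}}_y$ respectively, for each $m \in \Z$ the Hom complex $C_m^{\bullet} := \Hom^{\bullet}(X^{\bullet}, Y^{\bullet}\langle m \rangle)$ is then a bounded complex of finitely generated free $\mathfrak{S}$-modules whose cohomology $H^n(C_m^{\bullet})$ computes $\Hom_{\sfK}(\Delta^{\mathfrak{S}}_x, \nabla^{\mathfrak{S}}_y\langle m \rangle[n])$, and by Proposition~\ref{prop:integral-standard} the complex $\bk \otimes_{\mathfrak{S}} C_m^{\bullet}$ computes $\Hom_{\sfK_{\bk}}(\widetilde{\Sta}^{\bk}_x, \widetilde{\Cos}^{\bk}_y\langle m \rangle[n])$.

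Since $\mathfrak{S} = \Z[\tfrac{1}{N!}]$ is a principal ideal domain and $C_m^{\bullet}$ consists of free $\mathfrak{S}$-modules, I would invoke the universal coefficient theorem to obtain, for each $m, n \in \Z$ and each $\bk$ as above, a short exact sequence
\begin{equation*}
0 \to H^n(C_m^{\bullet}) \otimes_{\mathfrak{S}} \bk \to H^n(\bk \otimes_{\mathfrak{S}} C_m^{\bullet}) \to \mathrm{Tor}_1^{\mathfrak{S}}(H^{n+1}(C_m^{\bullet}), \bk) \to 0.
\end{equation*}
For the vanishing claims in \eqref{eqn:Zexceptional-seq-1}, \eqref{eqn:Zexceptional-seq-3}, and the off-diagonal part of \eqref{eqn:Zmorph-Sta-Cos-graded}, the middle term vanishes for every $\bk$ of characteristic $p > N$ by the corresponding field statements \eqref{eqn:exceptional-seq-1}, \eqref{eqn:exceptional-seq-3}, \eqref{eqn:morph-Sta-Cos-graded}, using that for $x,y \in \fW$ the condition $y \hdot \lambda_0 \uparrow x \hdot \lambda_0$ implies $y \le x$ in the Bruhat order. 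Hence $H^n(C_m^{\bullet})/p H^n(C_m^{\bullet}) = 0$ for every prime $p > N$; since the maximal ideals of $\mathfrak{S}$ are precisely the $(p)$ with $p > N$ prime, Nakayama's lemma applied at each such localization forces the finitely generated module $H^n(C_m^{\bullet})$ to vanish. The $(\Delta,\Delta)$ part of \eqref{eqn:Zexceptional-seq-3} is handled by the same argument with $Y^{\bullet}$ replaced by a complex representing $\Delta^{\mathfrak{S}}_y$.

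For the remaining isomorphism claims in \eqref{eqn:Zexceptional-seq-2} at $m=n=0$ and in \eqref{eqn:Zmorph-Sta-Cos-graded} at $x=y$, $m=n=0$, the vanishings just proved show that the Tor terms in the universal coefficient sequence at $(m,n)=(0,0)$ and at $(m,n)=(0,-1)$ disappear; the former gives $H^0(C_0^{\bullet}) \otimes_{\mathfrak{S}} \bk \cong \bk$ for every $\bk$ of characteristic $p > N$, while the latter forces $\mathrm{Tor}_1^{\mathfrak{S}}(H^0(C_0^{\bullet}), \bk) = 0$ for every such $\bk$, whence $H^0(C_0^{\bullet})$ has no $p$-torsion for any $p > N$ and is therefore torsion-free, hence free over the principal ideal domain $\mathfrak{S}$. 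Combining the two yields $H^0(C_0^{\bullet}) \cong \mathfrak{S}$ as claimed, an explicit generator being provided by $\id_{\nabla^{\mathfrak{S}}_x}$, respectively by $p_x \circ i_x$, whose nonvanishing after base change follows from Propositions~\ref{prop:stdcostd} and~\ref{prop:integral-standard}.

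There is no substantive obstacle here: the only non-trivial input is the $\mathfrak{S}$-freeness of morphisms between Bott--Samelson objects in the antispherical category supplied by Lemma~\ref{lem:morphisms-Dasph-integral}, which allows the universal coefficient theorem to apply; the rest combines Nakayama's lemma with the structure theorem over the principal ideal domain $\mathfrak{S}$.
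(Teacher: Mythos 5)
Your proposal is correct and is essentially the paper's own argument: the authors likewise represent $\nabla^{\mathfrak{S}}_x$ and $\nabla^{\mathfrak{S}}_y$ by bounded complexes in the additive hull of $\mathcal{D}_{\mathrm{BS},\mathfrak{S}}^{\mathrm{asph}}$, compute $\Hom_{\sfK}$ as the cohomology of the Hom complex of finitely generated free $\mathfrak{S}$-modules, use Lemma~\ref{lem:morphisms-Dasph-integral} to identify $\bk \lotimes_{\mathfrak{S}} (-)$ of that complex with the corresponding complex over $\bk$, and conclude acyclicity from the field-level vanishings for all $\bk$ of characteristic $p>N$ — your universal-coefficient-plus-Nakayama formulation is the same derived base-change argument. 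Where you go further is welcome: the paper dismisses the remaining relations as ``essentially identical,'' whereas your Tor-vanishing/torsion-freeness step is precisely what is needed to upgrade the pointwise rank computation to the isomorphisms with $\mathfrak{S}$ in \eqref{eqn:Zexceptional-seq-2} and \eqref{eqn:Zmorph-Sta-Cos-graded}; just note that the ``otherwise'' cases of \eqref{eqn:Zexceptional-seq-2}, which you omit from your list of vanishing claims but need for the Tor term at $(m,n)=(0,0)$, are covered by the identical Nakayama argument.
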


\begin{rmk}
  These results are consequences of
  \eqref{eqn:exceptional-seq-1}, \eqref{eqn:exceptional-seq-2},
  \eqref{eqn:exceptional-seq-3} and \eqref{eqn:morph-Sta-Cos-graded},
  which in turn are consequences of the corresponding vanishing for algebraic groups. We do not know any diagrammatic proof for these statements.
\end{rmk}

\begin{proof}
  We prove \eqref{eqn:Zexceptional-seq-1}.
The argument giving the
  other relations is essentially identical. We can compute $\Hom_{\sfK}(\nabla^{\mathfrak{S}}_x, \nabla^{\mathfrak{S}}_y
  \langle m \rangle [n])$ by choosing bounded complexes $D$ and $C$ of objects in
  $\mathcal{D}_{\mathrm{BS},\mathfrak{S}}^{\mathrm{asph}}$ representing $\nabla^{\mathfrak{S}}_x$ and $\nabla^{\mathfrak{S}}_y \langle m \rangle$ respectively, and calculating the $n$-th
  cohomology of the complex $(H^\bullet, d)$ with
\[
H^k := \bigoplus_{i \in \Z} \Hom_{\mathcal{D}_{\mathrm{BS},\mathfrak{S}}^{\mathrm{asph}}}(D^i, C^{i+k}),
\]
see \cite[\S 11.7]{ks}. By Proposition \ref{prop:integral-standard},
for any field $\Bbbk$ of characteristic $p > N$ we have $\bk
(C) = \widetilde{\Delta}_x^{\bk}$ and $\bk
(D) = \widetilde{\nabla}_x^{\bk}$. From
\eqref{eqn:exceptional-seq-1} and Lemma~\ref{lem:morphisms-Dasph-integral} we deduce that $\bk
\lotimes_{\mathfrak{S}} H^\bullet = 0$ if $x \not\geq y$. Hence $H^\bullet$ 
is equal to $0$ in the derived category of $\mathfrak{S}$-modules, hence has no cohomology.
\end{proof}

\begin{prop} \label{prop:adjointsexist}
  Let $I \subset W$ be an ideal.
  \begin{enumerate}
  \item 
  \label{it:adjoint-inclusion}
  The inclusion $i \colon \sfK_I \into \sfK$ admits a left adjoint $i^L$ and a right
    adjoint $i^R$.
  \item
  \label{it:adjoint-projection}
  The quotient $j \colon \sfK \onto \sfK/\sfK_I$ admits a left
    adjoint $j^L$ and a right
    adjoint $j^R$.
  \end{enumerate}
\end{prop}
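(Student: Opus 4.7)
The plan is to establish, for any ideal $I \subset \Waff$, a semi-orthogonal decomposition $\sfK = \langle \sfK_I^\perp, \sfK_I \rangle$ (yielding the right adjoint $i^R$), together with its dual $\sfK = \langle \sfK_I, {}^\perp\sfK_I \rangle$ (yielding $i^L$), and then deduce both adjoints of $j$ by the standard recollement formalism. At no point would I try to invoke Brown representability, since $\sfK$ is only a bounded homotopy category; instead all adjoints are constructed from the explicit semi-orthogonal triangles.

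First I would identify the candidate orthogonals. Set
\[
\sfK_I^\perp := \{T \in \sfK : \Hom_\sfK(A, T\langle m \rangle[n]) = 0 \text{ for all } A \in \sfK_I,\, m,n \in \Z\},
\]
and analogously for ${}^\perp\sfK_I$; these are full graded triangulated subcategories of $\sfK$. By Corollary~\ref{cor:dcgen} one can test membership in $\sfK_I^\perp$ against the objects $\Delta^{\mathfrak{S}}_x$ with $x \in I \cap \fW$ alone, and then the vanishing~\eqref{eqn:Zmorph-Sta-Cos-graded} shows that $\nabla^{\mathfrak{S}}_y \in \sfK_I^\perp$ for every $y \in \fW \smallsetminus I$. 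By the dual test, $\Delta^{\mathfrak{S}}_y \in {}^\perp\sfK_I$ for every $y \in \fW \smallsetminus I$.

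Next, let $\mathcal{B} \subset \sfK$ be the graded triangulated subcategory generated by $\{\nabla^{\mathfrak{S}}_y : y \in \fW \smallsetminus I\}$; by the previous step $\mathcal{B} \subset \sfK_I^\perp$. Corollary~\ref{cor:dcgen} applied with $I = \Waff$ shows that $\sfK$ is generated as a graded triangulated category by $\sfK_I \cup \mathcal{B}$. Combined with the all-shifts vanishing $\Hom_\sfK(\sfK_I, \mathcal{B}\langle m \rangle[n]) = 0$, a standard Bondal--Kapranov style argument (repeated use of the octahedral axiom, together with induction on the number of generators needed to build a given object) produces, for each $T \in \sfK$, a distinguished triangle $A_T \to T \to B_T$ with $A_T \in \sfK_I$ and $B_T \in \mathcal{B} \subset \sfK_I^\perp$; the assignment $T \mapsto A_T$ is then functorial and defines $i^R$. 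The dual construction with $\Delta^{\mathfrak{S}}$'s and the other direction of Proposition~\ref{prop:vanishings-KZ'} gives $i^L$. Once both adjoints of $i$ exist, the composite $\sfK_I^\perp \hookrightarrow \sfK \twoheadrightarrow \sfK/\sfK_I$ is automatically an equivalence of graded triangulated categories, and its inverse followed by the inclusion $\sfK_I^\perp \hookrightarrow \sfK$ is the right adjoint $j^R$; the left adjoint $j^L$ is obtained symmetrically from ${}^\perp\sfK_I$.

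The main obstacle is the semi-orthogonal decomposition step: one must upgrade the \emph{generation} of $\sfK$ by $\sfK_I \cup \mathcal{B}$ to an actual functorial triangle for every object. This is where we crucially use that the Hom vanishing from Proposition~\ref{prop:vanishings-KZ'} holds in \emph{all} graded and cohomological shifts, which forces the relevant extension classes to vanish and lets one ``split'' the two pieces of the triangle from one another. Fortunately, by Lemma~\ref{lem:Bx-Demazure-product} any $T \in \sfK$ already lies in $\sfK_J$ for a finite ideal $J \subset \Waff$, so the construction of the triangle $A_T \to T \to B_T$ reduces to a finite induction on $|J \cap \fW|$ in which one peels off a Bruhat-maximal element at each step, keeping the argument entirely within the finite-dimensional setup that Proposition~\ref{prop:vanishings-KZ'} directly controls.
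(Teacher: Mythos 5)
Your proposal is correct and follows essentially the same route as the paper: both rest on the semi-orthogonal decomposition of $\sfK$ obtained from the generation statement of Corollary~\ref{cor:dcgen} together with the Hom-vanishings of Proposition~\ref{prop:vanishings-KZ'}, and both realize $j^R$ (resp.\ $j^L$) via the equivalence $\sfK_I^\perp \simeq \sfK/\sfK_I$. The only cosmetic differences are that you invoke the $\Delta$--$\nabla$ orthogonality~\eqref{eqn:Zmorph-Sta-Cos-graded} where the paper uses the $\nabla$--$\nabla$ upper-triangularity~\eqref{eqn:Zexceptional-seq-1}, and that you package $i^R$ as the functorial truncation $T \mapsto A_T$ while the paper writes it as $(qi)^{-1}q$ for the quotient $q$ by $\sfK_I^\perp$.
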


\begin{proof}
In this proof, we denote by $\langle X \rangle$ the full graded triangulated subcategory of $\sfK$ generated by a set $X$ of objects.

Let us first consider~\eqref{it:adjoint-inclusion}. We prove that $i$ has a right adjoint; the
  existence of the left adjoint follows from a dual argument. Consider
$\sfK_I^\perp$, the full subcategory of $\sfK$ consisting of objects $X$
with $\Hom_{\sfK}(Y, X) = 0$ for all $Y$ in $\sfK_I$. From Corollary~\ref{cor:dcgen} and
\eqref{eqn:Zexceptional-seq-1} we one may deduce:
\begin{enumerate}
\item
\label{eqn:adjoints-1}
Any object $X$ in $\sfK$ is part of a distinguished triangle
\[
X_I \to X \to X^I \triright
\]
with $X_I \in \langle \nabla^{\mathfrak{S}}_x : x \in I \cap \fW \rangle$ and $X^I
\in \langle \nabla^{\mathfrak{S}}_y : y \in \fW \smallsetminus (I \cap \fW) \rangle$;
\item 
\label{eqn:adjoints-2}
$\sfK_I^\perp = \langle \nabla^{\mathfrak{S}}_y : y \in \fW \smallsetminus (I \cap \fW) \rangle$.
\end{enumerate}
From this and the definition of the Verdier quotient it follows that
for $X \in \sfK_I$ and $Y \in \sfK$ the map
\[
\Hom_{\sfK}(i(X), Y) \to \Hom_{\sfK/\sfK_I^{\perp}}(qi(X), q(Y))
\]
induced by the quotient $q \colon \sfK \to \sfK/\sfK_I^\perp$ is an isomorphism. In
particular $qi$ is fully-faithful. From~\eqref{eqn:adjoints-1} and~\eqref{eqn:adjoints-2} above one deduces easily
that $qi$ is also essentially surjective; hence it is an equivalence. Now
if we fix an inverse $(qi)^{-1}$ to
$qi$ we have
\[
\Hom_{\sfK}(i(X),Y) \cong \Hom_{\sfK/\sfK_I^{\perp}}(qi(X), q(Y)) \cong \Hom_{\sfK_I}(X, (qi)^{-1} q(Y))
\]
and hence our adjoint is given by $i^R := (qi)^{-1} q$.

We now turn to~\eqref{it:adjoint-projection}. Again, we only establish the existence of $j^R$
and leave $j^L$ to the reader. If we consider the composition
\[
\sfK_I^\perp = \langle \nabla^{\mathfrak{S}}_y \; | \; y \notin I \cap \fW \rangle \xrightarrow{g} \sfK \xrightarrow{j} \sfK/\sfK_I
\]
where $g$ is the natural embedding,
then as above~\eqref{eqn:adjoints-1} and~\eqref{eqn:adjoints-2} imply that $jg$ is an equivalence. As above,
after fixing a quasi-inverse $(jg)^{-1}$ we can take $j^R := g (jg)^{-1}$.
\end{proof}

Fix an ideal $I \subset W$. Then, as in the
proof of Lemma \ref{lem:rex}, for any element $x \in \fW$ which is minimal
in $W \smallsetminus I$ and any
two reduced expressions $\un{x}'$ and $\un{x}''$ for $x$, the images of the
objects $\overline{B}_{\un{x}'}$ and $\overline{B}_{\un{x}''}$ in $\sfK/\sfK_I$ are
canonically isomorphic, via the isomorphism associated with any choice of
rex move $\un{x}' \leadsto \un{x}''$, see~\cite[\S 6.5]{ew}. We denote the resulting object
$\mathfrak{S}_x$.

\begin{lem}
\label{lem:CZ-DZ-indep}
  For any ideal $I \subset W$ and any minimal element $x \in \fW \smallsetminus
  I$, there exists canonical isomorphisms
\[
\Delta^{\mathfrak{S}}_x = j^L\mathfrak{S}_x \qquad \text{and} \qquad \nabla^{\mathfrak{S}}_x = j^R\mathfrak{S}_x.
\]
(Here $j^L, j^R$ are the adjoints to $\sfK \to \sfK/\sfK_I$ constructed in Proposition~{\rm \ref{prop:adjointsexist}}.) In particular, the objects  $\Delta^{\mathfrak{S}}_x$ and $\nabla^{\mathfrak{S}}_x$
for all $x \in \fW$ are independent of the choice of reduced
expression used to define them.
\end{lem}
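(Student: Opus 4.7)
The plan is to identify $\nabla^{\mathfrak{S}}_{\underline{x}}$ with $j^R \mathfrak{S}_x$ using the characterization of $j^R$ via the equivalence $\sfK_I^\perp \simto \sfK/\sfK_I$ extracted from the proof of Proposition~\ref{prop:adjointsexist}; the argument for $\Delta^{\mathfrak{S}}_{\underline{x}} \cong j^L \mathfrak{S}_x$ is entirely dual. Once this identification is established, independence of the reduced expression is automatic: the object $\mathfrak{S}_x \in \sfK/\sfK_I$ is already canonical by~\cite[\S 6.5]{ew} and the discussion just preceding the lemma, and adjoints transport canonical isomorphisms to canonical isomorphisms.

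The first key observation is that since $x$ is minimal in $\fW \smallsetminus I$ and $I$ is downward closed under the Bruhat order, every $y \in \fW$ with $y < x$ lies in $I$. Combined with Lemma~\ref{lem:stdgen} (which shows that $\sfK_{I'}$ depends only on $I' \cap \fW$), this gives an inclusion $\sfK_{<x} \subset \sfK_I$. The distinguished triangle
\[
\overline{B}_{\underline{x}} \xrightarrow{p_{\underline{x}}} \nabla^{\mathfrak{S}}_{\underline{x}} \to \cone(p_{\underline{x}}) \triright
\]
from Proposition~\ref{prop:integral-standard} then maps under $j$ to an isomorphism $j(\overline{B}_{\underline{x}}) \simto j(\nabla^{\mathfrak{S}}_{\underline{x}})$, since $\cone(p_{\underline{x}}) \in \sfK_{<x} \subset \sfK_I$. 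By the definition of $\mathfrak{S}_x$ as the image of $\overline{B}_{\underline{x}}$ in $\sfK/\sfK_I$, this produces a canonical isomorphism $j(\nabla^{\mathfrak{S}}_{\underline{x}}) \cong \mathfrak{S}_x$.

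The second key point is that $\nabla^{\mathfrak{S}}_{\underline{x}} \in \sfK_I^\perp$. By Corollary~\ref{cor:dcgen}, $\sfK_I$ is generated as a graded triangulated category by the objects $\nabla^{\mathfrak{S}}_y$ for $y \in I \cap \fW$, so it suffices to check that $\Hom_{\sfK}(\nabla^{\mathfrak{S}}_y \langle m \rangle [n], \nabla^{\mathfrak{S}}_{\underline{x}}) = 0$ for all such $y$ and all $m, n \in \Z$. By~\eqref{eqn:Zexceptional-seq-1} this Hom vanishes unless $y \geq x$, and that inequality combined with $y \in I$ and the downward-closedness of $I$ would force $x \in I$, contradicting the hypothesis on $x$. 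Hence $\nabla^{\mathfrak{S}}_{\underline{x}} \in \sfK_I^\perp$, as required.

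Combining the two points, $\nabla^{\mathfrak{S}}_{\underline{x}}$ lies in the subcategory on which the proof of Proposition~\ref{prop:adjointsexist} shows that $j$ restricts to an equivalence $\sfK_I^\perp \simto \sfK/\sfK_I$, and $j$ sends it canonically to $\mathfrak{S}_x$. Since $j^R = g \circ (jg)^{-1}$ in the notation of that proof, we obtain the canonical isomorphism $\nabla^{\mathfrak{S}}_{\underline{x}} \cong j^R \mathfrak{S}_x$. The symmetric argument, using the triangle built from $i_{\underline{x}}$ and the characterization of $j^L$, gives $\Delta^{\mathfrak{S}}_{\underline{x}} \cong j^L \mathfrak{S}_x$. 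Taking $I$ to be (for example) $\{y \in W \mid y < x\} \cup (W \smallsetminus \fW)_{<x}$ restricted appropriately (or any ideal in which $x$ is minimal outside), we obtain the final assertion of the lemma. The mildly delicate point, rather than any hard estimate, is simply to keep straight that ``ideal'' here is with respect to the Bruhat order on all of $W$ while minimality is taken in $\fW \smallsetminus I$; once this is unpacked the proof is a direct application of the vanishings already established in Proposition~\ref{prop:vanishings-KZ'}.
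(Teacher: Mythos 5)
Your proposal is correct and follows essentially the same route as the paper: show that $p_{\underline{x}}$ becomes an isomorphism in $\sfK/\sfK_I$ (because its cone lies in $\sfK_{<x}\subset\sfK_I$), observe that $\nabla^{\mathfrak{S}}_{\underline{x}}$ lies in $\sfK_I^\perp$, and then read off $j^R\mathfrak{S}_x\cong\nabla^{\mathfrak{S}}_{\underline{x}}$ from the formula $j^R=g(jg)^{-1}$ in the proof of Proposition~\ref{prop:adjointsexist}. The paper simply cites the description $\sfK_I^\perp=\langle\nabla^{\mathfrak{S}}_y : y\notin I\cap\fW\rangle$ where you re-derive membership from~\eqref{eqn:Zexceptional-seq-1}, and for the final assertion the clean choice of ideal is just $I=\{y\in W\mid y<x\}$; these are cosmetic differences only.
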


\begin{proof}
Again, we give the proof of the canonical isomorphism $\nabla^{\mathfrak{S}}_x \cong j^R\mathfrak{S}_x$, and leave
  the dual statement $\Delta^{\mathfrak{S}}_x \cong j^L\mathfrak{S}_x$ to the reader. 
  Looking at the proof of Proposition~\ref{prop:adjointsexist}, and using the same notation, we see
  that, since $\mathfrak{S}_x=jg(\nabla^{\mathfrak{S}}_x)$, $j^R\mathfrak{S}_x$ may be calculated as the image of $\nabla^{\mathfrak{S}}_x$ under the
  embedding 
\[
\sfK_I^\perp = \langle \nabla^{\mathfrak{S}}_y \; | \; y \notin I \cap \fW \rangle
\xrightarrow{g} \sfK
\]
Hence $j^R\mathfrak{S}_x \cong \nabla^{\mathfrak{S}}_x$. The isomorphism is canonical because
\[
\Hom_{\sfK}(\nabla^{\mathfrak{S}}_x, j^R\mathfrak{S}_x) = \Hom_{\sfK/\sfK_I}(j(\nabla^{\mathfrak{S}}_x), \mathfrak{S}_x) = \Hom_{\sfK/\sfK_I}(\mathfrak{S}_x, \mathfrak{S}_x),
\]
where the identification $\mathfrak{S}_x  = j(\nabla^{\mathfrak{S}}_x) $ is given by $p_x$.
\end{proof}

From the above we see that if $I \subset W$ is an ideal and $x
\in \fW \smallsetminus I$ is a minimal element then setting $I'=I \cup \{y \in W \mid y \leq x\}$ then $\sfK_{I'}/\sfK_I$ identifies with the graded triangulated subcategory of $\sfK/\sfK_I$ generated by $\mathfrak{S}_x$, and we have an equivalence
of graded triangulated categories
\[
\phi_x \colon 
\Db(\mathfrak{S}\mathsf{-grmod}) \simto \sfK_{I'}/\sfK_I
\]
sending $\mathfrak{S}$ to $\mathfrak{S}_x$. (Here, $\mathfrak{S}\mathsf{-grmod}$ is the abelian category of finitely generated graded $\mathfrak{S}$-modules.)
For any $n \in \mathfrak{S}$, we define
\begin{gather*}
\Delta^{\mathfrak{S}/n\mathfrak{S}}_x :=  \begin{cases} \Delta^{\mathfrak{S}}_x & \text{if $n = 0$,} \\
\cone( \Delta^{\mathfrak{S}}_x \xrightarrow{n \cdot (-)} \Delta^{\mathfrak{S}}_x) &
\text{otherwise.} \end{cases} \\
\nabla^{\mathfrak{S}/n\mathfrak{S}}_x :=  \begin{cases} \nabla^{\mathfrak{S}}_x & \text{if $n = 0$,} \\
\cone( \nabla^{\mathfrak{S}}_x \xrightarrow{n \cdot (-)} \nabla^{\mathfrak{S}}_x) & \text{otherwise.} \end{cases}
\end{gather*}
Then via the above equivalence $\phi_x$ we have
\[
\Delta^{\mathfrak{S}/n\mathfrak{S}}_x \cong j^L(\phi_x(\mathfrak{S}/n\mathfrak{S})) \quad \text{and} \quad
\nabla^{\mathfrak{S}/n\mathfrak{S}}_x \cong j^R(\phi_x(\mathfrak{S}/n\mathfrak{S}))
\]
for all $x \in \fW$.

In the following statement we use the notation $\langle - \rangle_{\mathrm{ext}}$ introduced in~\S\ref{ss:forms_of_rep0}.

\begin{thm}
\label{thm:repintform}
  The full subcategories
\begin{align*}
\sfK^{\le 0} &:= \langle \Delta^{\mathfrak{S}}_x\langle m \rangle[k] :
x \in \fW, \,  m \in \Z, \, k \in \Z_{\ge 0} \rangle_{\ext}, \\
\sfK^{\ge 0} &:= \langle \nabla^{\mathfrak{S}/n\mathfrak{S}}_x\langle m \rangle[k] :
x \in \fW, \, n \in \mathfrak{S},  m \in \Z, \, k \in \Z_{\le 0} \rangle_{\ext}
\end{align*}
define a non-degenerate bounded $t$-structure on $\sfK$.
\end{thm}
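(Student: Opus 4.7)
I would verify the three $t$-structure axioms on $\sfK$ together with non-degeneracy and boundedness. The stability axiom and the orthogonality $\Hom_\sfK(\sfK^{\le 0}, \sfK^{\ge 1}) = 0$ are direct, once reduced to generators; the existence of truncation triangles is the main work and proceeds by an inductive, BBD-style gluing procedure along an ideal filtration of $W$.

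\emph{Stability and orthogonality.} The inclusions $\sfK^{\le 0}[1] \subset \sfK^{\le 0}$ and $\sfK^{\ge 0}[-1] \subset \sfK^{\ge 0}$ are immediate from the extension-closure definitions. For orthogonality, a standard long-exact-sequence argument reduces matters to showing that $\Hom_\sfK(\Delta^\mathfrak{S}_x \langle m \rangle [k], \nabla^{\mathfrak{S}/n\mathfrak{S}}_y \langle m' \rangle [k']) = 0$ for $k \ge 0$ and $k' \le -1$. Applying $\Hom_\sfK(\Delta^\mathfrak{S}_x \langle m \rangle [k], -)$ to the defining triangle $\nabla^\mathfrak{S}_y \xrightarrow{n} \nabla^\mathfrak{S}_y \to \nabla^{\mathfrak{S}/n\mathfrak{S}}_y \triright$ (interpreting $n = 0$ as the zero map, so that $\nabla^{\mathfrak{S}/0\mathfrak{S}}_y = \nabla^\mathfrak{S}_y$), both flanking terms vanish by \eqref{eqn:Zmorph-Sta-Cos-graded} because $k' - k \le -1 < 0$, and therefore so does the middle term.

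\emph{Truncation triangles.} Every $X \in \sfK$ lies in some $\sfK_I$ for a finite ideal $I \subset W$, and by the remark after Lemma~\ref{lem:stdgen} this subcategory depends only on $S := I \cap \fW$. I would induct on $|S|$, the base case $S = \varnothing$ being trivial. For the inductive step, pick a maximal $x \in S$, set $S' := S \smallsetminus \{x\}$, and let $I'$ be the ideal in $W$ generated by $S'$. Proposition~\ref{prop:adjointsexist}, applied to the pair $(\sfK_{I'}, \sfK_I)$, provides a recollement with inclusion $i \colon \sfK_{I'} \hookrightarrow \sfK_I$ and quotient $j \colon \sfK_I \to \sfK_I/\sfK_{I'}$, each with both adjoints $i^L, i^R, j^L, j^R$. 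Maximality of $x$ identifies the quotient with the graded triangulated subcategory generated by $\mathfrak{S}_x$, hence (via the functor $\phi_x$ of Lemma~\ref{lem:CZ-DZ-indep}) with $\Db(\mathfrak{S}\mathsf{-grmod})$ equipped with its standard $t$-structure. By induction $\sfK_{I'}$ carries the $t$-structure of the theorem. BBD gluing then yields a $t$-structure $(\sfK_I^{\le 0}, \sfK_I^{\ge 0})$ on $\sfK_I$ characterized by $Y \in \sfK_I^{\le 0}$ iff $i^L Y \in \sfK_{I'}^{\le 0}$ and $j Y \in \Db(\mathfrak{S}\mathsf{-grmod})^{\le 0}$, and dually for $\sfK_I^{\ge 0}$. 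The truncation triangles of this $t$-structure furnish the required triangles for $X$.

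\emph{Matching the candidates, boundedness, and non-degeneracy.} It remains to check $\sfK_I^{\le 0} = \sfK^{\le 0} \cap \sfK_I$ and $\sfK_I^{\ge 0} = \sfK^{\ge 0} \cap \sfK_I$. Under $\phi_x$, the objects $\Delta^{\mathfrak{S}/n\mathfrak{S}}_x\langle m\rangle$ and $\nabla^{\mathfrak{S}/n\mathfrak{S}}_x\langle m\rangle$ correspond, via $j^L$ and $j^R$ respectively, to $\mathfrak{S}/n\mathfrak{S}\langle m\rangle$ in the heart of $\Db(\mathfrak{S}\mathsf{-grmod})$, as recorded in the paragraph preceding the theorem statement. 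Since $\mathfrak{S}$ is a principal ideal domain, $\Db(\mathfrak{S}\mathsf{-grmod})^{\le 0}$ is the extension closure of the free modules $\mathfrak{S}\langle m\rangle[k]$ for $k \ge 0$ (every finitely generated module being a quotient of a free one), whereas $\Db(\mathfrak{S}\mathsf{-grmod})^{\ge 0}$ requires the full collection $\mathfrak{S}/n\mathfrak{S}\langle m\rangle[k]$ for $n \in \mathfrak{S}$ and $k \le 0$; this is exactly the asymmetry built into the definitions of $\sfK^{\le 0}$ and $\sfK^{\ge 0}$. Combining with the inductive description of $\sfK_{I'}^{\le 0}$, $\sfK_{I'}^{\ge 0}$ yields the identification. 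Boundedness is immediate since every generator is $t$-bounded, and non-degeneracy follows by unwinding the recollement tower: if $X$ lies in $\bigcap_n \sfK^{\le -n}$, then so does $i^L X$, while $j X \in \bigcap_n \Db(\mathfrak{S}\mathsf{-grmod})^{\le -n} = 0$, and induction concludes.

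\emph{Main obstacle.} The delicate step is the final matching: one must confirm that the extension-closure descriptions in the theorem generate precisely the subcategories arising from the standard $t$-structures on each ``stratum'' $\sfK_{I_j}/\sfK_{I_{j-1}}$ glued together. The asymmetry between the $\Delta^\mathfrak{S}$-generated and $\nabla^{\mathfrak{S}/n\mathfrak{S}}$-generated sides is essential and reflects the PID nature of $\mathfrak{S}$; it has to be carefully tracked through every level of the induction.
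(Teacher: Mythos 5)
Your proof is correct and follows exactly the approach the paper takes: the published argument is a one-line appeal to the BBD gluing formalism (citing \cite[Proposition~1]{bez} and \cite[\S 3.1]{modrap2}), starting from the standard $t$-structure on $\Db(\mathfrak{S}\mathsf{-grmod})$, and your write-up simply spells out that inductive recollement procedure along the ideal filtration of $W$. The only cosmetic slip is the parenthetical for $n=0$: the cone of the zero map on $\nabla^\mathfrak{S}_y$ is $\nabla^\mathfrak{S}_y \oplus \nabla^\mathfrak{S}_y[1]$, not $\nabla^\mathfrak{S}_y$, but the paper defines $\nabla^{\mathfrak{S}/0\mathfrak{S}}_y := \nabla^\mathfrak{S}_y$ directly and your orthogonality argument goes through unchanged in that case via \eqref{eqn:Zmorph-Sta-Cos-graded}.
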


\begin{proof}
This follows from the glueing formalism of~\cite{bbd}, as e.g.~in~\cite[Proposition~1]{bez} or in~\cite[\S 3.1]{modrap2}, starting from the standard t-structure on $\Db(\mathfrak{S}\mathsf{-grmod})$.
\end{proof}

We regard the heart of the t-structure constructed in Theorem~\ref{thm:repintform} as an ``integral form'' of $\Rep_0(G)$. One can deduce from~\eqref{eqn:Zexceptional-seq-3} that this heart contains the objects $\Delta^{\mathfrak{S}}_x$ and $\nabla^{\mathfrak{S}}_x$. It is not difficult to check (using the same arguments as in the proof of Proposition~\ref{prop:vanishings-KZ'}) that for $x,y \in \fW$ the $\mathfrak{S}$-modules
\[
\Hom_{\sfK}(\nabla^{\mathfrak{S}}_x, \nabla^{\mathfrak{S}}_y) \quad \text{and} \quad \Hom_{\sfK}(\Delta^{\mathfrak{S}}_x,\Delta^{\mathfrak{S}}_y)
\]
are free, and to deduce that the objects $\Delta^{\mathfrak{S}/n\mathfrak{S}}_x$ and $\nabla^{\mathfrak{S}/n\mathfrak{S}}_x$ also belong to this heart.

 }
\else { \newpage }
\fi

\part{The case of $\mathrm{GL}_n(\bk)$}
\label{pt:GLn}

\ifdefined\PARTCOMPILE{ 
\textbf{Overview}.
In this part we give a proof of Conjecture~\ref{conj:main} in the case of the group $G=\mathrm{GL}_n(\bk)$ (when $p>n\geq 3$), using
the theory of categorical Kac--Moody actions. Before plunging into the
details we give a brief overview of our approach.

For ease of exposition, let us first consider the case of integral category
$\mathcal{O}$ for $\mathfrak{g} := \mathfrak{gl}_n(\C)$. That is, we
consider $\mathfrak{g}$ together with its subalgebras $\mathfrak{h}
\subset \mathfrak{b}$ of diagonal and upper-triangular matrices and
define $\mathcal{O}^{\Z}$ to be the category of finitely
generated $\mathfrak{g}$-modules which are locally finite over
$\mathfrak{b}$, and $\mathfrak{h}$-diagonalisable with integral
weights (i.e.~weights which are differentials of characters of the torus of diagonal matrices in $\mathrm{GL}_n(\C)$). 

Inside $\mathcal{O}^{\Z}$ we can consider the block $\mathcal{O}_0$ of the trivial
module. It is a classical result of Soergel that $\Tilt(\mathcal{O}_0)$, the additive category of tilting modules
inside $\mathcal{O}_0$, has an action of the monoidal category of Soergel
bimodules associated with the Weyl group ${S}_n$ and its natural action on $\mathfrak{h}$. Indeed, in~\cite{soergel-garben} Soergel proves via his functor $\mathbb{V}$ that
$\Tilt(\mathcal{O}_0)$ is equivalent to a category of Soergel \emph{modules}, and
from the definitions we see that this category of Soergel modules has an
action of Soergel bimodules.

\begin{rmk*}
Soergel's results hold for any complex semi-simple Lie algebra and
 were initially phrased in terms of projective modules rather than
tilting modules. Tilting modules were considered in this context (using different terminology) by Colling\-wood--Irving~\cite{collingwood-irving}. The (mathematically easy, but conceptually
important) rephrasing of Soergel's results in terms of tilting modules
appears to be due to \cite{bbm}.
\end{rmk*}

In Soergel's approach one deduces the existence of the action of
Soergel bimodules \emph{after} obtaining a ``Hecke category description'' of the category
of tilting modules. In the setting of our conjecture we
would like information to flow the other way: we would like to
obtain a Hecke category description of the category of tilting modules from the
existence of a categorical action. The main reason is the 
difficulty to define a functor $\mathbb{V}$ in the setting of reductive groups. (In fact, no convincing definition seems to be known.)

Let us first outline how to do this in the case of
$\mathcal{O}^{\Z}$. Let us denote by $\bX \subset \mathfrak{h}^*$ the weight
lattice. Given any $\lambda \in \bX$ we have the Verma module
$\Delta_\C(\lambda) := U(\mathfrak{g}) \otimes_{U(\mathfrak{b})}
\C_{\lambda}$ and the classes of Verma modules give a basis for the complexified
Grothendieck group of $\mathcal{O}^\Z$:
\[
[\mathcal{O}^\Z]_{\C}  = \bigoplus_{\lambda \in \bX} \C [\Sta_\C(\lambda)].
\]

On $\mathcal{O}^{\Z}$ one has the endofunctors $E := V
\otimes (-)$ and $F := V^* \otimes (-)$ given by tensoring with the
natural representation $V = \C^n$ and its dual. Both functors have a natural
endomorphism $\Xbb$ (given essentially by the action of the Casimir element),
all of whose eigenvalues belong to $\Z \subset \C$. Hence we obtain
decompositions of our functors
\[
E = \bigoplus_{a \in \Z} E_a \qquad \text{and} \qquad F = \bigoplus_{a \in \Z} F_a
\]
into generalized eigenspaces.

It is a remarkable observation that on the level of Grothendieck groups these functors give
rise to an action of the Lie algebra $\mathfrak{sl}_\infty$ (whose Chevalley generators will be denoted $e_i$ and
$f_i$ for $i \in \Z$). More precisely, consider the natural module $\nat_\infty := \bigoplus_{i \in
  \Z} \C m_i$ of ``infinite column vectors'' for
$\mathfrak{sl}_\infty$. The isomorphism of vector spaces
\[
\varphi \colon [\mathcal{O}^\Z]_{\C} \simto \nat_\infty^{\otimes n}
\]
defined by
\[
\varphi \bigl( [\Sta_{\C}(\lambda)] \bigr) = m_{\lambda_1} \otimes m_{\lambda_2-1} \otimes \cdots \otimes m_{\lambda_n-n + 1}
\]
(where $\lambda = (\lambda_1, \cdots, \lambda_n)$ under the standard
identification $\bX = \Z^n$, i.e.~$\lambda$ sends a diagonal matrix with coefficients $x_1, \cdots, x_n$ to $\sum_{i=1}^n \lambda_i \cdot x_i$) 
intertwines the action of $[E_a]$ and $[F_a]$ on the left-hand side with the
action of $e_a$ and $f_a$ on the right-hand side. In particular,
the functors $E_a$ and $F_a$ induce an action of $\mathfrak{sl}_\infty$ on the
Grothendieck group of $\mathcal{O}^\Z$. Moreover, there exist
morphisms between the functors $E_a, F_a$ and their compositions which
allow us to upgrade this action on the Grothendieck group to a
``categorical $\mathfrak{sl}_\infty$-action'' in the sense of Rouquier and
Khovanov--Lauda. (The fact that the above action on the Grothendieck
group of $\mathcal{O}^{\Z}$ can be lifted to a categorical
$\mathfrak{sl}_\infty$-action is essentially due to Chuang--Rouquier~\cite{CR}.)

Now, let us return to our goal of producing an action of the Hecke
category on $\mathcal{O}_0 \subset \mathcal{O}^{\Z}$. Under the
isomorphism $\varphi$ above, the weight spaces for the
$\mathfrak{sl}_\infty$-action correspond precisely to the blocks of
$\mathcal{O}^{\Z}$. Moreover, on certain regular
blocks one can express each wall-crossing
functor in the form $F_aE_a$ for some $a \in \Z$.  The theory of
Kac--Moody actions gives us a rich supply of
morphisms between the functors $E_a$ and $F_a$, and it turns out that one can produce
the action of the Hecke category by explicitly giving the images of
all the generating morphisms and checking the relations (using known
relations in the categorification of $\mathfrak{sl}_\infty$).

The proof of our conjecture for $G = \mathrm{GL}_n(\bk)$ over a field
$\Bbbk$ of characteristic $p > n$ follows a
similar pattern, with one additional subtlety. Let us denote as in~\S\ref{ss:definitions-G} by $\Rep(G)$ the category of finite-dimensional rational representations of $G$ and by
$\Sta(\lambda)$ the Weyl module with highest weight $\lambda \in
\bX^+$. Then the Weyl modules give a basis for the (complexified)
Grothendieck group:
\[
[\Rep(G)]_\C = \bigoplus_{\lambda \in \bX^+} \C[\Delta(\lambda)]. 
\]
Again we can consider the functors $E := V \otimes (-)$ and $F := V^*
\otimes (-)$ given by tensoring with the natural representation $V$ of
$\mathrm{GL}_n(\bk)$ and its dual. As in the case of $\gl_n(\C)$, these functors have a natural endomorphism,
all of whose eigenvalues belong to the prime subfield $\Z/p\Z \subset
\Bbbk$, and we have decompositions
\[
E = \bigoplus_{a \in \Z/p\Z} E_a \qquad \text{and} \qquad F = \bigoplus_{a \in \Z/p\Z} F_a.
\]
In this setting the functors $E_a$ and $F_a$ give rise to an action of
the affine Lie algebra $\widehat{\mathfrak{gl}}_p$ on the Grothendieck
group. If this time we let $\nat_p$ denote the natural module for
$\widehat{\mathfrak{gl}}_p$ (see~\S\ref{sec:natglN}) then we have an
isomorphism of vector spaces 
\[
\varphi \colon [\Rep(G)]_{\C}  \simto \bigwedge \hspace{-3pt} {}^n \, \nat_p
\]
defined by
\[
\varphi \bigl( [\Sta(\lambda)] \bigr) = m_{\lambda_1} \wedge m_{\lambda_2-1}
\wedge \dots \wedge m_{\lambda_n-n + 1}.
\]
Again the action of the functors $E_a$ and $F_a$ on $\Rep(G)$ can be upgraded to a
categorical action of $\widehat{\mathfrak{gl}}_p$ (and again, this is due
to Chuang--Rouquier~\cite{CR}). In this setting 
each weight space is a union of blocks of $\Rep(G)$.

Let us now concentrate (as we do below) on the block containing
the Weyl module $\Sta \bigl( (n,n,\cdots,n) \bigr)$. As in the case of $\mathcal{O}^\Z$
considered above, we can express the wall-crossing functors
corresponding to ``finite'' simple reflections $s \in \Sf$ as 
compositions $F_aE_a$ for $1 \le a \le n-1$ (where we identify $a$ with its image in $\Z/p\Z$). Here the calculations
verifying the defining relations in the Hecke category are essentially
identical to those for $\mathcal{O}^{\Z}$. (These calculations are
basically already contained in the work of Mackaay--Sto{\v s}i{\'c}--Vaz~\cite{msv}.)  An additional difficulty is
provided by the wall-crossing functor corresponding to the affine
reflection. It is not difficult to see that this functor can be
expressed in terms of the functors $E_a$ and $F_a$ as 
\[
F_nF_{n+1} \dots F_{p-1} F_0E_0 E_{p-1} \dots E_{n+1}E_n,
\]
see~\S\ref{sss:comparison-Section3}.
However, checking the relations directly using this definition of the
wall-crossing functor is very complicated. (Such
calculations are performed, with different normalizations, in
\cite{mt1,mt2}.) Here we take a different approach and argue that we
can ``restrict'' part of the 2-representation given by $\Rep(G)$ to
obtain a representation of $\widehat{\mathfrak{gl}}_n$ (see Section~\ref{sec:restriction}).
After restricting we can express all
translation functors as composites $F_aE_a$ for $a \in \{ 0, \dots,
n-1\}$ and we check the relations in the Hecke category directly,
using the relations of the categorification of
$\widehat{\mathfrak{gl}}_n$.

\begin{rmk*}
\begin{enumerate}
\item
It will be more convenient for us to construct a \emph{left} action of $\DiagBS$ on $\Rep_0(G)$ rather than a right action as in Conjecture~\ref{conj:main}. However, from a left action one can obtain a right action by composing with the equivalence $\imath$; see Remark~\ref{rmk:main-conj}\eqref{it:rmk-left-right}.
\item
Recently, Maksimau~\cite{m} has given a
  general setup for restricting categorical representations
  of $\widehat{\mathfrak{sl}}_{e+1}$ to obtain categorical representations
  of $\widehat{\mathfrak{sl}}_e$. Our approach at this step is a special case of
  his construction.
\end{enumerate}
\end{rmk*}

This part consists of 3 sections, corresponding to the 3 steps of our proof. In Section~\ref{sec:2rep-GLn} we recall the construction of the categorical action of $\hgl_p$ on $\Rep(G)$ due to Chuang--Rouquier. In Section~\ref{sec:restriction} we explain how the action on a subcategory $\Rep^{[n]}(G)$ can be restricted to $\hgl_n$ if $p \geq n \geq 3$. Finally, in Section~\ref{sec:g2D} we prove that from this action one can obtain an action of $\DiagBS$ on $\Rep_0(G)$, which proves Conjecture~\ref{conj:main} in this case.

\section{Representations of $\mathrm{GL}_n$ in characteristic $p$ as a $2$-representation of $\hgl_p$}
\label{sec:2rep-GLn}

In this section we fix an algebraically closed field $\bk$ of characteristic $p>0$, and an integer $n \geq 1$. We
recall the construction of a categorical action of $\hgl_p$ on the category $\Rep(\mathrm{GL}_n(\Bbbk))$ due to Chuang--Rouquier. All the proofs are copied from~\cite{CR}. (We have tried to follow the notation and conventions of~\cite{CR} and~\cite{R2KM} as closely as possible. These authors work with the Lie algebra $\widehat{\mathfrak{sl}}_p$ rather than $\hgl_p$, but the extension is straightforward.)

\subsection{The affine Lie algebra $\hgl_N$}
\label{slp}

If $N \in \Z_{\geq 2}$, we denote by
$\hgl_N$ the affine Lie algebra associated with the Lie algebra $\gl_N(\C)$.
More precisely, we first consider the Lie algebra
\[
\widehat{\mathfrak{sl}}_N := \mathfrak{sl}_N(\C[t,t^{-1}]) \oplus \C K \oplus \C d
\]
with the Lie
bracket determined by the following rules:
\begin{gather*}
  [x \otimes t^m, y \otimes t^n] = [x,y] \otimes t^{m+n} +
  m\delta_{m,-n} (x|y)K \quad \text{for all $x, y \in \mathfrak{sl}_N(\C)$, $m,n \in
    \Z$;} \\
[d, x \otimes t^m] = m x \otimes t^m \quad \text{for all $x \in \mathfrak{sl}_N(\C)$,
  $m \in \Z$;} \\
[K, \widehat{\mathfrak{sl}}_N ] = 0.
\end{gather*}
Here $(-|-) \colon \mathfrak{sl}_N(\C) \times \mathfrak{sl}_N(\C) \to \C$ denotes the bilinear
form $(x|y) := \Tr(xy)$ on $\mathfrak{sl}_N(\C)$. Then $\hgl_N$ is the Lie algebra $\widehat{\mathfrak{sl}}_N \oplus \C$,  where we identify $(0,1)$ with the identity matrix in $\gl_N(\C)$.

Let $(e_{i,j})_{i,j \in \{1, \cdots, N\}}$ denote the matrix
units in $\gl_N(\C)$. Then
we consider the Chevalley elements
\begin{gather*}
  e_i = e_{i+1,i}, \quad f_i := e_{i,i+1} \quad \text{if $1 \le i \le N-1$;} \\ \quad e_0 := te_{1,N}, \quad
  f_0 := t^{-1}e_{N,1}; \\
h_i 
:= [e_i, f_i] = \begin{cases}
e_{1,1} - e_{N,N} + K & \text{if $i=0$;} \\
e_{i+1,i+1} - e_{i,i} & \text{if $1 \le i \le N-1$.}
\end{cases}
\end{gather*}
in $\hgl_N$.

We set $\hg := \hgf \oplus \C K \oplus \C d \subset \hgl_N$, where $\hgf \subset \gl_N(\C)$ denotes the Cartan subalgebra of
diagonal matrices. We have 
\[
\hgf^* = \bigoplus_{i=1}^N \C \e_i
\]
where
$(\e_i)_{i \in \{1, \cdots, N\}}$ is the basis dual to
$e_{1,1}, \cdots, e_{N,N}$. Any $\la_0 \in \hgf^*$ may be viewed as an element
of $\hg^*$ by setting $\langle \la_0, K \rangle = \langle \la_0, d \rangle =
0$. In this way we view $\hgf^*$ as a subspace of $\hg^*$. Similarly,
we let $K^*$ and $\delta$ denote the linear forms on $\hg$ defined by
\begin{align*}
\langle K^*, \hgf \oplus \C d \rangle=0, &\quad \langle K^*, K \rangle=1; \\
\langle \delta, \hgf \oplus \C K \rangle=0, &\quad \langle \delta, d \rangle=1.
\end{align*}
This gives
a decomposition 
\[
\hg^* = \hgf^*
\oplus \C K^* \oplus \C \delta.
\]
Let $P$ denote the \emph{weight lattice}:
\[
P := \{ \la \in \hg^* \; | \; \langle \la, h_i \rangle \in \Z \text{
  for all }0 \le i \le N-1 \}.
\]
The simple roots $\a_i \in \hg^*$ are
given by
\[
\a_0 = \delta - (\e_N - \e_1), \quad \a_i = \e_{i+1} - \e_{i} \quad
\text{for $1 \le i \le N-1$.}
\]

\begin{rmk}
The notation $\hgl_N$ is often used in the literature for the Lie algebra $\mathfrak{L}:=\gl_N(\C[t,t^{-1}]) \oplus \C K \oplus \C d$, with the obvious extension of the bracket considered above on $\widehat{\mathfrak{sl}}_N$.
Our Lie algebra $\hgl_N$ is a ``more naive'' version, namely the Kac--Moody Lie algebra associated with the affine Cartan matrix of type $\mathbf{A}$, and its realization $(\fh, \{\alpha_i, \, i=0, \cdots, N-1\}, \{h_i, \, i=0, \cdots, N-1\})$. In practice we will only consider representations on which $K$ acts trivially, i.e.~representations of the quotient $\hgl_N / \C \cdot K$, which is naturally a Lie subalgebra in $\mathfrak{L}/\C \cdot K$.
\end{rmk}

\subsection{The natural representation of $\hgl_N$} 
\label{sec:natglN}

Let $A = \bigoplus_{i = 1}^N \C a_i$ denote the natural module
for $\gl_N(\C)$. Then $A \otimes_\C \C[t,t^{-1}]$ is
naturally a module for $\mathfrak{sl}_N(\C[t,t^{-1}])$ via
\[
(x \otimes t^m) \cdot (a \otimes t^n) := x(a) \otimes t^{m+n} \quad
\text{for all $x \in \mathfrak{sl}_N(\C)$, $a \in A$ and $m,n \in \Z$}.
\]
We may extend this action to an action of $\hgl_N$ where $K$ acts by $0$, $d$ acts via
\[
d \cdot (a \otimes t^m) := ma \otimes t^m \quad \text{for all
    $a \in A$ and $m \in \Z$,}
\]
and the identity matrix acts as the identity.
We call the resulting $\hgl_N$-module the \emph{natural module} and
denote it $\nat_N$.

Given $\la \in \Z$ write $\la = \mu N + \nu$ with $1 \le \nu \le N$ and set
\[
m_\la := a_{\nu} \otimes t^{\mu}.
\]
Then $\nat_N = \bigoplus_{\la\in \Z} \C m_\la$
and the action of the Chevalley elements is given by
\begin{equation}
\label{eqn:nat-action-f}
e_i(m_\la) = \begin{cases} m_{\la + 1} & \text{if $i \equiv \la \mod N$;}
\\ 0 & \text{otherwise}  \end{cases}
\end{equation}
and
\begin{equation}
\label{eqn:nat-action-e}
f_i(m_\la) = \begin{cases} m_{\la - 1} & \text{if $i \equiv \la-1 \mod N$;} \\
0 & \text{otherwise.} \end{cases}
\end{equation}
If we write $\la = \mu N + \nu$ as above then each $m_\la$ is a weight
vector of weight $\e_{\nu} +\mu\delta$. In particular, all weight
spaces in $\nat_N$ are $1$-dimensional.

\subsection{Realization of $\bigwedge^n \nat_p$ as a Grothendieck group}
\label{ss:cat-GLn-Groth-group}

We assume from now on in this section that $N=p$.
We set
$G = \mathrm{GL}_n(\Bbbk)$, and we let $T \subset B \subset G$
be the maximal torus of diagonal matrices and the Borel subgroup of lower triangular matrices. With these data we use the notation of~\S\ref{ss:definitions-G} (but we do not have to assume that $p>h=n$ at this point).
We have
\[
\bX = X^*(T) = \bigoplus_{i=1}^n \Z \chi_i,
\]
where $ \chi_i \colon T \to \Gm $ is
given by
\begin{align*}
 \textrm{diag}(x_1, \dots, x_n) &\mapsto x_i.
\end{align*}
We will often write an element $\sum_{i=1}^n \la_i\chi_i \in \bX$ as the $n$-tuple $(\la_1,
\dots, \la_n)$. The simple roots are the characters $\chi_i-\chi_{i+1}$ for $i \in \{1, \cdots, n-1\}$, and in particular the cone of dominant weights is
\[
\bX^+ = \{ (\la_1, \dots, \la_n) \in \bX \; | \; \la_1 \ge \la_2 \ge \dots \ge \la_n \}.
\]
We also set
\[
\varsigma=(0,-1, \cdots, -n+1) \in \bX.
\]
Note that $\langle \varsigma, \beta \rangle=1$ for any simple root $\beta$, so that $w \hdot \lambda = w(\lambda + \varsigma)-\varsigma$ for all $w \in \Waff$ and $\lambda \in \bX$.

Recall that $\Rep(G)$ denotes the abelian category of finite dimensional rational representations of $G$, and that
given $\la \in \bX^+$ we have the
corresponding Weyl module $\Sta(\la) \in \Rep(G)$ with highest weight $\lambda$.  The classes of Weyl
modules give us a basis for the Grothendieck group:
\[
[\Rep(G)] = \bigoplus_{\la \in \bX^+} \Z [\Delta(\la)].
\]

Let $V=\Bbbk^n$ denote the natural representation of $G$ (which is isomorphic to $\Sim(\chi_1)$) and let $V^*$ be its
dual (which is isomorphic to $\Sim(-\chi_n)$). We have exact endofunctors of $\Rep(G)$:
\[
E := V \otimes (-) \quad \text{and} \quad F := V^* \otimes (-).
\]
Via the unit and counit morphisms
\[
\eta \colon \Bbbk \to V^* \otimes V \quad \text{and} \quad \epsilon \colon V \otimes V^* \to \Bbbk
\]
we may view $(E,F)$ as an adjoint pair of functors. We will denote again by $\eta \colon \id \to FE$ and $\epsilon \colon EF \to \id$ the corresponding adjunction morphisms.

\begin{rmk}
  Below it will be important to know that $F$ is
  also left adjoint to $E$. However it is crucial not to fix a
  preferred adjunction yet.
\end{rmk}

Let $\fg=\mathfrak{gl}_n(\Bbbk)$ denote the Lie algebra of $G$,
equipped with its natural (adjoint) $G$-module structure. Given
any $M \in \Rep(G)$ the action map
\[
a \colon \fg \otimes M \to M
\]
is a morphism of $G$-modules. Using the natural isomorphism
$\fg \cong V^* \otimes V$ and the natural adjunction $(V^* \otimes -, V \otimes -)$ we get
a morphism
\[
\Xbb_M \colon V \otimes M \to V \otimes M 
\]
given by the composition
\[
V \otimes M \xrightarrow{\adj \otimes V \otimes M} V \otimes V^* \otimes V \otimes M \xrightarrow{V \otimes a} V \otimes M
\]
which is functorial in $M$. In this way we obtain an endomorphism $\Xbb
\in \End(E)$. 
Since $F$ is right adjoint to $E$, $\Xbb$ induces an endomorphism of $F$, which by abuse we will also denote by $\Xbb$. More concretely,
$\Xbb_M \colon V^* \otimes M \to V^* \otimes M$ is given by the composition
\[
V^* \otimes M \xrightarrow{\eta \otimes V^* \otimes M}
V^* \otimes V \otimes V^* \otimes M \xrightarrow{V^* \otimes \Xbb_{V^*
    \otimes M}}  V^* \otimes V \otimes
V^* \otimes M \xrightarrow{V^* \otimes \epsilon \otimes M}  V^* \otimes M.
\]


We set
\[
\Omega := \sum_{i,j=1}^n e_{i,j} \otimes e_{j,i} \in \fg \otimes \fg,
\]
where $(e_{i,j})_{i,j \in \{1, \cdots, n\}}$ are the matrix units in $\fg=\mathfrak{gl}_n(\Bbbk)$.
The following lemma (taken from~\cite{CR}) can be easily checked by explicit computation.

\begin{lem}
For any $M$ in $\Rep(G)$:
\begin{enumerate}
\item
the endomorphism $\Xbb_M$ of $EM=V \otimes M$ is given by the action of $\Omega$;
\item
the endomorphism $\Xbb_M$ of $FM=V^* \otimes M$ is given by $-n \cdot \id - \Omega$.
\end{enumerate}
\end{lem}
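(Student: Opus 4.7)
The plan is to verify both statements by direct computation in a fixed basis. Fix $v_1, \dots, v_n$ for $V$ with dual basis $\{v_i^*\}$ for $V^*$; then the coevaluation $\Bbbk \to V \otimes V^*$ sends $1$ to $\sum_i v_i \otimes v_i^*$, the counit $\epsilon$ is the usual evaluation pairing, and the isomorphism $V^* \otimes V \simto \fg$ identifies $v_i^* \otimes v_j$ with the matrix unit $e_{j,i}$. The only nontrivial representation-theoretic input required is that $\fg$ acts on $V^*$ via the negative transpose, i.e.\ $e_{a,b}(v_j^*) = -\delta_{a,j} v_b^*$.

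For part~(1), I would unwind the defining composition on a generator $v_j \otimes m$: the coevaluation produces $\sum_i v_i \otimes v_i^* \otimes v_j \otimes m$, and applying $V \otimes a$ with $v_i^* \otimes v_j$ corresponding to $e_{j,i}$ gives $\sum_i v_i \otimes e_{j,i}(m)$. On the other hand, $\Omega \cdot (v_j \otimes m) = \sum_{a,b} e_{a,b}(v_j) \otimes e_{b,a}(m)$ collapses via $e_{a,b}(v_j) = \delta_{j,b} v_a$ to $\sum_a v_a \otimes e_{j,a}(m)$, and the two expressions match.

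For part~(2), I would trace the three-step composition on $v_j^* \otimes m$. The unit $\eta$ produces $\sum_i v_i^* \otimes v_i \otimes v_j^* \otimes m$; using part~(1) and the Leibniz rule $e_{b,a}(v_j^* \otimes m) = e_{b,a}(v_j^*) \otimes m + v_j^* \otimes e_{b,a}(m)$, the result of applying $V^* \otimes \Xbb_{V^* \otimes M}$ splits into two pieces. The first, via $e_{b,a}(v_j^*) = -\delta_{b,j} v_a^*$, simplifies to $-v_j^* \otimes \sum_a v_a \otimes v_a^* \otimes m$; contracting the middle $V \otimes V^*$ by the final counit yields $-n \cdot (v_j^* \otimes m)$, with $n = \dim V$ emerging from the zig-zag of $V$ with itself. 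The second piece contracts to $\sum_b v_b^* \otimes e_{b,j}(m)$, which a direct rewrite identifies with $-\Omega \cdot (v_j^* \otimes m)$ (the sign again from the negative transpose action on $V^*$). Summing yields $(-n \cdot \id - \Omega)(v_j^* \otimes m)$, as claimed. The argument is entirely mechanical; the only points requiring care are keeping straight which pair of tensor factors is contracted by the final $\epsilon$ (the middle $V \otimes V^*$, not the outer pair) and tracking the sign coming from the $\fg$-action on $V^*$.
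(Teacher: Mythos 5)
Your computation is correct, and it is exactly the verification the paper has in mind: the paper does not write out a proof but simply states that the lemma "can be easily checked by explicit computation" (citing Chuang--Rouquier), and your basis calculation — including the correct identification $v_i^* \otimes v_j \leftrightarrow e_{j,i}$, the sign from the dual action $e_{a,b}(v_j^*) = -\delta_{a,j} v_b^*$, and the contraction of the middle $V \otimes V^*$ producing the $-n$ term — supplies that computation faithfully.
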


For any $a \in \Bbbk$, let $E_a$
(resp. $F_a)$ denote the summand of $E$ (resp. $F$) given by the
generalized $a$-eigenspace of $\Xbb$ acting on $E$ (resp. $F$).

\begin{lem}
\label{lem:adjunctions-a}
  For any $a \in \Bbbk$, $\epsilon$ and $\eta$ induce units and counits $\eta_a \colon \id \to F_a E_a$ and $\epsilon_a \colon E_a F_a \to \id$
  making $(E_a, F_a)$ into an adjoint pair of functors.
\end{lem}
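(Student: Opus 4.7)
The plan rests on the observation that, by construction, $\Xbb \in \End(F)$ is the \emph{mate} of $\Xbb \in \End(E)$ under the adjunction $(E, F, \eta, \epsilon)$; once this is made precise, the adjunction bijection automatically respects the generalized eigenspace decomposition and descends to an adjunction between each pair $(E_a, F_a)$.

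First I would verify the mate identity: for all $M, N \in \Rep(G)$, the natural bijection $\Phi_{M,N} \colon \Hom(EM, N) \simto \Hom(M, FN)$ induced by $\eta$ and $\epsilon$ intertwines precomposition by $\Xbb_M$ on the left with postcomposition by $\Xbb_N$ on the right. Starting from the explicit formula $\Xbb_N = F(\epsilon_N) \circ F(\Xbb_{FN}) \circ \eta_{FN}$ recorded just above the lemma, this is a direct calculation using the zigzag identities for $(E, F, \eta, \epsilon)$ together with naturality of $\Xbb$ on $E$. Equivalently, and perhaps more useful for the next step, one obtains the pointwise identity $F(\Xbb_M) \circ \eta_M = \Xbb_{EM} \circ \eta_M \colon M \to FEM$, and its dual for $\epsilon$.

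Next I would exploit this compatibility to decompose the adjunction. Precomposition by $\Xbb_M$ preserves each summand $\Hom(E_bM, N) \subset \Hom(EM, N)$ and acts there with generalized eigenvalue $b$; hence the generalized $a$-eigenspace of this operator on $\Hom(EM, N)$ is exactly $\Hom(E_aM, N)$. The dual statement identifies the generalized $a$-eigenspace of postcomposition by $\Xbb_N$ on $\Hom(M, FN)$ with $\Hom(M, F_aN)$. Combined with the mate identity, $\Phi_{M,N}$ restricts to a natural isomorphism $\Hom(E_aM, N) \simto \Hom(M, F_aN)$, which is precisely the datum of an adjunction $(E_a, F_a)$. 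The unit $\eta_a \colon \id \to F_aE_a$ and counit $\epsilon_a \colon E_aF_a \to \id$ are obtained from $\eta$ and $\epsilon$ by projecting onto the appropriate eigenspace summands; concretely, $\eta_{a,M}$ is the image of $\id_{E_aM}$ under $\Phi^a_{M, E_aM}$, and the pointwise identity of the first paragraph guarantees that the off-diagonal components of $\eta$ under the decomposition $FE = \bigoplus_{a,b} F_aE_b$ vanish, so that $\eta = \sum_a \eta_a$ under this splitting. The zigzag relations for $(E_a, F_a, \eta_a, \epsilon_a)$ then follow by restricting those for $(E, F, \eta, \epsilon)$ to the corresponding summands.

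The only genuinely non-formal step is the mate identity in the first paragraph; everything else is formal once one has that identity and the general categorical principle that an adjunction compatible with endomorphisms of the functors descends to generalized eigenspaces. I therefore expect the unfolding of the explicit formula for $\Xbb_N$, and the repeated use of naturality and zigzag relations needed to identify $F(\Xbb_M) \circ \eta_M$ with $\Xbb_{EM} \circ \eta_M$, to be the one technically delicate (though routine) point of the proof.
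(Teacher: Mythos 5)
Your proof is correct, and its overall shape coincides with the paper's: both arguments reduce the lemma to showing that $\eta$ and $\epsilon$ factor through the diagonal summands $\bigoplus_a F_aE_a$ and $\bigoplus_a E_aF_a$, and both hinge on the single identity $F(\Xbb_M)\circ\eta_M = \Xbb_{EM}\circ\eta_M$ (together with its dual for $\epsilon$). Where you differ is in how that identity is obtained. The paper proves it by direct computation: it writes $\eta_M(m)=\sum_i v_i^*\otimes v_i\otimes m$ and checks that the element $\sum_{i,j} e_{i,j}\otimes e_{j,i}\otimes 1 + e_{i,j}\otimes 1\otimes e_{j,i}$, which computes $\Xbb$ on the outer $F$, acts on such vectors as $-n\cdot(1\otimes 1\otimes 1)-(1\otimes\Omega)$, forcing $-n-a=-n-b$. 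You instead observe that $\Xbb\in\End(F)$ is by construction the mate of $\Xbb\in\End(E)$, so the identity is the standard mate compatibility, provable from the zigzag relations and naturality alone. Your route is cleaner and strictly more general: it shows that any adjunction $(E,F)$ equipped with an endomorphism of $E$ and its mate on $F$ restricts to adjunctions between the corresponding generalized eigenspace summands, with no reference to $\Rep(G)$. The paper's computation, by contrast, is self-contained and simultaneously re-exhibits the eigenvalue matching $a\leftrightarrow -n-a$ underlying the definition of $F_a$. One small point worth writing out in your version: to conclude that the off-diagonal components of $\eta_M(m)$ vanish, you need $F(\Xbb_M)-\Xbb_{EM}$ to be invertible on each summand $F_aE_bM$ with $a\neq b$; this holds because the two operators commute (by naturality of $\Xbb$ on $F$), so on that summand their difference is $(b-a)$ plus a nilpotent operator.
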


%

\begin{proof}
To prove the lemma it suffices to prove that $\eta$ and $\epsilon$ factor through morphisms
\[
\id \to \bigoplus_{a \in \Bbbk} F_a E_a \qquad \text{and} \qquad \bigoplus_{a \in \Bbbk} E_a F_a \to \id.
\]

First, the morphism $\eta_M \colon M \to FEM = V^* \otimes V \otimes M$ is defined by
\[
m \mapsto \sum_{i=1}^n v_i^* \otimes v_i \otimes m,
\]
where $(v_i)_{i \in \{1, \cdots, n\}}$ is the standard basis of $V$. Now $\Omega$ acts on $F(EM) = V^* \otimes (EM)$ as the action of
\[
\sum_{i,j=1}^n e_{i,j} \otimes e_{j,i} \otimes 1 + e_{i,j} \otimes 1 \otimes e_{j,i} \quad \in \fg \otimes \fg \otimes \fg
\]
on $V^* \otimes V \otimes M$.
It can be easily checked that this element acts on any vector of the form $\sum_{k=1}^n v_k^* \otimes v_k \otimes m$ as the element
\[
-n \cdot (1 \otimes 1 \otimes 1) - (1 \otimes \Omega).
\]
Therefore, such an element can belong to $F_a E_b M$ only if
\[
-n-a=-n-b,
\]
i.e.~only if $a=b$, proving the claim about $\eta$.

Now, consider the morphism $\epsilon_M \colon EFM=V \otimes V^* \otimes M \to M$, which is defined by
\[
v \otimes \xi \otimes m \mapsto \xi(v) \cdot m.
\]
The element $\Omega$ acts on $E(FM) = V \otimes (FM)$ as the action of
\[
\sum_{i,j=1}^n e_{i,j} \otimes e_{j,i} \otimes 1 + e_{i,j} \otimes 1 \otimes e_{j,i} \in \fg \otimes \fg \otimes \fg
\]
on $V \otimes V^* \otimes M$.
Now for any $x \in EFM$ we have
\begin{multline*}
\epsilon_M\left( \Bigl( \sum_{i,j=1}^n e_{i,j} \otimes e_{j,i} \otimes 1 + e_{i,j} \otimes 1 \otimes e_{j,i} \Bigr) \cdot x \right) \\
= \epsilon_M \Bigl( (-n \cdot (1 \otimes 1 \otimes 1) - (1 \otimes \Omega)) \cdot x\Bigr).
\end{multline*}
Therefore, if $x \in E_b F_a M$, then $\epsilon_M(x)=0$ unless the operators
\[
\sum_{i,j=1}^n e_{i,j} \otimes e_{j,i} \otimes 1 + e_{i,j} \otimes 1 \otimes e_{j,i} \text{ and } -n \cdot (1 \otimes 1 \otimes 1) - (1 \otimes \Omega)
\]
have the same generalized eigenvalue on $x$, i.e.~unless $a=-n-(-n-b)$, which finishes the proof.
\end{proof}

In the following we identify $\Z/p\Z$ with the prime subfield of
$\Bbbk$. 
We denote by $\bX/(W,\hhdot)$ the set of orbits of the ``dot-action" of the affine Weyl group $\Waff$ on $\bX$. For any $c \in \bX/(W,\hhdot)$, we denote by
\[
\Repp_c(G)
\]
the Serre subcategory of $\Rep(G)$ generated by the simple objects $\Sim(\lambda)$ for $\lambda \in c \cap \bX^+$. By the linkage principle (see~\cite[Corollary~II.6.17]{jantzen}) we have a decomposition
\begin{equation}
\label{eqn:decomposition-Rep}
\Rep(G) = \bigoplus_{c \in \bX/(W,\, \raisebox{1.7pt}{\text{\circle*{1.7}}})} \Repp_c(G).
\end{equation}

\begin{prop}
\label{prop:natcat}
  \begin{enumerate}
  \item
\label{it:natcat-eigenvalues}
We have $E_a=0$ and $F_a=0$ unless $a \in \Z/p\Z$; therefore we have
\[
E = \bigoplus_{a \in \Z/p\Z} E_a \qquad \text{and} \qquad F = \bigoplus _{a \in
      \Z/p\Z} F_a.
\]
  \item
\label{it:natcat-EF}  
The isomorphism of $\C$-vector spaces
\begin{align*}
\varphi \colon \C \otimes_\Z [\Rep(G)] & \simto \bigwedge\hspace{-3pt}{}^n \, \nat_p
\end{align*}
given by
\begin{align*}
  \varphi([\Sta(\la)]) = m_{\la_1} \wedge m_{\la_2-1} \wedge \dots
  \wedge m_{\la_n-n+1} \quad \text{for $\la \in \bX^+$}
\end{align*}
satisfies $\varphi \circ [E_a] = e_a \circ \varphi$ and $\varphi \circ [F_a] =
f_a \circ \varphi$. In particular, identifying $\Z/p\Z$ with $\{0, \cdots, p-1\}$ in the obvious way, the exact functors $E_a$, $F_a$ are part of
an action of $\hgl_p$ on $\C \otimes_\Z [\Rep(G)]$.
\item 
\label{it:natcat-weights}
Upon passing to Grothendieck
  groups the decomposition~\eqref{eqn:decomposition-Rep}
induces the decomposition of $\bigwedge^n \nat_p$ into weight spaces (via the
isomorphism $\varphi$ of~\eqref{it:natcat-EF}).
  \end{enumerate}
\end{prop}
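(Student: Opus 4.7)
The plan is to reduce all three parts to a single explicit computation: the eigenvalues of $\Xbb$ on the subquotients of the standard Weyl filtrations of $V \otimes \Sta(\lambda)$ and $V^* \otimes \Sta(\lambda)$. For part~\eqref{it:natcat-eigenvalues}, I first note that every object of $\Rep(G)$ has a finite composition series whose simple subquotients each appear as subquotients of some Weyl module, and that $E$ and $F$ are exact; hence the characteristic polynomial of $\Xbb_M$ on $EM$ (resp.~$FM$) divides a product of characteristic polynomials of $\Xbb$ on various $E\Sta(\mu)$ (resp.~$F\Sta(\mu)$), reducing the claim to the case of Weyl modules. By \cite[Proposition~II.4.21]{jantzen}, $V \otimes \Sta(\lambda)$ admits a Weyl filtration with subquotients $\Sta(\lambda + \chi_i)$ for those $i \in \{1, \dots, n\}$ with $\lambda + \chi_i \in \bX^+$, and dually $V^* \otimes \Sta(\lambda)$ admits one with subquotients $\Sta(\lambda - \chi_i)$. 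Combining the preceding lemma (which gives $\Xbb_M = \Omega$ on $EM$ and $\Xbb_M = -n \cdot \id - \Omega$ on $FM$) with the standard identity $2\Omega = \Delta(C) - C \otimes 1 - 1 \otimes C$, where $C \in U(\fg)$ is the quadratic Casimir acting on $\Sta(\mu)$ by the scalar $c(\mu) := \sum_i \mu_i(\mu_i + n - 2i + 1)$, a direct computation shows that $\Xbb$ acts on the subquotient $\Sta(\lambda + \chi_i)$ of $E\Sta(\lambda)$ by the integer $\lambda_i - i + 1$ and on the subquotient $\Sta(\lambda - \chi_i)$ of $F\Sta(\lambda)$ by the integer $\lambda_i - i$; both lie in $\Z/p\Z \subset \Bbbk$, which proves~\eqref{it:natcat-eigenvalues}.

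Part~\eqref{it:natcat-EF} then reduces to checking the intertwining identities $\varphi \circ [E_a] = e_a \circ \varphi$ and $\varphi \circ [F_a] = f_a \circ \varphi$ on the basis $\{[\Sta(\lambda)]\}_{\lambda \in \bX^+}$. The eigenvalue calculation just described gives
\[
[E_a \Sta(\lambda)] = \sum_{\substack{i \,:\, \lambda + \chi_i \in \bX^+ \\ \lambda_i - i + 1 \,\equiv\, a \pmod{p}}} [\Sta(\lambda + \chi_i)],
\]
and applying $\varphi$ produces term-by-term the Leibniz expansion of $e_a(m_{\lambda_1} \wedge m_{\lambda_2 - 1} \wedge \cdots \wedge m_{\lambda_n - n + 1})$ computed via~\eqref{eqn:nat-action-f}; the non-dominant values of $\lambda + \chi_i$ correspond precisely to terms containing a repeated factor in the wedge, which vanish automatically. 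The case of $F_a$ is identical via~\eqref{eqn:nat-action-e}.

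For part~\eqref{it:natcat-weights}, I would rewrite the dot-action via the affine substitution $\lambda \mapsto \lambda + \varsigma$ as the linear action of $\Waff \cong \Wf \ltimes p\Z\Phi$ on $\Z^n$ by permutations and by translations in $p\Z\Phi$. Thus two dominant weights $\lambda, \mu$ are linked precisely when there exists $\sigma \in \Wf$ with $\lambda_{\sigma(i)} - \sigma(i) + 1 \equiv \mu_i - i + 1 \pmod{p}$ for every $i$ and with $\sum_i \lambda_i = \sum_i \mu_i$. A direct calculation of the $\hg$-weight of a pure wedge $m_{k_1} \wedge \cdots \wedge m_{k_n} \in \bigwedge^n \nat_p$ shows that the $\hgf^*$-component depends only on the multiset of residues $\{k_i \bmod p\}$, while (given this multiset) the $\delta$-component is determined by $\sum_i k_i$. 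Applied to $k_i = \lambda_i - i + 1$, these two conditions match the two parts of the linkage condition, so $\varphi$ sends the span of $\{[\Sta(\lambda)] : \lambda \in c \cap \bX^+\}$ into a single weight space for each orbit $c \in \bX/(\Waff, \hdot)$, and distinct orbits map to distinct weight spaces.

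The main technical ingredient is the Casimir computation in the first paragraph, which pins the eigenvalues down as the specific integers $\lambda_i - i + 1$ and $\lambda_i - i$; once these are known, everything else is a formal comparison. The computation itself is characteristic-free, taking place in the integral form of $\fg$, so the reduction of these integers modulo $p$ is automatically an element of $\Z/p\Z$.
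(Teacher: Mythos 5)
Your argument is essentially the paper's own proof: the same reduction to Weyl modules via exactness and finite length, the same Casimir identity $2\Omega = \Delta(C) - C \otimes 1 - 1 \otimes C$ yielding the eigenvalues $\lambda_i - i + 1$ and $\lambda_i - i$ (your $c(\mu)$ equals the paper's $b_\mu$), the same term-by-term comparison with~\eqref{eqn:nat-action-f}--\eqref{eqn:nat-action-e} for part~\eqref{it:natcat-EF}, and the same translation of the weight of a pure wedge into the linkage condition for part~\eqref{it:natcat-weights}. The only substantive caveat is your closing claim that the eigenvalue computation is ``characteristic-free'': the identity only pins down $2\Omega$, so for $p = 2$ it gives no information about $\Omega$ and the argument breaks down; the paper explicitly excludes this case and refers to \cite[\S 7.5.2]{CR} for the fix, and your write-up should do likewise rather than assert the computation descends to $\Z/p\Z$ automatically.
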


\begin{rmk}
Sometimes one considers the Lie algebra $\hgl_p$ without the degree
  operator $d$. However 
  for statement~\eqref{it:natcat-weights} it is essential that we include $d$ in our
  action (as can already be seen in the case $n = 1$).
\end{rmk}

\begin{proof}
For simplicity we assume that $p \neq 2$; the necessary fix to treat the case $p=2$ is discussed in \cite[\S 7.5.2]{CR}.
If we
denote by 
\[
C = \sum_{1 \le i, j \le n} e_{j,i}e_{i,j} \in
U(\fg)
\]
the Casimir element (with respect to the trace form on the
natural module) then
in $U(\fg) \otimes
U(\fg)$ we have
\begin{align}
\label{eq:Cident}
\Omega = \frac{1}{2} \bigl( \Delta(C) - C \otimes 1 - 1 \otimes C \bigr),
\end{align}
where $\Delta \colon U(\fg) \to U(\fg) \otimes
U(\fg)$ denotes the comultiplication. 

After writing
\[
C = 2 \sum_{1 \le i < j \le n} e_{j,i} e_{i,j} + \sum_{1 \le i \le n} e_{i,i}^2 +
\sum_{1 \le i<
  j \le n}(e_{i,i} - e_{j,j})
\]
we see that $C$ acts on $\Delta(\la)$ as
\[
b_\la := \sum_{1 \le i \le n} \la_i^2 + \sum_{1 \le i < j \le n} (\la_i - \la_j).
\]

We have
\[
E \Sta(\la) = V \otimes \Sta(\la) = V \otimes \bigl( \Ind_B^G(-w_0 \lambda) \bigr)^* \cong \bigl( \Ind_B^G(V^* \otimes \Bbbk_B(-w_0 \lambda)) \bigr)^*.
\]
Now the $B$-module $V^* \otimes \Bbbk_B(-w_0 \lambda)$ admits a filtration with subquotients the $B$-modules $\Bbbk_B(-w_0 \lambda-\chi_i)$, and each weight $-w_0 \lambda-\chi_i$ is either dominant or has pairing $-1$ with some simple root; in any case we have $R^1 \Ind_B^G(-w_0 \lambda-\chi_i)=0$, see~\cite[Proposition~II.4.5 and Proposition~II.5.4(a)]{jantzen}. Hence the filtration on $V^* \otimes \Bbbk_B(-w_0 \lambda)$ induces a filtration on $E \Sta(\la)$. Since $w_0$ permutes the set $\{\chi_1, \cdots, \chi_n\}$, the subquotients in this filtration are the $G$-modules
$\Sta(\la + \chi_i)$ for all $i \in \{1, \cdots, n\}$ such that $\la + \chi_i \in \bX^+$. From \eqref{eq:Cident} it follows
that $\Omega$ acts on the subquotient of $E \Sta(\la)$
isomorphic to $\Sta(\la + \chi_i)$ as multiplication by
\[
\frac{1}{2} \bigl( b_{\la + \chi_i} - b_{\chi_1} - b_{\la} \bigr) = \la_i - i + 1.
\]
In particular, all the eigenvalues of $\Xbb_{\Sta(\lambda)}$ on $E\Sta(\lambda)$ belong to $\Z/p\Z$.
Because any simple object in $\Rep(G)$ is a quotient of some $\Sta(\la)$ and
every object in $\Rep(G)$ is of finite length, we deduce that, for any $M \in \Rep(G)$, all eigenvalues
of $\Xbb_M$ on $E M$ belong to the prime field $\Z/p\Z$. Similar arguments apply to $F$, and~\eqref{it:natcat-eigenvalues} is proved.

We deduce from the above calculation that
\begin{equation}
\label{eqn:nat-action-F}
[E_a] \cdot [\Sta(\la)] = \sum_{\substack{\la \to_a \mu \\ \mu \in \bX^+}} [\Delta(\mu)]
\end{equation}
where we write $\la \to_a \mu$ if there exists an $i$ such that 
\[
\text{$\la_j
= \mu_j$  for  $i \ne j$, $\mu_i = \la_i + 1$ and $\la_i - i + 1 \equiv a
\mod p$.}
\]

Similar considerations, using the fact that
\[
-n-\frac{1}{2} \bigl( b_{\mu - \chi_i} - b_{-\chi_n} - b_{\mu} \bigr) = \mu_i - i,
\]
show that
\begin{equation}
\label{eqn:nat-action-E}
[F_a] \cdot [\Delta(\mu)] = \sum_{\substack{\la \to_a \mu \\ \la \in \bX^+}} [\Delta(\la)].
\end{equation}
The relations $\varphi \circ [E_a] = e_a \circ \varphi$ and $\varphi \circ [F_a] =
f_a \circ \varphi$ are now obtained by comparing formulas~\eqref{eqn:nat-action-f}--\eqref{eqn:nat-action-e} with~\eqref{eqn:nat-action-F}--\eqref{eqn:nat-action-E}, and~\eqref{it:natcat-EF} is proved.

It remains to check~\eqref{it:natcat-weights}. The weight of a
vector
\[
m_{\zeta_1} \wedge m_{\zeta_2} \wedge \dots \wedge m_{\zeta_n} \in
\bigwedge\hspace{-3pt}{}^n \, \nat_p
\]
is $k \delta + \sum_{i = 1}^p n_i \e_i$, where $n_i = | \{ j \mid \zeta_j \equiv i \mod p
\}|$ and $k = \sum_{i=1}^n \zeta_i^1$, where $\zeta_i^1$ is uniquely defined so that $\zeta_i = p\zeta_i^1 +
\zeta_i^0$ with $1 \le \zeta_i^0 \le p$. Now given two weights $\la, \mu \in \bX^+$, the corresponding
vectors 
\[
m_{\la_1} \wedge m_{\la_2-1} \wedge \dots \wedge m_{\la_n-n+1} \quad
\text{and} \quad 
m_{\mu_1} \wedge m_{\mu_2-1} \wedge \dots \wedge m_{\mu_n-n+1} 
\]
belong to the same weight space if and only if the $n$-tuples
$\la + \varsigma$ and $\mu + \varsigma$ can be related by permutations and
addition of weights of the form $p(\chi_i - \chi_j)$ with $1 \leq i,j \leq n$. This is precisely the
condition for $\la$ and $\mu$ to be in the same $(W,\hhdot)$-orbit 
on $\bX$.
\end{proof}

\begin{rmk}
\label{rmk:weights}
As seen in the proof of Proposition~\ref{prop:natcat}\eqref{it:natcat-weights}, the set of weights of the representation $\bigwedge^n \nat_p$ is
\[
P(\bigwedge\hspace{-3pt}{}^n \, \nat_p) = \left\{k\delta + \sum_{i=1}^p n_i \e_i : k \in \Z, \, n_i \in \Z_{\geq 0}, \, \sum_{i=1}^p n_i = n \right\},
\]
and this set is in a natural bijection with $\bX/(W,\hhdot)$; we denote this bijection by
\[
\imath_{n} \colon P(\bigwedge\hspace{-3pt}{}^n \, \nat_p) \simto \bX/(W,\hhdot).
\]
\end{rmk}





\subsection{$\Rep(G)$ as a $2$-representation}
\label{Rep-2-rep}

We now describe how the
above action of the Lie algebra $\hgl_p$ on $\C \otimes_\Z [\Rep(G)]$ can be
upgraded to a categorical action of the Khovanov--Lauda--Rouquier
$2$-category $\mathcal{U}(\hgl_p)$.

\subsubsection{The degenerate affine Hecke algebra}
\label{sss:daha}

Let $\F$ be a field, let $m \in \Z_{\geq 1}$, and let $\F[X_1, \cdots , X_m]$ denote a polynomial
ring, acted upon naturally by the symmetric group $S_m$. Let $s_i \in
S_m$ denote the simple transposition $(i,i+1)$, and let 
\[
\partial_i (f ) := \frac{f - s_i(f)}{X_{i+1} - X_i}
\]
denote the Demazure operator. (Note that this is a slightly non-standard definition.) Recall that the degenerate affine Hecke algebra
$\overline{H}_m$ is the algebra which:
\begin{enumerate}
\item  is isomorphic to $\F S_m \otimes_\F
\F[X_1, \cdots, X_m]$ as an $\F$-vector space;
\item has $\F S_m$ and $\F[X_1, \cdots, X_m]$ as subalgebras;
\item satisfies the relations
\[
T_i \cdot f-s_i(f) \cdot T_i   = \partial_i (f)
\]
for all $f \in \F[X_1, \cdots, X_m]$ and $1 \le i \le m-1$,
where $ T_i$ denotes the element $s_i \in S_m$ viewed in $\overline{H}_m$.
\end{enumerate}

\begin{rmk}
\begin{enumerate}
\item
  It is perhaps more aesthetically pleasing to define $\overline{H}_m$ by
  generators and relations (in $T_i$'s for $1 \le i \le m-1$ and
  $X_i$'s for $1 \le i \le m$). For instance we have
  $T_i \cdot X_{i+1} = X_i \cdot T_i + 1$.
\item
  The degenerate affine Hecke algebra has a ``polynomial'' module
\[
\overline{H}_m \otimes_{\F S_m} \F = \F[X_1, \cdots, X_m]
\]
where $f$ acts by multiplication and $T_i$ acts as
\[
f \mapsto s_i(f) + \partial_i(f).
\]
\end{enumerate}
\end{rmk}

We can regard any $\overline{H}_m$-module as a quasi-coherent
sheaf on the affine space $\Spec \F[X_1, \cdots, X_m] = \mathbb{A}_\F^m$. Given such a module $M$, we
denote by $M_a$ the submodule consisting of sections set-theoretically supported in
the point $a \in \mathbb{A}_\F^n$. Let $\Gamma \subset \F$ denote the prime subring. We denote by $\overline{\cC}_\Gamma$ the
full subcategory of the category of $\overline{H}_m$-modules whose objects are the modules $M$ such that
\[
M = \bigoplus_{a \in \Gamma^m} M_a.
\]
(In other words, $\overline{\cC}_\Gamma$ is the category of
$\overline{H}_m$-modules which are the direct sum of their
generalized eigenspaces for each $X_i$, and such that all eigenvalues for all
$X_i$ belong to $\Gamma$.)


\subsubsection{Unravelling Rouquier's notation}
\label{subsec:R}

Recall that we have fixed
a field $\F$. We view the prime subring $\Gamma \subset \F$ as a quiver with arrows $i \to i+1$ for all $i \in
\Gamma$. (Hence $\Gamma$ is of type $A_\infty$ if $\F$ is of characteristic
$0$, and is of type $\widetilde{A}_{\ell-1}$ if $\F$ is of characteristic
$\ell>0$.)

Let us write $i \rel j$ if $i \to j$ or $j \to
i$, and $i \norel j$ if neither $i \to j$ nor $j \to
i$. Then if
we specialize the notation of \cite[\S\S 3.2.3--3.2.4]{R2KM} down to our situation we
have:
\begin{enumerate}
\item $I = \widetilde{I} = \Gamma$ and $a = \id$;
\item 
\[
i \cdot j =
\begin{cases}
  2 & \text{if $i = j$;} \\
  -1 & \text{if $i \rel j$;} \\
  0 & \text{otherwise}
\end{cases}
\]
(the Cartan matrix);
\item 
\[
Q_{ij} (u,v)=
\begin{cases}
  0 & \text{if $i = j$;} \\
  1 & \text{if $i \ne j$ and $i \norel j$;} \\
  v-u & \text{if $i \to j$;} \\
  u-v & \text{if $j \to i$.}
\end{cases}
\]
\end{enumerate}

We define scalars $t_{ij}$
for $i, j \in \Gamma$ with $i \ne j$ as follows:
\begin{gather*}
t_{ij} = \begin{cases} -1 & \text{if $i \to j$;}\\
1 & \text{if $j \to i$ or $i \norel j$.}
     \end{cases} \\
\end{gather*}

\subsubsection{The KLR algebra}
\label{sss:def-KLR}

Here we recall the definition of the quiver Hecke
algebra (or KLR algebra) associated with $\Gamma$, see~\cite[\S 3.2.1]{R2KM}. As noted in \cite[Remark 3.6]{R2KM} it
is most natural to view the quiver Hecke algebra as a category; this
is the approach we take here.

Let $\Gamma^m$ denote the set of $m$-tuples of elements of
$\Gamma$ with its natural $S_m$-action. Consider the $\F$-linear category $H_m(\Gamma)$ with objects $\nu
\in \Gamma^m$ and morphisms generated by morphisms
\begin{align*}
x_{z,\nu} \in \End(\nu)& \quad \text{for $1 \le z \le m$,} \\
\tau_{z,\nu} \in \Hom(\nu, s_z\nu)& \quad \text{for $1 \le z < m$,}
\end{align*}
satisfying the following relations:
\begin{equation}
\label{eqn:rel-KLR-1}
x_{z,\nu} x_{z',\nu} = x_{z',\nu} x_{z,\nu};
\end{equation}
\begin{equation}
\label{eqn:rel-KLR-2}
\tau_{z,s_z(\nu)} \tau_{z,\nu} = \begin{cases}
0 & \text{if $\nu_z=\nu_{z+1}$}; \\
1_{\nu} & \text{if $\nu_z \neq \nu_{z+1}$ and $\nu_z \norel \nu_{z+1}$;} \\
t_{\nu_z, \nu_{z+1}} x_{z,\nu} + t_{\nu_{z+1}, \nu_z} x_{z+1, \nu} & \text{if $\nu_z \rel \nu_{z+1}$;}
\end{cases}
\end{equation}
\begin{equation}
\label{eqn:rel-KLR-3}
\tau_{z,s_{z'}(\nu)} \tau_{z',\nu} = \tau_{z', s_z(\nu)} \tau_{z,\nu} \quad \text{if $|z-z'|>1$;}
\end{equation}
\begin{multline}
\label{eqn:rel-KLR-4}
\tau_{z+1, s_z s_{z+1}(\nu)}\tau_{z,s_{z+1}(\nu)} \tau_{z+1, \nu} - \tau_{z,s_{z+1} s_z(\nu)} \tau_{z+1, s_z(\nu)} \tau_{z,\nu} \\
= \begin{cases}
t_{\nu_z, \nu_{z+1}} \cdot 1_\nu & \text{if $\nu_z=\nu_{z+2}$ and $\nu_{z+2} \rel \nu_{z+1}$;} \\
0 & \text{otherwise;}
\end{cases}
\end{multline}
\begin{equation}
\label{eqn:rel-KLR-5}
\tau_{z,\nu} x_{z',\nu} - x_{s_z(z'),s_z(\nu)} \tau_{z,\nu} = \begin{cases}
-1_{\nu} & \text{if $z=z'$ and $\nu_z = \nu_{z+1}$;} \\
1_{\nu} & \text{if $z'=z+1$ and $\nu_z=\nu_{z+1}$;} \\
0 & \text{otherwise.}
\end{cases}
\end{equation}
(Here, $1_\nu$ is the identity morphism of the object $\nu$.)

\subsubsection{Brundan--Kleshchev--Rouquier equivalence}

A representation of $H_m(\Gamma)$ is by
definition an $\F$-linear functor from $H_m(\Gamma)$ to the category of $\F$-vector spaces. In more concrete terms it
consists of 
\begin{itemize}
\item
an $\F$-vector space $V_\nu$ for each $\nu \in \Gamma^m$;
\item
endomorphisms $x_{z,\nu}$ of $V_\nu$ for all $\nu \in \Gamma^m$ and $1 \le z \le m$;
\item
morphisms
$\tau_{z,\nu} \colon V_{\nu} \to V_{s_z(\nu)}$ for all $\nu \in \Gamma^m$ and $1 \le z < m$,
\end{itemize}
satisfying the relations from~\S\ref{sss:def-KLR}.

We write $H_m(\Gamma)\Mod_0$ for the full subcategory of the category of representations
of $H_m(\Gamma)$ consisting of objects on which $x_{z,\nu}$ is locally
nilpotent for all $\nu$ and $1 \le z \le m$. Recall that $\overline{\cC}_\Gamma$ is the category of modules over
the degenerate affine Hecke algebra $\overline{H}_m$ 
which are direct sums of their generalized eigenspaces for each $X_i$, with eigenvalues in $\Gamma$;
see~\S\ref{sss:daha}. Given $M \in \overline{\cC}_\Gamma$ and $\nu \in \Gamma^m$
we denote by $M_\nu$ the generalized $\nu$-eigenspace:
\[
M_\nu := \{ m \in M \mid (X_z - \nu_z)^Nm = 0 
\text{ for all $1 \le z \le m$ and } N \gg 0 \}.
\]

The following theorem is due to Brundan-Kleshchev~\cite{BK} and Rouquier~\cite[Theorem~3.16]{R2KM}.

\begin{thm}
\label{thm:BKR}
There exists an equivalence of categories
\[
\overline{\cC}_\Gamma \simto 
H_m(\Gamma)\Mod_0
\]
which associates to $M \in \overline{\cC}_\Gamma$ the representation $V$ defined by
\begin{enumerate}
\item $V_{\nu} = M_\nu$  for all $\nu \in \Gamma^m$;
\item
$x_{z,\nu} := X_z - \nu_z$ for all $\nu \in \Gamma^m$ and $1 \le z \le m$; 
\item
\label{it:BKR-functor}
$\tau_{z,\nu}$ given by the formulas
\[
  \tau_{z,\nu} := \begin{cases}
  \frac{1}{1 + X_z - X_{z+1}}(T_z-1) & 
    \text{if $\nu_z = \nu_{z+1}$;} \\
(X_z - X_{z+1})T_z + 1 & \text{if $\nu_{z+1} = \nu_z +1$;}\\
\frac{X_z - X_{z+1}}{1 + X_z - X_{z+1}}(T_z-1) + 1 & \text{otherwise}
\end{cases}
\]
for all $\nu \in \Gamma^m$ and $1 \le z < m$.
\end{enumerate}
\end{thm}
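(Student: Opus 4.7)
The plan is to define the functor $\Phi \colon \overline{\cC}_\Gamma \to H_m(\Gamma)\Mod_0$ by the formulas in the statement, verify that it is well-defined (i.e.\ that the formulas make sense and the KLR relations hold), and then construct a quasi-inverse. First I would check that the formulas make sense: on $M_\nu$, the operator $X_z - X_{z+1}$ has constant term $\nu_z - \nu_{z+1}$ plus a locally nilpotent piece, so $1 + X_z - X_{z+1}$ is invertible on $M_\nu$ as soon as $\nu_{z+1} \neq \nu_z + 1$, which is precisely the case in which it appears in the denominator. Next I would check that $\tau_{z,\nu}$ lands in $M_{s_z(\nu)}$; this follows from the identities $T_z X_z = X_{z+1} T_z - 1$ and $T_z X_{z+1} = X_z T_z + 1$ in $\overline{H}_m$, which imply that for $v \in M_\nu$ the element $T_z v - s_z(v_{\text{const}})$ (suitably interpreted) lies in $M_{s_z(\nu)}$ after the appropriate correction from the formulas, the roles of the three cases being designed exactly to produce a clean answer.

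The heart of the argument is the verification of the five KLR relations~\eqref{eqn:rel-KLR-1}--\eqref{eqn:rel-KLR-5}. Relation~\eqref{eqn:rel-KLR-1} is immediate from the commutativity of the $X_i$. Relation~\eqref{eqn:rel-KLR-3} for $|z-z'| > 1$ follows because $T_z$ and $T_{z'}$ commute in $\overline{H}_m$ in that range and the scalar correction factors involve disjoint variables. Relation~\eqref{eqn:rel-KLR-5} reduces to a direct computation using $T_z X_{z'} = X_{z'} T_z$ for $z' \notin \{z, z+1\}$ together with the two skew commutation relations above. The genuinely delicate points are the quadratic relation~\eqref{eqn:rel-KLR-2} and the cubic braid relation~\eqref{eqn:rel-KLR-4}. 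For the quadratic relation one computes $\tau_{z,s_z(\nu)} \tau_{z,\nu}$ case by case, using $T_z^2 = 1$ in $\overline{H}_m$ together with the telescoping identity $T_z (1 + X_z - X_{z+1}) = (1 + X_{z+1} - X_z) T_z$; the three output cases $0$, $1_\nu$, and the polynomial expression match after substituting $t_{ij}$ from~\S\ref{subsec:R}. The cubic relation is the main obstacle: it requires expanding $\tau_{z+1} \tau_z \tau_{z+1} - \tau_z \tau_{z+1} \tau_z$ on each generalized eigenspace of $(X_z, X_{z+1}, X_{z+2})$ using the degenerate braid relation $T_{z+1} T_z T_{z+1} = T_z T_{z+1} T_z$ and the skew commutation relations, and checking that the nontrivial contribution arises exactly when $\nu_z = \nu_{z+2}$ and $\nu_{z+2} \rel \nu_{z+1}$.

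For essential surjectivity and full faithfulness, I would construct an explicit inverse $\Psi \colon H_m(\Gamma)\Mod_0 \to \overline{\cC}_\Gamma$. Given $V = \bigoplus_\nu V_\nu$, set $M = \bigoplus V_\nu$, let $X_z$ act on $V_\nu$ as $x_{z,\nu} + \nu_z$, and define $T_z$ by solving the three defining formulas for $\tau_{z,\nu}$. In the first and third cases one simply inverts $1 + X_z - X_{z+1}$, which is invertible since $\nu_z \neq \nu_{z+1} - 1$; in the second case ($\nu_{z+1} = \nu_z + 1$) one writes $T_z = \tau_{z,\nu} \cdot (X_z - X_{z+1})^{-1} + \text{correction}$ after recognizing that $\tau_{z,\nu} - 1$ is divisible by $X_z - X_{z+1}$, which follows from a careful inspection of the formulas and the relations just verified. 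Once $\Psi$ is defined, one checks that the defining relations of $\overline{H}_m$ hold on $M$ (again a case-by-case verification parallel to the KLR relation verification), and that $\Phi \circ \Psi$ and $\Psi \circ \Phi$ are naturally isomorphic to the identities, which is automatic from the fact that the formulas in~\eqref{it:BKR-functor} are invertible as rational expressions.

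The main obstacle, as indicated, is the cubic relation~\eqref{eqn:rel-KLR-4}: unlike the other relations, it involves all three variables $X_z, X_{z+1}, X_{z+2}$, and the scalar $t_{\nu_z, \nu_{z+1}}$ must match on the nose, which forces the precise choice of signs made in~\S\ref{subsec:R}. A minor secondary technical point is that one must work with the completion of $\overline{H}_m$ along each $\nu$-eigenspace in order to make sense of the rational expressions; this is legitimate because local nilpotence of $x_{z,\nu}$ and the fact that the modules in $\overline{\cC}_\Gamma$ are direct sums of generalized eigenspaces together guarantee convergence of all relevant geometric series.
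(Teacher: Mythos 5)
The paper does not prove Theorem~\ref{thm:BKR} at all; it is cited to Brundan--Kleshchev and to Rouquier's~\cite[Theorem~3.16]{R2KM}, and the surrounding remark in the paper explicitly calls the verification ``a highly non-trivial calculation'' to be found in those references. So there is no internal proof to compare against, but I can assess your sketch on its own terms.

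Your overall architecture (define the functor, verify the KLR relations~\eqref{eqn:rel-KLR-1}--\eqref{eqn:rel-KLR-5} case by case, then invert the formulas) is the correct one and is how the cited references proceed. However, you should be warned that one of the intermediate identities you quote is false. You write the ``telescoping identity'' $T_z(1 + X_z - X_{z+1}) = (1 + X_{z+1} - X_z)T_z$, but the degenerate affine Hecke relation $T_z f = s_z(f)T_z + \partial_z(f)$ gives $\partial_z(1 + X_z - X_{z+1}) = -2$ (with the paper's normalization $\partial_i(f) = (f - s_i(f))/(X_{i+1}-X_i)$), so the correct identity is
\[
T_z(1 + X_z - X_{z+1}) = (1 + X_{z+1} - X_z)T_z - 2.
\]
The constant $-2$ is not a nuisance you can drop: it is precisely what produces the nonzero right-hand side $t_{\nu_z,\nu_{z+1}}x_{z,\nu} + t_{\nu_{z+1},\nu_z}x_{z+1,\nu}$ (resp.\ the $1_\nu$) in the quadratic relation~\eqref{eqn:rel-KLR-2}. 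As you have stated it, the case computation for~\eqref{eqn:rel-KLR-2} would produce $0$ in all cases, which is wrong. The rest of your outline --- the invertibility discussion, the reduction of~\eqref{eqn:rel-KLR-3} and~\eqref{eqn:rel-KLR-5} to the skew commutation relations, the strategy for the cubic relation, and the reconstruction of $T_z$ on each block when defining the quasi-inverse --- is plausibly correct as a plan, though of course all the delicate sign bookkeeping with the $t_{ij}$'s of~\S\ref{subsec:R} still needs to be done. For a correct account you should consult the cited sources rather than try to redo the calculation from scratch.
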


\begin{rmk} 
\begin{enumerate}
\item
There seems to be a typo in the third formula in~\cite[Theorem~3.16]{R2KM}. We follow the formulas of~\cite[Proposition~3.15]{R2KM}.
\item
This theorem is a highly non-trivial 
  calculation. It is not even obvious that the formulas in~\eqref{it:BKR-functor}
  do indeed give morphisms $\tau_{z,\nu} \colon M_\nu \to M_{s_i(\nu)}$.
\end{enumerate}
\end{rmk}

\subsubsection{The $2$-Kac--Moody algebra}
\label{sss:2KM-def}

From now on we come back to the setting of~\S\ref{ss:cat-GLn-Groth-group}.
Recall in particular the affine Lie
algebra $\hgl_p$ defined in \S \ref{slp}. To the triple
\[
(\hg, \, \{
\a_i : i \in \{0, \cdots, p-1\} \} \subset \hg^*, \, \{ h_i : i \in \{0, \cdots, p-1\}\} \subset
\hg)
\]
Rouquier~\cite{R2KM} has associated a strict $\Bbbk$-linear additive $2$-category $\mathcal{U}(\hgl_p)$ categorifying (an idempotented form of)
the enveloping algebra of $\hgl_p$. The definition of this $2$-category is recalled in~\cite[Definition~1.1]{Brundan}, and we will follow the (diagrammatic) notation from~\cite{Brundan}. This definition depends on the choice of some
additional data; we choose them as follows:
\begin{enumerate}
\item our ground field is chosen to be $\Bbbk$;
\item the scalars $t_{ij}$ are given as in~\S\ref{subsec:R};
\item the scalars $s_{ij}^{pq}$ are chosen identically zero.
\end{enumerate}
The category $\mathcal{U}(\hgl_p)$ has objects $P$ (the weights of $\hgl_p$, as defined in~\S\ref{slp}), generating $1$-morphisms $E_i 1_\lambda \colon \lambda \to \lambda+\alpha_i$ (which we will depict as an upward arrow decorated by $\lambda$ in the right region) and $F_i 1_\lambda \colon \lambda \to \lambda-\alpha_i$ (which we will depict as a downward arrow decorated by $\lambda$ in the right region), and generating $2$-morphisms
\[
\begin{array}{c}
\begin{tikzpicture}[baseline = 0]
\node at (0.2,.2) {\tiny $\lambda$};
	\draw[<-,thick] (0,.6) to (0,-.2);
  \node at (0,-.4) {\tiny $i$};
      \node at (0,0.2) {$\bullet$};
\end{tikzpicture}
\end{array}, \quad
\begin{array}{c}
\begin{tikzpicture}[baseline = 0]
	\draw[<-,thick] (0.25,.6) to (-0.25,-.2);
	\draw[->,thick,green] (0.25,-.2) to (-0.25,.6);
\node at (0.3,.2) {\tiny $\lambda$};
  \node at (-0.25,-.4) {\tiny $i$};
   \node at (0.25,-.4) {\tiny $j$};
\end{tikzpicture}
\end{array}, \quad
\begin{array}{c}
      \begin{tikzpicture}[baseline = 0]
        \draw[->,thick] (-0.3,0.3) to[out=-90,in=180] (0,-0.2) to[out=0,in=-90] (0.3,0.3);
        \node at (0.5,.2) {\tiny $\lambda$};
          \node at (-0.3,.5) {\tiny $i$};
      \end{tikzpicture}
\end{array}, \quad
\begin{array}{c}
      \begin{tikzpicture}[baseline = 0]
        \draw[->,thick] (-0.3,-0.2) to[out=90,in=180] (0,0.3) to[out=0,in=90] (0.3,-0.2);
        \node at (0.4,.2) {\tiny $\lambda$};
          \node at (-0.3,-.4) {\tiny $i$};
      \end{tikzpicture}
\end{array}.
\]
These $2$-morphisms are required to satisfy a number of relations described in full in~\cite[Definition~1.1]{Brundan}. Here we recall only the following relations.
\begin{equation}
\label{eqn:rel-2KM-1}
\begin{array}{c}
\begin{tikzpicture}[baseline = 0]
	\draw[<-,thick] (0.25,.6) to (-0.25,-.2);
	\draw[->,thick,green] (0.25,-.2) to (-0.25,.6);
  \node at (-0.25,-.4) {\tiny $i$};
   \node at (0.25,-.4) {\tiny $j$};
  \node at (.3,.25) {\tiny $\lambda$};
      \node at (-0.13,-0.02) {$\bullet$};
\end{tikzpicture}
\end{array}
-
\begin{array}{c}
\begin{tikzpicture}[baseline = 0]
	\draw[<-,thick] (0.25,.6) to (-0.25,-.2);
	\draw[->,thick,green] (0.25,-.2) to (-0.25,.6);
  \node at (-0.25,-.4) {\tiny $i$};
   \node at (0.25,-.4) {\tiny $j$};
  \node at (.3,.25) {\tiny $\lambda$};
      \node at (0.13,0.42) {$\bullet$};
\end{tikzpicture}
\end{array}
=
\begin{array}{c}
\begin{tikzpicture}[baseline = 0]
	\draw[<-,thick] (0.25,.6) to (-0.25,-.2);
	\draw[->,thick,green] (0.25,-.2) to (-0.25,.6);
  \node at (-0.25,-.4) {\tiny $i$};
   \node at (0.25,-.4) {\tiny $j$};
  \node at (.3,.25) {\tiny $\lambda$};
      \node at (-0.13,0.42) {$\color{green}\bullet$};
\end{tikzpicture}
\end{array}
-
\begin{array}{c}
\begin{tikzpicture}[baseline = 0]
 	\draw[<-,thick] (0.25,.6) to (-0.25,-.2);
	\draw[->,thick,green] (0.25,-.2) to (-0.25,.6);
  \node at (-0.25,-.4) {\tiny $i$};
   \node at (0.25,-.4) {\tiny $j$};
  \node at (.3,.25) {\tiny $\lambda$};
     \node at (0.13,-0.02) {$\color{green}\bullet$};
 \end{tikzpicture}
\end{array}
=
\begin{cases}
\begin{array}{c}
\begin{tikzpicture}[baseline = 0]
 	\draw[->,thick,green] (0.08,-.3) to (0.08,.4);
	\draw[->,thick] (-0.28,-.3) to (-0.28,.4);
   \node at (-0.28,-.4) {\tiny $i$};
   \node at (0.08,-.4) {\tiny $j$};
 \node at (.28,.06) {\tiny $\lambda$};
\end{tikzpicture}
\end{array}
&\text{if $i=j$;}\\
0&\text{otherwise,}\\
\end{cases}
\end{equation}
\begin{equation}
\label{eqn:rel-2KM-2}
\begin{array}{c}
\begin{tikzpicture}[baseline = 0]
	\draw[->,thick] (0.28,.4) to[out=90,in=-90] (-0.28,1.1);
	\draw[->,thick,green] (-0.28,.4) to[out=90,in=-90] (0.28,1.1);
	\draw[-,thick,green] (0.28,-.3) to[out=90,in=-90] (-0.28,.4);
	\draw[-,thick] (-0.28,-.3) to[out=90,in=-90] (0.28,.4);
  \node at (-0.28,-.45) {\tiny $i$};
  \node at (0.28,-.45) {\tiny $j$};
   \node at (.43,.4) {\tiny $\lambda$};
\end{tikzpicture}
\end{array}
=
\begin{cases}
0&\text{if $i=j$;}\\
\begin{array}{c}
\begin{tikzpicture}[baseline = 0]
	\draw[->,thick,green] (0.08,-.3) to (0.08,.4);
	\draw[->,thick] (-0.28,-.3) to (-0.28,.4);
   \node at (-0.28,-.4) {\tiny $i$};
   \node at (0.08,-.4) {\tiny $j$};
   \node at (.3,.05) {\tiny $\lambda$};
\end{tikzpicture}
\end{array}&\text{if $i \neq j$ and $i \norel j$;}\\
t_{ij}
\begin{array}{c}
\begin{tikzpicture}[baseline = 0]
	\draw[->,thick,green] (0.08,-.3) to (0.08,.4);
	\draw[->,thick] (-0.28,-.3) to (-0.28,.4);
   \node at (-0.28,-.4) {\tiny $i$};
   \node at (0.08,-.4) {\tiny $j$};
   \node at (.3,-.05) {\tiny $\lambda$};
      \node at (-0.28,0.05) {$\bullet$};
\end{tikzpicture}
\end{array}
+
t_{ji}
\begin{array}{c}
\begin{tikzpicture}[baseline = 0]
	\draw[->,thick,green] (0.08,-.3) to (0.08,.4);
	\draw[->,thick] (-0.28,-.3) to (-0.28,.4);
   \node at (-0.28,-.4) {\tiny $i$};
   \node at (0.08,-.4) {\tiny $j$};
   \node at (.3,-.05) {\tiny $\lambda$};
     \node at (0.08,0.05) {$\color{green}\bullet$};
\end{tikzpicture}
\end{array}
&\text{if $i \rel j$,}\\
\end{cases}
\end{equation}
\begin{equation}
\label{eqn:rel-2KM-3}
\begin{array}{c}
\begin{tikzpicture}[baseline = 0]
	\draw[<-,thick] (0.45,.8) to (-0.45,-.4);
	\draw[->,thick,red] (0.45,-.4) to (-0.45,.8);
        \draw[-,thick,green] (0,-.4) to[out=90,in=-90] (-.45,0.2);
        \draw[->,thick,green] (-0.45,0.2) to[out=90,in=-90] (0,0.8);
   \node at (-0.45,-.6) {\tiny $i$};
   \node at (0,-.6) {\tiny $j$};
  \node at (0.45,-.6) {\tiny $k$};
   \node at (.5,-.1) {\tiny $\lambda$};
\end{tikzpicture}
\end{array}
-
\begin{array}{c}
\begin{tikzpicture}[baseline = 0]
	\draw[<-,thick] (0.45,.8) to (-0.45,-.4);
	\draw[->,thick,red] (0.45,-.4) to (-0.45,.8);
        \draw[-,thick,green] (0,-.4) to[out=90,in=-90] (.45,0.2);
        \draw[->,thick,green] (0.45,0.2) to[out=90,in=-90] (0,0.8);
   \node at (-0.45,-.6) {\tiny $i$};
   \node at (0,-.6) {\tiny $j$};
  \node at (0.45,-.6) {\tiny $k$};
   \node at (.5,-.1) {\tiny $\lambda$};
\end{tikzpicture}
\end{array}
=
\begin{cases}
t_{ij}
\!
\begin{array}{c}
\begin{tikzpicture}[baseline = 0]
	\draw[->,thick,red] (0.44,-.3) to (0.44,.4);
	\draw[->,thick,green] (0.08,-.3) to (0.08,.4);
	\draw[->,thick] (-0.28,-.3) to (-0.28,.4);
   \node at (-0.28,-.4) {\tiny $i$};
   \node at (0.08,-.4) {\tiny $j$};
   \node at (0.44,-.4) {\tiny $k$};
  \node at (.6,-.1) {\tiny $\lambda$};
\end{tikzpicture}
\end{array}
&\text{if $i=k$ and $k \rel j$;}\\
0&\text{otherwise.}
\end{cases}
\end{equation}

\begin{rmk}
\begin{enumerate}
\item
As usual, we will write $E_i E_j 1_\lambda$ for $(E_i 1_{\lambda+\alpha_j}) \circ (E_j 1_\lambda)$, and similarly for other compositions of functors $E_k 1_\mu$ and $F_k 1_\mu$.
\item
Unless otherwise indicated, our use of colors in this context is just to remind the reader that the labels of the strands might be different (but not necessarily).
\end{enumerate}
\end{rmk}

By definition, a \emph{representation} of $\mathcal{U}(\hgl_p)$ is a $\Bbbk$-linear additive functor from $\mathcal{U}(\hgl_p)$ to the $2$-category of $\Bbbk$-linear additive categories. More concretely, a representation consists of
\begin{itemize}
\item
for each $\lambda \in P$, a $\Bbbk$-linear additive category $\mathscr{C}_\lambda$;
\item
for each $\lambda \in P$ and each $i \in \{0, \cdots, p-1\}$, additive functors
\[
E_i 1_\lambda \colon \mathscr{C}_\lambda \to \mathscr{C}_{\lambda+\alpha_i} \quad \text{and} \quad F_i 1_\lambda \colon \mathscr{C}_\lambda \to \mathscr{C}_{\lambda-\alpha_i};
\]
\item
for each $\lambda \in P$ and $i,j \in \{0, \cdots, p-1\}$, morphisms of functors
\[
x \colon E_i 1_\lambda \to E_i 1_\lambda, \ \tau \colon E_i E_j 1_\lambda \to E_j E_i 1_\lambda, \ \eta \colon \id_{\mathscr{C}_\lambda} \to F_i E_i 1_\lambda, \ \epsilon \colon E_i F_i 1_\lambda \to \id_{\mathscr{C}_\lambda};
\]
\end{itemize}
these morphisms of functors satisfying the relations from~\cite[Definition~1.1]{Brundan}.




\subsubsection{Action of the degenerate affine Hecke algebra on powers of $E$}

The functor $EE = V \otimes V \otimes (-)$ on $\Rep(G)$ has a natural endomorphism
$\Tbb$ given by
\[
\Tbb_M \colon \left\{ 
\begin{array}{ccc}
EEM & \to & EEM \\
v \otimes v' \otimes m & \mapsto & v' \otimes v
\otimes m
\end{array}
\right. .
\]
Recall the endomorphism $\Xbb \in \End(E)$ constructed in~\S\ref{ss:cat-GLn-Groth-group}. The following relation in $\End(EE)$ is fundamental (see
\cite[Lemma~7.21]{CR}):
\begin{equation}
\label{eq:deghecke}
\Tbb_M \circ (V \otimes \Xbb_M) - \Xbb_{V \otimes M} \circ \Tbb_M = -(V \otimes V \otimes M).
\end{equation}
Recall also the degenerate affine Hecke algebra $\overline{H}_m$ defined in~\S\ref{sss:daha}, which we consider here in the case $\F=\bk$.

\begin{lem}
\label{lem:dhaction}
For any $m \geq 1$, the assignment
\begin{align*}
X_i \mapsto V^{\otimes m-i} \otimes \Xbb_{V^{\otimes i-1} \otimes M} \quad
&\text{for $1 \le i \le m$,}\\
T_i \mapsto V^{\otimes m-i-1} \otimes \Tbb_{V^{\otimes i-1} \otimes M} \quad
&\text{for $1 \le i < m$}
\end{align*}
extends to an algebra morphism $\overline{H}_m \to \End(E^m)$.
\end{lem}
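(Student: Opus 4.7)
The plan is to verify the three families of defining relations of $\overline{H}_m$ one by one: the symmetric group relations among the $T_i$'s, the commutation relations $X_i X_j = X_j X_i$, and the mixed relations $T_i \cdot f - s_i(f) \cdot T_i = \partial_i(f)$. The first family is immediate: the operator $V^{\otimes m-i-1} \otimes \Tbb_{V^{\otimes i-1} \otimes M}$ is nothing but the swap of two adjacent tensor factors of $V$ in $V^{\otimes m} \otimes M$, so the relations $T_i^2 = \id$, $T_i T_{i+1} T_i = T_{i+1} T_i T_{i+1}$, and $T_i T_j = T_j T_i$ for $|i-j| > 1$ hold in a manifest way.

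For the commutation $X_i X_j = X_j X_i$ with $i < j$, the point is that both operators are constructed from the single natural transformation $\Xbb \colon E \to E$ by whiskering with powers of $E$: one has $X_i = E^{m-i}(\Xbb_{E^{i-1} M})$ for all $i$. The commutation then follows purely from naturality of $\Xbb$: applied to the endomorphism $E^{j-i-1}(\Xbb_{E^{i-1} M})$ of $E^{j-1} M$, naturality yields
\[
\Xbb_{E^{j-1} M} \circ E\bigl(E^{j-i-1}(\Xbb_{E^{i-1} M})\bigr)
= E\bigl(E^{j-i-1}(\Xbb_{E^{i-1} M})\bigr) \circ \Xbb_{E^{j-1} M},
\]
and applying $E^{m-j}$ to both sides gives $X_j X_i = X_i X_j$.

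It remains to establish the mixed relations. By linearity in $f$ it suffices to treat $f = X_j$, and the cases $f = X_{i+1}$ and $f = X_i$ are exchanged by conjugating with $T_i$ (using $T_i^2 = \id$), so we are reduced to two claims: that $T_i X_j = X_j T_i$ for $j \notin \{i, i+1\}$, and that $T_i X_i - X_{i+1} T_i = -\id$. For $[T_i, X_j] = 0$ with $j < i$, we write both operators as $E^{m-i-1}$ applied to endomorphisms of $E^{i+1} M$: for $T_i$ this is $\Tbb_{E^{i-1} M}$, while for $X_j$ (using $i - j \geq 1$) it takes the form $E^2\bigl(E^{i-j-1}(\Xbb_{E^{j-1} M})\bigr)$, and the two commute by naturality of $\Tbb \colon E^2 \to E^2$. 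The case $j > i+1$ is treated symmetrically, writing both operators as $E^{m-j}$ applied to endomorphisms of $E^j M$ and using naturality of $\Xbb$. Finally, the identity $T_i X_i - X_{i+1} T_i = -\id$ is obtained from equation~\eqref{eq:deghecke} applied to $V^{\otimes i-1} \otimes M$, then whiskered by the functor $V^{\otimes m-i-1} \otimes (-)$. The only truly non-formal input is equation~\eqref{eq:deghecke} itself; everything else is bookkeeping in the $2$-category of endofunctors, so there is no serious obstacle.
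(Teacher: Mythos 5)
Your proof is correct and takes essentially the same route as the paper's, which is much terser: it declares the symmetric-group relations and the commutations among the $X_i$ immediate and reduces everything to the single relation $X_{i+1}T_i - T_iX_i = 1$, deduced from~\eqref{eq:deghecke} exactly as you do. One small repair: the reduction to $f = X_j$ is not "by linearity" alone (linearity only reaches linear combinations of monomials, not products); one also uses that both sides of $T_i f - s_i(f)T_i = \partial_i(f)$ satisfy the same twisted Leibniz rule, so the relation propagates from the generators $X_j$ to all of $\F[X_1,\dots,X_m]$ — a standard one-line step that the paper's remark on the generators-and-relations presentation of $\overline{H}_m$ implicitly relies on as well.
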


\begin{proof}
  The braid relations
\[
T_i T_{i+1} T_i = T_{i+1} T_i T_{i+1} \quad \text{and} \quad T_iT_j
  = T_j T_i \quad \text{if $| i - j | > 1$}
  \]
are immediate, as are the relations
  $T_i^2 = 1$ for $1 \le i \le n$ and  $X_i X_j = X_j X_i$ for all $1
  \le i, j \le n$. Hence we only need to check the relation
$T_i X_{i+1} = X_i T_i + 1$, or equivalently (since $T_i^2=1$) the relation $X_{i+1} T_i - T_i X_i = 1$.
This equality follows from the definitions and~\eqref{eq:deghecke}.
\end{proof}

\subsubsection{$2$-Kac--Moody action on $\Rep(G)$}

Let $M \in \Rep(G)$. 
It follows from Proposition~\ref{prop:natcat}\eqref{it:natcat-eigenvalues} that, for $m \geq 1$, if we view (the underlying vector space of)
$E^m M$ as a module over $\overline{H}_m$ via Lemma~\ref{lem:dhaction},
then $E^m M \in \overline{\cC}_\Gamma$. In particular we can use
Theorem~\ref{thm:BKR} to deduce that, for any $m \geq 1$, the assignment
\[
(\Z/p\Z)^m \ni \nu=(\nu_1, \cdots, \nu_m) \mapsto E_{\nu_m} E_{\nu_{m-1}} \cdots E_{\nu_1}M
\]
can be upgraded to give an object in $H_m(\Gamma)\Mod_0$. Note that, in this construction, for any $m \geq 1$ the action of $x_{i,\nu}$ on $E_{\nu_m} E_{\nu_{m-1}} \cdots E_{\nu_1}M$ is induced by the endomorphism of $E_{\nu_i}$ given by the case $m=1$ and $\nu=\nu_i$, and for any $m \geq 2$ the action of $\tau_{i,\nu}$ on $E_{\nu_m} E_{\nu_{m-1}} \cdots E_{\nu_1}M$ is induced by the morphism $E_{\nu_{i+1}} E_{\nu_{i}} \to E_{\nu_i} E_{\nu_{i+1}}$ given by the case $m=2$, $\nu=(\nu_i,\nu_{i+1})$.

Recall the bijection $\imath_{n}$ from Remark~\ref{rmk:weights}. To each $\lambda \in P$ we associate a category as follows:
\[
\lambda \mapsto \begin{cases}
\Repp_{\imath_{n}(\lambda)}(G) & \text{if $\lambda \in P(\bigwedge^n \nat_p)$;} \\
0 & \text{otherwise.}
\end{cases}
\]
Then, identifying $\Z/p\Z$ with $\{0, \cdots, p-1\}$ in the natural way, for $i \in \Z/p\Z$ and $\lambda \in P$, to $E_i 1_\lambda$ we associate the functor
\[
E_i^\lambda := E_i{}_{| \Repp_{\imath_{n}(\lambda)}(G)} \colon \Repp_{\imath_{n}(\lambda)}(G) \to \Repp_{\imath_{n}(\lambda+\alpha_i)}(G)
\]
if $\lambda$ and $\lambda+\alpha_i$ belong to $P(\bigwedge^n \nat_p)$ and $0$ otherwise,
and to $F_i 1_\lambda$ we associate the functor
\[
F_i^\lambda := F_i{}_{| \Repp_{\imath_{n}(\lambda)}(G)} \colon \Repp_{\imath_{n}(\lambda)}(G) \to \Repp_{\imath_{n}(\lambda-\alpha_i)}(G)
\]
if $\lambda$ and $\lambda-\alpha_i$ belong to $P(\bigwedge^n \nat_p)$ and $0$ otherwise.
Finally, to the generating $2$-morphisms
\[
\begin{array}{c}
\begin{tikzpicture}[baseline = 0]
\node at (0.2,.2) {\tiny $\lambda$};
	\draw[<-,thick] (0,.6) to (0,-.2);
  \node at (0,-.4) {\tiny $i$};
      \node at (0,0.2) {$\bullet$};
\end{tikzpicture}
\end{array}, \quad
\begin{array}{c}
\begin{tikzpicture}[baseline = 0]
	\draw[<-,thick,green] (0.25,.6) to (-0.25,-.2);
	\draw[->,thick] (0.25,-.2) to (-0.25,.6);
\node at (0.3,.2) {\tiny $\lambda$};
  \node at (-0.25,-.3) {\tiny $i$};
   \node at (0.25,-.3) {\tiny $j$};
\end{tikzpicture}
\end{array}, \quad
\begin{array}{c}
      \begin{tikzpicture}[baseline = 0]
        \draw[->,thick] (-0.3,0.3) to[out=-90,in=180] (0,-0.2) to[out=0,in=-90] (0.3,0.3);
        \node at (0.5,.2) {\tiny $\lambda$};
          \node at (-0.3,.4) {\tiny $i$};
      \end{tikzpicture}
\end{array}, \quad
\begin{array}{c}
      \begin{tikzpicture}[baseline = 0]
        \draw[->,thick] (-0.3,-0.2) to[out=90,in=180] (0,0.3) to[out=0,in=90] (0.3,-0.2);
        \node at (0.4,.2) {\tiny $\lambda$};
          \node at (-0.3,-.3) {\tiny $i$};
      \end{tikzpicture}
\end{array}
\]
we associate $0$ if one of the regions in the diagram is labelled by a weight which does not belong to $P(\bigwedge^n \nat_p)$, and otherwise the morphisms of functors
\begin{align*}
x_{1,i} \in \End(E_i^\lambda), &\quad \tau_{1,(j,i)} \in \End(E_i^{\lambda+\alpha_j} E_j^\lambda), \\
\eta_i \in \Hom(\id_{\Repp_{\imath_{n}(\lambda)}(G)}, F_i^{\lambda+\alpha_i} E_i^\lambda), &\quad \epsilon_i \in \Hom(E_i^{\lambda-\alpha_i} F_i^\lambda, \id_{\Repp_{\imath_{n}(\lambda)}(G)})
\end{align*}
respectively.
(Here, by abuse we denote by $x_{1,i}$, resp.~$\tau_{1,(j,i)}$, the image of this element of $H_1(\Gamma)$, resp.~of $H_2(\Gamma)$, in the corresponding morphism space obtained by the precedure described above.)

\begin{thm}
\label{thm:2rep-GLn}
The above data define a
$2$-representation of  $\mathcal{U}(\hgl_p)$.
\end{thm}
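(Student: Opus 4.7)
The plan is to verify each of the defining relations of $\mathcal{U}(\hgl_p)$ listed in~\cite[Definition~1.1]{Brundan} for the data introduced above, following closely the strategy of Chuang--Rouquier~\cite{CR}. The relations naturally split into three groups: (i) the KLR-type relations involving only upward strands (dots and crossings on compositions of $E_i$'s); (ii) the adjunction zigzag relations for the pairs $(E_i, F_i)$; and (iii) the ``mixed'' relations (cyclicity of dots under adjunction, bubble relations, and the inversion/$\mathfrak{sl}_2$-type relations that relate compositions $E_iF_j$ and $F_jE_i$).

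For group~(i), my first step is to combine Lemma~\ref{lem:dhaction} with Theorem~\ref{thm:BKR}. Namely, Lemma~\ref{lem:dhaction} endows $E^m M$ with the structure of a module over $\overline{H}_m$ for each $M \in \Rep(G)$, and by Proposition~\ref{prop:natcat}\eqref{it:natcat-eigenvalues} the eigenvalues of each $X_z$ lie in the prime subring $\Gamma = \Z/p\Z$. Hence $E^m M$ belongs to $\overline{\cC}_\Gamma$, and the Brundan--Kleshchev--Rouquier equivalence transports the $\overline{H}_m$-action to an action of the KLR algebra $H_m(\Gamma)$. The explicit formulas~\eqref{it:BKR-functor} in Theorem~\ref{thm:BKR} show that the resulting $x_{z,\nu}$ and $\tau_{z,\nu}$ are induced by $\Xbb$ and $\Tbb$ respectively. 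A direct check confirms that the scalars $t_{ij}$ appearing in the KLR relations~\eqref{eqn:rel-KLR-1}--\eqref{eqn:rel-KLR-5} match the scalars fixed in~\S\ref{subsec:R}, so relations~\eqref{eqn:rel-2KM-1}--\eqref{eqn:rel-2KM-3} and their analogues hold.

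For group~(ii), I would use Lemma~\ref{lem:adjunctions-a} to fix adjunctions $(E_a, F_a)$ for each $a \in \Z/p\Z$, which immediately yields the zigzag identities relating the generating cups and caps. From these adjunctions one obtains ``rightward'' and ``leftward'' dot and crossing morphisms on the $F_i$'s by cyclic rotation; the relations among these are forced by the corresponding relations on $E_i$'s already verified in~(i), provided cyclicity is compatible with the chosen adjunctions. Here the key check (part of group~(iii)) is that the dot on $F_i$ obtained by rotating the dot on $E_i$ agrees with the endomorphism of $F = V^* \otimes (-)$ induced by adjunction from $\Xbb$ on $E$; this is essentially the definition made in~\S\ref{ss:cat-GLn-Groth-group}.

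The main obstacle lies in group~(iii), namely the inversion relations that categorify the commutator $[E_i, F_j] = \delta_{ij} h_i$ and the bubble relations that determine the action of ``positive-degree bubbles'' via Grassmannian-type polynomial identities. Here I would follow Chuang--Rouquier: the key idea is that these relations are largely forced by the existence of the adjoint pair $(E_i, F_i)$ together with the KLR action and the knowledge that the Grothendieck group realizes the weight space decomposition of $\bigwedge^n \nat_p$ predicted by Proposition~\ref{prop:natcat}. More precisely, in each weight space $\lambda$ one establishes by direct computation on Weyl modules $\Sta(\la)$ that the natural transformation $E_i F_i 1_\lambda \oplus (\text{identity summands}) \to F_i E_i 1_\lambda \oplus (\text{identity summands})$ built from dots, crossings, cups, and caps is an isomorphism; the counts of identity summands must equal $\max(\langle \lambda, h_i\rangle, 0)$ and $\max(-\langle \lambda, h_i\rangle, 0)$, which matches Proposition~\ref{prop:natcat}\eqref{it:natcat-weights}. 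The bubble relations then follow, and together with the work in groups~(i) and~(ii) they complete the verification of the full list of relations from~\cite[Definition~1.1]{Brundan}. As stated in the text, we do not claim originality here: the argument is precisely the one of~\cite{CR}, transported through Brundan's equivalence~\cite{Brundan} from the Khovanov--Lauda presentation used in \emph{loc.~cit.}~to the presentation of~\cite{R2KM}.
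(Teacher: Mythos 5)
Your proposal is correct and follows essentially the same route as the paper: the KLR-type relations come from Lemma~\ref{lem:dhaction} combined with Theorem~\ref{thm:BKR}, the zigzag relations from Lemma~\ref{lem:adjunctions-a}, and the remaining invertibility/inversion relations from the fact that the data form an $\mathfrak{sl}_2$-categorification in Rouquier's sense. The only cosmetic difference is that the paper does not carry out the weight-by-weight isomorphism check you sketch in group~(iii) but instead cites Rouquier's Theorems~5.22 and~5.25 wholesale, which is exactly the ``forced by the adjunction plus the Grothendieck-group computation'' mechanism you describe.
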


\begin{proof}
We have to check that our morphisms of functors satisfy the relations of~\cite[Definition~1.1]{Brundan}. 
The relations~\eqref{eqn:rel-2KM-1}--\eqref{eqn:rel-2KM-3} (corresponding to the relations~\cite[(1.2)--(1.4)]{Brundan}) follow from the relations~\eqref{eqn:rel-KLR-1}--\eqref{eqn:rel-KLR-5} in the KLR algebra. For instance, the relation
\begin{equation*}
\begin{array}{c}
\begin{tikzpicture}[baseline = 0]
	\draw[<-,thick] (0.25,.6) to (-0.25,-.2);
	\draw[->,thick,green] (0.25,-.2) to (-0.25,.6);
  \node at (-0.25,-.3) {\tiny $i$};
   \node at (0.25,-.3) {\tiny $j$};
  \node at (.3,.25) {\tiny $\lambda$};
      \node at (-0.13,-0.02) {$\bullet$};
\end{tikzpicture}
\end{array}
-
\begin{array}{c}
\begin{tikzpicture}[baseline = 0]
	\draw[<-,thick] (0.25,.6) to (-0.25,-.2);
	\draw[->,thick,green] (0.25,-.2) to (-0.25,.6);
  \node at (-0.25,-.3) {\tiny $i$};
   \node at (0.25,-.3) {\tiny $j$};
  \node at (.3,.25) {\tiny $\lambda$};
      \node at (0.13,0.42) {$\bullet$};
\end{tikzpicture}
\end{array}
=
\begin{cases}
\begin{array}{c}
\begin{tikzpicture}[baseline = 0]
 	\draw[->,thick,green] (0.08,-.3) to (0.08,.4);
	\draw[->,thick] (-0.28,-.3) to (-0.28,.4);
   \node at (-0.28,-.4) {\tiny $i$};
   \node at (0.08,-.4) {\tiny $j$};
 \node at (.28,.06) {\tiny $\lambda$};
\end{tikzpicture}
\end{array}
&\text{if $i=j$,}\\
0&\text{otherwise}\\
\end{cases}
\end{equation*}
in $\Hom(E_i E_j, E_j E_i)$ follows from the relation
\[
\tau_{1, (j,i)} \cdot x_{2, (j,i)} - x_{1, (i,j)} \cdot \tau_{1, (j,i)} = \begin{cases}
1_\nu & \text{if $i=j$;} \\
0 & \text{otherwise.}
\end{cases}
\]
in $H_2(\Gamma)$. Similarly, relation~\eqref{eqn:rel-2KM-2} follows from the relation
\begin{equation*}
\tau_{1,(i,j)} \tau_{1,(j,i)} = \begin{cases}
0 & \text{if $j=i$}; \\
1_{(j,i)} & \text{if $j \neq i$ and $j \norel i$;} \\
t_{ji} \cdot x_{1,(j,i)} + t_{i j} \cdot x_{2, (j,i)} & \text{if $j \rel i$}
\end{cases}
\end{equation*}
in $H_2(\Gamma)$, and relation~\eqref{eqn:rel-2KM-3} follows from the relation
\[
\tau_{2, (i,k,j)}\tau_{1,(k,i,j)} \tau_{2,(k,j,i)} - \tau_{1,(j,i,k)} \tau_{2,(j,k,i)} \tau_{1,(k,j,i)}
= \begin{cases}
t_{i, j} 1_{(k,j,i)} & \text{if $k=i$ and $i\rel j$;} \\
0 & \text{otherwise;}
\end{cases}
\]
in $H_3(\Gamma)$.

The ``right adjunction relations''~\cite[(1.5)]{Brundan} follow from the fact that $\epsilon_i$ and $\eta_i$ are indeed adjunction morphisms, see Lemma~\ref{lem:adjunctions-a}. Finally, we need to check that the images of the $2$-morphisms depicted in~\cite[(1.7)--(1.9)]{Brundan} are isomorphisms. However, the datum of $E_i$ and $F_i$ and the appropriate morphisms defines an $\mathfrak{sl}_2$-categorification in the sense of~\cite[Definition~5.20]{R2KM}, so that the images of the $2$-morphisms~\cite[(1.8), (1.9)]{Brundan} are invertible by~\cite[Theorem~5.22 and its proof]{R2KM}. Then the invertibility of the image of~\cite[(1.7)]{Brundan} is guaranteed by~\cite[Theorem~5.25 and its proof]{R2KM}.
\end{proof}

\subsubsection{Comparison with the setting of Section~{\rm \ref{sec:blocks}}}
\label{sss:comparison-Section3}

Now we assume that $p \geq n \geq 2$, and set
\[
\om := \e_1 + \e_2 + \cdots + \e_n \in P(\bigwedge\hspace{-3pt}{}^n \, \nat_p) \subset P.
\]
Then $\imath_n(\omega)$ is the $(W,\hdot)$-orbit of the weight $(n, \cdots, n) \in \bX$. This weight belongs to the fundamental alcove $C_\Z$, so that following the notation of Section~\ref{sec:blocks} we can denote it $\lambda_0$, and the corresponding subcategory $\Repp_{\imath_n(\omega)}(G)$ can be denoted $\Rep_0(G)$.

\begin{rmk}
The category $\Rep_0(G)$ is naturally equivalent to the block of the weight $0 \in \bX$ via the functor $M \mapsto \det^{\otimes (-n)} \otimes M$.
\end{rmk}


For any $i \in \{1, \cdots, n-1\}$ one can consider the functors
\[
E_i^{\omega} \colon \Repp_{\imath_n(\omega)}(G) \to \Repp_{\imath_n(\omega+\alpha_i)}(G)
\]
and
\[
F_i^{\omega+\alpha_i} \colon \Repp_{\imath_n(\omega+\alpha_i)}(G) \to \Repp_{\imath_n(\omega)}(G).
\]
Here $\imath_n(\omega + \alpha_i)$ is the orbit of the weight
\[
(\underbrace{n, \cdots, n}_{n-i}, n+1, \underbrace{n, \cdots, n}_{i-1}) \in \bX.
\]

On the other hand, the Weyl group $\Wf$ identifies canonically with the symmetric group $S_{n}$, and the simple reflections identify with the simple transpositions $\{s_j, \, j \in \{1, \cdots, n-1\}\}$. For any $j$, $s_j$ is the reflection associated with the simple root
\[
\chi_j-\chi_{j+1} = (\underbrace{0, \cdots, 0}_{j-1}, 1, -1, \underbrace{0, \cdots, 0}_{n-j-1})
\]
of $\mathrm{GL}_n(\Bbbk)$.
Hence we can set
\[
\mu_{s_j} = (\underbrace{n, \cdots, n}_{j}, n+1, \underbrace{n, \cdots, n}_{n-j-1}),
\]
so that $\imath_n(\omega + \alpha_i)$ is the $W$-orbit of $\mu_{s_{n-i}}$. Then, with the notation of Section~\ref{sec:blocks}, we have
\[
T_{\lambda_0}^{\mu_{s_j}} \cong E^{\omega}_{n-j}, \quad T_{\mu_{s_j}}^{\lambda_0} \cong F^{\omega+\alpha_{n-j}}_{n-j}
\]
Hence we can simply set
\[
\Trans^{s_j} := E^{\omega}_{n-j}, \quad \Trans_{s_j} := F^{\omega+\alpha_{n-j}}_{n-j},
\quad
\Theta_{s_j} := F^{\omega+\alpha_{n-j}}_{n-j} E^{\omega}_{n-j}.
\]

Consider now the case of the affine simple reflection $s_\infty \in W$. (See~\S\ref{ss:restriction-combinatorics} below for an explanation of why we use the notation $s_\infty$ instead of $s_0$.) We can set
\[
\mu_{s_\infty} = (p+1, n, \cdots, n).
\]
Then there exist isomorphisms of functors
\begin{align}
\label{eqn:Tinfty-E}
T_{\lambda_0}^{\mu_{s_\infty}} &\cong E_0^{\omega + \alpha_n + \cdots + \alpha_{p-1}} E_{p-1}^{\omega + \alpha_n + \cdots + \alpha_{p-2}} \cdots E_{n+1}^{\omega + \alpha_n} E_n^{\omega}, \\
\label{eqn:Tinfty-F}
T_{\mu_{s_\infty}}^{\lambda_0} &\cong F_n^{\omega+\alpha_n} F_{n+1}^{\omega + \alpha_n + \alpha_{n+1}} \cdots F_{p-1}^{\omega + \alpha_{n} + \cdots + \alpha_{p-1}} F_0^{\omega+ \alpha_n + \cdots + \alpha_{p-1} + \alpha_{0}}.
\end{align}
For instance, let us explain how to construct an isomorphism as in~\eqref{eqn:Tinfty-E}; the construction for~\eqref{eqn:Tinfty-F} is similar. 
By~\cite[Remark~II.7.6(1)]{jantzen}, the functor $T_{\lambda_0}^{\mu_{s_\infty}}$ is isomorphic to
the functor $\mathrm{pr}_{\mu_{s_\infty}} \bigl( M \otimes \mathrm{pr}_{\lambda_0}(-) \bigr)$ for any $M$ in $\Rep(G)$ such that 
\[
\dim(M_{(p+1-n) \chi_1})=1 \quad \text{and} \quad M_\nu \neq 0 \Rightarrow \nu \preceq (p+1-n)\chi_1.
\]
(Here, $\preceq$ is the standard order on $\bX$ associated to our choice of positive roots.)
For instance, the module $M=V^{\otimes (p+1-n)}$ satisfies these properties, so that $T_{\lambda_0}^{\mu_{s_\infty}}$ is a direct factor of the functor $V^{\otimes (p+1-n)} \otimes (-)$. On the other hand, by construction the functor $E_n^{\omega + \alpha_0 + \alpha_{p-1} + \cdots + \alpha_{n+1}} \cdots E_{p-1}^{\omega + \alpha_0} E_0^{\omega}$ is also a direct factor of the functor $V^{\otimes (p+1-n)} \otimes (-)$. Hence using inclusion and projection we can construct a morphism of functors
\[
T_{\lambda_0}^{\mu_{s_\infty}} \to E_n^{\omega + \alpha_0 + \alpha_{p-1} + \cdots + \alpha_{n+1}} \cdots E_{p-1}^{\omega + \alpha_0} E_0^{\omega}.
\]
Now, using~\cite[Proposition~II.7.15]{jantzen} one can easily check that this morphism induces an isomorphism on any simple object in $\Rep_0(G)$, which implies that it is an isomorphism.

From these considerations we see that we can set
\begin{align*}
\Trans^{s_\infty} &:= E_0^{\omega + \alpha_n + \cdots + \alpha_{p-1}} E_{p-1}^{\omega + \alpha_n + \cdots + \alpha_{p-2}} \cdots E_{n+1}^{\omega + \alpha_n} E_n^{\omega}, \\
\Trans_{s_\infty} &:= F_n^{\omega+\alpha_n} F_{n+1}^{\omega + \alpha_n + \alpha_{n+1}} \cdots F_{p-1}^{\omega + \alpha_{n} + \cdots + \alpha_{p-1}} F_0^{\omega+ \alpha_n + \cdots + \alpha_{p-1} + \alpha_{0}}.
\end{align*}

The goal of Sections~\ref{sec:restriction}--\ref{sec:g2D} is to show that, with these choices, if $p>n$ then Conjecture~\ref{conj:main} holds.

\section{Restriction of the representation to $\hgl_n$}
\label{sec:restriction}

From now on we assume that $p \ge n \geq 3$.\footnote{See Remark~\ref{rmk:intro-other-types}\eqref{it:rmk-n2} for comments on this restriction.}
Our goal in this section is to show that the $2$-representation of $\hgl_p$ considered in Section~\ref{sec:2rep-GLn} can be
``restricted'' to a $2$-representation of $\hgl_n$. This technical result will simplify our computations in Section \ref{sec:g2D}.

A similar 
construction in a more general setting is due to
Maksimau~\cite{m}. For the reader's convenience, we give a detailed proof.

\subsection{Combinatorics}
\label{ss:restriction-combinatorics}

Since $n \le p$, we can view $\mathfrak{sl}_n(\C)$ as a Lie subalgebra of $\mathfrak{sl}_p(\C)$
consisting of matrices all of whose non-zero entries are in the first
$n$ rows and columns. Since the trace form on $\mathfrak{sl}_p(\C)$ restricts to
the trace form on $\mathfrak{sl}_n(\C)$, this embedding extends in a natural way to an
embedding $\widehat{\mathfrak{sl}}_n \subset \widehat{\mathfrak{sl}}_p$ of Lie algebras, hence also to an embedding $\hgl_n \subset \hgl_p$.

Consider the natural representation $\nat_n$ (resp. $\nat_p$)
of $\hgl_n$ (resp. $\hgl_p$). The restriction of $\nat_p$ to $\hgl_n$
via the above inclusion is isomorphic to the direct sum of $\nat_n$ and
infinitely many copies of the trivial representation. Hence the
restriction of $\bigwedge^n \nat_p$ to $\hgl_n$ contains a  summand
$\bigwedge^n \nat_n$.

Let us describe this summand explicitly. The lattice of weights of $\hgl_n$ embeds in a natural way in the lattice $P$ of weights of $\hgl_p$. Via this embedding, the $\hgl_n$-weights in $\nat_n$ are the
weights of the form
\[
\{ \e_i + m \delta \mid 1 \le i \le n, \, m \in \Z \} \subset P.
\]
If we write $\nat_{p}^{[n]} \subset \nat_p$ for the sum of the weight spaces
corresponding to these weights, then this subspace is stable under the action of the Lie subalgebra $\hgl_n \subset \hgl_p$, and we have a canonical identification
$\nat_n =  \nat_{p}^{[n]}$.

Similarly, let us consider the weights
\begin{equation}
\label{eqn:weight-GLn}
\left\{ \sum_{i = 1}^n n_i \e_i + m \delta : n_i \in \Z_{\geq 0}, \, \sum_{i = 1}^n n_i
= n,  \, m \in \Z \right\} \subset P.
\end{equation}
Then if $(\bigwedge^n \nat_p)^{[n]} \subset \bigwedge^n \nat_p$ is the sum of the weight spaces corresponding to these weights, it is easy to see that $(\bigwedge^n \nat_p)^{[n]}$ is stable under the action of
$\hgl_n$ and that we have a canonical identification
\[
\bigwedge\hspace{-3pt}{}^n \, \nat_n =  (\bigwedge\hspace{-3pt}{}^n \, \nat_p)^{[n]}.
\]

Of course the inclusion $\hgl_n \subset \hgl_p$ is not compatible with the
Chevalley elements associated with the affine vertex $0$ unless $n = p$. 
For this reason, to avoid confusion the affine vertex for $\hgl_n$ will be denoted $\infty$ instead of $0$.
The image of $e_\infty$ and $f_\infty$ in $\hgl_p$ are complicated commutators;
however, one may
express their action on the summand $\nat_p^{[n]} \subset \nat_p$
in terms of the Chevalley elements of $\hgl_p$ as follows:
\[
e_\infty \leftrightarrow e_0 e_{p-1} \cdots e_{n+1} e_n, \qquad
f_\infty \leftrightarrow f_n f_{n+1} \cdots f_{p-1} f_0.
\]
(In other words, the complicated commutator
  expressing the image of $e_\infty$ in $\hgl_p$ has many
  terms which act by zero on $\nat_p^{[n]} \subset \nat_p$, and the above
  expression is all that remains.)

\subsection{Categorifying the combinatorics}
\label{ss:categorify-combinatorics}

We now explain how the previous construction can be categorified. 
As in~\S\ref{ss:cat-GLn-Groth-group} we set $G=\mathrm{GL}_n(\Bbbk)$, and let
\[
\Rep^{[n]}(G) \subset \Rep(G)
\]
denote the direct sum of the blocks corresponding to the weights in~\eqref{eqn:weight-GLn}.
We define functors $\widetilde{E}_i \colon \Rep^{[n]}(G) \to \Rep^{[n]}(G)$ for $i \in \{1, \dots, n-1, \infty\}$ as follows:
\begin{align*}
  \widetilde{E}_i &:= E_i{}_{|\Rep^{[n]}(G)} \quad \text{for $1 \le i \le n-1$};\\
  \widetilde{F}_i &:= F_i{}_{|\Rep^{[n]}(G)} \quad \text{for $1 \le i \le n-1$};\\
\widetilde{E}_\infty &:= (E_0 E_{p-1} \dots E_{n+1}E_n){}_{|\Rep^{[n]}(G)}; \\
\widetilde{F}_\infty &:= (F_n F_{n+1} \dots F_{p-1}F_0){}_{|\Rep^{[n]}(G)}.
\end{align*}
(It is immediate to check that these functors preserve the subcategory $\Rep^{[n]}(G)$.)

Using the adjunctions $(E_j, F_j)$ for all $j \in \{0, \cdots, p-1\}$ we obtain adjunctions
$(\widetilde{E}_i, \widetilde{F}_i )$ for all $i \in \{1, \dots, n - 1, \infty\}$.
From Proposition~\ref{prop:natcat} and the considerations of~\S\ref{ss:restriction-combinatorics}
we deduce the following.

\begin{lem}
On the Grothendieck group $[\Rep^{[n]}(G)]$, the exact functors
  $\widetilde{E}_i$ and $\widetilde{F}_i$ ($i \in \{1, \dots, n-1, \infty\}$) induce an action of
  $\hgl_n$. Moreover, the resulting $\hgl_n$-module is (canonically)
  isomorphic to $\bigwedge^n \nat_n$.
\end{lem}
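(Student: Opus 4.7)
The plan is to reduce the claim to the categorification of $\bigwedge^{n} \nat_{p}$ already established in Proposition~\ref{prop:natcat}, together with the concrete matrix calculation sketched at the end of \S\ref{ss:restriction-combinatorics}. First I would match the Grothendieck group with the correct subspace: by Proposition~\ref{prop:natcat}\eqref{it:natcat-weights} the blocks of $\Rep(G)$ are in bijection with the weights of $\bigwedge^{n}\nat_{p}$, and the subcategory $\Rep^{[n]}(G)$ is by construction the direct sum of those blocks corresponding to weights of the form~\eqref{eqn:weight-GLn}. These are exactly the weights of $(\bigwedge^{n}\nat_{p})^{[n]} = \bigwedge^{n}\nat_{n}$. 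Thus the isomorphism $\varphi$ of Proposition~\ref{prop:natcat}\eqref{it:natcat-EF} restricts to a canonical isomorphism $\C \otimes_{\Z}[\Rep^{[n]}(G)] \simto \bigwedge^{n}\nat_{n}$.

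Next I would treat the Chevalley generators indexed by $i \in \{1, \ldots, n-1\}$. Here $\widetilde{E}_{i}$ and $\widetilde{F}_{i}$ are literally the restrictions of $E_{i}$ and $F_{i}$ to $\Rep^{[n]}(G)$; since $\bigwedge^{n}\nat_{n} \subset \bigwedge^{n}\nat_{p}$ is preserved by $e_{i}, f_{i}$ (because these elements of $\hgl_{p}$ lie in the image of $\hgl_{n} \hookrightarrow \hgl_{p}$), Proposition~\ref{prop:natcat}\eqref{it:natcat-EF} immediately tells us that $[\widetilde{E}_{i}]$ and $[\widetilde{F}_{i}]$ act on $\C \otimes_{\Z}[\Rep^{[n]}(G)]$ as $e_{i}$ and $f_{i}$ (viewed now as generators of $\hgl_{n}$).

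The substantive step is the affine vertex $\infty$. Upon taking Grothendieck groups, $[\widetilde{E}_{\infty}]$ acts as the composite $e_{0} e_{p-1} \cdots e_{n+1} e_{n}$, computed in $\hgl_{p}$ on the subspace $(\bigwedge^{n}\nat_{p})^{[n]}$. I would verify that this composite coincides with the derivation action of $e_{\infty} \in \hgl_{n}$ on $\bigwedge^{n}\nat_{n}$ by tracking weights on a pure wedge $m_{\lambda_{1}} \wedge \cdots \wedge m_{\lambda_{n}}$ whose factors all have weight in $\{\e_{1}, \ldots, \e_{n}\}$: the operator $e_{n}$ can only increment a factor with $\lambda_{i} \equiv n \pmod{p}$ to a factor of weight $\e_{n+1}$, and since $n+1 \leq p$ no other factor has weight $\e_{n+1}$, so $e_{n+1}$ continues to act only on that same factor, and so on up to $e_{0}$; the result is the sum over $i$ with $\lambda_{i} \equiv n \pmod{p}$ of the wedge with $\lambda_{i}$ replaced by $\lambda_{i} + (p - n + 1)$. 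Translating through the weight-preserving identification $\nat_{p}^{[n]} \cong \nat_{n}$ (where $m_{\mu p + \nu} \in \nat_{p}^{[n]}$ corresponds to $m_{\mu n + \nu} \in \nat_{n}$ for $1 \leq \nu \leq n$), the condition $\lambda_{i} \equiv n \pmod{p}$ becomes the condition $\mu_{i} \equiv 0 \pmod{n}$, and the shift by $p - n + 1$ becomes the shift by $1$, which is exactly the action of the derivation $e_{\infty}$. A symmetric argument handles $\widetilde{F}_{\infty}$.

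The hard part, as indicated, is this last calculation; everything else is bookkeeping. Once the four generators are matched with their $\hgl_{n}$-counterparts on all of $(\bigwedge^{n}\nat_{p})^{[n]} \cong \bigwedge^{n}\nat_{n}$, the defining relations of $\hgl_{n}$ are satisfied automatically because the target is already a genuine $\hgl_{n}$-module (via the embedding $\hgl_{n} \hookrightarrow \hgl_{p}$ and the fact that $\bigwedge^{n}\nat_{n}$ is an $\hgl_{n}$-stable subspace of $\bigwedge^{n}\nat_{p}$). Hence the exact functors $\widetilde{E}_{i}, \widetilde{F}_{i}$ induce the desired $\hgl_{n}$-action, and the resulting module is canonically isomorphic to $\bigwedge^{n}\nat_{n}$.
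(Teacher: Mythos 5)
Your proposal is correct and follows the same route the paper gestures at: the paper gives no detailed proof, merely deducing the lemma "from Proposition~\ref{prop:natcat} and the considerations of~\S\ref{ss:restriction-combinatorics}," and your argument supplies exactly those details (the weight-space identification, the matching of $[\widetilde{E}_i],[\widetilde{F}_i]$ with Chevalley generators, and the observation that the relations hold automatically since $\bigwedge^n\nat_n$ is already a genuine $\hgl_n$-module). The explicit check that the composite $e_0 e_{p-1}\cdots e_n$ acts on pure wedges in $\bigwedge^n\nat_n$ as the derivation $e_\infty$ — using that at each step exactly one factor carries the intermediate weight $\e_{n+1},\ldots,\e_p$ — is the right way to make precise the assertion stated informally at the end of \S\ref{ss:restriction-combinatorics}.
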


The goal of the rest of the section is to show that these functors (together with extra data to be defined below) endow $\Rep^{[n]}(G)$ with a structure of $2$-representation of $\mathcal{U}(\hgl_n)$.


\subsection{First relations}
\label{ss:first-relations}

Some of our computations below will require an induction on $n$. So, for now we consider an integer $m$ with $3 \leq m \leq p$.
Let us consider the following subsets of the set $P$ of weights of $\hgl_p$:
\begin{align*}
P_+ &:= \left\{ \; \sum_{i=1}^p n_i \e_{i} + k \delta : n_i \in \Z_{\geq 0}, \, k \in \Z \right\},  \\
Y_m &:=\left\{ \; \sum_{i = 1}^m n_i \e_i + k \delta : n_i \in \Z, \, k \in \Z \right\}.
\end{align*}

We denote by $\mathcal{U}_{+}(\hgl_p)$ the quotient of $\mathcal{U}(\hgl_p)$ by the objects which do not belong to $P_+$. In other words, 
$2$-morphisms in $\mathcal{U}_+(\hgl_p)$ are obtained from the $2$-morphisms in $\mathcal{U}(\hgl_p)$ by quotienting by the $2$-morphisms which contain a region labelled by an element $\lambda \in P \smallsetminus P_+$ (or equivalently which factor through a $1$-morphism factoring through an object $\lambda \in P \smallsetminus P_+$).
Note that in the $2$-representation of Theorem~\ref{thm:2rep-GLn}, the category attached to a weight $\lambda \in P \smallsetminus P_+$ is the zero category; therefore this $2$-representation of $\mathcal{U}(\hgl_p)$ factors through a $2$-representation of $\mathcal{U}_{+}(\hgl_p)$.

In the lemmas below, by abuse we sometimes use the diagrams for $2$-morphisms in $\mathcal{U}(\hgl_p)$ to denote their image in $\mathcal{U}_{+}(\hgl_p)$.

For $\gamma \in P$, we now set
\begin{equation}
\label{eqn:def-Einfty}
E_{\infty_m} 1_\gamma := E_0 E_{p-1} \cdots E_m 1_\gamma, \qquad F_{\infty_m} 1_\gamma := F_m F_{m+1} \cdots F_{p-1} F_0 1_\gamma.
\end{equation}
The corresponding identity $2$-morphisms (in $\mathcal{U}(\hgl_p)$) are denoted as follows:
\[
.
\]
In fact this equality follows from the first case in~\eqref{eqn:rel-2KM-3} applied to the strands labelled $m+1$ and $m$, remarking that the term coming from the right-hand side in this equality vanishes by the first case in~\eqref{eqn:rel-2KM-2}.
\end{proof}

The following lemma is a straightforward consequence of relation~\eqref{eqn:rel-2KM-3}; details are left to the reader.
(In this case we do not need any restriction on $\gamma$, and do not have to work in $\mathcal{U}_+(\hgl_p)$.)

\begin{lem}
\label{lem:crossings-infty-1}
For any $\gamma \in P$, in $\mathcal{U}(\hgl_p)$ we have
\[
      \begin{array}{c}
\begin{tikzpicture}[baseline = 0,xscale=1.5,yscale=1.2]
	\draw[<-,thick,green] (0.45,.8) to (-0.45,-.4);
	\draw[->,thick,green] (0.45,-.4) to (-0.45,.8);
        \draw[-,thick] (0,-.4) to[out=90,in=-90] (-.45,0.2);
        \draw[->,thick] (-0.45,0.2) to[out=90,in=-90] (0,0.8);
   \node at (-0.45,-.6) {\tiny $1$};
   \node at (0,-.6) {\tiny $\infty_m$};
  \node at (0.45,-.6) {\tiny $1$};
   \node at (.5,-.1) {\tiny ${\g}$};
\end{tikzpicture}
\end{array}
=
      \begin{array}{c}
\begin{tikzpicture}[baseline = 0,xscale=1.5,yscale=1.2]
	\draw[<-,thick,green] (0.45,.8) to (-0.45,-.4);
	\draw[->,thick,green] (0.45,-.4) to (-0.45,.8);
        \draw[-,thick] (0,-.4) to[out=90,in=-90] (.45,0.2);
        \draw[->,thick] (0.45,0.2) to[out=90,in=-90] (0,0.8);
   \node at (-0.45,-.6) {\tiny $1$};
   \node at (0,-.6) {\tiny $\infty_m$};
  \node at (0.45,-.6) {\tiny $1$};
   \node at (.5,-.1) {\tiny ${\g}$};
\end{tikzpicture}
\end{array}
+
      \begin{array}{c}
\begin{tikzpicture}[baseline = 0,xscale=1.5,yscale=1.2]
	\draw[<-,thick,green] (0.45,.8) to (0.45,-.4);
	\draw[->,thick,green] (-0.45,-.4) to (-0.45,.8);
        \draw[->,thick] (0,-.4) to (0,0.8);
   \node at (-0.45,-.6) {\tiny $1$};
   \node at (0,-.6) {\tiny $\infty_m$};
  \node at (0.45,-.6) {\tiny $1$};
   \node at (.7,-.1) {\tiny ${\g}$};
\end{tikzpicture}
\end{array}.
\]
\end{lem}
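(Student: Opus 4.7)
My plan is to unfold the definition of the two crossings involving the $\infty_m$ strand in terms of crossings with each of the component strands labelled $0, p-1, p-2, \ldots, m+1, m$, and then apply relation~\eqref{eqn:rel-2KM-3} to each of the resulting triple crossings between the two outer strands labelled $1$ and each component strand of $\infty_m$. In contrast with the previous two lemmas, here there is no restriction on $\gamma$ and we stay in $\mathcal{U}(\hgl_p)$ (not its quotient $\mathcal{U}_+(\hgl_p)$), so I do not expect to need any weight argument.

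By definition, the LHS (and the first term on the RHS) are, up to no sign adjustment, built as compositions of triple crossings in which the two outer strands are both labelled $1$ and the middle strand runs successively through the labels $0, p-1, \ldots, m$. Hence to pass from the first diagram on the LHS to the first diagram on the RHS, it suffices to commute the $1$-crossing past each of these middle strands one after the other, at each step picking up an ``error term'' dictated by~\eqref{eqn:rel-2KM-3}. The error term for middle label $j$ is $t_{1,j}$ times three vertical parallel strands if $1=1$ and $1 \rel j$, and is zero otherwise.

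In the affine type $\widetilde{A}_{p-1}$ Dynkin diagram, the vertex $1$ is adjacent only to $0$ and $2$, so $1 \rel j$ if and only if $j \in \{0, 2\}$. Under the standing assumption $m \geq 3$, the set $\{0, p-1, p-2, \ldots, m+1, m\}$ of middle labels intersects $\{0, 2\}$ only at $0$, and a direct check with the convention of~\S\ref{subsec:R} (using $0 \to 1$) gives $t_{1,0}=1$. Hence the only nonzero error term occurs when the $1$-crossing is commuted past the $0$-strand at the top of the $\infty_m$ composition, and it contributes exactly the diagram of three vertical parallel strands labelled $1,\infty_m,1$ (with coefficient $+1$), which is precisely the second term on the RHS of the claimed identity.

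The main (and only) obstacle is bookkeeping: one must be careful with the order in which~\eqref{eqn:rel-2KM-3} is applied, so that after each application the remaining diagram still matches the pattern needed for the next application, and so that the single nonzero error term lands unadorned (no extra dots or crossings) as three vertical parallel strands. The case $m=p$, where $\infty_m$ is literally the single arrow $0$, follows directly from~\eqref{eqn:rel-2KM-3} with $i=k=1$, $j=0$ and serves as the base of this bookkeeping. This routine iteration is entirely analogous in spirit to the manipulations carried out in the proofs of Lemmas~\ref{lem:crossings-infty} and~\ref{lem:crossings-all-infty}, but simpler because no auxiliary induction on $m$ or weight-support argument is required.
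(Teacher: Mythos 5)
Your proposal is correct and is exactly the argument the paper intends: the paper's own proof consists of the single remark that the lemma ``is a straightforward consequence of relation~\eqref{eqn:rel-2KM-3}; details are left to the reader,'' and you have supplied those details accurately — decomposing $\infty_m$ into its component strands, applying~\eqref{eqn:rel-2KM-3} to each triple $(1,j,1)$, observing that under the standing hypothesis $m\geq 3$ the only component adjacent to $1$ in the cyclic quiver is $j=0$, and computing $t_{1,0}=1$ from $0\to 1$. Your observation that no weight-support argument or passage to $\mathcal{U}_+(\hgl_p)$ is needed also matches the paper's parenthetical remark preceding the lemma.
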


\begin{lem}
\label{lem:crossings-infty-m-1}
For any $\gamma \in P$, in $\mathcal{U}(\hgl_p)$ we have
\[
      \begin{array}{c}
\begin{tikzpicture}[baseline = 0,xscale=1.5,yscale=1.2]
	\draw[<-,thick,green] (0.45,.8) to (-0.45,-.4);
	\draw[->,thick,green] (0.45,-.4) to (-0.45,.8);
        \draw[-,thick] (0,-.4) to[out=90,in=-90] (-.45,0.2);
        \draw[->,thick] (-0.45,0.2) to[out=90,in=-90] (0,0.8);
   \node at (-0.45,-.6) {\tiny $m-1$};
   \node at (0,-.6) {\tiny $\infty_m$};
  \node at (0.45,-.6) {\tiny $m-1$};
   \node at (.5,-.1) {\tiny ${\g}$};
\end{tikzpicture}
\end{array}
=
      \begin{array}{c}
\begin{tikzpicture}[baseline = 0,xscale=1.5,yscale=1.2]
	\draw[<-,thick,green] (0.45,.8) to (-0.45,-.4);
	\draw[->,thick,green] (0.45,-.4) to (-0.45,.8);
        \draw[-,thick] (0,-.4) to[out=90,in=-90] (.45,0.2);
        \draw[->,thick] (0.45,0.2) to[out=90,in=-90] (0,0.8);
   \node at (-0.45,-.6) {\tiny $m-1$};
   \node at (0,-.6) {\tiny $\infty_m$};
  \node at (0.45,-.6) {\tiny $m-1$};
   \node at (.5,-.1) {\tiny ${\g}$};
\end{tikzpicture}
\end{array}
-
      \begin{array}{c}
\begin{tikzpicture}[baseline = 0,xscale=1.5,yscale=1.2]
	\draw[<-,thick,green] (0.45,.8) to (0.45,-.4);
	\draw[->,thick,green] (-0.45,-.4) to (-0.45,.8);
        \draw[->,thick] (0,-.4) to (0,0.8);
   \node at (-0.45,-.6) {\tiny $m-1$};
   \node at (0,-.6) {\tiny $\infty_m$};
  \node at (0.45,-.6) {\tiny $m-1$};
   \node at (.7,-.1) {\tiny ${\g}$};
\end{tikzpicture}
\end{array}.
\]
\end{lem}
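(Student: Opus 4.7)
The proof is analogous to that of Lemma~\ref{lem:crossings-infty-1}, being a direct consequence of relation~\eqref{eqn:rel-2KM-3}. Expand the middle $\infty_m$ strand using~\eqref{eqn:def-Einfty} so that it becomes a bundle of $p-m+1$ strands labelled $0, p-1, \ldots, m+1, m$ from left to right. Since $m \geq 3$, the vertex $m-1$ has only $m-2$ and $m$ as neighbors in the cyclic quiver $\tilde A_{p-1}$, and among the labels appearing in this bundle only the rightmost label $m$ is adjacent to $m-1$.

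The plan is to move the bundle strands one at a time, from the left of the double crossing of the outer $m-1$ strands to its right, each time applying relation~\eqref{eqn:rel-2KM-3} with $i = k = m-1$ and $j$ the label of the strand being moved. For $j \in \{0, p-1, \dots, m+1\}$ the relation falls in its second case, so the bundle strand commutes past the crossing with no residual term. For $j = m$ the relation falls in its first case and yields an extra contribution equal to $t_{m-1,m}$ times a configuration of three parallel strands. Because $m-1 \to m$ in the quiver, $t_{m-1,m} = -1$.

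After moving every bundle strand across, the ``main'' term is precisely the first picture on the right-hand side of the lemma. The extra term produced by the application at $j=m$ has the two $m-1$ strands and the $m$-strand parallel, while the remaining bundle labels $\{0, p-1, \dots, m+1\}$ are still ``swinging left''; since none of these latter labels is adjacent to $m-1$, the two crossings each such strand would make with the now-parallel $m-1$ strands cancel via the second case of~\eqref{eqn:rel-2KM-2}, and this configuration simplifies to three straight parallel strands, namely the second picture on the right-hand side of the lemma. Combining the two contributions yields the stated equality.

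The only real obstacle is careful bookkeeping of the intermediate diagrams as the bundle strands are moved past the crossing one by one; since the only nontrivial contribution comes from a single application of the first case of~\eqref{eqn:rel-2KM-3}, both the sign and the coefficient in the resulting identity are entirely dictated by $t_{m-1,m} = -1$. The minus sign in the present statement, as opposed to the plus sign in Lemma~\ref{lem:crossings-infty-1}, reflects precisely that here the quiver arrow points from $m-1$ to $m$ (giving $t_{m-1,m}=-1$), whereas in the analogous calculation for Lemma~\ref{lem:crossings-infty-1} the relevant arrow points from $0$ to $1$ (giving $t_{1,0}=+1$).
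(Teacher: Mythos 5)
Your proposal is correct and follows essentially the same route as the paper's proof: decompose the $\infty_m$ strand into its constituent $E$-strands, note that only the $m$-strand is adjacent to $m-1$ in the quiver so that the single application of the first case of~\eqref{eqn:rel-2KM-3} contributes $t_{m-1,m}=-1$ times the parallel configuration, and dispose of the remaining bundle strands via the second cases of~\eqref{eqn:rel-2KM-2} and~\eqref{eqn:rel-2KM-3}. Your sign analysis, the use of $m\geq 3$ to rule out further adjacencies, and the comparison with Lemma~\ref{lem:crossings-infty-1} all match the paper, which likewise leaves the remaining diagram bookkeeping to the reader.
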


\begin{proof}
If $m=p$ then this relation is an instance of the first case in~\eqref{eqn:rel-2KM-3}. If $m \leq p-1$, it suffices to write the strand labelled $\infty_m$ as a combination of strands labelled by $\infty_{m+1}$ and $m$, to apply the first case in~\eqref{eqn:rel-2KM-3} to the strands labelled $m$ and $m-1$, and then to use the second cases in~\eqref{eqn:rel-2KM-2} and in~\eqref{eqn:rel-2KM-3}; details are left to the reader.
\end{proof}

\begin{lem}
\label{lem:crossings-2infty-1}
For any $\gamma \in Y_m$, in $\mathcal{U}_+(\hgl_p)$ we have
\[
      \begin{array}{c}
\begin{tikzpicture}[baseline = 0,xscale=1.5,yscale=1.2]
	\draw[<-,thick] (0.45,.8) to (-0.45,-.4);
	\draw[->,thick] (0.45,-.4) to (-0.45,.8);
        \draw[-,thick,green] (0,-.4) to[out=90,in=-90] (-.45,0.2);
        \draw[->,thick,green] (-0.45,0.2) to[out=90,in=-90] (0,0.8);
   \node at (-0.45,-.6) {\tiny $\infty_m$};
   \node at (0,-.6) {\tiny $1$};
  \node at (0.45,-.6) {\tiny $\infty_m$};
   \node at (.5,-.1) {\tiny ${\g}$};
\end{tikzpicture}
\end{array}
=
      \begin{array}{c}
\begin{tikzpicture}[baseline = 0,xscale=1.5,yscale=1.2]
	\draw[<-,thick] (0.45,.8) to (-0.45,-.4);
	\draw[->,thick] (0.45,-.4) to (-0.45,.8);
        \draw[-,thick,green] (0,-.4) to[out=90,in=-90] (.45,0.2);
        \draw[->,thick,green] (0.45,0.2) to[out=90,in=-90] (0,0.8);
   \node at (-0.45,-.6) {\tiny $\infty_m$};
   \node at (0,-.6) {\tiny $1$};
  \node at (0.45,-.6) {\tiny $\infty_m$};
   \node at (.5,-.1) {\tiny ${\g}$};
\end{tikzpicture}
\end{array}
-
      \begin{array}{c}
\begin{tikzpicture}[baseline = 0,xscale=1.5,yscale=1.2]
	\draw[<-,thick] (0.45,.8) to (0.45,-.4);
	\draw[->,thick] (-0.45,-.4) to (-0.45,.8);
        \draw[->,thick,green] (0,-.4) to (0,0.8);
   \node at (-0.45,-.6) {\tiny $\infty_m$};
   \node at (0,-.6) {\tiny $1$};
  \node at (0.45,-.6) {\tiny $\infty_m$};
   \node at (.7,-.1) {\tiny ${\g}$};
\end{tikzpicture}
\end{array}.
\]
\end{lem}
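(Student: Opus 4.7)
I would proceed by downward induction on $m$, in parallel with the arguments for Lemmas~\ref{lem:crossings-infty} and~\ref{lem:crossings-all-infty}. For the base case $m=p$, we have $\infty_p = E_0$ (a single strand labelled $0$), and the identity reduces to three simple-root strands with labels $(0,1,0)$. It then becomes the first case of~\eqref{eqn:rel-2KM-3} applied with $i = k = 0$ and $j = 1$: since the quiver contains the arrow $0 \to 1$ we have $k \rel j$, and the scalar produced is $t_{0,1} = -1$. This is the source of the minus sign in the statement, and it neatly mirrors the $+$ sign in Lemma~\ref{lem:crossings-infty-1}, where the analogous scalar was $t_{1,0} = 1$.

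For the inductive step, I would write $\infty_m = \infty_{m+1} \cdot m$ on both the left and the right, so that each $\infty_m$-strand is replaced by an $\infty_{m+1}$-strand followed (inwards) by a strand labelled $m$. Because $m \ge 3$ and the only neighbours of $1$ in the cyclic quiver are $0$ and $2$, we have $1 \norel m$. Consequently every crossing between the inner $1$-strand and one of the two newly-introduced $m$-strands is governed by the second cases of~\eqref{eqn:rel-2KM-2} and~\eqref{eqn:rel-2KM-3} and contributes no correction term. After performing these commutations one is reduced (up to cosmetic rearrangement) to an identity of the same shape but with $\infty_{m+1}$ in place of $\infty_m$; since $\gamma \in Y_m \subset Y_{m+1}$, the induction hypothesis applies and finishes the step, yielding precisely the minus sign coming from the base case.

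The main obstacle is exactly as in the proofs of Lemmas~\ref{lem:crossings-infty} and~\ref{lem:crossings-all-infty}: one must ensure that the intermediate manipulations do not generate parasitic diagrams. Two types of unwanted terms can appear. First, a configuration in which two $\infty_{m+1}$-strands meet in a cap-cup shape; this is killed by Lemma~\ref{lem:vanishing-infty}. Second, diagrams in which some region carries a weight lying outside $P_+$; these vanish in the quotient $\mathcal{U}_+(\hgl_p)$, which is precisely why the hypothesis $\gamma \in Y_m$ is needed, since it pins down the form of the weights in the relevant regions once the $m$-strands have been commuted past the $\infty_{m+1}$-strands. Verifying case by case that every spurious term falls into one of these two categories is the most delicate part of the argument, but it is essentially the sign-flipped counterpart of the bookkeeping already performed for the preceding lemmas.
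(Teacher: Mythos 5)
Your base case and overall strategy (downward induction on $m$, expanding $\infty_m$ into $\infty_{m+1}$ followed by an $m$-strand) match the paper's proof, and the identification of the sign with $t_{0,1}=-1$ when $m=p$ is correct. The gap is in the inductive step, in the claim that after commuting the $1$-strand past the two $m$-strands one is reduced ``up to cosmetic rearrangement'' to the identity for $\infty_{m+1}$. This is not so: besides the two $\infty_{m+1}$-strands and the $1$-strand, the expanded diagram contains the two $m$-strands, which cross each other and cross the opposite $\infty_{m+1}$-strands, and these crossings are not removable for free (the label $m$ is adjacent to $m+1$, which occurs inside $\infty_{m+1}$). When you apply the induction hypothesis to the sub-diagram formed by the two $\infty_{m+1}$-strands and the $1$-strand (at the weight $\gamma+2\alpha_m$, not $\gamma$ --- this is the correct reason the hypothesis applies), the first resulting term does reassemble into the first term of the desired right-hand side by sliding the $1$-strand past the $m$-strands, exactly as you say. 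But the second term, coming from the ``minus the straight strands'' part of the induction hypothesis, still carries the two crossing $m$-strands wrapped around a now-straightened $\infty_{m+1}$-strand; it is not yet $-\,\mathrm{id}$ of $E_{\infty_m}E_1E_{\infty_m}$.

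To finish one must resolve this residual term: apply Lemma~\ref{lem:crossings-infty-m-1} (for the parameter $m+1$, i.e.\ to the configuration $m$, $\infty_{m+1}$, $m$) together with the second case of~\eqref{eqn:rel-2KM-2}; this produces the desired identity diagram plus one extra diagram in which the $\infty_{m+1}$-strand bulges through the crossing of the two $m$-strands, and that extra diagram is killed because it contains a region labelled $\gamma+\alpha_0+\cdots+\alpha_{m+1}=\gamma+\delta+\varepsilon_1-\varepsilon_{m+1}$, which lies outside $P_+$ precisely because $\gamma\in Y_m$. Your generic remark that parasitic terms are killed either by Lemma~\ref{lem:vanishing-infty} or by weights outside $P_+$ does not capture this step: Lemma~\ref{lem:vanishing-infty} plays no role here (no cap--cup of $\infty_{m+1}$-strands arises in this proof), and the essential missing input is Lemma~\ref{lem:crossings-infty-m-1}, which your sketch never invokes. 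Without it the induction does not close.
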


\begin{proof}
We prove the formula once again by downward induction on $m$. If $m=p$, then the relation is an instance of the first case in~\eqref{eqn:rel-2KM-3}. Now assume that $m<p$. We have
\begin{multline*}
\begin{array}{c}
\begin{tikzpicture}[baseline = 0,yscale=1.3,xscale=1.4]
	\draw[<-,thick] (0.45,.8) to (-0.45,-.4);
	\draw[->,thick] (0.45,-.4) to (-0.45,.8);
        \draw[-,thick,green] (0,-.4) to[out=90,in=-90] (-.45,0.2);
        \draw[->,thick,green] (-0.45,0.2) to[out=90,in=-90] (0,0.8);
   \node at (-0.45,-.5) {\tiny $\infty_m$};
   \node at (0,-.5) {\tiny $1$};
  \node at (0.5,-.5) {\tiny $\infty_m$};
   \node at (.5,-.1) {\tiny ${\g}$};
\end{tikzpicture}
\end{array}
=
-
\begin{array}{c}
\begin{tikzpicture}[baseline = 0,yscale=1.3,xscale=2.3]
	\draw[<-,thick] (0.26,.8) to (-0.64,-.4);
	        \draw[-,thick,green] (0,-.4) to[out=90,in=-90] (-.45,0.2);
        \draw[->,thick,green] (-0.45,0.2) to[out=90,in=-90] (0,0.8);
        	\draw[->,thick] (0.3,-.4) to (-0.6,.8);
	\draw[<-,thick,red] (0.55,.8) to (-0.35,-.4);
	\draw[->,thick,red] (0.55,-.4) to (-0.35,.8);
   \node at (-0.64,-.5) {\tiny $\infty_{m+1}$};
   \node at (0,-.5) {\tiny $1$};
  \node at (0.3,-.5) {\tiny $\infty_{m+1}$};
  \node at (-0.35,-.5) {\tiny $m$};
  \node at (0.55,-.5) {\tiny $m$};
   \node at (.6,-.1) {\tiny ${\g}$};
\end{tikzpicture}
\end{array}
\\
=
-
\begin{array}{c}
\begin{tikzpicture}[baseline = 0,yscale=1.3,xscale=2.3]
	\draw[<-,thick] (0.26,.8) to (-0.64,-.4);
	\draw[->,thick,green] (0,-.4) to[out=90,in=-90] (-.15,0) to[out=90,in=-90] (-.05,.2) to[out=90,in=-90] (-.15,.4) to[out=90,in=-90] (0,.8);
        	\draw[->,thick] (0.3,-.4) to (-0.6,.8);
	\draw[<-,thick,red] (0.55,.8) to (-0.35,-.4);
	\draw[->,thick,red] (0.55,-.4) to (-0.35,.8);
   \node at (-0.05,-.5) {\tiny $1$};
  \node at (-0.35,-.5) {\tiny $m$};
  \node at (0.55,-.5) {\tiny $m$};
   \node at (.6,-.1) {\tiny ${\g}$};
\end{tikzpicture}
\end{array}
+
\begin{array}{c}
\begin{tikzpicture}[baseline = 0,yscale=1.3,xscale=2]
	\draw[<-,thick] (-0.64,.8) to (-0.64,-.4);
	        \draw[->,thick,green] (0,-.4) to[out=90,in=-90] (-.2,.2) to[out=90,in=-90] (0,0.8);
        	\draw[->,thick] (0.3,-.4) to[out=90,in=-90] (0,.2) to[out=90,in=-90] (0.3,.8);
	\draw[<-,thick,red] (0.55,.8) to (-0.35,-.4);
	\draw[->,thick,red] (0.55,-.4) to (-0.35,.8);
   \node at (0,-.5) {\tiny $1$};
  \node at (-0.35,-.5) {\tiny $m$};
  \node at (0.55,-.5) {\tiny $m$};
   \node at (.6,-.1) {\tiny ${\g}$};
\end{tikzpicture}
\end{array}
\end{multline*}
by induction (which is applicable since $\gamma+2\alpha_m \in Y_{m+1}$). In the final expression, using repeatedly the second case in~\eqref{eqn:rel-2KM-3} we see that the first term coincides with the first term in the right-hand side of the equality we wish to prove. Now, consider the second term. Using Lemma~\ref{lem:crossings-infty-m-1} and the second case in~\eqref{eqn:rel-2KM-2}, we see that to conclude it suffices to prove that
\[
      \begin{array}{c}
\begin{tikzpicture}[baseline = 0,xscale=1.5,yscale=1.2]
	\draw[<-,thick,red] (0.45,.8) to (-0.45,-.4);
	\draw[->,thick,red] (0.45,-.4) to (-0.45,.8);
        \draw[-,thick] (0,-.4) to[out=90,in=-90] (.45,0.2);
        \draw[->,thick] (0.45,0.2) to[out=90,in=-90] (0,0.8);
   \node at (-0.45,-.6) {\tiny $m$};
   \node at (0,-.6) {\tiny $\infty_{m+1}$};
  \node at (0.45,-.6) {\tiny $m$};
   \node at (.5,-.1) {\tiny ${\g}$};
\end{tikzpicture}
\end{array}
=0
\]
in $\mathcal{U}_+(\hgl_p)$. This follows from the fact that
\[
\gamma+(\alpha_0 + \cdots + \alpha_{m+1}) = \gamma + (\delta + \varepsilon_1 - \varepsilon_{m+1})
\]
does not belong to $P_+$.
\end{proof}

We finish this subsection with two lemmas proving some relations involving dots.

\begin{lem}
\label{lem:crossings-dot-infty}
For any $\g \in Y_m$, in $\mathcal{U}_+(\hgl_p)$ we have:
\begin{equation*}
  \begin{array}{c}
    \begin{tikzpicture}[thick,xscale=0.15,yscale=0.25]
      \draw[->] (-2,-2) to (2,2);
      \draw[->] (2,-2) to (-2,2);
      \node at (-1,-1) {$\bullet$};
      \node at (-2,-2.6) {\tiny $\infty_m$};      
      \node at (2,-2.6) {\tiny $\infty_m$};
      \node at (2.5,0) {\tiny ${\g}$};
    \end{tikzpicture}
\end{array}
-
  \begin{array}{c}
    \begin{tikzpicture}[thick,xscale=0.15,yscale=0.25]
      \draw[->] (-2,-2) to (2,2);
      \draw[->] (2,-2) to (-2,2);
      \node at (1,1) {$\bullet$};
      \node at (-2,-2.6) {\tiny $\infty_m$};      
      \node at (2,-2.6) {\tiny $\infty_m$};
      \node at (2.5,0) {\tiny ${\g}$};
    \end{tikzpicture}
\end{array}
= 
  \begin{array}{c}
    \begin{tikzpicture}[thick,xscale=0.15,yscale=0.25]
      \draw[->] (-2,-2) to (2,2);
      \draw[->] (2,-2) to (-2,2);
      \node at (-1,1) {$\bullet$};
      \node at (-2,-2.6) {\tiny $\infty_m$};      
      \node at (2,-2.6) {\tiny $\infty_m$};
      \node at (2.5,0) {\tiny ${\g}$};
    \end{tikzpicture}
\end{array}
-
  \begin{array}{c}
    \begin{tikzpicture}[thick,xscale=0.15,yscale=0.25]
      \draw[->] (-2,-2) to (2,2);
      \draw[->] (2,-2) to (-2,2);
      \node at (1,-1) {$\bullet$};
      \node at (-2,-2.6) {\tiny $\infty_m$};      
      \node at (2,-2.6) {\tiny $\infty_m$};
      \node at (2.5,0) {\tiny ${\g}$};
    \end{tikzpicture}
\end{array}
= 
\begin{array}{c}
    \begin{tikzpicture}[thick,xscale=0.15,yscale=0.25]
      \draw[->] (-2,-2) to (-2,2);
      \draw[->] (1,-2) to (1,2);
     \node at (-2,-2.6) {\tiny $\infty_m$};      
     \node at (1,-2.6) {\tiny $\infty_m$};
      \node at (2.5,0) {\tiny ${\g}$};
    \end{tikzpicture}
\end{array}.
\end{equation*}
\end{lem}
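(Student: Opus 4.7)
My plan is to prove the lemma by downward induction on $m$, following the strategy already used for Lemmas~\ref{lem:vanishing-infty}--\ref{lem:crossings-2infty-1}. The base case $m=p$ is immediate: $\infty_p$ is a single strand labelled $0$, the sign $(-1)^{p-m}$ in the definition of the crossing is~$1$, and both equalities in the statement reduce directly to relation~\eqref{eqn:rel-2KM-1} applied with $i=j=0$. (Observe that the two equalities in the statement are in fact the same relation, once for each of the two crossing strands, so it suffices to establish one of them; the other follows by the symmetric argument, or equivalently by applying the reflection of diagrams across a horizontal axis.)

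For the inductive step with $m<p$, I would unfold the definition of the $\infty_m/\infty_m$ crossing, writing it as $(-1)^{p-m}$ times the composition of all $(p-m+1)^2$ elementary crossings among the component strands (labelled $0,p-1,\dots,m+1,m$) of the two $\infty_m$ groups. The dot on the strand labelled $\infty_m$ lives, by definition, on the rightmost component strand, whose label is~$m$. I would then push the dot upward along this $m$-strand through the composite crossing, applying~\eqref{eqn:rel-2KM-1} at each elementary crossing it meets. The $m$-strand encounters exactly $p-m+1$ strands of the opposing $\infty_m$ group (one of each label $0,p-1,\ldots,m+1,m$), and by the second case of~\eqref{eqn:rel-2KM-1} all but the unique $m/m$ crossing produce no contribution; the $m/m$ crossing itself contributes a single identity term, in which that crossing is resolved into two parallel straight $m$-strands. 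Thus the expression $(\text{dot bottom-left})-(\text{dot top-right})$ equals $(-1)^{p-m}$ times the diagram obtained from the full $\infty_m/\infty_m$ crossing by resolving its $m/m$ crossing. The last step is to verify that this resolved diagram simplifies to $(-1)^{p-m}$ times the identity on $\infty_m\infty_m$, so that the overall sign works out. For this, I would unbraid the remaining distinct-label crossings via repeated application of the second case of~\eqref{eqn:rel-2KM-2}, and invoke the inductive hypothesis to reduce the $\infty_{m+1}/\infty_{m+1}$ sub-crossing appearing inside.

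The main obstacle will be the topological bookkeeping at this last step, namely confirming that after the removal of the single $m/m$ crossing, the remainder of the fully braided $\infty_m/\infty_m$ diagram really straightens to the identity (with exactly the right sign), as opposed to producing extra unwanted terms. To pin down the sign convention and build confidence, I would first treat the smallest non-trivial case $p-m=1$ (so each $\infty_m$ has two component strands and the composite crossing consists of only four elementary crossings) completely by hand: there the topology is transparent, the cancellation via~\eqref{eqn:rel-2KM-2} is immediate, and the $(-1)^{p-m}$ sign in the definition of the crossing is exactly what is needed. The general inductive step then becomes a careful repetition of this pattern, with the $\infty_{m+1}/\infty_{m+1}$ block absorbing the complexity at each stage.
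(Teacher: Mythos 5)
Your opening moves are sound: the base case $m=p$ is indeed just~\eqref{eqn:rel-2KM-1}, the dot sits on the $m$-labelled component strand, and sliding it through the unfolded crossing via~\eqref{eqn:rel-2KM-1} kills every contribution except the resolution of the unique $m/m$ crossing. This is exactly the mechanism of the paper's proof (though the paper only unfolds one level, writing $\infty_m=\infty_{m+1}\cdot m$ and keeping $\infty_{m+1}$ as a single thick strand). The problem is your last step. After resolving the $m/m$ crossing, the diagram you are left with is a \emph{reduced}-word diagram for a nontrivial permutation: every pair of strands still crosses at most once, so there are no bigons and the second case of~\eqref{eqn:rel-2KM-2} has nothing to act on. Worse, many of the remaining crossings involve adjacent labels $i\rel j$ (e.g.\ $m\rel m+1$ and $p-1\rel 0$), for which the relevant case of~\eqref{eqn:rel-2KM-2} produces dot terms rather than the identity; your ``$p-m=1$'' sanity check already hits this, since the two component labels $0$ and $p-1$ are related in the quiver. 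Nor can the inductive hypothesis help at this point: it is a statement about dots, and the resolved diagram is dot-free.

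Concretely, what remains after the resolution is an $\infty_{m+1}$--$\infty_{m+1}$ crossing with one $m$-strand passing to the right of it (and one straight $m$-strand); this is not isotopic to the identity and cannot be unbraided. The paper closes the argument by applying Lemma~\ref{lem:crossings-infty} at level $m+1$, i.e.\ the relation asserting that the $m$-strand passing on one side of the $\infty_{m+1}$ crossing equals the $m$-strand passing on the other side \emph{plus the identity}; that identity correction term is precisely the right-hand side of the present lemma. The leftover term is then killed by a weight argument: it contains a region labelled $\gamma+\alpha_m+2(\alpha_{m+1}+\cdots+\alpha_0)$, which does not lie in $P_+$, so it vanishes in $\mathcal{U}_+(\hgl_p)$. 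Your plan contains no step that could produce this vanishing, yet some such use of the quotient $\mathcal{U}_+(\hgl_p)$ and of the hypothesis $\gamma\in Y_m$ is indispensable — the stated identity fails in $\mathcal{U}(\hgl_p)$ itself.
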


\begin{proof}
We prove the second equality; the proof of the first one is similar. By definition we have
\[
  \begin{array}{c}
    \begin{tikzpicture}[thick,xscale=0.15,yscale=0.25]
      \draw[->] (-2,-2) to (2,2);
      \draw[->] (2,-2) to (-2,2);
      \node at (-1,1) {$\bullet$};
      \node at (-2,-2.6) {\tiny $\infty_m$};      
      \node at (2,-2.6) {\tiny $\infty_m$};
      \node at (2.5,0) {\tiny ${\g}$};
    \end{tikzpicture}
\end{array}
-
  \begin{array}{c}
    \begin{tikzpicture}[thick,xscale=0.15,yscale=0.25]
      \draw[->] (-2,-2) to (2,2);
      \draw[->] (2,-2) to (-2,2);
      \node at (1,-1) {$\bullet$};
      \node at (-2,-2.6) {\tiny $\infty_m$};      
      \node at (2,-2.6) {\tiny $\infty_m$};
      \node at (2.5,0) {\tiny ${\g}$};
    \end{tikzpicture}
\end{array}
=
    - \begin{array}{c}
    \begin{tikzpicture}[thick,xscale=-0.25,yscale=0.3]
          \draw[->] (0,-2) to (4,2);
      \draw[->] (4,-2) to (0,2);
      \draw[green,->] (-2,-2) to (2,2);
      \draw[green,->] (2,-2) to (-2,2);
      \node at (1.5,1.5) {$\color{green}\bullet$};
      \node at (0,-2.4) {\tiny $\infty_{m+1}$};      
      \node at (3.5,-2.4) {\tiny $\infty_{m+1}$};
      \node at (-2.5,0) {\tiny ${\g}$};
    \end{tikzpicture}
\end{array}
+ 
\begin{array}{c}
    \begin{tikzpicture}[thick,xscale=-0.25,yscale=0.3]
          \draw[->] (0,-2) to (4,2);
      \draw[->] (4,-2) to (0,2);
      \draw[green,->] (-2,-2) to (2,2);
      \draw[green,->] (2,-2) to (-2,2);
      \node at (-0.5,-0.5) {\color{green}$\bullet$};
      \node at (0,-2.4) {\tiny $\infty_{m+1}$};      
      \node at (3.5,-2.4) {\tiny $\infty_{m+1}$};
      \node at (-2.5,0) {\tiny ${\g}$};
    \end{tikzpicture}
\end{array},
\]
where the green strands are labelled by $m$. Using the second case in~\eqref{eqn:rel-2KM-1} to pull the green dot through the black strand, and then the first case in~\eqref{eqn:rel-2KM-1}, we see that the right-hand side is equal to
\[
-
      \begin{array}{c}
\begin{tikzpicture}[baseline = 0,xscale=1.5,yscale=1.2]
	\draw[<-,thick] (0.45,.8) to (-0.45,-.4);
	\draw[->,thick] (0.45,-.4) to (-0.45,.8);
		\draw[->,thick,green] (0.65,-.4) to (0.65,.8);
        \draw[-,thick,green] (0,-.4) to[out=90,in=-90] (.45,0.2);
        \draw[->,thick,green] (0.45,0.2) to[out=90,in=-90] (0,0.8);
   \node at (-0.45,-.6) {\tiny $\infty_{m+1}$};
  \node at (0.45,-.6) {\tiny $\infty_{m+1}$};
   \node at (.8,-.1) {\tiny ${\g}$};
\end{tikzpicture}
\end{array}
=
\begin{array}{c}
    \begin{tikzpicture}[thick,xscale=0.15,yscale=0.4]
      \draw[->] (-2,-2) to (-2,2);
      \draw[->] (1,-2) to (1,2);
     \node at (-2,-2.6) {\tiny $\infty_m$};      
     \node at (1,-2.6) {\tiny $\infty_m$};
      \node at (2.5,0) {\tiny ${\g}$};
    \end{tikzpicture}
\end{array}
-
      \begin{array}{c}
\begin{tikzpicture}[baseline = 0,xscale=1.5,yscale=1.2]
	\draw[<-,thick] (0.45,.8) to (-0.45,-.4);
	\draw[->,thick] (0.45,-.4) to (-0.45,.8);
		\draw[->,thick,green] (0.65,-.4) to (0.65,.8);
        \draw[-,thick,green] (0,-.4) to[out=90,in=-90] (-.25,0.2);
        \draw[->,thick,green] (-0.25,0.2) to[out=90,in=-90] (0,0.8);
   \node at (-0.45,-.6) {\tiny $\infty_{m+1}$};
  \node at (0.45,-.6) {\tiny $\infty_{m+1}$};
   \node at (.8,-.1) {\tiny ${\g}$};
\end{tikzpicture}
\end{array},
\]
where the equality follows from Lemma~\ref{lem:crossings-infty}. (This result can be applied because $\gamma+\alpha_m$ belongs to $Y_{m+1}$.) To conclude it suffices to remark that the rightmost term vanishes in $\mathcal{U}_+(\hgl_p)$ since it contains a region labelled by
\[
\gamma+\alpha_m + 2(\alpha_{m+1} + \cdots + \alpha_0) = \gamma+(\varepsilon_{m+1} - \varepsilon_m) + 2(\delta + \varepsilon_1 - \varepsilon_{m+1}),
\]
which does not belong to $P_+$.
\end{proof}

The following lemma shows that, in the definition of the dotted arrow labelled by $\infty_m$, we could have put the dot on any strand.

\begin{lem}
\label{lem:dotsmove}
For any $\g \in Y_m$, in $\mathcal{U}_+(\hgl_p)$ we have
\[
\begin{array}{c}
    \begin{tikzpicture}[thick,yscale=0.3,xscale=0.4,baseline]
      \draw[->] (0,-2) to (0,2);
      \node at (0,-2.4) {\tiny $0$};
      \node at (0,0) {$\bullet$};
      \draw[->] (1.5,-2) to (1.5,2);
      \node at (1.5,-2.4) {\tiny $p-1$};
      \draw[->] (3.5,-2) to (3.5,2);
      \node at (3.5,-2.4) {\tiny $m+1$};
      \draw[->] (5,-2) to (5,2);
      \node at (5,-2.4) {\tiny $m$};
      \node at (2.6,0) {$\dots$};
      \node at (5.5,0) {\tiny $\g$};
    \end{tikzpicture}
\end{array}
=
\begin{array}{c}
    \begin{tikzpicture}[thick,yscale=0.3,xscale=0.4,baseline]
      \draw[->] (0,-2) to (0,2);
      \node at (0,-2.4) {\tiny $0$};
      \node at (1.5,0) {$\bullet$};
      \draw[->] (1.5,-2) to (1.5,2);
      \node at (1.5,-2.4) {\tiny $p-1$};
      \draw[->] (3.5,-2) to (3.5,2);
      \node at (3.5,-2.4) {\tiny $m+1$};
      \draw[->] (5,-2) to (5,2);
      \node at (5,-2.4) {\tiny $m$};
      \node at (2.6,0) {$\dots$};
      \node at (5.5,0) {\tiny $\g$};
    \end{tikzpicture}
\end{array}
= \cdots =
\begin{array}{c}
    \begin{tikzpicture}[thick,yscale=0.3,xscale=0.4,baseline]
      \draw[->] (0,-2) to (0,2);
      \node at (0,-2.4) {\tiny $0$};
      \node at (5,0) {$\bullet$};
      \draw[->] (1.5,-2) to (1.5,2);
      \node at (1.5,-2.4) {\tiny $p-1$};
      \draw[->] (3.5,-2) to (3.5,2);
      \node at (3.5,-2.4) {\tiny $m+1$};
      \draw[->] (5,-2) to (5,2);
      \node at (5,-2.4) {\tiny $m$};
      \node at (2.6,0) {$\dots$};
      \node at (5.7,0) {\tiny $\g$};
    \end{tikzpicture}
\end{array}
.
\]
\end{lem}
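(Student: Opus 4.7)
The plan is to iterate an elementary equality: for each pair of adjacent strands in the composite $\infty_m$ (i.e., the pairs $(E_{i+1}, E_i)$ for $m \leq i \leq p-2$, and the pair $(E_0, E_{p-1})$), show that the dot on the left strand equals the dot on the right strand in $\mathcal{U}_+(\hgl_p)$ for $\gamma \in Y_m$. Since these pairs chain through all $p-m+1$ strands of $\infty_m$ (whose labels $m, m+1, \ldots, p-1, 0$ form adjacent vertices in the affine Dynkin diagram), iterating these local equalities yields the full chain of equalities stated in the lemma.

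For the local step, I will use relation~\eqref{eqn:rel-2KM-2}. Since $i \to i+1$ in the Dynkin diagram, we have $t_{i+1,i} = 1$ and $t_{i,i+1} = -1$, so the relation applied to the strands $E_{i+1}$ (left) and $E_i$ (right) reads
\[
      \begin{array}{c}
\begin{tikzpicture}[baseline = 0,xscale=0.9]
	\draw[->,thick] (0.28,.4) to[out=90,in=-90] (-0.28,1.1);
	\draw[->,thick,green] (-0.28,.4) to[out=90,in=-90] (0.28,1.1);
	\draw[-,thick,green] (0.28,-.3) to[out=90,in=-90] (-0.28,.4);
	\draw[-,thick] (-0.28,-.3) to[out=90,in=-90] (0.28,.4);
  \node at (-0.28,-.45) {\tiny $i{+}1$};
  \node at (0.28,-.45) {\tiny $i$};
\end{tikzpicture}
\end{array}
\;=\;(\text{dot on strand } i+1) \;-\; (\text{dot on strand } i),
\]
placed inside the full $\infty_m$ composite, with the identity on the remaining strands. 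Hence to prove equality of dots, it suffices to show that this double crossing vanishes in $\mathcal{U}_+(\hgl_p)$.

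The vanishing will follow from the observation that the middle region of the double crossing (the small region between the two crossings and between the two swapped strands) carries a weight not in $P_+$. Following weights across strands from the rightmost region (weight $\gamma$) and accounting for the contribution $\alpha_m + \cdots + \alpha_{i-1}$ from the strands to the right of $E_i$ together with an $\alpha_{i+1}$-contribution when crossing the swapped right strand (formerly $E_{i+1}$), the middle region has weight
\[
\gamma + \sum_{j=m}^{i-1} \alpha_j + \alpha_{i+1}
\;=\; \gamma + (\e_i - \e_m) + (\e_{i+2} - \e_{i+1}),
\]
(with the obvious modifications when $i = m$ or $i = p-1$). Since $\gamma \in Y_m$ has zero coefficient on $\e_{i+1}$ (as $i+1 > m$), the coefficient of $\e_{i+1}$ in this weight is $-1$; for the pair $(E_0, E_{p-1})$ the same computation gives coefficient $-1$ on $\e_p$. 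In either case the middle region's weight lies outside $P_+$, so the double crossing is zero in $\mathcal{U}_+(\hgl_p)$.

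There is no serious obstacle here: the argument is essentially bookkeeping of weights in regions of the diagram, combined with a single application of~\eqref{eqn:rel-2KM-2}. The only point to double-check is that the weight computation of the middle region is correct, and that each of the iteratively applied equalities really uses the same hypothesis $\gamma \in Y_m$ (which it does, since the outer weight $\gamma$ is unchanged throughout the chain of equalities).
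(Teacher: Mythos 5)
Your proof is correct and is essentially the same as the paper's: both compute the weight $\gamma + \alpha_m + \cdots + \alpha_{k-1} + \alpha_{k+1}$ of the middle region of each adjacent double crossing, observe it has $\varepsilon_{k+1}$-coefficient (or $\varepsilon_p$-coefficient for the $(0,p-1)$ pair) equal to $-1$ and hence lies outside $P_+$, and then invoke~\eqref{eqn:rel-2KM-2} together with $t_{k+1,k} = -t_{k,k+1}$ to conclude. The only cosmetic difference is that you spell out the two cases slightly differently, while the paper handles them uniformly with the convention $\alpha_p := \alpha_0$.
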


\begin{proof}
If $m=p$ there is nothing to prove. So, we assume that $m \leq p-1$.

Note that, for any $k \in \{m, \cdots, p-1\}$, the weight 
\[
\g + \a_m + \dots + \a_{k-1} + \a_{k+1} = \begin{cases}
\gamma + \varepsilon_k - \varepsilon_m + \varepsilon_{k+2} - \varepsilon_{k+1} & \text{if $k \leq p-2$} \\
\gamma + \varepsilon_{p-1} - \varepsilon_m + (\delta - \varepsilon_p+\varepsilon_1) & \text{if $k=p-1$}
\end{cases}
\]
does not belong to $P_+$  (where $\a_p := \alpha_0$). Hence, using relation~\eqref{eqn:rel-2KM-2} we deduce that in $\mathcal{U}^{[n]}(\hgl_p)$ we have
\[
0 = 
\begin{array}{c}
    \begin{tikzpicture}[thick,yscale=0.3,xscale=0.4,baseline]
      \draw[->] (0,-2) to (0,2);
      \node at (0,-2.4) {\tiny $0$};
      \draw[->] (5,-2) to (5,2);
      \node at (5,-2.4) {\tiny $m$};
      \node at (1.2,0) {$\dots$};
      \node at (3.8,0) {$\dots$};
      \draw[->] (2,-2) to[out=90,in=-90] (3,0) to[out=90,in=-90] (2,2);
      \draw[->] (3,-2) to[out=90,in=-90] (2,0) to[out=90,in=-90] (3,2);
      \node at (5.5,0) {\tiny $\g$};
\node at (3,-2.4) {\tiny $k$};
    \end{tikzpicture}
\end{array}
=
t_{k+1,k}
\begin{array}{c}
    \begin{tikzpicture}[thick,yscale=0.3,xscale=0.4,baseline]
      \draw[->] (0,-2) to (0,2);
      \node at (0,-2.4) {\tiny $0$};
      \draw[->] (5,-2) to (5,2);
      \node at (5,-2.4) {\tiny $m$};
      \node at (1,0) {$\dots$};
      \node at (4,0) {$\dots$};
      \draw[->] (2,-2) to (2,2);
      \draw[->] (3,-2) to (3,2);
      \node at (2,0) {$\bullet$};
      \node at (5.5,0) {\tiny $\g$};
\node at (3,-2.4) {\tiny $k$};
    \end{tikzpicture}
\end{array}
+
t_{k,k+1}
\begin{array}{c}
    \begin{tikzpicture}[thick,yscale=0.3,xscale=0.4,baseline]
      \draw[->] (0,-2) to (0,2);
      \node at (0,-2.4) {\tiny $0$};
      \draw[->] (5,-2) to (5,2);
      \node at (5,-2.4) {\tiny $m$};
      \node at (1,0) {$\dots$};
      \node at (4,0) {$\dots$};
      \draw[->] (2,-2) to (2,2);
      \draw[->] (3,-2) to (3,2);
      \node at (3,0) {$\bullet$};
      \node at (5.5,0) {\tiny $\g$};
\node at (3,-2.4) {\tiny $k$};
    \end{tikzpicture}
\end{array}
\]
(where $t_{p,p-1}:=t_{0,p-1}$ and $t_{p-1,p}:=t_{p-1,0}$).
The lemma now follows because $t_{ij} = -t_{ji}$ if $i \rel j$.
\end{proof}

\subsection{Restriction of the $2$-representation to $\mathcal{U}(\hgl_n)$}

From now on we restrict to the case $m=n$, and write $\infty$ for $\infty_n$. We define the ``cup'' and ``cap'' $2$-morphisms as follows:
\[
\begin{array}{c}
      \begin{tikzpicture}[baseline = 0]
        \draw[->,thick] (-0.3,0.3) to[out=-90,in=180] (0,-0.2) to[out=0,in=-90] (0.3,0.3);
        \node at (0.5,.2) {\tiny ${\g}$};
          \node at (-0.3,.4) {\tiny ${\infty}$};
      \end{tikzpicture}
\end{array}
=
\begin{array}{c}
    \begin{tikzpicture}[thick,xscale=0.3,baseline = 0,yscale=-0.3]
            \draw[->] (-3,-2) to[out=90,in=180] (0,2) to[out=0,in=90] (3,-2);
            \draw[->] (-1,-2) to[out=90,in=180] (0,0) to[out=0,in=90] (1,-2);
      \node at (-2,-1.2) {\tiny $\dots$};
      \node at (2,-1.2) {\tiny $\dots$};
      \node at (3,-2.4) {\tiny $n$};
      \node at (1,-2.4) {\tiny $0$};
      \node at (-1,-2.4) {\tiny $0$};
      \node at (-3,-2.4) {\tiny $n$};
      \node at (3.2,0) {\tiny${\g}$};
    \end{tikzpicture}
\end{array},
\quad
\begin{array}{c}
      \begin{tikzpicture}[scale=0.3,baseline = 0]
        \draw[->,thick] (-1,-1) to[out=90,in=180] (0,1) to[out=0,in=90] (1,-1);
        \node at (1.3,.2) {\tiny ${\g}$};
          \node at (-1,-1.4) {\tiny ${\infty}$};
      \end{tikzpicture}
\end{array}
=
\begin{array}{c}
    \begin{tikzpicture}[thick,scale=0.3,baseline = 0]
            \draw[->] (-3,-2) to[out=90,in=180] (0,2) to[out=0,in=90] (3,-2);
            \draw[->] (-1,-2) to[out=90,in=180] (0,0) to[out=0,in=90] (1,-2);
      \node at (-2,-1.2) {\tiny $\dots$};
      \node at (2,-1.2) {\tiny $\dots$};
      \node at (3,-2.4) {\tiny $0$};
      \node at (1,-2.4) {\tiny $n$};
      \node at (-1,-2.4) {\tiny $n$};
      \node at (-3,-2.4) {\tiny $0$};
      \node at (3.2,0) {\tiny ${\g}$};
    \end{tikzpicture}
\end{array}.
\]


We will view $\{1, \dots, n-1, \infty\}$ as a quiver
 as follows:
\begin{equation}
\label{eqn:quiver-hgln}
\begin{array}{c}
  \begin{tikzpicture}[xscale=0.8,yscale=0.5,baseline]
    \node (a) at (1,0) {$1$};
    \node (b) at (2,0) {$2$};
    \node (c) at (3,0) {};
    \node (d) at (3.5,0) {};
    \node (e) at (5,0) {$n-1$};
    \node (i) at (3,2) {$\infty$};
    \node at (3.3,0) {$\dots$};
    \draw[->] (a) to (b);
    \draw[->] (b) to (c);
    \draw[->] (d) to (e);
    \draw[->] (e) to (i);
    \draw[->] (i) to (a);
  \end{tikzpicture}
  \end{array} .
\end{equation}
We extend the definition of our scalars $t_{ij} \in \{ \pm 1 \}$ to $i,j \in \{1, \dots, n-1, \infty \}$ as follows:
\begin{equation*}
t_{ij} = \begin{cases} -1 & \text{if $i \to j$;}\\
1 & \text{if $j \to i$ or $i \norel j$.} \end{cases}
\end{equation*}
(This is clearly consistent with the notation from~\S\ref{subsec:R}.)

Associated with these scalars we have the $2$-category $\mathcal{U}(\hgl_n)$ as in~\cite{Brundan} (i.e.~as in~\S\ref{sss:2KM-def} with $p$ replaced by $n$). 
Recall that our goal is to define a ``restriction'' of the $2$-representation of $\mathcal{U}(\hgl_p)$ (or in fact, as noticed above, of $\mathcal{U}_+(\hgl_p)$) considered in Theorem~\ref{thm:2rep-GLn} to $\mathcal{U}(\hgl_n)$. In view of this,
to a weight $\gamma$ for $\hgl_n$ we associate the category $\Repp_{\imath_n(\gamma)}(G)$ if $\gamma$ is a weight of $\bigwedge^n \nat_n$ (which we view as an element in $P$ in the obvious way, see~\S\ref{ss:restriction-combinatorics}) or $0$ otherwise. To a generator $E_i 1_\gamma$ (with $i \in \{1, \cdots, n-1\}$) we associate the action on $\Repp_{\imath_n(\gamma)}(G)$ of the element denoted similarly in $\mathcal{U}(\hgl_p)$ via the action of Theorem~\ref{thm:2rep-GLn} (or $0$). When $i=\infty$ we proceed similarly, using the $1$-morphism $E_{\infty} 1_\gamma$ defined in~\eqref{eqn:def-Einfty} (in the case $m=n$). Finally, to the various generating $2$-morphisms of $\mathcal{U}(\hgl_n)$ we associate the action of the corresponding $2$-morphisms in $\mathcal{U}_+(\hgl_p)$ via this same action of Theorem~\ref{thm:2rep-GLn} (or $0$).
The main result of the present section is the following.

\begin{thm}
\label{thm:reduction-2rep}
  These data define a $2$-representation of $\mathcal{U}(\hgl_n)$.
\end{thm}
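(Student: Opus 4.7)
The plan is to verify directly the defining relations of $\mathcal{U}(\hgl_n)$ from Brundan's presentation (as recalled in~\S\ref{sss:2KM-def}), relation by relation. Since our $2$-representation of $\mathcal{U}(\hgl_p)$ factors through $\mathcal{U}_+(\hgl_p)$, and since the weight lattice of $\hgl_n$ embeds into $P_+$ precisely as a subset of $Y_n$ (in fact of the weights of $\bigwedge^n \nat_n$), we are in position to apply the technical lemmas of~\S\ref{ss:first-relations} with $m = n$. The key observation is that all generating $2$-morphisms involving only the indices $i \in \{1,\dots,n-1\}$ are literally restrictions of the corresponding $2$-morphisms for $\mathcal{U}(\hgl_p)$; hence every relation of $\mathcal{U}(\hgl_n)$ in which no strand is labelled $\infty$ holds tautologically.

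The substantive work is to verify the relations in which at least one strand carries the label $\infty$. First I would address the KLR-type relations~\eqref{eqn:rel-2KM-1}--\eqref{eqn:rel-2KM-3}. The dot-crossing relation~\eqref{eqn:rel-2KM-1} with one $\infty$-strand reduces, after unpacking $E_\infty = E_0 E_{p-1}\cdots E_n$ and applying Lemma~\ref{lem:dotsmove}, to instances of the corresponding relation in $\mathcal{U}_+(\hgl_p)$ and vanishings coming from the support condition in $P_+$; the case of two $\infty$-strands is exactly Lemma~\ref{lem:crossings-dot-infty}. For the quadratic relation~\eqref{eqn:rel-2KM-2}, the case $i=j=\infty$ is Lemma~\ref{lem:vanishing-infty}, while the cases $\{i,j\}=\{k,\infty\}$ for $k\in\{1,\dots,n-1\}$ with $k\norel\infty$ in the quiver~\eqref{eqn:quiver-hgln} (i.e.\ $2\le k\le n-2$) follow from Lemma~\ref{lem:vanishing-infty} applied to each crossing in the unpacking, and the remaining cases $k=1$ and $k=n-1$ follow from Lemmas~\ref{lem:crossings-infty-1} and~\ref{lem:crossings-infty-m-1} respectively (both of which produce the predicted sign, consistent with the normalization of $t_{\infty,k}$ and $t_{k,\infty}$). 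For the braid relation~\eqref{eqn:rel-2KM-3}, the case of three $\infty$-strands is Lemma~\ref{lem:crossings-all-infty}, the cases $\{i,j,k\}=\{\infty,n-1,\infty\}$ and $\{\infty,1,\infty\}$ are Lemmas~\ref{lem:crossings-infty} and~\ref{lem:crossings-2infty-1}, and the remaining mixed cases reduce to the braid relation in $\mathcal{U}_+(\hgl_p)$ together with the vanishing of regions lying outside $P_+$.

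Next I would verify the adjunction data. The pair $(E_\infty,F_\infty)$ comes equipped with a canonical adjunction built by composing the adjunctions $(E_j,F_j)$ in the order dictated by~\eqref{eqn:def-Einfty}; the generating cup and cap $2$-morphisms were already defined as nested cups and caps of the building blocks. The ``right adjunction'' relation~\cite[(1.5)]{Brundan} is then immediate from the zigzag identities for each individual $(E_j,F_j)$. What remains is the invertibility of the $2$-morphisms depicted in~\cite[(1.7)--(1.9)]{Brundan} when one or both of the strands are labelled $\infty$; for $i\in\{1,\dots,n-1\}$ these are inherited from the $2$-representation of $\mathcal{U}(\hgl_p)$. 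For the $\infty$-strand this reduces, as in the proof of Theorem~\ref{thm:2rep-GLn}, to checking that $(E_\infty,F_\infty)$ equipped with the chosen dot and crossing defines an $\mathfrak{sl}_2$-categorification on each weight space in the sense of \cite[Definition~5.20]{R2KM}; invertibility then follows from Rouquier's general theorems \cite[Theorems~5.22 and~5.25]{R2KM}.

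The main obstacle is this last invertibility statement: one must exhibit, on each relevant weight space $\lambda$, an adjunction between $(E_\infty 1_\lambda, F_\infty 1_{\lambda+\alpha_\infty})$ whose compatibility with the nilpotent endomorphism $x$ and the crossing $\tau$ realises an $\mathfrak{sl}_2$-categorification. Concretely, one computes the action of the $\hgl_n$-element $[e_\infty,f_\infty]$ on $\bigwedge^n \nat_n$ and checks that on each weight space it matches the expected difference of dimensions, so that Rouquier's criterion applies. This is where the restriction $p\ge n\ge 3$ is used crucially: for $n=2$ the affine Dynkin diagram degenerates and the computations of~\S\ref{ss:first-relations} — in particular the support arguments producing the correct signs $t_{\infty,k},t_{k,\infty}$ — break down, which is the reason the theorem is stated for $n\ge 3$.
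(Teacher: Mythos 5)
Your proposal is correct and follows essentially the same route as the paper: verify the KLR relations~\eqref{eqn:rel-2KM-1}--\eqref{eqn:rel-2KM-3} inside $\mathcal{U}_+(\hgl_p)$ using the lemmas of~\S\ref{ss:first-relations} (which you correctly identify as carrying all the substance), and then obtain the adjunction and invertibility relations from the general $\mathfrak{sl}_2$-categorification results of (Chuang--)Rouquier exactly as in the proof of Theorem~\ref{thm:2rep-GLn}. The only slips are two misattributions for relation~\eqref{eqn:rel-2KM-2}: the distant case $\{k,\infty\}$ with $2\le k\le n-2$ follows from repeated use of the distant case of~\eqref{eqn:rel-2KM-2} in $\mathcal{U}_+(\hgl_p)$ rather than from Lemma~\ref{lem:vanishing-infty}, and the adjacent cases $k\in\{1,n-1\}$ are handled in the paper by a direct diagram computation using Lemma~\ref{lem:dotsmove} (Lemmas~\ref{lem:crossings-infty-1} and~\ref{lem:crossings-infty-m-1} are instances of the braid relation~\eqref{eqn:rel-2KM-3}, not of~\eqref{eqn:rel-2KM-2}).
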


\begin{proof}
A large part of the work in view of proving Theorem~\ref{thm:reduction-2rep} has been done in~\S\ref{ss:first-relations}. In fact, we first need to check that, in our action on $\Rep^{[n]}(G)$, the generating $2$-morphisms satisfy relations~\eqref{eqn:rel-2KM-1}--\eqref{eqn:rel-2KM-3}. 
Of course, it will suffice to prove that these relations hold in $\mathcal{U}_+(\hgl_p)$.
To check one relation below we will use the following notation:
\[
\begin{array}{c}
    \begin{tikzpicture}[thick,scale=0.3,baseline]
      \draw[double,double distance=2pt,->] (0,-2) to (0,2);
      \node at (0.8,0) {\tiny ${\g}$};
    \end{tikzpicture}
\end{array}
:= 
\begin{array}{c}
    \begin{tikzpicture}[thick,yscale=0.3,xscale=0.5,baseline]
      \draw[->] (0,-2) to (0,2);
      \node at (0,-2.4) {\tiny $p-1$};
      \draw[->] (1.5,-2) to (1.5,2);
      \node at (1.5,-2.4) {\tiny $p-2$};
      \draw[->] (3.5,-2) to (3.5,2);
      \node at (3.5,-2.4) {\tiny $n+1$};
      \draw[->] (5,-2) to (5,2);
      \node at (5,-2.4) {\tiny $n$};
      \node at (2.5,0) {$\dots$};
      \node at (5.5,0) {\tiny ${\g}$};
    \end{tikzpicture}
\end{array} .
\]
(We interpret the right-hand side as an empty diagram in case $n=p$.)

Relation~\eqref{eqn:rel-2KM-1} follows directly from the corresponding relation in $\mathcal{U}(\hgl_p)$ in the case $i \neq j$ or $i=j \neq \infty$. And the case $i=j = \infty$ has been checked in
Lemma~\ref{lem:crossings-dot-infty}.

Now we consider relation~\eqref{eqn:rel-2KM-2}. If neither $i$ nor $j$ is $\infty$ then the relation follows directly from the corresponding relation in $\mathcal{U}(\hgl_p)$. Similarly, the relation is clear if $i$ or $j$ is $\infty$ and the other label belongs to $\{2, \cdots, n-2\}$. The case $i=j=\infty$ has been checked in Lemma~\ref{lem:vanishing-infty}.

The remaining cases are $i = \infty$, $j \in \{1, n-1\}$ and $i \in \{1, n-1\}$, $j
= \infty$. We do the case $i = 1$, $j = \infty$ and leave the (very
similar) other cases to the reader:
  \begin{gather*}
      \begin{array}{c}
\begin{tikzpicture}[baseline = 0]
	\draw[->,thick] (0.28,.4) to[out=90,in=-90] (-0.28,1.1);
	\draw[->,thick] (-0.28,.4) to[out=90,in=-90] (0.28,1.1);
	\draw[-,thick] (0.28,-.3) to[out=90,in=-90] (-0.28,.4);
	\draw[-,thick] (-0.28,-.3) to[out=90,in=-90] (0.28,.4);
  \node at (-0.28,-.5) {\tiny $1$};
  \node at (0.28,-.5) {\tiny $\infty$};
  \node at (.43,.4) {\tiny ${\g}$};
\end{tikzpicture}
\end{array}
=
      \begin{array}{c}
    \begin{tikzpicture}[thick,xscale=0.15,yscale=0.5]
      \draw[->] (-2,-2) to[out=90,in=-90] (2,0) to[out=90,in=-90] (-2,2);
      \draw[double distance=2pt,double,<-,scale=-1] (-2.4,-2) to[out=60,in=-110] (1.2,0) to[out=110,in=-60] (-2.4,2);
      \draw[<-,scale=-1] (-.8,-2) to[out=60,in=-110] (3.2,0) to[out=110,in=-60] (-.8,2);
      \node at (.8,-2.3) {\tiny $0$};   
      \node at (-2.2,-2.3) {\tiny $1$};   
\node at (3.2,0) {\tiny ${\g}$};
     \end{tikzpicture}
\end{array}
 =
       \begin{array}{c}
    \begin{tikzpicture}[thick,xscale=0.15,yscale=0.5]
      \draw[->] (-2,-2) to[out=90,in=-110] (1,0) to[out=110,in=-90] (-2,2);
      \draw[double distance=2pt,double,<-,scale=-1] (-2.7,-2) to (-2.7,2);
      \draw[<-,scale=-1] (-1,-2) to[out=90,in=-110] (3,0) to[out=110,in=-90] (-1,2);
      \node at (1,-2.3) {\tiny $0$};   
      \node at (-2.2,-2.3) {\tiny $1$};   
 \node at (4.1,0) {\tiny ${\g}$};
     \end{tikzpicture}
\end{array}
= t_{1, 0} 
      \begin{array}{c}
    \begin{tikzpicture}[thick,xscale=0.1,yscale=0.3]
      \draw[->] (-2,-2) to (-2,2);
      \draw[<-,scale=-1] (-2,-2) to (-2,2);
      \node at (-2,-2.4) {\tiny $1$};      
      \node at (2,-2.4)  {\tiny $\infty$};
      \node at (-2,0) {$\bullet$};
\node at (3.9,0) {\tiny ${\g}$};
     \end{tikzpicture}
\end{array}
+  t_{0, 1} 
      \begin{array}{c}
    \begin{tikzpicture}[thick,xscale=0.1,yscale=0.3]
      \draw[->] (-2,-2) to (-2,2);
      \draw[<-,scale=-1] (-2,-2) to (-2,2);
      \node at (-2,-2.4) {\tiny $1$};      
      \node at (2,-2.4)  {\tiny $\infty$};
      \node at (2,0) {$\bullet$};
\node at (3.9,0) {\tiny ${\g}$};
     \end{tikzpicture}
\end{array}.
\end{gather*}
Here the second equality follows from the second case in~\eqref{eqn:rel-2KM-2}, and the third equality uses the third case in relation~\eqref{eqn:rel-2KM-2} and Lemma~\ref{lem:dotsmove}. We conclude using the fact that $t_{1,0}=t_{1,\infty}$ and $t_{0,1}=t_{\infty,1}$.

Next we consider relation~\eqref{eqn:rel-2KM-3}. If $i$, $j$, and $k$ all belong to $\{1, \cdots, n-1\}$, the desired relation follows from the corresponding relation in $\mathcal{U}(\hgl_p)$. If $i \neq k$, then the relation is also a direct consequence of the corresponding relation in $\mathcal{U}(\hgl_p)$. If $i=k \norel j$ and $j \neq i$, the relation is also easy to check. 
The remaining cases are as follows:
\begin{itemize}
\item 
$i=j=k=\infty$: this case was checked in Lemma~\ref{lem:crossings-all-infty};
\item
$i=k=\infty$ and $j=n-1$: this case was checked in Lemma~\ref{lem:crossings-infty};
\item
$i = k = \infty$ and $j=1$: this case was checked in Lemma~\ref{lem:crossings-2infty-1};
\item
$i=k=1$ and $j=\infty$: this case was checked in Lemma~\ref{lem:crossings-infty-1};
\item
$i=k=n-1$, $j = \infty$: this case was checked in Lemma~\ref{lem:crossings-infty-m-1}.
\end{itemize}


To conclude the proof we observe that, as explained in the proof of Theorem~\ref{thm:2rep-GLn}, once the relations~\eqref{eqn:rel-2KM-1}--\eqref{eqn:rel-2KM-3} are established
the other relations follow automatically by general results due to (Chuang--)Rouquier.
\end{proof}

\section{From categorical $\hgl_n$-actions to $\DiagBS$-modules}
\label{sec:g2D}

In this section we continue with the setting of Sections~\ref{sec:2rep-GLn}--\ref{sec:restriction}, and we assume that $p>n \geq 3$.

\subsection{Strategy}
\label{ss:strategy}

Our goal now is to finally prove that the categorical $\hgl_n$-action on $\Rep^{[n]}(G)$ obtained from Theorem~\ref{thm:reduction-2rep} induces an action of $\DiagBS$ on $\Rep_0(G)$, where the weight $\lambda_0$ is chosen as in~\S\ref{sss:comparison-Section3}. In particular, this will prove Conjecture~\ref{conj:main} in this case (see Remark~\ref{rmk:main-conj}\eqref{it:rmk-left-right}).

First, we consider a quotient of the $2$-category $\mathcal{U}(\hgl_n)$ defined as follows.
We denote by $\mathcal{U}^{[n]}(\hgl_n)$ the $2$-category obtained from $\mathcal{U}(\hgl_n)$ by quotienting the $2$-morphisms by those which contain a region labelled by a weight which is not of the form
\[
\sum_{i=1}^n n_i \varepsilon_{i} + m\delta \qquad \text{with $n_i \in \Z_{\geq 0}$, $\sum_{i=1}^n n_i = n$, and $m \in \Z$.}
\]
Using conventions similar to those in Section~\ref{sec:restriction}, we will denote in the same way a morphism in $\mathcal{U}(\hgl_n)$ and its image in $\mathcal{U}^{[n]}(\hgl_n)$. By construction (see~\S\ref{ss:restriction-combinatorics}), the $2$-representation of $\mathcal{U}(\hgl_n)$ considered in Theorem~\ref{thm:reduction-2rep} factors through a $2$-representation of $\mathcal{U}^{[n]}(\hgl_n)$.

Recall now the category $\DiagBS$ defined in~\S\ref{ss:diag-SB}.
We set
\[
\omega := \varepsilon_1 + \cdots + \varepsilon_n.
\]
Then $\Rep_0(G)$ is the category attached to $\omega$ in the $2$-representation of $\mathcal{U}^{[n]}(\hgl_n)$ under consideration. Hence to conclude our proof it suffices
to construct a strict monoidal functor
\[
\sigma \colon \DiagBS \to \End_{\mathcal{U}^{[n]}(\hgl_n)}(\omega)
\]
sending, for each $s \in S$, the object $B_s$ to the corresponding functor $\Trans_s \Trans^s$ as defined in~\S\ref{sss:comparison-Section3}.

For this 
we identify the set $S$ of simple reflections in $\Waff$ with the set $\{1,
\cdots, n-1, \infty\}$ in such a way that any $j \in \{1, \cdots, n-1\}$ corresponds to the transposition $s_{n-j}$, and that $\infty$ corresponds to the unique simple reflection in $S \smallsetminus \Sf$. Then, our requirement on images of objects $B_s$ forces us to define $\sigma$ on
objects as follows\footnote{In what follows, in $\mathcal{U}^{[n]}(\hgl_n)$, an upward arrow will most of the time have a downward arrow labelled by the same element on its left; we indicate the label only once for simplicity.}: 
\[
B_{(i_1, \cdots, i_r)} \langle k \rangle \ \mapsto \
\begin{array}{c}
\begin{tikzpicture}[baseline = 0]
	\draw[->,thick,black] (-.7,.6) to (-.7,-.2);
	\draw[<-,thick,black] (-0.4,.6) to (-.4,-.2);
	\draw[->,thick,green] (0.4,.6) to (.4,-.2);
	\draw[<-,thick,green] (.7,.6) to (.7,-.2);
	\node at (0,0.2) {$\dots$};
        \node at (-.4,-.4) {\tiny $i_1$};
        \node at (.7,-.4) {\tiny $i_r$};
        \node at (1,.2) {\tiny $\omega$};
\end{tikzpicture}.
\end{array}
\]
On the ``dot'' and trivalent morphisms, $\sigma$ is defined as follows:
\begin{align*}
 \begin{array}{clcclc}
  \begin{array}{c}
    \begin{tikzpicture}[thick,scale=0.15]
      \draw[black] (0,-5) to (0,0);
      \node at (0,0) {$\bullet$};
      \node at (0,-5.7) {\tiny $i$};
    \end{tikzpicture}
  \end{array}
& \mapsto &\;\; 
  \begin{array}{c}
    \begin{tikzpicture}[thick,scale=0.2]
\draw[<-,black] (-1.5,-5) to[out=90,in=180] (0,0) to[out=0,in=90] (1.5,-5);
\node at (3,-3) {\tiny $\omega$};
\node at (1.5,-5.5) {\tiny $i$};
    \end{tikzpicture}  \end{array}, \qquad 
&  \begin{array}{c}
    \begin{tikzpicture}[thick,scale=-0.15]
      \draw[black] (0,-5) to (0,0);
      \node at (0,0) {$\bullet$};
      \node at (0,-5.7) {\tiny $i$};
    \end{tikzpicture}
  \end{array}
& \mapsto &\;\; 
  \begin{array}{c}
    \begin{tikzpicture}[thick,scale=-0.2]
\draw[<-,black] (-1.5,-5) to[out=90,in=180] (0,0) to[out=0,in=90] (1.5,-5);
\node at (-3,-3) {\tiny $\omega$};
\node at (1.5,-5.7) {\tiny $i$};
    \end{tikzpicture}  \end{array}, \\
  \begin{array}{c}
    \begin{tikzpicture}[thick,scale=-0.15]
      \draw[black] (-4,5) to (0,0) to (4,5);
      \draw[black] (0,-5) to (0,0);
      \node at (0,-5.7) {\tiny $i$};
    \end{tikzpicture}
  \end{array}
& \mapsto & \;\; 
  \begin{array}{c}
    \begin{tikzpicture}[thick,scale=-0.15]
\draw[<-,black] (-1,-5) to[out=90,in=-90] (-5,5);
\draw[->,black] (1,-5) to[out=90,in=-90] (5,5);
\draw[->,black] (3,5) to[out=-90,in=0] (0,2) to[out=180,in=-90] (-3,5);
\node at (-6,2) {\tiny $\omega$};
\node at (-5,5.5) {\tiny $i$};
\node at (3,5.5) {\tiny $i$};
    \end{tikzpicture}  \end{array}, 
\qquad &
  \begin{array}{c}
    \begin{tikzpicture}[thick,scale=0.15]
      \draw[black] (-4,5) to (0,0) to (4,5);
      \draw[black] (0,-5) to (0,0);
            \node at (0,-5.7) {\tiny $i$};
    \end{tikzpicture}
  \end{array}
& \mapsto & \;\; 
  \begin{array}{c}
    \begin{tikzpicture}[thick,scale=0.15]
\draw[<-,black] (-1,-5) to[out=90,in=-90] (-5,5);
\draw[->,black] (1,-5) to[out=90,in=-90] (5,5);
\draw[->,black] (3,5) to[out=-90,in=0] (0,2) to[out=180,in=-90] (-3,5);
\node at (4,-2) {\tiny $\omega$};
\node at (-5,5.7) {\tiny $i$}; 
\node at (1,-5.7) {\tiny $i$};
    \end{tikzpicture}  \end{array}.  
\end{array} 
\end{align*}
For $i, j \in \{1, \cdots, n-1,\infty\}$ with $i \norel j$ (in the quiver described in~\eqref{eqn:quiver-hgln}), $\sigma$ is defined as follows:
\begin{gather*}
  \begin{array}{c}
    \begin{tikzpicture}[thick,scale=0.15]
      \draw (-2,-2) to (2,2);
      \draw[green] (2,-2) to (-2,2);
      \node at (-2,-3) {\tiny $i$};
     \node at (2,-3) {\tiny $j$};
    \end{tikzpicture}
  \end{array}
 \mapsto  \;\; 
  \begin{array}{c}
    \begin{tikzpicture}[thick,scale=0.15]
     \draw[<-] (-3,-3) to (1,3);
      \draw[->] (-1,-3) to (3,3);
     \draw[green,<-] (1,-3) to (-3,3);
      \draw[green,->] (3,-3) to (-1,3);
      \node at (-1,-4) {\tiny $i$};
     \node at (3,-4) {\tiny $j$};
     \node at (3,0) {\tiny $\omega$};
    \end{tikzpicture}  \end{array} 
\end{gather*}
Finally, for $i, j \in \{1, \cdots, n-1,\infty\}$ with $j \to i$ (again in the quiver described in~\eqref{eqn:quiver-hgln}), we define $\sigma$ by:
\begin{align*}
 \begin{array}{clcclc}
  \begin{array}{c}
    \begin{tikzpicture}[thick,scale=0.15]
      \draw (-4,-5) to (0,0) to (4,-5);
      \draw (0,5) to (0,0);
      \draw[green] (-4,5) to (0,0) to (4,5);
      \draw[green] (0,-5) to (0,0);
      \node at (-4,-6) {\tiny $i$};
     \node at (0,-6) {\tiny $j$};
    \end{tikzpicture}
  \end{array}
& \mapsto & \;\; -
  \begin{array}{c}
    \begin{tikzpicture}[thick,scale=0.15]
      \draw[<-] (-5,-5) to[out=90,in=180] (0,1.5) to[out=0,in=90] (5,-5);
      \draw[->] (-3,-5) to[out=90,in=-90] (1,1) to (1,5);
      \draw[<-] (3,-5) to[out=90,in=-90] (-1,1) to (-1,5);
\draw[<-,green] (-1,-5) to[out=90,in=-90] (-5,5);
\draw[->,green] (1,-5) to[out=90,in=-90] (5,5);
\draw[->,green] (3,5) to[out=-90,in=0] (0,2) to[out=180,in=-90] (-3,5);
\node (a) at (6,1) {\tiny $\omega$};
      \node at (-3,-6) {\tiny $i$};
     \node at (1,-6) {\tiny $j$};
     \node at (5,-6) {\tiny $i$};
    \end{tikzpicture}  \end{array}, \quad & \quad 
  \begin{array}{c}
    \begin{tikzpicture}[thick,scale=-0.15]
      \draw (-4,-5) to (0,0) to (4,-5);
      \draw (0,5) to (0,0);
      \draw[green] (-4,5) to (0,0) to (4,5);
      \draw[green] (0,-5) to (0,0);
      \node at (4,6) {\tiny $j$};
     \node at (0,6) {\tiny $i$};
    \end{tikzpicture}
  \end{array}
& \mapsto & \;\; - 
  \begin{array}{c}
    \begin{tikzpicture}[thick,scale=-0.15]
      \draw[<-] (-5,-5) to[out=90,in=180] (0,1.5) to[out=0,in=90] (5,-5);
      \draw[->] (-3,-5) to[out=90,in=-90] (1,1) to (1,5);
      \draw[<-] (3,-5) to[out=90,in=-90] (-1,1) to (-1,5);
\draw[<-,green] (-1,-5) to[out=90,in=-90] (-5,5);
\draw[->,green] (1,-5) to[out=90,in=-90] (5,5);
\draw[->,green] (3,5) to[out=-90,in=0] (0,2) to[out=180,in=-90] (-3,5);
\node (a) at (-6,1) {\tiny $\omega$};
      \node at (3,6) {\tiny $j$};
     \node at (-1,6) {\tiny $i$};
     \node at (-5,6) {\tiny $j$};
    \end{tikzpicture}  \end{array}.
\end{array} 
\end{align*}
(Here and below, all the $2$-morphisms in $\mathcal{U}(\hgl_n)$ we consider are
as defined in~\cite{Brundan}, and we use the same notation.)

The main result of this section, which finishes the proof of Conjecture~\ref{conj:main} in this case is the following.

\begin{thm}
\label{thm:2KM-SBim}
The above assignment defines a strict monoidal functor
\[
\sigma \colon \DiagBS
  \to \End_{\mathcal{U}^{[n]}(\hgl_n)}(\omega).
\]
\end{thm}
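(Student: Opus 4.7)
The plan is to verify that $\sigma$ respects all the defining relations of $\DiagBS$ as listed in~\cite[\S 5]{ew}. By Remark~\ref{rmk:morphisms-Dasph} and~\eqref{eqn:generators-R}, it suffices to check the relations involving only the generators in items~\eqref{it:morphisms-DBS-dots}, \eqref{it:morphisms-DBS-trivalent} and~\eqref{it:morphisms-DBS-2mvalent} of~\S\ref{ss:diag-SB}; the polynomial-slide relations of~\cite[(5.1), (5.2)]{ew} then follow automatically once we have correctly interpreted the composition of a lower and an upper dot as the action of the corresponding coroot. Since $G=\mathrm{GL}_n(\bk)$ has affine Weyl group $\widetilde{A}_{n-1}$, in the quiver~\eqref{eqn:quiver-hgln} we have $m_{ij}\in\{2,3\}$ throughout, so only the one-color relations, the $A_1\times A_1$ and $A_2$ two-color relations, and the three-color Zamolod\-chikov-type relations have to be checked.

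First, I would verify the one-color relations for a fixed $i\in\{1,\dots,n-1,\infty\}$. The defining cup/cap and trivalent images are built entirely from the unit, counit and the morphisms $x$ and $\tau$ of $(E_i, F_i)$ in $\mathcal{U}^{[n]}(\hgl_n)$. The adjunction axioms of~\cite[(1.5)]{Brundan} immediately give the zigzag (``biadjunction'') relations for dots, and the fact that the images of the 2-morphisms~\cite[(1.7)--(1.9)]{Brundan} are invertible ensures that the Frobenius and needle relations hold. The composition of a lower and an upper dot produces the endomorphism $x$ of $E_i F_i 1_\omega$, so that the polynomial-slide relations~\cite[(5.1)--(5.2)]{ew} translate into the relation~\eqref{eqn:rel-2KM-1} for equal labels and its extension via~\cite[(1.7)]{Brundan}. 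This step is essentially bookkeeping, analogous to the one-color checks of~\cite{msv}.

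Second, for two-color relations I treat separately the cases $i\norel j$ and $i\rel j$. When $i\norel j$ the two strands on each side of a crossing commute by the second case of~\eqref{eqn:rel-2KM-2}, and one checks easily that $\sigma$ of the 4-valent vertex yields mutually inverse crossings, so that all $m_{ij}=2$ relations of~\cite[(5.7)--(5.9)]{ew} reduce to planar isotopy together with~\eqref{eqn:rel-2KM-1}. When $i\rel j$ (i.e.\ $m_{ij}=3$), the task is to verify the ``6-valent vertex'' relations~\cite[(5.10)--(5.12)]{ew}. Here one rewrites both sides as compositions of $E$'s and $F$'s labelled by $i$ and $j$, and reduces them via the adjunctions to two types of identities: the braid-like relation~\eqref{eqn:rel-2KM-3} between three strands (with the two extreme strands labelled by $i$ and the middle by $j$, or vice versa), and the mixed dot/crossing relation~\eqref{eqn:rel-2KM-2}. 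The signs introduced in the definition of $\sigma$ on trivalent vertices are precisely what is needed so that the scalars $t_{ij}$ and $t_{ji}$ appearing in these KLR identities cancel in the correct way. This computation is essentially the one appearing in~\cite{msv} and~\cite{mt1}, but is now uniform in all simple reflections including $\infty$ thanks to Theorem~\ref{thm:reduction-2rep}.

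Finally, the three-color relations~\cite[(6.13)--(6.16)]{ew} have to be checked when three distinct labels $i,j,k$ meet. Their images under $\sigma$ become Zamolodchikov-type tetrahedron identities in $\mathcal{U}^{[n]}(\hgl_n)$, and I expect the verification to be the main technical obstacle; nevertheless, each such identity is a consequence of repeated applications of the ``nil-affine braid'' relation~\eqref{eqn:rel-2KM-3}, the mixed slide~\eqref{eqn:rel-2KM-2}, and isotopy of cups and caps, together with the fact that any region whose label falls outside the set of weights of $\bigwedge^n\nat_n$ kills the diagram in $\mathcal{U}^{[n]}(\hgl_n)$. In practice, one organises the three-color check according to the subdiagram spanned by $i,j,k$ in~\eqref{eqn:quiver-hgln}: either it is a path $i\to j\to k$ (so $m_{ij}=m_{jk}=3$, $m_{ik}=2$), the disjoint union $m_{ij}=m_{ik}=m_{jk}=2$, or one arrow plus an isolated vertex. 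Each case reduces, after resolving the trivalent vertices into cups, caps and crossings via the formulas defining $\sigma$, to a ``movie move'' identity in the KLR calculus, as in~\cite{mt2}. Once all these verifications are done, strictness of $\sigma$ is built into the definition, and Theorem~\ref{thm:2KM-SBim} follows, completing the proof of Conjecture~\ref{conj:main} in type $\mathbf{A}$.
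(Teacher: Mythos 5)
Your overall strategy coincides with the paper's: one checks each defining relation of $\DiagBS$ in $\End_{\mathcal{U}^{[n]}(\hgl_n)}(\omega)$ using the KLR relations, with the key extra tool being that any diagram containing a region labelled by a weight outside $P(\bigwedge^n\nat_n)$ vanishes in $\mathcal{U}^{[n]}(\hgl_n)$ (in particular regions labelled $\omega+2\alpha_i$). However, there are a few concrete gaps. First, you never verify \emph{cyclicity} of the images of the $4$-valent and $6$-valent vertices. Since morphisms in $\DiagBS$ are isotopy classes of diagrams, the assignment is not even well defined until one checks that the chosen images are cyclic with respect to the fixed biadjunctions; the paper devotes a separate step to this, and for the $6$-valent vertex it is a genuine computation with the sign rules~\eqref{signs:1}--\eqref{signs:4}.

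Second, two of your ``automatic'' claims are not automatic. The needle relation does \emph{not} follow from the invertibility of the $2$-morphisms~\cite[(1.7)--(1.9)]{Brundan}; it follows from the vanishing of the clockwise degree-zero bubble, which is an instance of~\cite[(3.6)]{Brundan} because $\langle \omega+\alpha_i,h_i\rangle=2$. Likewise the polynomial relations are not free: one must check that the images of $\alpha_i^\vee$ and $\alpha_j^\vee$ (i.e.\ of dot--dot compositions on strands $i$ and $j$) commute, and the sliding relation~\cite[(5.2)]{ew} for $i\rel j$ requires the bubble-slide identities of~\cite[(1.19)--(1.21), (2.7)]{Brundan} together with the $\mathcal{U}^{[n]}$-specific identity of Lemma~\ref{lem:blue2}, whose proof uses precisely the vanishing of the $\omega+2\alpha_i$ region that you only invoke for the three-color case. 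That same vanishing, fed through~\cite[(3.16)--(3.17)]{Brundan}, is also the engine behind the Jones--Wenzl and two-color associativity checks, so it should appear from the start rather than only at the Zamolodchikov stage. With these points supplied, your outline matches the paper's argument.
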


\begin{rmk}
The constructions in this section are inspired by a similar construction performed in~\cite{mt1, mt2} (following earlier work on the analogous finite case in~\cite{msv}). However these authors use a different choice of constants $t_{ij}$; therefore we provide a complete proof (which is slightly different from the proof in~\cite{mt1, mt2}).
\end{rmk}

To prove Theorem~\ref{thm:2KM-SBim} the strategy is clear: we have a list of
relations defining $\DiagBS$ as described in~\cite{ew}, and we need to verify that
these relations are satisfied in $\End_{\mathcal{U}^{[n]}(\hgl_n)}(\omega)$ by the images of these morphisms.

\subsection{Preliminary lemmas}

For future reference, we first record the ``sign rules'' from~\cite[(2.2), (2.9), (5.7), (5.8)]{Brundan}

\begin{lem}
\label{signrules}
For $i, j \in \{1, \dots, n-1, \infty
  \}$ we have the following sign rules in $\mathcal{U}(\hgl_n)$:
   \begin{gather}
    \label{signs:1}
    \begin{array}{c}
      \begin{tikzpicture}[thick,scale=0.4]
        \draw[->] (-1,0) to[out=90,in=180] (0,1) to[out=0,in=90] (1,0);
        \draw[green,->] (0,0) to (1.3,1.3);
        \node at (-1,-.3) {\tiny $i$};
        \node at (0,-.3) {\tiny $j$};
        \node at  (1.5,.7) {\tiny $\lambda$};
      \end{tikzpicture}
    \end{array}
=
    \begin{array}{c}
      \begin{tikzpicture}[thick,scale=0.4]
        \draw[->] (-1,0) to[out=90,in=180] (0,1) to[out=0,in=90] (1,0);
        \draw[green,->] (0,0) to (-1.3,1.3);
        \node at (-1,-.3) {\tiny $i$};
        \node at (0,-.3) {\tiny $j$};
        \node at  (1.5,.7) {\tiny $\lambda$};
      \end{tikzpicture}
    \end{array},
\quad \quad
    \begin{array}{c}
      \begin{tikzpicture}[thick,scale=-0.4]
        \draw[<-] (-1,0) to[out=90,in=180] (0,1) to[out=0,in=90] (1,0);
        \draw[green,<-] (0,0) to  (1.3,1.3);
        \node at  (-1.5,.5) {\tiny $\lambda$};
        \node at (1.3,1.7) {\tiny $j$};
        \node at (1,-.4) {\tiny $i$};
      \end{tikzpicture}
    \end{array}
=
    \begin{array}{c}
      \begin{tikzpicture}[thick,scale=-0.4]
        \draw[<-] (-1,0) to[out=90,in=180] (0,1) to[out=0,in=90] (1,0);
        \draw[green,<-] (0,0) to  (-1.3,1.3);
        \node at  (-1.5,.5) {\tiny $\lambda$};
        \node at (-1.3,1.7) {\tiny $j$};
        \node at (1,-.4) {\tiny $i$};
      \end{tikzpicture} 
     \end{array}, \\
    \label{signs:2}
    \begin{array}{c}
      \begin{tikzpicture}[thick,scale=-0.4]
        \draw[<-] (-1,0) to[out=90,in=180] (0,1) to[out=0,in=90] (1,0);
        \draw[green,->] (0,0) to (-1.3,1.3);
        \node at  (-1.5,.5) {\tiny $\lambda$};
        \node at (0,-.4) {\tiny $j$};
        \node at (1,-.4) {\tiny $i$};
      \end{tikzpicture}
    \end{array}
=
t_{ji}
    \begin{array}{c}
      \begin{tikzpicture}[thick,scale=-0.4]
        \draw[<-] (-1,0) to[out=90,in=180] (0,1) to[out=0,in=90] (1,0);
        \draw[green,->] (0,0) to (1.3,1.3);
        \node at  (-1.5,.5) {\tiny $\lambda$};
        \node at (0,-.4) {\tiny $j$};
        \node at (1,-.4) {\tiny $i$};
      \end{tikzpicture}
    \end{array},
\quad \quad
    \begin{array}{c}
      \begin{tikzpicture}[thick,scale=0.4]
        \draw[->] (-1,0) to[out=90,in=180] (0,1) to[out=0,in=90] (1,0);
        \draw[green,<-] (0,0) to  (-1.3,1.3);
        \node at (-1,-.4) {\tiny $i$};
        \node at (-1.3,1.7) {\tiny $j$};
        \node at (1.5,.5) {\tiny $\lambda$};
      \end{tikzpicture}
    \end{array}
=
    t_{ji} \begin{array}{c}
      \begin{tikzpicture}[thick,scale=0.4]
        \draw[->] (-1,0) to[out=90,in=180] (0,1) to[out=0,in=90] (1,0);
        \draw[green,<-] (0,0) to  (1.3,1.3);
        \node at (-1,-.4) {\tiny $i$};
        \node at (1.3,1.7) {\tiny $j$};
        \node at (1.5,.5) {\tiny $\lambda$};
      \end{tikzpicture} 
     \end{array},\\
    \label{signs:3}
    \begin{array}{c}
      \begin{tikzpicture}[thick,scale=0.4]
        \draw[<-] (-1,0) to[out=90,in=180] (0,1) to[out=0,in=90] (1,0);
        \draw[green,->] (0,0) to (1.3,1.3);
        \node at (1,-.4) {\tiny $i$};
        \node at (0,-.4) {\tiny $j$};
        \node at (1.5,.5) {\tiny $\lambda$};
      \end{tikzpicture}
    \end{array}
=
    t_{ij} \begin{array}{c}
      \begin{tikzpicture}[thick,scale=0.4]
        \draw[<-] (-1,0) to[out=90,in=180] (0,1) to[out=0,in=90] (1,0);
        \draw[green,->] (0,0) to (-1.3,1.3);
        \node at (1,-.4) {\tiny $i$};
        \node at (0,-.4) {\tiny $j$};
        \node at (1.5,.5) {\tiny $\lambda$};
      \end{tikzpicture}
    \end{array},
\quad \quad
    \begin{array}{c}
      \begin{tikzpicture}[thick,scale=-0.4]
        \draw[->] (-1,0) to[out=90,in=180] (0,1) to[out=0,in=90] (1,0);
        \draw[green,<-] (0,0) to  (1.3,1.3);
        \node at (-1,-.4) {\tiny $i$};
        \node at (1.3,1.7) {\tiny $j$};
        \node at (-1.5,.5) {\tiny $\lambda$};
      \end{tikzpicture}
    \end{array}
=
    t_{ij}\begin{array}{c}
      \begin{tikzpicture}[thick,scale=-0.4]
        \draw[->] (-1,0) to[out=90,in=180] (0,1) to[out=0,in=90] (1,0);
        \draw[green,<-] (0,0) to  (-1.3,1.3);
        \node at (-1,-.4) {\tiny $i$};
                \node at (-1.5,.5) {\tiny $\lambda$};
                \node at (-1.3,1.7) {\tiny $j$};
      \end{tikzpicture} 
     \end{array},\\
    \label{signs:4}
    \begin{array}{c}
      \begin{tikzpicture}[thick,scale=-0.4]
        \draw[->] (-1,0) to[out=90,in=180] (0,1) to[out=0,in=90] (1,0);
        \draw[green,->] (0,0) to (-1.3,1.3);
        \node at (-1,-.4) {\tiny $i$};
        \node at (0,-.4) {\tiny $j$};
        \node at (-1.5,.5) {\tiny $\lambda$};
      \end{tikzpicture}
    \end{array}
=
    \begin{array}{c}
      \begin{tikzpicture}[thick,scale=-0.4]
        \draw[->] (-1,0) to[out=90,in=180] (0,1) to[out=0,in=90] (1,0);
        \draw[green,->] (0,0) to (1.3,1.3);
        \node at (-1,-.4) {\tiny $i$};
        \node at (0,-.4) {\tiny $j$};
        \node at (-1.5,.5) {\tiny $\lambda$};
      \end{tikzpicture}
    \end{array},
\quad \quad
    \begin{array}{c}
      \begin{tikzpicture}[thick,scale=0.4]
        \draw[<-] (-1,0) to[out=90,in=180] (0,1) to[out=0,in=90] (1,0);
        \draw[green,<-] (0,0) to  (-1.3,1.3);
        \node at (1,-.4) {\tiny $i$};
        \node at (1.5,.5) {\tiny $\lambda$};
        \node at (-1.4,1.7) {\tiny $j$};
      \end{tikzpicture}
    \end{array}
=
    \begin{array}{c}
      \begin{tikzpicture}[thick,scale=0.4]
        \draw[<-] (-1,0) to[out=90,in=180] (0,1) to[out=0,in=90] (1,0);
        \draw[green,<-] (0,0) to  (1.3,1.3);
        \node at (1,-.4) {\tiny $i$};
        \node at (1.5,.5) {\tiny $\lambda$};
        \node at (1.3,1.7) {\tiny $j$};
      \end{tikzpicture} \\
     \end{array}.
\end{gather}
\end{lem}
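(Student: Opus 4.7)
The eight identities in this lemma are essentially the sign rules (2.2), (2.9), (5.7), (5.8) from \cite{Brundan}, translated into our diagrammatic conventions. My plan is therefore not to derive them from scratch, but to verify that each identity can be deduced from the defining structure of $\mathcal{U}(\hgl_n)$ recalled in \S\ref{sss:2KM-def}, using only the adjunction relations, the relation~\eqref{eqn:rel-2KM-2}, and the invertibility of the $2$-morphisms appearing in~\cite[(1.7)--(1.9)]{Brundan}.

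First I would handle the four identities in \eqref{signs:1} and \eqref{signs:4}. In each of these, both strands attached to the cup (or cap) carry the same orientation as the strand being slid past it, so no scalar should appear. The argument proceeds by inserting a zigzag on the free strand to turn the picture into a composition of two sideways crossings (each defined by composing a genuine crossing with a cup or cap via the adjunctions), and then applying the second case in~\eqref{eqn:rel-2KM-2} to simplify the middle. Since the labels are different ($i \ne j$ automatically in the cases with non-trivial content, and the case $i=j$ is trivial via the zigzag relation), only the ``identity'' term survives, yielding the equality with no sign.

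Next I would treat the mixed-orientation identities in \eqref{signs:2} and \eqref{signs:3}. Here the scalar $t_{ij}$ or $t_{ji}$ enters, and the source is the relation~\eqref{eqn:rel-2KM-2} together with the definition in \S\ref{subsec:R} of the $t_{ij}$ in terms of the quiver orientation. Concretely, the upward and downward sideways crossings on labels $(i,j)$ are not identified by a bare zigzag; one must first convert a right-adjoint crossing into a left-adjoint crossing, and the coefficient of the ``identity'' term in~\eqref{eqn:rel-2KM-2} is precisely $t_{ij}$ (respectively $t_{ji}$, depending on which orientation one converts). This step is a routine calculation once the correct zigzag insertion and the correct application of~\eqref{eqn:rel-2KM-2} are chosen.

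The main obstacle, and the reason this is stated as a lemma rather than just a reference, is the careful bookkeeping of conventions: Brundan's $t_{ij}$ and his choices of adjunctions differ slightly in appearance from ours (recall in particular that in \S\ref{sss:2KM-def} we have fixed the scalars $s^{pq}_{ij}$ to be zero), so a direct diagrammatic check is needed to make sure no extra sign is introduced when translating between the two frameworks. Once the conventions are aligned, each of the eight identities reduces to a short sequence of applications of~\eqref{eqn:rel-2KM-2} and the zigzag relations, and the lemma follows.
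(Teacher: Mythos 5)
The paper offers no proof of this lemma at all: it is recorded with the single phrase ``we first record the sign rules from~\cite[(2.2), (2.9), (5.7), (5.8)]{Brundan}'', i.e.\ the four identities are taken directly from~\cite{Brundan}, where they are established as part of the structure of $\mathcal{U}(\hgl_n)$. Your opening sentence correctly identifies this, and the right move is to stop there.

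Your attempted re-derivation has a genuine gap. You claim that after inserting a zigzag one can apply ``the second case in~\eqref{eqn:rel-2KM-2}'', so that only the identity term survives and no scalar appears in~\eqref{signs:1} and~\eqref{signs:4}. But that case of~\eqref{eqn:rel-2KM-2} applies only when $i \neq j$ and $i \norel j$: when $i \rel j$ the third case applies and produces dot terms, and when $i=j$ the relation gives zero, not the identity. The sign rules in the lemma hold \emph{uniformly} for all $i,j$, including $i=j$ and $i \rel j$, so your argument covers only one of three sub-cases. The same problem infects your treatment of \eqref{signs:2}--\eqref{signs:3}: you extract the scalar $t_{ij}$ from~\eqref{eqn:rel-2KM-2}, but that relation produces $t_{ij}$ alongside a dot term, not by itself. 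What Brundan actually does is quite different --- the sideways and leftward crossings are defined via the adjunction 2-morphisms, and the sign rules follow from the explicit inversion formulas for the 2-morphisms in~\cite[(1.7)--(1.9)]{Brundan} together with the cyclicity structure he builds up in his Sections 2 and 5, not from a direct application of the quadratic KLR relation. If you want to include a derivation rather than a citation, you would have to repeat that argument; the sketch as given does not go through.
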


We also recall that we do not have to distinguish on which side of a cup or cap a dot is put; see~\cite[(2.1), (5.6)]{Brundan}; we will therefore sometimes write this dot in the middle.

The following relations (in $\mathcal{U}(\hgl_n)$) for $i, j \in \{1, \cdots, n-1, \infty\}$ with $i \ne j$ are equivalent to the relation in~\cite[(1.19)]{Brundan}:
\begin{equation}
  \label{pullapart}
  \begin{array}{c}
    \begin{tikzpicture}[thick,scale=0.3]
      \draw[black,->] (-1,-2) to[out=90,in=-90] (1,0) to[out=90,in=-90] (-1,2);
      \draw[green,<-] (1,-2) to[out=90,in=-90] (-1,0) to[out=90,in=-90] (1,2);
      \node at (-1,-2.5) {\tiny $i$};      \node at (1,2.5) {\tiny $j$};
      \node at (1.7,0) {\tiny $\lambda$};
    \end{tikzpicture}
\end{array}
=
  \begin{array}{c}
    \begin{tikzpicture}[thick,scale=0.3]
      \draw[black,->] (-.5,-2) to (-.5,2);
      \draw[green,<-] (1,-2) to (1,2);
      \node at (-.5,-2.5) {\tiny $i$};      \node at (1,2.5) {\tiny $j$};
            \node at (1.7,0) {\tiny $\lambda$};
    \end{tikzpicture}
\end{array},
\qquad
  \begin{array}{c}
    \begin{tikzpicture}[thick,scale=0.3]
      \draw[black,<-] (-1,-2) to[out=90,in=-90] (1,0) to[out=90,in=-90] (-1,2);
      \draw[green,->] (1,-2) to[out=90,in=-90] (-1,0) to[out=90,in=-90] (1,2);
      \node at (-1,2.5) {\tiny $i$};      \node at (1,-2.5) {\tiny $j$};
      \node at (1.7,0) {\tiny $\lambda$};
    \end{tikzpicture}
\end{array}
=
  \begin{array}{c}
    \begin{tikzpicture}[thick,scale=0.3]
      \draw[black,<-] (-.5,-2) to (-.5,2);
      \draw[green,->] (1,-2) to (1,2);
      \node at (-.5,2.5) {\tiny $i$};      \node at (1,-2.5) {\tiny $j$};
            \node at (1.7,0) {\tiny $\lambda$};
    \end{tikzpicture}
\end{array}
\end{equation}

The following relation (again in $\mathcal{U}(\hgl_n)$), for $i,j \in \{1, \cdots, n-1, \infty\}$ with $i \rel j$, is a special case of~\cite[Corollary~3.4]{Brundan} (taking into account~\cite[(3.9)]{Brundan}):
\begin{equation}
  \label{ipullapart}
  \begin{array}{c}
    \begin{tikzpicture}[thick,scale=0.3]
      \draw[black,->] (-1,-2) to[out=90,in=-90] (1,0) to[out=90,in=-90] (-1,2);
      \draw[<-] (1,-2) to[out=90,in=-90] (-1,0) to[out=90,in=-90] (1,2);
      \node at (-1,-2.5) {\tiny $i$};      
      \node at (1,2.5) {\tiny $i$};
      \node[right] at (1.5,0) {\tiny $\omega + \a_i + \a_j$};
    \end{tikzpicture}
\end{array}
=
  \begin{array}{c}
    \begin{tikzpicture}[thick,scale=0.3]
      \draw[black,->] (-1,-2) to[out=90,in=180] (0,-.5) to[out=0,in=90] (1,-2);
      \draw[->] (1,2) to[out=-90,in=0] (0,.5) to[out=180,in=-90] (-1,2);
      \node at (-1,-2.5) {\tiny $i$};      
      \node at (1,2.5) {\tiny $i$};
            \node[right] at (1.5,0) {\tiny $\omega + \a_i + \a_j$};
    \end{tikzpicture}
\end{array}
-
  \begin{array}{c}
    \begin{tikzpicture}[thick,scale=0.3]
      \draw[black,->] (0,-2) to  (0,2);
      \draw[<-] (1,-2) to  (1,2);
      \node at (0,-2.5) {\tiny $i$};      
      \node at (1,2.5) {\tiny $i$};
            \node[right] at (1.5,0) {\tiny $\omega + \a_i + \a_j$};
    \end{tikzpicture}
\end{array}
\end{equation}

\begin{lem}
\label{nilhecke3}
For $i, j,k \in \{1, \cdots, n-1, \infty\}$, in $\mathcal{U}^{[n]}(\hgl_n)$ we
  have:
\begin{equation}
  \label{eq:nilhecke3}
  \begin{array}{c}
    \begin{tikzpicture}[thick,yscale=0.3,xscale=0.25]
      \draw[black,->] (-2,-2) to (2,2);
      \draw[green,->] (2,-2) to (-2,2);
      \draw[red,->] (0,-2) to[out=90,in=-90] (2,0) to[out=90,in=-90] (0,2);
      \node at (-2,-2.5) {\tiny $i$}; \node at (0,-2.5) {\tiny $j$}; \node at (2,-2.5) {\tiny $k$};
      \node at (3,0) {\tiny $\omega$};
    \end{tikzpicture}
\end{array}
=
\begin{cases}
    -t_{ij} \begin{array}{c}
    \begin{tikzpicture}[thick,yscale=0.3,xscale=0.25]
      \draw[black,->] (-2,-2) to (-2,2);
      \draw[green,->] (2,-2) to (2,2);
      \draw[red,->] (0,-2) to (0,2);
      \node at (-2,-2.5) {\tiny $i$}; \node at (0,-2.5) {\tiny $j$}; \node at (2,-2.5) {\tiny $k$};
            \node at (3,0) {\tiny $\omega$};
    \end{tikzpicture}
\end{array}
& \text{if $i = k$ and $k \rel j$,} \\
0 & \text{otherwise.}
\end{cases}
\end{equation}
\end{lem}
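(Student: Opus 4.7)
My strategy is to apply the KLR braid relation~\eqref{eqn:rel-2KM-3} in $\mathcal{U}(\hgl_n)$ to the left-hand side of the lemma. In our diagram, the middle strand (labelled $j$) bends to the \emph{right}, and this configuration corresponds to the second term in~\eqref{eqn:rel-2KM-3}. Applying that relation, I can rewrite the LHS as the analogous ``left-bend'' diagram (where the $j$-strand bends to the left) minus the correction term $t_{ij} \cdot (\text{three vertical strands})$ when $i = k$ and $k \rel j$, and minus zero otherwise. Comparing this with the claimed RHS of the lemma, the problem is reduced to showing that the left-bend diagram vanishes in $\mathcal{U}^{[n]}(\hgl_n)$.

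To handle the left-bend, I would compute the weights of the various regions in that diagram. The three crossings (of $j$ with $i$, of $i$ with $k$, and of $j$ with $k$) all occur on the left side, and at an intermediate height between the first two crossings the strand order is $j,i,k$ from left to right, giving interior region weights $\omega$, $\omega+\alpha_k$, $\omega+\alpha_k+\alpha_i$ reading from right to left. In the critical case $i=k$, the region ``between the two $i$-labelled strands'' has weight $\omega + 2\alpha_i$, which has $n_i = -1$ (or $n_n = -1$ if $i = \infty$); hence this weight lies outside the allowed set defining $\mathcal{U}^{[n]}(\hgl_n)$, forcing the diagram to vanish. For the remaining case $i \ne k$ (where the correction term is zero and we must also conclude that the right-bend itself vanishes), I would use similar weight considerations combined, if necessary, with further applications of~\eqref{pullapart}, the sign rules of Lemma~\ref{signrules}, and the simplifications provided by~\eqref{eqn:rel-2KM-2} applied to pairs of identically-labelled strands.

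The main obstacle, which I expect to be the hardest part, is verifying the vanishing of the left-bend in the most general cases where $i$, $j$, $k$ are distinct and no single interior region has an obviously disallowed weight. There one probably needs to identify a ``hidden'' region (for instance the leftmost outer region, which carries the weight $\omega + \alpha_i + \alpha_j + \alpha_k$) or to iterate the braid relation and exploit cancellations coming from the restriction to $\mathcal{U}^{[n]}(\hgl_n)$. In practice one would carry out a case analysis indexed by the pairs of labels that are related in the quiver~\eqref{eqn:quiver-hgln}, reducing each case to a vanishing provided either by the weight restriction or by the $\tau^2 = 0$ type relation of~\eqref{eqn:rel-2KM-2} for same-labelled strands.
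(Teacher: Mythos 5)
Your plan is exactly the paper's proof, which consists of a single sentence: by \eqref{eqn:rel-2KM-3} the claim reduces to the vanishing of the left-bend diagram in $\mathcal{U}^{[n]}(\hgl_n)$, and this holds because that diagram contains a region labelled by $\omega+2\alpha_i$. Your execution of the critical case $i=k$ is correct: in the slice where the strand order is $j,i,k$ the weights read $\omega$, $\omega+\alpha_k$, $\omega+\alpha_k+\alpha_i$ from right to left, and the last of these equals $\omega+2\alpha_i$, whose coefficient of $\varepsilon_i$ (of $\varepsilon_n$ if $i=\infty$) is $-1$, so the left-bend dies in the quotient and \eqref{eqn:rel-2KM-3} yields the stated answer. (One small correction: the region of weight $\omega+2\alpha_i$ is the central triangle, bounded by the $j$-strand on the left and the original $i$-strand on the right; the region literally between the two $i$-labelled strands has weight $\omega+\alpha_i$ only. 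Your weight list is right, your parenthetical identification of the region is not.) The coincidence cases $i=j\neq k$ and $j=k\neq i$ are likewise killed by a disallowed region ($\omega+2\alpha_j$ at the top, resp.\ $\omega+2\alpha_k$ at the bottom, of both diagrams), as your "similar weight considerations" would find.

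The case you single out as the hard one --- $i$, $j$, $k$ pairwise distinct --- is genuinely out of reach of this method, and you should not try to close it: there \eqref{eqn:rel-2KM-3} only says the two bends are equal, and every region of either diagram is of the form $\omega$, $\omega+\alpha_a$ or $\omega+\alpha_a+\alpha_b$ with $a\neq b$, all of which are allowed, so no region can be "hidden". Indeed, for pairwise distinct and pairwise unrelated labels each crossing is invertible by \eqref{eqn:rel-2KM-2}, so the triple crossing is an isomorphism between $1$-morphisms whose identities survive in $\mathcal{U}^{[n]}(\hgl_n)$ (they act nontrivially in the $2$-representation on $\Rep^{[n]}(G)$), and it therefore cannot vanish. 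The paper's one-line proof has exactly the same scope as your argument; the lemma is only ever invoked with $i=k$ (in the verification of the Jones--Wenzl relation), and the "otherwise" branch should be read as ranging over the configurations in which two of the labels coincide. So: same approach as the paper, correctly carried out where the approach works, and the residual worry you record is a defect of the statement, not of your proof.
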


\begin{proof}
This follows from~\eqref{eqn:rel-2KM-3}, using the fact that the first term in the left-hand side in this equation vanishes in $\mathcal{U}^{[n]}(\hgl_n)$ since it contains a region labelled by $\omega + 2\a_i$.
\end{proof}

\begin{lem}
\label{lem:blue2} 
For any $i \in \{1, \cdots, n-1, \infty\}$, in $\mathcal{U}^{[n]}(\hgl_n)$ we have:
  \begin{equation*}
    \begin{array}{c}
            \begin{tikzpicture}[thick,scale=0.3]
      \draw[black,->] (-.5,-2) to (-.5,2);
      \draw[black,<-] (.5,-2) to (.5,2);
      \node at (2.5,0) {\tiny $\omega + \a_i$};
      \node at (-.5,-2.5) {\tiny $i$};
      \node at (.5,2.5) {\tiny $i$};
      \end{tikzpicture}
    \end{array}
\; = \;
    \begin{array}{c}
            \begin{tikzpicture}[thick,scale=0.3]
      \draw[black,->] (-1,-2) to[out=90,in=-180] (0,-1) to[out=0,in=90] (1,-2);
      \draw[black,<-] (-1,2) to[out=-90,in=-180] (0,1) to[out=0,in=-90] (1,2);
            \draw[black] (0,0) circle (.5);
      \draw[<-] (.1,.5) to (-.1,.5);
            \node[black] at (-0.5,0) {$\bullet$};
            \node at (-1.4,0) {\tiny $2$};
            \node at (3,1) {\tiny $\omega + \a_i$};
                  \node at (1,2.5) {\tiny $i$};
      \node at (-1,-2.5) {\tiny $i$};
      \end{tikzpicture}
    \end{array}
+
    \begin{array}{c}
            \begin{tikzpicture}[thick,scale=0.3]
      \draw[black,->] (-1,-2) to[out=90,in=-180] (0,-0.5) to[out=0,in=90] (1,-2);
      \draw[black,<-] (-1,2) to[out=-90,in=-180] (0,0.5) to[out=0,in=-90] (1,2);
      \node[black] at (0,0.5) {$\bullet$};
      \node at (2.5,0) {\tiny $\omega + \a_i$};
      \node at (1,2.5) {\tiny $i$};
      \node at (-1,-2.5) {\tiny $i$};
      \end{tikzpicture}
    \end{array}
\;+\;
    \begin{array}{c}
            \begin{tikzpicture}[thick,scale=0.3]
      \draw[black,->] (-1,-2) to[out=90,in=-180] (0,-0.5) to[out=0,in=90] (1,-2);
      \draw[black,<-] (-1,2) to[out=-90,in=-180] (0,0.5)
      to[out=0,in=-90] (1,2);
      \node[black] at (0,-0.5) {$\bullet$};
      \node at (2.5,0) {\tiny $\omega + \a_i$};
      \node at (1,2.5) {\tiny $i$};
      \node at (-1,-2.5) {\tiny $i$};
      \end{tikzpicture}
    \end{array}
  \end{equation*}
\end{lem}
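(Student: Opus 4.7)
The plan is to recognize the asserted identity as a direct consequence of the $\mathfrak{sl}_2$-inversion relation of Brundan's 2-Kac--Moody category $\mathcal{U}(\hgl_n)$ (specifically \cite[(1.7) and (1.9)]{Brundan}), specialized to the weight $\lambda = \omega + \alpha_i$ and then pushed down to the quotient $\mathcal{U}^{[n]}(\hgl_n)$.

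First I would compute $\langle h_i, \omega+\alpha_i\rangle = 2$ for every $i \in \{1,\dots,n-1,\infty\}$ (valid for $i=\infty$ since $K$ acts as zero on the weights of $\bigwedge^n \nat_n$, as noted in \S\ref{ss:restriction-combinatorics}). Brundan's inversion/infinite Grassmannian relation, valid in $\mathcal{U}(\hgl_n)$ for any $\lambda$ with $d := \langle h_i,\lambda\rangle \geq 0$, expresses the identity of $E_i F_i 1_\lambda$ as the sum of (a) a ``crossing'' term that factors through $F_i E_i 1_\lambda$, and (b) a sum, indexed by $0 \leq k \leq d-1$, of cup--cap diagrams decorated with dots and fake bubbles of appropriate degrees. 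For $d=2$ this yields three terms in total.

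Next I would pass to the quotient $\mathcal{U}^{[n]}(\hgl_n)$. The crucial observation here is that the weight $\omega + 2\alpha_i$ is not of the form $\sum n_j \varepsilon_j + m\delta$ with all $n_j \geq 0$: for instance if $i \in \{1,\dots,n-1\}$ (i.e.\ $i$ corresponds to a finite simple reflection), then $\omega + 2\alpha_i$ has a negative $\varepsilon$-coefficient. Hence every 2-morphism in $\mathcal{U}^{[n]}(\hgl_n)$ with a region labeled by $\omega + 2\alpha_i$ is zero. Consequently, the ``crossing'' term in Brundan's inversion formula does \emph{not} outright vanish in the quotient, but the real bubble that would ordinarily sit in the region of weight $\omega + 2\alpha_i$ must be rewritten in terms of fake bubbles using Brundan's bubble-slide identities \cite[(1.8), (3.3)--(3.5)]{Brundan}; this produces precisely the first diagram on the right-hand side of the lemma (the cup--cap with a fake bubble of ``degree $2$''). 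The two remaining cup--cap-with-dot terms coming directly from the $d=2$ sum in Brundan's inversion formula then yield the second and third terms of the right-hand side.

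The main obstacle will be bookkeeping: one must carefully match the orientations, signs (including the scalars $t_{ij}$), and degree/dot conventions used by Brundan with those implicit in the diagrams of Lemma~\ref{lem:blue2}, so that the ``$2$'' label on the fake bubble and the precise placement of the $\bullet$ on each cup--cap term come out exactly as stated. The two undecorated aspects of the verification --- that the decomposition has three terms and that each factors through $1_{\omega+\alpha_i}$ --- follow structurally from the fact that $F_i E_i 1_{\omega+\alpha_i}$ ``morally'' contributes nothing nontrivial in $\mathcal{U}^{[n]}(\hgl_n)$ once the crossing is re-expressed via the fake-bubble rewriting.
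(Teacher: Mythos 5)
Your high-level idea is on the right track, but your account of the mechanism in the middle paragraph is confused in a way that would derail the write-up. Both you and the paper exploit the vanishing of the weight $\omega + 2\alpha_i$ in $\mathcal{U}^{[n]}(\hgl_n)$, but the paper invokes Brundan's Corollary~3.4 — which expresses the double crossing $E_iF_i1_\lambda \to F_iE_i1_\lambda \to E_iF_i1_\lambda$ as a sum of cup--cap-with-bubble diagrams minus the identity — and then observes that the \emph{left-hand side} of that corollary contains a region labelled $\omega + 2\alpha_i$ and therefore vanishes \emph{outright} in the quotient. After this vanishing, the identity $\id_{E_iF_i}$ is equal to the remaining cup--cap terms, and the only thing left to do is to unwind the negative-dot conventions of Brundan's~(3.9), (3.3) into the fake-bubble-of-degree-$2$ form appearing in the lemma. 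You, on the other hand, assert that the crossing term ``does \emph{not} outright vanish'' and that the fake-bubble term is produced by ``rewriting'' a real bubble sitting at $\omega + 2\alpha_i$ via bubble slides. That is incorrect: anything factoring through $F_iE_i1_{\omega+\alpha_i}$ contains a region at $\omega+2\alpha_i$, so it genuinely dies in $\mathcal{U}^{[n]}(\hgl_n)$; the fake bubble in the lemma is already present in the expansion formula (via fake/negative-dot bubbles), not manufactured from the vanished term. As written, your argument would not produce the three-term right-hand side by the route you describe. Two smaller points: you only verify that $\omega+2\alpha_i$ is not a weight of $\bigwedge^n\nat_n$ for $i \in \{1,\dots,n-1\}$; you should also check $i = \infty$ (there $\alpha_\infty = \delta + \varepsilon_1 - \varepsilon_n$, so $\omega + 2\alpha_\infty$ has $\varepsilon_n$-coefficient $-1$). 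And the observation that $K$ acts trivially is irrelevant to the computation $\langle h_i,\omega+\alpha_i\rangle=2$; what matters is the explicit identification of the weights of $\bigwedge^n\nat_n$ as in the definition of $\mathcal{U}^{[n]}(\hgl_n)$.
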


\begin{proof}
By~\cite[Corollary~3.4]{Brundan}, in $\mathcal{U}(\hgl_n)$ we have
  \begin{equation*}
    \begin{array}{c}
 \begin{tikzpicture}[thick,baseline,scale=0.4]
\draw[->] (1,0) to [out=90,in=-90] (2,1.5) to [out=90,in=-90] (1,3);
\draw[<-] (2,0) to [out=90,in=-90] (1,1.5) to [out=90,in=-90] (2,3);
\node at (3.2,1.5) {\tiny $\omega + \a_i$};
  \end{tikzpicture}\end{array}
=
    \begin{array}{c}
            \begin{tikzpicture}[thick,scale=0.3]
      \draw[black,->] (-1,-2) to[out=90,in=-180] (0,-1) to[out=0,in=90] (1,-2);
      \draw[black,<-] (-1,2) to[out=-90,in=-180] (0,1) to[out=0,in=-90] (1,2);
            \draw[black] (0,0) circle (.5);
      \draw[->] (.1,.5) to (-.1,.5);
            \node[black] at (0.5,0) {$\bullet$};
            \node at (1.4,0) {\tiny $-2$};
            \node at (3,1) {\tiny $\omega + \a_i$};
      \end{tikzpicture}
    \end{array}
\;+\;
    \begin{array}{c}
            \begin{tikzpicture}[thick,scale=0.3]
      \draw[black,->] (-1,-2) to[out=90,in=-180] (0,-0.5) to[out=0,in=90] (1,-2);
      \draw[black,<-] (-1,2) to[out=-90,in=-180] (0,0.5)
      to[out=0,in=-90] (1,2);
      \node[black] at (0,0.5) {$\bullet$};
      \node at (3.2,0) {\tiny $\omega + \a_i$};
      \end{tikzpicture}
    \end{array}
\;+\;
    \begin{array}{c}
            \begin{tikzpicture}[thick,scale=0.3]
      \draw[black,->] (-1,-2) to[out=90,in=-180] (0,-0.5) to[out=0,in=90] (1,-2);
      \draw[black,<-] (-1,2) to[out=-90,in=-180] (0,0.5)
      to[out=0,in=-90] (1,2);
      \node[black] at (0,-0.5) {$\bullet$};
      \node at (3.2,0) {\tiny $\omega + \a_i$};
      \end{tikzpicture}
    \end{array}
-
    \begin{array}{c}
            \begin{tikzpicture}[thick,scale=0.3]
      \draw[black,->] (-.5,-2) to (-.5,2);
      \draw[black,<-] (.5,-2) to (.5,2);
      \node at (2.5,0) {\tiny $\omega + \a_i$};
      \end{tikzpicture}
    \end{array}
  \end{equation*}
(where all strands have color $i$).
Then we notice that the left-hand side has a region labelled by $\omega +2\alpha_i$, hence it vanishes in $\mathcal{U}^{[n]}(\hgl_n)$. Using the definition of negative dots in~\cite[(3.9), (3.3)]{Brundan}, what remains is precisely the relation in the lemma.
\end{proof}

\begin{lem}
Let $i, j \in \{1, \cdots, n-1, \infty\}$ with $i \to j$. Then in $\mathcal{U}^{[n]}(\hgl_n)$ we have:
  \begin{equation}
  \label{splits}
  \begin{array}{c}
    \begin{tikzpicture}[thick,scale=0.3]
      \draw[black,<-] (0,-2) to (0,2);      \draw[black,->] (1,-2) to (1,2);
      \node at (0,-2.5) {\tiny $i$};      \node at (1,-2.5) {\tiny $i$};
      \node[right] at (1.5,0) {\tiny $\omega + \a_j$};
    \end{tikzpicture}
\end{array} = 
  \begin{array}{c}
    \begin{tikzpicture}[thick,scale=0.3]
      \draw[black,<-] (-1,-2) to[out=90,in=180] (0,-.3) to[out=0,in=90] (1,-2);
     \draw[black,->] (-1,2) to[out=-90,in=180] (0,.3) to[out=0,in=-90] (1,2); 
     \node at (-1,-2.5) {\tiny $i$};      \node at (1,-2.5) {\tiny $i$};
      \node[right] at (1.5,0) {\tiny $\omega + \a_j$};
    \end{tikzpicture}
\end{array} .
\end{equation}
\end{lem}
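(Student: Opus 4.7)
\medskip

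The plan is to establish \eqref{splits} as a specialization of an ``infinitesimal $\mathfrak{sl}_2$ relation'' of $\mathcal{U}(\hgl_n)$ to the quotient $\mathcal{U}^{[n]}(\hgl_n)$, at a weight where a complementary term vanishes. First, I would compute the value of the coroot pairing $\langle h_i, \omega + \alpha_j\rangle$. Since $\omega = \varepsilon_1 + \cdots + \varepsilon_n$ pairs trivially with every simple coroot of $\hgl_n$, we have $\langle h_i, \omega + \alpha_j\rangle = a_{ij} = -1$, where $a_{ij}$ is the Cartan matrix entry (here $-1$ because the hypothesis $i \to j$ means precisely that $i$ and $j$ are adjacent in the affine Dynkin diagram \eqref{eqn:quiver-hgln}).

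The crucial second step is a case-by-case verification that the weight $\omega + \alpha_j - \alpha_i$ does not lie in the set of $\hgl_n$-weights of the form $\sum_{k=1}^n n_k \varepsilon_k + m\delta$ with $n_k \in \Z_{\geq 0}$ and $\sum n_k = n$. I would treat the three cases $(i,j) = (i, i+1)$ for $1 \le i \le n-2$, $(i,j) = (n-1, \infty)$, and $(i,j) = (\infty, 1)$ separately, each reducing to a small calculation using the explicit formulas $\alpha_k = \varepsilon_{k+1} - \varepsilon_k$ and $\alpha_\infty = \delta + \varepsilon_1 - \varepsilon_n$. In each case one obtains a negative coefficient on some $\varepsilon_k$. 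Consequently, in $\mathcal{U}^{[n]}(\hgl_n)$ the identity $1_{\omega + \alpha_j - \alpha_i}$ vanishes, and therefore any $2$-morphism factoring through the $1$-morphism $E_i F_i 1_{\omega + \alpha_j}$ (which passes through this weight between the two strands) must vanish.

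Finally, I would invoke the version of the defining sl$_2$-relation \cite[(1.10)]{Brundan} (or equivalently the invertibility statement for the $2$-morphism \cite[(1.9)]{Brundan}) at the weight $\omega + \alpha_j$, dual to the relation \eqref{ipullapart} already used in the excerpt. This expresses $\id_{F_i E_i 1_{\omega+\alpha_j}}$ as the sum of the cup--cap composition (i.e.~the right-hand side of \eqref{splits}) and a correction $2$-morphism which factors through $E_i F_i 1_{\omega + \alpha_j}$. Combining this with the vanishing from the previous step yields the desired equality.

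The main obstacle I anticipate is not the logical structure of the argument, which is transparent, but rather the careful bookkeeping of signs and grading shifts: extracting the correct form of the relation dual to \eqref{ipullapart} from \cite{Brundan} at the weight $\omega + \alpha_j$ with $\langle h_i, \omega + \alpha_j\rangle = -1$ requires some care, because this ``negative'' case uses the cup and cap in the other order than in \eqref{ipullapart}, and the sign conventions $t_{ij}$ enter into how the correction term is expressed. Once this relation is written down correctly, the rest of the argument is a routine verification.
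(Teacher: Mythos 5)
Your argument is correct and is essentially the proof the paper gives: the paper likewise invokes Brundan's explicit inversion relation \cite[(3.17), (3.8)]{Brundan} to write the double crossing on $F_iE_i1_{\omega+\alpha_j}$ as the cap--cup composition minus the identity, and then kills the double crossing because its middle region carries the disallowed weight $\omega+\alpha_j-\alpha_i$. The only cosmetic difference is that the paper quotes the already-worked-out two-term form of the relation rather than re-deriving it from the invertibility axiom, so the sign and dot bookkeeping you anticipate is already packaged in \cite[(3.17)]{Brundan}.
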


\begin{proof}
Using~\cite[(3.17), (3.8)]{Brundan} we have in $\mathcal{U}(\hgl_n)$:
\[
 \begin{array}{c} \begin{tikzpicture}[thick,scale=0.3]
      \draw[black,<-] (-1,-2) to[out=90,in=-90] (1,0) to[out=90,in=-90] (-1,2);
      \draw[->] (1,-2) to[out=90,in=-90] (-1,0) to[out=90,in=-90] (1,2);
      \node at (-1,-2.5) {\tiny $i$};      \node at (1,-2.5) {\tiny $i$};
      \node[right] at (1.5,0) {\tiny $\omega+\alpha_j$};
    \end{tikzpicture}\end{array}
=   \begin{array}{c}
    \begin{tikzpicture}[thick,scale=0.3]
      \draw[black,<-] (-1,-2) to[out=90,in=180] (0,-.3) to[out=0,in=90] (1,-2);
     \draw[black,->] (-1,2) to[out=-90,in=180] (0,.3) to[out=0,in=-90] (1,2); 
     \node at (-1,-2.5) {\tiny $i$};      \node at (1,-2.5) {\tiny $i$};
      \node[right] at (1.5,0) {\tiny $\omega + \alpha_j$};
    \end{tikzpicture}
\end{array} -
  \begin{array}{c}
    \begin{tikzpicture}[thick,scale=0.3]
      \draw[black,<-] (0,-2) to (0,2);      \draw[black,->] (1,-2) to (1,2);
      \node at (0,-2.5) {\tiny $i$};      \node at (1,-2.5) {\tiny $i$};
      \node[right] at (1.5,0) {\tiny $\omega + \alpha_j$};
    \end{tikzpicture}
\end{array} .
\]
Then
we observe that the left-hand side vanishes in $\mathcal{U}^{[n]}(\hgl_n)$ since it contains a region labelled by $\omega+\alpha_j-\alpha_i$.
\end{proof}

\subsection{Polynomials}

As explained in Remark~\ref{rmk:morphisms-Dasph}, the elements of $R$, considered as morphisms in $\DiagBS$, can be expressed in terms of dot morphisms. The fact that this indeed defines an action of $R$ follows from the following lemma.

\begin{lem}
For any $i,j \in \{1, \cdots, n-1, \infty\}$, we have
\[
\sigma \left(
\begin{array}{c}
            \begin{tikzpicture}[thick,scale=0.3]
\draw (-1,-1) to (-1,1);
\node at (-1,-1) {$\bullet$};
\node at (-1,1) {$\bullet$};
\draw[green] (0.5,-1) to (0.5,1);
\node[green] at (0.5,-1) {$\bullet$};
\node[green] at (0.5,1) {$\bullet$};
\node at (-1,-1.7) {\tiny $i$};
\node at (0.5,-1.7) {\tiny $j$};
      \end{tikzpicture}
    \end{array}
\right)
=
\sigma \left(
\begin{array}{c}
            \begin{tikzpicture}[thick,scale=0.3]
\draw[green] (-1,-1) to (-1,1);
\node[green] at (-1,-1) {$\bullet$};
\node[green] at (-1,1) {$\bullet$};
\draw (0.5,-1) to (0.5,1);
\node at (0.5,-1) {$\bullet$};
\node at (0.5,1) {$\bullet$};
\node at (-1,-1.7) {\tiny $j$};
\node at (0.5,-1.7) {\tiny $i$};
      \end{tikzpicture}
    \end{array}
\right)
\]
\end{lem}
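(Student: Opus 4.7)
The plan is to apply the Eckmann--Hilton argument to the monoid of $2$-endomorphisms of the identity $1$-morphism $1_\omega$ in the strict $2$-category $\mathcal{U}^{[n]}(\hgl_n)$. First, I observe that a ``strand of color $k$ with dots at both its top and bottom endpoints'' in $\DiagBS$ is, by construction, the vertical composition of a lower dot $B_\varnothing \to B_k \langle 1 \rangle$ followed by an upper dot $B_k \to B_\varnothing \langle 1 \rangle$, and hence a morphism $B_\varnothing \to B_\varnothing \langle 2 \rangle$ (this is exactly the action of $\alpha_k^\vee \in R$, as in Remark~\ref{rmk:morphisms-Dasph}). Since $B_\varnothing$ is the monoidal unit of $\DiagBS$ and $\sigma$ is strict monoidal, we have $\sigma(B_\varnothing) = 1_\omega$, so $\sigma$ sends each such dotted strand to a $2$-endomorphism $\varphi_k$ of $1_\omega$ in $\mathcal{U}^{[n]}(\hgl_n)$.

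Next, horizontal concatenation of diagrams in $\DiagBS$ is by definition the monoidal product, so under the strict monoidal functor $\sigma$ it corresponds to horizontal composition of the corresponding $2$-morphisms in $\End_{\mathcal{U}^{[n]}(\hgl_n)}(\omega)$. Consequently, the image under $\sigma$ of the left-hand side of the displayed equality is the horizontal composite $\varphi_i \star \varphi_j$, while the image of the right-hand side is $\varphi_j \star \varphi_i$, where $\star$ denotes horizontal composition of $2$-morphisms.

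Finally, I would invoke the classical Eckmann--Hilton argument: in any strict $2$-category, the $2$-endomorphisms of a $1$-identity form a commutative monoid in which horizontal and vertical composition coincide. Applied to $1_\omega$ in $\mathcal{U}^{[n]}(\hgl_n)$, this gives
\[
\varphi_i \star \varphi_j \;=\; \varphi_i \circ \varphi_j \;=\; \varphi_j \circ \varphi_i \;=\; \varphi_j \star \varphi_i,
\]
which is precisely the desired equality. I do not anticipate any real obstacle: the argument is entirely formal and relies on no specific $2$-cell relation of $\mathcal{U}(\hgl_n)$ (no sign rules, no KLR relations from \S\ref{sss:def-KLR}) beyond the interchange law that holds in any strict $2$-category. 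The only verification needed is the identification of horizontal concatenation in $\DiagBS$ with horizontal $2$-composition in the target, which is immediate from the strict monoidality of $\sigma$ together with $\sigma(B_\varnothing) = 1_\omega$.
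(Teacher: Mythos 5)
Your argument is correct, and it takes a genuinely different route from the paper. You identify each barbell's image as a $2$-endomorphism ("bubble") of the identity $1$-morphism $1_\omega$, note that the two sides of the identity are the two horizontal composites of these bubbles, and conclude by the Eckmann--Hilton argument, which is valid in any strict $2$-category and uses no relation specific to $\mathcal{U}(\hgl_n)$. The paper instead argues by explicit computation: after discarding the trivial case $i=j$, it slides one bubble so that it becomes nested inside the other (using the curl/bubble relations of Brundan, the sign rules of Lemma~\ref{signrules}, and the dot-slide relations) and checks that both orderings reduce to the same nested-bubble expression, namely~\eqref{eqn:sigma-polynomials-1} when $i \norel j$ and~\eqref{eqn:sigma-polynomials-2} when $i \rel j$. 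Your proof is shorter and more robust (it would apply to any assignment of the dot generators whatsoever, since it only uses the interchange law and unitality of horizontal composition), whereas the paper's computation has the minor side benefit of exhibiting the common value explicitly as a nested-bubble expression. One small point of phrasing: at this stage $\sigma$ is only an assignment on generators being extended to all diagrams, so you should not literally invoke "the strict monoidality of $\sigma$" (which is what is being proved); rather, the identification of the two sides with $\varphi_i \star \varphi_j$ and $\varphi_j \star \varphi_i$ holds because the extension of $\sigma$ to arbitrary diagrams is, by construction, defined through the free strict monoidal category on the generators, where horizontal concatenation is sent to horizontal composition in the target by definition. With that adjustment the argument is complete.
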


\begin{proof}
Of course, there is noting to prove if $i=j$. Hence from now on we assume that $i \neq j$. We observe that, by~\cite[(1.19)]{Brundan} and then Lemma~\ref{signrules}, in $\mathcal{U}(\hgl_n)$ we have
\[
\begin{array}{c}
            \begin{tikzpicture}[thick,scale=0.3]
      \draw[black] (-1.5,0) circle (1);
      \draw[<-] (-.5,.1) to (-.5,-.1);
      \node at (-1.5,-1.5) {\tiny $i$};
      \draw[green] (1.5,0) circle (1);
      \draw[green,<-] (2.5,.1) to (2.5,-.1);
      \node at (3,.5) {\tiny $\omega$};
      \node at (1.5,-1.5) {\tiny $j$};
      \end{tikzpicture}
\end{array}
=
\begin{array}{c}
            \begin{tikzpicture}[thick,scale=0.3]
      \draw[black] (-1.5,0) circle (1);
      \draw[<-] (-.5,.1) to (-.5,-.1);
      \node at (-1.5,-1.5) {\tiny $i$};
      \draw[green] (-.3,0) circle (1);
      \draw[green,<-] (.7,.1) to (.7,-.1);
      \node at (1.2,.5) {\tiny $\omega$};
      \node at (-.3,-1.5) {\tiny $j$};
      \end{tikzpicture}
\end{array}
= t_{j,i}
\begin{array}{c}
            \begin{tikzpicture}[thick,scale=0.3]
      \draw[black] (-1.5,0) circle (1);
      \draw[<-] (-.5,.1) to (-.5,-.1);
      \node at (-1.5,-1.5) {\tiny $i$};
      \draw[green] (.7,0) to (.7,.5) to[out=90,in=90] (-2.5,1.5) to[out=-90,in=90] (-1.5,0) to[out=-90,in=90] (-2.5,-1.5) to[out=-90,in=-90] (.7,-.5) to (.7,0);
      \draw[green,<-] (.7,.1) to (.7,-.1);
      \node at (1.2,.5) {\tiny $\omega$};
      \node at (-.3,-2.2) {\tiny $j$};
      \end{tikzpicture}
\end{array}.
\]
By~\cite[(2.7)]{Brundan}, this last expression is equal to
\begin{equation}
\label{eqn:sigma-polynomials-1}
\begin{array}{c}
            \begin{tikzpicture}[thick,scale=0.3]
      \draw[black] (-1.5,0) circle (1);
            \draw[green] (-1.5,0) circle (2);
                  \draw[<-] (-.5,.1) to (-.5,-.1);
                  \draw[green,<-] (.5,.1) to (.5,-.1);
                  \node at (-1.5,-1.4) {\tiny $i$};
                   \node at (-1.5,-2.4) {\tiny $j$};
                         \node at (1.2,.5) {\tiny $\omega$};
            \end{tikzpicture}
\end{array}
\end{equation}
if $i \norel j$, and to
\begin{equation}
\label{eqn:sigma-polynomials-2}
-
\begin{array}{c}
            \begin{tikzpicture}[thick,scale=0.3]
      \draw[black] (-1.5,0) circle (1);
            \draw[green] (-1.5,0) circle (2);
                  \draw[<-] (-.5,.1) to (-.5,-.1);
                  \draw[green,<-] (.5,.1) to (.5,-.1);
                  \node[green] at (-1.5,2) {$\bullet$};
                  \node at (-1.5,-1.4) {\tiny $i$};
                   \node at (-1.5,-2.4) {\tiny $j$};
                         \node at (1.2,.5) {\tiny $\omega$};
            \end{tikzpicture}
\end{array}
+
\begin{array}{c}
            \begin{tikzpicture}[thick,scale=0.3]
      \draw[black] (-1.5,0) circle (1);
            \draw[green] (-1.5,0) circle (2);
                  \draw[<-] (-.5,.1) to (-.5,-.1);
                  \draw[green,<-] (.5,.1) to (.5,-.1);
                  \node at (-1.5,1) {$\bullet$};
                  \node at (-1.5,-1.4) {\tiny $i$};
                   \node at (-1.5,-2.4) {\tiny $j$};
                         \node at (1.2,.5) {\tiny $\omega$};
            \end{tikzpicture}
\end{array}
\end{equation}
if $i \rel j$.

On the other hand, for similar reasons we have
\[
\begin{array}{c}
            \begin{tikzpicture}[thick,xscale=-0.3,yscale=0.3]
      \draw[black] (-1.5,0) circle (1);
      \draw[<-] (-2.5,.1) to (-2.5,-.1);
      \node at (-1.5,-1.5) {\tiny $i$};
      \draw[green] (1.5,0) circle (1);
      \draw[green,<-] (.5,.1) to (.5,-.1);
      \node at (-3,.5) {\tiny $\omega$};
      \node at (1.5,-1.5) {\tiny $j$};
      \end{tikzpicture}
\end{array}
=
\begin{array}{c}
            \begin{tikzpicture}[thick,xscale=-0.3,yscale=0.3]
      \draw[black] (-1.5,0) circle (1);
      \draw[<-] (-2.5,.1) to (-2.5,-.1);
      \node at (-1.5,-1.5) {\tiny $i$};
      \draw[green] (-.3,0) circle (1);
      \draw[green,<-] (-1.3,.1) to (-1.3,-.1);
      \node at (-3.2,.5) {\tiny $\omega$};
      \node at (-.3,-1.5) {\tiny $j$};
      \end{tikzpicture}
\end{array}
= t_{i,j}
\begin{array}{c}
            \begin{tikzpicture}[thick,xscale=-0.3,yscale=0.3]
      \draw[black] (-1.5,0) circle (1);
      \draw[<-] (-2.5,.1) to (-2.5,-.1);
      \node at (-1.5,-1.5) {\tiny $i$};
      \draw[green] (.7,0) to (.7,.5) to[out=90,in=90] (-2.5,1.5) to[out=-90,in=90] (-1.5,0) to[out=-90,in=90] (-2.5,-1.5) to[out=-90,in=-90] (.7,-.5) to (.7,0);
      \draw[green,<-] (-1.5,.1) to (-1.5,-.1);
      \node at (-3.2,.5) {\tiny $\omega$};
      \node at (-.3,-2.2) {\tiny $j$};
      \end{tikzpicture}
\end{array}.
\]
Using~\eqref{eqn:rel-2KM-2}, this last expression is equal to~\eqref{eqn:sigma-polynomials-1}
if $i \norel j$, and to~\eqref{eqn:sigma-polynomials-2}
if $i \rel j$, which completes the proof.
\end{proof}

\subsection{One color relations}
\label{ss:one-color}

The next relations we must consider are the one color relations. We remark that the Frobenius relations are immediate from the zigzag relations in $\mathcal{U}(\hgl_n)$ (see~\cite[(1.5) and Theorem~4.3]{Brundan}), or in other words from the fact that the dot and trivalent morphisms are induced by some adjunctions, see Remark~\ref{rmk:main-conj}\eqref{it:rmks-conj}. Next, the needle relation (i.e.~the relation described in~\cite[(5.5)]{ew}) follows from the relation
\[
    \begin{array}{c}
            \begin{tikzpicture}[thick,scale=0.3]
      \draw[black] (0,0) circle (1);
      \draw[<-] (1,-.1) to (1,.1);
      \node at (2.8,.5) {\tiny $\omega + \a_i$};
      \node at (-1.5,0) {\tiny $i$};
      \end{tikzpicture}
    \end{array}
\; = \; 0
\]
in $\mathcal{U}(\hgl_n)$, which is an application of~\cite[(3.6)]{Brundan}
since $\langle \omega + \a_i, h_i \rangle = 2$.

The polynomial sliding relation (see~\cite[(5.2)]{ew}) follows from Lemmas~\ref{lem:polynomial-1},~\ref{lem:polynomial-2} and~\ref{lem:polynomial-3} below, see Remark~\ref{rmk:morphisms-Dasph}.

\begin{lem}
\label{lem:polynomial-1}
  For any $i \in \{1, \cdots, n-1, \infty \}$ we have
\begin{equation*}
\sigma \left(
  \begin{array}{c}
    \begin{tikzpicture}[thick,scale=0.3]
\draw (0,-2) to (0,2);
\draw (-1,-1) to (-1,1);
\node at (-1,-1) {$\bullet$};
\node at (-1,1) {$\bullet$};
    \end{tikzpicture}
  \end{array}
+
  \begin{array}{c}
    \begin{tikzpicture}[thick,scale=0.3]
\draw (0,-2) to (0,2);
\draw (1,-1) to (1,1);
\node at (1,1) {$\bullet$};
\node at (1,-1) {$\bullet$};
    \end{tikzpicture}
  \end{array}
    \right)
=
  2 \;  \sigma \left(  \begin{array}{c}\begin{tikzpicture}[thick,scale=0.3]
\draw (0,-2) to (0,-.5);
\draw (0,2) to (0,.5);
\node at (0,-.5) {$\bullet$};
\node at (0,.5) {$\bullet$};
    \end{tikzpicture}
  \end{array}
  \right)
\end{equation*}
(where all strands have label $i$).
\end{lem}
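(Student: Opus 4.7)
\medskip

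\noindent\textit{Proof plan.} We apply $\sigma$ to all three diagrams and work in $\mathcal{U}^{[n]}(\hgl_n)$. Under $\sigma$, the identity $\text{id}_{B_i}$ becomes $\text{id}_{F_i E_i 1_\omega}$, and an upper (resp.~lower) dot $B_i \to B_\varnothing \langle 1\rangle$ (resp.~$B_\varnothing \to B_i \langle 1 \rangle$) becomes the right-adjunction counit $\epsilon_i \colon F_i E_i 1_\omega \to 1_\omega$ (resp.~unit $\eta_i \colon 1_\omega \to F_i E_i 1_\omega$). Thus the right-hand side maps to $2 \cdot (\eta_i \circ \epsilon_i) \colon F_i E_i 1_\omega \to F_i E_i 1_\omega$, i.e.~twice the ``cap-cup on the strand''. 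The two diagrams on the left-hand side map to the identity 2-morphism of $F_i E_i 1_\omega$ horizontally composed (on the left or on the right) with the counter-clockwise bubble $\epsilon_i \circ \eta_i \colon 1_\omega \to 1_\omega$, which lives in the outer region labeled $\omega$.

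The crucial observation is that, since $\omega = \epsilon_1 + \cdots + \epsilon_n$, we have $\langle \omega, h_i\rangle = 0$ for every $i \in \{1, \dots, n-1, \infty\}$ (a direct computation using the definitions of $h_i$ from~\S\ref{slp}). Hence in each case we sit in the ``zero weight'' regime for the $i$-colored part of $\mathcal{U}(\hgl_n)$, where the $\mathfrak{sl}_2$-type relations simplify substantially. The plan is to use the bubble slide relations of~\cite[(2.7)--(2.12) and Corollary~3.4]{Brundan}, which allow one to move a bubble past an adjacent strand at the cost of corrections involving dots on the strand and cap-cup morphisms. Applied in our setting, sliding the leftmost bubble inward across the $F_i$ strand and then across the $E_i$ strand produces the rightmost bubble together with a single correction equal (up to sign) to $\eta_i \circ \epsilon_i$; equivalently, the sum of the two side-bubbles equals $2\cdot(\eta_i \circ \epsilon_i)$, which is exactly $\sigma$ of the right-hand side.

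The simplification that makes this work cleanly in $\mathcal{U}^{[n]}(\hgl_n)$ (rather than in $\mathcal{U}(\hgl_n)$) is that the would-be ``higher-order'' correction terms in the bubble slide relations are 2-morphisms that factor through regions labeled by weights of the form $\omega + k\alpha_i$ with $|k|\geq 2$, and such weights do not belong to $P_+$ (because $\omega = \epsilon_1 + \cdots + \epsilon_n$ has coefficient exactly $1$ on each of $\epsilon_1,\dots,\epsilon_n$, so subtracting any nonzero multiple of $\alpha_i = \epsilon_{i+1}-\epsilon_i$ produces a negative coefficient). By construction of $\mathcal{U}^{[n]}(\hgl_n)$, all such terms vanish, leaving only the cap-cup on the strand.

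The main obstacle is the bookkeeping: Brundan's bubble-slide relations involve several terms with prescribed sign conventions (depending on the orientation of the bubble and on the scalars $t_{ij}$), and one must carefully verify that, after the simplifications above, exactly the coefficient $2$ emerges on the right. This is essentially the categorical incarnation of the identity $\partial_{s_i}(\alpha_i^\vee) = 2$ governing the polynomial sliding relation in $\DiagBS$, and once the vanishing of the non-$P_+$ terms is established, the remaining computation is a short diagrammatic manipulation using the zigzag relations together with~\eqref{pullapart} and~\eqref{ipullapart}.
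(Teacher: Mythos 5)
Your proposal identifies the same two key inputs as the paper --- Brundan's Corollary~3.4 and the vanishing in $\mathcal{U}^{[n]}(\hgl_n)$ of any $2$-morphism passing through a weight $\omega + k\alpha_i$ with $|k| \geq 2$ --- and the computation that $\langle \omega, h_i \rangle = 0$ for all $i$ is correct. But your organization (``slide the left bubble across the two strands to produce the right bubble plus corrections'') is different from the paper's, and the crucial step is asserted rather than proved. The paper instead first establishes Lemma~\ref{lem:blue2}, which uses the vanishing to rewrite $\id_{E_iF_i 1_{\omega+\alpha_i}}$ as a sum of three explicit diagrams, and then uses that identity to express each of $\sigma(\text{left barbell})$, $\sigma(\text{right barbell})$ and $\sigma(\text{cap-cup})$ in a common basis; equations~\eqref{eqn:alpha} and~\eqref{eqn:cups} record those expansions, and the lemma drops out by adding the first two and comparing with twice the third.

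The concrete gap in your argument is the claim that the slide produces ``a single correction equal (up to sign) to $\eta_i \circ \epsilon_i$.'' For the stated conclusion $\sigma(\text{left})+\sigma(\text{right}) = 2\,\sigma(\text{cap-cup})$ to follow from a slide $\sigma(\text{left}) = -\sigma(\text{right}) + (\text{correction})$, the correction must be exactly $2 \cdot (\eta_i\circ\epsilon_i)$, not $\pm(\eta_i\circ\epsilon_i)$. That coefficient is precisely the categorical analogue of $\partial_{s_i}(\alpha_{s_i}) = 2$, and it arises from two separate contributions (one from crossing each strand), so ``a single correction'' is misleading at best. You also need the bubble to flip sign when crossing both strands, which must be checked from the precise form of Brundan's relations rather than assumed. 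Neither point is verified. The paper sidesteps exactly these sign/coefficient subtleties by expanding all three sides against Lemma~\ref{lem:blue2} and observing that the $-2$-dotted internal bubble term appears once in each expansion while the two dotted-strand terms appear with coefficients $(2,0)$, $(0,2)$ and $(1,1)$ respectively. If you want to carry through the slide approach, you would in effect have to reprove the content of~\eqref{eqn:alpha} and~\eqref{eqn:cups}; the amount of work is comparable.
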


\begin{proof}
By Lemma~\ref{lem:blue2}, in $\mathcal{U}^{[n]}(\hgl_n)$ we have
  \begin{equation}
\label{eqn:alpha}
    \begin{array}{c}
      \begin{tikzpicture}[thick,scale=0.3]
        \draw[<-] (0,-2) to (0,2); \draw[->] (1,-2) to (1,2);
        \draw[->] (2.5,0) circle (0.7); \draw[->] (3.2,-.1) to (3.2,+.1); 
        \node at (2.5,1.5) {\tiny $\omega$};
      \end{tikzpicture}
    \end{array}
    =
    \begin{array}{c}
      \begin{tikzpicture}[thick,scale=0.3]
        \draw[<-] (-0.6,-2) to (-0.6,2);
        \draw[->] (1,-2) to[out=90,in=180]
        (1.5,-.8) to[out=0,in=180] (2.4,-1.5) to[out=0,in=-90] (3,0)
        to[out=90,in=0] (2.4,1.5) to[out=-180,in=0] (1.5,.8)
        to[out=-180,in=-90] (1,2);
        \draw[black] (1,0) circle (0.5);
        \node (a) at (.5,0) {$\bullet$};
        \draw[->] (.9,.5) to (1.1,.5);
        \node at (2.5,2) {\tiny $\omega$};
        \node at (-0.1,0) {\tiny $2$};
      \end{tikzpicture}
    \end{array}
    \;+\;
        \begin{array}{c}
      \begin{tikzpicture}[thick,scale=0.3]
        \draw[<-] (0,-2) to (0,2); \draw[->] (1,-2) to[out=90,in=180]
        (1.5,-.3) to[out=0,in=180] (2.4,-1) to[out=0,in=-90] (3,0)
        to[out=90,in=0] (2.4,1) to[out=-180,in=0] (1.5,.3)
        to[out=-180,in=-90] (1,2); \node (a) at (1.5,-.3) {$\bullet$};
        \node at (2.5,1.5) {\tiny $\omega$};
      \end{tikzpicture}
    \end{array}
    \;+\;
    \begin{array}{c}
      \begin{tikzpicture}[thick,scale=0.3]
        \draw[<-] (0,-2) to (0,2); \draw[->] (1,-2) to[out=90,in=180]
        (1.5,-.3) to[out=0,in=180] (2.4,-1) to[out=0,in=-90] (3,0)
        to[out=90,in=0] (2.4,1) to[out=-180,in=0] (1.5,.3)
        to[out=-180,in=-90] (1,2); \node (a) at (1.5,.3) {$\bullet$};
        \node at (2.5,1.5) {\tiny $\omega$};
      \end{tikzpicture}
    \end{array}
    = 
    \begin{array}{c}
                \begin{tikzpicture}[thick,scale=0.3]
                  \draw[<-] (-0.6,-2) to (-0.6,2);
                          \draw[black] (1,0) circle (0.5); 
                  \draw[->] (2,-2) to (2,2);
                          \node (a) at (.5,0) {$\bullet$};
                  \node at (2.7,0.5) {\tiny $\omega$};
                          \node at (-0.1,0) {\tiny $2$};
                                  \draw[->] (.9,.5) to (1.1,.5);
                \end{tikzpicture}
              \end{array}
    \;+\;
    \; 2 \; \begin{array}{c}
                \begin{tikzpicture}[thick,scale=0.3]
                  \draw[<-] (0,-2) to (0,2); \draw[->] (1,-2) to
                  (1,2); \node at (1,0) {$\bullet$};
                  \node at (2,0.5) {\tiny $\omega$};
                \end{tikzpicture}
              \end{array}.
            \end{equation}
 Similarly, in $\mathcal{U}^{[n]}(\hgl_n)$ we have
   \begin{equation*}
    \begin{array}{c}
      \begin{tikzpicture}[thick,scale=0.3]
        \draw[<-] (0,-2) to (0,2);
        \draw[->] (1,-2) to (1,2);
        \draw[->] (-1.5,0) circle (0.7);
        \draw[->] (-.8,-.1) to (-.8,+.1); 
        \node at (2,0) {\tiny $\omega$};
      \end{tikzpicture}
    \end{array}
    = 
        \begin{array}{c}
                \begin{tikzpicture}[thick,scale=0.3]
                  \draw[<-] (-0.6,-2) to (-0.6,2);
                          \draw[black] (1,0) circle (0.5); 
                  \draw[->] (2,-2) to (2,2);
                          \node (a) at (.5,0) {$\bullet$};
                  \node at (2.7,0.5) {\tiny $\omega$};
                          \node at (-0.1,0) {\tiny $2$};
                                  \draw[->] (.9,.5) to (1.1,.5);
                \end{tikzpicture}
              \end{array}
              \;+\;
    \; 2 \; \begin{array}{c}
                \begin{tikzpicture}[thick,scale=0.3]
                  \draw[<-] (0,-2) to (0,2);
                  \draw[->] (1,-2) to (1,2);
                  \node at (0,0) {$\bullet$};
                  \node at (2,0.5) {\tiny $\omega$};
                \end{tikzpicture}
              \end{array}.
            \end{equation*}
On the other hand, again by Lemma \ref{lem:blue2}, in $\mathcal{U}^{[n]}(\hgl_n)$ we have
            \begin{equation}
            \label{eqn:cups}
              \begin{array}{c}
                \begin{tikzpicture}[scale=0.3,thick]
                  \draw[->] (1,-2) to[out=90,in=0] (0,-.3)
                  to[out=180,in=90] (-1,-2); \draw[<-] (1,2)
                  to[out=-90,in=0] (0,.3) to[out=180,in=-90] (-1,2);
                                    \node at (2,0.5) {\tiny $\omega$};
                \end{tikzpicture}
              \end{array}
                      \; = \;
                          \begin{array}{c}
                \begin{tikzpicture}[thick,scale=0.3]
                  \draw[<-] (-0.6,-2) to (-0.6,2);
                          \draw[black] (1,0) circle (0.5); 
                  \draw[->] (2,-2) to (2,2);
                          \node (a) at (.5,0) {$\bullet$};
                  \node at (2.7,0.5) {\tiny $\omega$};
                          \node at (-0.1,0) {\tiny $2$};
                                  \draw[->] (.9,.5) to (1.1,.5);
                \end{tikzpicture}
              \end{array}
              \;+\;
                      \begin{array}{c}
                        \begin{tikzpicture}[thick,scale=0.3]
                          \draw[<-] (0,-2) to (0,2); \draw[->] (1,-2)
                          to (1,2); \node (a) at (1,0) {$\bullet$};
                                            \node at (2,0.5) {\tiny $\omega$};
                        \end{tikzpicture}
                      \end{array} \; + \; 
                      \begin{array}{c}
                        \begin{tikzpicture}[thick,scale=0.3]
                          \draw[<-] (0,-2) to (0,2); \draw[->] (1,-2)
                          to (1,2); \node (a) at (0,0) {$\bullet$};
                                            \node at (2,0.5) {\tiny $\omega$};
                        \end{tikzpicture}
                      \end{array}.
                    \end{equation}
We deduce that in $\mathcal{U}^{[n]}(\hgl_n)$ we have
                    \begin{equation*}
                      \begin{array}{c}
                        \begin{tikzpicture}[thick,scale=0.3]
                          \draw[<-] (0,-2) to (0,2);
                          \draw[->] (1,-2) to (1,2);
                          \draw[->] (2.5,0) circle (0.7);
                          \draw[->] (3.2,-.1) to (3.2,+.1);
                          \node at (2.5,1.5) {\tiny $\omega$};
                        \end{tikzpicture}
                      \end{array}
                      +
                      \begin{array}{c}
                        \begin{tikzpicture}[thick,scale=0.3]
                          \draw[<-] (0,-2) to (0,2);
                          \draw[->] (1,-2) to (1,2);
                          \draw[->] (-1.5,0) circle (0.7);
                          \draw[->] (-0.8,-.1) to (-0.8,+.1);
                           \node at (2,0) {\tiny $\omega$};
                        \end{tikzpicture}
                      \end{array}
                      \; = \; 2 \;
                      \begin{array}{c}
                        \begin{tikzpicture}[scale=0.3,thick]
                          \draw[->] (1,-2) to[out=90,in=0] (0,-.3) to[out=180,in=90] (-1,-2);
                          \draw[<-] (1,2) to[out=-90,in=0] (0,.3) to[out=180,in=-90] (-1,2);
                          \node at (2,0) {\tiny $\omega$};
                        \end{tikzpicture}
                      \end{array},
                    \end{equation*}
which is the desired equality.
                  \end{proof}

\begin{lem}
\label{lem:polynomial-2}
  For any $i, j \in \{1, \cdots, n-1, \infty\}$ with $i \rel j$ we have:
\begin{equation*}
\sigma \left(
  \begin{array}{c}
    \begin{tikzpicture}[thick,scale=0.3]
\draw (0,-2) to (0,2);
\draw[green] (-1,-1) to (-1,1);
\node[green] at (-1,-1) {$\bullet$};
\node[green] at (-1,1) {$\bullet$};
\node at (-1,-2.5) {\tiny $j$};
\node at (0,-2.5) {\tiny $i$};
    \end{tikzpicture}
  \end{array}
+
\begin{array}{c}\begin{tikzpicture}[thick,scale=0.3]
\draw (0,-2) to (0,-.5);
\draw (0,2) to (0,.5);
\node at (0,-.5) {$\bullet$};
\node at (0,.5) {$\bullet$};
\node at (0,-2.5) {\tiny $i$};
    \end{tikzpicture}
  \end{array}
\right)
=
\sigma \left(
  \begin{array}{c}
    \begin{tikzpicture}[thick,scale=0.3]
\draw (0,-2) to (0,2);
\draw (1,-1) to (1,1);
\node at (1,1) {$\bullet$};
\node at (1,-1) {$\bullet$};
\node at (0,-2.5) {\tiny $i$};
\node at (1,-2.5) {\tiny $i$};
    \end{tikzpicture}
  \end{array}
+
  \begin{array}{c}
    \begin{tikzpicture}[thick,scale=0.3]
\draw (0,-2) to (0,2);
\draw[green] (1,-1) to (1,1);
\node[green] at (1,1) {$\bullet$};
\node[green] at (1,-1) {$\bullet$};
\node at (0,-2.5) {\tiny $i$};
\node at (1,-2.5) {\tiny $j$};
    \end{tikzpicture}
  \end{array}
\right).
\end{equation*}
\end{lem}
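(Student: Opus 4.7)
The plan is to apply $\sigma$ to each of the four diagrams and reduce the resulting identity in $\End_{\mathcal{U}^{[n]}(\hgl_n)}(F_iE_i)$ to a bubble-slide formula, following the template of the one-color case (Lemma~\ref{lem:polynomial-1}) but with an extra step handling the adjacent-color crossing.

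After unpacking $\sigma$, the object $B_i$ becomes the antiparallel pair $F_iE_i$, a floating $k$-barbell (a $k$-strand with lower dot below and upper dot above) becomes a $k$-colored (empty, degree-$2$) bubble in $\End(\id_\omega)$, and the ``short $i$-barbell on $B_i$'' from the second LHS diagram, which in $\DiagBS$ is the composition $B_i \xrightarrow{\text{upper dot}} B_\varnothing \xrightarrow{\text{lower dot}} B_i$, becomes the ``figure-eight'' element $\mathsf{FE}_0 := \mathrm{cup} \circ \mathrm{cap} \in \End(F_iE_i)$. The desired equality therefore reads
\[
(j\text{-bubble left of }F_iE_i) + \mathsf{FE}_0 = (i\text{-bubble right of }F_iE_i) + (j\text{-bubble right of }F_iE_i).
\]
I next use the two key decompositions from the proof of Lemma~\ref{lem:polynomial-1}. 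Equation \eqref{eqn:cups} gives $\mathsf{FE}_0$ as the sum of (straight $F_iE_i$ with an $i$-bubble between the strands) plus a dot on $E_i$ plus a dot on $F_i$; and equation \eqref{eqn:alpha} gives $(i\text{-bubble right of }F_iE_i)$ as the same ``straight $F_iE_i$ with an $i$-bubble between the strands'' plus twice a dot on $F_i$. Substituting and cancelling the common term, the identity reduces to the bubble-slide formula
\[
(j\text{-bubble right of }F_iE_i) - (j\text{-bubble left of }F_iE_i) = (\text{dot on }E_i) - (\text{dot on }F_i).
\]

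To establish this slide formula I write the $j$-bubble as $\mathrm{cap}_j \circ \mathrm{cup}_j$ and move it across the $F_iE_i$ pair by commuting each of its $F_j$ and $E_j$ legs past each of $F_i$ and $E_i$. The mixed-direction crossings (an upward $j$ past a downward $i$, or vice versa) pull apart freely via \eqref{pullapart}, while the two same-direction crossings ($E_j$ past $E_i$ and $F_j$ past $F_i$, for the adjacent pair $i \rel j$) introduce corrections of the form $t_{ij}\cdot(\text{$i$-dot}) + t_{ji}\cdot(\text{$j$-dot})$ via \eqref{eqn:rel-2KM-2}. Closing the bubble's cap and cup onto these correction terms makes the $j$-dot contributions disappear, since the resulting diagrams have region labels (such as $\omega + \alpha_j - \alpha_i$) that lie outside $P(\bigwedge^n \nat_n)$ and hence vanish in the quotient $\mathcal{U}^{[n]}(\hgl_n)$. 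The surviving $i$-dot contributions, combined using the identity $t_{ij} = -t_{ji}$ valid when $i \rel j$, give precisely $(\text{dot on }E_i) - (\text{dot on }F_i)$.

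The main obstacle is the careful bookkeeping of signs and of the vanishing of spurious terms: the scalars $t_{ij}$ in Lemma~\ref{signrules} depend on the orientation of the edge in the quiver \eqref{eqn:quiver-hgln}, and several intermediate summands in the bubble-slide computation carry region labels that must be checked to lie outside the allowed weight set $P(\bigwedge^n \nat_n)$. It is the combination of these two facts that makes the identity close up. The verification is direct but somewhat lengthy, closely paralleling the one-color computation already carried out in the proof of Lemma~\ref{lem:polynomial-1}.
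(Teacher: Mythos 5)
Your overall strategy is the same as the paper's: both reduce the lemma to comparing the $j$-bubble placed to the left versus to the right of the pair $\sigma(B_i)$, using the decompositions \eqref{eqn:alpha} and \eqref{eqn:cups} already established in the one-color case, and both resolve the adjacent-color interactions via the sign rules of Lemma~\ref{signrules}, \eqref{pullapart}, \eqref{eqn:rel-2KM-2} and the identity $t_{ij}t_{ji}^{-1}=-1$. The paper simply computes each placement of the $j$-bubble separately (each equals a dotted $j$-bubble sitting between the strands minus a dot on one of the $i$-strands) and then takes the difference, rather than isolating a ``slide formula'' as a separate intermediate step.

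Two points in your execution need correction. First, you have misread \eqref{eqn:alpha}: the doubled dot there sits on the upward ($E_i$) strand, not on $F_i$ (it is the unlabelled left-bubble analogue that carries the doubled dot on $F_i$). As a consequence the bubble-slide identity you reduce to has the wrong sign; the correct statement, and the one the paper establishes, is $(j\text{-bubble left}) - (j\text{-bubble right}) = (\text{dot on } E_i) - (\text{dot on } F_i)$. Second, the mechanism you propose for proving the slide does not match how the relations actually resolve: for $i \rel j$ the same-direction double crossing is \emph{not} ``identity plus corrections'' --- by \eqref{eqn:rel-2KM-2} it equals $t_{ij}x_i + t_{ji}x_j$ with no identity term --- and the dotted $j$-bubble contributions do not vanish for weight reasons (the region between the two strands is $\omega+\alpha_i$, which lies in the allowed weight set); they occur in both the left- and right-bubble computations and cancel only upon taking the difference. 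The computation that works, as in the paper, uses the sign rules to rotate the bubble so that the $i$-strand passes through it, and then the curl relation of Brundan together with \eqref{eqn:rel-2KM-2} to extract the strand, producing precisely the dotted $j$-bubble plus a single dot on the $i$-strand.
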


\begin{proof}
Applying the relations~\eqref{pullapart},~\eqref{signs:1} and~\eqref{signs:3}, and then~\eqref{eqn:rel-2KM-2} and~\cite[(1.21)]{Brundan}, we have, since $t_{ij}t_{ji}^{-1} = -1$,
  \begin{equation*}
    \begin{array}{c}
      \begin{tikzpicture}[thick,scale=0.3]
        \draw[<-] (0,-2) to (0,2);
        \draw[->] (1,-2) to (1,2);
        \draw[green,->] (2.5,0) circle (0.7);
        \draw[green,->] (3.2,-.1) to (3.2,+.1); 
        \node at (2.5,1.5) {\tiny $\omega$};
        \node at (1,-2.5) {\tiny $i$};
        \node at (2.5, -1.2) {\tiny $j$};
      \end{tikzpicture}
    \end{array}
    \;=\;
    t_{ji}^{-1} \;
    \begin{array}{c}
      \begin{tikzpicture}[thick,scale=0.3]
        \draw[<-] (-1,-2) to (-1,2);
        \draw[->] (1,-2) to (1,2);
                \node at (1,-2.5) {\tiny $i$};
        \draw[green,->] (0.5,0) circle (1.1);
        \draw[green,->] (1.6,-.1) to (1.6,+.1); 
        \node at (2.5,1) {\tiny $\omega$};
                \node at (0, -1.5) {\tiny $j$};
      \end{tikzpicture}
    \end{array}
    \;=\;
    \begin{array}{c}
      \begin{tikzpicture}[thick,scale=0.3]
        \draw[<-] (-1,-2) to (-1,2);
        \draw[->] (1,-2) to (1,2);
        \node at (1,-2.5) {\tiny $i$};
        \node at (0,-1) {\tiny $j$};
        \draw[green,->] (0,0) circle (0.6);
        \draw[green,<-] (-.1,.6) to (.1,.6);
        \node[green] at (0.6,0) {$\bullet$};
         \node at (2,0.5) {\tiny $\omega$};
      \end{tikzpicture}
    \end{array}
    - \begin{array}{c}
        \begin{tikzpicture}[thick,scale=0.3]
          \draw[<-] (-0.2,-2) to (-0.2,2); \draw[->] (1,-2) to (1,2);
          \node at (1,0) {$\bullet$};
          \node at (2,0.5) {\tiny $\omega$};
          \node at (1,-2.5) {\tiny $i$};
        \end{tikzpicture}
      \end{array} .
    \end{equation*}
Similarly, using~\cite[(2.7)]{Brundan} we have:
    \begin{gather*}
      \begin{array}{c}
        \begin{tikzpicture}[thick,scale=0.3]
          \draw[<-] (0,-2) to (0,2);
          \draw[->] (1,-2) to (1,2);
          \node at (1,-2.5) {\tiny $i$};
          \draw[green,->] (-1.5,0) circle (0.7);
          \node at (-1.5,-1.2) {\tiny $j$};
          \draw[green,->] (-.8,-.1) to (-.8,+.1);
           \node at (2,.5) {\tiny $\omega$};
        \end{tikzpicture}
      \end{array}
      \;=\;
      \begin{array}{c}
        \begin{tikzpicture}[thick,scale=0.3]
          \draw[<-] (-1,-2) to (-1,2);
          \draw[->] (1,-2) to (1,2);
          \node at (1,-2.5) {\tiny $i$};
          \node at (0,-1) {\tiny $j$};
        \draw[green,->] (0,0) circle (0.6);
        \draw[green,<-] (-.1,.6) to (.1,.6);
        \node[green] at (0.6,0) {$\bullet$};
         \node at (2,.5) {\tiny $\omega$};
        \end{tikzpicture}
      \end{array}
      - \begin{array}{c}
          \begin{tikzpicture}[thick,scale=0.3]
            \draw[<-] (-0.2,-2) to (-0.2,2);
            \draw[->] (1,-2) to (1,2);
            \node at (1,-2.5) {\tiny $i$};
            \node at (-0.2,0) {$\bullet$};
            \node at (2,.5) {\tiny $\omega$};
          \end{tikzpicture}
        \end{array}.
      \end{gather*}
      Hence in $\mathcal{U}^{[n]}(\hgl_n)$ we have
      \begin{gather*}
        \begin{array}{c}
          \begin{tikzpicture}[thick,scale=0.3]
            \draw[<-] (0,-2) to (0,2);
            \draw[->] (1,-2) to (1,2);
            \draw[green,->] (-1.5,0) circle (0.7);
          \draw[green,->] (-.8,-.1) to (-.8,+.1);
             \node at (2,.5) {\tiny $\omega$};
             \node at (1,-2.5) {\tiny $i$};
             \node at (-1.5,-1.2) {\tiny $j$};
          \end{tikzpicture}
        \end{array}
        - \begin{array}{c}
            \begin{tikzpicture}[thick,scale=0.3]
              \draw[<-] (0,-2) to (0,2);
              \draw[->] (1,-2) to (1,2);
              \node at (1,-2.5) {\tiny $i$};
              \node at (2.5,-1.2) {\tiny $j$};
              \draw[green,->] (2.5,0) circle (0.7);
              \draw[green,->] (3.2,-.1) to (3.2,+.1);
              \node at (2.5,1.5) {\tiny $\omega$};
            \end{tikzpicture}
          \end{array}
          \;=\;
          \begin{array}{c}
            \begin{tikzpicture}[thick,scale=0.3]
              \draw[<-] (-0.2,-2) to (-0.2,2);
              \draw[->] (1,-2) to (1,2);
              \node at (1,-2.5) {\tiny $i$};
              \node at (1,0) {$\bullet$};
               \node at (2,.5) {\tiny $\omega$};
            \end{tikzpicture}
          \end{array}
          - \begin{array}{c}
              \begin{tikzpicture}[thick,scale=0.3]
                \draw[<-] (-0.2,-2) to (-0.2,2);
                \draw[->] (1,-2) to (1,2); 
                \node at (-0.2,0) {$\bullet$};
                              \node at (1,-2.5) {\tiny $i$};
                               \node at (2,.5) {\tiny $\omega$};
              \end{tikzpicture}
            \end{array}
            \;=\;
            \begin{array}{c}
              \begin{tikzpicture}[thick,scale=0.3]
                \draw[<-] (0,-2) to (0,2);
                \draw[->] (1,-2) to (1,2);
                \node at (1,-2.5) {\tiny $i$};
                \node at (2.5,-1.2) {\tiny $i$};
                \draw[->] (2.5,0) circle (0.7); \draw[->] (3.2,-.1) to
                (3.2,+.1);
                 \node at (2.5,1.5) {\tiny$\omega$};
              \end{tikzpicture}
            \end{array}
            -
            \begin{array}{c}
              \begin{tikzpicture}[scale=0.3,thick]
                \draw[->] (1,-2) to[out=90,in=0] (0,-.3)
                to[out=180,in=90] (-1,-2); \draw[<-] (1,2)
                to[out=-90,in=0] (0,.3) to[out=180,in=-90] (-1,2);
                \node at (1,-2.5) {\tiny $i$};
                \node at (-1,2.5) {\tiny $i$};
                             \node at (2,.5) {\tiny $\omega$};
              \end{tikzpicture}
            \end{array}
          \end{gather*}
          by~\eqref{eqn:alpha} and~\eqref{eqn:cups}. This is easily seen
          to imply the relation in the lemma.
        \end{proof}

The proof of the following lemma is similar to (and in fact simpler than) that of Lemma~\ref{lem:polynomial-2}; it is therefore left to the reader.

\begin{lem}
\label{lem:polynomial-3}
  For any $i, j \in \{1, \cdots, n-1, \infty\}$ with $i \neq j$ and $i \norel j$ we have:
\begin{equation*}
\sigma \left(
  \begin{array}{c}
    \begin{tikzpicture}[thick,scale=0.3]
\draw (0,-2) to (0,2);
\draw[green] (-1,-1) to (-1,1);
\node[green] at (-1,-1) {$\bullet$};
\node[green] at (-1,1) {$\bullet$};
\node at (-1,-2.5) {\tiny $j$};
\node at (0,-2.5) {\tiny $i$};
    \end{tikzpicture}
  \end{array}
\right)
=
\sigma \left(
  \begin{array}{c}
    \begin{tikzpicture}[thick,scale=0.3]
\draw (0,-2) to (0,2);
\draw[green] (1,-1) to (1,1);
\node[green] at (1,1) {$\bullet$};
\node[green] at (1,-1) {$\bullet$};
\node at (0,-2.5) {\tiny $i$};
\node at (1,-2.5) {\tiny $j$};
    \end{tikzpicture}
  \end{array}
\right).
\end{equation*}
\end{lem}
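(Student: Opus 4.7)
The plan is to mimic the proof of Lemma~\ref{lem:polynomial-2}, exploiting the drastic simplifications afforded by the assumption $i \norel j$. First I would compute the images of both sides under $\sigma$: using the definition of $\sigma$ on the dot morphism and on the identity of $B_i$, the left-hand side becomes a closed $j$-coloured bubble (the image under $\sigma$ of the polynomial generator $\alpha_j^\vee$, exactly as it appeared in the proof of Lemma~\ref{lem:polynomial-1}) placed to the left of a vertical pair of arrows of colour $i$ (one downward, one upward) in the region labelled by $\omega$; the right-hand side is the same bubble placed on the right of this pair.

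Next I would slide the $j$-bubble across the $i$-strands, following the same sequence of manipulations as in the first two displayed equalities in the proof of Lemma~\ref{lem:polynomial-2}. Concretely, one pulls the $j$-bubble ``inside out'' around the $i$-strands using \eqref{pullapart}, and then uses the sign rules \eqref{signs:1}--\eqref{signs:4} of Lemma~\ref{signrules} to isolate the dot and resolve the resulting crossings of colours $i$ and $j$ via the second case of \eqref{eqn:rel-2KM-2} (the case $i \neq j$ and $i \norel j$, which gives the identity).

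The crucial simplification compared with Lemma~\ref{lem:polynomial-2} is that, under the assumption $i \norel j$, every scalar $t_{ij}$ or $t_{ji}$ appearing in \eqref{signs:1}--\eqref{signs:4} equals $1$, so no sign is produced when the bubble passes across an $i$-strand; and the ``nilHecke-type'' correction terms that arose in the $i \rel j$ proof (coming from the third case of \eqref{eqn:rel-2KM-2} and the non-trivial case of \eqref{eqn:rel-2KM-1}) are all zero here, because both relations produce zero whenever $i \neq j$ and $i \norel j$. Consequently, each algebraic step of the proof of Lemma~\ref{lem:polynomial-2} reduces, in the present setting, to a purely topological sliding of the closed bubble through a transparent strand, with neither the extra cap-with-dot terms nor the ``$\pm 2 \cdot (\text{dot on strand})$'' correction terms that appeared there. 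The bubble therefore moves freely from the left to the right of the $B_i$ strand pair, yielding the stated identity.

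The only genuine bookkeeping issue, as in Lemma~\ref{lem:polynomial-2}, is to keep careful track of the orientations of cups, caps, and the bubble so as to invoke the correct variant among \eqref{signs:1}--\eqref{signs:4}; but in the $i \norel j$ regime all four variants collapse to sign-free sliding, so this is essentially automatic and no real obstacle arises.
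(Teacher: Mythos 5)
Your argument is correct and is exactly what the paper intends: the paper omits this proof, remarking only that it is "similar to (and in fact simpler than)" that of Lemma~\ref{lem:polynomial-2}, and your observation that in the $i \norel j$ regime all scalars $t_{ij}, t_{ji}$ equal $1$ and all correction terms from~\eqref{eqn:rel-2KM-1} and~\eqref{eqn:rel-2KM-2} vanish, so the $j$-bubble slides freely past the $i$-strands, is precisely that simplification.
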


\subsection{Cyclicity}

Now we prove that the images under $\sigma$ of the $4$-valent and $6$-valent vertices are cyclic. The case of the $4$-valent vertices is easy, and
left to the reader. So, we concentrate on the $6$-valent vertices.

We fix $i,j \in \{1, \cdots, n-1, \infty\}$ with
$j \to i$. Then in $\mathcal{U}(\hgl_n)$ we have
\begin{equation*}
  \begin{array}{c}
    \begin{tikzpicture}[thick,scale=0.2]
      \draw[black,->] (-6,5) to[out=-90,in=180] (-3,-3) to[out=0,in=-120] (0,-1) to[out=60,in=-90] (1,5);
      \draw[black,<-] (-5,5) to[out=-90,in=180] (-3,-2) to[out=0,in=180] (0,1) to[out=0,in=90] (3,-5);
      \draw[black,->] (-1,5) to[out=-90,in=120] (0,-1) to[out=-60,in=90] (2,-5);
      \draw[green,->] (5,-5) to[out=90,in=0] (3.5,4) to[out=180,in=0] (0,1.5) to[out=180,in=-90] (-3,5);
      \draw[green,<-] (4,-5) to[out=90,in=0] (3.5,2) to[out=180,in=90] (.5,-5);
      \draw[green,->] (-4,5) to[out=-90,in=90] (-.5,-5);
      \node at (.5,-6) {\tiny $j$};
     \node at (3,-6) {\tiny $i$};
     \node at (5,-6) {\tiny $j$};
\node (a) at (7,0) {\tiny $\omega$};
    \end{tikzpicture}
  \end{array}
=
  \begin{array}{c}
    \begin{tikzpicture}[thick,scale=0.2]
      \draw[black,->] (-6,5) to[out=-90,in=180] (-1.5,-2) to[out=0,in=-120] (0,-1) to[out=60,in=-90] (1,5);
      \draw[black,<-] (-5,5) to[out=-90,in=180] (-1.5,-1) to[out=0,in=170] (0,1) to[out=-10,in=90] (1,-5);
      \draw[black,->] (0,5) to[out=-90,in=120] (-.5,-1) to[out=-60,in=90] (0,-5);
      \draw[green,->] (-2,5) to (-3.5,-5);
      \draw[green,<-] (-1,5) to (4.5,-5);
      \draw[green,->] (-1.5,-5) to[out=70,in=-170] (0.5,-3) to[out=10,in=110] (2,-5);
      \node at (-1.5,-6) {\tiny $j$};
     \node at (1,-6) {\tiny $i$};
     \node at (4.5,-6) {\tiny $j$};
\node (a) at (3,0) {\tiny $\omega$};
    \end{tikzpicture}
  \end{array}
=
  \begin{array}{c}
    \begin{tikzpicture}[thick,xscale=-0.15,yscale=-0.2]
      \draw[<-] (-5,-5) to[out=90,in=180] (0,1.5) to[out=0,in=90] (5,-5);
      \draw[->] (-3,-5) to[out=90,in=-90] (1,1) to (1,5);
      \draw[<-] (3,-5) to[out=90,in=-90] (-1,1) to (-1,5);
\draw[<-,green] (-1,-5) to[out=90,in=-90] (-5,5);
\draw[->,green] (1,-5) to[out=90,in=-90] (5,5);
\draw[->,green] (3,5) to[out=-90,in=0] (0,2) to[out=180,in=-90] (-3,5);
\node (a) at (-6,0) {\tiny $\omega$};
      \node at (3,6) {\tiny $j$};
     \node at (-1,6) {\tiny $i$};
     \node at (-5,6) {\tiny $j$};
    \end{tikzpicture}
\end{array}
\end{equation*}
where the first equality uses each of the four sign rules in Lemma~\ref{signrules} once, and the second equality follows from~\eqref{signs:2} and~\eqref{signs:4}.

Using similar computations, we have
\[
  \begin{array}{c}
    \begin{tikzpicture}[thick,scale=-0.2]
      \draw[black,->] (-6,5) to[out=-90,in=180] (-3,-3) to[out=0,in=-120] (0,-1) to[out=60,in=-90] (1,5);
      \draw[black,<-] (-5,5) to[out=-90,in=180] (-3,-2) to[out=0,in=180] (0,1) to[out=0,in=90] (3,-5);
      \draw[black,->] (-1,5) to[out=-90,in=120] (0,-1) to[out=-60,in=90] (2,-5);
      \draw[green,->] (5,-5) to[out=90,in=0] (3.5,4) to[out=180,in=0] (0,1.5) to[out=180,in=-90] (-3,5);
      \draw[green,<-] (4,-5) to[out=90,in=0] (3.5,2) to[out=180,in=90] (.5,-5);
      \draw[green,->] (-4,5) to[out=-90,in=90] (-.5,-5);
      \node at (-1,6) {\tiny $i$};
     \node at (-6,6) {\tiny $i$};
     \node at (-4,6) {\tiny $j$};
\node (a) at (-7,0) {\tiny $\omega$};
    \end{tikzpicture}
  \end{array}
=
  \begin{array}{c}
    \begin{tikzpicture}[thick,scale=-0.2]
      \draw[black,->] (-6,5) to[out=-90,in=180] (-1.5,-2) to[out=0,in=-120] (0,-1) to[out=60,in=-90] (1,5);
      \draw[black,<-] (-5,5) to[out=-90,in=180] (-1.5,-1) to[out=0,in=170] (0,1) to[out=-10,in=90] (1,-5);
      \draw[black,->] (0,5) to[out=-90,in=120] (-.5,-1) to[out=-60,in=90] (0,-5);
      \draw[green,->] (-2,5) to (-3.5,-5);
      \draw[green,<-] (-1,5) to (4.5,-5);
      \draw[green,->] (-1.5,-5) to[out=70,in=-170] (0.5,-3) to[out=10,in=110] (2,-5);
      \node at (0,6) {\tiny $i$};
     \node at (-6,6) {\tiny $i$};
     \node at (-2,6) {\tiny $j$};
\node (a) at (-6.5,0) {\tiny $\omega$};
    \end{tikzpicture}
  \end{array}
=
  \begin{array}{c}
    \begin{tikzpicture}[thick,xscale=0.15,yscale=0.2]
      \draw[<-] (-5,-5) to[out=90,in=180] (0,1.5) to[out=0,in=90] (5,-5);
      \draw[->] (-3,-5) to[out=90,in=-90] (1,1) to (1,5);
      \draw[<-] (3,-5) to[out=90,in=-90] (-1,1) to (-1,5);
\draw[<-,green] (-1,-5) to[out=90,in=-90] (-5,5);
\draw[->,green] (1,-5) to[out=90,in=-90] (5,5);
\draw[->,green] (3,5) to[out=-90,in=0] (0,2) to[out=180,in=-90] (-3,5);
\node (a) at (6,0) {\tiny $\omega$};
      \node at (5,-6) {\tiny $i$};
     \node at (1,-6) {\tiny $j$};
     \node at (-3,-6) {\tiny $i$};
    \end{tikzpicture}
\end{array} .
\]
These relations imply that indeed the $6$-valent vertices are cyclic.

\subsection{Jones--Wenzl relations}


Now we turn to the Jones--Wenzl relations, i.e. the relations described in~\cite[(5.7)]{ew}. In our specific situation, these relations are described in the following proposition.

\begin{prop}
Let $i,j \in \{1, \cdots, n-1,\infty\}$ with $i \neq j$.
  \begin{enumerate}
  \item
\label{it:JW1}  
  If $i \norel j$ we have:
    \begin{gather*}
\sigma \left(
      \begin{array}{c}
            \begin{tikzpicture}[thick,xscale=0.1,yscale=0.15]
              \draw[green] (-3,-3) to (2,2);
              \node[green] at (2,2) {$\bullet$};
              \draw (3,-3) to (-3,3);
      \node at (-3,-4) {\tiny $i$};
     \node at (3,-4) {\tiny $j$};
            \end{tikzpicture}
      \end{array}
\right)
=
\sigma \left(
      \begin{array}{c}
            \begin{tikzpicture}[thick,xscale=0.1,yscale=0.15]
              \draw[green] (-3,-3) to (-1,-1);
              \node[green] at (-1,-1) {$\bullet$};
              \draw (3,-3) to (-3,3);
      \node at (-3,-4) {\tiny $i$};
     \node at (3,-4) {\tiny $j$};
            \end{tikzpicture}
      \end{array}
\right).
    \end{gather*}
  \item 
\label{it:JW2}  
  If $i \rel j$ we have:
  \end{enumerate}
\begin{align*}
\sigma \left(
  \begin{array}{c}
    \begin{tikzpicture}[thick,scale=0.15]
      \draw (-4,-5) to (0,0) to (4,-5);
      \draw (0,5) to (0,0);
      \draw[green] (-4,5) to (0,0) to (3,3);
      \node[green] at (3,3) {$\bullet$};
      \draw[green] (0,-5) to (0,0);
      \node at (-4,-6) {\tiny $i$};
     \node at (0,-6) {\tiny $j$};
           \node at (4,-6) {\tiny $i$};
    \end{tikzpicture}
  \end{array}
\right)
\;=\;
\sigma \left(
  \begin{array}{c}
    \begin{tikzpicture}[thick,scale=0.15]
      \draw (-4,-5) to (-2.5,-2.5);
      \node[black] at (-2.5,-2.5) {$\bullet$};
      \draw (4,-5) to[out=90,in=-90] (1,5);
      \draw[green] (0,-5) to[out=90,in=-90] (-2,5);
      \node at (-4,-6) {\tiny $i$};
     \node at (0,-6) {\tiny $j$};
           \node at (4,-6) {\tiny $i$};
    \end{tikzpicture}
  \end{array}
\;+\;
  \begin{array}{c}
    \begin{tikzpicture}[thick,scale=0.15]
      \draw (-4,-5) to (0,0) to (4,-5);
      \draw (0,5) to (0,0);
      \draw[green] (-4,5) to (-2,2);
      \node[green] at (-2,2) {$\bullet$};
      \draw[green] (0,-5) to (0,-3);
      \node[green] at (0,-3) {$\bullet$};
      \node at (-4,-6) {\tiny $i$};
     \node at (0,-6) {\tiny $j$};
    \end{tikzpicture}
  \end{array}
\right).
\end{align*}
\end{prop}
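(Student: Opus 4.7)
The approach is to unfold the images under $\sigma$ of both sides of each identity using the formulas in~\S\ref{ss:strategy}, and then verify the resulting equalities in $\End_{\mathcal{U}^{[n]}(\hgl_n)}(\omega)$ using the KLR relations recorded above. Recall that each color-$k$ strand of $\DiagBS$ unfolds to a pair consisting of an upward and a downward $k$-labelled strand in $\mathcal{U}(\hgl_n)$, each $4$-valent crossing becomes a double crossing of the two associated pairs, and the $6$-valent vertex (with its sign) becomes the intricate cup/cap/crossing diagram given in~\S\ref{ss:strategy}; the upper dot becomes a cap. In all manipulations we use freely the fact that many diagrams vanish in $\mathcal{U}^{[n]}(\hgl_n)$ because they contain a region labelled by a weight outside the allowed set.

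For case~\eqref{it:JW1} ($i \norel j$) the argument is essentially formal. The image of the left-hand side is the double crossing of the $(E_i, F_i)$-pair with the $(E_j, F_j)$-pair, capped off on the $i$-side after crossing. Each elementary crossing of an $i$-strand with a $j$-strand is trivial: the up/down crossings reduce to parallel strands via~\eqref{pullapart}, while the up/up and down/down crossings reduce to parallel strands via the second case of~\eqref{eqn:rel-2KM-2}. Successive application of these relations pulls the $j$-pair straight through the $i$-pair, at which point the cap is applied to the $i$-pair on the left; this is precisely the image of the right-hand side.

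For case~\eqref{it:JW2} ($i \rel j$) the computation is more subtle. I would first use the cyclicity (verified in the preceding subsection) together with the formulas of~\S\ref{ss:strategy} to rewrite $\sigma$ of the $6$-valent vertex composed with the upper dot as a composite containing several $i,j$-crossings (both of up/up and of down/down type) together with some cups and caps. For each such crossing, the third case of~\eqref{eqn:rel-2KM-2} splits the diagram into a sum of two terms, one carrying a dot on the $i$-strand (with coefficient $t_{ji}$) and one carrying a dot on the $j$-strand (with coefficient $t_{ij}$); recall that $t_{ij} t_{ji} = -1$. After this expansion, the sign rules of Lemma~\ref{signrules} allow one to move all dots to chosen positions, and a large number of resulting terms are killed either by the weight restriction defining $\mathcal{U}^{[n]}(\hgl_n)$ or by Lemma~\ref{nilhecke3}; the remaining cups and caps carrying dots can be simplified using Lemma~\ref{lem:blue2} and~\eqref{ipullapart}. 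The surviving contributions group into exactly two terms: the ``dot-on-$j$'' contribution collapses, after adjunction, into the first summand on the right-hand side (with the bottom-left $i$-strand killed by an upper dot and the two remaining strands running through), while the ``dot-on-$i$'' contribution reassembles into the second summand, which is $\sigma$ of the $6$-valent vertex with the two additional dots.

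The main obstacle will be the combinatorial bookkeeping in case~\eqref{it:JW2}: many cups, caps, and crossings are in play simultaneously, and one must carefully track the signs $t_{ij}, t_{ji}$ and the vanishing of weight-forbidden regions, ensuring that exactly the right terms survive. A useful sanity check is that, by the cyclicity already established, it suffices to verify the identity up to planar isotopy of the resulting morphisms in $\mathcal{U}(\hgl_n)$, and that all morphisms in sight have degree $+1$ in the grading on $\DiagBS$.
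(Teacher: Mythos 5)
Your treatment of case~\eqref{it:JW1} matches the paper's: the mixed-color crossings are trivialized by~\eqref{pullapart} and the second case of~\eqref{eqn:rel-2KM-2}, and the dot slides through. For case~\eqref{it:JW2}, however, your proposed mechanism for producing the two summands on the right-hand side is not the correct one, and following it as written would not close the argument. You attribute the splitting into two terms to the third case of~\eqref{eqn:rel-2KM-2} applied to the mixed $i,j$-crossings (dot-on-$i$ with coefficient $t_{ji}$ versus dot-on-$j$ with coefficient $t_{ij}$). In the actual computation those mixed-color dot terms either vanish outright (one branch is killed by the first case of~\eqref{eqn:rel-2KM-2}, or by a weight-forbidden region in $\mathcal{U}^{[n]}(\hgl_n)$) or merely contribute overall signs such as $t_{ij}t_{ji}^{-1}=-1$. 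The splitting that actually yields the two summands comes from a \emph{same-color} $i$ up/down crossing via~\eqref{ipullapart}, i.e.\ the categorified $\mathfrak{sl}_2$-relation $F_iE_i\cong E_iF_i\oplus\mathrm{id}$ on the relevant weight space: its cup--cap term becomes (after~\eqref{splits} and the sign rules) the doubly dotted $6$-valent term, and its identity term becomes the first summand. The other nontrivial inputs you would need are the triple-crossing relation of Lemma~\ref{nilhecke3} (in the orientation $j\to i$) and the vanishing of one branch of~\eqref{eqn:rel-2KM-2} (in the orientation $i\to j$); in particular the two orientations $i\to j$ and $j\to i$ must be treated separately, since which regions are weight-forbidden and which case of~\eqref{eqn:rel-2KM-3} applies depends on the direction of the arrow, and your proposal does not distinguish them.

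A second, smaller but real, problem is your closing remark that cyclicity lets you work ``up to planar isotopy of the resulting morphisms in $\mathcal{U}(\hgl_n)$.'' KLR diagrams are \emph{not} isotopy invariant: rotating a mixed-color crossing past a cup or cap introduces the factors $t_{ij}$, $t_{ji}$ recorded in Lemma~\ref{signrules}, and the identity you are proving has both right-hand summands with coefficient exactly $+1$, so these signs cannot be discarded. Cyclicity of the images of the generators only guarantees that $\sigma$ is well defined on isotopy classes of \emph{Soergel} diagrams; every manipulation on the KLR side must still track the sign rules explicitly (one may work ``up to sign'' only if the signs are reinstated and checked at the end).
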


\begin{proof}
\eqref{it:JW1}~follows from Lemma~\ref{signrules},~\eqref{pullapart} and~\eqref{eqn:rel-2KM-2}; details are left to
  the reader. Now we consider~\eqref{it:JW2}. We treat the cases $i
  \to j$ and $j \to i$ separately. 

\emph{Case 1: $j \to i$.} The left-hand side of the desired equality is
\begin{gather*}
 - \;\; \begin{array}{c}
    \begin{tikzpicture}[thick,scale=0.2]
      \draw[<-] (-5,-5) to[out=90,in=180] (0,1.5) to[out=0,in=90] (5,-5);
      \draw[->] (-3,-5) to[out=90,in=-90] (1,1) to (1,5);
      \draw[<-] (3,-5) to[out=90,in=-90] (-1,1) to (-1,5);
\draw[->,green] (1,-5) to[out=90,in=-90] (4,3) to[out=90,in=0] (3,5) to[out=180,in=90] (2,3.5)
to[out=-90,in=0] (0,2) to[out=180,in=-90] (-3,5);
\draw[<-,green] (-1,-5) to[out=90,in=-90] (-5,5);
\node (a) at (5,2) {\tiny $\omega$};
\node at (-3,-5.6) {\tiny $i$};
\node at (1,-5.6) {\tiny $j$};
\node at (5,-5.6) {\tiny $i$};
    \end{tikzpicture}  \end{array}
\stackrel{\eqref{signs:3}}{=}
-t_{ji}^{-1} \;
  \begin{array}{c}
    \begin{tikzpicture}[thick,scale=0.2]
      \draw[<-] (-5,-5) to[out=90,in=180] (0,1.5) to[out=0,in=90] (5,-5);
      \draw[->] (-3,-5) to[out=90,in=-90] (1,1) to (1,5);
      \draw[<-] (3,-5) to[out=90,in=-90] (-1,1) to (-1,5);
\draw[->,green] (1,-5) to[out=90,in=-90] (3,0) to[out=90,in=-90] (-3,5);
\draw[<-,green] (-1,-5) to[out=90,in=-90] (-5,5);
\node (a) at (5,2) {\tiny$\omega$};
\node at (-3,-5.6) {\tiny $i$};
\node at (1,-5.6) {\tiny $j$};
\node at (5,-5.6) {\tiny $i$};
    \end{tikzpicture}  
    \end{array}
\stackrel{\eqref{eq:nilhecke3}}{=}
t_{ij}t_{ji}^{-1}
  \begin{array}{c}
    \begin{tikzpicture}[thick,scale=0.2]
      \draw[->] (5,-5) to[out=90,in=-90] (1,5);
      \draw[->] (-3,-5) to[out=90,in=-90] (1,0) to[out=90,in=0]
      (-1,1.5) to[out=180,in=90] (-5,-5);
      \draw[<-] (3,-5) to[out=90,in=-90] (-2.3,1) to (-1,5);
\draw[->,green] (1,-5) to[out=90,in=-90] (2,0) to[out=90,in=-90] (-3,5);
\draw[<-,green] (-1,-5) to[out=90,in=-90] (-5,5);
\node (a) at (5,2) {\tiny $\omega$};
\node at (-3,-5.6) {\tiny $i$};
\node at (1,-5.6) {\tiny $j$};
\node at (5,-5.6) {\tiny $i$};
    \end{tikzpicture}  \end{array} \\
\stackrel{\eqref{ipullapart} + \eqref{signs:4} + \eqref{pullapart}}{=} \quad
-t_{ij}t_{ji}^{-1}
  \begin{array}{c}
    \begin{tikzpicture}[thick,scale=0.2]
      \draw[->] (5,-5) to[out=90,in=-90] (1,5);
      \draw[->] (-3,-5) to[out=90,in=0] (-4,-3) to[out=180,in=90] (-5,-5);
      \draw[<-] (3,-5) to[out=90,in=-90] (-1,1) to (-1,5);
\draw[->,green] (1,-5) to[out=90,in=-90] (2,0) to[out=90,in=-90] (-3,5);
\draw[<-,green] (-1,-5) to[out=90,in=-90] (-5,5);
\node (a) at (5,2) {\tiny $\omega$};
\node at (-3,-5.6) {\tiny $i$};
\node at (1,-5.6) {\tiny $j$};
\node at (5,-5.6) {\tiny $i$};
    \end{tikzpicture}  \end{array} 
+t_{ij}t_{ji}^{-1}
  \begin{array}{c}
    \begin{tikzpicture}[thick,scale=0.2]
      \draw[->] (5,-5) to[out=90,in=-90] (1,5);
      \draw[->] (-3,-5) to[out=90,in=180] (0,-1) to[out=0,in=90] (3,-5);
      \draw[<-] (-5,-5) to[out=90,in=-90] (-1,5);
\draw[->,green] (1,-5) to[out=90,in=-90] (2,-1) to[out=90,in=-90] (-3,5);
\draw[<-,green] (-1,-5) to[out=90,in=-90] (-5,5);
\node (a) at (5,2) {\tiny $\omega$};
\node at (-3,-5.6) {\tiny $i$};
\node at (1,-5.6) {\tiny $j$};
\node at (5,-5.6) {\tiny $i$};
    \end{tikzpicture}  \end{array} \\
\stackrel{\eqref{pullapart} + \eqref{signs:2}}{=} \quad
  \begin{array}{c}
    \begin{tikzpicture}[thick,scale=0.2]
      \draw[->] (5,-5) to[out=90,in=-90] (5,5);
      \draw[->] (-3,-5) to[out=90,in=0] (-4,-3) to[out=180,in=90] (-5,-5);
      \draw[<-] (3,-5) to (3,5);
\draw[->,green] (1,-5) to (1,5);
\draw[<-,green] (-1,-5) to (-1,5);
\node (a) at (6,2) {\tiny $\omega$};
\node at (-3,-5.6) {\tiny $i$};
\node at (1,-5.6) {\tiny $j$};
\node at (5,-5.6) {\tiny $i$};
    \end{tikzpicture}  \end{array} 
+t_{ij}
  \begin{array}{c}
    \begin{tikzpicture}[thick,scale=0.2]
      \draw[->] (5,-5) to[out=90,in=-90] (4,5);
      \draw[->] (-3,-5) to[out=90,in=180] (0,-1) to[out=0,in=90] (3,-5);
      \draw[<-] (-5,-5) to[out=90,in=-90] (2,5);
\draw[->,green] (1,-5) to[out=90,in=-90] (3,-1) to[out=90,in=-90] (0,5);
\draw[<-,green] (-1,-5) to[out=90,in=-90] (1,-1) to[out=90,in=-90] (-2,5);
\node (a) at (6,2) {\tiny $\omega$};
\node at (-3,-5.6) {\tiny $i$};
\node at (1,-5.6) {\tiny $j$};
\node at (5,-5.6) {\tiny $i$};
    \end{tikzpicture}  \end{array} 
\end{gather*}
(where in the fourth equality we also use that $t_{ij}t_{ji}^{-1}
= -1$).

The first term is as expected. We
simplify the second term further as follows:
\begin{multline*}
t_{ij}
  \begin{array}{c}
    \begin{tikzpicture}[thick,scale=0.2]
      \draw[->] (5,-5) to[out=90,in=-90] (4,5);
      \draw[->] (-3,-5) to[out=90,in=180] (0,-1) to[out=0,in=90] (3,-5);
      \draw[<-] (-5,-5) to[out=90,in=-90] (2,5);
\draw[->,green] (1,-5) to[out=90,in=-90] (3,-1) to[out=90,in=-90] (0,5);
\draw[<-,green] (-1,-5) to[out=90,in=-90] (1,-1) to[out=90,in=-90] (-2,5);
\node (a) at (6,2) {\tiny $\omega$};
\node at (-3,-5.6) {\tiny $i$};
\node at (1,-5.6) {\tiny $j$};
\node at (5,-5.6) {\tiny $i$};
    \end{tikzpicture}  \end{array} \quad
\stackrel{\eqref{splits}}= \quad
t_{ij}
  \begin{array}{c}
    \begin{tikzpicture}[thick,scale=0.2]
      \draw[->] (5,-5) to[out=90,in=-90] (4,5);
      \draw[->] (-3,-5) to[out=90,in=180] (0,-1) to[out=0,in=90] (3,-5);
      \draw[<-] (-5,-5) to[out=90,in=-90] (2,5);
\draw[->,green] (1,-5) to[out=90,in=-90] (3,-1) to[out=90,in=0] (2,0)
to[out=180,in=45] (1,-2) to[out=-135,in=90] (-1,-5);
\draw[<-,green] (0,5) to[out=-90,in=90] (3,2) to[out=-90,in=0] (2,1)
to[out=180,in=-45] (0,3) to[out=135,in=-90] (-2,5);
\node (a) at (6,2) {\tiny $\omega$};
\node at (-3,-5.6) {\tiny $i$};
\node at (1,-5.6) {\tiny $j$};
\node at (5,-5.6) {\tiny $i$};
    \end{tikzpicture}  \end{array} 
\\
\stackrel{\eqref{signs:2} + \eqref{signs:4} + \eqref{pullapart}}= \quad
  \begin{array}{c}
    \begin{tikzpicture}[thick,scale=0.2]
      \draw[->] (5,-5) to[out=90,in=-90] (4,5);
      \draw[->] (-3,-5) to[out=90,in=180] (0,-1) to[out=0,in=90] (3,-5);
      \draw[<-] (-5,-5) to[out=90,in=-90] (2,5);
\draw[->,green] (1,-5) to[out=90,in=0] (0,-3) to[out=180,in=90] (-1,-5);
\draw[<-,green] (0,5) to[out=-90,in=0] (-1,3) to[out=180,in=-90] (-2,5);
\node (a) at (6,2) {\tiny $\omega$};
\node at (-3,-5.6) {\tiny $i$};
\node at (1,-5.6) {\tiny $j$};
\node at (5,-5.6) {\tiny $i$};
    \end{tikzpicture}  \end{array} .
\end{multline*}
Hence we are done in this case.

\emph{Case 2: $i \to j$.} 
Using~\eqref{signs:3} and the third case in~\eqref{eqn:rel-2KM-2}, we see that
\[
     -\;\; \begin{array}{c}
\begin{tikzpicture}[thick,scale=0.2]
      \draw[->] (-1,5) to[out=-90,in=90] (-5,-5);
      \draw[<-] (1,5) to[out=-90,in=90] (5,-5);
      \draw[->] (-3,-5) to[out=90,in=180] (0,-1) to[out=0,in=90] (3,-5);
\draw[green,->] (-5,5) to[out=-90,in=180] (0,0) to[out=0,in=-90] (4,3)
to[out=90,in=0] (3,4) to[out=180,in=60] (2,3) to[out=-120,in=80]
(-1,-1) to[out=-100,in=90] (-1,-5);
\draw[green,->] (1,-5) to (1,-1) to[out=90,in=-90] (-3,5);
\node (a) at (5,2) {\tiny $\omega$};
\node at (-3,-5.6) {\tiny $i$};
\node at (1,-5.6) {\tiny $j$};
\node at (5,-5.6) {\tiny $i$};
    \end{tikzpicture}
  \end{array}
=
-t_{ji}^{-1}t_{ji}\;
 \begin{array}{c}
    \begin{tikzpicture}[thick,scale=0.2]
      \draw[->] (-1,5) to[out=-90,in=90] (-5,-5);
      \draw[<-] (1,5) to[out=-90,in=90] (5,-5);
      \draw[->] (-3,-5) to[out=90,in=180] (0,-1) to[out=0,in=90] (3,-5);
\draw[green,->] (-5,5) to[out=-90,in=180] (0,0) to[out=0,in=-90] (1.3,1.3)
to[out=90,in=0] (0.3,2.3) to[out=180,in=60] (-0.8,1.3) to (-1,-5);
\draw[green,->] (1,-5) to (1,-1) to[out=90,in=-90] (-3,5);
\node[green] (b) at (1,2) {$\bullet$}; 
\node (a) at (4,2) {\tiny $\omega$};
\node at (-3,-5.6) {\tiny $i$};
\node at (1,-5.6) {\tiny $j$};
\node at (5,-5.6) {\tiny $i$};
    \end{tikzpicture}
\end{array} .
\]
(In fact, the second term coming from the application of~\eqref{eqn:rel-2KM-2} vanishes due to the first case in~\eqref{eqn:rel-2KM-2}.) Now, using~\eqref{eqn:rel-2KM-1}, the same vanishing property as above, and the sign rule~\eqref{signs:2}, we see that the right-hand side here is equal to
\begin{equation}
\label{eqn:JW-proof}
 \begin{array}{c}
    \begin{tikzpicture}[thick,scale=0.2]
      \draw[->] (2,5) to[out=-90,in=30] (-1.5,2) to[out=-150,in=90] (-5,-5);
      \draw[<-] (4.5,5) to[out=-90,in=90] (5,-5);
      \draw[->] (-3,-5) to[out=90,in=180] (0,-1) to[out=0,in=90] (3,-5);
\draw[green,->] (.5,-5) to[out=90,in=-60] (2,1) to[out=120,in=0] (.5,2) to[out=180,in=60] (-1,1) to[out=-120,in=90] (-1,-5);
\draw[green,<-] (-1,5) to[out=-90,in=60] (-.5,.5) to[out=-120,in=0] (-2,-0.5) to[out=180,in=-60] (-3.5,.5) to[out=120,in=-90] (-2,5);
\node (a) at (5.5,2) {\tiny $\omega$};
\node at (-3,-5.6) {\tiny $i$};
\node at (1,-5.6) {\tiny $j$};
\node at (5,-5.6) {\tiny $i$};
    \end{tikzpicture}
\end{array}\\
\stackrel{\eqref{ipullapart}}{=}
\begin{array}{c}
    \begin{tikzpicture}[thick,scale=0.2]
      \draw[->] (3,5) to[out=-90,in=30] (-1.5,2) to[out=-150,in=90] (-5,-5);
      \draw[<-] (4.5,5) to[out=-90,in=90] (5,-5);
      \draw[->] (-3,-5) to[out=90,in=180] (0,-1) to[out=0,in=90] (3,-5);
\draw[green,->] (1,-5) to (1,5);
\draw[green,->] (-1,5) to (-1,-5);
\node (a) at (5.5,2) {\tiny $\omega$};
\node at (-3,-5.6) {\tiny $i$};
\node at (1,-5.6) {\tiny $j$};
\node at (5,-5.6) {\tiny $i$};
    \end{tikzpicture}
\end{array}
-
 \begin{array}{c}
    \begin{tikzpicture}[thick,scale=0.2]
      \draw[->] (3,5) to[out=-90,in=30] (-1.5,2) to[out=-150,in=90] (-5,-5);
      \draw[<-] (4.5,5) to[out=-90,in=90] (5,-5);
      \draw[->] (-3,-5) to[out=90,in=180] (0,-1) to[out=0,in=90] (3,-5);
\draw[green,->] (1,-5) to[out=90,in=-60] (1.5,-1) to[out=120,in=0] (0,0) to[out=180,in=60] (-1.5,-1) to[out=-120,in=90] (-1,-5);
\draw[green,<-] (1,5) to[out=-90,in=60] (1.5,1.5) to[out=-120,in=0] (0,0.5) to[out=180,in=-60] (-1.5,1.5) to[out=120,in=-90] (-1,5);
\node (a) at (5.5,2) {\tiny$\omega$};
\node at (-3,-5.6) {\tiny $i$};
\node at (1,-5.6) {\tiny $j$};
\node at (5,-5.6) {\tiny $i$};
    \end{tikzpicture}
\end{array} .
\end{equation}
We now analyze the second term in the right-hand side of~\eqref{eqn:JW-proof}.
Using the sign rules~\eqref{signs:1},~\eqref{signs:2} and~\eqref{signs:3} and~\eqref{pullapart}, we see that
\[
- \begin{array}{c}
    \begin{tikzpicture}[thick,scale=0.2]
      \draw[->] (3,5) to[out=-90,in=30] (-1.5,2) to[out=-150,in=90] (-5,-5);
      \draw[<-] (4.5,5) to[out=-90,in=90] (5,-5);
      \draw[->] (-3,-5) to[out=90,in=180] (0,-1) to[out=0,in=90] (3,-5);
\draw[green,->] (1,-5) to[out=90,in=-60] (1.5,-1) to[out=120,in=0] (0,0) to[out=180,in=60] (-1.5,-1) to[out=-120,in=90] (-1,-5);
\draw[green,<-] (1,5) to[out=-90,in=60] (1.5,1.5) to[out=-120,in=0] (0,0.5) to[out=180,in=-60] (-1.5,1.5) to[out=120,in=-90] (-1,5);
\node (a) at (5.5,2) {\tiny$\omega$};
\node at (-3,-5.6) {\tiny $i$};
\node at (1,-5.6) {\tiny $j$};
\node at (5,-5.6) {\tiny $i$};
    \end{tikzpicture}
\end{array}
=    -t_{ij}^{-1} t_{ji} 
 \begin{array}{c}
    \begin{tikzpicture}[thick,scale=0.2]
      \draw[->] (3,5) to[out=-90,in=30] (-2.5,0) to[out=-150,in=90] (-5,-5);
      \draw[<-] (4.5,5) to[out=-90,in=90] (5,-5);
      \draw[->] (-3,-5) to[out=90,in=180] (0,-1) to[out=0,in=90] (3,-5);
\draw[green,->] (1,-5) to[out=90,in=0] (0,-3) to[out=180,in=90] (-1,-5);
\draw[green,<-] (1,5) to[out=-90,in=0] (0,3) to[out=180,in=-90] (-1,5);
\node (a) at (5.5,2) {\tiny $\omega$};
\node at (-3,-5.6) {\tiny $i$};
\node at (1,-5.6) {\tiny $j$};
\node at (5,-5.6) {\tiny $i$};
    \end{tikzpicture}
    \end{array}
    =
 \begin{array}{c}
    \begin{tikzpicture}[thick,scale=0.2]
      \draw[->] (3,5) to[out=-90,in=30] (-2.5,0) to[out=-150,in=90] (-5,-5);
      \draw[<-] (4.5,5) to[out=-90,in=90] (5,-5);
      \draw[->] (-3,-5) to[out=90,in=180] (0,-1) to[out=0,in=90] (3,-5);
\draw[green,->] (1,-5) to[out=90,in=0] (0,-3) to[out=180,in=90] (-1,-5);
\draw[green,<-] (1,5) to[out=-90,in=0] (0,3) to[out=180,in=-90] (-1,5);
\node (a) at (5.5,2) {\tiny $\omega$};
\node at (-3,-5.6) {\tiny $i$};
\node at (1,-5.6) {\tiny $j$};
\node at (5,-5.6) {\tiny $i$};
    \end{tikzpicture}
    \end{array} .
\]
Finally we analyze the first term in the right-hand side of~\eqref{eqn:JW-proof}. Here we use~\eqref{ipullapart} (in which the left-hand side vanishes in $\mathcal{U}^{[n]}(\hgl_n)$ in case $i \to j$ since it contains a region labelled by $\omega +2\alpha_i + \alpha_j$) and then the sign rule~\eqref{signs:1} and~\eqref{pullapart}, to see that
\[
\begin{array}{c}
    \begin{tikzpicture}[thick,scale=0.2]
      \draw[->] (3,5) to[out=-90,in=30] (-1.5,2) to[out=-150,in=90] (-5,-5);
      \draw[<-] (4.5,5) to[out=-90,in=90] (5,-5);
      \draw[->] (-3,-5) to[out=90,in=180] (0,-1) to[out=0,in=90] (3,-5);
\draw[green,->] (1,-5) to (1,5);
\draw[green,->] (-1,5) to (-1,-5);
\node (a) at (5.5,2) {\tiny $\omega$};
\node at (-3,-5.6) {\tiny $i$};
\node at (1,-5.6) {\tiny $j$};
\node at (5,-5.6) {\tiny $i$};
    \end{tikzpicture}
\end{array}
=
\begin{array}{c}
    \begin{tikzpicture}[thick,scale=0.2]
      \draw[->] (-3,-5) to[out=90,in=0] (-2,0) to[out=180,in=90] (-5,-5);
      \draw[->] (3,5) to[out=-90,in=90] (2,0) to[out=-90,in=90] (3,-5);
\draw[green,->] (-1,5) to [out=-90,in=90] (-4,0) to[out=-90,in=90] (-1,-5);
\draw[green,<-] (1,5) to [out=-90,in=90] (4,0) to[out=-90,in=90] (1,-5);
      \draw[<-] (4.5,5) to[out=-90,in=90] (5,-5);
\node (a) at (5.5,2) {\tiny $\omega$};
\node at (-3,-5.6) {\tiny $i$};
\node at (1,-5.6) {\tiny $j$};
\node at (5,-5.6) {\tiny $i$};
    \end{tikzpicture}
  \end{array}
  =
\begin{array}{c}
    \begin{tikzpicture}[thick,scale=0.2]
      \draw[->] (-3,-5) to[out=90,in=0] (-4,-2) to[out=180,in=90] (-5,-5);
      \draw[->] (3,5) to (3,-5);
\draw[green,->] (-1,5) to (-1,-5);
\draw[green,<-] (1,5) to (1,-5);
      \draw[<-] (5,5) to (5,-5);
\node (a) at (6,2) {\tiny $\omega$};
\node at (-3,-5.6) {\tiny $i$};
\node at (1,-5.6) {\tiny $j$};
\node at (5,-5.6) {\tiny $i$};
    \end{tikzpicture}
  \end{array}.
\]
The proposition is proved. \end{proof}

\subsection{Two color associativity}

We now turn to the ``two color associativity'' relations, see~\cite[(5.6)]{ew}. As for the Jones--Wenzl relations, these relations are associated with pairs $(i,j)$ of elements in $\{1, \cdots, n-1, \infty\}$ with $i \neq j$. In the case when $i \norel j$, these relations are easy and left to the reader. Now we assume that $i \rel j$.

Once one has the Jones-Wenzl relations and the one colour relations, one can check that the
two color associativity relations associated with the pair $(i,j)$ is implied by the following relation:
\begin{gather}
\label{2colassoc}
.
\end{multline*}
(Here, the second equality again also uses the fact that the central part of the diagram involving only $j$ is isotopy-invariant.) Using this expression for the box in~\eqref{boxrotate},
it is again bookkeeping to check that the relation holds.
\end{proof}

\subsection{Zamolodchikov (three colour) relations}

The Zamolodchikov relations are associated with triples $(i,j,k)$ of pairwise distinct elements of $\{1, \cdots, n-1, \infty\}$, see~\cite[(5.8)--(5.12)]{ew}. The precise form of the relation depends on the type of the parabolic subgroup of $W$ generated by $s_i$, $s_j$ and $s_k$. The cases when this subgroup is of type $\mathbf{A}_1 \times \mathbf{A}_1 \times \mathbf{A}_1$ or $\mathbf{A}_2 \times \mathbf{A}_1$ are easy, and left to the reader. (In fact, in these cases, the relation follows from the observation that in $\mathcal{U}(\hgl_n)$ one can pull a strand labelled $i$ through any crossing of strands labelled $j$ and $k$ if $i \norel j$, $i \neq j$, $i \norel k$ and $i \neq k$; see Lemma~\ref{lem:reid} below and its proof.)

Now we consider the case when the subgroup is of type $\mathbf{A}_3$. To fix notation, we assume that $i \rel j \rel k$ and $i \neq k$. In this case, using cyclicity, the relation (stated in~\cite[(5.10)]{ew}) can be equivalently formulated as the following equality:
\begin{gather}
\label{zam}
  \begin{array}{c}
  \begin{tikzpicture}[thick,scale=-0.4]
  \node[inner sep=0mm,minimum size=0mm] (n) at (-1,4) {};
  \node[inner sep=0mm,minimum size=0mm] (s) at (1,-4) {};
  \node[inner sep=0mm,minimum size=0mm] (e) at (3,2) {};
 \node[inner sep=0mm,minimum size=0mm] (w) at (-3,-2) {};
 \draw (-1,-6) to (w) to (e) to (5,-6); \draw (-5,6) to (w); \draw (1,6) to (e);
 \draw[green] (3,-6) to (s) to (n) to (3,6); \draw[green] (-3,-6) to (s); \draw[green] (-3,6) to (n);
 \draw[red] (-5,-6) to (w) to (n) to (-1,6); \draw[red] (s) to (w);
 \draw[red] (1,-6) to (s) to (e) to (5,6); \draw[red] (e) to (n);
 \node at (-5,-6.5) {\tiny $j$};
 \node at (-5,6.5) {\tiny $i$};
 \node at (-3,-6.5) {\tiny $k$};
  \node at (-3,6.5) {\tiny $k$};
  \node at (-1,-6.5) {\tiny $i$};
  \node at (-1,6.5) {\tiny $j$};
  \node at (1,-6.5) {\tiny $j$};
  \node at (1,6.5) {\tiny $i$};
  \node at (3,-6.5) {\tiny $k$};
  \node at (3,6.5) {\tiny $k$};
  \node at (5,-6.5) {\tiny $i$};
  \node at (5,6.5) {\tiny $j$};
\end{tikzpicture}
\end{array}
=
  \begin{array}{c}
  \begin{tikzpicture}[thick,scale=0.4]
  \node[inner sep=0mm,minimum size=0mm] (n) at (1,3) {};
  \node[inner sep=0mm,minimum size=0mm] (s) at (-1,-3) {};
  \node[inner sep=0mm,minimum size=0mm] (e) at (3,-2) {};
 \node[inner sep=0mm,minimum size=0mm] (w) at (-3,2) {};
 \draw (-1,-6) to (s) to (n) to (1,6); \draw (5,-6) to (s); \draw (-5,6) to (n);
 \draw[green] (-3,-6) to (w) to (e) to (3,6); \draw[green] (3,-6) to (e); \draw[green] (-3,6) to (w);
 \draw[red] (-5,-6) to[out=90,in=120] (w) to (n) to (-1,6); \draw[red] (s) to (w);
 \draw[red] (1,-6) to (s) to (e) to[out=-60,in=-90] (5,6); \draw[red] (e) to (n);
  \node at (-5,-6.5) {\tiny $j$};
 \node at (-5,6.5) {\tiny $i$};
 \node at (-3,-6.5) {\tiny $k$};
  \node at (-3,6.5) {\tiny $k$};
  \node at (-1,-6.5) {\tiny $i$};
  \node at (-1,6.5) {\tiny $j$};
  \node at (1,-6.5) {\tiny $j$};
  \node at (1,6.5) {\tiny $i$};
  \node at (3,-6.5) {\tiny $k$};
  \node at (3,6.5) {\tiny $k$};
  \node at (5,-6.5) {\tiny $i$};
  \node at (5,6.5) {\tiny $j$};
\end{tikzpicture}
\end{array} .
\end{gather}
It can be easily checked that if this relation is known for a triple $(i,j,k)$ as above, then the relation follows for the triple $(k,j,i)$. Hence it suffices to consider the case $k \to j \to i$.
Moreover,
if we denote the left hand side in~\eqref{zam} by $\begin{array}{c} \tikz[scale=0.2]{ \draw (0,0) rectangle (4,4); \node at (2,2) {$Z$};} \end{array}$, then we can restate this relation as the following equality:
\begin{gather} \label{zam2}
  \begin{array}{c} \begin{tikzpicture}[scale=0.3] \draw (-3,-3) rectangle (3,3); \node at (0,0) {$Z$};
\draw(-2.5,3) to (-2.5,6);
\draw[green](-1.5,3) to (-1.5,6);
\draw[red](-0.5,3) to (-0.5,6);
\draw[red] (2.5,3) to (2.5,6);
\draw[green](1.5,3) to (1.5,6);
\draw(0.5,3) to (0.5,6);
\node at (-2.5, -6.5) {\tiny $j$};
\node at (-2.5,6.5) {\tiny $i$};
\node at (-1.5,-6.5) {\tiny $k$};
\node at (-1.5,6.5) {\tiny $k$};
\node at (-.5,-6.5) {\tiny $i$};
\node at (-.5,6.5) {\tiny $j$};
\node at (.5,-6.5) {\tiny $j$};
\node at (.5,6.5) {\tiny $i$};
\node at (1.5,-6.5) {\tiny $k$};
\node at (1.5,6.5) {\tiny $k$};
\node at (2.5,-6.5) {\tiny $i$};
\node at (2.5,6.5) {\tiny $j$};
\draw (2.5,-3) to (2.5,-6);
\draw[green](1.5,-3) to (1.5,-6);
\draw[red](0.5,-3) to (0.5,-6);
\draw[red] (-2.5,-3) to (-2.5,-6);
\draw[green](-1.5,-3) to (-1.5,-6);
\draw (-0.5,-3) to (-0.5,-6);
 \end{tikzpicture} \end{array}
=
  \begin{array}{c} \begin{tikzpicture}[xscale=-0.3,yscale=0.3] \draw (-3,-3) rectangle (3,3); \node at (0,0) {$Z$};
\draw[red] (-2.5,3) to (-2.5,6);
\draw[green](-1.5,3) to (-1.5,6);
\draw (-0.5,3) to (-0.5,6);
\draw[red] (0.5,3) to[out=90,in=180] (3,5.5) to[out=0,in=90] (5.5,-6);
\draw[green] (1.5,3) to[out=90,in=180] (3,4.5) to[out=0,in=90] (4.5,-6);
\draw (2.5,3) to[out=90,in=180] (3,3.5) to[out=0,in=90] (3.5,-6);
\draw[red] (2.5,-3) to (2.5,-6);
\draw[green](1.5,-3) to (1.5,-6);
\draw (0.5,-3) to (0.5,-6);
\draw[red] (-0.5,-3) to[out=-90,in=0] (-3,-5.5) to[out=180,in=-90] (-5.5,6);
\draw[green] (-1.5,-3) to[out=-90,in=0] (-3,-4.5) to[out=180,in=-90] (-4.5,6);
\draw (-2.5,-3) to[out=-90,in=0] (-3,-3.5) to[out=180,in=-90] (-3.5,6);
\node at (5.5, -6.5) {\tiny $j$};
\node at (-.5,6.5) {\tiny $i$};
\node at (4.5,-6.5) {\tiny $k$};
\node at (-1.5,6.5) {\tiny $k$};
\node at (3.5,-6.5) {\tiny $i$};
\node at (-2.5,6.5) {\tiny $j$};
\node at (2.5,-6.5) {\tiny $j$};
\node at (-3.5,6.5) {\tiny $i$};
\node at (1.5,-6.5) {\tiny $k$};
\node at (-4.5,6.5) {\tiny $k$};
\node at (.5,-6.5) {\tiny $i$};
\node at (-5.5,6.5) {\tiny $j$};
\end{tikzpicture} 
\end{array}.
\end{gather}
This is the relation that we will check below (in the case $k \to j \to i$); this will finish the proof of Theorem~\ref{thm:2KM-SBim}.

We start with some preliminary lemmas. In the first statement, $i,j,k$ are arbitrary elements of $\{1, \cdots, n-1, \infty\}$.

\begin{lem}
\label{lem:reid}
For $i,j,k \in \{1, \cdots, n-1, \infty\}$, in $\mathcal{U}(\hgl_n)$ we have
\begin{align}
  \label{reid}
  \begin{array}{c}
    \begin{tikzpicture}[thick,scale=0.3]
      \draw[black,->] (-2,-2) to (2,2);
      \draw[green,->] (2,-2) to (-2,2);
      \draw[red,->] (0,-2) to[out=90,in=-90] (2,0) to[out=90,in=-90] (0,2);
      \node at (-2,-2.5) {\tiny $i$}; \node at (0,-2.5) {\tiny $j$}; \node at (2,-2.5) {\tiny $k$};
      \node at (2.7,0) {\tiny $\lambda$};
    \end{tikzpicture}
\end{array}
&=     \begin{array}{c}\begin{tikzpicture}[thick,scale=0.3]
      \draw[black,->] (-2,-2) to (2,2);
      \draw[green,->] (2,-2) to (-2,2);
      \draw[red,->] (0,-2) to[out=90,in=-90] (-2,0) to[out=90,in=-90] (0,2);
      \node at (-2,-2.5) {\tiny $i$}; \node at (0,-2.5) {\tiny $j$}; \node at (2,-2.5) {\tiny $k$};
      \node at (2,0) {\tiny $\lambda$};
    \end{tikzpicture} 
\end{array}\quad \text{unless $i = k$ and $i \rel j$;}
\\
  \label{reid-reversed}
  \begin{array}{c}
    \begin{tikzpicture}[thick,scale=0.3]
      \draw[black,<-] (-2,-2) to (2,2);
      \draw[green,<-] (2,-2) to (-2,2);
      \draw[red,<-] (0,-2) to[out=90,in=-90] (2,0) to[out=90,in=-90] (0,2);
      \node at (-2,2.5) {\tiny $i$}; \node at (0,2.5) {\tiny $j$}; \node at (2,2.5) {\tiny $k$};
      \node at (2.7,0) {\tiny $\lambda$};
    \end{tikzpicture}
\end{array}
&=     \begin{array}{c}\begin{tikzpicture}[thick,scale=0.3]
      \draw[black,<-] (-2,-2) to (2,2);
      \draw[green,<-] (2,-2) to (-2,2);
      \draw[red,<-] (0,-2) to[out=90,in=-90] (-2,0) to[out=90,in=-90] (0,2);
      \node at (-2,2.5) {\tiny $i$}; \node at (0,2.5) {\tiny $j$}; \node at (2,2.5) {\tiny $k$};
      \node at (2,0) {\tiny $\lambda$};
    \end{tikzpicture} 
\end{array}\quad \text{unless $i = k$ and $i \rel j$;}
\\
  \label{reid'}
  \begin{array}{c}
    \begin{tikzpicture}[thick,scale=0.3]
      \draw[black,->] (-2,-2) to (2,2);
      \draw[green,<-] (2,-2) to (-2,2);
      \draw[red,->] (0,-2) to[out=90,in=-90] (2,0) to[out=90,in=-90] (0,2);
      \node at (-2,-2.5) {\tiny $i$}; \node at (0,-2.5) {\tiny $j$}; \node at (-2,2.5) {\tiny $k$};
      \node at (2.7,0) {\tiny $\lambda$};
    \end{tikzpicture}
\end{array}
&=     \begin{array}{c}\begin{tikzpicture}[thick,scale=0.3]
      \draw[black,->] (-2,-2) to (2,2);
      \draw[green,<-] (2,-2) to (-2,2);
      \draw[red,->] (0,-2) to[out=90,in=-90] (-2,0) to[out=90,in=-90] (0,2);
      \node at (-2,-2.5) {\tiny $i$}; \node at (0,-2.5) {\tiny $j$}; \node at (-2,2.5) {\tiny $k$};
      \node at (2,0) {\tiny $\lambda$};
    \end{tikzpicture} 
\end{array}\quad \text{unless $j = k$ and $k \rel i$;}
\\
  \label{reid2}
  \begin{array}{c}
    \begin{tikzpicture}[thick,scale=0.3]
      \draw[black,->] (-2,-2) to (2,2);
      \draw[green,->] (2,-2) to (-2,2);
      \draw[red,<-] (0,-2) to[out=90,in=-90] (2,0) to[out=90,in=-90] (0,2);
      \node at (-2,-2.5) {\tiny $i$}; \node at (0,2.5) {\tiny $j$}; \node at (2,-2.5) {\tiny $k$};
      \node at (2.7,0) {\tiny $\lambda$};
    \end{tikzpicture}
\end{array}
&=     \pm \begin{array}{c}\begin{tikzpicture}[thick,scale=0.3]
      \draw[black,->] (-2,-2) to (2,2);
      \draw[green,->] (2,-2) to (-2,2);
      \draw[red,<-] (0,-2) to[out=90,in=-90] (-2,0) to[out=90,in=-90] (0,2);
      \node at (-2,-2.5) {\tiny $i$}; \node at (0,2.5) {\tiny $j$}; \node at (2,-2.5) {\tiny $k$};
      \node at (2,0) {\tiny $\lambda$};
    \end{tikzpicture} 
\end{array}\quad \text{unless $i = j = k$;}
\\
  \label{reid3}
  \begin{array}{c}
    \begin{tikzpicture}[thick,xscale=0.3,yscale=-0.3]
      \draw[black,->] (-2,-2) to (2,2);
      \draw[green,->] (2,-2) to (-2,2);
      \draw[red,<-] (0,-2) to[out=90,in=-90] (2,0) to[out=90,in=-90] (0,2);
      \node at (-2,-2.5) {\tiny $i$}; \node at (0,2.5) {\tiny $j$}; \node at (2,-2.5) {\tiny $k$};
      \node at (2.7,0) {\tiny $\lambda$};
    \end{tikzpicture}
\end{array}
&=     \pm \begin{array}{c}\begin{tikzpicture}[thick,xscale=0.3,yscale=-0.3]
      \draw[black,->] (-2,-2) to (2,2);
      \draw[green,->] (2,-2) to (-2,2);
      \draw[red,<-] (0,-2) to[out=90,in=-90] (-2,0) to[out=90,in=-90] (0,2);
      \node at (-2,-2.5) {\tiny $i$}; \node at (0,2.5) {\tiny $j$}; \node at (2,-2.5) {\tiny $k$};
      \node at (2,0) {\tiny $\lambda$};
    \end{tikzpicture} 
\end{array}\quad \text{unless $i = j = k$;}
\\
  \label{reid4}
  \begin{array}{c}
    \begin{tikzpicture}[thick,scale=0.3]
      \draw[black,<-] (-2,-2) to (2,2);
      \draw[green,->] (2,-2) to (-2,2);
      \draw[red,->] (0,-2) to[out=90,in=-90] (2,0) to[out=90,in=-90] (0,2);
      \node at (2,2.5) {\tiny $i$}; \node at (0,-2.5) {\tiny $j$}; \node at (2,-2.5) {\tiny $k$};
      \node at (2.7,0) {\tiny $\lambda$};
    \end{tikzpicture}
\end{array}
&=  \pm   \begin{array}{c}\begin{tikzpicture}[thick,scale=0.3]
      \draw[black,<-] (-2,-2) to (2,2);
      \draw[green,->] (2,-2) to (-2,2);
      \draw[red,->] (0,-2) to[out=90,in=-90] (-2,0) to[out=90,in=-90] (0,2);
      \node at (2,2.5) {\tiny $i$}; \node at (0,-2.5) {\tiny $j$}; \node at (2,-2.5) {\tiny $k$};
      \node at (2,0) {\tiny $\lambda$};
    \end{tikzpicture} 
\end{array}\quad \text{unless $i = j$ and $j \rel k$;}
\\
  \label{reid5}
  \begin{array}{c}
    \begin{tikzpicture}[thick,scale=0.3]
      \draw[black,->] (-2,-2) to (2,2);
      \draw[green,<-] (2,-2) to (-2,2);
      \draw[red,<-] (0,-2) to[out=90,in=-90] (2,0) to[out=90,in=-90] (0,2);
      \node at (-2,-2.5) {\tiny $i$}; \node at (0,2.5) {\tiny $j$}; \node at (2,-2.5) {\tiny $k$};
      \node at (2.7,0) {\tiny $\lambda$};
    \end{tikzpicture}
\end{array}
&=  \pm   \begin{array}{c}\begin{tikzpicture}[thick,scale=0.3]
      \draw[black,->] (-2,-2) to (2,2);
      \draw[green,<-] (2,-2) to (-2,2);
      \draw[red,<-] (0,-2) to[out=90,in=-90] (-2,0) to[out=90,in=-90] (0,2);
      \node at (-2,-2.5) {\tiny $i$}; \node at (0,2.5) {\tiny $j$}; \node at (2,-2.5) {\tiny $k$};
      \node at (2,0) {\tiny $\lambda$};
    \end{tikzpicture} 
\end{array}\quad \text{unless $i = j$ and $j \rel k$;}
\\
  \label{reid6}
  \begin{array}{c}
    \begin{tikzpicture}[thick,scale=0.3]
      \draw[black,<-] (-2,-2) to (2,2);
      \draw[green,->] (2,-2) to (-2,2);
      \draw[red,<-] (0,-2) to[out=90,in=-90] (2,0) to[out=90,in=-90] (0,2);
      \node at (2,2.5) {\tiny $i$}; \node at (0,2.5) {\tiny $j$}; \node at (2,-2.5) {\tiny $k$};
      \node at (2.7,0) {\tiny $\lambda$};
    \end{tikzpicture}
\end{array}
&=  \pm   \begin{array}{c}\begin{tikzpicture}[thick,scale=0.3]
      \draw[black,<-] (-2,-2) to (2,2);
      \draw[green,->] (2,-2) to (-2,2);
      \draw[red,<-] (0,-2) to[out=90,in=-90] (-2,0) to[out=90,in=-90] (0,2);
      \node at (2,2.5) {\tiny $i$}; \node at (0,2.5) {\tiny $j$}; \node at (2,-2.5) {\tiny $k$};
      \node at (2,0) {\tiny $\lambda$};
    \end{tikzpicture} 
\end{array}\quad \text{unless $j = k$ and $k \rel i$.}
\end{align}
\end{lem}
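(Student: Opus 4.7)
All eight equalities are ``pull-through'' (Reidemeister~III type) relations asserting that a strand may be slid past a crossing of two other strands, except in the tightly controlled case where the KLR defining relation~\eqref{eqn:rel-2KM-3} contributes a nonzero correction term. The plan is therefore to derive~\eqref{reid} directly from~\eqref{eqn:rel-2KM-3}, deduce~\eqref{reid-reversed} by the downward analogue, and obtain the six remaining ``mixed'' identities by bending strands with cups and caps, tracking signs via Lemma~\ref{signrules}.

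The starting point is that~\eqref{eqn:rel-2KM-3} is precisely the equality
\[
(\text{middle strand passes left of the crossing}) \; - \; (\text{middle strand passes right of the crossing}) \;=\; \begin{cases} t_{ij}\,\cdot(\text{parallels}) & \text{if $i=k$ and $k \rel j$,}\\ 0 & \text{otherwise.}\end{cases}
\]
Consequently, outside the exceptional case $i = k$ and $i \rel j$, both configurations agree, which is exactly the content of~\eqref{reid}. For~\eqref{reid-reversed}, the ``downward'' counterpart of~\eqref{eqn:rel-2KM-3} is obtained by enclosing~\eqref{eqn:rel-2KM-3} between rightward caps at the top and leftward cups at the bottom and invoking the zigzag identities~\cite[(1.5)]{Brundan}; alternatively, it can be read off directly from the formulas for downward crossings in~\cite{Brundan}. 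The exceptional locus is identical because the three strands are rotated symmetrically, so no additional sign appears.

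Each of~\eqref{reid'}--\eqref{reid6} is derived by bending one or more strands in the upward picture of~\eqref{reid} into the downward direction using pairs of cups and caps. Concretely, to pass from~\eqref{reid} to, say,~\eqref{reid'} one rotates the $k$-strand alone: the relation in $\mathcal{U}(\hgl_n)$ obtained by composing both sides of~\eqref{reid} with a rightward cap attached to the top of the $k$-strand and a rightward cup attached at the bottom becomes~\eqref{reid'} after isotopy, with the exception condition transforming from ``$i=k$ and $i \rel j$'' into ``$j=k$ and $k \rel i$'' because the rotated $k$-strand now plays the role of the previously-matched label. The signs $\pm$ appearing in~\eqref{reid2}--\eqref{reid6} are then read off from the sign rules~\eqref{signs:1}--\eqref{signs:4}: each elementary bend through a cup or cap involving two distinct labels $i \rel j$ contributes a factor $t_{ij}$ or $t_{ji}$, and the net sign depends only on the pattern of arrows, not on the interior of the diagram.

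The main obstacle is purely combinatorial: careful bookkeeping of the signs and of how the exception condition of~\eqref{eqn:rel-2KM-3} transforms under each possible rotation. In particular, some relations (e.g.~\eqref{reid2} and~\eqref{reid3}) require rotating two strands simultaneously, producing a product of two signs from Lemma~\ref{signrules}; one must verify that this product is indeed a well-defined sign $\pm 1$ independent of the order of the rotations, which follows from the coherence of the adjunctions in $\mathcal{U}(\hgl_n)$. Once these verifications are made, each of the eight relations reduces to~\eqref{reid} via a single, explicit manipulation.
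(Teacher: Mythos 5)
Your strategy works for \eqref{reid}, \eqref{reid-reversed}, \eqref{reid'} and \eqref{reid4}--\eqref{reid6}, and for those it essentially coincides with the paper's proof: \eqref{reid} is a special case of~\eqref{eqn:rel-2KM-3}, \eqref{reid-reversed} and \eqref{reid'} are the corresponding relations from~\cite{Brundan} (or are obtained by bending), and \eqref{reid4}--\eqref{reid6} follow by attaching a cup and a cap and tracking signs via Lemma~\ref{signrules}. For all of these the exception set transforms into a permuted copy of ``$i=k$ and $i \rel j$'', which is consistent with the stated exceptions, so the rotation bookkeeping is all that is needed.

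However, there is a genuine gap in your treatment of \eqref{reid2} and \eqref{reid3}. These two relations are claimed to hold \emph{unless $i=j=k$}, whereas any rotation of \eqref{reid} or \eqref{reid-reversed} carries the exception ``$a=b$ and $a\rel c$'' (for some permutation of the labels) along with it. In particular, when $i=k$, $i\rel j$ and $i\neq j$, relation \eqref{reid} genuinely fails — the correction term $t_{ij}$ times the identity survives — yet \eqref{reid2} asserts equality up to sign in exactly this case. So no amount of cup/cap bending and sign coherence can produce \eqref{reid2} from \eqref{reid}: you would have to additionally prove that the rotated correction term vanishes in all cases with not all three labels equal, and that is the actual content of the statement. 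The paper instead proves \eqref{reid2} by a two-case argument: assuming $j\neq k$ (resp.\ $j\neq i$), one uses the pull-apart relation \eqref{pullapart} — whose hypothesis is precisely that the two labels are distinct — to insert a trivial double crossing between the downward $j$-strand and the $k$-strand (resp.\ the $i$-strand), slides the $j$-strand around the outside of the diagram by isotopy and the sign rules, applies \eqref{reid} in a configuration where its exception does not interfere, and pulls apart again; the two cases together cover everything except $i=j=k$. Relation \eqref{reid3} is handled the same way starting from the reversed R3 relation. Your proposal would need to be supplemented by this (or an equivalent) argument for \eqref{reid2} and \eqref{reid3}.
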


\begin{proof}
Equality~\eqref{reid} is a special case of~\eqref{eqn:rel-2KM-3},~\eqref{reid-reversed} is a special case of~\cite[2.8]{Brundan} and~\eqref{reid'} is a special case of~\cite[(2.4)]{Brundan}. Equality~\eqref{reid4} can be obtained from~\eqref{reid} (applied to strands $j$, $k$ an $i$ from left to right) by adding a leftward cup to the right and a leftward cap to the left. Equalities~\eqref{reid5} and~\eqref{reid6} can be deduced from~\eqref{reid-reversed} by a similar procedure.

To prove~\eqref{reid2}, we distinguish two cases.
First, assume that $j \neq k$. Then
\begin{multline*}

\end{gather*}
\caption{Manipulations for the proof of~\eqref{zam2}.}\label{fig:zam}
\end{figure}
In these manipulations we work ``up to sign'', so that we do not need
to be careful with the sign relations in Lemma~\ref{signrules}. More
generally, this means that the diagrams become isotopy-invariant.

Now we justify the various equalities:
\begin{itemize}
\item
Equality~(1) follows from 18 applications of the ``Reidemester moves'' of Lemma~\ref{lem:reid} to move the vertical green strands to the right and the left of the diagram.
\item
Equality~(2) follows from~\eqref{reid2} and~\eqref{reid3}, used to pull the black strands through the crossings of red strands in the middle of the diagram.
\item
Equality~(3) follows from~\cite[(3.17)]{Brundan} (applied in the middle of the diagram, for the weight $\omega + \alpha_i + \alpha_j + \alpha_k$ and the color $j$).
\item
Equality~(4) follows from the following manipulation together with its 180 degrees rotation (which can be checked similarly):
\[
\begin{array}{c}
\begin{tikzpicture}[thick,xscale=0.4,yscale=0.5]
\draw[->,red] (0,-2) to (0,2);
\draw[->] (1,2) to (1,-2);
\draw[->] (2,-2) to[out=90, in=0] (-2,2);
\draw[->] (-2,-2) to[out=0,in=-90] (2,2);
\end{tikzpicture}
\end{array}
\stackrel{\eqref{pullapart}}{=}
\begin{array}{c}
\begin{tikzpicture}[thick,xscale=0.4,yscale=0.5]
\draw[->,red] (0,-2) to[out=45,in=-90] (1.2,-1.2) to[out=120,in=-90] (0,0) to (0,2);
\draw[->] (1,2) to (1,-2);
\draw[->] (2,-2) to[out=90, in=0] (-2,2);
\draw[->] (-2,-2) to[out=0,in=-90] (2,2);
\end{tikzpicture}
\end{array}
\stackrel{\eqref{reid2}}{=}
\pm
\begin{array}{c}
\begin{tikzpicture}[thick,xscale=0.4,yscale=0.5]
\draw[->,red] (0,-2) to[out=90,in=-90] (1.3,-.5) to[out=90,in=-90] (0,1) to (0,2);
\draw[->] (1,2) to (1,-2);
\draw[->] (2,-2) to[out=90, in=0] (-2,2);
\draw[->] (-2,-2) to[out=0,in=-90] (2,2);
\end{tikzpicture}
\end{array}
\stackrel{\eqref{reid4}}{=}
\pm
\begin{array}{c}
\begin{tikzpicture}[thick,xscale=0.4,yscale=0.5]
\draw[->,red] (0,-2) to[out=90,in=-90] (1.25,-1) to (1.25,1) to[out=90,in=-90] (0,2);
\draw[->] (1,2) to (1,-2);
\draw[->] (2,-2) to[out=90, in=0] (-2,2);
\draw[->] (-2,-2) to[out=0,in=-90] (2,2);
\end{tikzpicture}
\end{array}.
\]
\item
To prove~(5), on the right part of the diagram we use the relation
\[
 \begin{array}{c}\begin{tikzpicture}[thick,xscale=0.2,yscale=0.3]
      \draw[->] (-2,-2) to (2,2);
      \draw[->] (2,-2) to (-2,2);
      \draw[red,->] (0,-2) to[out=90,in=-90] (-2,0) to[out=90,in=-90] (0,2);
    \end{tikzpicture} 
\end{array}
=
 \begin{array}{c}
    \begin{tikzpicture}[thick,xscale=0.2,yscale=0.3]
      \draw[->] (-2,-2) to (2,2);
      \draw[->] (2,-2) to (-2,2);
      \draw[red,->] (0,-2) to[out=90,in=-90] (2,0) to[out=90,in=-90] (0,2);
    \end{tikzpicture}
\end{array}
+t_{ij}
  \begin{array}{c}
    \begin{tikzpicture}[thick,xscale=0.2,yscale=0.3]
      \draw[->] (-2,-2) to (-2,2);
      \draw[->] (2,-2) to (2,2);
      \draw[red,->] (0,-2) to (0,2);
    \end{tikzpicture}
\end{array},
\]
see~\eqref{eqn:rel-2KM-3}, and then check (using~\cite[(3.16)]{Brundan} and then~\eqref{eqn:rel-2KM-2}) that the term coming from the rightmost diagram vanishes. Then we do a similar manipulation on the left part of the diagram using the reversed variants of these relations, see~\cite[(2.7), (2.8)]{Brundan}.
\item
Equality~(6) follows from Lemma~\ref{lem:rel-zam}.
\item
In equality~(7) we use the relation
\[
\begin{array}{c}
\begin{tikzpicture}[thick,scale=0.3]
\draw[->] (-1,.7) to[out=0,in=-90] (2,3);
\draw[<-] (-1,-.7) to[out=0,in=90] (2,-3);
\draw[->,red] (0,-3) to[out=90,in=-90] (1.5,-1.2) to (1.5,1.2) to[out=90,in=-90] (0,3);
\draw[->] (1,3) to (1,-3);
\end{tikzpicture}
\end{array}
\stackrel{\eqref{reid2}}{=}
\pm
\begin{array}{c}
\begin{tikzpicture}[thick,scale=0.3]
\draw[->] (-1,.7) to[out=0,in=-90] (2,3);
\draw[<-] (-1,-.7) to[out=0,in=90] (2,-3);
\draw[->,red] (0,-3) to[out=90,in=-90] (1.5,-1.2) to[out=90,in=-90] (0,3);
\draw[->] (1,3) to (1,-3);
\end{tikzpicture}
\end{array}
\stackrel{\eqref{reid4}+\eqref{pullapart}}{=}
\pm
\begin{array}{c}
\begin{tikzpicture}[thick,scale=0.3]
\draw[->] (-1,.7) to[out=0,in=-90] (2,3);
\draw[<-] (-1,-.7) to[out=0,in=90] (2,-3);
\draw[->,red] (0,-3) to (0,3);
\draw[->] (1,3) to (1,-3);
\end{tikzpicture}
\end{array},
\]
then the 180 degrees rotation of this equality (which follows from similar manipulations), then the equalities
\begin{multline*}
\begin{array}{c}
\begin{tikzpicture}[thick,scale=0.3]
\draw[->] (-4,2) to[out=-90,in=180] (0,-.5) to[out=0,in=-90] (4,2);
\draw[->,red] (4,-2) to[out=90,in=0] (0,.5) to[out=180,in=90] (-4,-2);
\draw[->] (2,2) to (2,-2);
\draw[->,red] (1,-2) to (1,2);
\draw[->,red] (-1,2) to (-1,-2);
\draw[->] (-2,-2) to (-2,2);
\end{tikzpicture}
\end{array}
\stackrel{\eqref{reid2}}{=}
\pm
\begin{array}{c}
\begin{tikzpicture}[thick,scale=0.3]
\draw[->] (-4,2) to[out=-90,in=180] (0,-.5) to[out=0,in=-90] (4,2);
\draw[->,red] (4,-2) to[out=90,in=-90] (1.5,-.5) to[out=90,in=0] (0,.5) to[out=180,in=90] (-4,-2);
\draw[->] (2,2) to (2,-2);
\draw[->,red] (1,-2) to (1,2);
\draw[->,red] (-1,2) to (-1,-2);
\draw[->] (-2,-2) to (-2,2);
\end{tikzpicture}
\end{array}
\stackrel{\eqref{reid}}{=}
\begin{array}{c}
\begin{tikzpicture}[thick,scale=0.3]
\draw[->] (-4,2) to[out=-90,in=180] (0,-.5) to[out=0,in=-90] (4,2);
\draw[->,red] (4,-2) to[out=90,in=-90] (0,-.5) to[out=90,in=-90] (0,0) to[out=90,in=90] (-4,-2);
\draw[->] (2,2) to (2,-2);
\draw[->,red] (1,-2) to (1,2);
\draw[->,red] (-1,2) to (-1,-2);
\draw[->] (-2,-2) to (-2,2);
\end{tikzpicture}
\end{array}
\\
\stackrel{\eqref{reid3}}{=}
\begin{array}{c}
\begin{tikzpicture}[thick,scale=0.3]
\draw[->] (-4,2) to[out=-90,in=180] (0,-.5) to[out=0,in=-90] (4,2);
\draw[->,red] (4,-2) to[out=90,in=-90] (-1.5,-.5) to[out=90,in=0] (-2,.5) to[out=180,in=90] (-4,-2);
\draw[->] (2,2) to (2,-2);
\draw[->,red] (1,-2) to (1,2);
\draw[->,red] (-1,2) to (-1,-2);
\draw[->] (-2,-2) to (-2,2);
\end{tikzpicture}
\end{array}
\stackrel{\eqref{reid'}+\eqref{pullapart}}{=}
\begin{array}{c}
\begin{tikzpicture}[thick,scale=0.3]
\draw[->] (-4,2) to[out=-90,in=180] (0,.5) to[out=0,in=-90] (4,2);
\draw[->,red] (4,-2) to[out=90,in=0] (0,-.5) to[out=180,in=90] (-4,-2);
\draw[->] (2,2) to (2,-2);
\draw[->,red] (1,-2) to (1,2);
\draw[->,red] (-1,2) to (-1,-2);
\draw[->] (-2,-2) to (-2,2);
\end{tikzpicture}
\end{array}
\end{multline*}
and the 180 degrees rotation of this relation (which again follows from similar manipulations).
\end{itemize}

Finally we consider the last diagram in Figure~\ref{fig:zam} (without the sign, and not considered ``up to isotopy'' anymore), and denote it $Z'$. To prove~\eqref{zam2}, it suffices to prove the similar relation where ``$Z$'' is replaced by ``$Z'$''. In this setting, if we draw the right-hand side, we can first use the zigzag relations on the black strands (corresponding to $i$), then on the green strands (corresponding to $k$), and finally on the red strands (corresponding to $j$) to obtain the left-hand side. In this process we use the sign relations from Lemma~\ref{signrules}. In the first step, the corresponding crossings involve colors which are either equal or distant, so no sign appears. In the second step we use the relations
\[
    \begin{array}{c}
      \begin{tikzpicture}[thick,scale=0.4]
        \draw[->,green] (-1,0) to[out=90,in=180] (0,1) to[out=0,in=90] (1,0);
        \draw[red,->] (0.2,0) to (1.5,1.3);
        \draw[red,<-] (-.2,0) to (1.1,1.3);
      \end{tikzpicture}
    \end{array}
=
t_{jk}^{-1}
    \begin{array}{c}
      \begin{tikzpicture}[thick,scale=0.4]
        \draw[->,green] (-1,0) to[out=90,in=180] (0,1) to[out=0,in=90] (1,0);
        \draw[red,->] (0.2,0) to (-1.1,1.3);
        \draw[red,<-] (-.2,0) to (-1.5,1.3);
      \end{tikzpicture}
    \end{array},
\quad \text{and} \quad
    \begin{array}{c}
      \begin{tikzpicture}[thick,scale=-0.4]
        \draw[->,green] (-1,0) to[out=90,in=180] (0,1) to[out=0,in=90] (1,0);
        \draw[red,->] (0.2,0) to (1.5,1.3);
        \draw[red,<-] (-.2,0) to (1.1,1.3);
      \end{tikzpicture}
    \end{array}
=
t_{kj}
    \begin{array}{c}
      \begin{tikzpicture}[thick,scale=-0.4]
        \draw[->,green] (-1,0) to[out=90,in=180] (0,1) to[out=0,in=90] (1,0);
        \draw[red,->] (0.2,0) to (-1.1,1.3);
        \draw[red,<-] (-.2,0) to (-1.5,1.3);
      \end{tikzpicture}
    \end{array}.
\]
And in the third step we use the relations
\begin{gather*}
    \begin{array}{c}
      \begin{tikzpicture}[thick,scale=0.4]
        \draw[->,red] (-1,0) to[out=90,in=180] (0,1) to[out=0,in=90] (1,0);
        \draw[<-,red] (-1.4,0) to[out=90,in=180] (0,1.4) to[out=0,in=90] (1.4,0);
        \draw[->] (0.2,0) to (1.5,1.3);
        \draw[green,<-] (-.2,0) to (1.1,1.3);
      \end{tikzpicture}
    \end{array}
=
t_{ji} t_{kj}^{-1}
    \begin{array}{c}
      \begin{tikzpicture}[thick,scale=0.4]
        \draw[->,red] (-1,0) to[out=90,in=180] (0,1) to[out=0,in=90] (1,0);
        \draw[<-,red] (-1.4,0) to[out=90,in=180] (0,1.4) to[out=0,in=90] (1.4,0);
        \draw[->] (0.2,0) to (-1.1,1.3);
        \draw[green,<-] (-.2,0) to (-1.5,1.3);
      \end{tikzpicture}
    \end{array},
\qquad
    \begin{array}{c}
      \begin{tikzpicture}[thick,scale=-0.4]
        \draw[->,red] (-1,0) to[out=90,in=180] (0,1) to[out=0,in=90] (1,0);
        \draw[<-,red] (-1.4,0) to[out=90,in=180] (0,1.4) to[out=0,in=90] (1.4,0);
        \draw[->] (0.2,0) to (1.5,1.3);
        \draw[green,<-] (-.2,0) to (1.1,1.3);
      \end{tikzpicture}
    \end{array}
=
t_{jk} t_{ij}^{-1}
    \begin{array}{c}
      \begin{tikzpicture}[thick,scale=-0.4]
        \draw[->,red] (-1,0) to[out=90,in=180] (0,1) to[out=0,in=90] (1,0);
        \draw[<-,red] (-1.4,0) to[out=90,in=180] (0,1.4) to[out=0,in=90] (1.4,0);
        \draw[->] (0.2,0) to (-1.1,1.3);
        \draw[green,<-] (-.2,0) to (-1.5,1.3);
      \end{tikzpicture}
    \end{array},
\\
    \begin{array}{c}
      \begin{tikzpicture}[thick,scale=-0.4]
        \draw[<-] (-1,0) to[out=90,in=180] (0,1) to[out=0,in=90] (1,0);
        \draw[red,->] (0.2,0) to (1.5,1.3);
        \draw[red,<-] (-.2,0) to (1.1,1.3);
      \end{tikzpicture}
    \end{array}
=
t_{ji}^{-1}
    \begin{array}{c}
      \begin{tikzpicture}[thick,scale=-0.4]
        \draw[<-] (-1,0) to[out=90,in=180] (0,1) to[out=0,in=90] (1,0);
        \draw[red,->] (0.2,0) to (-1.1,1.3);
        \draw[red,<-] (-.2,0) to (-1.5,1.3);
      \end{tikzpicture}
    \end{array},
\qquad
    \begin{array}{c}
      \begin{tikzpicture}[thick,scale=0.4]
        \draw[<-] (-1,0) to[out=90,in=180] (0,1) to[out=0,in=90] (1,0);
        \draw[red,->] (0.2,0) to (1.5,1.3);
        \draw[red,<-] (-.2,0) to (1.1,1.3);
      \end{tikzpicture}
    \end{array}
=
t_{ij}
    \begin{array}{c}
      \begin{tikzpicture}[thick,scale=0.4]
        \draw[<-] (-1,0) to[out=90,in=180] (0,1) to[out=0,in=90] (1,0);
        \draw[red,->] (0.2,0) to (-1.1,1.3);
        \draw[red,<-] (-.2,0) to (-1.5,1.3);
      \end{tikzpicture}
    \end{array}.
\end{gather*}
Hence all the signs cancel, and we obtain the desired relation.
\end{proof}

 }
\else {  \newpage }
\fi

\part{Relation to parity sheaves}
\label{pt:parities}

\ifdefined\PARTCOMPILE{ \textbf{Overview}.
This part is devoted to the proof of the relation between the
diagrammatic Hecke category and parity complexes on flag
varieties. Many proofs in this section are similar to some proofs in
Part~\ref{pt:general-conj}. However we repeat most of the details, for
the benefit of readers interested in geometry of flag varieties but
not necessarily in representation theory of reductive groups.

In Section~\ref{sec:parity} we recall some notions related to (partial) flag varieties of a Kac--Moody group $\GKM$ and to parity complexes on these varieties. In particular we introduce and study the concept of ``section of the $!$-flag'', which is an analogue for parity complexes of the ``sections of the $\Cos$-flag'' of tilting modules in Part~\ref{pt:general-conj}. Using this notion we prove Proposition~\ref{prop:morphisms-BS-parity}, which explains how to ``generate" morphisms between ``Bott--Samelson type" parity complexes and is the analogue for parity complexes of Proposition~\ref{prop:morphisms-BS}.

In Section~\ref{sec:parity-Hecke} we prove the main result of this part, namely an equivalence between the diagrammatic Hecke category attached to $\GKM$ and the category of Borel-equivariant parity complexes on the full flag variety of $\GKM$. (This equivalence is a generalization of Soergel's description of the category of equivariant semisimple complexes with complex coefficients on a flag variety in terms of Soergel bimodules.) In particular, this provides a geometric description of the category $\Diag$ of Part~\ref{pt:general-conj}.

Finally, in Section~\ref{sec:Whittaker} we extend these constructions to some ``Whittaker-type" parity complexes, and derive a geometric description of the category $\Dasph$ of Part~\ref{pt:general-conj}.

\section{Parity complexes on flag varieties}
\label{sec:parity}

\subsection{Reminder on Kac--Moody groups and their flag varieties}
\label{ss:KMgroups}

Let $A=(a_{i,j})_{i,j \in I}$ be a generalized Cartan matrix, with rows and columns parametrized by a finite set $I$, and let $(\Lambda, \{\alpha_i : i \in I\}, \{\alpha_i^\vee : i \in I\})$ be an associated Kac--Moody root datum. In other words, $\Lambda$ is a finitely generated free $\Z$-module, $\{\alpha_i : i \in I\}$ is a collection of elements of $\Lambda$, $\{\alpha_i^\vee : i \in I\}$ is a collection of elements of $\Hom_{\Z}(\Lambda,\Z)$, and we assume that
\[
a_{i,j}=\langle \alpha_i^\vee, \alpha_j \rangle 
\]
for any $i,j \in I$.
To such a datum one can associate following Mathieu (see~\cite[p.~45]{mathieu-KM}) a group ind-scheme $\GKM_\Z$ over $\Z$. This group ind-scheme has a canonical Borel subgroup $\BKM_\Z$ (which is a group subscheme) and a canonical maximal torus $\TKM_\Z$, whose group of characters is canonically isomorphic to $\Lambda$.
For any subset $J \subset I$ of finite type 
we also have an associated parabolic subgroup $\PKM_{J,\Z} \subset \GKM_\Z$, see~\cite[p.~61]{mathieu0}.

We let $W$ be the Weyl group of $\GKM_\Z$, 
$S \subset W$ be the subset of simple reflections, and $\ell$ be the corresponding length function. There exist canonical bijections $I \simto S$ and $S \simto I$, which will be denoted $i \mapsto s_i$ and $s \mapsto i_s$. We also denote by $\leq$ the Bruhat order on $W$. We will use the same terminology and notation as in Part~\ref{pt:general-conj} for the objects attached to the Coxeter group $(W,S)$; see in particular~\S\ref{ss:BStilting} for the notion of an expression.

\begin{rmk}
\label{rmk:KM}
\begin{enumerate}
\item
\label{it:KM-conditions}
To be more precise, in~\cite{mathieu-KM} Mathieu works under some technical conditions on the Kac--Moody root datum;
see also~\cite[\S 6.5]{tits} or~\cite[Remarque~3.5]{rousseau} for a discussion of this condition. See~\cite[\S 6.8]{tits} for a sketch of an argument explaining how to generalize this construction to general Kac--Moody root data, and~\cite[\S 3.19]{rousseau} for more details.
\item
As claimed in~\cite{mathieu-KM} (and explained in more detail in~\cite[\S 3.8]{rousseau}), Mathieu's construction generalizes the construction of split connected reductive groups. In particular, the constructions in this part
apply when $\GKM$ is a 
connected reductive group.
\item
We work with Mathieu's Kac--Moody groups and not with the version of Tits (considered in particular in~\cite{kumar}) for two reasons. The first one is that we need an algebro-geometric structure on the group itself (and not only on its flag variety) in order to define convolution of parity complexes. The second one is that we want to consider base fields of positive characteristic also. (This case is crucial for the considerations in Section~\ref{sec:Whittaker}.)
\end{enumerate}
\end{rmk}



We now fix an algebraically closed field $\bbL$. 
We set
\[
 \GKM := \mathrm{Spec}(\bbL) \times_{\mathrm{Spec}(\Z)} \GKM_\Z, \quad
 \BKM := \mathrm{Spec}(\bbL) \times_{\mathrm{Spec}(\Z)} \BKM_\Z, \quad
 \TKM := \mathrm{Spec}(\bbL) \times_{\mathrm{Spec}(\Z)} \TKM_\Z.
\]
Then $\GKM$ is a group ind-scheme over $\bbL$, and $\BKM$ and $\TKM$ are group subschemes of $\GKM$. For any simple reflection $s \in S$, we also set
\[
 \PKM_s := \mathrm{Spec}(\bbL) \times_{\mathrm{Spec}(\Z)} \PKM_{\{i_s\}, \Z}.
\]

Let us briefly recall the construction of $\GKM$ and its flag variety $\Flag := \GKM/\BKM$. By Remark~\ref{rmk:KM}\eqref{it:KM-conditions} one can assume that the technical conditions of~\cite{mathieu-KM} are satisfied, which we will do from now on. Choose $\lambda \in \Lambda$ such that $\langle \lambda, \alpha_i^\vee \rangle >0$ for any $i \in I$, and consider the corresponding integrable module $L(\lambda)$ for the Lie agebra over $\bbL$ associated with $A$ (see~\cite[p.~28]{mathieu0} for the case $\mathrm{char}(\bbL)=0$, and~\cite[p.~246]{mathieu0} for the general case, which is obtained by extension of scalars from an integral form of the module in characteristic $0$). Then $L(\lambda)$ has a canonical $\BKM$-module structure. For $w \in W$ we denote by $L(\lambda)_{w\lambda}$ the weight space of $L(\lambda)$ of weight $w\lambda$, and by $E_w(\lambda)$ the $\BKM$-submodule of $L(\lambda)$ generated by $L(\lambda)_{w\lambda}$. We set
\[
 S_{w,\lambda} := \overline{\BKM \cdot L(\lambda)_{w\lambda}} \subset \mathbb{P}(E_w(\lambda)).
\]
Then $S_{w,\lambda}$ is a projective variety over $\bbL$. Clearly $S_{w,\lambda}^\circ := \BKM \cdot L(\lambda)_{w\lambda}$ is an open subvariety of $S_{w,\lambda}$, which can be seen to be isomorphic to the group denoted $U(w)$ in~\cite[Lemme~4]{mathieu-KM} (see in particular~\cite[Lemme~133 on p.~247]{mathieu0}), hence to an affine space of dimension $\ell(w)$. 

If $v \leq w$ there exists a natural closed embedding $S_{v,\lambda} \subset S_{w,\lambda}$, and we have
\[
S_{w,\lambda} = \bigsqcup_{v \leq w} S_{v,\lambda}^\circ
\]
(see~\cite[p.~65]{mathieu-KM} for the case $\mathrm{char}(\bbL)=0$).

For any expression $\uw=s_1 \cdots s_r$ we can consider the Bott--Samelson resolution
\[
 \BSvar(\uw) := \PKM_{s_1} \times^\BKM \cdots \times^\BKM \PKM_{s_r}/\BKM,
\]
a smooth projective variety.
If $\uw$ is a reduced expression for $w$ there exists a canonical (proper, birational) morphism
\[
 \nu_{\uw, \lambda} \colon \BSvar(\uw) \to S_{w,\lambda},
\]
see~\cite[Lemme~29 on p.~65 and p.~246]{mathieu0}.

Now for $w \in W$ we denote by $\Flag_{\leq w}$ the normalization of $S_{w,\lambda}$. Then $\Flag_{\leq w}$ is a projective variety, see~\cite[Proposition~A.6]{kumar}. The inverse image $\Flag_w$ of $S_{w,\lambda}^\circ$ under the normalization morphism is an open subvariety of $\Flag_{\leq w}$, isomorphic to $\mathbb{A}_\bbL^{\ell(w)}$. If $\uw$ is a reduced expression for $w$, since $\BSvar(\uw)$ is smooth, the morphism $\nu_{\uw,\lambda}$ factors (uniquely) through a morphism
\[
 \nu_{\uw} \colon \BSvar(\uw) \to \Flag_{\leq w},
\]
see~\cite[Proposition~A.7]{kumar}. Moreover, since
$\nu_{\uw, \lambda}$ is birational, $\nu_{\uw}$ is also birational. Hence, by Zariski's main theorem,
it satisfies $(\nu_{\uw})_* \cO_{\BSvar(\uw)} \cong \cO_{\Flag_{\leq w}}$, see~\cite[Theorem~A.9]{kumar}. It is proved in~\cite[Lemme~32 on p.~69]{mathieu0} that the normalization morphism $\Flag_{\leq w} \to S_{w,\lambda}$ is a homeomorphism. In particular, this implies that $\Flag_{\leq w}$ can be described as the ringed space $\bigl( S_{w,\lambda}, (\nu_{\uw, \lambda})_* \cO_{\BSvar(\uw)} \bigr)$.

 
Using this description, Mathieu checks that $\Flag_{\leq w}$ is independent of $\lambda$, see~\cite[Lemme~33 on p.~71 and \S XVIII]{mathieu0}. 
%

If $v \leq w$, then the closed embedding $S_{v,\lambda} \hookrightarrow S_{w,\lambda}$ induces a scheme morphism $\Flag_{\leq v} \to \Flag_{\leq w}$, which is known to be a closed embedding, see~\cite[Th\'eor\`eme~2(b) on p.~116 and Th\'eor\`eme~5(6) on p.~241]{mathieu0}. In particular $\Flag_v$ is a locally closed subvariety of $\Flag_{\leq w}$, and we have
\begin{equation}
 \label{eqn:Bruhat-Schubert}
 \Flag_{\leq w} = \bigsqcup_{v \leq w} \Flag_v
\end{equation}
(which justifies our notations). Finally there exists a natural $\BKM$-action on $\Flag_{\leq w}$, such that~\eqref{eqn:Bruhat-Schubert} identifies with the decomposition into $\BKM$-orbits.

\begin{rmk}
 It is proved in~\cite[Th\'eor\`eme~2 on p.~116]{mathieu0} that the normalization morphism $\Flag_{\leq w} \to S_{w,\lambda}$ is an isomorphism if $\lambda$ is ``sufficiently large,'' and in~\cite[Corollaire~1]{mathieu-KM} that this is in fact the case for any $\lambda$ as above. 
\end{rmk}

The varieties $\Flag_{\leq w}$ play the role of Schubert varieties in the flag variety. Next one defines what will play the role of their inverse image in $\GKM$. For this we observe that given a $\BKM$-module $M$ one can define a vector bundle $\mathscr{L}_w(M)$ on $\Flag_{\leq w}$ by taking the direct image under $\nu_{\uw}$ of the vector bundle on $\BSvar(\uw)$ naturally associated with $M$. (Here $\uw$ is a reduced expression for $w$.) When $M=\bbL[\BKM]$ is the regular representation of $\BKM$, this vector bundle has a natural structure of sheaf of algebras. Therefore there exists a scheme $\BKM(w)$ and an affine morphism $\mu_w \colon \BKM(w) \to \Flag_{\leq w}$ such that $\mathscr{L}_w(\bbL[\BKM]) \cong (\mu_w)_* \cO_{\BKM(w)}$. One next proves that $\BKM(w)$ is an affine scheme, and is the affinization of the variety
\[
 \PKM_{s_1} \times^\BKM \cdots \times^\BKM \PKM_{s_r}
\]
if $(s_1, \cdots, s_r)$ is a reduced expression for $w$; see~\cite[p.~129, Lemme~70 on p.~133, and Proposition~26 on p.~256]{mathieu0}. By~\cite[Lemme~7]{mathieu-KM}, one also knows that the natural $\BKM$-action on $\BKM(w)$ on the right is locally free, and that $\mu_w$ is the quotient morphism for this action. If $v \leq w$, then the closed embedding $\Flag_{\leq v} \hookrightarrow \Flag_{\leq w}$ induces a closed embedding $\BKM(v) \hookrightarrow \BKM(w)$.

Finally, one defines $\GKM$ as the ind-scheme associated with the varieties $\{\BKM(w) : w \in W\}$ and the closed embeddings $\BKM(v) \hookrightarrow \BKM(w)$ for $v \leq w$. This ind-scheme has a natural group structure where the multiplication and inverse morphisms are induced by the natural operations on the schemes $\PKM_{s_1} \times^\BKM \cdots \times^\BKM \PKM_{s_r}$, see~\cite[p.~45]{mathieu-KM} for details. Then it makes sense to consider the quotient $\Flag=\GKM/\BKM$, which is the ind-variety associated with the varieties $\{\Flag_{\leq w} : w \in W\}$ and the closed embeddings $\Flag_{\leq v} \hookrightarrow \Flag_{\leq w}$ for $v \leq w$.

By construction
we have a Bruhat decomposition
\[
\Flag = \bigsqcup_{w \in W} \Flag_w,
\]
and each $\Flag_w$ is a locally-closed subvariety of $\Flag$ isomorphic to an affine space of dimension $\ell(w)$. We denote by $i_w \colon \Flag_w \to \Flag$ the corresponding inclusion morphism.
For any $w \in W$, we will also consider the open ind-subvariety
\[
\Flag_{\geq w} = \bigsqcup_{y \geq w} \Flag_y.
\]
Note that $\Flag_w$ is closed in $\Flag_{\geq w}$.

\subsection{Partial flag varieties}
\label{ss:partial-flags}

Let $J \subset I$ be a subset of finite type. We denote by $W_J$ the (finite) subgroup of $W$ generated by $\{s_j : j \in J\}$, and by $W^J$ the subset of $W$ consisting of elements $w$ which are minimal in $wW_J$. We also denote by $w_0^J$ the longest element of $W_J$, so that $\{w w_0^J : w \in W^J\}$ is the subset of $W$ consisting of elements $v$ which are maximal in $vW_J$. We set
\[
\PKM_J := \mathrm{Spec}(\mathbb{L}) \times_{\mathrm{Spec}(\Z)} \PKM_{J,\Z}.
\]

Let $w \in W^J$.
In~\cite{mathieu0}, Mathieu also considers some varieties $S_{w,\mu}$ defined as above, but now under the assumption that $\langle \mu, \alpha_j^\vee \rangle = 0$ if $j \in J$ and $\langle \mu, \alpha_i^\vee \rangle > 0$ if $i \in I \smallsetminus J$. The open subvariety $S_{w,\mu}^\circ \subset S_{w,\mu}$ is also defined as above, and isomorphic to an affine space of dimension $\ell(w)$. If $\uw$ is a reduced expression for $w$, we also have a ``Demazure resolution" $\nu_{\uw,\mu} \colon \BSvar(\uw) \to S_{w,\mu}$, which is proper and birational.

As in the case $J=\varnothing$, it is proved in~\cite{mathieu0} that the normalization $\Flag^J_{\leq w}$ of $S_{w,\mu}$ does not depend on the choice of $\mu$.
If $v \leq w$ are in $W^J$ there exists a canonical closed embedding $\Flag_{\leq v}^J \hookrightarrow \Flag^J_{\leq w}$; hence we can define the partial flag variety $\Flag^J$ as the ind-scheme associated with the varieties $\{\Flag^J_{\leq w} : w \in W^J\}$ and the closed embeddings $\Flag^J_{\leq v} \hookrightarrow \Flag^J_{\leq w}$ for $v \leq w$. For $w \in W^J$ we denote by $\Flag^J_w$ the inverse image of $S_{w,\mu}^\circ$ in $\Flag^J_{\leq w}$; then each $\Flag^J_w$ is an affine space of dimension $\ell(w)$, and we have a Bruhat decomposition
\[
 \Flag^J = \bigsqcup_{w \in W^J} \Flag^J_w.
\]
For $w \in W^J$, we will denote by $i_w^J \colon \Flag^J_w \hookrightarrow \Flag^J$ the embedding.

Now, let us explain how to construct a $\BKM$-equivariant ind-scheme morphism $q^J \colon \Flag \to \Flag^J$, following a construction in~\cite{kumar}. In fact it suffices to construct compatible morphisms $q^J_w \colon \Flag_{\leq w w_0^J} \to \Flag^J_{\leq w}$ for any $w \in W^J$. Consider the natural morphism $f_{\lambda, \mu} \colon L(\lambda + \mu) \to L(\lambda) \otimes L(\mu)$, see e.g.~\cite[Corollaire~2]{mathieu-KM}. This map identifies the line $L(\lambda + \mu)_{w w_0^J(\lambda+\mu)} = L(\lambda + \mu)_{ww_0^J(\lambda) + w(\mu)}$ with $L(\lambda)_{ww_0^J(\lambda)} \otimes L(\mu)_{w(\mu)}$, hence maps $E_{ww_0^J}(\lambda+\mu)$ into $E_{ww_0^J}(\lambda) \otimes E_w(\mu)$. It is clear also that the embedding $S_{ww_0^J, \lambda + \mu} \hookrightarrow \mathbb{P}(E_{ww_0^J}(\lambda+\mu))$ factors through an embedding
\[
S_{ww_0^J, \lambda + \mu} \hookrightarrow \mathbb{P} \bigl( E_{ww_0^J}(\lambda+\mu) / \ker(f_{\lambda,\mu}) \cap E_{ww_0^J}(\lambda+\mu) \bigr).
\]

Consider now the Segre embedding
\[
\mathbb{P}(E_{ww_0^J}(\lambda)) \times \mathbb{P}(E_w(\mu)) \hookrightarrow \mathbb{P}(E_{ww_0^J}(\lambda) \otimes E_w(\mu)),
\]
see e.g.~\cite[Lemma 7.1.14]{kumar}. This morphism is clearly $\BKM$-equivariant (for the diagonal action on the left-hand side), and its image contains the image of $L(\lambda+\mu)_{w w_0^J(\lambda+\mu)}$. Hence the composition
\[
S_{ww_0^J, \lambda+\mu} \hookrightarrow \mathbb{P} \bigl( E_{ww_0^J}(\lambda+\mu) / \ker(f_{\lambda,\mu}) \cap E_{ww_0^J}(\lambda+\mu) \bigr) \hookrightarrow \mathbb{P}(E_{ww_0^J}(\lambda) \otimes E_w(\mu))
\]
factors through a closed embedding $S_{ww_0^J, \lambda+\mu} \hookrightarrow \mathbb{P}(E_{ww_0^J}(\lambda)) \times \mathbb{P}(E_w(\mu))$, which itself defines a closed embedding
\[
S_{ww_0^J,\lambda+\mu} \hookrightarrow S_{ww_0^J}(\lambda) \times S_w(\mu).
\]
Composing with the projection on the second component, and passing to normalizations (using again~\cite[Proposition~A.7]{kumar}), we deduce the wished-for morphism $q^J_w \colon \Flag_{\leq w w_0^J} \to \Flag^J_{\leq w}$.

Since the varieties $\Flag_{\leq w w_0^J}$ and $\Flag^J_{\leq w}$ are projective, the morphism $q^J$ is ind-proper. It is also clear that if $w \in W^J$, 
the set-theoretic inverse image of $\Flag^J_w$ under $q^J$ is $\bigsqcup_{v \in W_J} \Flag_{wv}$, and that
the restriction of $q^J$ to the stratum $\Flag_{wv}$ is isomorphic to the standard projection $\mathbb{A}_{\bbL}^{\ell(w) + \ell(v)} \to \mathbb{A}_{\bbL}^{\ell(w)}$.

\begin{rmk}
In view of the familiar case of reductive groups, we would expect the morphism $q^J$ to be smooth (and in fact a locally trivial $\PKM_J/\BKM$-bundle). This property would follow e.g.~if we knew that the natural $\PKM_J$-action on $\BKM(ww_0^J)$ on the right (for $w \in W^J$) is locally free. However, we were not able to prove this fact in the case when $\mathrm{char}(\bbL) \neq 0$. (The case $\mathrm{char}(\bk)=0$ can e.g.~be treated using~\cite[Corollary~7.4.15 and Exercise~7.4.E.5]{kumar}.) This is the source of some technical difficulties encountered below.
\end{rmk}

Below we will mainly consider the special case $J=\{i_s\}$ for some $s \in S$. In this case we will replace the superscript $\{i_s\}$ by $s$. In particular, we have
the partial flag variety $\Flag^s$,
the Schubert cells $\Flag^s_w$ for any $w \in W^s$, and the embeddings $i^s_w \colon \Flag_w^s \to \Flag^s$. Below we will sometimes identify $W^s$ with $W/W_s$.
For $w \in W^s$, we also set
\[
\Flag^s_{\geq w} := \bigsqcup_{\substack{y \in W^s \\ y \geq w}} \Flag^s_y.
\]
Here again, $\Flag^s_{\geq w}$ is an open ind-subvariety in $\Flag^s$, and $\Flag^s_w$ is closed in $\Flag^s_{\geq w}$.


\subsection{Derived categories of sheaves on $\Flag$ and $\Flag^s$}
\label{ss:DbX}


We will consider some categories of sheaves of $\K$-modules on $\Flag$ and $\Flag^J$, where $\K$ is a commutative ring. We will assume that we are in one of the following two contexts:
\begin{enumerate}
\item
$\bbL=\C$, $\K$ is Noetherian of finite global dimension, and we consider sheaves for the analytic topology on our varieties;
\item
$\bbL$ is arbitrary, $\K$ is either an algebraic closure of $\Q_p$, or a finite extension of $\Q_p$, or the ring of integers in such an extension, or a finite field of characteristic $p$ (where $p$ is a prime number different from $\mathrm{char}(\bbL)$), and we work with \'etale $\K$-sheaves.
\end{enumerate}
The first context will be referred to as the ``classical context," and the second one as the ``\'etale context."

We denote by $\Db_\BKM(\Flag, \K)$, resp.~$\Db_\BKM(\Flag^J, \K)$, the $\BKM$-equivariant derived category of $\Flag$, resp.~$\Flag^J$. (See~\cite[\S 2.7]{weidner} for the construction of the equivariant derived category in the \'etale context.) We also denote by $\Db_{(\BKM)}(\Flag,\K)$, resp.~$\Db_{(\BKM)}(\Flag^J,\K)$, the corresponding derived category of complexes which are constructible with respect to the stratification given by the Bruhat decomposition. (The technical details of the definition of such categories are discussed in~\cite[\S 2.2]{nadler}, and will not be repeated here. Let us only note that, by definition, any object of $\Db_{\BKM}(\Flag,\K)$ or $\Db_{(\BKM)}(\Flag,\K)$ is supported on a \emph{finite} union of Bruhat cells.) There exists a natural forgetful functor $\Db_{\BKM}(\Flag,\K) \to \Db_{(\BKM)}(\Flag,\K)$, which will usually be omitted from the notation.

For $\cF,\cG$ in $\Db_{\BKM}(\Flag,\K)$ or in $\Db_{(\BKM)}(\Flag,\K)$, we set
\begin{align*}
\Hom^\bullet_{\Db_{\BKM}(\Flag,\K)}(\cF,\cG) &= \bigoplus_{n \in \Z} \Hom_{\Db_{\BKM}(\Flag,\K)}(\cF,\cG[n]), \\
\Hom^\bullet_{\Db_{(\BKM)}(\Flag,\K)}(\cF,\cG) &= \bigoplus_{n \in \Z} \Hom_{\Db_{(\BKM)}(\Flag,\K)}(\cF,\cG[n])
\end{align*}
respectively. We use a similar notation for $\Flag^J$.

The morphism $q^J$ induces functors
\begin{multline*}
(q^J)_*, (q^J)_! \colon \Db_{\BKM}(\Flag,\K) \to \Db_{\BKM}(\Flag^s,\K), \\
(q^J)^*, (q^J)^! \colon \Db_{\BKM}(\Flag^s,\K) \to \Db_{\BKM}(\Flag,\K)
\end{multline*}
and similarly functors
\begin{multline*}
(q^J)_*, (q^J)_! \colon \Db_{(\BKM)}(\Flag,\K) \to \Db_{(\BKM)}(\Flag^s,\K), \\
(q^J)^*, (q^J)^! \colon \Db_{(\BKM)}(\Flag^s,\K) \to \Db_{(\BKM)}(\Flag,\K),
\end{multline*}
these functors being compatible with the forgetful functor in the obvious sense. Since $q^J$ is proper, we have a canonical isomorphism $(q^J)_! \simto (q^J)_*$.

For $w \in W$, we set
\[
\Cos_w := (i_w)_* \underline{\K}_{\Flag_w} [\ell(w)] \in \Db_{\BKM}(\Flag, \K), \quad
\Sta_w := (i_w)_! \underline{\K}_{\Flag_w} [\ell(w)] \in \Db_{\BKM}(\Flag, \K).
\]
If $w \in W^J$, we also set
\[
\Cos_w^J := (i^J_w)_* \underline{\K}_{\Flag_w^J}[\ell(w)] \in \Db_{\BKM}(\Flag^J, \K), \quad
\Sta_w^J := (i^J_w)_! \underline{\K}_{\Flag_w^J}[\ell(w)] \in \Db_{\BKM}(\Flag^J, \K).
\]

The category $\Db_{\BKM}(\Flag,\K)$ can be endowed with a natural convolution product
\[
(-) \star^{\BKM} (-) \colon \Db_{\BKM}(\Flag,\K) \times \Db_{\BKM}(\Flag,\K) \to \Db_{\BKM}(\Flag,\K)
\]
defined as follows. Let $\mu \colon \GKM \to \Flag$ be the projection, and $\mathsf{m} \colon \GKM \times^{\BKM} \Flag \to \Flag$ be the morphism induced by the $\GKM$-action on $\Flag$. Then, given $\cE$ and $\cF$ in $\Db_{\BKM}(\Flag,\K)$, there exists a unique object $\cE \, \widetilde{\boxtimes} \, \cF$ in $\Db_{\BKM}(\GKM \times^{\BKM} \Flag,\K)$ whose pullback to $\GKM \times \Flag$ is isomorphic to $\mu^*\cE \boxtimes \cF$; by definition we have
\[
\cE \star^\BKM \cF = \mathsf{m}_*(\cE \, \widetilde{\boxtimes} \, \cF).
\]
This convolution product, together with the natural associativity constraint and the natural identity object, make $\Db_{\BKM}(\Flag,\K)$ into a monoidal category.

The same construction also defines bifunctors
\begin{align*}
\Db_{(\BKM)}(\Flag,\K) \times \Db_{\BKM}(\Flag,\K) &\to \Db_{(\BKM)}(\Flag,\K), \\
\Db_{\BKM}(\Flag,\K) \times \Db_{\BKM}(\Flag^J,\K) &\to \Db_{\BKM}(\Flag^J,\K)
\end{align*}
which we will denote similarly, and which define a right action of the monoidal category $\Db_{\BKM}(\Flag,\K)$ on $\Db_{(\BKM)}(\Flag,\K)$, and a left action on $\Db_{\BKM}(\Flag^J,\K)$.

\begin{rmk}
As above, in the description of $\star^\BKM$ we have ignored the subtleties related to the fact that $\GKM$ is an ind-variety and not an honest variety.
\end{rmk}

\subsection{Parity complexes on flag varieties}
\label{ss:parity-flag}

We continue with the setting of~\S\ref{ss:DbX}.
Recall from~\cite{jmw} the notion of \emph{even} objects, \emph{odd} objects, and \emph{parity complexes} in $\Db_{\BKM}(\Flag, \K)$, $\Db_{(\BKM)}(\Flag, \K)$, $\Db_{\BKM}(\Flag^J, \K)$ or $\Db_{(\BKM)}(\Flag^J, \K)$.
(In~\cite{jmw}, only certain rings of coefficients are considered; but the definitions have obvious analogues in our more general setting; see~\cite[\S 2.1]{mr}.) We will denote by $\Par_{\BKM}(\Flag, \K)$, $\Par_{(\BKM)}(\Flag, \K)$, $\Par_{\BKM}(\Flag^J, \K)$, $\Par_{(\BKM)}(\Flag^J, \K)$ the corresponding full subcategories of parity complexes. Then by definition the forgetful functors send $\Par_{\BKM}(\Flag, \K)$ into $\Par_{(\BKM)}(\Flag, \K)$ and $\Par_{\BKM}(\Flag^J, \K)$ into $\Par_{(\BKM)}(\Flag^J, \K)$. 

\begin{lem}
\label{lem:q^J_*-parity}
The functor $(q^J)_*$ sends even objects, resp.~odd objects, resp.~parity complexes on $\Flag$ to even objects, resp.~odd objects, resp.~parity complexes on $\Flag^J$ (in both the $\BKM$-equivariant and the $\BKM$-constructible settings).
\end{lem}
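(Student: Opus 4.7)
The plan is to prove the result by combining proper base change with a dévissage along the strata in the fibers of $q^J$. Since $(q^J)_* \cong (q^J)_!$ by properness of $q^J$, and since the $B$-equivariant and $B$-constructible cases are handled identically (the forgetful functor commutes with $(q^J)_*$, and the equivariant structure on each stratum is standard), it suffices to check that for a parity complex $\cF$ on $\Flag$ and any $w \in W^J$, both $(i^J_w)^* (q^J)_* \cF$ and $(i^J_w)^! (q^J)_* \cF$ are direct sums of constant sheaves on $\Flag^J_w \cong \mathbb{A}^{\ell(w)}_{\bbL}$ concentrated in shifts of the appropriate parity.

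Form the cartesian square
\[
\begin{array}{c}
\xymatrix{
Y_w \ar[r]^-{\tilde{\imath}_w} \ar[d]_-{\tilde{q}_w} & \Flag \ar[d]^-{q^J} \\
\Flag^J_w \ar[r]^-{i^J_w} & \Flag^J
}
\end{array}
\]
where $Y_w := (q^J)^{-1}(\Flag^J_w)$, so that proper base change yields $(i^J_w)^* (q^J)_* \cF \cong (\tilde{q}_w)_* (\tilde{\imath}_w)^* \cF$, and similarly for the $!$-pullback. By the description at the end of~\S\ref{ss:partial-flags}, we have set-theoretically $Y_w = \bigsqcup_{v \in W_J} \Flag_{wv}$, and the restriction of $\tilde{q}_w$ to each $\Flag_{wv}$ is isomorphic to the standard projection $\mathbb{A}^{\ell(w)+\ell(v)}_{\bbL} \to \mathbb{A}^{\ell(w)}_{\bbL}$.

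Fix a total order on $W_J$ refining the Bruhat order; this gives an increasing filtration of $Y_w$ by closed sub-ind-schemes $Z_k$ with open-closed complements isomorphic to single strata $\Flag_{wv_k}$, and corresponding distinguished triangles of the form
\[
(\tilde{q}_w)_* j_! j^* \bigl( (\tilde{\imath}_w)^* \cF \bigr) \to (\tilde{q}_w)_* \bigl( (\tilde{\imath}_w)^* \cF \bigr) \to (\tilde{q}_w)_* i_* i^* \bigl( (\tilde{\imath}_w)^* \cF \bigr) \xrightarrow{[1]}
\]
(and the analogous triangles for $(\tilde{\imath}_w)^!\cF$). Since $\cF$ is parity, its $*$- and $!$-restrictions to every stratum $\Flag_{wv}$ are direct sums of $\underline{\K}_{\Flag_{wv}}[m]$ with $m$ of a fixed parity. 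Combined with the elementary fact that for the trivial affine projection $p_v \colon \mathbb{A}^{\ell(w)+\ell(v)}_{\bbL} \to \mathbb{A}^{\ell(w)}_{\bbL}$ with fiber $\mathbb{A}^{\ell(v)}_{\bbL}$ we have $(p_v)_* \underline{\K} \cong \underline{\K}$ and $(p_v)_! \underline{\K} \cong \underline{\K}[-2\ell(v)]$ (both of even cohomological degree), each step of the dévissage contributes only shifts of $\underline{\K}_{\Flag^J_w}$ by integers of the expected parity. An induction on the filtration then yields the claim for $(i^J_w)^*(q^J)_*\cF$, and the analogous argument (or adjunction with $(i^J_w)_*$) handles $(i^J_w)^!(q^J)_*\cF$.

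The only mild difficulty is the bookkeeping of parities through the affine projection formulas and the base-change identifications, together with the (harmless) point that the support of $\cF$ is finite so everything can be carried out on honest varieties rather than genuine ind-schemes. No deeper geometric input about $q^J$ (such as smoothness, which is not available in positive characteristic) is required, because the stratum-wise triviality of the projection already furnishes everything we need.
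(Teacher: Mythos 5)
Your argument is correct and is in substance the same as the paper's: the paper simply observes that $q^J$ is a proper \emph{even} morphism with respect to the Bruhat stratifications in the sense of~\cite[Definition~2.33]{jmw} and invokes~\cite[Proposition~2.34]{jmw}, whose proof is precisely the proper-base-change-plus-d\'evissage over the strata $\Flag_{wv}$, $v \in W_J$, that you carry out by hand. The only additional point the paper flags is that in the \'etale context the fibers of $q^J_w$ may a priori fail to be reduced, which is harmless for sheaf cohomology (\cite[Remark~II.3.17]{milne}).
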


\begin{proof}
In the classical context, this follows from the facts that $q^J$ is an even morphism (with respect to the Bruhat stratifications of $\Flag$ and $\Flag^J$) in the sense of~\cite[Definition~2.33]{jmw}, and that such morphisms preserve parity objects when they are proper; see~\cite[Proposition~2.34]{jmw}. Similar arguments apply in the \'etale context. (Note that the fibers of the morphisms $q^J_{w}$ might a priori not be reduced; however this does not play any role in cohomology, see~\cite[Remark~II.3.17]{milne}.)
\end{proof}

We set
\[
\cE_{w_0^J} := \underline{\K}_{\PKM_J/\BKM}[\ell(w_0^J)].
\]
Since $\PKM_J/\BKM=\Flag_{\leq w^J_0}$ is smooth, it is easy to see that $\cE_{w_0^J}$ is a parity complex on $\Flag$. It is also clear that
\begin{equation}
\label{eqn:inverse-image-skyscraper}
(q^J)^* \underline{\K}_{\Flag^J_{\leq 1}} \cong \cE_{w_0^J} [-\ell(w_0^J)],
\end{equation}
where here $1$ is the unit element in $W$. (Here again, some non-reducedness questions might arise; but this can be ignored when considering sheaves by~\cite[Remark~II.3.17]{milne}.)

Consider the variety $\GKM \times^\BKM (\PKM_J / \BKM)$ (which is well defined since the right $\BKM$-action on $\GKM$ is locally trivial). There are natural proper morphisms
\[
a^1_J, a^2_J \colon \GKM \times^\BKM (\PKM_J / \BKM) \to \Flag
\]
induced by projection on the first factor and the $\GKM$-action on $\Flag$ respectively. Moreover, the morphism $a_J^1$ is smooth (more precisely, a locally trivial $\PKM_J/\BKM$-fibration). These morphisms fit in the following commutative diagram:
\[
\xymatrix@C=1.5cm{
\GKM \times^\BKM (\PKM_J/\BKM) \ar[r]^-{a_J^2} \ar[d]_-{a_J^1} & \Flag \ar[d]^-{q^J} \\
\Flag \ar[r]^-{q^J} & \Flag^J.
}
\]
It is clear also that there exists a canonical isomorphism
\begin{equation}
\label{eqn:convolution-constant}
(-) \star^\BKM \cE_{w_0^J}[-\ell(w_0^J)] \cong (a_J^2)_* (a_J^1)^* (-)
\end{equation}
of endofunctors of $\Db_\BKM(\Flag,\K)$ and of $\Db_{(\BKM)}(\Flag,\K)$.

\begin{lem}
\label{lem:qJ-parity}
\begin{enumerate}
\item
\label{it:qJ1}
There exist isomorphisms
\[
(q^J)^* (q^J)_* \cong (a_J^2)_* (a_J^1)^*, \qquad (q^J)^! (q^J)_* \cong (a_J^2)_* (a_J^1)^*[2\ell(w_0^J)]
\]
of endofunctors of $\Db_\BKM(\Flag,\K)$ and of $\Db_{(\BKM)}(\Flag,\K)$.
\item
\label{it:qJ2}
The functors $(q^J)^* (q^J)_*$ and $(q^J)^! (q^J)_*$ send parity complexes to parity complexes (in both the $\BKM$-equivariant and the $\BKM$-constructible settings).
\end{enumerate}
\end{lem}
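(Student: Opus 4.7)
\medskip

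\noindent\textbf{Proof plan for Lemma~\ref{lem:qJ-parity}.}
The whole argument rests on reading the commutative square preceding~\eqref{eqn:convolution-constant} as a cartesian square and then pushing all the assertions through base change. So I would start by verifying that the natural map
\[
\GKM \times^{\BKM} (\PKM_J/\BKM) \;\longrightarrow\; \Flag \times_{\Flag^J} \Flag, \qquad (g, p\BKM) \longmapsto (g\BKM, gp\BKM),
\]
is an isomorphism of ind-schemes: injectivity and surjectivity on $\bbL$-points is clear from the definition of $\Flag^J$ as $\GKM/\PKM_J$, and one checks scheme-theoretically (stratum by stratum, using that the right $\BKM$-action on $\GKM$ is locally trivial) that it identifies the two ind-schemes. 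This is the only ``geometric'' input.

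Once the square is cartesian, part~\eqref{it:qJ1} is a routine application of base change. Both $q^J$ and $a_J^2$ are ind-proper, and both $a_J^1$ and $a_J^2$ are also proper (their fibers are isomorphic to $\PKM_J/\BKM$, which is projective), so the $({}_!,{}^*)$-base change isomorphism together with $(q^J)_!=(q^J)_*$, $(a_J^2)_!=(a_J^2)_*$ yields the first isomorphism $(q^J)^*(q^J)_*\cong (a_J^2)_*(a_J^1)^*$. For the second one, the $({}_*,{}^!)$-base change isomorphism (which holds for cartesian squares with at least one proper side) gives $(q^J)^!(q^J)_*\cong (a_J^2)_*(a_J^1)^!$. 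The map $a_J^1$ is a locally trivial fibration with smooth projective fiber $\PKM_J/\BKM$ of dimension $\ell(w_0^J)$, hence smooth of that relative dimension; consequently $(a_J^1)^!\cong (a_J^1)^*[2\ell(w_0^J)]$ (with an implicit Tate twist in the \'etale context), and combining the two gives the claimed description.

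For part~\eqref{it:qJ2}, rather than re-doing a stratified argument I would invoke~\eqref{eqn:convolution-constant}, which together with part~\eqref{it:qJ1} rewrites both functors as
\[
(q^J)^*(q^J)_*(-) \;\cong\; (-) \star^{\BKM} \cE_{w_0^J}[-\ell(w_0^J)], \qquad (q^J)^!(q^J)_*(-) \;\cong\; (-) \star^{\BKM} \cE_{w_0^J}[\ell(w_0^J)].
\]
The object $\cE_{w_0^J}$ is a parity complex on $\Flag$ since $\Flag_{\leq w_0^J}=\PKM_J/\BKM$ is smooth and projective, and convolution of two parity complexes on $\Flag$ (with respect to the Bruhat stratification) is again parity; for reductive groups this is~\cite[Theorem~4.8]{jmw}, and in the Kac--Moody setting it follows by the same stratification argument, the point being that convolution with $\cE_{w_0^J}$ is obtained by composing the smooth pullback $(a_J^1)^*$, which preserves parity up to a single shift, with the ind-proper and even pushforward $(a_J^2)_*$.

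The only delicate point is verifying that the defining square is cartesian as ind-schemes (there is no general smoothness of $q^J$ to fall back on, as recorded in the remark preceding the lemma); everything else is formal base-change bookkeeping and a standard parity-preservation statement for convolution.
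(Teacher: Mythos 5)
The base-change bookkeeping in your part~\eqref{it:qJ1} is formally correct, but it all hangs on the claim that the commutative square is cartesian, i.e.\ that the natural map $\GKM \times^{\BKM} (\PKM_J/\BKM) \to \Flag \times_{\Flag^J} \Flag$ is an isomorphism of ind-schemes. This is precisely the point the authors cannot establish and deliberately route around: as recorded in the remark at the end of \S\ref{ss:partial-flags}, it is not known (for $\mathrm{char}(\bbL)\neq 0$) that $q^J$ is smooth or a locally trivial $\PKM_J/\BKM$-bundle, and the fibers of $q^J_w$ may a priori be non-reduced, because $q^J$ is built from normalizations and Segre embeddings rather than as a quotient by a locally free $\PKM_J$-action. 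Cartesianness of your square would identify the base change of $q^J$ along itself with the locally trivial fibration $a_J^1$, which sits squarely in this open territory; a ``stratum by stratum'' check says nothing about the scheme structure of the fiber product. (A universal homeomorphism onto the fiber product would suffice for sheaf-theoretic base change, but you neither state nor prove that, and even it would require an argument.) This is a genuine gap, not a technicality.

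The paper avoids the issue entirely: it uses the compatibility $(q^J)^*(\cF\star^\BKM\cG)\cong\cF\star^\BKM(q^J)^*\cG$ of convolution with equivariant pullback --- which rests on a shearing square that \emph{is} cartesian because $\GKM$ acts compatibly on $\Flag$ and $\Flag^J$ --- applied with $\cG=\underline{\K}_{\Flag^J_{\leq 1}}$; then $(-)\star^\BKM\cG$ is $(q^J)_*$ and $(q^J)^*\cG=\cE_{w_0^J}[-\ell(w_0^J)]$ by~\eqref{eqn:inverse-image-skyscraper}, so~\eqref{eqn:convolution-constant} gives the first isomorphism, and the second follows by conjugating with Verdier duality (using that $q^J$ and $a_J^2$ are proper and $a_J^1$ is smooth). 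For part~\eqref{it:qJ2} the paper is also more economical: Lemma~\ref{lem:q^J_*-parity} shows $(q^J)_*$ preserves evenness, $(q^J)^*$ clearly preserves $*$-evenness, so the composite sends even objects to $*$-even ones, and the Verdier self-duality up to shift from part~\eqref{it:qJ1} upgrades this to evenness. Your alternative, invoking that convolution with the parity complex $\cE_{w_0^J}$ preserves parity and that $(a_J^2)_*$ is an even pushforward, imports an input that is essentially equivalent to what is being proved and is not established in the paper (though a version is available in \cite[\S 4.1]{jmw}); if you take that route you should at least verify evenness of $a_J^2$ with respect to the relevant stratifications.
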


\begin{proof}
\eqref{it:qJ1}
Let us start with the first isomorphism. For a general algebraic group $K$ and subgroup $H$, given a $K$-variety $X$ there exists a natural ``convolution" bifunctor 
\[
\star^H \colon \Db_{\mathrm{c}}(K/H, \K) \times \Db_H(X,\K) \to \Db_{\mathrm{c}}(X,\K),
\]
where $\Db_{\mathrm{c}}(-)$ means the constructible derived category.
If $Y$ is another $K$-variety and $f \colon X \to Y$ is a $K$-equivariant morphism, it is easy to see that the functor $f^*$ commutes with these operations on $X$ and $Y$ in the obvious sense. Applying this to the morphism $q^J$ we obtain, for any $\cF$ in $\Db_{\mathrm{c}}(\Flag, \K)$ and $\cG$ in $\Db_\BKM(\Flag^J, \K)$, a canonical isomorphism
\[
(q^J)^*(\cF \star^\BKM \cG) \cong \cF \star^\BKM (q^J)^* \cG.
\]
In particular, when $\cG=\underline{\K}_{\Flag^J_{\leq 1}}$, the functor $(-) \star^\BKM \cG$ identifies with the functor $(q^J)_*$. Using isomorphisms~\eqref{eqn:inverse-image-skyscraper} and~\eqref{eqn:convolution-constant}, we deduce the desired isomorphism of functors in the constructible setting. The equivariant setting is similar.

The second isomorphism is obtained by conjugating the first one with Verdier duality, using the facts that $q^J$ and $a^2_J$ are proper, while $a^1_J$ is smooth.

\eqref{it:qJ2}
We treat the case of $(q^J)^* (q^J)_*$; the case of $(q^J)^! (q^J)_*$ is similar.

By Lemma~\ref{lem:q^J_*-parity}, the functor $(q^J)_*$ sends even objects to even objects. On the other hand, it is clear that the functor $(q^J)^*$ sends $*$-even complexes to $*$-even complexes. Hence our functor sends even complexes to $*$-even complexes. Since this functor is Verdier self-dual up to shift by $2\ell(w_0^J)$ by~\eqref{it:qJ1}, it in fact sends even complexes to even complexes. The claim follows.
\end{proof}

In particular, for any $s \in S$, choosing $J=\{i_s\}$ we obtain the object
\[
\cE_s:= \underline{\K}_{\PKM_s/\BKM}[1].
\]
For any expression $\uw=s_1 \cdots s_r$, we set
\[
\cE_{\uw} := \cE_{s_1} \star^\BKM \cdots \star^\BKM \cE_{s_r}.
\]
It follows from~\eqref{eqn:convolution-constant} and Lemma~\ref{lem:qJ-parity} that
this object belongs to $\Par_{\BKM}(\Flag, \K)$. 

There exists a canonical graded algebra isomorphism
\begin{equation}
\label{eqn:equiv-cohom-pt}
\mathsf{H}^\bullet_{\BKM}(\mathrm{pt}; \K) \cong \mathrm{Sym}(\Lambda \otimes_{\Z} \K),
\end{equation}
where 
$\Lambda \otimes_\Z \K$ is placed in degree $2$. Moreover, if $\cF,\cG \in \Db_{\BKM}(\Flag,\K)$ are parity complexes, the $\mathsf{H}^\bullet_{\BKM}(\mathrm{pt}; \K)$-module $\Hom^\bullet_{\Db_{\BKM}(\Flag,\K)}(\cF,\cG)$ is free of finite rank, and the forgetful functor induces an isomorphism
\begin{equation}
\label{eqn:morphisms-parity-For}
\K \otimes_{\mathsf{H}^\bullet_{\BKM}(\mathrm{pt}; \K)} \Hom^\bullet_{\Db_{\BKM}(\Flag,\K)}(\cF,\cG) \simto \Hom^\bullet_{\Db_{(\BKM)}(\Flag,\K)}(\cF,\cG),
\end{equation}
see~\cite[Lemma~2.2]{mr}. 


Assume from now on that $\K$ is a field, or more generally a complete local ring.
By~\cite[Theorem~2.12 \& \S 4.1]{jmw},
for any $w \in W$ there exists (up to isomorphism)
a unique indecomposable parity complex
\[
\cE_w \in \Par_\BKM(\Flag,\K)
\]
which is characterized (among indecomposable parity complexes) by the properties that $\cE_w$ is supported on $\overline{\Flag_w}$ and that
\[
i_w^*(\cE_w) \cong \underline{\K}_{\Flag_w} [\ell(w)].
\]
By~\cite[Lemma~2.4]{mr}, the image of $\cE_w$ in $\Par_{(\BKM)}(\Flag,\K)$ is still indecomposable, hence it is isomorphic to the parity complex in $\Db_{(\BKM)}(\Flag,\K)$ characterized by the similar conditions (which will therefore also be denoted $\cE_w$). In particular, if $J \subset I$ is of finite type, $\cE_{w_0^J}$ is the object considered above.
(This object is canonical, whereas in general $\cE_w$ is defined only up to isomorphism.)


Similarly, given $s \in S$, for any $w \in W^s$ there exists a unique indecomposable parity complex
\[
\cE_w^s \in \Par_\BKM(\Flag^s, \K)
\]
which is characterized by the properties that $\cE_w^s$ is supported on $\overline{\Flag^s_w}$ and that
\[
(i_w^s)^*(\cE_w^s) \cong \underline{\K}_{\Flag^s_w} [\ell(w)].
\]
As above the image of $\cE_w^s$ in $\Par_{(\BKM)}(\Flag^s, \K)$ is indecomposable, and will still be denoted $\cE_w^s$.

If $w \in W^s$, then $\Flag^s_w$ is open in the support of $(q^s)_* \cE_w$, and the restriction of this complex to $\Flag^s_w$ is $\underline{\K}_{\Flag^s_w} [\ell(w)]$. Therefore,
$\cE_w^s$ is a direct summand of $(q^s)_* \cE_w$; we fix split embeddings and projections
\begin{equation}
\label{eqn:qs-parity-2}
\cE_w^s \to (q^s)_* \cE_w \to \cE_w^s.
\end{equation}

\begin{lem}
\label{lem:qs^*-parity}
Assume that $\K$ is a complete local ring.
The functors $(q^s)^*$ and $(q^s)^!$ send parity complexes to parity complexes (in both the $\BKM$-equivariant and the $\BKM$-constructible settings).
\end{lem}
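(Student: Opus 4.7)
The plan is to reduce to checking the claim on the indecomposable parity complexes $\cE_w^s$ for $w \in W^s$, and then to exhibit $(q^s)^* \cE_w^s$ and $(q^s)^! \cE_w^s$ as direct summands of complexes that are already known to be parity by Lemma~\ref{lem:qJ-parity}\eqref{it:qJ2}. Concretely, since $\K$ is a complete local ring, the general results of~\cite{jmw} ensure that $\Par_\BKM(\Flag^s, \K)$ is Krull--Schmidt, with indecomposable objects (up to shift and isomorphism) exactly the $\cE_w^s$ for $w \in W^s$. Hence every parity complex on $\Flag^s$ decomposes as a finite direct sum of shifts of such objects, and since parity complexes are stable under direct sums, shifts, and direct summands, it suffices to prove the statement for each $\cE_w^s$ individually.

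For this step I would use the already-fixed split embedding and projection in~\eqref{eqn:qs-parity-2}, whose composition is $\id_{\cE_w^s}$. Applying $(q^s)^*$ exhibits $(q^s)^* \cE_w^s$ as a direct summand of $(q^s)^*(q^s)_* \cE_w$, and the latter is parity by Lemma~\ref{lem:qJ-parity}\eqref{it:qJ2}; since direct summands of parity complexes are parity, this proves the claim for $(q^s)^*$. The same idempotent-splitting argument applied with $(q^s)^!$ in place of $(q^s)^*$, using the second half of Lemma~\ref{lem:qJ-parity}\eqref{it:qJ2}, handles $(q^s)^!$. For the $\BKM$-constructible version, one argues identically: the image of $\cE_w^s$ under the forgetful functor remains indecomposable in $\Par_{(\BKM)}(\Flag^s,\K)$ (by~\cite[Lemma~2.4]{mr}, exactly as invoked earlier in \S\ref{ss:parity-flag}), the splittings~\eqref{eqn:qs-parity-2} descend, and $(q^s)^*(q^s)_*$, $(q^s)^!(q^s)_*$ preserve parity at the constructible level by the same lemma.

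The main obstacle is the absence of smoothness of $q^s$ in positive characteristic, which prevents a direct ``parity-preserving smooth pullback'' argument and forces the detour through Lemma~\ref{lem:qJ-parity}; once one accepts that lemma (whose proof relies on the Verdier self-duality $(q^s)^!(q^s)_* \cong (q^s)^*(q^s)_*[2]$ supplied by the smooth projection $a^1_s$), the Krull--Schmidt/summand reduction sketched above is routine.
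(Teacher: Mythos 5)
Your proposal is correct and follows essentially the same route as the paper's own proof: both use the Krull--Schmidt property (valid since $\K$ is complete local) to reduce to the indecomposables $\cE_w^s$, exhibit each as a direct summand of $(q^s)_*\cE_w$ via~\eqref{eqn:qs-parity-2}, and then invoke Lemma~\ref{lem:qJ-parity}\eqref{it:qJ2} to conclude that $(q^s)^*$ and $(q^s)^!$ applied to that summand land inside parity. The paper compresses this to the single observation that every parity complex on $\Flag^s$ is a direct summand of $(q^s)_*\cE$ for some parity complex $\cE$ on $\Flag$, but the underlying argument is identical to yours.
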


\begin{proof}
It follows from the comments before the statement that any parity complex on $\Flag^s$ is isomorphic to a direct summand of an object of the form $(q^s)_* \cE$ where $\cE$ is a parity complex on $\Flag$. Hence the claim follows from Lemma~\ref{lem:qJ-parity}\eqref{it:qJ2}.
\end{proof}

As in the case of $(q^s)_*$, it follows from Lemma~\ref{lem:qs^*-parity} that for any $w \in W^s$, $\cE_{ws}$ is a direct summand of $(q^s)^! \cE_w^s [-1]$ and of $(q^s)^* \cE_w^s [1]$. We fix a split embedding and projection respectively
\begin{equation}
\label{eqn:qs-parity-1}
\cE_{ws} \to (q^s)^! \cE^s_w[-1],
\quad 
(q^s)^* \cE^s_w[1] \to \cE_{ws}.
\end{equation}

\begin{rmk}
Using the arguments in~\cite[Proposition~3.5]{williamson-IC}, one can prove that both morphisms in~\eqref{eqn:qs-parity-1} are in fact isomorphisms.
\end{rmk}


\subsection{Sections of the $!$-flag}

In the rest of this section we 
assume that $\K$ is a field, and we denote it by $\F$.
Recall that an object $\cF \in \Db_{(\BKM)}(\Flag,\F)$ is \emph{$!$-even} if for any $w \in W$ we have
\[
\cH^k(i_w^! \cF) = 0 \qquad \text{unless $k$ is even.}
\]
$\cF$ is called \emph{$!$-odd} if $\cF[1]$ is $!$-even. Finally, we will say that $\cF$ is \emph{$!$-parity} if it is the direct sum of a $!$-even and a $!$-odd object. (Note that this terminology is slightly different from the one used in~\cite{jmw}.)

\begin{defn}
Let $\cF \in \Db_{(\BKM)}(\Flag,\F)$ be $!$-parity. 
A \emph{section of the $!$-flag} of $\cF$ is a quadruple $(\Pi,e,d,(\varphi_\pi^\cF)_{\pi \in \Pi})$ where
\begin{itemize}
\item
$\Pi$ is a finite set;
\item
$e \colon \Pi \to W$ and $d \colon \Pi \to \Z$ are maps;
\item
for each $\pi \in \Pi$, $\varphi_\pi^\cF$ is an element in $\Hom_{\Db_{(\BKM)}(\Flag,\F)}(\cE_{e(\pi)},\cF[d(\pi)])$
\end{itemize}
such that for any $w \in W$ the images of the morphisms in
\[
\{\varphi^\cF_\pi \colon \cE_{w} \to \cF [d(\pi)] : \pi \in e^{-1}(w) \}
\]
form a basis of $\Hom^\bullet_{\Db_{(\BKM)}(\Flag_{\geq w},\F)}(\cE_w, \cF)$ (where we omit the functor of restriction from $\Flag$ to $\Flag_{\geq w}$ from the notation).
\end{defn}

\begin{rmk}
\begin{enumerate}
\item
We have an isomorphism
\[
\Hom^\bullet_{\Db_{(\BKM)}(\Flag_{\geq w},\F)}(\cE_w, \cF) \cong \mathsf{H}^{\bullet-\ell(w)}(\Flag_w, i_w^! \cF).
\]
In particular, $|e^{-1}(w)|=\dim_{\F} \mathsf{H}^{\bullet}(\Flag_w, i_w^! \cF)$.
\item
It follows from arguments similar to those in the proof of~\cite[Corollary~2.9]{jmw}
that sections of the $!$-flag always exist for 
$!$-parity objects.
\end{enumerate}
\end{rmk}

Of course, we have similar concepts for the ind-varieties $\Flag^s$ instead of $\Flag$ (replacing the objects $\cE_w$ by the objects $\cE_w^s$), where now $e$ takes values in $W^s$ (or equivalently in $W/W_s$); we will not repeat the definition.

\subsection{Sections of the $!$-flag and pushforward to $\Flag^s$}
\label{ss:section-!-flag-push}

We fix a simple reflection $s \in S$. Let 
$\cF \in \Db_{(\BKM)}(\Flag,\F)$ be $!$-parity, 
and let $(\Pi,e,d,(\varphi_\pi^\cF)_{\pi \in \Pi})$ be a section of the $!$-flag of $\cF$. 
The proof of Lemma~\ref{lem:q^J_*-parity} shows
that the object $\cG:=(q^s)_* \cF$ is $!$-parity, and the goal of this subsection is to explain how one can define a section of the $!$-flag for this object out of $(\Pi,e,d,(\varphi_\pi^\cF)_{\pi \in \Pi})$.

We set $\Pi':=\Pi$, and define $e' \colon \Pi' \to W/W_s$ as the composition of $e$ with the surjection $W \to W/W_s$. Now let $\pi \in \Pi'$. First, we assume that $e(\pi) > e(\pi)s$. Then we set $d'(\pi):=d(\pi)-1$, and we define $\varphi^{\cG}_\pi$ as the composition
\begin{multline*}
\cE_{e'(\pi)}^s \xrightarrow{\adj} (q^s)_* (q^s)^* \cE_{e'(\pi)}^s \to (q^s)_* \cE_{e(\pi)}[-1] \\
\xrightarrow{(q^s)_*(\varphi^{\cF}_\pi)[-1]} (q^s)_* \cF [d(\pi)-1]=\cG[d'(\pi)],
\end{multline*}
where 
the 
second morphism
is induced by 
the second map in~\eqref{eqn:qs-parity-1}. In other words, $\varphi^{\cG}_\pi$ is the image of the composition of $\varphi_\pi^\cF[-1]$ with the projection $(q^s)^* \cE_{e'(\pi)}^s \to \cE_{e(\pi)} [-1]$ under the isomorphism
\[
\Hom_{\Db_{(\BKM)}(\Flag,\F)}((q^s)^* \cE^s_{e'(\pi)}, \cF [d(\pi)-1]) \cong \Hom_{\Db_{(\BKM)}(\Flag^s,\F)}(\cE^s_{e'(\pi)}, \cG[d'(\pi)])
\]
induced by
adjunction.

Now, assume that $e(\pi)<e(\pi)s$, or in other words that $e(\pi) \in W^s$. Then we set $d'(\pi):=d(\pi)$, and we define $\varphi_\pi^\cG$ as the composition
\[
\cE_{e'(\pi)}^s \to (q^s)_* \cE_{e(\pi)} \xrightarrow{(q^s)_* (\varphi^\cF_\pi)} (q^s)_* \cF [d(\pi)] = \cG [d'(\pi)],
\]
where the first map is the first morphism in~\eqref{eqn:qs-parity-2}.

\begin{rmk}
\label{rmk:section-!-push}
An important point for us is that in both cases the morphism $\varphi^{\cG}_\pi$ factors through a shift of $(q^s)_* (\varphi^\cF_\pi) \colon (q^s)_* \cE_{e(\pi)} \to \cG[d(\pi)]$.
\end{rmk}

\begin{prop}
\label{prop:section-!-push}
The quadruple $(\Pi',e',d',(\varphi^{\cG}_\pi)_{\pi \in \Pi'})$ constructed above is a section of the $!$-flag of $\cG=(q^s)_* \cF$.
\end{prop}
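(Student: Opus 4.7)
\medskip

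The plan is to fix $w \in W^s$ (so that $w < ws$) and compute the target space $\Hom^\bullet_{\Db_{(\BKM)}(\Flag^s_{\geq w},\F)}(\cE^s_w, \cG)$ directly, by proper base change along the Cartesian square
\[
\xymatrix@R=0.5cm{
\Flag_w \sqcup \Flag_{ws} \ar[r]^-{j} \ar[d]_-{q^s_w} & \Flag_{\geq w} \ar[d]^-{q^s} \\
\Flag^s_w \ar[r]^-{i^s_w} & \Flag^s_{\geq w},
}
\]
so that $(i^s_w)^! \cG \cong (q^s_w)_* \, j^! \cF$. Since $\Flag_w$ is closed in $\Flag_{\geq w}$ (as $w$ is minimal among $\{y \geq w\}$), the restriction of $\cE_w$ to $\Flag_{\geq w}$ is $(i_w)_! \underline{\F}[\ell(w)]$, and similarly $\cE_{ws}$ restricted to $\Flag_{\geq ws}$ is $(i_{ws})_! \underline{\F}[\ell(ws)]$. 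Using this and adjunction between $(q^s_w)^*$ and $(q^s_w)_*$, one obtains a canonical direct sum decomposition
\[
\Hom^\bullet_{\Db_{(\BKM)}(\Flag^s_{\geq w},\F)}(\cE^s_w, \cG) \;\simto\; \Hom^\bullet_{\Db_{(\BKM)}(\Flag_{\geq w},\F)}(\cE_w, \cF) \;\oplus\; \Hom^{\bullet-1}_{\Db_{(\BKM)}(\Flag_{\geq ws},\F)}(\cE_{ws}, \cF),
\]
in which the homological shift in the second summand accounts exactly for the offset $d'(\pi)=d(\pi)-1$ used in the construction of $\varphi^{\cG}_\pi$ when $e(\pi)=ws$.

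Next I would check that the images of the morphisms $\varphi^\cG_\pi$ correspond, under this decomposition, to the expected basis elements from the given section of $\cF$. The fibre $(e')^{-1}(w)$ partitions as $e^{-1}(w) \sqcup e^{-1}(ws)$, matching the two summands above. For $\pi$ with $e(\pi)=w$, the morphism $\varphi^\cG_\pi$ factors through $(q^s)_*(\varphi^\cF_\pi)$ via the splitting $\cE^s_w \hookrightarrow (q^s)_* \cE_w$ from~\eqref{eqn:qs-parity-2}, and by Remark~\ref{rmk:section-!-push} its restriction to $\Flag_w$ recovers $\varphi^\cF_\pi$ while its restriction to $\Flag_{ws}$ vanishes (because the splitting of~\eqref{eqn:qs-parity-2} becomes the identity after applying $(i_w)^!$ and trivially vanishes after $(i_{ws})^!$). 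For $\pi$ with $e(\pi)=ws$, the morphism $\varphi^\cG_\pi$ is obtained via adjunction together with the projection $(q^s)^* \cE^s_w[1] \twoheadrightarrow \cE_{ws}$ of~\eqref{eqn:qs-parity-1}; the symmetric local calculation shows that it corresponds to $\varphi^\cF_\pi$ in the second summand and to $0$ in the first. Once these correspondences are established, the hypothesis that $(\Pi, e, d, (\varphi^\cF_\pi))$ is a section of the $!$-flag of $\cF$ supplies a basis of each summand, and concatenation yields the required basis for the direct sum.

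The main obstacle is the verification in the second paragraph: one must show that the two chosen splittings~\eqref{eqn:qs-parity-2} and~\eqref{eqn:qs-parity-1}, together with the adjunction units/counits, interact correctly with the local base change isomorphism. Concretely, this amounts to identifying, on each of the two strata $\Flag_w$ and $\Flag_{ws}$, the composition defining $\varphi^\cG_\pi$ after applying $j^!$ and comparing it to $\varphi^\cF_\pi$. The subtle point is that we have not claimed $(q^s)^* \cE^s_w[1] \cong \cE_{ws}$ strictly, only that $\cE_{ws}$ is a summand, so one must argue at the level of stalks and costalks on $\Flag_w$ and $\Flag_{ws}$ that the chosen splittings are the identity on the relevant locally constant restriction (which follows because both source and target are then free of rank one in the appropriate degree). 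Once this local identification is carried out, the decomposition argument immediately finishes the proof, in direct analogy with the role played by Lemma~\ref{lem:translation-section-to} inside the proof of Proposition~\ref{prop:translation-section-to}.
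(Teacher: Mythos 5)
Your argument is correct and is essentially the paper's proof in different packaging: the paper also reduces to the two strata $\Flag_w \sqcup \Flag_{ws}$ lying over $\Flag^s_w$ (via the recollement triangle $(i_w)_*(i_w)^!\cF \to (i'_w)_*(i'_w)^!\cF \to (i_{ws})_*(i_{ws})^!\cF$, which it shows becomes split after applying $(q^s)_*$), and it isolates exactly your deferred local verification — that the chosen splittings act by invertible scalars on the relevant rank-one (co)stalks — as Lemma~\ref{lem:section-!-push}. The one step you should make explicit is why your long exact sequence of $\Hom$-spaces actually yields a direct sum decomposition: either observe that the connecting maps vanish for degree reasons because $\cF$ is $!$-parity, or argue as the paper does that $(q^s)_*$ annihilates every morphism $\Cos_{ws} \to \Cos_w[k]$.
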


Before proving Proposition~\ref{prop:section-!-push} in general we consider a special case.

\begin{lem}
\label{lem:section-!-push}
Let $w \in W$, and
assume that $\cF$ is isomorphic to a direct sum of shifts of $\Cos_w$. Then the quadruple $(\Pi',e',d',(\varphi^{\cG}_\pi)_{\pi \in \Pi'})$ constructed above is a section of the $!$-flag of $\cG$.
\end{lem}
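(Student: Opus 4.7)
My plan is to follow the strategy of Lemma~\ref{lem:translation-section-to} from Part~\ref{pt:general-conj}, transposed to the parity/geometric setting. The first step is a reduction to the case $\cF = \Cos_w$. To carry this out, I would decompose $\cF \cong \bigoplus_i \Cos_w[n_i]$ and note that any section of the $!$-flag of $\cF$ can be chosen so that each $\varphi_\pi^\cF$ lands in one of the summands. This is possible because $\Hom^\bullet(\cE_w, \Cos_w[n_i])$ is one-dimensional and concentrated in a single degree. The construction of $\varphi_\pi^\cG$ from $\varphi_\pi^\cF$ is manifestly compatible with direct sums, since by Remark~\ref{rmk:section-!-push} each $\varphi_\pi^\cG$ factors through a shift of $(q^s)_*(\varphi_\pi^\cF)$, and $(q^s)_*$ is additive.

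Once reduced to $\cF = \Cos_w$, observe that $i_v^! \Cos_w = 0$ unless $v=w$, so $\Pi = e^{-1}(w)$ is a singleton $\{\pi\}$, $d(\pi)=0$, and $\varphi_\pi^\cF \colon \cE_w \to \Cos_w$ is (up to nonzero scalar) the canonical morphism determined by $i_w^* \cE_w \cong \underline{\F}_{\Flag_w}[\ell(w)]$. I would then split into two cases according to the position of $w$ relative to $W^s$. In Case~1, where $w \in W^s$ (so $w<ws$), we have $e'(\pi) = w$ and $d'(\pi)=0$. Proper base change identifies $(q^s)_* \Cos_w$ restricted to $\Flag^s_{\geq w}$ with $\Cos^s_w$, because $q^s|_{\Flag_w} \colon \Flag_w \to \Flag^s_w$ is an isomorphism while no other stratum of $\overline{\Flag_w}$ maps into $\Flag^s_{\geq w}$. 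Under this identification, $\varphi_\pi^\cG$ restricts on $\Flag^s_w$ to (a nonzero multiple of) the identity, hence generates the one-dimensional space $\Hom^\bullet_{\Db_{(\BKM)}(\Flag^s_{\geq w},\F)}(\cE^s_w, \Cos^s_w)$.

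In Case~2, where $w \notin W^s$ (so $w>ws$), we have $e'(\pi) = ws$ and $d'(\pi) = -1$. Here $q^s$ is no longer an isomorphism on the relevant stratum: the preimage of $\Flag^s_{ws}$ inside $\overline{\Flag_w}$ is $\Flag_w \sqcup \Flag_{ws}$, with $q^s|_{\Flag_{ws}}$ an isomorphism and $q^s|_{\Flag_w}$ an $\mathbb{A}^1$-bundle projection. Proper base change then gives an explicit expression for $(q^s)_* \Cos_w$ on $\Flag^s_{\geq ws}$ whose $!$-fiber at $\Flag^s_{ws}$ is one-dimensional in cohomological degree $\ell(ws)-1$. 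The morphism $\varphi_\pi^\cG$, by definition, is the image under the adjunction $((q^s)^*, (q^s)_*)$ of the composition $(q^s)^* \cE^s_{ws} [1] \to \cE_w \xrightarrow{\varphi_\pi^\cF} \Cos_w$; I would verify that this composition is nonzero on the open cell $\Flag_w$ (the first map being surjective there by the splitting~\eqref{eqn:qs-parity-1} and the second restricting to a nonzero scalar there), which by adjunction forces $\varphi_\pi^\cG$ to be nonzero modulo strata strictly above $\Flag^s_{ws}$. A dimension count then shows it generates the relevant Hom-space.

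The main obstacle will be Case~2: one has to track the interaction between the adjunction unit $\id \to (q^s)_*(q^s)^*$ and the non-canonical splitting~\eqref{eqn:qs-parity-1} to confirm that the constructed morphism does not vanish on $\Flag^s_{\geq ws}$. This parallels the trickiest part of the proof of Lemma~\ref{lem:translation-section-to}, and as there the argument proceeds by reducing the question to the single-cell calculation on $\Flag_w$ via adjunction and then invoking the surjectivity properties of the morphisms entering the definition of $\varphi_\pi^\cG$.
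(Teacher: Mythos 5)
Your proposal is correct and follows essentially the same route as the paper's proof: reduce to $\cF=\Cos_w$ with a single morphism $\varphi^\cF_w=f_w$ (using that the given section identifies $\cF$ with a direct sum of shifts of $\Cos_w$ compatibly with the $\varphi_\pi$'s), observe that $\cG$ is a shift of a costandard object on $\Flag^s$ so that it suffices to show $\varphi^\cG_w\neq 0$, and then check nonvanishing by restricting to the open cell when $w<ws$ and by the adjunction/splitting argument when $w>ws$. The extra base-change computations you include are not needed beyond identifying $\cG$, but they do no harm.
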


\begin{proof}
Let us fix a non-zero morphism $f_w \colon \cE_w \to \Cos_w$ (which is unique up to scalar); then the morphism
\[
\bigoplus_{\pi \in \Pi} \varphi_\pi [-d(\pi)] \colon \bigoplus_{\pi \in \Pi} \cE_w[-d(\pi)] \to \cF
\]
factors through $\bigoplus_{\pi \in \Pi} f_w [-d(\pi)]$, and induces an isomorphism
\[
\bigoplus_{\pi \in \Pi} \Cos_w[-d(\pi)] \simto \cF;
\]
therefore we can assume that $\cF=\Cos_w$, $\Pi=\{w\}$, $e(w)=w$, $d(w)=0$ and $\varphi^{\cF}_w = f_w$. In this setting $\cG$ is isomorphic to $\Cos^s_w$ or $\Cos^s_w[1]$, so that to conclude it suffices to prove that $\varphi^\cG_w \neq 0$.

If $w>ws$, then the fact that $\varphi^\cG_w \neq 0$ follows from the facts that the composition of $\varphi^\cF_w [-1]$ with the projection $(q^s)^* \cE_{e'(\pi)}^s \to \cE_{e(\pi)} [-1]$ is nonzero, and that $\varphi^\cG_w$ is obtained from the latter morphism by adjunction. If $w<ws$, it is easy to see that $(i_w^s)^*(\varphi^{\cG}_w)$ is an isomorphism, hence is non-zero; it follows that $\varphi^\cG_w \neq 0$ also in this case.
\end{proof}

\begin{proof}[Proof of Proposition~{\rm \ref{prop:section-!-push}}]
We have to prove that for any $w \in W^s$ the images of the morphisms $\varphi^\cG_\pi$ with $e'(\pi)=w$ form a basis of $\Hom^\bullet_{\Db_{(\BKM)}(\Flag^s_{\geq w},\F)}(\cE_w^s, \cG)$. For this we fix $w$ and choose some closed subvariety $\mathscr{Y} \subset \Flag$ such that
\[
\mathscr{Y} \supset \Flag_w, \quad \mathscr{Y} \not\supset \Flag_{ws},
\]
and such that both
\[
\mathscr{Y}':=\mathscr{Y} \smallsetminus \Flag_w \quad \text{and} \quad \mathscr{Y}'':=\mathscr{Y} \cup \Flag_{ws}
\]
are closed subvarieties of $\Flag$. We denote by
\[
i \colon \mathscr{Y} \to \Flag, \quad i' \colon \mathscr{Y}' \to \Flag, \quad i'' \colon \mathscr{Y}'' \to \Flag
\]
the embeddings, so that we have natural morphisms
\[
(i')_! (i')^! \cF \to i_! i^! \cF \to (i'')_! (i'')^! \cF \to \cF
\]
induced by adjunction. Note that all of these objects are $!$-parity.

One can easily check, using the standard triangle
\[
i_! i^! \cF \to \cF \to j_* j^* \cF \triright
\]
where $j \colon \Flag \smallsetminus \mathscr{Y} \to \Flag$ is the (open) embedding, resp.~the similar triangle for $i''$, that every morphism $\varphi^\cF_\pi$ where $e(\pi)=w$, resp.~$e(\pi)=ws$, factors (uniquely) through a morphism
\[
\varphi_\pi^{i_! i^! \cF} \colon \cE_w \to i_! i^! \cF[d(\pi)], \quad \text{resp.} \quad \varphi_\pi^{(i'')_! (i'')^! \cF} \colon \cE_{ws} \to (i'')_! (i'')^! \cF[d(\pi)].
\]
Then by construction the corresponding morphism $\varphi^\cG_\pi$ factors through a morphism
\begin{multline*}
\varphi_\pi^{(q^s)_* i_! i^! \cF} \colon \cE^s_w \to (q^s)_* i_! i^! \cF [d'(\pi)], \\
\text{resp.} \quad \varphi_\pi^{(q^s)_* (i'')_! (i'')^! \cF} \colon \cE^s_w \to (q^s)_* (i'')_! (i'')^! \cF[d'(\pi)].
\end{multline*}
It is clear also that the morphisms obtained in this way coincide with the morphisms obtained by the above procedure from the section of the $!$-flag of $i_! i^! \cF$, resp.~$(i'')_! (i'')^! \cF$, obtained (in the obvious way) from $(\Pi,e,d,(\varphi_\pi^\cF)_{\pi \in \Pi})$ by restriction to $\mathscr{Y}$, resp.~to $\mathscr{Y}''$.

Now if $e(\pi)=w$, resp.~$e(\pi)=ws$, we can consider the composition
\[
\varphi_\pi^{(i_w)_* (i_w)^! \cF} \colon \cE_w \xrightarrow{\varphi_\pi^{i_! i^! \cF}} i_! i^! \cF[d(\pi)] \to (i_w)_* (i_w)^! \cF[d(\pi)]
\]
where the second morphism is induced by the adjunction $\bigl( (i_w)^*, (i_w)_* \bigr)$, resp.~the composition
\[
\varphi_\pi^{(i_{ws})_* (i_{ws})^! \cF} \colon \cE_{ws} \xrightarrow{\varphi_\pi^{(i'')_! (i'')^! \cF}} (i'')_! (i'')^! \cF[d(\pi)] \to (i_{ws})_* (i_{ws})^! \cF[d(\pi)]
\]
where the second morphism is induced by the adjunction $\bigl( (i_{ws})^*, (i_{ws})_* \bigr)$, and the corresponding morphisms $\varphi_\pi^{(q^s)_* (i_w)_* (i_w)^! \cF}$, resp.~$\varphi_\pi^{(q_s)_* (i_{ws})_* (i_{ws})^! \cF}$, obtained by the procedure above, which can also be described as the compositions
\[
\cE^s_w \xrightarrow{\varphi_\pi^{(q^s)_* i_! i^! \cF}} (q^s)_* i_! i^! \cF [d'(\pi)] \to (q^s)_* (i_w)_* (i_w)^! \cF [d'(\pi)],
\]
resp.
\[
\cE^s_w \xrightarrow{\varphi_\pi^{(q^s)_* (i'')_! (i'')^! \cF}} (q^s)_* (i'')_! (i'')^! \cF[d'(\pi)] \to (q^s)_* (i_{ws})_* (i_{ws})^! \cF [d'(\pi)].
\]

Finally, let us denote by $i'_{w} \colon \Flag_w \sqcup \Flag_{ws} \to \Flag$ the inclusion. Then if $e(\pi)=w$, resp.~$e(\pi)=ws$, we can consider the composition
\[
\varphi_\pi^{(i'_w)_* (i'_w)^! \cF} \colon \cE_w \xrightarrow{\varphi_\pi^{(i_w)_* (i_w)^! \cF}} (i_w)_* (i_w)^! \cF[d(\pi)] \to (i'_w)_* (i'_w)^! \cF[d(\pi)],
\]
resp.~the composition
\[
\varphi_\pi^{(i'_w)_* (i'_w)^! \cF} \colon \cE_{ws} \xrightarrow{\varphi_\pi^{(i'')_! (i'')^! \cF}} (i'')_! (i'')^! \cF[d(\pi)] \to (i'_w)_* (i'_w)^! \cF[d(\pi)],
\]
where in both cases the second morphism is again induced by adjunction, and the corresponding morphisms
\[
\varphi_\pi^{(q^s)_* (i'_w)_* (i'_w)^! \cF} \colon \cE^s_w \to (q^s)_* (i'_w)_* (i'_w)^! \cF[d'(\pi)].
\]

Now, consider the natural distinguished triangle
\[
(i_w)_* (i_w)^! \cF \to (i'_w)_* (i'_w)^! \cF \to (i_{ws})_* (i_{ws})^! \cF \triright
\]
and its image 
\[
(q^s)_* (i_w)_* (i_w)^! \cF \to (q^s)_* (i'_w)_* (i'_w)^! \cF \to (q^s)_* (i_{ws})_* (i_{ws})^! \cF \triright
\]
under the functor $(q^s)_*$. Since the image under $(q^s)_*$ of any morphism $\Cos_{ws} \to \Cos_w [k]$ is zero, this triangle is split.
Hence, taking the image under the functor $\Hom_{\Db_{(\BKM)}(\Flag^s_{\geq w}, \F)}(\cE_w^s, -)$ we obtain an exact sequence of $\F$-vector spaces
\begin{multline*}
0 \to \Hom^\bullet_{\Db_{(\BKM)}(\Flag^s_{\geq w}, \F)}(\cE_w^s, (q^s)_* (i_w)_* (i_w)^! \cF) \\
\to \Hom^\bullet_{\Db_{(\BKM)}(\Flag^s_{\geq w}, \F)}(\cE_w^s, (q^s)_* (i'_w)_* (i'_w)^! \cF) \\
\to \Hom^\bullet_{\Db_{(\BKM)}(\Flag^s_{\geq w}, \F)}(\cE_w^s, (q^s)_* (i_{ws})_* (i_{ws})^! \cF) \to 0.
\end{multline*}
It follows from Lemma~\ref{lem:section-!-push} that the images of the morphisms $\varphi_\pi^{(q^s)_* (i_w)_* (i_w)^! \cF}$, resp.~$\varphi_\pi^{(q^s)_* (i_{ws})_* (i_{ws})^! \cF}$, with $e(\pi)=w$, resp.~$e(\pi)=ws$, form a basis of the first, respectively third, term in this exact sequence. Hence the morphisms $\varphi_\pi^{(q^s)_* (i'_w)_* (i'_w)^! \cF}$ with $e(\pi) \in \{w,ws\}$ form a basis of the middle term. Now we remark that the natural morphisms
\[
(i'')_! (i'')^! \cF \to \cF \quad \text{and} \quad (i'')_! (i'')^! \cF \to (i_w')_* (i_w')^! \cF
\]
induce isomorphisms
\begin{multline*}
\Hom^\bullet_{\Db_{(\BKM)}(\Flag^s_{\geq w}, \F)}(\cE_w^s, (q^s)_* \cF) \simto \Hom^\bullet_{\Db_{(\BKM)}(\Flag^s_{\geq w}, \F)}(\cE_w^s, (q^s)_* (i'')_! (i'')^! \cF) \\
\simto \Hom^\bullet_{\Db_{(\BKM)}(\Flag^s_{\geq w}, \F)}(\cE_w^s, (q^s)_* (i'_w)_* (i'_w)^! \cF),
\end{multline*}
and we deduce the desired claim.
\end{proof}

\subsection{Sections of the $!$-flag and pullback from $\Flag^s$}
\label{ss:section-!-flag-pull}

As in~\S\ref{ss:section-!-flag-push} we fix a simple reflection $s \in S$. Let $\cF \in \Db_{(\BKM)}(\Flag^s,\F)$ be an object which is $!$-parity, and let $(\Pi,e,d,(\varphi_\pi^\cF)_{\pi \in \Pi})$ be a section of the $!$-flag of $\cF$. It is 
clear
that the object $\cH:=(q^s)^! \cF$ is $!$-parity, and the goal of this subsection is to explain how one can define a section of the $!$-flag for this object out of $(\Pi,e,d,(\varphi_\pi^\cF)_{\pi \in \Pi})$.

We set $\Pi':=\Pi \times \{0,1\}$. We define a map $e' \colon \Pi' \to W$ as follows. Given $\pi \in \Pi$, the elements $e'(\pi,0)$ and $e'(\pi,1)$ are characterized by the following properties:
\begin{itemize}
\item
the images in $W/W_s$ of both $e'(\pi,0)$ and $e'(\pi,1)$ are equal to $e(\pi)$;
\item
$e'(\pi,1) = e'(\pi,0)s$;
\item
$e'(\pi,0) < e'(\pi,1)$ in the Bruhat order.
\end{itemize}
Then we define a map $d' \colon \Pi' \to \Z$ and morphisms $\varphi^{\cH}_{(\pi,0)}$ and $\varphi^{\cH}_{(\pi,1)}$ for $\pi \in \Pi$ as follows. First, we set $d'(\pi,1)=d(\pi)-1$, and $\varphi^{\cH}_{(\pi,1)}$ is defined as the composition
\[
\cE_{e'(\pi,1)} \to (q^s)^! \cE^s_{e(\pi)}[-1] \xrightarrow{(q^s)^! (\varphi^{\cF}_\pi) [-1]} (q^s)^! \cF[d(\pi)-1] = \cH[d'(\pi,1)],
\]
where the first morphism is the first map in~\eqref{eqn:qs-parity-1}.
On the other hand, we set $d'(\pi,0)=d(\pi)$ and we define $\varphi^{\cH}_{(\pi,0)}$ as the composition
\begin{multline*}
\cE_{e'(\pi,0)} \xrightarrow{\adj} (q^s)^! (q^s)_! \cE_{e'(\pi,0)} = (q^s)^! (q^s)_* \cE_{e'(\pi,0)} \to \\
(q^s)^! \cE^s_{e(\pi)} \xrightarrow{(q^s)^! (\varphi^{\cF}_\pi)} (q^s)^! \cF[d(\pi)] = \cH[d'(\pi,0)].
\end{multline*}
Here the 
the third morphism is induced by the second map in~\eqref{eqn:qs-parity-2}.

\begin{rmk}
\label{rmk:section-!-pull}
As in Remark~\ref{rmk:section-!-push},
an important point for us is that both $\varphi^{\cH}_{(\pi,0)}$ and $\varphi^{\cH}_{(\pi,1)}$ factor through a shift of the morphism $(q^s)^! (\varphi^\cF_\pi) \colon (q^s)^! \cE^s_{e(\pi)} \to \cH[d(\pi)]$.
\end{rmk}

\begin{prop}
\label{prop:section-!-pull}
The quadruple $(\Pi',e',d',(\varphi^{\cH}_\pi)_{\pi \in \Pi'})$ constructed above is a section of the $!$-flag of $\cH=(q^s)^! \cF$.
\end{prop}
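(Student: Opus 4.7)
The plan is to mirror the strategy used in the proof of Proposition~\ref{prop:section-!-push}: first establish the statement in the special case where $\cF$ is a direct sum of shifts of a single costandard object $\Cos^s_w$ (with $w \in W^s$), and then reduce the general case to this one by means of a filtration argument on $\Flag^s$.

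For the special case, I would assume $\cF = \Cos^s_w$, $\Pi = \{w\}$, $e(w)=w$, $d(w)=0$, and $\varphi^\cF_w \colon \cE^s_w \to \Cos^s_w$ is a fixed nonzero morphism. Since the restriction of $q^s$ to each of $\Flag_w$ and $\Flag_{ws}$ is an isomorphism onto $\Flag^s_w$, we have $(i_{e'(w,\epsilon)})^! (q^s)^! \Cos^s_w \cong (i^s_w)^! \Cos^s_w$ up to a shift by $-1$ in the case $\epsilon = 1$, whence $(q^s)^! \Cos^s_w$ has $!$-stalks at $\Flag_{e'(w,0)}$ and $\Flag_{e'(w,1)}$ which are each $1$-dimensional (in appropriate degrees). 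So to verify the proposition in this case it suffices to check that the morphisms $\varphi^\cH_{(w,0)}$ and $\varphi^\cH_{(w,1)}$ are nonzero. For $\varphi^\cH_{(w,1)}$, nonvanishing follows from the fact that the first morphism in~\eqref{eqn:qs-parity-1} and $(q^s)^!(\varphi^\cF_w)$ are both nonzero on the $!$-stalk at $\Flag_{e'(w,1)}$. For $\varphi^\cH_{(w,0)}$, by adjunction $\varphi^\cH_{(w,0)}$ is the image under $\bigl( (q^s)_!, (q^s)^! \bigr) = \bigl( (q^s)_*, (q^s)^! \bigr)$ of the composition $(q^s)_* \cE_{e'(w,0)} \to \cE^s_{e(w)} \xrightarrow{\varphi^\cF_w} \cF[d(w)]$, and this composition is nonzero (the first map being surjective on the $*$-stalk at $\Flag^s_{e(w)}$).

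For the general case, I would choose a closed ind-subvariety $\mathscr{Y}^s \subset \Flag^s$ such that $\Flag^s_w \subset \mathscr{Y}^s$ is open, $\mathscr{Y}^s \not\supset \Flag^s_y$ for any $y > w$ in $W^s$, and such that $(\mathscr{Y}^s)' := \mathscr{Y}^s \smallsetminus \Flag^s_w$ is closed. Writing $k \colon \mathscr{Y}^s \hookrightarrow \Flag^s$ and $k' \colon (\mathscr{Y}^s)' \hookrightarrow \Flag^s$ for the inclusions, by restriction we obtain sections of the $!$-flag of $k_!k^!\cF$ and $(k')_!(k')^!\cF$ in the obvious way. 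A standard adjunction argument (analogous to the one used in the proof of Proposition~\ref{prop:section-!-push}) shows that for $\pi$ with $e(\pi) = w$, the morphism $\varphi^\cH_{(\pi,\epsilon)}$ factors canonically through a morphism $\varphi^{(q^s)^!k_!k^!\cF}_{(\pi,\epsilon)}$, which is moreover obtained from the section of the $!$-flag of $k_!k^!\cF$ by applying the same procedure.

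To conclude, I would consider the distinguished triangle
\[
(k')_!(k')^!\cF \to k_!k^!\cF \to (i^s_w)_*(i^s_w)^!\cF \triright
\]
and apply the exact functor $(q^s)^!$ and then the functor $\Hom^\bullet_{\Db_{(\BKM)}(\Flag_{\geq e'(w,0)}, \F)}(\cE_{e'(w,0)}, -)$ (and similarly for $\cE_{e'(w,1)}$). A vanishing argument (using that $(i^s_w)_*(i^s_w)^!\cF$ is a direct sum of shifts of $\Cos^s_w$ together with the special case) together with the fact that the images of the morphisms $\varphi^{(q^s)^!(i^s_w)_*(i^s_w)^!\cF}_{(\pi,\epsilon)}$ for $e(\pi)=w$ and $\epsilon \in \{0,1\}$ form a basis of the associated $\Hom$-space, reduces the claim to the same property for $k_!k^!\cF$. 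Iterating on $\mathscr{Y}^s$ (or equivalently using induction on the support of $\cF$), the proposition follows. The main technical obstacle is ensuring that the factorizations through $(q^s)^!k_!k^!\cF$ really match the morphisms obtained by applying the procedure of~\S\ref{ss:section-!-flag-pull} to the restricted sections — this requires careful tracking of the adjunctions involved, exactly as in Remark~\ref{rmk:section-!-pull}.
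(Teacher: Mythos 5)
Your proposal is correct and follows essentially the same route as the paper: the special case $\cF \cong \bigoplus \Cos^s_w[k]$ is exactly Lemma~\ref{lem:section-!-pull} (proved via the triangle $\Cos_w[-2] \to \cH \to \Cos_{ws}[-1]$ and the nonvanishing of the two restrictions), and the general case is the same filtration/adjunction argument as in Proposition~\ref{prop:section-!-push}, which the paper explicitly leaves to the reader. One small slip: the restriction of $q^s$ to $\Flag_{ws}$ is an $\mathbb{A}^1$-fibration onto $\Flag^s_w$, not an isomorphism (only $q^s|_{\Flag_w}$ is an isomorphism), but your conclusion that the relevant $!$-stalks are one-dimensional in the appropriate degrees is unaffected.
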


Before proving Proposition~\ref{prop:section-!-pull} in general we consider a special case.

\begin{lem}
\label{lem:section-!-pull}
Let $w \in W^s$, and
assume that $\cF$ is isomorphic to a direct sum of shifts of $\Cos^s_w$. Then the quadruple $(\Pi',e',d',(\varphi^{\cH}_\pi)_{\pi \in \Pi'})$ constructed above is a section of the $!$-flag of $\cH$.
\end{lem}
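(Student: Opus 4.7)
The plan is to mimic the strategy used in Lemma~\ref{lem:section-!-push}, but working with $(q^s)^!$ instead of $(q^s)_*$, and to reduce to the case of a single costandard object, where everything can be checked by hand using base change.

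First I would reduce to the case $\cF=\Cos^s_w$, $\Pi=\{w\}$, $e(w)=w$, $d(w)=0$, with $\varphi^\cF_w$ a non-zero morphism $\cE^s_w \to \Cos^s_w$. Indeed, because the only nonzero morphisms from $\cE^s_w$ to a shift of $\Cos^s_w$ are (up to scalar) multiples of the canonical one, the data of a section of the $!$-flag in this special case forces $\cF$ to decompose as $\bigoplus_\pi \Cos^s_w[-d(\pi)]$, and the construction of $(\Pi',e',d',(\varphi^\cH_{(\pi,j)}))$ is compatible with this decomposition.

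Next I would compute $\cH=(q^s)^! \Cos^s_w$ stratum by stratum. Since $\Flag_w \sqcup \Flag_{ws} = (q^s)^{-1}(\Flag^s_w)$, with $q^s$ restricting to an isomorphism on $\Flag_w$ and to a trivial $\mathbb{A}^1$-bundle on $\Flag_{ws}$, base change immediately yields
\[
i_v^! \cH \cong \begin{cases}
\underline{\F}_{\Flag_w}[\ell(w)] & \text{if $v=w$,} \\
\underline{\F}_{\Flag_{ws}}[\ell(ws)+1] & \text{if $v=ws$,} \\
0 & \text{otherwise.}
\end{cases}
\]
From this I conclude that $\Hom^\bullet_{\Db_{(\BKM)}(\Flag_{\geq w},\F)}(\cE_w,\cH)$ is one-dimensional, concentrated in degree $d'(w,0)=0$, and $\Hom^\bullet_{\Db_{(\BKM)}(\Flag_{\geq ws},\F)}(\cE_{ws},\cH)$ is one-dimensional, concentrated in degree $d'(w,1)=-1$. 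So the proof reduces to checking that $\varphi^\cH_{(w,0)}$ and $\varphi^\cH_{(w,1)}$ are nonzero.

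For $\varphi^\cH_{(w,1)}$, this is straightforward: the morphism $\cE_{ws} \to (q^s)^! \cE^s_w[-1]$ from~\eqref{eqn:qs-parity-1} restricts over $\Flag_{ws}$ to a (nonzero scalar multiple of the) identity, and applying $(q^s)^!$ to the nonzero morphism $\varphi^\cF_w$ remains nonzero on the preimage stratum $\Flag_{ws}$, so the composition is nonzero. For $\varphi^\cH_{(w,0)}$, I would use that by adjunction and the description of $\cE_w^s$ as a direct summand of $(q^s)_* \cE_w$ (see~\eqref{eqn:qs-parity-2}), the morphism $\varphi^\cH_{(w,0)}$ corresponds under the adjunction $((q^s)_!,(q^s)^!) = ((q^s)_*, (q^s)^!)$ to the composition $(q^s)_* \cE_w \twoheadrightarrow \cE_w^s \xrightarrow{\varphi^\cF_w} \Cos^s_w$, which is nonzero since $\varphi^\cF_w$ is nonzero and the projection is split surjective. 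By the same adjunction $\varphi^\cH_{(w,0)}$ itself is therefore nonzero.

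The only point I expect to require a little care is checking that the restriction of the first morphism in~\eqref{eqn:qs-parity-1} to the stratum $\Flag_{ws}$ is actually a (nonzero multiple of the) identity after accounting for the shift; this follows from the fact that $\cE_{ws}$ is the canonical indecomposable parity extension of $\underline{\F}_{\Flag_{ws}}[\ell(ws)]$ and that the morphism~\eqref{eqn:qs-parity-1} is chosen to induce an isomorphism on $i_{ws}^!$.
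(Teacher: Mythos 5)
Your proposal is correct and follows essentially the same route as the paper's proof: reduce to a single copy of $\Cos^s_w$, identify the two relevant $\Hom$-spaces as one-dimensional in the predicted degrees (the paper packages your stratum-by-stratum costalk computation into a distinguished triangle relating $\cH$ to $\Cos_w$ and $\Cos_{ws}$, which is the same information), and then check nonvanishing of $\varphi^{\cH}_{(w,0)}$ and $\varphi^{\cH}_{(w,1)}$. Your adjunction argument for $\varphi^{\cH}_{(w,0)}$ and restriction argument for $\varphi^{\cH}_{(w,1)}$ just spell out what the paper dismisses as "clear from construction."
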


\begin{proof}
As in the proof of Lemma~\ref{lem:section-!-push}, we can assume that $\cF=\Cos^s_w$, $\Pi=\{w\}$, $e(w)=w$, $d(w)=0$. In this case, since $(q^s)^{-1}(\Flag^s_w)$ is the disjoint union of the closed subset $\Flag_w$ and the open subset $\Flag_{ws}$, adjunction provides a canonical distinguished triangle
\begin{equation*}
\Cos_w[-2] \to \cH \to \Cos_{ws}[-1] \triright.
\end{equation*}
Moreover, the morphisms in this triangle induce isomorphisms
\begin{align*}
\Hom^\bullet_{\Db_{(\BKM)}(\Flag_{\geq w},\F)}(\cE_w, \cH) &\cong \Hom^{\bullet-2}_{\Db_{(\BKM)}(\Flag_{\geq w},\F)}(\cE_w, \Cos_w), \\
\Hom^\bullet_{\Db_{(\BKM)}(\Flag_{\geq ws},\F)}(\cE_{ws}, \cH) &\cong \Hom^{\bullet-1}_{\Db_{(\BKM)}(\Flag_{\geq ws},\F)}(\cE_{ws}, \Cos_{ws}),
\end{align*}
and all of these spaces are $1$-dimensional.
Hence to conclude it suffices to prove that the restriction of $\varphi^{\cH}_{(\pi,0)}$ to $\Flag_w$ and the restriction of $\varphi^{\cH}_{(\pi,1)}$ to $\Flag_{ws}$ are non-zero; in both cases this is clear from construction.
\end{proof}

\begin{proof}[Proof of Proposition~{\rm \ref{prop:section-!-pull}}]
The proof is very similar to, and in fact simpler than, that of Proposition~\ref{prop:section-!-push}; details are therefore left to the reader.
\end{proof}

\subsection{Morphisms between ``Bott--Samelson type'' parity complexes}
\label{ss:morphisms-BS-parity}

We set
\[
\Upsilon_s := (q^s)^! (q^s)_*[-1] \colon \Db_{(\BKM)}(\Flag,\F) \to \Db_{(\BKM)}(\Flag,\F).
\]
%
By Lemma~\ref{lem:qJ-parity}\eqref{it:qJ1} and~\eqref{eqn:convolution-constant}, we have a canonical isomorphism
\begin{equation}
\label{eqn:Upsilon-convolution}
\Upsilon_s \cong (-) \star^{\BKM} \cE_s.
\end{equation}



Recall the parity complexes $\cE_{\uw}$ (where $\uw$ is an expression) defined in~\S\ref{ss:parity-flag}.

\begin{prop}
\label{prop:morphisms-BS-parity}
Let $\ux$ and $\uv$ be expressions, and assume that $\ux$ is a reduced expression for some element $x \in W$.
\begin{enumerate}
\item
\label{it:morphisms-BS-parity-1}
Assume that $x \in W^s$, so that $\ux s$ is a reduced expression for $xs \in W$. Let $(f_i)_{i \in \mathfrak{I}}$ be a family of homogeneous elements in $\Hom^\bullet_{\Db_{(\BKM)}(\Flag,\F)}(\cE_{\ux}, \cE_{\uv})$ whose images span the vector space $\Hom^\bullet_{\Db_{(\BKM)}(\Flag_{\geq x},\F)}(\cE_{\ux}, \cE_{\uv})$, and let $(g_j)_{j \in \mathfrak{J}}$ be a family of homogeneous elements in $\Hom^\bullet_{\Db_{(\BKM)}(\Flag,\F)}(\cE_{\ux s}, \cE_{\uv})$ whose images span the vector space $\Hom^\bullet_{\Db_{(\BKM)}(\Flag_{\geq xs},\F)}(\cE_{\ux s}, \cE_{\uv})$. Then there exist integers $n_i$ and morphisms $f'_i \colon \cE_{\ux} \to \cE_{\ux s}[n_i]$ (for $i \in \mathfrak{I}$) and integers $m_j$ and morphisms $g'_j \colon \cE_{\ux} \to \cE_{\ux s s}[m_j]$ (for $j \in \mathfrak{J}$) such that the images of the compositions
\[
\cE_{\ux} \xrightarrow{f'_i} \cE_{\ux s}[n_i] \xrightarrow{f_i \star^\BKM \cE_s [n_i]} \cE_{\uv s} [n_i + \deg(f_i)]
\]
together with the images of the compositions
\[
\cE_{\ux} \xrightarrow{g'_j} \cE_{\ux s s}[m_j] \xrightarrow{g_j \star^\BKM \cE_s[m_j]} \cE_{\uv s} [m_j + \deg(g_j)]
\]
span the vector space $\Hom^\bullet_{\Db_{(\BKM)}(\Flag_{\geq x},\F)}(\cE_{\ux}, \cE_{\uv s})$.
\item
\label{it:morphisms-BS-parity-2}
Assume that $\ux=\uy s$ for some expression $\uy$ (which is automatically a reduced expression for $xs$). Let $(f_i)_{i \in \mathfrak{I}}$ be a family of homogeneous elements in $\Hom^\bullet_{\Db_{(\BKM)}(\Flag,\F)}(\cE_{\ux}, \cE_{\uv})$ whose images span $\Hom^\bullet_{\Db_{(\BKM)}(\Flag_{\geq x},\F)}(\cE_{\ux}, \cE_{\uv})$, and let $(g_j)_{j \in \mathfrak{J}}$ be a family of homogeneous elements in $\Hom^\bullet_{\Db_{(\BKM)}(\Flag,\F)}(\cE_{\uy}, \cE_{\uv})$ whose images span $\Hom^\bullet_{\Db_{(\BKM)}(\Flag_{\geq xs},\F)}(\cE_{\uy}, \cE_{\uv})$. Then there exist integers $n_i$ and morphisms $f'_i \colon \cE_{\ux} \to \cE_{\ux s}[n_i]$ (for $i \in \mathfrak{I}$) and integers $m_j$ and morphisms $g'_j \colon \cE_{\ux} \to \cE_{\ux}[m_j]$ (for $j \in \mathfrak{J}$) such that the images of the compositions
\[
\cE_{\ux} \xrightarrow{f'_i} \cE_{\ux s}[n_i] \xrightarrow{f_i \star^\BKM \cE_s[n_i]} \cE_{\uv s} [n_i + \deg(f_i)]
\]
together with the images of the compositions
\[
\cE_{\ux} \xrightarrow{g'_j} \cE_{\ux}[m_j] \xrightarrow{g_j \star^\BKM \cE_s[m_j]} \cE_{\uv s} [m_j + \deg(g_j)]
\]
span the vector space $\Hom^\bullet_{\Db_{(\BKM)}(\Flag_{\geq x},\F)}(\cE_{\ux}, \cE_{\uv s})$.
\end{enumerate}
\end{prop}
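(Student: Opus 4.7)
The plan is to adapt the proof of Proposition~\ref{prop:morphisms-BS} to the parity setting, using sections of the $!$-flag developed in \S\S\ref{ss:section-!-flag-push}--\ref{ss:section-!-flag-pull} in place of sections of the $\Cos$-flag, and using the adjoint functors $(q^s)_*$ and $(q^s)^!$ in place of the translation functors $\Trans^s$ and $\Trans_s$. The geometric input that links these two settings is the isomorphism~\eqref{eqn:Upsilon-convolution}, which gives $(q^s)^!(q^s)_* \cE_{\uv} \cong \cE_{\uv s}[1]$; thus producing morphisms into $\cE_{\uv s}$ reduces, after a shift, to producing morphisms into $(q^s)^!(q^s)_* \cE_{\uv}$.

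I treat part~\eqref{it:morphisms-BS-parity-1}, as part~\eqref{it:morphisms-BS-parity-2} follows the same template with minor modifications of shifts and of the roles of the two sub-families. Since $\ux$ is reduced for $x$ and $\ux s$ for $xs$, the parity complex $\cE_{\ux}$ admits $\cE_x$ as a direct summand in degree $0$ over $\Flag_{\geq x}$, and similarly $\cE_{\ux s}$ admits $\cE_{xs}$ as a direct summand in degree $0$ over $\Flag_{\geq xs}$. Fix split surjections and inclusions for these summands. Up to replacing $(f_i)$ and $(g_j)$ by equivalent spanning sets, I may then assume that the morphisms $\widetilde{f}_i \colon \cE_x \to \cE_{\uv}[\deg f_i]$ and $\widetilde{g}_j \colon \cE_{xs} \to \cE_{\uv}[\deg g_j]$ obtained by precomposing with the split inclusions form respectively the $x$-part and $xs$-part of some section of the $!$-flag of $\cE_{\uv}$; I extend this to a full section by arbitrary choices at other strata.

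Next, I apply Proposition~\ref{prop:section-!-push} to produce a section of the $!$-flag of $(q^s)_* \cE_{\uv}$, and then Proposition~\ref{prop:section-!-pull} to produce a section of the $!$-flag of $(q^s)^!(q^s)_* \cE_{\uv} \cong \cE_{\uv s}[1]$. By Remarks~\ref{rmk:section-!-push} and~\ref{rmk:section-!-pull}, the morphisms in this new section corresponding to $\widetilde{f}_i$ and $\widetilde{g}_j$ factor, up to shifts, through $(q^s)^!(q^s)_* \widetilde{f}_i$ and $(q^s)^!(q^s)_* \widetilde{g}_j$, which under~\eqref{eqn:Upsilon-convolution} are identified with $\widetilde{f}_i \star^{\BKM} \cE_s$ and $\widetilde{g}_j \star^{\BKM} \cE_s$, mapping $\cE_{xs} = \cE_x \star^{\BKM} \cE_s$ or $\cE_{xss} = \cE_{xs} \star^{\BKM} \cE_s$ into $\cE_{\uv s}$. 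Precomposing the resulting maps with the split surjections $\cE_{\ux} \twoheadrightarrow \cE_x$ extended to all of $\Flag$ produces the desired morphisms $f'_i$ and $g'_j$.

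The spanning condition follows because the constructed section of the $!$-flag of $\cE_{\uv s}[1]$ has $x$-part spanning $\Hom^\bullet_{\Db_{(\BKM)}(\Flag_{\geq x},\F)}(\cE_x, \cE_{\uv s}[1])$, and the identification of $\cE_x$ as a direct summand of $\cE_{\ux}|_{\Flag_{\geq x}}$ yields the required spanning after accounting for the further summands of $\cE_{\ux}|_{\Flag_{\geq x}}$ (handled inductively along the Bruhat order, in the spirit of the proof of Proposition~\ref{prop:morphisms-Phit}). The main technical obstacle is the careful bookkeeping of homological shifts through the two applications of Propositions~\ref{prop:section-!-push} and~\ref{prop:section-!-pull}, keeping track of whether each relevant element $e(\pi)$ satisfies $e(\pi) > e(\pi) s$ or $e(\pi) < e(\pi) s$, and verifying that the isomorphism~\eqref{eqn:Upsilon-convolution} intertwines the morphisms~\eqref{eqn:qs-parity-2} and~\eqref{eqn:qs-parity-1} used in building these sections with the unit and counit of convolution with $\cE_s$.
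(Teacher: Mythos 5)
Your proposal is correct and follows essentially the same route as the paper: fix split embeddings identifying $\cE_x$ and $\cE_{xs}$ inside $\cE_{\ux}$ and $\cE_{\ux s}$, arrange the $f_i$ and $g_j$ into a section of the $!$-flag of $\cE_{\uv}$, push forward via Proposition~\ref{prop:section-!-push}, pull back via Proposition~\ref{prop:section-!-pull}, invoke the factorization observations of Remarks~\ref{rmk:section-!-push} and~\ref{rmk:section-!-pull}, and identify $(q^s)^!(q^s)_*[-1]$ with $(-)\star^\BKM \cE_s$ via~\eqref{eqn:Upsilon-convolution}. The only superfluous step is your final inductive handling of "further summands" of $\cE_{\ux}|_{\Flag_{\geq x}}$: since every other indecomposable summand $\cE_y[k]$ of $\cE_{\ux}$ has $y<x$ and hence vanishes on $\Flag_{\geq x}$, one has $\cE_{\ux} \cong \cE_x$ there outright and the spanning claim follows immediately.
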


\begin{proof}
\eqref{it:morphisms-BS-parity-1}
We have $\cE_{\ux} \cong \cE_x$ in $\Db_{(\BKM)}(\Flag_{\geq x}, \F)$, and $\cE_{\ux s} \cong \cE_{xs}$ in $\Db_{(\BKM)}(\Flag_{\geq xs}, \F)$. Hence we can fix split embeddings $\cE_x \to \cE_{\ux}$ and $\cE_{xs} \to \cE_{\ux s}$, and assume that the compositions
\[
\cE_x \to \cE_{\ux} \xrightarrow{f_i} \cE_{\uv}[\deg(f_i)] \quad \text{and} \quad \cE_{xs} \to \cE_{\ux s} \xrightarrow{g_j} \cE_{\uv}[\deg(g_j)]
\]
are part of a section of the $!$-flag of $\cE_{\uv}$. Then Proposition~\ref{prop:section-!-push} provides a section of the $!$-flag of $(q^s)_* \cE_{\uv}$ whose morphisms in $\Hom^\bullet(\cE^s_x, (q^s)_* \cE_{\uv})$ are parametrized by $\mathfrak{I} \sqcup \mathfrak{J}$, in such a way that the morphism associated with $i \in \mathfrak{I}$ factors through a shift of $(q^s)_* (f_i) \colon (q^s)_* \cE_{\ux} \to (q^s)_* \cE_{\uv}[\deg(f_i)]$, and the morphism associated with $j \in \mathfrak{J}$ factors through a shift of $(q^s)_* (g_j) \colon (q^s)_* \cE_{\ux s} \to (q^s)_* \cE_{\uv}[\deg(g_j)]$ (see in particular Remark~\ref{rmk:section-!-push}). Applying Proposition~\ref{prop:section-!-pull}, we then obtain a section of the $!$-flag of $\Upsilon_s \cE_{\uv}$ whose morphisms in $\Hom^\bullet(\cE_x, \Upsilon_s \cE_{\uv})$ are parametrized by $\mathfrak{I} \sqcup \mathfrak{J}$, in such a way that the morphism associated with $i \in \mathfrak{I}$ factors through a shift of $\Upsilon_s (f_i) \colon \Upsilon_s \cE_{\ux} \to \Upsilon_s \cE_{\uv}[\deg(f_i)]$, and the morphism associated with $j \in \mathfrak{J}$ factors through a shift of $\Upsilon_s (g_j) \colon \Upsilon_s \cE_{\ux s} \to \Upsilon_s \cE_{\uv}[\deg(g_j)]$ (see in particular Remark~\ref{rmk:section-!-pull}). Composing with an arbitrarily chosen split projection $\cE_{\ux} \to \cE_x$ and using~\eqref{eqn:Upsilon-convolution},
we deduce the desired claim.

\eqref{it:morphisms-BS-parity-2} The proof is identical to the proof of~\eqref{it:morphisms-BS-parity-1}, and is therefore omitted.
\end{proof}

\section{Parity complexes and the Hecke category}
\label{sec:parity-Hecke}

\subsection{Diagrammatic category associated with $\GKM$}
\label{ss:Diag-GKM}

We now define a realization of $(W,S)$ over $\Z$ (in the sense of~\cite[Definition~3.1]{ew}) as follows:
\begin{itemize}
\item
the underlying $\Z$-module is $\fh^\Z_{\GKM} := \Hom_{\Z}(\Lambda, \Z)$;
\item
for any $s \in S$, the elements ``$\alpha_s$'' and ``$\alpha_s^\vee$'' are the simple root and coroot attached to $s$ respectively.
\end{itemize}
This realization is balanced in the sense of~\cite[Definition~3.6]{ew}.

For any integral domain $\K$ one can consider the realization $\fh_{\GKM}^\K:=\fh_{\GKM}^\Z \otimes_\Z \K$ over $\K$. If the ``Demazure surjectivity'' condition~\cite[Assumption~3.7]{ew} holds for $\fh_{\GKM}^\Z$, we set $\Z':=\Z$; otherwise we set $\Z':=\Z[\frac{1}{2}]$. Then Demazure surjectivity holds for $\fh_{\GKM}^{\Z'}$, hence for $\fh_{\GKM}^\K$ for any integral domain $\K$ such that there exists a ring morphism $\Z' \to \K$. For such a ring, we denote by
$\DiagBS^\K(\GKM)$ the diagrammatic category of~\cite[Definition~5.2]{ew} associated with this realization. (The definition of this category is similar to the definition of the category $\DiagBS$ in~\S\ref{ss:diag-SB}. In particular, the morphisms are generated by the same diagrams as in~\S\ref{ss:diag-SB}, with now $S$ and $W$ defined as in~\S\ref{ss:KMgroups}, and $\fh$ replaced by $\fh_{\GKM}^\K$.)

By construction, for $\K$ as above and any $M,N$ in $\DiagBS^\K(\GKM)$, the morphism space
\[
\Hom^\bullet_{\DiagBS^\K(\GKM)}(M,N) :=
\bigoplus_{i \in \Z} \Hom_{\DiagBS^\K(\GKM)}(M,N \langle i \rangle)
\]
is a graded bimodule over
\begin{equation}
\label{eqn:equiv-cohom-K}
\cO(\fh_{\GKM}^\K) = \mathrm{Sym} \bigl( \K \otimes_\Z \Lambda \bigr).
\end{equation}
Note also that if $\K \to \K'$ is a ring morphism (where again $\K'$ is an integral domain), then there exists a natural functor $\DiagBS^\K(\GKM) \to \DiagBS^{\K'}(\GKM)$ which is the identity on objects. Moreover, this functor induces an isomorphism
\begin{equation}
\label{eqn:Diag-K-K'}
\K' \otimes_{\K} \Hom_{\DiagBS^\K(\GKM)}(M,N) \simto \Hom_{\DiagBS^{\K'}(\GKM)}(M,N)
\end{equation}
for any $M,N$ in $\DiagBS^\K(\GKM)$. (In fact, this follows from the double leaves theorem~\cite[Theorem~6.11]{ew}.)

If $\K$ is in addition a complete local ring, we denote by $\Diag^\K(\GKM)$ the Karoubi envelope of the additive hull of $\DiagBS^\K(\GKM)$. We define $\Hom^\bullet_{\Diag^\K(\GKM)}(M,N)$ in the obvious way.

\begin{rmk}
In this case, contrary to the situation in~\S\ref{ss:diag-SB}, in general the roots do \emph{not} generate $\K \otimes_\Z \Lambda$. Hence we need to consider the ``polynomial'' morphisms in $\DiagBS^\K(\GKM)$; they cannot be expressed in terms of the other generators in general.
\end{rmk}

\subsection{More on Bott--Samelson parity complexes}
\label{ss:BS-parity}

Let us come back to the setting of~\S\ref{ss:DbX}.
Below we will also use an analogue of the construction of $\star^\BKM$ in the case of $3$ variables: given $\cF_1, \cF_2, \cF_3$ in $\Db_{\BKM}(\Flag,\K)$, we set
\[
\mathsf{Conv}_3(\cF_1, \cF_2, \cF_3) := \mathsf{m}_{3*} (\cF_1 \, \widetilde{\boxtimes} \, \cF_2 \, \widetilde{\boxtimes} \, \cF_3),
\]
where $\mathsf{m}_3 \colon \GKM \times^\BKM \GKM \times^\BKM \Flag \to \Flag$ is the morphism induced by multiplication in $\GKM$, and $\cF_1 \, \widetilde{\boxtimes} \, \cF_2 \, \widetilde{\boxtimes} \, \cF_3$ is the unique object in $\Db_{\BKM}(\GKM \times^\BKM \GKM \times^\BKM \Flag, \K)$ whose pullback to $\GKM \times \GKM \times \Flag$ is $\mu^*\cF_1 \boxtimes \mu^*\cF_2 \boxtimes \cF_3$. Of course, there exist canonical isomorphisms
\[
(\cF_1 \star^\BKM \cF_2) \star^\BKM \cF_3 \simto \mathsf{Conv}_3(\cF_1, \cF_2, \cF_3) \simto \cF_1 \star^\BKM (\cF_2 \star^\BKM \cF_3)
\]
whose composition is the associativity constraint for the product $\star^\BKM$.

The ``Bott--Samelson parity complex" $\cE_{\uw}$ defined in~\S\ref{ss:parity-flag} is defined only up to (canonical) isomorphism, since one needs to choose the order in which the convolution products are taken. To remedy this we introduce a \emph{canonical} object $\cE_{\K}(\uw)$ as follows.
For this, recall the Demazure (or Bott--Samelson) resolution $\nu_{\uw} \colon \BSvar(\uw) \to \Flag$ 
defined in~\S\ref{ss:KMgroups}.
Then we set
\[
\cE_\K(\uw) := (\nu_{\uw})_* \underline{\K}_{\BSvar(\uw)} [\ell(\uw)].
\]

\begin{lem}
\label{lem:convolution-Ew}
For any expressions $\uw$ and $\uv$, there exists a canonical isomorphism
\[
\cE_\K(\uw) \star^\BKM \cE_\K(\uv) \cong \cE_\K(\uw \uv),
\]
where $\uw\uv$ is the concatenation of $\uw$ and $\uv$.
\end{lem}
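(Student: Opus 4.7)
The plan is to unwind the definition of $\star^\BKM$ and reduce everything to base change and the Künneth formula. Write $\uw = s_1 \cdots s_r$ and set $\widetilde{\BSvar}(\uw) := \PKM_{s_1} \times^\BKM \cdots \times^\BKM \PKM_{s_r}$, so that $\BSvar(\uw) = \widetilde{\BSvar}(\uw)/\BKM$ and the natural map $\tilde\nu_\uw \colon \widetilde{\BSvar}(\uw) \to \GKM$ is $\BKM$-equivariant on the right and descends to $\nu_\uw$. By construction, one has a canonical identification
\[
\BSvar(\uw \uv) = \widetilde{\BSvar}(\uw) \times^\BKM \BSvar(\uv),
\]
and $\nu_{\uw \uv}$ factors as
\[
\BSvar(\uw \uv) \xrightarrow{\; f \;} \GKM \times^\BKM \Flag \xrightarrow{\; \mathsf{m} \;} \Flag,
\]
where $f$ is induced by $\tilde\nu_\uw$ and $\nu_\uv$. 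Hence $\cE_\K(\uw \uv) \cong \mathsf{m}_* \bigl( f_* \underline{\K}_{\BSvar(\uw \uv)}[\ell(\uw)+\ell(\uv)] \bigr)$, and it suffices to produce a canonical isomorphism
\[
f_* \underline{\K}_{\BSvar(\uw \uv)}[\ell(\uw)+\ell(\uv)] \;\cong\; \cE_\K(\uw) \,\widetilde{\boxtimes}\, \cE_\K(\uv).
\]

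To check this, I would pull back along the quotient map $p \colon \GKM \times \Flag \to \GKM \times^\BKM \Flag$, which is a $\BKM$-torsor, hence smooth. Forming the Cartesian square with $f$ produces $\widetilde{\BSvar}(\uw) \times \BSvar(\uv)$ on top, and smooth base change gives
\[
p^* f_* \underline{\K}_{\BSvar(\uw \uv)}[\ell(\uw)+\ell(\uv)] \;\cong\; (\tilde\nu_\uw \times \nu_\uv)_* \underline{\K}_{\widetilde{\BSvar}(\uw) \times \BSvar(\uv)}[\ell(\uw)+\ell(\uv)].
\]
The Künneth formula (applicable since $\tilde\nu_\uw$ and $\nu_\uv$ are both proper) identifies the right-hand side with
\[
(\tilde\nu_\uw)_* \underline{\K}_{\widetilde{\BSvar}(\uw)}[\ell(\uw)] \,\boxtimes\, \cE_\K(\uv).
\]
Finally, applying base change to the Cartesian square
\[
\xymatrix{
\widetilde{\BSvar}(\uw) \ar[r]^-{\tilde\nu_\uw} \ar[d] & \GKM \ar[d]^-\mu \\
\BSvar(\uw) \ar[r]^-{\nu_\uw} & \Flag
}
\]
(which is Cartesian because both verticals are right $\BKM$-torsors and the top map is $\BKM$-equivariant) identifies $(\tilde\nu_\uw)_* \underline{\K}_{\widetilde{\BSvar}(\uw)}[\ell(\uw)]$ with $\mu^* \cE_\K(\uw)$. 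Thus $p^* f_* \underline{\K}_{\BSvar(\uw \uv)}[\ell(\uw)+\ell(\uv)] \cong \mu^*\cE_\K(\uw) \boxtimes \cE_\K(\uv)$, which is precisely the characterization of $\cE_\K(\uw) \,\widetilde{\boxtimes}\, \cE_\K(\uv)$; pushing forward by $\mathsf{m}_*$ then yields the claimed isomorphism with $\cE_\K(\uw) \star^\BKM \cE_\K(\uv)$.

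The construction is canonical because each of the three isomorphisms used (two base changes and the Künneth map) is canonical and compatible with the $\BKM$-equivariance in the appropriate way. The main obstacle I anticipate is bookkeeping: one needs to verify that the smooth base change isomorphism along $p$ genuinely recovers the object $\cE_\K(\uw) \,\widetilde{\boxtimes}\, \cE_\K(\uv)$ (rather than just some object with the correct pullback), which uses the characterization of $\widetilde{\boxtimes}$ via descent along $p$, and to handle the ind-scheme aspect of $\GKM$, which is harmless here since both $\cE_\K(\uw)$ and the image of $\widetilde{\BSvar}(\uw)$ are supported on a finite-type piece $\BKM(w')$ of $\GKM$ for $w'$ the product of the simple reflections in $\uw$.
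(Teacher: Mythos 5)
Your argument is correct and follows exactly the route the paper takes: the paper's proof simply asserts that $\cE_\K(\uw) \mathbin{\widetilde{\boxtimes}} \cE_\K(\uv)$ is the pushforward of the constant sheaf along the natural map $\BSvar(\uw\uv) = \bigl(\PKM_{s_1} \times^\BKM \cdots \times^\BKM \PKM_{s_r}\bigr) \times^\BKM \BSvar(\uv) \to \GKM \times^\BKM \Flag$ and then composes with $\mathsf{m}$, leaving your base-change/K\"unneth verification as "easily checked." Your write-up just supplies those details (note only that the properness of $\tilde\nu_{\uw}$, which you invoke for K\"unneth, is itself justified by the Cartesian square you draw afterwards, so that square should logically come first).
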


\begin{proof}
Write $\uw=s_1 \cdots s_r$.
It can be easily checked that the complex $\cE_\K(\uw) \, \widetilde{\boxtimes} \, \cE_\K(\uv)$ is canonically isomorphic to (the extension by 0 of) the direct image of the constant sheaf on
\[
\bigl( \PKM_{s_1} \times^\BKM \cdots \times^\BKM \PKM_{s_r} \bigr) \times^\BKM \BSvar(\uv) = \BSvar(\uw\uv)
\]
under the natural projection to $\GKM \times^\BKM \Flag$. The composition of this projection with
the morphism induced by $\mathsf{m}$ identifies with $\nu_{\uw \uv}$, and the claim follows.
\end{proof}

We will consider the category
\[
\Par_{\BKM}^{\mathrm{BS}}(\Flag, \K)
\]
whose objects are the pairs $(\uw, n)$ where $\uw$ is an expression and $n \in \Z$, and whose morphisms are defined as follows:
\[
\Hom_{\Par_{\BKM}^{\mathrm{BS}}(\Flag, \K)} \bigl( (\uw, n), (\uv, m) \bigr) := \Hom_{\Db_{\BKM}(\Flag,\K)} \bigl( \cE_\K(\uw)[n], \cE_\K(\uv)[m] \bigr).
\]
We endow this category with a monoidal product $\star$ by declaring that
\[
(\uw,n) \star (\uv,m) := (\uw\uv, n+m)
\]
and using the canonical isomorphism in Lemma~\ref{lem:convolution-Ew} to define the product of morphisms.


If $\K$ and $\K'$ are Noetherian commutative rings of finite global dimension, and if we are given a ring morphism $\K \to \K'$, then we have an ``extension of scalars'' functor
\[
\K' := \K' \lotimes_{\K} (-) \colon \Db_{\BKM}(\Flag,\K) \to \Db_{\BKM}(\Flag,\K').
\]
Standard compatibility properties of extension of scalars with $!$-pushforward functors (see e.g.~\cite[Proposition~2.6.6]{ks1} in the classical context) show that for any expression $\uw$ there exists a canonical isomorphism $\K'(\cE_\K(\uw)) \cong \cE_{\K'}(\uw)$. In particular, it follows that the functor $\K'$ defines in a natural way a monoidal functor from $\Par_{\BKM}^{\mathrm{BS}}(\Flag, \K)$ to $\Par_{\BKM}^{\mathrm{BS}}(\Flag, \K')$, which we will denote by the same symbol. The following result is standard, see e.g.~\cite[Lemma~2.2(2)]{mr}.

\begin{lem}
\label{lem:morphisms-parityBS}
For any expressions $\uw$ and $\uv$ and any $n,m \in \Z$, the $\K$-module
\[
\Hom_{\Par_{\BKM}^{\mathrm{BS}}(\Flag, \K)} \bigl( (\uw, n), (\uv, m) \bigr)
\]
is free, and the functor $\K'$ induces an isomorphism
\[
\K' \otimes_\K \Hom_{\Par_{\BKM}^{\mathrm{BS}}(\Flag, \K)} \bigl( (\uw, n), (\uv, m) \bigr) \simto \Hom_{\Par_{\BKM}^{\mathrm{BS}}(\Flag, \K')} \bigl( (\uw, n), (\uv, m) \bigr).
\]
\end{lem}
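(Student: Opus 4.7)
\medskip

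The plan is to deduce the lemma from standard properties of parity complexes, following the strategy of~\cite{jmw} and~\cite{mr}. Rewriting, the space in question is a single graded component of $\Hom^\bullet_{\Db_{\BKM}(\Flag,\K)}(\cE_\K(\uw), \cE_\K(\uv))$, shifted by $m-n$, so it suffices to prove that $\Hom^\bullet_{\Db_{\BKM}(\Flag,\K)}(\cE_\K(\uw), \cE_\K(\uv))$ is free over $\sfH^\bullet_{\BKM}(\mathrm{pt};\K) = \Sym(\K\otimes_\Z\Lambda)$ of finite total rank in each cohomological degree, and that this $\Hom$-module is compatible with extension of scalars. Since a free module over a polynomial ring generated in finitely many degrees is free over $\K$ in each cohomological degree, and since tensor product with $\K'$ over $\K$ commutes with tensor product with $\K'$ over $\sfH^\bullet_{\BKM}(\mathrm{pt};\K)$, the desired statements then follow.

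The first step is to observe that $\cE_\K(\uw)$ is a parity complex in the sense of~\cite{jmw}, as $\nu_{\uw} \colon \BSvar(\uw) \to \Flag$ is a proper morphism from a smooth variety which is ``even'' with respect to the Bruhat stratification (its restriction over each $\Flag_w$ is a Bott--Samelson type iterated fibration, with cohomology of the fibers concentrated in even degrees). Then by~\cite[Proposition~2.34]{jmw} (extended to our setting), $(\nu_{\uw})_*$ sends the constant sheaf to a parity complex. Next, the freeness and base-change statements for $\Hom^\bullet$ between parity complexes are proved by induction on the (finite) union of Bruhat strata supporting $\cE_\K(\uw)$ and $\cE_\K(\uv)$: for $Z$ a closed union of such strata and $\Flag_w \subset Z$ an open stratum, one uses the excision triangle and the adjunction to reduce the computation to $\Hom^\bullet_{\Db_\BKM(\Flag_w,\K)}\bigl((i_w)^*\cE_\K(\uw), (i_w)^!\cE_\K(\uv)\bigr)$, for which both restrictions are, by the parity hypothesis, direct sums of shifts of the constant sheaf placed in degrees of a single parity. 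On the single stratum $\Flag_w \cong \bA^{\ell(w)}_{\bbL}$, the equivariant cohomology $\sfH^\bullet_\BKM(\Flag_w;\K) \cong \sfH^\bullet_{\TKM}(\mathrm{pt};\K)$ is a free polynomial ring on $\K\otimes_\Z\Lambda$, the relevant $\Hom$-spaces are visibly free over it, and they are compatible with extension of scalars since the $!$- and $*$-pullbacks to affine space commute with $\K' \lotimes_\K(-)$.

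The one point that makes the induction go through, and which would otherwise be the obstacle, is showing that the long exact sequence of $\Hom$-spaces attached to the excision triangle
\[
(i_w)_!(i_w)^*\cE_\K(\uw) \to \cE_\K(\uw)|_Z \to \cE_\K(\uw)|_{Z\smallsetminus \Flag_w} \triright
\]
degenerates into short exact sequences of free modules. This is exactly the content of the parity vanishing: since $\cE_\K(\uw)$ and $\cE_\K(\uv)$ are both parity (of the same parity, after possibly shifting $\uv$), any would-be connecting morphism lands in a $\Hom$-group that vanishes for degree-parity reasons, so each stage of the induction merely glues two already-free pieces. Combined with the identical argument applied over $\K'$ (and the flat base-change isomorphism $\K' \lotimes_\K (i_w)^! \simeq (i_w)^! \K'$, which holds as $i_w$ is a locally closed embedding into a smooth stratum), this yields both the freeness over $\sfH^\bullet_\BKM(\mathrm{pt};\K)$ and the base-change isomorphism simultaneously, completing the proof.
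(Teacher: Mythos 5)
Your argument is correct and is the standard one: the paper itself gives no proof, merely citing~\cite[Lemma~2.2(2)]{mr}, and the argument there is precisely the induction over closed unions of Bruhat strata that you describe, with parity vanishing splitting the excision long exact sequences of $\Hom$-groups into short exact sequences of free $\mathsf{H}^\bullet_{\BKM}(\pt;\K)$-modules compatibly with extension of scalars. One small imprecision is your justification for the base-change compatibility of $(i_w)^!$: it follows from the freeness of the $!$-stalks of parity complexes (which is what makes $\K'\lotimes_\K$ commute with $(i_w)^!$ on these objects) rather than from smoothness of the ambient, which need not hold for a general closed union of strata, but this does not affect the substance of the argument.
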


Assume now in addition that $\K$ is a 
complete local ring, and recall the category $\Par_{\BKM}(\Flag, \K)$ defined in~\S\ref{ss:parity-flag}.
We can define a natural fully-faithful monoidal functor
\begin{equation}
\label{eqn:functor-parityBS}
\Par_{\BKM}^{\mathrm{BS}}(\Flag, \K) \to \Par_{\BKM}(\Flag, \K)
\end{equation}
sending $(\uw,n)$ to $\cE_\K(\uw)[n]$. In this case any indecomposable object in the category $\Par_{\BKM}(\Flag, \K)$ is isomorphic to a direct summand of an object $\cE_\K(\uw)[n]$, see~\cite{jmw}. We deduce the following lemma.

\begin{lem}
\label{lem:parity-Karoubi}
Assume that $\K$ is a Noetherian commutative complete local ring. The functor~\eqref{eqn:functor-parityBS} realizes $\Par_{\BKM}(\Flag,\K)$, as a monoidal category, as the Karou\-bi envelope of the additive hull of the monoidal category $\Par_{\BKM}^{\mathrm{BS}}(\Flag, \K)$.
\end{lem}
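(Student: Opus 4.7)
\medskip

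The plan is to verify the three standard ingredients for identifying a category with the Karoubi envelope of the additive hull of a given subcategory: full faithfulness of the functor, generation up to finite direct sums and summands, and idempotent completeness of the target. The monoidal compatibility is then essentially formal.

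First, full faithfulness of~\eqref{eqn:functor-parityBS} is immediate from the very definition of $\Par_{\BKM}^{\mathrm{BS}}(\Flag,\K)$: its Hom spaces are literally the Hom spaces between the images $\cE_\K(\uw)[n]$ in $\Db_{\BKM}(\Flag,\K)$. Second, by the classification theorem of~\cite{jmw} (recalled in~\S\ref{ss:parity-flag}), under the standing assumption that $\K$ is a complete local ring every indecomposable parity complex is isomorphic to a direct summand of some $\cE_\K(\uw)[n]$, and every parity complex is a finite direct sum of indecomposables. Hence every object of $\Par_{\BKM}(\Flag,\K)$ is a direct summand of a finite direct sum of objects of the form $\cE_\K(\uw)[n]$, i.e.~of objects in the essential image of~\eqref{eqn:functor-parityBS}.

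Third, I would verify that $\Par_{\BKM}(\Flag,\K)$ is idempotent complete. For any two parity complexes $\cF,\cG$ the $\mathsf{H}^\bullet_{\BKM}(\mathrm{pt};\K)$-module $\Hom^\bullet_{\Db_\BKM(\Flag,\K)}(\cF,\cG)$ is free of finite rank, as recalled around~\eqref{eqn:morphisms-parity-For}; specializing to $\cF=\cG$ and extracting degree zero, the endomorphism ring $\End(\cF)$ is a finitely generated module over the Noetherian complete local ring $\K$, hence is semiperfect. Idempotents therefore split in $\Par_{\BKM}(\Flag,\K)$, and combined with the previous step this gives the equivalence between $\Par_{\BKM}(\Flag,\K)$ and the Karoubi envelope of the additive hull of $\Par_{\BKM}^{\mathrm{BS}}(\Flag,\K)$ at the level of plain additive categories.

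Finally, I would promote this equivalence to one of monoidal categories. The functor~\eqref{eqn:functor-parityBS} is monoidal by construction, using the canonical isomorphism of Lemma~\ref{lem:convolution-Ew} to identify $\cE_\K(\uw)\star^\BKM \cE_\K(\uv)$ with $\cE_\K(\uw\uv)$. The monoidal product $\star$ on $\Par_{\BKM}^{\mathrm{BS}}(\Flag,\K)$ extends uniquely (up to unique monoidal isomorphism) to the additive hull and then to the Karoubi envelope, and since $\star^\BKM$ on $\Par_{\BKM}(\Flag,\K)$ is bi-additive and preserves direct summands, the equivalence constructed above intertwines the two monoidal structures. I do not expect any genuine obstacle in this proof: the only mildly delicate point is the idempotent completeness, which relies on the semiperfectness of endomorphism rings coming from the complete local hypothesis on $\K$; everything else is either cited from~\cite{jmw} or formal from the definitions.
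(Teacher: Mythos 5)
Your proof is correct and follows essentially the same route as the paper, which deduces the lemma directly from the full faithfulness of~\eqref{eqn:functor-parityBS} and the fact (from~\cite{jmw}, valid since $\K$ is complete local) that every indecomposable parity complex is a direct summand of some $\cE_\K(\uw)[n]$. Your extra care about idempotent completeness via semiperfect endomorphism rings is exactly the Krull--Schmidt argument the paper implicitly relies on, so there is no substantive difference.
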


\subsection{Statement of the equivalences}
\label{ss:statement-Diag-parity}

We continue with the setting of~\S\ref{ss:DbX} and~\S\ref{ss:BS-parity}, and assume in addition that $\K$ is an integral domain.
The main result of this subsection is the following.

\begin{thm}
\label{thm:main-parity}
Assume that $\K$ is a 
complete local ring and that there exists a ring morphism $\Z' \to \K$. Then there exists an equivalence of additive monoidal categories
\[
\Delta \colon \Diag^\K(\GKM) \simto \Par_{\BKM}(\Flag,\K).
\]
\end{thm}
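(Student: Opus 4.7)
The plan is to establish Theorem~\ref{thm:main-parity} by first constructing a monoidal functor at the Bott--Samelson level
\[
\Delta_{\mathrm{BS}} \colon \DiagBS^\K(\GKM) \to \Par_{\BKM}^{\mathrm{BS}}(\Flag,\K),
\]
sending $B_s$ to the pair $(s,0)$ (i.e.~$\cE_\K(s)$) and the shift $\langle 1 \rangle$ to $[1]$, and then extending it to an equivalence by passing to Karoubi envelopes via Lemma~\ref{lem:parity-Karoubi}. Monoidality on objects is immediate from Lemma~\ref{lem:convolution-Ew}. To define the functor on morphisms we treat the generators of $\DiagBS^\K(\GKM)$ one by one: the polynomial generators $f \in R$ are sent to the images of $\K \otimes_\Z \Lambda \subset \mathrm{Sym}(\K \otimes_\Z \Lambda)$ via the graded algebra isomorphism~\eqref{eqn:equiv-cohom-pt} and the natural action of $\mathsf{H}^\bullet_\BKM(\mathrm{pt};\K)$ on every $\BKM$-equivariant complex; the upper and lower dots $B_s \rightleftarrows B_\varnothing \langle 1 \rangle$ are sent to the adjunction morphisms between $\cE_\K(s) = (q^s)^* \underline{\K}_{\Flag^s_{\leq 1}}[1]$ and $\underline{\K}_{\Flag_{\leq 1}}$ coming from the adjoint pairs $((q^s)^*,(q^s)_*)$ and $((q^s)_!,(q^s)^!)$ via~\eqref{eqn:inverse-image-skyscraper}; the trivalent vertices are obtained from the dots by horizontal composition in the natural way. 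The $2m_{st}$-valent vertex (for $s \ne t$ with $m_{st} < \infty$) is more subtle: both concatenations $sts\cdots$ and $tst\cdots$ of length $m_{st}$ are reduced expressions for the longest element of the finite parabolic subgroup $W_{\{s,t\}}$, so the objects $\cE_\K(sts\cdots)$ and $\cE_\K(tst\cdots)$ each contain a unique indecomposable summand (canonically isomorphic to $\cE_{w_0^{\{s,t\}}}$) of top degree, and the $2m_{st}$-valent morphism is defined as the composition of the split projection onto and split embedding from this summand, normalized by the compatibility with the one-colour generators.

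The verification of the defining relations of $\DiagBS^\K(\GKM)$ then proceeds in three steps. First, using~\eqref{eqn:Diag-K-K'} and Lemma~\ref{lem:morphisms-parityBS}, morphism spaces on both sides are free $\K$-modules that base change faithfully, so it suffices to check relations when $\K = \Z'$, and indeed, by a further base-change argument, when $\K$ is an algebraically closed field of characteristic $0$. Second, over such a field of characteristic $0$ the category $\Par_{\BKM}(\Flag,\K)$ is semisimple (every parity complex is a direct sum of shifts of intersection cohomology sheaves) and, by Soergel's theorem together with the identification $\Diag^\K = \DiagBS^\K$-Karoubi envelope in characteristic $0$ as Soergel bimodules (see~\cite[Theorem~6.30]{ew}), both sides are well known to realize the same monoidal category. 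Third, it remains to ensure that our specific choices of images for generators satisfy the relations, and one fixes the normalization constants so that the one-colour Frobenius/needle/polynomial slide relations hold, after which the two-colour and Zamolodchikov relations follow automatically by rigidity of morphism spaces between Bott--Samelson complexes (the characteristic $0$ check forces the relations up to scalars, and these scalars are identified with $1$ by restricting to torus-fixed points).

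To upgrade $\Delta_{\mathrm{BS}}$ to a fully faithful functor, the key ingredient is Proposition~\ref{prop:morphisms-BS-parity}, which mirrors Proposition~\ref{prop:morphisms-BS} and explains how morphisms between Bott--Samelson parity complexes are generated inductively from dots, trivalent vertices, and polynomial generators. Combining this with the double leaves basis theorem~\cite[Theorem~6.11]{ew} on the diagrammatic side provides a surjection
\[
\Hom^\bullet_{\DiagBS^\K(\GKM)}(B_{\uw},B_{\uv}) \twoheadrightarrow \Hom^\bullet_{\Par_{\BKM}^{\mathrm{BS}}(\Flag,\K)}((\uw,0),(\uv,0))
\]
for all expressions $\uw,\uv$. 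Since both $\K$-modules are free of the same graded rank (the diagrammatic rank computed in~\cite[Corollary~6.13]{ew} matches the geometric rank computed from~\eqref{eqn:morphisms-parity-For} together with the stalk calculation for Bott--Samelson complexes), this surjection is an isomorphism.

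Finally, the extension to Karoubi envelopes is formal: the left-hand side $\Diag^\K(\GKM)$ is by definition the Karoubi envelope of the additive hull of $\DiagBS^\K(\GKM)$, while Lemma~\ref{lem:parity-Karoubi} identifies $\Par_{\BKM}(\Flag,\K)$ as the Karoubi envelope of the additive hull of $\Par_{\BKM}^{\mathrm{BS}}(\Flag,\K)$. The main obstacle in this programme is the consistent definition and normalization of the $2m_{st}$-valent morphisms on the geometric side, together with the ensuing verification of the two-colour and three-colour (Zamolodchikov) relations; this is the step where one must most carefully exploit base change from characteristic $0$, since direct diagrammatic-to-geometric translations are prohibitive in the presence of torsion.
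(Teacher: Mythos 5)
Your overall architecture matches the paper's: define $\Delta_{\mathrm{BS}}$ on the diagrammatic generators, reduce the verification of relations to the case of characteristic-zero coefficients using freeness of Hom spaces and base change (as in~\eqref{eqn:Diag-K-K'} and Lemma~\ref{lem:morphisms-parityBS}), establish full faithfulness by the combination of Proposition~\ref{prop:morphisms-BS-parity} (surjectivity) and a graded-rank comparison, and pass to Karoubi envelopes via Lemma~\ref{lem:parity-Karoubi}. However, two of your steps have genuine gaps.

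First, the trivalent vertices are \emph{not} obtained ``from the dots by horizontal composition in the natural way.'' The dot morphisms go between $\cE_\K(s)$ and $\cE_\K(\varnothing)[\pm 1]$, whereas a trivalent vertex is an element of $\Hom(\cE_\K(s),\cE_\K(ss)[-1])$ or $\Hom(\cE_\K(ss),\cE_\K(s)[-1])$; no horizontal composition of dot morphisms with identity morphisms produces a map with those shifts. What the paper actually does (Lemma~\ref{lem:morphisms-ss-s}) is indirect: it proves that post- or pre-composing with the morphism induced by a dot gives an \emph{isomorphism} between the relevant one-dimensional Hom spaces, and then \emph{defines} $b_1$, $b_2$ to be the unique preimages of the identity under these isomorphisms. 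This characterization is essential later (it feeds into the verification $\bbH(b_1)=t_1$, $\bbH(b_2)=t_2$) and cannot be replaced by a direct compositional formula.

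Second, your claim that ``the two-colour and Zamolodchikov relations follow automatically by rigidity of morphism spaces $\ldots$ and these scalars are identified with $1$ by restricting to torus-fixed points'' skips the actual content of the relation check. Knowing abstractly (via Soergel's theorem) that both monoidal categories are equivalent over a characteristic-zero field does not tell you that your \emph{specific assignments} on generators satisfy the Elias--Williamson relations, and rigidity of Hom spaces only reduces each relation to a scalar identity once you know both sides are nonzero multiples of each other. The paper instead introduces a faithful monoidal functor $\bbH$ to graded $R$-bimodules (with the three properties (H1)--(H3) in~\S\ref{ss:verification-Diag-parity}), explicitly computes the image under $\bbH$ of every generating morphism, and reduces to the Soergel-bimodule relations, which were verified by computer via the localization method of~\cite[\S 5.5]{ew}. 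The ``restrict to torus-fixed points'' sketch is the right mechanism, but it is a nontrivial finite computation, not an automatic consequence of rigidity; your outline gives the impression that this step is soft when in fact it is the technical heart of the argument.
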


In view of Lemma~\ref{lem:parity-Karoubi} and the construction of the category $\Diag^\K(\GKM)$, Theorem~\ref{thm:main-parity} will follow from the following result, which applies to more general coefficients.

\begin{thm}
\label{thm:main-parity-BS}
Assume that there exists a ring morphism $\Z' \to \K$. Then there exists an equivalence of monoidal categories
\[
\Delta_{\mathrm{BS}} \colon \DiagBS^\K(\GKM) \simto \Par^{\mathrm{BS}}_{\BKM}(\Flag,\K).
\]
\end{thm}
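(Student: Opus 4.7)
The plan is to construct the monoidal functor $\Delta_{\mathrm{BS}}$ by specifying its action on the generating $2$-morphisms in the presentation of~\cite{ew}, verifying the defining relations, and concluding full faithfulness by a basis comparison. Since both sides behave well under extension of scalars (by~\eqref{eqn:Diag-K-K'} and Lemma~\ref{lem:morphisms-parityBS}), one may first work with the universal coefficient ring $\K = \Z'$ and then base change freely.

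On objects, set $\Delta_{\mathrm{BS}}(B_{\uw}\langle n\rangle) := (\uw, n)$; this is compatible with the monoidal products by Lemma~\ref{lem:convolution-Ew}. On morphisms, the polynomial generators $B_\varnothing \to B_\varnothing\langle 2k\rangle$ are sent via the identification of $\cO(\fh_{\GKM}^\K) = \mathrm{Sym}(\K\otimes_\Z \Lambda)$ in~\eqref{eqn:equiv-cohom-K} with $\mathsf{H}^\bullet_\BKM(\mathrm{pt};\K)$ in~\eqref{eqn:equiv-cohom-pt}, which naturally acts on $\Hom^\bullet$-spaces. The upper and lower dots between $\cE_s$ and $\underline{\K}_{\Flag_1}[k]$ are obtained from the adjunction unit/counit for the closed embedding $\Flag_1 \hookrightarrow \Flag_{\leq s}$, combined with the isomorphism~\eqref{eqn:inverse-image-skyscraper} applied to $J = \{i_s\}$. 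The two trivalent vertices come from any fixed splitting $\cE_s \star^\BKM \cE_s \cong \cE_s\langle 1\rangle \oplus \cE_s\langle -1\rangle$, which exists because $\cE_s \star^\BKM \cE_s$ is parity and has the correct stalks on each of the two Schubert cells. Finally, for each pair $(s,t) \subset S$ with $m_{st} < \infty$, both iterated convolutions $\cE_s \star^\BKM \cE_t \star^\BKM \cdots$ and $\cE_t \star^\BKM \cE_s \star^\BKM \cdots$ of length $m_{st}$ are isomorphic to the indecomposable parity complex $\cE_{w_{st}}$ attached to the longest element of $W_{\{s,t\}}$, and the $2m_{st}$-valent generator is sent to any chosen isomorphism between them, rescaled as required.

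Verifying the defining relations is the main obstacle. The one-color relations and most two-color relations involving dots and trivalent vertices follow automatically from the Frobenius/zigzag formalism for adjunction morphisms, while the polynomial-sliding relations follow from the explicit equivariant-cohomology action. The genuinely delicate relations --- the Jones--Wenzl relations (for each finite $m_{st}$) and the three-color Zamolodchikov relations --- involve at most three simple reflections and therefore take place inside the full monoidal subcategory generated by parity sheaves on a finite-type partial flag variety $\Flag^J$ with $|J| \leq 3$. There $\Flag^J$ is a smooth projective variety, and a standard adaptation of Soergel's bimodule theorem to parity sheaves identifies the relevant full subcategory of $\Par_\BKM(\Flag^J,\K)$ with (the Karoubi envelope of) the additive hull of Soergel bimodules for $W_J$; combined with the theorem of~\cite{ew} that the diagrammatic category surjects onto Soergel bimodules in finite type (with the relations of~\cite[Definition~5.2]{ew} precisely being those that hold in the bimodule category), all desired relations follow, modulo a choice of rescalings of the $2m_{st}$-valent generators. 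The coherence of these rescalings across the various pairs and triples is the subtle point, but can be arranged one color, pair, and triple at a time.

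For full faithfulness, one compares bases on both sides. By Lemma~\ref{lem:morphisms-parityBS} together with~\eqref{eqn:morphisms-parity-For}, the graded $\Hom$-space $\Hom^\bullet_{\Par^{\mathrm{BS}}_\BKM(\Flag,\K)}((\uw,0), (\uv,0))$ is a free module over $\mathsf{H}^\bullet_\BKM(\mathrm{pt};\K) \cong \cO(\fh^\K_{\GKM})$; by the double leaves theorem of~\cite{ew}, the diagrammatic $\Hom$-space is also free over this ring, with an explicit basis indexed by pairs of subexpressions $(\ue, \uf) \subset (\uw, \uv)$ with common endpoint. Iterating Proposition~\ref{prop:morphisms-BS-parity} starting from the trivial base case $\uw = \varnothing$ produces a spanning family of the parity $\Hom$-space indexed by the same combinatorial data, and tracking the inductive construction step by step shows that $\Delta_{\mathrm{BS}}$ sends the double-leaves basis precisely to this spanning family; counting, it is a basis, which completes the proof.
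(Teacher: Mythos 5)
Your overall architecture matches the paper's: define $\Delta_{\mathrm{BS}}$ on the generating morphisms over $\Z'$ and base change, verify the relations by localizing to the (at most three) simple reflections involved and working in a Soergel-bimodule model, and deduce full faithfulness from surjectivity (via the inductive machinery of Proposition~\ref{prop:morphisms-BS-parity} and sections of the $!$-flag) together with a graded rank count against the double leaves basis. However, two steps as written would not go through.

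First, your construction of the image of the $2m_{st}$-valent vertex is incorrect: the length-$m_{st}$ convolutions $\cE_s \star^\BKM \cE_t \star^\BKM \cdots$ and $\cE_t \star^\BKM \cE_s \star^\BKM \cdots$ are Bott--Samelson objects, and for $m_{st} \geq 3$ they are neither indecomposable nor isomorphic to each other (e.g.\ in type $\mathbf{A}_2$ one is $\cE_{sts} \oplus \cE_s$ and the other is $\cE_{sts} \oplus \cE_t$), so ``any chosen isomorphism between them'' does not exist. What is actually true, and what the paper proves (Lemmas~\ref{lem:mZ} and~\ref{lem:split}), is that the degree-zero $\Hom$-space between these two objects is free of rank one over $\Z'$, and the generator is normalized by requiring it to restrict to the identity on the stalk at the point of $\Flag_{w_I}$; that normalized map, not an isomorphism of objects, is the image of the $2m_{st}$-valent vertex.

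Second, the verification of the relations is where the normalizations bite, and your argument is close to circular there. Knowing that the parity category on a finite flag variety is equivalent to Soergel bimodules does not by itself tell you that \emph{your chosen} images of the generators satisfy the relations: one must compute the image of each generator under a faithful functor to bimodules and check it equals the standard bimodule map, and this computation is precisely what forces the normalization of the dots, the trivalent vertices, and the $2m_{st}$-valent vertices. In particular ``any fixed splitting'' of $\cE_s \star^\BKM \cE_s$ will in general violate the Frobenius/zigzag relations against the chosen dots. The paper handles this by applying the equivariant hypercohomology functor $\bbH$ (faithful over $\Q$, which suffices by freeness of the $\Hom$-modules over $\Z'$), explicitly identifying $\bbH$ of every generator with the corresponding bimodule map, and then checking the relations in the bimodule category after splitting off the realization $\mathfrak{l}_{S'}$ spanned by the relevant coroots. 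Your full-faithfulness endgame is fine, though you only need surjectivity onto a spanning set of the right cardinality; there is no need to match the double leaves basis term by term with the family produced by Proposition~\ref{prop:morphisms-BS-parity}.
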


The remainder of this section is devoted to the proof of Theorem~\ref{thm:main-parity-BS}: in~\S\S\ref{ss:construction-Diag-parity}--\ref{ss:verification-Diag-parity} we construct a monoidal functor $\Delta_{\mathrm{BS}} \colon \DiagBS^\K(\GKM) \to \Par^{\mathrm{BS}}_{\BKM}(\Flag,\K)$ which is obviously essentially surjective. Then in~\S\ref{ss:Diag-parity-ff} we show that this functor is fully-faithful, which completes the proof of Theorem~\ref{thm:main-parity-BS}, hence also of Theorem~\ref{thm:main-parity}.

To fix notation we assume that we work in the classical setting. The \'etale setting can be treated in a similar way, replacing $\Z'$ by a ring of $p$-adic integers.

\subsection{Construction of the functor $\Delta_{\mathrm{BS}}$}
\label{ss:construction-Diag-parity}

\subsubsection{Principle of the construction}

In this subsection we assume that $\K$ is a Noetherian integral domain of finite global dimension, and
that there exists a ring morphism $\Z' \to \K$.
Our goal is to construct a monoidal functor
\[
\Delta_{\mathrm{BS}} \colon \DiagBS^\K(\GKM) \to \Par^{\mathrm{BS}}_{\BKM}(\Flag,\K).
\]
The definition on objects is obvious: we simply set $\Delta_{\mathrm{BS}}(B_{\uw} \langle n \rangle) = (\uw, n)$. To define $\Delta_{\mathrm{BS}}$ on morphisms, we will explain how to define the image of a morphism $\phi \colon B_{\uw} \langle n \rangle \to B_{\uw'} \langle n \rangle$ where $\uw'$ is either equal to $\uw$ or obtained from $\uw$ by one of the substitutions
\begin{equation}
\label{eqn:elementary-morphisms}
s \dashrightarrow \varnothing, \ \ \varnothing \dashrightarrow s, \ \ ss \dashrightarrow s, \ \ s \dashrightarrow ss, \ \ st \cdots \dashrightarrow ts \cdots
\end{equation}
(where $s,t \in S$, $s \neq t$, and $st$ has finite order $m_{st}$, the number of terms on each side of the last substitution being $m_{st}$), and $\phi$ is induced by the corresponding ``elementary'' morphism (polynomial, upper dot, lower dot, trivalent morphism or $2m_{st}$-valent morphism). Then we will check that these images satisfy the relations from~\cite{ew}. In fact we will only consider the case when $\K=\Z'$. Then using Lemma~\ref{lem:morphisms-parityBS} one can deduce the definition of the morphisms in the case of any $\K$, and the fact that the relations hold over $\Z'$ implies that they also hold over $\K$.

We only need to define the images of the morphisms associated with the polynomials and the substitutions in~\eqref{eqn:elementary-morphisms}. For instance, if one knows the definition of the image $\psi \colon \cE(s) \to \cE(\varnothing)[1]$ of the ``upper dot morphism'' $B_s \to B_\varnothing \langle 1 \rangle$, for any expressions $\uu$ and $\uv$ one defines the image of the induced morphism $B_{\uu s \uv} \to B_{\uu \uv} \langle 1 \rangle$ as the composition
\begin{multline*}
\cE(\uu s \uv) \simto \mathsf{Conv}_3(\cE(\uu), \cE(s), \cE(\uv)) \\
\xrightarrow{\mathsf{Conv}_3(\cE(\uu), \psi, \cE(\uv))} \mathsf{Conv}_3(\cE(\uu), \cE(\varnothing), \cE(\uv)) [1] \simto \cE(\uu \uv)[1]
\end{multline*}
where the first and third morphisms are the canonical isomorphisms (given by the obvious analogue of Lemma~\ref{lem:convolution-Ew}).

The definition of these images occupies the rest of this subsection. Then in~\S\ref{ss:verification-Diag-parity} we prove that these morphisms satisfy the required relations.

\subsubsection{Polynomials}

As noted in~\eqref{eqn:equiv-cohom-pt},
for $m \in \Z_{\geq 0}$ the Borel isomorphism gives
us a canonical identification
\[
\Hom_{\Db_{\BKM}(\Flag, \Z')}(\cE_{\Z'}(\varnothing), \cE_{\Z'}(\varnothing)[2m]) \cong \mathsf{H}_{\BKM}^{2m}(\pt; \Z') = \mathrm{Sym}^m_{\Z'} \bigl( \Z' \otimes_\Z \Lambda \bigr).
\]
We send the morphism $B_{\varnothing} \to B_{\varnothing} \langle 2m \rangle$ given a region labelled by $f \in \mathrm{Sym}^m_{\Z'}\bigl( \Z' \otimes_\Z \Lambda \bigr)$ to the corresponding map
$\cE_{\Z'}(\varnothing) \to \cE_{\Z'}(\varnothing)[2m]$ under this identification:
\[
\Delta_{\mathrm{BS}} \left ( 
  \begin{array}{c}
    \begin{tikzpicture}[thick,scale=0.07,baseline]
      \node at (0,0) {$f$};
    \end{tikzpicture}
  \end{array}  \right )
:= \cE_{\Z'}(\varnothing) \xrightarrow{f} \cE_{\Z'}(\varnothing)[2m].
\]

\subsubsection{Dot morphisms}

Recall the inclusion $i_{1} \colon \Flag_{\leq 1} = \BKM/\BKM \into \GKM/\BKM$. 
We define the image of the upper dot morphism attached to a simple reflection $s \in S$ to be the
adjunction morphism:
\[
\Delta_{\mathrm{BS}} \left ( 
  \begin{array}{c}
    \begin{tikzpicture}[thick,scale=0.07,baseline]
      \draw (0,-5) to (0,0);
      \node at (0,0) {$\bullet$};
      \node at (0,-6.7) {\tiny $s$};
    \end{tikzpicture}
  \end{array}  \right )
:= a_* \colon  \cE_{\Z'}(s) \to (i_{1})_*(i_1)^*\cE_{\Z'}(s) = \cE_{\Z'}(\varnothing)[1].
\]
(Because $(f \circ g)^* \cong g^* f^*$ for two maps $f$ and $g$, we have canonically $(i_{1})^* \cE_{\Z'}(s) = (i_{1})^*(p_s)^*\underline{\Z'}_{\pt}[1] =
\underline{\Z'}_{\BKM/\BKM}[1]$ where $p_s \colon \PKM_s/\BKM \to \pt$ denotes the projection.)

We define the image of the lower dot morphism attached to $s \in S$ to be the adjunction morphism:
\[
\Delta_{\mathrm{BS}} \left ( 
  \begin{array}{c}
    \begin{tikzpicture}[thick,baseline,xscale=0.07,yscale=-0.07]
      \draw (0,-5) to (0,0);
      \node at (0,0) {$\bullet$};
      \node at (0,-6.7) {\tiny $s$};
    \end{tikzpicture}
  \end{array}  \right )
:= a_! \colon \cE_{\Z'}(\varnothing) = (i_{1})_!(i_{1})^! \cE_{\Z'}(s)[1] \to \cE_{\Z'}(s)[1].
\]
(Because $(f \circ g)^! \cong g^! f^!$ for two maps $f$ and $g$, we have canonically $(i_1)^! \cE_{\Z'}(s)[1] =
(i_{1})^!\underline{\mathbb{D}}^{\Z'}_{\PKM_s/\BKM} = (i_{1})^!(p_s)^!\underline{\Z'}_{\pt} =
\underline{\Z'}_{\BKM/\BKM}$, where as above $p_s \colon \PKM_s/\BKM \to \pt$ denotes the projection. Here
the identification $\cE_{\Z'}(s) = \underline{\Z'}_{\PKM_s/\BKM} [1]= \underline{\mathbb{D}}^{\Z'}_{\PKM_s/\BKM}[-1]$ is canonical because
$\PKM_s/\BKM$ is smooth of complex dimension $1$ and we
have chosen once and for all $\sqrt{-1} \in \C$.)

\subsubsection{Trivalent vectices}

We fix a simple reflection $s \in S$.

\begin{lem}
\label{lem:Ess}
There exists an isomorphism $\cE_{\Z'}(ss) \cong \cE_{\Z'}(s)[1] \oplus \cE_{\Z'}(s)[-1]$.
\end{lem}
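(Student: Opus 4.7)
My proof plan for Lemma~\ref{lem:Ess} is as follows.

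First, by Lemma~\ref{lem:convolution-Ew} we have a canonical identification $\cE_{\Z'}(ss) \cong \cE_{\Z'}(s) \star^{\BKM} \cE_{\Z'}(s)$. Unwinding the definition of the convolution product from~\S\ref{ss:DbX}, this object is $\mathsf{m}_* \underline{\Z'}_{\BSvar(s,s)}[2]$, where $\mathsf{m} \colon \BSvar(s,s) = \PKM_s \times^{\BKM} (\PKM_s/\BKM) \to \Flag$ is the map induced by multiplication in $\GKM$. So the lemma reduces to establishing an isomorphism
\[
\mathsf{m}_* \underline{\Z'}_{\BSvar(s,s)} \cong \underline{\Z'}_{\PKM_s/\BKM} \oplus \underline{\Z'}_{\PKM_s/\BKM}[-2],
\]
with the right-hand side viewed as a complex on $\Flag$ via $(i_s)_*$.

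Next, I would observe that $\mathsf{m}$ factors through the closed embedding $\PKM_s/\BKM \hookrightarrow \Flag$ (since $\PKM_s$ is a group); abusing notation let $\mathsf{m} \colon \BSvar(s,s) \to \PKM_s/\BKM$ also denote the induced map. This map is in fact a trivial $\mathbb{P}^1$-bundle: concretely, the morphism
\[
\BSvar(s,s) \to (\PKM_s/\BKM) \times (\PKM_s/\BKM), \qquad (g_1, g_2\BKM) \mapsto (g_1\BKM, g_1 g_2\BKM)
\]
is an isomorphism under which $\mathsf{m}$ becomes the projection onto the second factor, and under which the first projection becomes the standard projection $\BSvar(s,s) \to \PKM_s/\BKM$ making $\BSvar(s,s)$ into a $\mathbb{P}^1$-bundle over $\PKM_s/\BKM$ in the usual way.

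From here the result follows by the Künneth formula (or equivalently the projective bundle formula) applied to the trivial $\mathbb{P}^1$-bundle $\mathsf{m}$: the pushforward of the constant sheaf along the projection of a product decomposes as a tensor product involving $\mathsf{H}^\bullet(\mathbb{P}^1_{\C}, \Z')$, and this cohomology is a free $\Z'$-module with generators in degrees $0$ and $2$, giving the desired decomposition $\underline{\Z'}_{\PKM_s/\BKM} \oplus \underline{\Z'}_{\PKM_s/\BKM}[-2]$. Shifting by $[2]$ and using $\cE_{\Z'}(s) = \underline{\Z'}_{\PKM_s/\BKM}[1]$ then yields the claimed isomorphism $\cE_{\Z'}(ss) \cong \cE_{\Z'}(s)[1] \oplus \cE_{\Z'}(s)[-1]$. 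The only step that requires any care is the freeness of $\mathsf{H}^\bullet(\mathbb{P}^1_{\C}, \Z')$ ensuring that the decomposition holds integrally over $\Z'$ and not merely rationally; since $\Z' \in \{\Z, \Z[\tfrac12]\}$ and $\mathsf{H}^\bullet(\mathbb{P}^1_{\C}, \Z)$ is already free, this poses no real difficulty (and the analogous input in the \'etale setting is the standard computation of the cohomology of $\mathbb{P}^1$).
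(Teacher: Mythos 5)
Your proof is correct and follows essentially the same route as the paper's: both identify $\BSvar(ss)$ with $(\PKM_s/\BKM) \times (\PKM_s/\BKM) \cong \mathbb{P}^1 \times \mathbb{P}^1$ so that the multiplication map becomes the second projection, and then conclude via the projection formula/K\"unneth together with the freeness of the cohomology of $\mathbb{P}^1$ over $\Z'$. The only difference is cosmetic: the paper merely asserts the existence of such an identification, while you write out the isomorphism $(g_1,g_2\BKM) \mapsto (g_1\BKM, g_1g_2\BKM)$ explicitly.
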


\begin{proof}
There exist isomorphisms
\[
\BSvar(ss) \cong \PKM_s/\BKM \times \PKM_s/\BKM \cong \mathbb{P}^1 \times \mathbb{P}^1
\]
such that the morphism $\nu_{ss}$ identifies with the second projection $\mathbb{P}^1 \times \mathbb{P}^1 \to \mathbb{P}^1$. Then the decomposition follows e.g.~from the projection formula together with the known description of the cohomology of $\mathbb{P}^1$.
\end{proof}

In particular, from Lemma~\ref{lem:Ess} we deduce the following isomorphisms:
\begin{gather*}
  \Hom(\cE_{\Z'}(s), \cE_{\Z'}(ss)[-1]) = \Hom(\cE_{\Z'}(s), \cE_{\Z'}(s)) =
  \mathsf{H}^0_{\BKM}(\PKM_s/\BKM; \Z')  = \Z', \\ 
  \Hom(\cE_{\Z'}(ss), \cE_{\Z'}(s)[-1]) = \Hom(\cE_{\Z'}(s)[-1], \cE_{\Z'}(s)[-1]) =
  \mathsf{H}^0_{\BKM}(\PKM_s/\BKM; \Z') = \Z'.
\end{gather*}

\begin{lem}
\label{lem:morphisms-ss-s}
\begin{enumerate}
\item
\label{it:morphisms-ss-s-1}
Composition with the morphism
\[
\cE_{\Z'}(ss)[-1] \simto
\cE_{\Z'}(s) \star^{\BKM} \cE_{\Z'}(s) [-1] \xrightarrow{\cE_{\Z'}(s) \star^\BKM a_* [-1]} \cE_{\Z'}(s) \star^\BKM \cE_{\Z'}(\varnothing) \simto \cE_{\Z'}(s)
\]
induces an isomorphism
\[
\Hom_{\Db_{\BKM}(\Flag, \Z')}(\cE_{\Z'}(s), \cE_{\Z'}(ss)[-1]) \simto \Hom_{\Db_{\BKM}(\Flag, \Z')}(\cE_{\Z'}(s), \cE_{\Z'}(s)).
\]
\item
\label{it:morphisms-ss-s-2}
Composition with the morphism
\[
\cE_{\Z'}(s)[-1] \simto \cE_{\Z'}(s) \star^\BKM \cE_{\Z'}(\varnothing)[-1] \xrightarrow{\cE_{\Z'}(s) \star^\BKM a_! [-1]} \cE_{\Z'}(s) \star^\BKM \cE_{\Z'}(s) \simto \cE_{\Z'}(ss)
\]
induces an isomorphism
\[
\Hom_{\Db_{\BKM}(\Flag, \Z')}(\cE_{\Z'}(ss), \cE_{\Z'}(s)[-1]) \simto \Hom_{\Db_{\BKM}(\Flag, \Z')}(\cE_{\Z'}(s)[-1], \cE_{\Z'}(s)[-1]).
\]
  \end{enumerate}
\end{lem}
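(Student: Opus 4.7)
I will prove part~(1); part~(2) is entirely dual (replacing $a_*$ by $a_!$ and the adjunction unit by the counit, or alternatively by applying Verdier duality, which is compatible with convolution). As noted immediately before the statement, both the source and target of the map in~(1) are free $\Z'$-modules of rank one, so it suffices to check that the induced map is an isomorphism.

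The key geometric input is an explicit description of the morphism $\cE_{\Z'}(s) \star^{\BKM} a_*$. The Bott--Samelson variety $\BSvar(ss) = \PKM_s \times^{\BKM} \PKM_s/\BKM$ is isomorphic to $\mathbb{P}^1 \times \mathbb{P}^1$ via $[g_1, g_2 \BKM] \mapsto (g_1 \BKM, g_1 g_2 \BKM)$; under this isomorphism $\nu_{ss}$ becomes the second projection $p_2$, and the closed subvariety $\PKM_s \times^{\BKM} \{1 \cdot \BKM\}$---which supports the convolution $\cE_{\Z'}(s) \star^{\BKM} \cE_{\Z'}(\varnothing) \cong \cE_{\Z'}(s)$---corresponds to the diagonal $\Delta \subset \mathbb{P}^1 \times \mathbb{P}^1$. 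Consequently $\cE_{\Z'}(ss) \cong (p_2)_* \underline{\Z'}_{\mathbb{P}^1 \times \mathbb{P}^1}[2]$, and the morphism $\cE_{\Z'}(s) \star^{\BKM} a_* \colon \cE_{\Z'}(ss) \to \cE_{\Z'}(s)[1]$ is identified with the pushforward by $p_2$ of the adjunction unit $\underline{\Z'}_{\mathbb{P}^1 \times \mathbb{P}^1}[2] \to \Delta_* \Delta^* \underline{\Z'}_{\mathbb{P}^1 \times \mathbb{P}^1}[2]$, followed by the canonical identification $(p_2)_* \Delta_* \underline{\Z'}_{\mathbb{P}^1}[2] = \underline{\Z'}_{\mathbb{P}^1}[2] \cong \cE_{\Z'}(s)[1]$.

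With this description, equivariant base change and adjunction identify the two Hom-groups in~(1) with $\mathsf{H}^0_{\BKM}(\mathbb{P}^1 \times \mathbb{P}^1; \Z')$ and $\mathsf{H}^0_{\BKM}(\mathbb{P}^1; \Z')$ respectively---each canonically equal to $\Z'$---and the composition in the statement becomes the pullback $\Delta^*$ along the diagonal. In degree zero, $\Delta^*$ sends the unit class to the unit class and is therefore an isomorphism $\Z' \to \Z'$, which completes the proof of~(1). The main technical point will be pinning down the isomorphism $\BSvar(ss) \cong \mathbb{P}^1 \times \mathbb{P}^1$ and verifying that the section $\PKM_s \times^\BKM \{1 \cdot \BKM\}$ really corresponds to the diagonal (rather than to some other degree-one curve), but this is a direct unwinding of the definition of the convolution product.
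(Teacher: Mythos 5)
Your proof is correct and rests on the same geometric input as the paper's: the identification $\BSvar(ss)\cong \mathbb{P}^1\times\mathbb{P}^1$ with $\nu_{ss}$ the second projection and the convolution section corresponding to the diagonal. The paper packages this as a choice of the splitting $\cE_{\Z'}(ss)\cong \cE_{\Z'}(s)[1]\oplus\cE_{\Z'}(s)[-1]$ compatible with the morphism (plus the vanishing of $\Hom(\cE_{\Z'}(s),\cE_{\Z'}(s)[-2])$), whereas you compute the induced map on $\mathsf{H}^0_{\BKM}$ directly as $\Delta^*$; these are the same argument in different clothing.
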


\begin{proof}
Statement~\eqref{it:morphisms-ss-s-1} follows from the observation that one can choose the decomposition $\cE_{\Z'}(ss) \cong \cE_{\Z'}(s)[1] \oplus \cE_{\Z'}(s)[-1]$ of Lemma~\ref{lem:Ess} so that our morphism identifies with the shift by $[-1]$ of the projection on the first factor. The proof of~\eqref{it:morphisms-ss-s-2} is similar.
%
\end{proof}

We now define
\[
b_1 \in \Hom_{\Db_{\BKM}(\Flag, \Z')}(\cE_{\Z'}(s), \cE_{\Z'}(ss)[-1]) \ \text{and} \
b_2 \in \Hom_{\Db_{\BKM}(\Flag, \Z')}(\cE_{\Z'}(ss), \cE_{\Z'}(s)[-1]) 
\]
to be the unique elements which map to the 
identity in $\Hom_{\Db_{\BKM}(\Flag, \Z')}(\cE_{\Z'}(s), \cE_{\Z'}(s))$ and $\Hom_{\Db_{\BKM}(\Flag, \Z')}(\cE_{\Z'}(s)[-1], \cE_{\Z'}(s)[-1])$ respectively under the isomorphisms of
Lemma~\ref{lem:morphisms-ss-s}. We set:
\[
\Delta_{\mathrm{BS}} \left ( 
  \begin{array}{c}
    \begin{tikzpicture}[thick,baseline,scale=0.07]
      \draw (-4,5) to (0,0) to (4,5);
      \draw (0,-5) to (0,0);
      \node at (0,-6.7) {\tiny $s$};
      \node at (-4,6.4) {\tiny $s$};
            \node at (4,6.4) {\tiny $s$};
    \end{tikzpicture}
  \end{array} \right ) := b_1
\qquad \text{and} \quad
\Delta_{\mathrm{BS}} \left ( 
  \begin{array}{c}
    \begin{tikzpicture}[thick,baseline,scale=-0.07]
      \draw (-4,5) to (0,0) to (4,5);
      \draw (0,-5) to (0,0);
      \node at (0,-6.7) {\tiny $s$};
            \node at (-4,6.4) {\tiny $s$};
            \node at (4,6.4) {\tiny $s$};
    \end{tikzpicture}
  \end{array} \right ) := b_2.
\]

\subsubsection{$2m_{st}$-valent vectices}

Fix $s, t \in S$ and define
\[
m_{st} := \begin{cases} 2 & \text{if $\langle \alpha_{i_s}^\vee, \alpha_{i_t}
    \rangle = \langle \alpha_{i_t}^\vee, \alpha_{i_s} \rangle = 0$;} \\
3 & \text{if $\langle \alpha_{i_s}^\vee, \alpha_{i_t}
    \rangle \langle \alpha_{i_t}^\vee, \alpha_{i_s} \rangle = 1$;} \\
4 & \text{if $\langle \alpha_{i_s}^\vee, \alpha_{i_t}
    \rangle \langle \alpha_{i_t}^\vee, \alpha_{i_s} \rangle = 2$;} \\
6 & \text{if $\langle \alpha_{i_s}^\vee, \alpha_{i_t}
    \rangle \langle \alpha_{i_t}^\vee, \alpha_{i_s} \rangle = 3$;} \\
\infty & \text{if $\langle \alpha_{i_s}^\vee, \alpha_{i_t}
    \rangle \langle \alpha_{i_t}^\vee, \alpha_{i_s} \rangle > 3$.}
\end{cases}
\]
Then $m_{st}$ is the order of $st \in W$. 

From now on we fix a pair $s,t \in S$ with $m_{st} < \infty$ and
abbreviate $m := m_{st}$. Set
\[
w_I := sts \cdots \qquad \text{(with $m$ terms)}
\]
and let $W_I := \langle s, t \rangle$.
To simplify notation we also set
\begin{gather*}
 \cF_s :=  \cE_{\Z'}(st \cdots) \qquad \text{($m$ terms)},\\
 \cF_t :=  \cE_{\Z'}(ts \cdots) \qquad \text{($m$ terms)}.
\end{gather*}

\begin{lem} \label{lem:mZ}
The $\Z'$-module $\Hom_{\Db_{\BKM}(\Flag, \Z')}(\cF_s, \cF_t)$ is free of rank $1$.
\end{lem}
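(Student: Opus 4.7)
The plan is to show that both $\cF_s$ and $\cF_t$ are isomorphic (as objects of $\Db_{\BKM}(\Flag,\Z')$) to the shifted constant sheaf on the smooth Schubert variety $\Flag_{\leq w_I}$, and then to compute the resulting endomorphism ring directly.

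First, I would assemble the basic geometric ingredients. The subgroup $W_I := \langle s, t \rangle$ is a finite dihedral group of order $2m$, with longest element $w_I = sts\cdots$ of length $m$, and both $\uw_s := sts\cdots$ (with $m$ terms) and $\uw_t := tst\cdots$ (with $m$ terms) are reduced expressions for $w_I$. Using the sub-word characterization of the Bruhat order, one sees that $\Flag_{\leq w_I} = \bigsqcup_{w \in W_I} \Flag_w = \PKM_I/\BKM$, which is the full flag variety of the rank-$2$ reductive Levi quotient of $\PKM_I$; in particular it is smooth, connected, and projective of complex dimension $m$.

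Next, I would argue that for any reduced expression $\uw$ of $w_I$ the Bott--Samelson resolution $\nu_{\uw}\colon \BSvar(\uw) \to \Flag_{\leq w_I}$ is an isomorphism. Indeed, $\nu_{\uw}$ is proper, surjective and birational between smooth projective varieties of the same dimension $m$; upper semicontinuity of fibre dimension together with birationality forces all fibres to be finite, so $\nu_{\uw}$ is finite, and since $\Flag_{\leq w_I}$ is normal, Zariski's main theorem yields an isomorphism. Consequently, writing $i \colon \Flag_{\leq w_I} \hookrightarrow \Flag$ for the closed embedding, there are canonical isomorphisms $\cF_s \cong i_* \underline{\Z'}_{\Flag_{\leq w_I}}[m] \cong \cF_t$.

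Finally, using the adjunction $(i^*, i_*)$ for the closed embedding together with the identification $i^* i_* \cong \mathrm{id}$, I would compute
\[
\Hom_{\Db_{\BKM}(\Flag,\Z')}(\cF_s, \cF_t) \cong \Hom_{\Db_{\BKM}(\Flag_{\leq w_I},\Z')}\!\bigl( \underline{\Z'}_{\Flag_{\leq w_I}}[m],\, \underline{\Z'}_{\Flag_{\leq w_I}}[m] \bigr) \cong \mathsf{H}^0_{\BKM}(\Flag_{\leq w_I}; \Z').
\]
Since $\Flag_{\leq w_I}$ is connected and $\BKM$ is a connected group, the right-hand side is free of rank $1$ over $\Z'$ (generated by the identity endomorphism), which gives the lemma.

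The step that requires most care is showing that $\nu_{\uw}$ is an isomorphism; should one wish to bypass this classical fact, an alternative is a character-theoretic argument, using that $\underline{H}_{s}\underline{H}_{t}\cdots = \underline{H}_{w_I}$ in the Hecke algebra (a consequence of the defining properties of the Kazhdan--Lusztig basis), together with the smoothness of $\Flag_{\leq w_I}$ (which guarantees $\cE_{w_I}^{\Z'} = \underline{\Z'}_{\Flag_{\leq w_I}}[m]$) and Krull--Schmidt for parity complexes, to identify $\cF_s$ and $\cF_t$ with $\cE_{w_I}^{\Z'}$.
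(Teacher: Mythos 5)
Your argument has a genuine gap at its central step: the Bott--Samelson map $\nu_{\uw} \colon \BSvar(\uw) \to \Flag_{\leq w_I}$ is \emph{not} an isomorphism once $m \geq 3$, and the inference you use to get finiteness of fibres is invalid. A proper birational morphism between smooth projective varieties of the same dimension can perfectly well have positive-dimensional fibres (any blow-down is an example); upper semicontinuity of fibre dimension only says the bad locus is closed, not empty, so you cannot conclude quasi-finiteness and then invoke Zariski's main theorem. Concretely, take $m=3$ (type $\mathbf{A}_2$): in the Hecke algebra one has $\uH_s \uH_t \uH_s = \uH_{sts} + \uH_s$, so over $\Q$ the decomposition theorem gives $\Q(\cF_s) \cong \mathcal{IC}_{w_I} \oplus \mathcal{IC}_s$, which is decomposable and therefore cannot be the shifted constant sheaf on the smooth variety $\Flag_{\leq w_I}$. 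Equivalently, $\nu_{sts}$ has fibre $\mathbb{P}^1$ over the points of $\Flag_s$. The same identity shows that your fallback argument also fails: $\uH_s\uH_t\cdots$ ($m$ factors) equals $\uH_{w_I}$ plus genuinely nonzero lower terms for $m \geq 3$, so it is not a ``consequence of the defining properties of the Kazhdan--Lusztig basis'' that the product collapses to $\uH_{w_I}$, and $\cF_s$ is not isomorphic to $\cE_{w_I}$.

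The correct route (and the one the paper takes) accepts that $\cF_s$ and $\cF_t$ are decomposable and exploits that their extra summands are \emph{different}. One first reduces to $\Q$-coefficients: the $\Z'$-module in question is free by the general freeness statement for Hom spaces between Bott--Samelson parity complexes, and its rank equals $\dim_\Q \Hom_{\Db_{\BKM}(\Flag,\Q)}(\Q(\cF_s),\Q(\cF_t))$. Over $\Q$, Kazhdan--Lusztig theory gives $\Q(\cF_s) \cong \mathcal{IC}_{w_I} \oplus \bigoplus_{x \neq w_I,\, sx<x} \mathcal{IC}_x^{\oplus m_x}$ and $\Q(\cF_t) \cong \mathcal{IC}_{w_I} \oplus \bigoplus_{x \neq w_I,\, tx<x} \mathcal{IC}_x^{\oplus m_x}$; these semisimple perverse sheaves share exactly one simple constituent, namely $\mathcal{IC}_{w_I}$, so the Hom space is one-dimensional. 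Your computation of $\Hom$ via $i^*i_* \cong \id$ would be fine \emph{if} the identification of $\cF_s$ and $\cF_t$ with the constant sheaf held, but it does not, so the argument as written does not establish the lemma.
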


\begin{proof}
By Lemma~\ref{lem:morphisms-parityBS}, the $\Z'$-module under consideration is free, and we have
\[
\Q \otimes_{\Z'} \Hom_{\Db_{\BKM}(\Flag, \Z')}(\cF_s, \cF_t)\cong \Hom_{\Db_{\BKM}(\Flag, \Q)}(\Q(\cF_s) , \Q(\cF_t)).
\]
By Kazhdan--Lusztig theory and a calculation in the Hecke algebra we have isomorphisms:
\[
\Q(\cF_s) \cong \mathcal{IC}_{w_I} \oplus \bigoplus_{\substack{x \in W_I,
  \\ x \ne w_I,
  sx < x}} \mathcal{IC}_x^{\oplus m_x},
\qquad
\Q(\cF_t) \cong \mathcal{IC}_{w_I} \oplus \bigoplus_{\substack{x \in W_I,
  \\ x \ne w_I,
  tx < x}} \mathcal{IC}_x^{\oplus m_x}
\]
for some integers $m_x$, where $\mathcal{IC}_y$ is the intersection cohomology complex associated with the constant rank-$1$ $\Q$-local system on $\Flag_y$.
In particular, $\Q(\cF_s)$ and $\Q(\cF_t)$ are semi-simple perverse sheaves and only
have one simple factor in common. Hence $\Hom(\Q(\cF_s),
\Q(\cF_t)) = \Q$, and the claim follows.
\end{proof}

\begin{lem}
 \label{lem:split}
We have decompositions
\[
\cF_s = \underline{\Z'}_{\overline{\Flag_{w_I}}}[m] \oplus
  \cC_s \quad \text{and} \quad \cF_t = \underline{\Z'}_{\overline{\Flag_{w_I}}}[m] \oplus \cC_t
\]
where $\cC_s, \cC_t$ are complexes supported on $\overline{\Flag_{w_I}} \smallsetminus \Flag_{w_I}$.
\end{lem}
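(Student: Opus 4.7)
My plan is to exploit the fact that, since $W_I = \langle s, t \rangle$ is finite, the closure $\overline{\Flag_{w_I}}$ identifies with the finite-type partial flag variety $\PKM_{\{i_s, i_t\}}/\BKM$ and is therefore smooth of complex dimension $m$. In particular, $\underline{\Z'}_{\overline{\Flag_{w_I}}}[m]$ is the intersection cohomology complex of $\overline{\Flag_{w_I}}$ and is itself a parity complex; it coincides with the indecomposable parity complex $\cE_{w_I}$ supported on $\overline{\Flag_{w_I}}$.

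To construct the splitting of $\cF_s$, I would use the Bott--Samelson resolution $\nu := \nu_{st\cdots} \colon \BSvar(st\cdots) \to \overline{\Flag_{w_I}}$, which is proper and birational and restricts to an isomorphism over the open cell $\Flag_{w_I}$. Since both source and target are smooth of the same complex dimension $m$, the duality identification $\nu^!\omega_{\overline{\Flag_{w_I}}} = \omega_{\BSvar(st\cdots)}$ yields $\nu^!\underline{\Z'}_{\overline{\Flag_{w_I}}} = \underline{\Z'}_{\BSvar(st\cdots)}$. The unit of the adjunction $(\nu^*,\nu_*)$ and the counit of the adjunction $(\nu_! = \nu_*, \nu^!)$, both applied to $\underline{\Z'}_{\overline{\Flag_{w_I}}}[m]$, then furnish natural morphisms
\[
\alpha_s \colon \underline{\Z'}_{\overline{\Flag_{w_I}}}[m] \to \cF_s \qquad \text{and} \qquad \beta_s \colon \cF_s \to \underline{\Z'}_{\overline{\Flag_{w_I}}}[m].
\]

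The main step is then to verify that $\beta_s \circ \alpha_s = \mathrm{id}$, after which setting $\cC_s := \ker(\beta_s)$ gives the desired decomposition $\cF_s \cong \underline{\Z'}_{\overline{\Flag_{w_I}}}[m] \oplus \cC_s$, with $\cC_s$ automatically supported on $\overline{\Flag_{w_I}} \smallsetminus \Flag_{w_I}$ because the splitting is already an isomorphism on that open stratum. To check this identity, I would observe that $\End(\underline{\Z'}_{\overline{\Flag_{w_I}}}[m]) = \mathsf{H}^0(\overline{\Flag_{w_I}};\Z') = \Z'$ by connectedness, and that the restriction map to $\mathsf{H}^0(\Flag_{w_I};\Z') = \Z'$ is the identity of $\Z'$, hence injective. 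Over $\Flag_{w_I}$, where $\nu$ is an isomorphism, both $\alpha_s$ and $\beta_s$ reduce to the identity map of $\underline{\Z'}_{\Flag_{w_I}}[m]$, so $\beta_s \circ \alpha_s$ restricts to the identity and therefore equals the identity globally. The decomposition for $\cF_t$ follows from the identical argument applied to $\nu_{ts\cdots}$. The principal subtlety, should one expand this sketch into a full proof, lies in carefully setting up the Grothendieck duality identification $\nu^!\underline{\Z'}_{\overline{\Flag_{w_I}}} = \underline{\Z'}_{\BSvar(st\cdots)}$ underlying $\beta_s$; once that is in place, the remaining steps are formal manipulations with adjunction.
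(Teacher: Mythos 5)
Your proposal is correct and follows essentially the same route as the paper: both construct a map $\alpha_s\colon \underline{\Z'}_{\overline{\Flag_{w_I}}}[m]\to\cF_s$ from the resolution $\nu_{st\cdots}$ (the paper via the truncation $\tau_{\le -m}\cF_s\cong\underline{\Z'}_{\overline{\Flag_{w_I}}}[m]$, you via the adjunction unit, which agree), obtain $\beta_s$ by duality/the $(\nu_!,\nu^!)$ counit using smoothness of $\overline{\Flag_{w_I}}=\PKM_{\{i_s,i_t\}}/\BKM$ and of the Bott--Samelson variety, and verify $\beta_s\circ\alpha_s=\mathrm{id}$ by restricting to the open cell, where $\End(\underline{\Z'}_{\overline{\Flag_{w_I}}}[m])\to\Z'$ is an isomorphism. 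The only cosmetic caveat is that "$\cC_s:=\ker(\beta_s)$" should be read as the complementary summand cut out by the idempotent $\alpha_s\beta_s$ (or the cone of $\alpha_s$), since kernels are not available in the triangulated category.
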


\begin{proof}
By symmetry it is enough to show one decomposition. Set $\uw:=st \cdots$ (with $m$ terms);
then by definition we have $\cF_s = \nu_{\uw*} \underline{\Z'}_{\BSvar(\uw)}[m]$. The morphism $\nu_{\uw}$ is birational and has connected fibers, see~\cite[Lemme~32 on p.~69]{mathieu0}. Hence if
$\tau_{\le -m}$ denotes the truncation for the standard t-structure,
we have a map
\[
\alpha \colon \underline{\Z'}_{\overline{\Flag_{w_I}}}[m] \cong \tau_{\le -m} \cF_s \to \cF_s
\]
which is an isomorphism over the Schubert cell $\Flag_{w_I} \subset
\Flag$. Dualizing $\a$ we get a map (by the self-duality of both
sheaves)
\[
\beta \colon \cF_s \to \underline{\Z'}_{\overline{\Flag_{w_I}}}[m]
\]
which is again an isomorphism over $\Flag_{w_I} \subset \Flag$. Hence the composition
\[
\underline{\Z'}_{\overline{\Flag_{w_I}}}[m] \to \cF_s \to \underline{\Z'}_{\overline{\Flag_{w_I}}}[m] 
\]
is an isomorphism, since the morphism
\[
\Hom(\underline{\Z'}_{\overline{\Flag_{w_I}}}[m],  \underline{\Z'}_{\overline{\Flag_{w_I}}}[m]) \to \Hom(\underline{\Z'}_{\Flag_{w_I}}[m], \underline{\Z'}_{\Flag_{w_I}}[m]) \cong \Z'
\]
induced by restriction
is an isomorphism.
Hence $\underline{\Z'}_{\overline{\Flag_{w_I}}}[m]$ is a summand of $\cF_s$ as claimed. Then we can write
\[
\cF_{s} = \underline{\Z'}_{\overline{\Flag_{w_I}}}[m] \oplus \cC_s,
\]
and the above considerations imply that $i_{w_I}^*\cC_s = 0$.
\end{proof}

Since both $\nu_{st \cdots}$ and $\nu_{ts \cdots}$ (with $m$ terms in both expressions once again) are isomorphisms over $\Flag_{w_I}$, we have canonical isomorphisms
\begin{equation} \label{eq:stalk}
\mathsf{H}^{-m}((\cF_s)_{w_I \BKM/\BKM}) = \mathsf{H}^0(\pt,\Z') = \mathsf{H}^{-m}((\cF_t)_{w_I\BKM/\BKM}).
\end{equation}
We let
\[
f_{s,t} \colon \cF_s \to \cF_t
\]
be the unique morphism which restricts to the identity on the stalk at $w_I \BKM/\BKM$ under the
identification~\eqref{eq:stalk}. (The uniqueness of
such a map follows from Lemma  \ref{lem:mZ}, and the existence
follows from Lemma \ref{lem:split}.)
Then we define the image of the $2m_{st}$-valent vertex to be $f_{s,t}$:
\[
\Delta_{\mathrm{BS}} \left (
\begin{tikzpicture}[yscale=0.5,xscale=0.3,baseline,thick]
\draw (-2.5,-1) to (0,0) to (-1.5,1);
\draw (-0.5,-1) to (0,0) to (0.5,1);
\draw (1.5,-1) to (0,0) to (2.5,1);
\draw[red] (-1.5,-1) to (0,0) to (-2.5,1);
\draw[red] (0.5,-1) to (0,0) to (-0.5,1);
\draw[red] (2.5,-1) to (0,0) to (1.5,1);
\node at (-2.5,-1.3) {\tiny $s$};
\node at (-1.5,1.3) {\tiny $s$};
\node at (-0.5,-1.3) {\tiny $\cdots$};
\node at (-1.5,-1.3) {\tiny $t$};
\node at (-2.5,1.3) {\tiny $t$};
\node at (-0.5,1.3) {\tiny $\cdots$};
    \end{tikzpicture}
    \right ) := 
f_{s,t}.
\]

\subsection{Verification of the relations}
\label{ss:verification-Diag-parity}

Now we prove that the morphisms defined in~\S\ref{ss:construction-Diag-parity} satisfy the relations of the definition of~$\DiagBS^{\Z'}(\GKM)$. In fact, using Lemma~\ref{lem:morphisms-parityBS} once again, we see that it is enough to check that the similar morphisms for coefficients $\Q$ satisfy the desired relations. For simplicity, we will use the same notation as in~\S\ref{ss:construction-Diag-parity}, but now our ring of coefficients is $\Q$ instead of $\Z'$. Also, we set $R:=\cO(\fh_\GKM^\Q) \cong \mathsf{H}^\bullet_{\BKM}(\pt; \Q)$ (see~\eqref{eqn:equiv-cohom-pt}), we denote by $R-\mathsf{Bim}^\Z$ the category of $\Z$-graded $R$-bimodules, and we denote by $\langle 1 \rangle$ the functor of shift of the grading, normalized\footnote{Note that this normalization is opposite to the choice in~\cite{mr} or~\cite{ar}.} by $(M\langle 1 \rangle)_n=M_{n+1}$.

The total cohomology functor $\mathsf{H}^\bullet_{\BKM}(\Flag, -)$ induces a functor
\begin{equation}
\label{eqn:functor-H}
\Par^{\mathrm{BS}}_{\BKM}(\Flag, \Q) \to \mathsf{H}^\bullet_{\BKM}(\Flag; \Q)-\mathsf{Mod}^\Z,
\end{equation}
where the right-hand side denotes the category of graded $\mathsf{H}^\bullet_{\BKM}(\Flag; \Q)$-modules. Composition with the ``restriction of scalars'' functor associated with the natural morphism
\[
R \otimes_{\Q} R \to \mathsf{H}^\bullet_{\BKM}(\Flag; \Q)
\]
induced by~\eqref{eqn:equiv-cohom-pt} (for the two actions of $\BKM$ on $\GKM$), we obtain a functor
\[
\bbH \colon \Par^{\mathrm{BS}}_{\BKM}(\Flag, \Q) \to R-\mathsf{Bim}^\Z.
\]

The following facts are known about $\bbH$:
\begin{itemize}
\item[(H1)]
\label{it:H1}
$\bbH$ is monoidal, i.e.~there exists canonical isomorphisms
\[
\bbH(\cF \star \cG) \cong \bbH(\cF) \otimes_R \bbH(\cG)
\]
for $\cF, \cG$ in $\Par^{\mathrm{BS}}_{\BKM}(\Flag, \Q)$;
\item[(H2)]
\label{it:H2}
we have $\bbH \bigl( (s,0) \bigr) = R \otimes_{R^s} R \langle 1 \rangle$ and $\bbH \bigl( (\varnothing,n) ) = R \langle n\rangle$ for any $s \in S$ and $n \in \Z$.
\item[(H3)]
\label{it:H3}
$\bbH$ is faithful.
\end{itemize}
(To prove (H3), we remark that the functor~\eqref{eqn:functor-H} is fully-faithful by standard arguments, see~\cite[\S 3.3]{by} or~\cite[\S 3.9]{mr}, and that a ``restriction of scalars'' functor is always faithful.)

We now calculate the image of our morphisms under $\bbH$. For simplicity we abbreviate $B_s^B := \bbH \bigl( (s,0) ) = R \otimes_{R^s} R \langle 1 \rangle$.
\begin{enumerate}
\item
\emph{Polynomials:}
$\bbH(\Delta_{\mathrm{BS}}(f))$ for $f \in \mathrm{Sym}^m(\Q \otimes_\Z \Lambda)$ is given by
  multiplication by $f$ on $R$ (as is immediate from
  the definitions).
\item \emph{The upper dot:} Because $B_s^B$ and $R$ are both cyclic as $R$-bimodules
  any morphism
\[
B_s^B \to R \langle 1 \rangle
\]
is a scalar multiple of the morphism $m \colon B_s^B \to R \langle 1 \rangle \colon f \otimes g \mapsto fg$.
From the definitions $\bbH(a_*)$ must induce the identity in degree $-1$. Hence
$\bbH(a_*) = m$.
\item
\emph{The lower dot:}
It is easy to see that the space of graded $R$-bimodule homomorphisms
\[
R \to B_s^B \langle 1 \rangle
\]
is of dimension 1, with generator $\delta$, where $\delta$ is given by
$\delta(1) = \frac{1}{2}(\alpha_s \otimes 1 + 1 \otimes \alpha_s)$. In
particular, we have $\bbH(a_!) = x \cdot \delta$ for some $x \in \Q$.

However, the composition
\begin{multline*}
(i_1)_!(i_{1})^! \cE_{\Q}(s) \to \cE_{\Q}(s) \to (i_{1})_*(i_1)^*\cE_{\Q}(s) \in \Hom(\cE_{\Q}(\varnothing)[-1],
\cE_{\Q}(\varnothing)[1] ) \\
= \mathsf{H}_{\BKM}^2(\pt,\Q)
\end{multline*}
is given by the $\BKM$-weight on the tangent space of $\PKM_s/\BKM$ at $\BKM/\BKM$,
which is $\alpha_s$. We conclude that $x = 1$, or in other words that
$\bbH(a_!) = \delta$.
\item \emph{The trivalent vertices.}
Any choice of isomorphism $R = R^s \oplus R^s \langle -2 \rangle$ of $R^s$-bimodules gives a decomposition
\[
B_s^B \otimes_R B_s^B = B_s^B \langle 1 \rangle \oplus B_s^B \langle -1 \rangle.
\]
In particular, we have
\begin{align*}
\Hom(B_s^B, B_s^B \otimes_R B_s^B \langle -1 \rangle) = \Q, \\
\Hom(B_s^B \otimes_R B_s^B, B_s^B \langle -1 \rangle) = \Q.
\end{align*}
Moreover, these spaces are generated by the maps
\begin{gather*}
t_1 \colon f \otimes g \mapsto f \otimes 1 \otimes g \quad \in \Hom(B_s^B, B_s^B \otimes_R B_s^B \langle -1 \rangle)\\
t_2 \colon f \otimes g \otimes h \mapsto f (\partial_s g) \otimes h \quad \in \Hom(B_s^B \otimes_R B_s^B, B_s^B \langle -1 \rangle)
\end{gather*}
where in both cases we identify $B_s^B \otimes_R B_s^B = R
\otimes_{R^s} R \otimes_{R^s} R \langle 2 \rangle$. (In the formula for $t_2$, $\partial_s$ is the Demazure operator associated with $s$; see~\cite[\S 3.3]{ew}.) In particular, $\bbH(b_i)$ is a
scalar multiple of $t_i$ for $i \in \{1, 2\}$.

However one checks easily that one has
\begin{align*}
(t_2 \langle 1 \rangle) \circ ( \delta \otimes_R \id_{B_s^B})
= \id_{B_s^B}, \\
(m \langle -1 \rangle \otimes_R \id_{B_s^B}) \circ t_1
= \id_{B_s^B}. 
\end{align*}
Hence
\[
\bbH(b_1) = t_1 \quad \text{and} \quad \bbH(b_2) = t_2
\]
as follows from applying $\bbH$
to the defining properties of $b_1$, $b_2$.
\item
\emph{$2m_{st}$-valent vertices.}
Let $\ux = (s,t, \cdots)$ and $\uy = (t, s, \cdots)$, where both sequences have $m_{st}$ elements.
Then by (H1) and (H2) we have
\begin{align*}
\bbH(\ux) = B_s^B \otimes_R B_t^B \otimes_R \dots \quad
\text{($m_{st}$ terms),} \\
\bbH(\uy) = B_t^B \otimes_R B_s^B \otimes_R \dots \quad
\text{($m_{st}$ terms).}
\end{align*}
Analogously to the definition of the image of the $2m_{st}$-valent
vertex one can use the theory of Soergel bimodules to see that the
space of graded $R$-bimodule homomorphisms
$\bbH(\ux)  \to \bbH(\uy)$
is of dimension 1 over $\Q$, see~\cite[Proposition 4.3]{libedinsky}. It follows from the definition of $f_{s,t}$ that
$\bbH(f_{s,t})$ is the unique morphism
\[
B_s^B \otimes_R B_t^B \otimes_R \dots \to B_t^B \otimes_R B_s^B \otimes_R \dots 
\]
which induces the identity in degree $-m_{st}$.
\end{enumerate}

To conclude the proof, by faithfulness of $\bbH$ we only have to check that the morphisms of $R$-bimodules considered above satisfy the relations of~\cite{ew}. Each of these relations involves a subset $S'$ of $S$ (of cardinality at most $3$) which generates a finite subgroup $W'$ of $W$. Let us fix some relation, and the corresponding subset $S'$. Let $\mathfrak{k}_{S'} \subset \fh^\Q_{\GKM}$ be the intersection of the kernels of the images of the roots $\alpha_s$ for $s \in S'$, and let $\mathfrak{l}_{S'} \subset \fh^\Q_{\GKM}$ be the subspace generated by the images of the coroots $\alpha_s^\vee$ for $s \in S'$. Then since $W'$ is finite the matrix $(\langle \alpha_s^\vee, \alpha_t \rangle)_{s,t \in S'}$ is invertible, which implies that $\dim(\mathfrak{l}_{S'}) = \# S'$ and that $\fh^\Q_{\GKM} = \mathfrak{k}_{S'} \oplus \mathfrak{l}_{S'}$. This decomposition reduces the verification of the relation to the similar relation in the case of the realization of $(W',S')$ given by $\mathfrak{l}_{S'}$, i.e.~by the standard Cartan realization. In this case the relations can be (and have been) checked by computer, using the localization method described in~\cite[\S 5.5]{ew}.


\begin{rmk}
\label{rmk:Soergel-H}
As explained above,
the relations in the definition of $\DiagBS^\K(\GKM)$ only involve subsets of $S$ which generate a \emph{finite} parabolic subgroup of $W$. Hence in the proof above one only needs the fully-faithfulness of~\eqref{eqn:functor-H} in the case when $\GKM$ is an ordinary complex connected reductive group. In this generality this result was first proved by Soergel, see~\cite[Proposition~2]{soergel-Langlands}. 
\end{rmk}

\subsection{Fully-faithfulness of $\Delta_{\mathrm{BS}}$}
\label{ss:Diag-parity-ff}

To conclude the proof of Theorem~\ref{thm:main-parity-BS}, it remains to prove that our functor $\Delta_{\mathrm{BS}}$ is fully-faithful. Using~\eqref{eqn:Diag-K-K'} and Lemma~\ref{lem:morphisms-parityBS},
one sees that it is enough to prove fully-faithfulness over $\Z'$.

\begin{lem}
\label{lem:graded-ranks}
Let $\uw$ and $\uv$ be expressions. The graded $\cO(\fh_\GKM^{\Z'})$-modules
\[
\Hom_{\DiagBS^{\Z'}(\GKM)}^\bullet(B_{\uw}, B_{\uv}) \quad \text{and} \quad \bigoplus_{n \in \Z}Ê\Hom_{\Par^{\mathrm{BS}}_{\BKM}(\Flag,\Z')}((\uw,0), (\uv,n))
\]
are free of finite rank, and they have the same graded rank.
\end{lem}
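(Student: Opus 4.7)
The plan is to show that both graded $R$-modules (where $R := \cO(\fh_\GKM^{\Z'}) \cong \mathsf{H}^\bullet_\BKM(\pt;\Z')$ acts via either of its two actions) are free, and to compute their graded ranks via the same combinatorial expression in $v$. Specifically, I would aim to verify that both graded ranks equal
\[
\sum_{x \in W} \pi_{\uw}(x) \cdot \pi_{\uv}(x),
\]
where $\pi_{\ux}(x) := \sum_{\ue\subset \ux,\ \ux^{\ue}=x} v^{d(\ue)}$ is the generating series, weighted by defect, of subexpressions of $\ux$ expressing $x\in W$. Here the sum over $x$ is finite because only $x \leq *\uw$ and $x \leq *\uv$ can contribute, in the Demazure-product sense used in Lemma~\ref{lem:Bx-Demazure-product}.

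For the diagrammatic side, this is a direct application of the double leaves theorem of Elias--Williamson. The double leaves basis of~\cite[Proposition~6.12 and Theorem~6.11]{ew} provides, for any two expressions $\uw$ and $\uv$, an explicit basis of $\Hom^\bullet_{\DiagBS^{\Z'}(\GKM)}(B_{\uw},B_{\uv})$ as a free left $R$-module, indexed by pairs $(\ue,\uf)$ with $\ue\subset\uw$, $\uf\subset\uv$ and $\uw^{\ue}=\uv^{\uf}$, and with explicit degree $d(\ue)+d(\uf)$. Summing over $x\in W$ the contributions corresponding to pairs with common endpoint $x$ yields exactly the formula above.

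For the parity side, I would argue by a standard \emph{d\'evissage along Schubert cells}. Because $\cE_{\Z'}(\uw)$ and $\cE_{\Z'}(\uv)$ are parity complexes, the module $\bigoplus_n \Hom((\uw,0),(\uv,n))$ is free of finite rank over $R$ by~\cite[Corollary~2.8]{jmw} applied together with~\eqref{eqn:equiv-cohom-pt}, and its graded rank can be computed stratum by stratum as
\[
\sum_{x\in W} \rk_{R}\bigl(\mathsf{H}^\bullet_\BKM(\Flag_x;\, (i_x)^*\cE_{\Z'}(\uw))\bigr)\cdot \rk_{R}\bigl(\mathsf{H}^\bullet_\BKM(\Flag_x;\, (i_x)^!\cE_{\Z'}(\uv))\bigr),
\]
with appropriate degree shifts. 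The Bott--Samelson resolution $\nu_{\uw}\colon \BSvar(\uw)\to \Flag$ is an iterated $\mathbb{P}^1$-bundle, and over each Schubert cell $\Flag_x$ its fibre is paved by affine spaces indexed by subexpressions $\ue\subset\uw$ with $\uw^{\ue}=x$, each contributing a class of known degree determined by the defect $d(\ue)$. By proper base change for $\nu_{\uw}$ one obtains that $\mathsf{H}^\bullet_\BKM(\Flag_x;(i_x)^*\cE_{\Z'}(\uw))$ is free of graded rank $\pi_{\uw}(x)$ over $R$ (up to a shift depending on $\ell(x)$ and $\ell(\uw)$, which cancels in the symmetric expression), and dually for $(i_x)^!$, giving the same formula.

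The main obstacle is the freeness of stalks and costalks of Bott--Samelson parity complexes with integral coefficients, which underpins the graded-rank computation on the parity side. This freeness is however standard in the theory of parity sheaves once one observes that each fibre of $\nu_{\uw}$ over a Schubert cell admits a cellular filtration by affine bundles indexed by subexpressions, so that the iterated application of the Leray--Serre-type argument (or equivalently of~\cite[Proposition~2.6]{jmw} applied inductively to the $\mathbb{P}^1$-bundle structure of $\BSvar(\uw)$) yields freeness and the claimed graded rank. Once both ranks are known to equal $\sum_{x\in W}\pi_{\uw}(x)\pi_{\uv}(x)$, the lemma follows.
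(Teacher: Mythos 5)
Your proposal is correct, but it takes a somewhat different route from the paper's. The paper first uses adjunction (the self-biadjointness of $B_s$ and of $(-)\star^\BKM \cE_s$) to reduce to the case $\uv=\varnothing$, and then identifies the graded rank of each side with the coefficient of $H_{\id}$ in the product $\uH_{s_1}\cdots\uH_{s_r}$ — via \cite[Lemma~2.10 \& Proposition~6.12]{ew} on the diagrammatic side, and via the character computation of \cite[\S 3.10]{mr} applied to $(i_1)^!\cE_{\Z'}(\uw)$ on the parity side. You instead keep both expressions in play and compute each graded rank directly as $\sum_{x}\pi_{\uw}(x)\pi_{\uv}(x)$: the full double leaves basis (indexed by pairs $(\ue,\uf)$ with common endpoint, in degree $d(\ue)+d(\uf)$) on one side, and the stalk/costalk d\'evissage along Schubert cells on the other. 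Both arguments rest on the same two pillars — the double leaves theorem and the fact that the character map sends $\cE_{\Z'}(\uw)$ to $\uH_{\uw}=\sum_x\pi_{\uw}(x)H_x$ — so the difference is mostly one of bookkeeping: the paper's adjunction step lets it quote a one-variable statement, while your version requires the symmetric two-variable computation and some care with the duality relating costalks to stalks (the shift by $2\ell(x)$ and the attendant grading conventions), which you should make explicit so that the two sides are compared in the \emph{same} normalization. One small caveat: the literal claim that the fibre of $\nu_{\uw}$ over $\Flag_x$ is paved by affine cells indexed by \emph{all} subexpressions expressing $x$ deserves a reference or should be replaced by the inductive argument you also sketch (computing $i_x^*(\cF\star^\BKM\cE_s)$ via the $\mathbb{P}^1$-bundle $q^s$, which categorifies right multiplication by $\uH_s$); the latter is the route the paper's reference \cite[\S 3.10]{mr} actually takes and is watertight.
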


\begin{proof}[Sketch of proof]
For $\DiagBS^{\Z'}(\GKM)$, the freeness follows from~\cite[Theorem~6.11]{ew}. For the category $\Par^{\mathrm{BS}}_{\BKM}(\Flag,\Z')$ this follows from~\cite[Lemma~2.2(2)]{mr}, see~\S\ref{ss:parity-flag}.

To prove that the graded ranks coincide, using adjunction it suffices to consider the case $\uv=\varnothing$ (see e.g.~the arguments in~\S\ref{ss:proof-thm}). In this case, using~\cite[Lemma~2.10 \& Proposition~6.12]{ew}, if $\uw=s_1 \cdots s_r$ one sees that the graded rank of the left-hand side is the coefficient of $H_{\id}$ in the product of the Kazhdan--Lusztig elements $\uH_{s_i}$ in the Hecke algebra of $(W,S)$. On the other hand, the left-hand side is the dual of
\[
\mathsf{H}^\bullet_{\BKM} \bigl( (i_{1})^! \cE_{\Z'}(\uw) \bigr).
\]
The graded rank of the latter module can also be expressed in terms of the Hecke algebra of $(W,S)$ using the methods of ~\cite[\S 3.10]{mr} (which go back at least to~\cite{springer}).
The formula for the graded rank of $\bigoplus_{n \in \Z}Ê\Hom_{\Par^{\mathrm{BS}}_{\BKM}(\Flag,\Z')}((\uw,0), (\uv,n))$ obtained in this way is the same as for $\DiagBS^{\Z'}(\GKM)$, and the lemma follows.
\end{proof}


It is easy to see that a graded morphism $\phi$ between two graded free $\cO(\fh_\GKM^{\Z'})$-modules 
is an isomorphism iff $\F \otimes_{\Z'} \phi$ is an isomorphism for any field $\F$ such that there exists a ring morphism $\Z' \to \F$.
This remark, together with~\eqref{eqn:Diag-K-K'}, Lemma~\ref{lem:morphisms-parityBS} and Lemma~\ref{lem:graded-ranks}, reduce the proof of fully-faithfulness to the case of coefficients in a field $\F$ (which admits a ring morphism $\Z' \to \F$). Finally, it is easy to see that a morphism $\phi \colon M \to N$ between two graded free $\cO(\fh_\GKM^{\F})$-modules with the same graded rank is an isomorphism iff the composition
\[
M \xrightarrow{\phi} N \to \F \otimes_{\cO(\fh_\GKM^{\F})} N
\]
is surjective (where here $\F$ is considered as the trivial $\cO(\fh_\GKM^{\F})$-module). Hence, using~\eqref{eqn:morphisms-parity-For}, we have finally reduced the proof of the fact that $\Delta_{\mathrm{BS}}$ is fully-faithful to proving the following claim.

\begin{prop}
\label{prop:Delta-surjective}
For any field $\F$ such that there exists a morphism $\Z' \to \F$, and any expressions $\uw$ and $\uv$,
the morphism
\[
\gamma_{\uw,\uv} \colon \Hom^\bullet_{\DiagBS^{\F}(\GKM)}(B_{\uw}, B_{\uv}) \to \Hom^\bullet_{\Db_{(\BKM)}(\Flag, \F)}(\cE_{\F}(\uw), \cE_{\F}(\uv))
\]
induced by the composition of $\Delta_{\mathrm{BS}}$ with the forgetful functor is surjective.
\end{prop}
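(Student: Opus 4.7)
The plan is to adapt the proof of Proposition~\ref{prop:morphisms-Phit} from~\S\ref{ss:surjectivity}, with Proposition~\ref{prop:morphisms-BS-parity} playing the role of Proposition~\ref{prop:morphisms-BS}. I would introduce the ``localized'' variant
\[
\beta_{\ux,\uv} \colon \Hom^\bullet_{\DiagBS^{\F}(\GKM)}(B_{\ux}, B_{\uv}) \to \Hom^\bullet_{\Db_{(\BKM)}(\Flag_{\geq x}, \F)}(\cE_{\F}(\ux), \cE_{\F}(\uv))
\]
obtained by composing $\gamma_{\ux,\uv}$ with restriction to $\Flag_{\geq x}$; the core technical step is then to prove by induction on $\ell(\uv)$ that $\beta_{\ux,\uv}$ is surjective for every reduced expression $\ux$ (of any $x \in W$) and every $\uv$. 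The base case $\ell(\uv) = 0$ follows from~\eqref{eqn:equiv-cohom-pt} and the construction of $\Delta_{\mathrm{BS}}$ on polynomials, since the target vanishes unless $\ux = \varnothing$. For the inductive step, write $\uv = \uv'' s$ and distinguish cases according to whether $xs > x$ (apply Proposition~\ref{prop:morphisms-BS-parity}\eqref{it:morphisms-BS-parity-1} directly) or $xs < x$ (use Lemma~\ref{lem:rex-move-smaller} to replace $\ux$ by a reduced expression of the form $\uy s$, then apply Proposition~\ref{prop:morphisms-BS-parity}\eqref{it:morphisms-BS-parity-2}). In both cases, the Proposition reduces surjectivity of $\beta_{\ux,\uv}$ to the inductive hypothesis — applied at the shorter target $\uv''$, for both sources $\ux$ and $\ux s$ (resp. $\uy$) — combined with surjectivity of the ``bridging'' maps $\beta_{\uy,\uy s}$, $\beta_{\uy,\uy ss}$, $\beta_{\uy s,\uy s}$, $\beta_{\uy s,\uy ss}$, which is the parity-sheaf analogue of Lemma~\ref{lem:morphisms-Phit}.

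These bridging-map surjectivities reduce to concrete small computations. The crucial case $\beta_{\uy,\uy s}$ amounts to checking that the lower-dot morphism $\cE_\F(\varnothing) \to \cE_\F(s)[1]$ — which by construction is the image under $\Delta_{\mathrm{BS}}$ of $B_{\varnothing} \to B_s\langle 1\rangle$ — produces, after convolution with the identity on $\cE_\F(\uy)$, a non-zero element of the one-dimensional localized Hom-space
\[
\Hom^\bullet_{\Db_{(\BKM)}(\Flag_{\geq y},\F)}(\cE_\F(\uy), \cE_\F(\uy s)).
\]
This non-vanishing is immediate from the fact that $a_! \colon \cE_\F(\varnothing) \to \cE_\F(s)[1]$ restricts to a generator of the stalk at the base point of $\PKM_s/\BKM$. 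The other three bridging cases are easier, involving trivalent-vertex or identity images together with the decomposition of Lemma~\ref{lem:Ess}. A key auxiliary observation, which streamlines the argument, is that since we are proving surjectivity of $\beta$ rather than of $\gamma$, the morphisms $f'_i, g'_j$ produced by Proposition~\ref{prop:morphisms-BS-parity} need only be known up to modification by something in the kernel of restriction to $\Flag_{\geq x}$, and such $\gamma$-lifts are supplied by the bridging-map surjectivity.

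Once $\beta_{\ux,\uv}$ has been shown to be surjective for every reduced $\ux$, I would deduce the full proposition by induction on $\ell(\uw)$. The base case $\uw = \varnothing$ is immediate: $\Flag_{\geq 1} = \Flag$, so $\gamma_{\varnothing, \uv} = \beta_{\varnothing,\uv}$, which is known surjective. For the inductive step, write $\uw = s\uw'$. The object $\cE_\F(s) \in \Par_\BKM(\Flag,\F)$ is self-biadjoint under left convolution (up to cohomological shift), with unit and counit given by the geometric adjunction morphisms $a_!$ and $a_*$; since $\Delta_{\mathrm{BS}}$ sends the diagrammatic adjunction data (dot morphisms) precisely to $a_!$ and $a_*$, the induced natural isomorphisms fit into a commutative square relating $\gamma_{s\uw', \uv}$ to $\gamma_{\uw', s\uv}$ (up to shift), and surjectivity descends from the latter to the former. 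Iterating, we reduce to $\uw = \varnothing$, which has already been handled. The main obstacle will be Step~2 — in particular the bridging-map analogue of Lemma~\ref{lem:morphisms-Phit} and the careful tracking of the double induction — while the adjunction reduction in Step~3 is essentially bookkeeping, enabled by the careful choice of $\Delta_{\mathrm{BS}}$ on the dot morphisms.
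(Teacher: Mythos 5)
Your proposal is correct and follows essentially the same route as the paper: reduce to $\uw=\varnothing$ by the adjunctions coming from the images of the dot/trivalent morphisms, then prove the stronger localized statement (your $\beta_{\ux,\uv}$, the paper's $\delta_{\uw,\uv}$ of Proposition~\ref{prop:surjectivity-parity}) by induction on $\ell(\uv)$ with the same case division, the same use of rex moves and Proposition~\ref{prop:morphisms-BS-parity}, and the same bridging lemma (Lemma~\ref{lem:surjectivity-parity}) whose key point is the non-vanishing of the lower-dot image in the one-dimensional localized Hom-space. Your observation that the $f'_i,g'_j$ only need $\gamma$-lifts up to the kernel of restriction is exactly how the paper uses the bridging lemma.
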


As in the proof of Theorem~\ref{thm:main} (see~\S\ref{ss:proof-thm}), using adjunction one can reduce the proof of Proposition~\ref{prop:Delta-surjective} to the case when $\uw=\varnothing$. This special case follows from the following more general claim.

\begin{prop}
\label{prop:surjectivity-parity}
For $\F$ as in Proposition~{\rm \ref{prop:Delta-surjective}}, any reduced expression $\uw$ for an element $w \in W$ and any expression $\uv$, the composition
\begin{multline*}
\delta_{\uw,\uv} \colon \Hom^\bullet_{\DiagBS^{\F}(\GKM)}(B_{\uw}, B_{\uv}) \xrightarrow{\gamma_{\uw,\uv}}  \Hom^\bullet_{\Db_{(\BKM)}(\Flag, \F)}(\cE_{\F}(\uw), \cE_{\F}(\uv)) \\
\to \Hom^\bullet_{\Db_{(\BKM)}(\Flag_{\geq w}, \F)}(\cE_{\F}(\uw), \cE_{\F}(\uv))
\end{multline*}
(where the second arrow is induced by restriction to $\Flag_{\geq w}$) is surjective.
\end{prop}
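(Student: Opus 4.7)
The plan is to prove Proposition \ref{prop:surjectivity-parity} by induction on $\ell(\uv)$, closely mirroring the proof of Proposition \ref{prop:morphisms-Phit} in Part \ref{pt:general-conj}. The base case $\uv = \varnothing$ is immediate: the complex $\cE_\F(\varnothing) = \underline{\F}_{\Flag_{\leq 1}}$ is supported on $\Flag_{\leq 1}$, so its restriction to $\Flag_{\geq w}$ vanishes unless $w = 1$. When $w = 1$ we necessarily have $\uw = \varnothing$, and both sides of $\delta_{\varnothing, \varnothing}$ identify with $R = \mathsf{H}^\bullet_\BKM(\mathrm{pt}; \F)$ via the polynomial generators, so $\delta_{\varnothing, \varnothing}$ is surjective (in fact bijective).

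For the inductive step I would write $\uv = \uv' s$ and split into two cases according to the Bruhat order. In Case A ($w < ws$), the reduced expression $\uw$ cannot end in $s$, and $\uw s$ is a reduced expression for $ws$. I would apply Proposition \ref{prop:morphisms-BS-parity}\eqref{it:morphisms-BS-parity-1}: by the inductive hypothesis applied to $\uv'$ (with source expressions $\uw$ and $\uw s$), one can choose spanning families $(f_i)$ and $(g_j)$ in the images of $\delta_{\uw, \uv'}$ and $\delta_{\uw s, \uv'}$ respectively. The proposition then produces auxiliary morphisms $f'_i \colon \cE_\F(\uw) \to \cE_\F(\uw s)[n_i]$ and $g'_j \colon \cE_\F(\uw) \to \cE_\F(\uw s s)[m_j]$ whose convolutions with the $f_i$ and $g_j$ span the target $\Hom$-space on $\Flag_{\geq w}$. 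In Case B ($ws < w$), I would first observe that any two reduced expressions for $w$ yield parity complexes whose restrictions to $\Flag_{\geq w}$ are both canonically isomorphic to $\cE_w$, and that an isomorphism between them can be realized by a rex-move morphism lying in the image of $\Delta_{\mathrm{BS}}$ (the parity analogue of Lemma \ref{lem:rex}, which follows from Lemmas \ref{lem:BsBs} and \ref{lem:rex-move-smaller} together with fully-faithfulness of $\Delta_{\mathrm{BS}}$ on the lower-length part already handled by induction). This reduces to the situation $\uw = \uy s$, where Proposition \ref{prop:morphisms-BS-parity}\eqref{it:morphisms-BS-parity-2} applies analogously.

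To make either case go through, I need a preparatory lemma — the parity analogue of Lemma \ref{lem:morphisms-Phit} — showing that for a reduced expression $\uw$ of $w \in W$, the morphisms $\delta_{\uw,\uw s}$ and $\delta_{\uw,\uw s s}$ (and $\delta_{\uw,\uy}$, when $\uw = \uy s$) are surjective. This is what guarantees that the auxiliary morphisms $f'_i, g'_j$ produced by Proposition \ref{prop:morphisms-BS-parity} can themselves be chosen inside the image of $\Delta_{\mathrm{BS}}$. The key inputs are the decomposition $\cE_\F(ss) \cong \cE_\F(s)[1] \oplus \cE_\F(s)[-1]$ from Lemma \ref{lem:Ess} (which reduces the $\uw s s$ case to the $\uw s$ case), a computation of $\Hom^\bullet_{\Db_{(\BKM)}(\Flag_{\geq w},\F)}(\cE_w, \cE_{ws}[1])$ via the adjunction $((i_w)_!,(i_w)^!)$, and the identification of its generator as the image under $(-)\star^\BKM \cE_\F(\uw)$ of the lower-dot morphism $a_! \colon \cE_\F(\varnothing) \to \cE_\F(s)[1]$.

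The main obstacle is the preparatory lemma, and within it the verification that the diagrammatic lower-dot morphism produces a non-zero element of the one-dimensional space $\Hom^\bullet_{\Db_{(\BKM)}(\Flag_{\geq w}, \F)}(\cE_w, \cE_{ws}[1])$. This ultimately amounts to an adjunction and base-change computation, entirely parallel to the use of Lemma \ref{lem:lower-dot-not0} and Lemma \ref{lem:Hom-Dta-y-ys} in the proof of Lemma \ref{lem:morphisms-Phit}. Once this is established, the induction closes formally and Proposition \ref{prop:surjectivity-parity} — hence Proposition \ref{prop:Delta-surjective}, and by the reductions of \S\ref{ss:Diag-parity-ff} the full faithfulness of $\Delta_{\mathrm{BS}}$ and Theorem \ref{thm:main-parity-BS} — follows.
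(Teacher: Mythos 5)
Your strategy matches the paper's proof of Proposition~\ref{prop:surjectivity-parity} in all essentials: induction on $\ell(\uv)$, the two cases determined by whether $ws$ is longer or shorter than $w$, a rex-move reduction to $\uw=\uy s$ in the latter case, application of Proposition~\ref{prop:morphisms-BS-parity} to produce auxiliary morphisms, and a preparatory lemma (this is Lemma~\ref{lem:surjectivity-parity} in the text) to land those auxiliary morphisms in the image of $\gamma$. Your sketch of the preparatory lemma's proof --- reducing the $\uy s s$ case to the $\uy s$ case via Lemma~\ref{lem:Ess}, then computing $\Hom^\bullet_{\Db_{(\BKM)}(\Flag_{\geq y},\F)}(\cE_\F(\uy),\cE_\F(\uy s))$ and identifying the generator with the image of the lower dot --- is also the paper's argument (it restricts to $\Flag_y\sqcup\Flag_{ys}$ via $(i_y')^*$ rather than using $(i_w)^!$ adjunction, but the content is the same).

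Three small corrections. First, in the base case $\uw=\uv=\varnothing$ the codomain lives in the \emph{constructible}, not equivariant, derived category: $\cE_\F(\varnothing)$ is a skyscraper on a point, so the codomain is $\F$ in degree $0$, not $\mathsf{H}^\bullet_{\BKM}(\mathrm{pt};\F)$. The map is the augmentation $\cO(\fh_\GKM^\F)\twoheadrightarrow\F$, which is surjective (as required) but certainly not bijective. Second, your justification of rex-move invertibility by ``fully-faithfulness of $\Delta_{\mathrm{BS}}$ on the lower-length part already handled by induction'' is circular: the induction is on $\ell(\uv)$, not $\ell(\uw)$, and fully-faithfulness is precisely what we are trying to prove. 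The cleaner route, which you also gesture at, is to apply the already-constructed \emph{functor} $\Delta_{\mathrm{BS}}$ to the identity of Lemma~\ref{lem:rex-move-smaller}: the error terms factor through objects $\cE_\F(\uz_j)[k_j]$ with $\ell(\uz_j)<\ell(w)$, so the Demazure product $*\uz_j$ satisfies $*\uz_j<w$ and the support $\overline{\Flag_{*\uz_j}}$ misses $\Flag_{\geq w}$; hence the error terms vanish after restriction. This uses only functoriality of $\Delta_{\mathrm{BS}}$, never its faithfulness. (The paper is more direct still: each $2m_{st}$-valent morphism is by construction the identity on stalks at the open cell, so any rex-move composition restricts to an isomorphism.) Third, a typo in the statement of the preparatory lemma: where you write $\delta_{\uw,\uy}$ for $\uw=\uy s$ you should have $\delta_{\uw,\uw}$, the trivial case whose target is $\F\cdot\id$.
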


Before proving Proposition~\ref{prop:surjectivity-parity} in full generality, we begin with some special cases.

\begin{lem}
\label{lem:surjectivity-parity}
Let $\uy$ be a reduced expression for an element $y \in W$, and let $s \in S$ be a simple reflection such that $ys>y$ in the Bruhat order. Then the morphisms $\delta_{\uy s, \uy s}$, $\delta_{\uy s, \uy s s}$, $\delta_{\uy,\uy s}$ and $\delta_{\uy,\uy s s}$ are surjective.
\end{lem}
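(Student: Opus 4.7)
The plan is to mimic the proof of Lemma~\ref{lem:morphisms-Phit}, exploiting the close parallel between the representation-theoretic and parity-sheaf pictures. The four cases split into two after an elementary reduction, after which only one genuinely substantive computation remains.

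First I would reduce to the single-$s$ cases. The decomposition $\cE_\F(ss) \cong \cE_\F(s)[1] \oplus \cE_\F(s)[-1]$ from Lemma~\ref{lem:Ess}, together with the analogous decomposition $B_{ss} \cong B_s\langle 1\rangle \oplus B_s\langle -1\rangle$ from Lemma~\ref{lem:BsBs}, yields by convolution on the left with $\uy$ a splitting $\cE_\F(\uy ss) \cong \cE_\F(\uy s)[1] \oplus \cE_\F(\uy s)[-1]$ and similarly for $B_{\uy ss}$. By construction of $\Delta_{\mathrm{BS}}$ (whose values on the trivalent vertices were chosen to be the canonical maps $b_1$, $b_2$ realising the geometric splitting), these decompositions are intertwined by $\Delta_{\mathrm{BS}}$. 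This reduces $\delta_{\uy s, \uy ss}$ (resp.~$\delta_{\uy, \uy ss}$) to $\delta_{\uy s, \uy s}$ (resp.~$\delta_{\uy, \uy s}$).

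Two cases remain. For $\delta_{\uy s, \uy s}$, the stratum $\Flag_{ys}$ is closed in the open subvariety $\Flag_{\geq ys}$, so every summand $\cE_v$ of $\cE_\F(\uy s)$ with $v < ys$ restricts to zero; thus $\cE_\F(\uy s)|_{\Flag_{\geq ys}} \cong (i_{ys})_* \underline{\F}_{\Flag_{ys}}[\ell(ys)]$ and the target becomes $\mathsf{H}^\bullet(\Flag_{ys}, \F) \cong \F$ concentrated in degree $0$, which is manifestly hit by the identity morphism in the source.

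The substantive case is $\delta_{\uy, \uy s}$. Since $y \in W^s$, one has $(q^s)^{-1}(\Flag^s_{\geq y}) = \Flag_{\geq y}$; combined with proper base change and the isomorphism $\cE_\F(\uy) \star^\BKM \cE_s \cong (q^s)^!(q^s)_* \cE_\F(\uy)[-1]$ from \eqref{eqn:Upsilon-convolution}, the adjunction $(q^s)_* \dashv (q^s)^!$ gives
\[
\Hom^\bullet_{\Db_{(\BKM)}(\Flag_{\geq y})}(\cE_\F(\uy), \cE_\F(\uy s)) \;\cong\; \Hom^{\bullet-1}_{\Db_{(\BKM)}(\Flag^s_{\geq y})}((q^s)_* \cE_\F(\uy), (q^s)_* \cE_\F(\uy)).
\]
On $\Flag^s_{\geq y}$, the only surviving summand of $(q^s)_* \cE_\F(\uy)$ is $\cE^s_y$, restricting to $\underline{\F}_{\Flag^s_y}[\ell(y)]$ supported on the closed stratum, so the target is one-dimensional and concentrated in degree $1$. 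In the source, the lower dot provides a morphism $B_{\uy} \to B_{\uy s}\langle 1\rangle$ whose image under $\Delta_{\mathrm{BS}}$ is $\id_{\cE_\F(\uy)} \star a_!$. The crux of the argument is to identify this morphism, under the displayed adjunction isomorphism, with the unit $\eta_{\cE_\F(\uy)} \colon \cE_\F(\uy) \to (q^s)^!(q^s)_* \cE_\F(\uy)$, whose transpose is the identity on $(q^s)_* \cE_\F(\uy)$; this clearly hits the generator. The main obstacle is precisely this last identification, which demands unwinding the definition of $a_!$ as the counit of $((i_1)_!, (i_1)^!)$ for $i_1 \colon \Flag_{\leq 1}\hookrightarrow \Flag$ and matching it, after convolution on the left with $\cE_\F(\uy)$, against the unit of the $q^s$-adjunction --- a verification in terms of base-change and projection-formula isomorphisms that is routine but somewhat fiddly.
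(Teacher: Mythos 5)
Your reduction of the two double-$s$ cases via Lemma~\ref{lem:BsBs} (equivalently Lemma~\ref{lem:Ess} on the geometric side) and your treatment of $\delta_{\uy s,\uy s}$ coincide with the paper's argument. For the remaining case $\delta_{\uy,\uy s}$ you take a genuinely different route: you transport the computation to the partial flag variety via the adjunction $((q^s)_!,(q^s)^!)$ and the isomorphism $\cE_\F(\uy)\star^\BKM\cE_s\cong(q^s)^!(q^s)_*\cE_\F(\uy)[-1]$, whereas the paper restricts along $i_y'\colon\Flag_y\sqcup\Flag_{ys}\hookrightarrow\Flag$, where both objects become (pushforwards of) shifted constant sheaves and the Hom space is computed directly. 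Both routes give the same target ($\F$ concentrated in degree $1$), and your dimension count is correct (your identification of the preimage $(q^s)^{-1}(\Flag^s_{\geq y})=\Flag_{\geq y}$ implicitly uses the lifting property of the Bruhat order, which is fine). The trade-off lies in the final nonvanishing step: the identification you flag as the crux --- that $\id_{\cE_\F(\uy)}\star^\BKM a_!$ corresponds under your adjunction isomorphism to the unit $\id\to(q^s)^!(q^s)_!$, hence to $\id_{(q^s)_*\cE_\F(\uy)}$ --- is true, but is less routine than you suggest, since the isomorphism $(q^s)^!(q^s)_*\cong(-)\star^\BKM\cE_s[1]$ of Lemma~\ref{lem:qJ-parity} is produced from the $*$-version by conjugating with Verdier duality, so matching $a_!$ (the counit of $((i_1)_!,(i_1)^!)$) against the $q^s$-unit requires tracking that duality. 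The paper's restriction to $\Flag_y\sqcup\Flag_{ys}$ buys exactly this: after restriction the lower dot becomes an explicit map of constant sheaves whose nonvanishing is immediate, with no adjunction bookkeeping. If you carry out the identification you flag (or replace it by the restriction argument for the nonvanishing), your proof is complete.
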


\begin{proof}
Using Lemma~\ref{lem:BsBs} (but in $\DiagBS^\F(\GKM)$ now), as in the proof of Lemma~\ref{lem:morphisms-Phit} one reduces the case of $\delta_{\uy s, \uy s s}$ to that of $\delta_{\uy s, \uy s}$, and the case of $\delta_{\uy,\uy s s}$ to that of $\delta_{\uy,\uy s}$. The case of $\delta_{\uy s, \uy s}$ is obvious since
\[
\bigoplus_{n \in \Z} \Hom_{\Db_{(\BKM)}(\Flag_{\geq ys}, \F)}(\cE_{\F}(\uy s), \cE_{\F}(\uy s)[n]) = \F \cdot \id,
\]
hence only the case of $\delta_{\uy,\uy s}$ remains. In this case, recall the inclusion $i_y' \colon \Flag_y \sqcup \Flag_{ys} \hookrightarrow \Flag$. Since the Bott--Samelson resolution $\nu_{\uy s}$ is an isomorphism over the open subset $\Flag_y \sqcup \Flag_{ys} \subset \overline{\Flag_{ys}}$, we have
\[
(i_y')^* \cE_{\F}(\uy s) \cong \underline{\F}_{\Flag_y \sqcup \Flag_{ys}} [\ell(y)+1].
\]
Similarly we have
\[
(i_y')^* \cE_{\F}(\uy) \cong \underline{\F}_{\Flag_y} [\ell(y)]
\]
(where we omit the direct image functor associated with the closed embedding $\Flag_y \hookrightarrow \Flag_y \sqcup \Flag_{ys}$),
and the functor $(i'_y)^*$ induces an isomorphism
\begin{multline*}
\Hom^\bullet_{\Db_{(\BKM)}(\Flag_{\geq y}, \F)}(\cE_{\F}(\uy), \cE_{\F}(\uy s)) \simto \\
\Hom^\bullet_{\Db_{(\BKM)}(\Flag_{y} \sqcup \Flag_{ys}, \F)}(\underline{\F}_{\Flag_y} [\ell(y)], \underline{\F}_{\Flag_y \sqcup \Flag_{ys}} [\ell(y)+1]).
\end{multline*}
It is easily checked that the right-hand side is concentrated in degree $1$, and $1$-dimensional. It is also easy to see that the image of the morphism $\cE_{\F}(\uy) \to \cE_{\F}(\uy s)[1]$ induced by the lower dot morphism is non-zero, and the claim follows.
\end{proof}

\begin{proof}[Proof of Proposition~{\rm \ref{prop:surjectivity-parity}}]
We prove by induction on $\ell(\uv)$ that the statement holds for all reduced expressions $\uw$. If $\ell(\uv)=0$, i.e.~$\uv=\varnothing$, then the codomain of $\delta_{\uw,\uv}$ vanishes unless $\uw=\varnothing$, in which case it is spanned by the identity morphism; hence there is nothing to prove in this case.

Now, let $\uv$ be an expression such that $\ell(\uv)>0$. Write $\uv=\uu s$ where $s \in S$, 
and assume the result is known for $\uu$. We distinguish two cases.

\emph{Case~1:} $ws<w$. In this case, $w$ has a reduced expression $\uw'$ ending with $s$. It is clear from definitions that the restriction to $\Flag_w$ of the image under $\Delta_{\mathrm{BS}}$ of the morphism $\cE_\F(\uw) \to \cE_{\F}(\uw')$ induced by a rex move $\uw \leadsto \uw'$ (see~\S\ref{ss:rex-moves-DBS}) is invertible. Hence it is enough to prove the surjectivity of $\delta_{\uw',\uv}$; in other words we can assume (replacing $\uw$ by $\uw'$ if necessary) that $\uw=\ux s$ for some reduced expression $\ux$ (expressing $ws$). By induction, there exists a family $(f_i)_{i \in \mathfrak{I}}$ of homogeneous elements in the image of $\gamma_{\uw,\uu}$ whose image spans the vector space
\[
\Hom^\bullet_{\Db_{(\BKM)}(\Flag_{\geq w}, \F)}(\cE_{\F}(\uw), \cE_{\F}(\uu)),
\]
and a family $(g_j)_{j \in \mathfrak{J}}$ of homogeneous elements in the image of $\gamma_{\ux, \uu}$ whose image spans the vector space
\[
\Hom^\bullet_{\Db_{(\BKM)}(\Flag_{\geq ws}, \F)}(\cE_{\F}(\ux), \cE_{\F}(\uu)).
\]
By Proposition~\ref{prop:morphisms-BS-parity}\eqref{it:morphisms-BS-parity-2}, there exist morphisms
\[
f'_i \colon \cE_\F(\uw) \to \cE_\F(\uw s) [n_i], \qquad g'_j \colon \cE_{\F}(\uw) \to \cE_{\F}(\uw)[m_j]
\]
such that the images of the compositions
\[
(f_i \star^\BKM \cE_\F(s) [n_i]) \circ f'_i \quad \text{and} \quad (g_j \star^\BKM \cE_\F(s) [m_j]) \circ g'_j
\]
span the vector space
\begin{equation}
\label{eqn:hom-space-delta}
\Hom^\bullet_{\Db_{(\BKM)}(\Flag_{\geq w}, \F)}(\cE_{\F}(\uw), \cE_{\F}(\uv)).
\end{equation}
By Lemma~\ref{lem:surjectivity-parity} we can assume that the morphisms $f'_i$ are in the image of $\gamma_{\uw,\uw s}$, and that the morphisms $g'_j$ are in the image of $\gamma_{\uw, \uw}$; then we obtain that~\eqref{eqn:hom-space-delta} is spanned by images of vectors in the image of $\gamma_{\uw,\uv}$, and the claim follows.

\emph{Case~2:} $ws>w$. By induction, there exists a family $(f_i)_{i \in \mathfrak{I}}$ of homogeneous elements in the image of $\gamma_{\uw,\uu}$ whose image spans the vector space
\[
\Hom^\bullet_{\Db_{(\BKM)}(\Flag_{\geq w}, \F)}(\cE_{\F}(\uw), \cE_{\F}(\uu)),
\]
and a family $(g_j)_{j \in \mathfrak{J}}$ of homogeneous elements in the image of $\gamma_{\uw s, \uu}$ whose image spans the vector space
\[
\Hom^\bullet_{\Db_{(\BKM)}(\Flag_{\geq ws}, \F)}(\cE_{\F}(\uw s), \cE_{\F}(\uu)[n]).
\]
By Proposition~\ref{prop:morphisms-BS-parity}\eqref{it:morphisms-BS-parity-1}, there exist morphisms
\[
f'_i \colon \cE_\F(\uw) \to \cE_\F(\uw s) [n_i], \qquad g'_j \colon \cE_{\F}(\uw) \to \cE_{\F}(\uw s s)[m_j]
\]
such that the compositions
\[
(f_i \star^\BKM \cE_\F(s) [n_i]) \circ f'_i \quad \text{and} \quad (g_j \star^\BKM \cE_\F(s) [m_j]) \circ g'_j
\]
span the vector space
\begin{equation}
\label{eqn:hom-space-delta-2}
\Hom^\bullet_{\Db_{(\BKM)}(\Flag_{\geq w}, \F)}(\cE_{\F}(\uw), \cE_{\F}(\uv)).
\end{equation}
By Lemma~\ref{lem:surjectivity-parity} we can assume that the morphisms $f'_i$ are in the image of $\gamma_{\uw,\uw s}$, and that the morphisms $g'_j$ are in the image of $\gamma_{\uw, \uw ss}$; then we obtain that~\eqref{eqn:hom-space-delta-2} is spanned by images of vectors in the image of $\gamma_{\uw,\uv}$, and the claim follows.
\end{proof}



\subsection{The case of the affine flag variety}
\label{ss:affine-flag-variety}

We now explain the relation between the results stated in~\S\ref{ss:statement-Diag-parity} and the setting considered in
Sections~\ref{sec:blocks}--\ref{sec:Diag}. We consider a connected reductive algebraic group $G$ over an algebraically closed field $\F$ of characteristic $p$, and use the same notation as in~\S\ref{ss:definitions-G} (but do not assume that $p \geq h$).
We denote by $\Gv$ the 
connected reductive group over $\bbL$ which is
Langlands dual to $G$. That is to say, $\Gv$ has a fixed maximal torus
$\Tv \subset \Gv$ endowed with a fixed isomorphism $X^*(\Tv) \cong
X_*(T)$, such that the root datum of $(\Gv,\Tv)$ identifies with the
dual of the root datum of $(G,T)$. We also denote by $\Bv \subset
\Gv$ the Borel subgroup whose roots are the negative coroots of
$G$. Next, we denote by $\Gw$ the simply connected cover of 
the derived subgroup of 
$\Gv$. In particular, we have a natural group
morphism $\Gw \to \Gv$, and we denote by $\Tw$, resp.~$\Bw$, the
inverse image of $\Tv$, resp.~$\Bv$, under this morphism.

Let $\mathscr{K}:=\bbL( \hspace{-1.5pt} (z) \hspace{-1.5pt} )$ and $\mathscr{O}:=\bbL[ \hspace{-1.5pt} [z] \hspace{-1.5pt} ]$. Then we can consider the ind-group scheme $\GwK$ over $\bbL$ and its group subscheme $\GwO$. We define the Iwahori subgroup $\Iw \subset \GwO$ as the inverse image of $\Bw$ under the evaluation morphism $\GwO \to \Gw$ (at $z=0$). We can similarly define $\GvK$, $\GvO$ and $\Iv$, and consider the affine flag varieties
\[
\Fl^\vee := \GvK/\Iv, \qquad \Fl^\wedge:=\GwK/\Iw
\]
and their natural ind-variety structure.
The morphism $\Gw \to \Gv$ induces a closed embedding
\[
\Fl^\wedge \to \Fl^\vee
\]
which identifies $\Fl^\wedge$ with the connected component of the base point $\Iv/\Iv$ in $\Fl^\vee$. For a detailed account of these constructions, see e.g.~\cite{goertz}.

Both in the classical or \'etale setting of~\S\ref{ss:DbX}, we can consider the equivariant derived category $\Db_{\Iw}(\Fl^\wedge, \K)$.
It is well known that we have a ``Bruhat decomposition''
\[
\Fl^\wedge = \bigsqcup_{w \in \Waff} \Fl^\wedge_w
\]
where $\Fl^\wedge_w$ is the $\Iw$-orbit of the point of $\Fl^\wedge$ associated naturally with $w$, and that moreover $\Fl^\wedge_w$ is isomorphic to an affine space of dimension $\ell(w)$; see~\cite[Theorem~2.18]{goertz}. 
We also have ``Bott--Samelson varieties'' $\BSvar(\uw)$, hence 
we can consider the corresponding objects $\cE_{\K}(\uw)$ in 
$\Db_{\Iw}(\Fl^\wedge, \K)$, and the category $\Par^{\mathrm{BS}}_{\Iw}(\Fl^\wedge, \K)$ of $\Iw$-equivariant Bott--Samelson parity complexes on $\Fl^\wedge$. This category has a natural convolution product $\star^{\Iw}$, which makes it a monoidal category. Finally for any $s \in S$ we have a corresponding partial affine flag variety $\Fl^{\wedge,s}$ and a morphism $\Fl^\wedge \to \Fl^{\wedge, s}$; see e.g.~\cite{pappas-rapoport}.

In case $\K$ is complete local, one can also consider the monoidal category of all $\Iw$-equivariant parity complexes on $\Fl^\wedge$, denoted $\Par_{\Iw}(\Fl^\wedge, \K)$, which identifies with the Karoubi envelope of the additive hull of $\Par^{\mathrm{BS}}_{\Iw}(\Fl^\wedge, \K)$.

The proof of the following theorem, which provides a geometric description of the category $\Diag$ used in Sections~\ref{sec:Diag}--\ref{sec:main-conj}, is identical to the proof of Theorem~\ref{thm:main-parity} and Theorem~\ref{thm:main-parity-BS}. Here we set $\Z'=\Z$ if the morphisms $\alpha \colon \Z\Phi^\vee \to \Z$ and $\alpha^\vee \colon \Z\Phi \to \Z$ are surjective for any simple root $\alpha$, and $\Z'=\Z[\frac{1}{2}]$ otherwise. (In this statement, since we only consider the categories $\Diag$ and $\DiagBS$, we do not need to assume that~\eqref{eqn:assumption-pairing} is satisfied.)

\begin{thm}
\label{thm:Diag-parity-affine}
Assume that there exists a ring morphism $\Z' \to \K$. Then
there exists a canonical equivalence of monoidal categories
\[
\DiagBS \simto \Par_{\Iw}^{\mathrm{BS}}(\Fl^\wedge, \K).
\]
If $\K$ is furthermore a complete local ring, this equivalence
induces an equivalence of additive monoidal categories
\[
\Diag \simto \Par_{\Iw}(\Fl^\wedge, \K).
\]
\end{thm}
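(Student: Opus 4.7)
The strategy is to transpose the arguments leading to Theorems~\ref{thm:main-parity} and~\ref{thm:main-parity-BS} into the affine setting. The first step is to identify $\Fl^\wedge$ with the full flag ind-variety $\Flag = \GKM/\BKM$ attached (by Mathieu's construction recalled in~\S\ref{ss:KMgroups}) to the affine Kac--Moody root datum whose Weyl group is $\Waff$, whose set of simple reflections is $\Saff$, and whose realization over $\Z$ is the one used to define $\DiagBS$ in Part~\ref{pt:general-conj} (so that the Demazure surjectivity condition fails exactly when $\Z' = \Z[\tfrac12]$ is needed). Here $\Gw$ plays the role of the minimal (algebraic) Kac--Moody group and $\Iw$ the role of the Borel $\BKM$; the $\Iw$-orbit stratification matches the Bruhat decomposition on $\Flag$ labelled by $\Waff$, and the Bott--Samelson varieties agree on the two sides. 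Under this identification we get automatically a monoidal category $\Par^{\mathrm{BS}}_{\Iw}(\Fl^\wedge,\K)$ together with its objects $\cE_\K(\uw)$, and our morphism assumption on $\Z' \to \K$ is precisely what is needed for the realization to be balanced and to satisfy Demazure surjectivity over $\K$.

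Next, I would define the monoidal functor $\Delta_{\mathrm{BS}} \colon \DiagBS \to \Par^{\mathrm{BS}}_{\Iw}(\Fl^\wedge,\K)$ by mimicking~\S\ref{ss:construction-Diag-parity}: on objects $B_{\uw}\langle n \rangle \mapsto (\uw,n)$; on a polynomial decoration $f \in \mathrm{Sym}(\K \otimes_\Z \Z\Phi)$ by the corresponding class in equivariant cohomology $\mathsf{H}^\bullet_{\Iw}(\mathrm{pt};\K)$ via the Borel isomorphism~\eqref{eqn:equiv-cohom-pt}; on upper and lower dots by the adjunction units and counits for the closed inclusion of the base point; on trivalent vertices by the unique maps determined (over $\Z'$) by the splitting analogue of Lemma~\ref{lem:Ess} and the normalization of Lemma~\ref{lem:morphisms-ss-s}; and on $2m_{st}$-valent vertices by the unique morphism in the one-dimensional space of~\S\ref{ss:construction-Diag-parity} that induces the identity on the stalk at the top-dimensional Schubert cell (using Lemma~\ref{lem:split} and its affine analogue, which only depends on birationality and connected fibers of the Bott--Samelson maps).

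To verify that the relations of~\cite{ew} hold it suffices, by Lemma~\ref{lem:morphisms-parityBS} and~\eqref{eqn:Diag-K-K'}, to work over $\Q$; then all relations involve at most three simple reflections generating a \emph{finite} parabolic subgroup of $\Waff$, and Soergel's fully-faithfulness theorem (see Remark~\ref{rmk:Soergel-H}) combined with the splitting $\fh_{\Waff}^\Q = \mathfrak{k}_{S'} \oplus \mathfrak{l}_{S'}$ of the realization along any such finite $S' \subset \Saff$ reduces each check to the corresponding relation for the standard Cartan realization of a finite Weyl group, which is already verified. Essential surjectivity of $\Delta_{\mathrm{BS}}$ is then immediate from the definition.

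For fully-faithfulness, I would follow~\S\ref{ss:Diag-parity-ff} word-for-word, using~\eqref{eqn:Diag-K-K'}, Lemma~\ref{lem:morphisms-parityBS}, and the Hecke-algebraic graded-rank computation of Lemma~\ref{lem:graded-ranks} (which is purely about $\Waff$ and goes through unchanged) to reduce the problem to the surjectivity statement of Proposition~\ref{prop:Delta-surjective}, and from there via adjunction to Proposition~\ref{prop:surjectivity-parity} with $\uw = \varnothing$. The inductive proof of Proposition~\ref{prop:surjectivity-parity} rests on Proposition~\ref{prop:morphisms-BS-parity}, itself built from the two ``section of the $!$-flag'' propositions (Propositions~\ref{prop:section-!-push} and~\ref{prop:section-!-pull}); these are proved using only formal properties of the morphism $q^s \colon \Flag \to \Flag^s$ (proper, and with fibres stratified as $\Flag_w \sqcup \Flag_{ws}$), which all hold for the affine analogue $\Fl^\wedge \to \Fl^{\wedge,s}$. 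The final statement, passing to Karoubi envelopes when $\K$ is complete local, follows from the affine analogue of Lemma~\ref{lem:parity-Karoubi} together with the construction of $\Diag$. The main obstacle, and the only genuinely affine-specific point, is to be confident that the affine flag variety $\Fl^\wedge$ built from the loop group coincides with the Kac--Moody flag variety $\Flag$ obtained by Mathieu's construction from the affine root datum; once that identification is in place, everything in Sections~\ref{sec:parity} and~\ref{sec:parity-Hecke} transfers without change.
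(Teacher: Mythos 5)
Your overall approach is correct in spirit, but the very first step — identifying $\Fl^\wedge$ with the Kac--Moody flag variety $\Flag$ of Mathieu's construction applied to the affine root datum — is not available: the paper itself flags this as an open point in Remark~\ref{rmk:p-can-basis-realization}(1), where it says that such an identification would make Theorem~\ref{thm:Diag-parity-affine} an actual special case of Theorems~\ref{thm:main-parity} and~\ref{thm:main-parity-BS}, ``However, such a statement does not seem to be known.'' So you cannot simply reduce to the Kac--Moody theorem.

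What the paper actually does (``The proof... is identical to the proof of Theorem~\ref{thm:main-parity} and Theorem~\ref{thm:main-parity-BS}'') is not reduce the affine statement to the Kac--Moody one, but rather \emph{re-run} the same arguments directly on $\Fl^\wedge = \Gw(\mathscr{K})/\Iw$. The point is that the proof of Theorems~\ref{thm:main-parity}/\ref{thm:main-parity-BS} uses only a short list of geometric inputs: the Bruhat decomposition of the flag ind-variety into affine cells indexed by $W$, the Bott--Samelson resolutions $\nu_{\uw}$ (proper, birational, connected fibers), the projections $q^s$ to partial flag ind-varieties (proper, with the right fiber stratification, yielding the isomorphisms of Lemma~\ref{lem:qJ-parity}), the Borel isomorphism for $\BKM$-equivariant cohomology of a point, and the decomposition $\fh^{\Q} = \mathfrak{k}_{S'} \oplus \mathfrak{l}_{S'}$ along finite parabolic subsets $S'$. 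Every one of these is available directly for the loop group affine flag variety and its parahoric quotients $\Fl^{\wedge,s}$ (cf.\ the discussion in~\S\ref{ss:affine-flag-variety}, with references to G\"ortz and Pappas--Rapoport). Once you accept this, the remainder of your proposal — construction of $\Delta_{\mathrm{BS}}$ via the Borel isomorphism, dot and trivalent maps from adjunction, $2m_{st}$-valent map normalized by its stalk on the open cell, verification of relations over $\Q$ reduced to finite parabolics, and fully-faithfulness via the graded rank count (Lemma~\ref{lem:graded-ranks}) and the inductive surjectivity (Propositions~\ref{prop:Delta-surjective} and~\ref{prop:surjectivity-parity}, resting on the section-of-the-$!$-flag machinery) — goes through without change. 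So the fix is simply to drop the identification and argue directly on $\Fl^\wedge$; as written, though, your proof relies on a fact the paper goes out of its way to say is unproved.
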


\begin{rmk}
\label{rmk:p-can-basis-realization}
\begin{enumerate}
\item
\label{it:KM-affine}
Assume for simplicity that $G$ is quasi-simple.
Let $\Sigma^\vee$ be the set of simple coroots of $G$, which constitute a basis of the root system of $(\Gw, \Tw)$. To avoid confusion, for $\alpha \in \Phi^\vee$ we will denote by $\widetilde{\alpha} \in \Phi$ the corresponding coroot of $\Gw$.
Let $\theta$ be the highest root of $\Gw$, and let $A$ be the generalized Cartan matrix with rows and columns parametrized by $\Sigma^\vee \cup \{0\}$ and coefficients
\[
a_{\alpha,\beta} = \begin{cases}
\langle \beta, \widetilde{\alpha} \rangle & \text{if $\alpha, \beta \in \Sigma^\vee$;} \\
2 & \text{if $\alpha=\beta=0$;} \\
- \langle \beta, \widetilde{\theta} \rangle & \text{if $\beta \in \Sigma^\vee$ and $\alpha=0$;} \\
- \langle \theta, \widetilde{\alpha} \rangle & \text{if $\alpha \in \Sigma^\vee$ and $\beta = 0$.}
\end{cases}
\]
There exists a natural Kac--Moody root datum for $A$ with underlying $\Z$-module $\Hom_\Z(\Z\Phi,\Z)$, and with the simple roots and coroots defined in a way similar to the construction in~\S\ref{ss:diag-SB}. It is likely that the Kac--Moody group over $\bbL$ associated with this root datum is the group ind-scheme $\Gw(\mathscr{K})$ considered above. 
This fact would make Theorem~\ref{thm:Diag-parity-affine} an actual special case of Theorem~\ref{thm:main-parity} and Theorem~\ref{thm:main-parity-BS}.
However, such a statement does not seem to be known. (See~\cite[\S 9.h]{pappas-rapoport} for a comparison of the corresponding flag varieties.)
\item
Assume again that $G$ is quasi-simple.
Instead of the ``degenerate" Kac--Moody root datum of~\eqref{it:KM-affine}, one can consider the more traditional
Kac--Moody root datum consisting of $\Lambda = \Hom_\Z(\Z \Phi \oplus \Z c \oplus \Z d, \Z)$, the simple roots $\Sigma^\vee \cup \{\delta - \theta\}$ (where $\delta(d)=1$ and $\delta_{|\Z \Phi \oplus \Z c}=0$, and where $\Sigma^\vee$ is considered as a subset of $\Lambda$ in the obvious way) and the simple coroots $\Sigma \cup \{c-\widetilde{\theta}\}$. In~\S\ref{ss:intro-tilting-characters} we recalled the definition of the $p$-canonical basis of $\cH$ using the category $\Diag$. It might be more natural to define this $p$-canonical basis using the category $\Diag^\bk(\GKM)$, where $\GKM$ is the Kac--Moody group associated with $\Lambda$ and the above roots and coroots. 
But in fact the two definitions coincide. Indeed, we have natural $\Waff$-invariant morphisms
\[
\Z \Phi \oplus \Z c \oplus \Z d \twoheadrightarrow \Z \Phi \oplus \Z d \quad \text{and} \quad \Z\Phi \hookrightarrow \Z \Phi \oplus \Z d,
\]
which allow to construct monoidal functors
\[
\eta_1 \colon \Diag' \to \Diag^\bk(\GKM) \quad \text{and} \quad \eta_2 \colon \Diag' \to \Diag,
\]
where $\Diag'$ is the category from~\cite{ew} associated with the natural realization of $\Waff$ with underlying $\bk$-module $\bk \otimes_{\Z} (\Z \Phi \oplus \Z d)$. Hence to prove our claim it suffices to prove that $\eta_1$ and $\eta_2$ send indecomposable objects to indecomposable objects. However, if we define the category $\overline{\Diag}$ with the same objects as $\Diag$ and morphisms from $M$ to $N$ given by the degree-$0$ part of
\[
\bk \otimes_{\cO(\bk \otimes_\Z \Z\Phi)}
\Hom^\bullet_{\Diag}(M,N),
\]
then $\overline{\Diag}$ is a Krull--Schmidt category, and an object in $\Diag$ is indecomposable if and only if its image in $\overline{\Diag}$ is indecomposable. Similar remarks apply to $\Diag'$ and $\Diag^\bk(\GKM)$. This suffices to conclude since, by
the double leaves theorem~\cite[Theorem~6.11]{ew}, for any $M,N$ in $\Diag'$ the morphisms
\[
\bk \otimes_{\cO(\bk \otimes_\Z (\Z \Phi \oplus \Z d))}
\Hom^\bullet_{\Diag'}(M,N) \to \bk \otimes_{\cO(\bk \otimes_\Z (\Z \Phi \oplus \Z c \oplus \Z d))} \Hom^\bullet_{\Diag^\bk(\GKM)}(\eta_1(M), \eta_1(N))
\]
induced by $\eta_1$ and
\[
\bk \otimes_{\cO(\bk \otimes_\Z (\Z \Phi \oplus \Z d))}
\Hom^\bullet_{\Diag'}(M,N) \to \bk \otimes_{\cO(\bk \otimes_\Z \Z \Phi)} \Hom^\bullet_{\Diag}(\eta_2(M), \eta_2(N))
\]
induced by $\eta_2$ are isomorphisms.
\end{enumerate}
\end{rmk}

\section{Whittaker sheaves and antispherical diagrammatic categories}
\label{sec:Whittaker}

\subsection{Definition of Whittaker sheaves}
\label{ss:IW}


Let us come back to the general setting of Sections~\ref{sec:parity}--\ref{sec:parity-Hecke}. In~\S\ref{ss:statement-Diag-parity} we have obtained a description of the category $\Diag^\K(\GKM)$ in terms of parity complexes on a flag variety. The goal of this section is to obtain a similar description for the antispherical quotient defined in Remark~\ref{rmk:asph-general}. This relies on the consideration of some ``Whittaker sheaves," which exist only in the \'etale context.

So, from now on we restrict to the \'etale context of~\S\ref{ss:DbX}, assuming in addition that $\mathrm{char}(\bbL)>0$, and that there exists a nontrivial (additive) character $\psi$ from the prime subfield of $\bbL$ to $\K^\times$ (which we fix once and for all).
Then we can consider the corresponding Artin--Schreier local system $\cL_\psi$ on $\mathbb{G}_{\mathrm{a}, \bbL}$, see~\cite{ab, by, modrap1}, i.e.~the rank-$1$ local system defined as the $\psi$-isotypic component in the direct image of the constant sheaf under the Artin--Schreier map $\mathbb{G}_{\mathrm{a}, \bbL} \to \mathbb{G}_{\mathrm{a}, \bbL}$ defined by $x \mapsto x^{\mathrm{char}(\bbL)}-x$.

Let $J \subset I$ be a subset of finite type. We will denote by $\JW \subset W$ the subset of elements $v$ which are minimal in $W_J v$ (i.e.~the image of $W^J$ under $w \mapsto w^{-1}$).
To $J$ we have associated the parabolic subgroup $\PKM_J$ of $\GKM$. We denote by $\UKM^J$ the pro-unipotent radical of $\PKM_J$ (i.e.~the base change to $\bbL$ of the group scheme denoted $\mathfrak{U}_J^{\mathrm{ma}+}$ in~\cite{rousseau}), and by $\LKM_J$ its Levi factor (i.e.~the base change to $\bbL$ of the group scheme denoted $\mathfrak{G}(J)$ in~\cite{rousseau}), which is a connected reductive $\bbL$-group. Finally, let $\UKM_J^-$ be the unipotent radical of the Borel subgroup of $\LKM_J$ which is opposite to $\BKM \cap \LKM_J$. Then the $\UKM_J^- \UKM^J$-orbits on $\Flag$ are parametrized by $W$ (in the obvious way). For $w \in W$, we will denote by $\Flag_{w,J}$ the orbit corresponding to $w$, and by $d_w^J$ its dimension.

For any $s \in \{s_j : j \in J\}$ we have a root subgroup $\UKM_s^- \subset \UKM_J^-$. Moreover, the natural embedding induces an isomorphism of algebraic groups
\[
 \prod_{s \in J} \UKM_s^- \simto \UKM_J^- / [\UKM_J^-, \UKM_J^-].
\]
If we choose once and for all, for any $s \in J$, an isomorphism $\mathbb{G}_{\mathrm{a}, \bbL} \simto \UKM_s^-$, we deduce a morphism of algebraic groups
\[
  \UKM_J^- \UKM^J \to \UKM_J^- \to \UKM_J^- / [\UKM_J^-, \UKM_J^-] \simto \prod_{s \in J} \UKM_s^- \simto (\mathbb{G}_{\mathrm{a}, \bbL})^J \xrightarrow{+} \mathbb{G}_{\mathrm{a}, \bbL}
\]
which we will denote $\chi_J$.

The local system $\cL_\psi$ is multiplicative in the sense of~\cite[Appendix~A]{modrap1}, hence so is $(\chi_J)^* \cL_\psi$. We will denote by
\[
 \Db_{\Whit,J}(\Flag, \K)
\]
the triangulated category of $(\UKM_J^- \UKM^J, (\chi_J)^* \cL_\psi)$-equivariant complexes on the variety $\Flag$ (see~\cite[Definition~A.1]{modrap1}). Note that if $w \in W$, then the $\UKM_J^- \UKM^J$-orbit parametrized by $w$ supports a $(\UKM_J^- \UKM^J, (\chi_J)^* \cL_\psi)$-equivariant local system iff $w \in \JW$. In this case there exists a unique such local system of rank one (up to isomorphism), which we will denote $\cL_w^J$, and we have $d^J_w=\ell(w)+\ell(w_0^J)$. We denote by $i^J_{w,\circ} \colon \Flag_{w,J} \hookrightarrow \Flag$ the embedding, and set
\[
\Sta^J_{w,\circ} := (i^J_{w,\circ})_! \cL^J_{w}[\ell(w) + \ell(w_0^J)], \qquad \Cos^J_{w,\circ} := (i^J_{w,\circ})_* \cL^J_{w}[\ell(w) + \ell(w_0^J)].
\]
By minimality, when $w=1$ is the neutral element we have $\Sta^J_{1,\circ} \cong \Cos^J_{1,\circ}$, see~\cite[Corollary~4.2.2]{by}.
These objects satisfy
\[
\Hom^n_{\Db_{\Whit,J}(\Flag, \K)}(\Sta^J_{w,\circ}, \Cos^J_{v,\circ}) = \begin{cases}
\K & \text{if $w=v$ and $n=0$;} \\
0 & \text{otherwise.}
\end{cases}
\]

There exists a natural convolution functor
\[
(-) \star^{\BKM} (-) \colon \Db_{\Whit,J}(\Flag, \K) \times \Db_{\BKM}(\Flag, \K) \to \Db_{\Whit,J}(\Flag, \K),
\]
defined in a way similar to the convolution on $\Db_{\BKM}(\Flag, \K)$. Then we can define the
``averaging functor''
\[
\mathsf{Av}_{J} \colon \Db_{\BKM}(\Flag, \K) \to \Db_{\Whit,J}(\Flag, \K),
\]
as the functor $\cF \mapsto \Delta^J_{1,\circ} \star^\BKM \cF$. (Alternatively, this functor can be described as the composition of the forgetful functor to the constructible derived category, followed by any choice of the functors defined as in~\cite[\S A.2]{modrap1}; see~\cite[Lemma~4.4.3]{by}.) 
By construction, for $\cF,\cG$ in $\Db_{\BKM}(\Flag, \K)$ there exists a canonical and functorial isomorphism
\begin{equation}
\label{eqn:Av-convolution}
\mathsf{Av}_{J} (\cF \star^{\BKM} \cG) \cong \mathsf{Av}_{J}(\cF) \star^{\BKM} \cG.
\end{equation}

For $s \in S$,
we can consider in a similar way the category
\[
\Db_{\Whit,J}(\Flag^{s}, \K).
\]
The $\UKM_J^- \UKM^J$-orbits on $\Flag^{s}$ are parametrized in a natural way by $W^s$;  those which support a $(\UKM_J^- \UKM^J, (\chi_J)^* \cL_\psi)$-equivariant local system correspond to the elements in 
\[
\JW^s_\circ := \{w \in W^s \mid w \in \JW \text{ and } ws \in \JW\};
\]
we denote the corresponding standard and costandard sheaves $\Sta^{s,J}_{w,\circ}$ and $\Cos^{s,J}_{w,\circ}$ respectively.

We also have 
functors $(q^s)_!=(q^s)_*$, $(q^s)^*$, $(q^s)^!$ between the $(\UKM_J^- \UKM^J, (\chi_J)^* \cL_\psi)$-equivariant categories.

\begin{lem}
\label{lem:qs-standard-costandard}
Let $w \in \JW$.
\begin{enumerate}
\item
If $w \in W^s$ and $ws \notin \JW$, then
\[
(q^s)_! \Sta^{J}_{w,\circ} = (q^s)_! \Cos^{J}_{w,\circ} = 0.
\]
\item
If $w \in W^s$ and $ws \in \JW$, then 
\[
(q^s)_! \Sta^{J}_{w,\circ} \cong \Sta^{s,J}_{w,\circ} \quad \text{and} \quad (q^s)_! \Cos^{J}_{w,\circ} \cong \Cos^{s,J}_{w,\circ}.
\]
\item
If $ws<w$ (which implies that $ws \in \JW$), then
\[
(q^s)_! \Sta^{J}_{w,\circ} \cong \Sta^{s,J}_{ws,\circ}[-1] \quad \text{and} \quad (q^s)_! \Cos^{J}_{w,\circ} \cong \Cos^{s,J}_{ws,\circ}[1].
\]
\end{enumerate}
\end{lem}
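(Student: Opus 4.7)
The plan is to apply proper base change along the stratification of $\Flag^s$ by the Whittaker orbits $\Flag^s_{v,J}$ for $v \in W^s$. For each $v \in W^s$, the set-theoretic preimage $(q^s)^{-1}(\Flag^s_{v,J})$ consists of the orbits $\Flag_{w,J}$ for those $w \in W$ whose image in $W/W_s$ equals $v$, namely $w \in \{v, vs\}$. Among these, $\Flag_{v,J}$ is closed and $\Flag_{vs,J}$ is open (since $v < vs$), and the restriction of $q^s$ to $\Flag_{v,J}$ is an isomorphism onto $\Flag^s_{v,J}$, while the restriction to $\Flag_{vs,J}$ is an $\mathbb{A}^1$-fibration over $\Flag^s_{v,J}$. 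Thus $(i^{s,J}_{u,\circ})^{*} (q^s)_! \Sta^J_{w,\circ}$ vanishes unless $u$ equals the image of $w$ in $W^s$, and in that case it equals the proper pushforward of the Whittaker-twisted standard (or costandard) along a map whose fibers are a point or an affine line.

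Having reduced to this fiber computation, I would run the three cases:

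\begin{itemize}
\item In Case~(2), with $w = v \in W^s \cap \JW$ and $vs \in \JW$, the map $q^s|_{\Flag_{v,J}} \colon \Flag_{v,J} \to \Flag^s_{v,J}$ is an isomorphism intertwining the two Whittaker local systems $\cL^J_v$ and $\cL^{s,J}_v$ (this uses the standard compatibility between the Whittaker character on $\Flag$ and on $\Flag^s$), and the dimension shift $\ell(v) + \ell(w^J_0)$ agrees on both sides, so the formulas for both $(q^s)_!\Sta^J_{v,\circ}$ and $(q^s)_!\Cos^J_{v,\circ}$ follow.

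\item In Case~(3), with $ws < w$ (so $ws \in W^s \cap \JW$), the orbit $\Flag_{w,J}$ maps onto $\Flag^s_{ws,J}$ as an $\mathbb{A}^1$-fibration, the Whittaker character $\chi_J$ being trivial on the fiber (this is where we use $ws \in \JW$). The pushforward of $\cL^J_w$ along a trivial $\mathbb{A}^1$-fibration is the corresponding Whittaker local system on $\Flag^s_{ws,J}$, up to the cohomology of $\mathbb{A}^1$ with constant coefficients. Accounting for $!$- versus $*$-pushforward (which differ by $[\pm 2]$) and the dimension shift $\ell(w) = \ell(ws) + 1$ then yields the shifts $[-1]$ and $[1]$ stated in the lemma.

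\item In Case~(1), with $w = v \in W^s \cap \JW$ and $vs \notin \JW$, by the argument above the only potentially nonzero restriction is to $\Flag^s_{v,J}$, and the corresponding fiber analysis involves the orbit $\Flag_{v,J}$ alone. The map $q^s|_{\Flag_{v,J}}$ is an isomorphism onto $\Flag^s_{v,J}$; however, $\Flag^s_{v,J}$ does \emph{not} support a Whittaker local system (because $vs \notin \JW$ forces an element of the stabilizer on which $\chi_J^*\cL_\psi$ restricts nontrivially), so the pushforward of $\cL^J_v$ cannot be equivariant for $(\UKM_J^-\UKM^J, \chi_J^*\cL_\psi)$. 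Interpreting this in terms of Artin--Schreier cohomology, the relevant computation reduces to $\mathsf{H}^\bullet_c(\mathbb{A}^1, \cL_\psi) = 0$ on the relevant one-parameter root subgroup inside the stabilizer, yielding the vanishing.
\end{itemize}

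The main obstacle will be Case~(1): one must carefully identify how the Whittaker character $\chi_J$ restricts to the root subgroup witnessing $vs \notin \JW$ (i.e.\ the simple reflection $r \in W_J$ with $rvs < vs$), in order to pin down which Artin--Schreier local system appears on the fibers. Once this translation between ``$vs \notin \JW$'' and ``nontriviality of $\chi_J$ on an $\mathbb{A}^1$-subgroup'' is made precise, the cohomological vanishing $\mathsf{H}^\bullet_c(\mathbb{A}^1,\cL_\psi)=0$ finishes the argument. The remaining cases are then routine applications of proper base change and the projection formula, using the parallel structure of the orbits $\Flag_{w,J}$ and $\Flag^s_{v,J}$.
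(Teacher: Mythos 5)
Your reduction to a fiberwise computation is the right idea, and Cases~(2) and~(3) are handled correctly (and exactly as in the paper), but the geometric input you feed into Case~(1) is wrong, and that is the only case with real content. You assert that for every $v \in W^s$ the restriction of $q^s$ to $\Flag_{v,J}$ is an isomorphism onto the corresponding orbit of $\Flag^s$, with $\Flag_{vs,J}$ the $\mathbb{A}^1$-fibration. This fails precisely when $v \in \JW \cap W^s$ but $vs \notin \JW$. Already for $\GKM = \mathrm{SL}_3$, $J=\{1\}$, $s=s_1$ and $w=1$ one has $\Flag_{1,J} = \UKM_J^- \cdot \BKM/\BKM \cong \mathbb{A}^1$, the big cell of $\PKM_{s_1}/\BKM \cong \mathbb{P}^1$, which $q^{s_1}$ collapses to the single point $\PKM_{s_1}/\PKM_{s_1}$ of $\Flag^{s_1}$; moreover $\chi_J$ restricts to an isomorphism $\UKM_J^- \simto \mathbb{G}_{\mathrm{a},\bbL}$, so $\cL^J_1 \cong \cL_\psi$ on this $\mathbb{A}^1$. (In this situation $\Flag_{w,J}$ is also the \emph{open} piece of the preimage and $\Flag_{ws,J}$ the closed one, the reverse of your claim.) The correct statement, which is the entire proof of Case~(1), is that when $ws \notin \JW$ the restriction of $q^s$ to $\Flag_{w,J}$ is itself an $\mathbb{A}^1$-fibration onto a strictly smaller orbit, the fiber direction being a simple negative root subgroup $\UKM_{s_j}^-$ with $j \in J$ on which $\chi_J$ is nontrivial; hence $(q^s)_!$ computes $\mathsf{H}^\bullet_c(\mathbb{A}^1,\cL_\psi)=0$ along its own fibers.

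Your attempted repair does not recover this. Once you declare $q^s|_{\Flag_{w,J}}$ to be an isomorphism, $(q^s)_!\Sta^J_{w,\circ}$ is a shifted rank-one local system on its image and is certainly nonzero, so the desired vanishing is false in the geometry you describe. The observation that the image orbit supports no $(\UKM_J^-\UKM^J,\chi_J^*\cL_\psi)$-equivariant local system is not a computation: the pushforward is automatically equivariant, and to deduce vanishing from equivariance alone you would need to control every orbit in the support, not just the open one. There is no ``$\mathbb{A}^1$ inside the stabilizer'' left to integrate $\cL_\psi$ over after the map has been declared an isomorphism; the $\mathbb{A}^1$ carrying $\cL_\psi$ is the fiber of $q^s|_{\Flag_{w,J}}$ itself. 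With the corrected description of the orbits, the rest of your argument goes through and coincides with the paper's proof.
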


\begin{proof}
Consider the restriction of $q^s$ to $\Flag_{w,J}$. If $w \in W^s$ and $ws \notin \JW$, then this restriction is an $\mathbb{A}^1$-fibration, and the restriction of $\cL^J_{w}$ to the fibers are isomorphic to $\cL_\psi$, so that $(q^s)_! \Sta^{J}_{w,\circ} =(q^s)_! \Cos^{J}_{w,\circ} = 0$. 

If $w \in W^s$ and $ws \in \JW$, then the restriction of $q^s$ is an isomorphism, so that $(q^s)_! \Sta^{J}_{w,\circ} \cong \Sta^{s,J}_{w,\circ}$ and $(q^s)_! \Cos^{J}_{w,\circ} \cong \Cos^{s,J}_{w,\circ}$.

Finally, if $w \notin W^s$, then the restriction of $q^s$ to $\Flag_{w,J}$ is an $\mathbb{A}^1$-fibration, and the restriction of $\cL^J_{w}$ to the fibers are constant; it follows that $(q^s)_! \Sta^{J}_{w,\circ} \cong \Sta^{s,J}_{ws,\circ}[-1]$ and $(q^s)_! \Cos^{J}_{w,\circ} \cong \Cos^{s,J}_{ws,\circ}[1]$.
\end{proof}

\begin{lem}
\label{lem:qs-inverse-standard-costandard}
Let $w \in \JW^s_\circ$.
There exists distinguished triangles
\[
\Sta^J_{ws,\circ}[-1] \to (q^s)^* \Sta^{s,J}_{w,\circ} \to \Sta^J_{w,\circ} \triright
\quad \text{and} \quad
\Cos^J_{w,\circ} \to (q^s)^! \Cos^{s,J}_w \to \Cos^J_{ws,\circ}[1] \triright.
\]
\end{lem}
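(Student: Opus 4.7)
The plan is to deduce both distinguished triangles from the standard open/closed triangles on the preimage $U := (q^s)^{-1}(\Flag^s_{w,J})$, combined with proper base change. The hypothesis $w \in \JW^s_\circ$ (so $w \in W^s$, $w \in \JW$ and $ws \in \JW$) ensures as a set that $U = \Flag_{w,J} \sqcup \Flag_{ws,J}$, with $\Flag_{w,J}$ closed in $U$ (since $w < ws$) and $\Flag_{ws,J}$ the open complement. Denote the inclusions by $i_Z \colon \Flag_{w,J} \hookrightarrow U$ and $j_Y \colon \Flag_{ws,J} \hookrightarrow U$, and by $\tilde{i}^s \colon U \hookrightarrow \Flag$ the base change of $i^s \colon \Flag^s_{w,J} \hookrightarrow \Flag^s$ along $q^s$. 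Under the restriction $q^s_U \colon U \to \Flag^s_{w,J}$, the composition $q^s_U \circ i_Z$ is an isomorphism (as $w \in W^s$), while $q^s_U \circ j_Y$ is an $\mathbb{A}^1$-fibration (by the same argument as in the proof of Lemma~\ref{lem:qs-standard-costandard}).

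For the standard triangle, I would apply base change $(q^s)^* (i^s)_! \cong (\tilde{i}^s)_! (q^s_U)^*$ to get $(q^s)^* \Sta^{s,J}_{w,\circ} = (\tilde{i}^s)_! \mathcal{F}_U$ with $\mathcal{F}_U := (q^s_U)^* \cL^J_w[\ell(w)+\ell(w_0^J)]$, and then push forward the open/closed triangle $(j_Y)_! j_Y^* \mathcal{F}_U \to \mathcal{F}_U \to (i_Z)_* i_Z^* \mathcal{F}_U \triright$ via $(\tilde{i}^s)_!$. The closed restriction $i_Z^* \mathcal{F}_U$ identifies with $\cL^J_w[\ell(w)+\ell(w_0^J)]$ on $\Flag_{w,J}$ (through the isomorphism $q^s_U \circ i_Z$), producing $\Sta^J_{w,\circ}$ after pushforward. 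The key identification for the open piece is that $(q^s_U \circ j_Y)^* \cL^J_w$ is a rank-one $(\UKM_J^- \UKM^J, \chi_J^* \cL_\psi)$-equivariant local system on $\Flag_{ws,J}$ (since $q^s_U \circ j_Y$ is $\UKM_J^-\UKM^J$-equivariant), hence $\cong \cL^J_{ws}$ by uniqueness; accounting for the shift discrepancy $\ell(ws) = \ell(w)+1$, the open piece pushes to $\Sta^J_{ws,\circ}[-1]$.

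The costandard triangle is dual: use base change $(q^s)^! (i^s)_* \cong (\tilde{i}^s)_* (q^s_U)^!$ and push forward the triangle $(i_Z)_* i_Z^! \mathcal{G}_U \to \mathcal{G}_U \to (j_Y)_* j_Y^* \mathcal{G}_U \triright$, where $\mathcal{G}_U := (q^s_U)^! \cL^J_w[\ell(w)+\ell(w_0^J)]$. On the closed piece, $i_Z^! \mathcal{G}_U \cong i_Z^* \mathcal{G}_U$ since $q^s_U \circ i_Z$ is an isomorphism, yielding $\Cos^J_{w,\circ}$. On the open piece, smoothness of relative dimension~$1$ for $q^s_U \circ j_Y$ gives $(q^s_U \circ j_Y)^! \cong (q^s_U \circ j_Y)^*[2]$, and combining with the local-system identification above we get $\Cos^J_{ws,\circ}[1]$.

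The only real obstacle is verifying the set-theoretic fiber description $U = \Flag_{w,J} \sqcup \Flag_{ws,J}$ and the geometric nature of the restrictions of $q^s$ (one isomorphism, one $\mathbb{A}^1$-fibration) for the $\UKM_J^-\UKM^J$-orbits rather than the $\BKM$-orbits; this should reduce to the analogous statement at the $\BKM$-level (already used in Lemma~\ref{lem:qs-standard-costandard}) using $\UKM_J^-\UKM^J$-equivariance of $q^s$, but is the one point where care is needed.
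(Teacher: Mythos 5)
Your argument is correct and is exactly the approach the paper takes: its proof is a one-sentence sketch invoking base change and the standard open/closed triangles for the decomposition of $(q^s)^{-1}(\Flag^s_{w,J})$ into the closed piece $\Flag_{w,J}$ and the open piece $\Flag_{ws,J}$, which you have simply written out in full (including the correct local-system identification and shift bookkeeping).
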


\begin{proof}
These distinguished triangles are obtained by using the base change theorem and applying the standard distinguished triangles of functors attached to the decomposition of the inverse image under $q^s$ of the orbit corresponding to $w$
as the disjoint union of the closed piece $\Flag_{w,J}$ and the open piece $\Flag_{ws,J}$.
\end{proof}

\subsection{Whittaker parity complexes}
\label{ss:IW-parity}

The general theory of parity complexes developed in~\cite{jmw} applies verbatim in the setting of Whittaker sheaves. In particular, a complex $\cF$ in $\Db_{\Whit,J}(\Flag, \K)$ is said to be $*$-even, resp.~$!$-even, if for any $w \in W$ we have
\[
\cH^{\mathrm{odd}} \bigl( (i^J_{w,\circ})^* \cF \bigr)=0, \quad \text{resp.} \quad \cH^{\mathrm{odd}} \bigl( (i^J_{w,\circ})^! \cF \bigr)=0,
\]
and if moreover for $w \in W$ and $k$ even the local system
\[
\cH^k \bigl( (i^J_{w,\circ})^* \cF \bigr), \quad \text{resp.} \quad \cH^k \bigl( (i^J_{w,\circ})^! \cF \bigr),
\]
is projective (equivalently, free) over $\K$.
(These conditions are automatic if $w \notin \JW$ since in this case $(i^J_{w,\circ})^* \cF=(i^J_{w,\circ})^! \cF=0$ for any $\cF$ in $\Db_{\Whit,J}(\Flag, \K)$.) Then an object $\cF$ is called a \emph{parity complex} if $\cF \cong \cF_0 \oplus \cF_1$ where the objects $\cF_0$ and $\cF_1[1]$ are both $*$-even and $!$-even. We will denote by
\[
\Par_{\Whit,J}(\Flag, \K)
\]
the additive category of parity complexes in $\Db_{\Whit,J}(\Flag, \K)$. For any $s \in S$, we can similarly consider the category
\[
\Par_{\Whit,J}(\Flag^{s}, \K)
\]
of parity complexes in $\Db_{\Whit,J}(\Flag^{s}, \K)$.

\begin{lem}
\label{lem:parity-IW-direct}
Let $\cF \in \Db_{\Whit,J}(\Flag, \K)$ and $s \in S$.
\begin{enumerate}
\item
\label{it:parity-IW-*}
If $\cF$ is $*$-even, then $(q^s)_* \cF$ is $*$-even.
\item
\label{it:parity-IW-!}
If $\cF$ is $!$-even, then $(q^s)_* \cF$ is $!$-even.
\item
\label{it:parity-IW-*!}
If $\cF$ is a parity complex, then $(q^s)_* \cF$ is a parity complex.
\end{enumerate}
\end{lem}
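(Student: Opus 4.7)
The plan is to follow the template of Lemma~\ref{lem:q^J_*-parity} closely. First, since a complex is parity precisely when each of its two pieces (up to a shift) is both $*$-even and $!$-even, part~\eqref{it:parity-IW-*!} follows formally from parts \eqref{it:parity-IW-*} and \eqref{it:parity-IW-!}. Next, parts \eqref{it:parity-IW-*} and \eqref{it:parity-IW-!} are interchanged by Verdier duality: since $q^s$ is proper we have $(q^s)_* \cong (q^s)_!$, so duality commutes with $(q^s)_*$; and duality sends a $*$-even object in $\Db_{\Whit,J}(\Flag,\K)$ to a $!$-even object in the Whittaker category attached to $\psi^{-1}$, which is described by the same formalism. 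Thus it suffices to establish \eqref{it:parity-IW-*}.

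For this, the key observation is that $q^s$ should be viewed as an ``even morphism'' in the sense of \cite[Definition~2.33]{jmw}, relative to the Whittaker stratifications of $\Flag$ and $\Flag^s$: its fibers are isomorphic to $\PKM_s/\BKM \cong \mathbb{P}^1$, whose cohomology is concentrated in even degrees. The general mechanism of \cite[Proposition~2.34]{jmw}, whose proof rests on proper base change and the spectral sequence of a stratified morphism and carries over verbatim to the Whittaker-equivariant setting, then forces the proper pushforward $(q^s)_*$ to send $*$-even complexes to $*$-even complexes.

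What remains to check explicitly is the behaviour of $(i^{s,J}_{w,\circ})^* (q^s)_* \cF$ on each Whittaker orbit $\Flag^s_{w,J} \subset \Flag^s$. By proper base change this equals the pushforward of the restriction of $\cF$ along the restriction of $q^s$ to the preimage $(q^s)^{-1}(\Flag^s_{w,J})$. As in the analysis underlying Lemma~\ref{lem:qs-standard-costandard}, this preimage decomposes into one or two $\UKM_J^- \UKM^J$-orbits, on each of which $q^s$ restricts either to an isomorphism or to an $\mathbb{A}^1$-fibration. The main technical subtlety will be tracking the Artin--Schreier twist $(\chi_J)^* \cL_\psi$ under these restrictions: on an $\mathbb{A}^1$-fiber its restriction is either trivial (in which case pushforward contributes a free $\K$-module in a single even degree, up to a global even shift coming from $d^J_w$ versus $d^J_{ws}$) or nontrivial (in which case the pushforward vanishes). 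Either contribution produces only projective $\K$-modules in even degrees on $\Flag^s_{w,J}$, yielding the desired $*$-evenness of $(q^s)_* \cF$ and completing the argument.
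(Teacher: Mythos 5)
Your overall plan is sound and your proof would work, but it takes a somewhat different (and in one place less tight) route than the paper does. Two points of comparison. First, your reduction of~\eqref{it:parity-IW-!} to~\eqref{it:parity-IW-*} via Verdier duality and the properness of $q^s$ is a genuine simplification; the paper does not exploit it, simply asserting that the proof of~\eqref{it:parity-IW-!} is ``similar.'' Your argument is correct, provided one keeps track of the fact that $\mathbb{D}$ exchanges the Whittaker categories attached to $\psi$ and $\psi^{-1}$ (which you do).

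Second, for part~\eqref{it:parity-IW-*} the paper does \emph{not} invoke the ``even morphism'' machinery of \cite[Definition~2.33, Proposition~2.34]{jmw} as it did in Lemma~\ref{lem:q^J_*-parity}. It instead runs an induction on the support of $\cF$: pick a stratum $\Flag_{w,J}$ open in the support, split off $(i^J_{w,\circ})_!(i^J_{w,\circ})^!\cF$ (which is a direct sum of even shifts of $\Sta^J_{w,\circ}$, since $w \in \JW$), apply $(q^s)_*$ and use Lemma~\ref{lem:qs-standard-costandard} on that piece, then handle the rest by induction. Your appeal to \cite[Proposition~2.34]{jmw} ``carrying over verbatim'' is optimistic: that statement is set up for equivariant categories and stratifications by genuine strata with the trivial local system, whereas in the Whittaker setting the relevant control is over which orbits carry a nontrivial $(\UKM_J^-\UKM^J,(\chi_J)^*\cL_\psi)$-equivariant local system and how the Artin--Schreier twist restricts along the $\mathbb{A}^1$-fibers of $q^s$. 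You recognize this — that is exactly what your third paragraph does — but at that point the even-morphism citation has added nothing, and the real content is the stratum-by-stratum analysis, which is precisely what the paper's induction formalizes. In short: your argument is correct, your duality trick is nicer than the paper's, but your appeal to JMW's even-morphism framework should either be dropped or replaced with the explicit inductive argument on support that it is silently standing in for.
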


\begin{proof}
Clearly,~\eqref{it:parity-IW-*!} follows from~\eqref{it:parity-IW-*} and~\eqref{it:parity-IW-!}. We explain the proof of~\eqref{it:parity-IW-*}; the proof of~\eqref{it:parity-IW-!} is similar.

So, let us assume that $\cF$ is $*$-even. We proceed by induction on the support of $\cF$; so we assume that $\cF$ is supported on a closed union $\mathscr{Y} \subset \Flag$ of $\UKM_J^- \UKM^J$-orbits, and that $w \in W$ is such that $\Flag_{w,J}$ is open in $\mathscr{Y}$ and $(i^J_{w,\circ})^*\cF \neq 0$. Necessarily we have $w \in \JW$. Let $k \colon \mathscr{Y} \smallsetminus \Flag_{w,J} \hookrightarrow \mathscr{Y}$ be the closed embedding, and consider the canonical distinguished triangle
\[
(i^J_{w,\circ})_! (i^J_{w,\circ})^! \cF \to \cF \to k_* k^* \cF \triright.
\]
The object $k_* k^* \cF$ is $*$-even and supported on $\mathscr{Y} \smallsetminus \Flag_{w,J}$, hence by induction we know that $(q^s)_* k_* k^* \cF$ is $*$-even. On the other hand, we have an isomorphism
\[
(i^J_{w,\circ})_! (i^J_{w,\circ})^! \cF \cong \bigoplus_{i \in \Z} \bigl( \Sta^J_{w,\circ}[i] \bigr)^{\oplus n_i},
\]
where $n_i \in \Z_{\geq 0}$ and $n_i=0$ unless $\dim(\Flag_{w,J})+i$ is even. Now it follows from Lemma~\ref{lem:qs-standard-costandard} that under this condition the object $(q^s)_* (\Sta^J_{w,\circ}[i])$ is $*$-even, and the claim follows.
\end{proof}

%

Using arguments similar to those for Lemma~\ref{lem:qJ-parity}, one can check that the functors $(q^s)^! (q^s)_*$ and $(q^s)^* (q^s)_*$ are isomorphic up to cohomological shift by $2$. (In our present setting Verdier duality exchanges the Whittaker categories defined in terms of $\psi$ and $\psi^{-1}$; but this does not cause any trouble.) We deduce the following.

\begin{lem}
\label{lem:parity-IW-inverse}
If $\cF \in \Db_{\Whit,J}(\Flag,\K)$ is parity, then so are $(q^s)^* (q^s)_* \cF$ and $(q^s)^! (q^s)_* \cF$.
\end{lem}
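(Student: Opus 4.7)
The plan is to directly adapt the argument used to prove Lemma~\ref{lem:qJ-parity}\eqref{it:qJ2} to the Whittaker setting. First, I apply Lemma~\ref{lem:parity-IW-direct}\eqref{it:parity-IW-*!} to conclude that $(q^s)_* \cF$ is a parity object of $\Db_{\Whit,J}(\Flag^s,\K)$. By the isomorphism $(q^s)^! (q^s)_* \cong (q^s)^* (q^s)_* [2]$ recorded in the paragraph immediately preceding the statement of the lemma, it suffices to treat one of the two functors; I will work with $(q^s)^* (q^s)_*$, and the case of $(q^s)^! (q^s)_*$ will then follow at once.

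Since any parity object decomposes as the direct sum of an even and an odd summand, and since all functors in sight commute with cohomological shifts, it is enough to treat the case when $\cF$ is even. The functor $(q^s)^*$ clearly sends $*$-even complexes to $*$-even complexes (the condition on stalks is preserved by any inverse image), and we have just observed that $(q^s)_* \cF$ is in particular $*$-even. Hence $(q^s)^* (q^s)_* \cF$ is $*$-even.

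To complete the argument it remains to show that $(q^s)^* (q^s)_* \cF$ is also $!$-even. The idea, mirroring the proof of Lemma~\ref{lem:qJ-parity}\eqref{it:qJ2}, is to exploit the Verdier self-duality up to shift of the endofunctor $(q^s)^* (q^s)_*$. Writing $\mathbb{D}$ for Verdier duality, which in the Whittaker framework exchanges the $(\chi_J)^*\cL_\psi$-equivariant and $(\chi_J)^*\cL_{\psi^{-1}}$-equivariant categories as highlighted in the paragraph preceding the lemma, the mentioned isomorphism yields $\mathbb{D} \circ (q^s)^*(q^s)_* \circ \mathbb{D} \cong (q^s)^*(q^s)_*[2]$, compatibly with the $\psi \leftrightarrow \psi^{-1}$ swap. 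Applying the preceding step to the even object $\mathbb{D}\cF$ in the $\psi^{-1}$-category gives that $(q^s)^*(q^s)_*\mathbb{D}\cF$ is $*$-even there; its Verdier dual is then $!$-even in the $\psi$-category, and the displayed self-duality identifies this dual with $(q^s)^*(q^s)_*\cF$ up to shift by $2$, proving $!$-evenness and completing the proof.

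The main technical point to check carefully will be the Verdier self-duality isomorphism $\mathbb{D}(q^s)^*(q^s)_*\mathbb{D} \cong (q^s)^*(q^s)_*[2]$ in the Whittaker setting. As the paragraph preceding the statement indicates, this should proceed exactly as in Lemma~\ref{lem:qJ-parity}\eqref{it:qJ1} via the factorization through $\GKM \times^\BKM (\PKM_s/\BKM)$, using that $a_s^1$ is smooth of relative dimension $1$ and $a_s^2$ is proper; the only subtlety is tracking the $\psi \leftrightarrow \psi^{-1}$ swap introduced by $\mathbb{D}$ through the base-change and projection isomorphisms, which is precisely the point the author describes as causing no trouble.
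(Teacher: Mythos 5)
Your proposal is correct and follows essentially the same route as the paper, which simply asserts the shift-by-$2$ isomorphism between $(q^s)^!(q^s)_*$ and $(q^s)^*(q^s)_*$ (noting that Verdier duality swaps $\psi$ and $\psi^{-1}$ "but this does not cause any trouble") and then deduces the lemma exactly as in Lemma~\ref{lem:qJ-parity}\eqref{it:qJ2}: $(q^s)_*$ preserves parity by Lemma~\ref{lem:parity-IW-direct}, $(q^s)^*$ preserves $*$-evenness, and self-duality up to shift upgrades this to full parity. Your explicit tracking of the $\psi\leftrightarrow\psi^{-1}$ swap through the duality argument is precisely the point the paper leaves to the reader.
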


\begin{cor}
\label{cor:Av-parity}
For any $\cE$ in $\Par_{\BKM}(\Flag, \K)$ the object $\mathsf{Av}_{J}(\cE)$
is a parity complex.
\end{cor}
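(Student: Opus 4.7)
The plan is to reduce the statement to two basic facts: that $\mathsf{Av}_J(\cE(\varnothing))$ is parity, and that right convolution with $\cE_s$ preserves parity in the Whittaker category. Using the compatibility isomorphism~\eqref{eqn:Av-convolution} together with associativity of $\star^\BKM$, for any Bott--Samelson $\cE_\K(\uw) \cong \cE(s_1) \star^\BKM \cdots \star^\BKM \cE(s_r)$ we have
\[
\mathsf{Av}_J(\cE_\K(\uw)) \cong \Delta^J_{1,\circ} \star^\BKM \cE(s_1) \star^\BKM \cdots \star^\BKM \cE(s_r),
\]
so it suffices to treat the two reductions.

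The first fact is immediate: $\Delta^J_{1,\circ}$ is supported on the single orbit $\Flag_{1,J}$ (smooth, of dimension $\ell(w_0^J)$), where it restricts to the rank-one local system $\cL^J_1$ placed in a single degree; as $\Sta^J_{1,\circ}=\Cos^J_{1,\circ}$, this object is both $*$-even and $!$-even, hence parity. For the second fact, I would transport the proof of Lemma~\ref{lem:qJ-parity}\eqref{it:qJ1} to the Whittaker setting: because $q^s$ is $\GKM$-equivariant (and in particular $\BKM$-equivariant on the source) while the Whittaker equivariance is imposed on the other factor of the convolution, the natural isomorphism $(-) \star^\BKM \cE_s \cong (q^s)^!(q^s)_*[-1]$ goes through as an isomorphism of endofunctors of $\Db_{\Whit,J}(\Flag,\K)$. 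Combining Lemma~\ref{lem:parity-IW-direct}\eqref{it:parity-IW-*!} (which says $(q^s)_*$ sends parity to parity in the Whittaker category) with Lemma~\ref{lem:parity-IW-inverse} (which says $(q^s)^!(q^s)_*$ sends parity to parity) then shows that $(-) \star^\BKM \cE_s$ preserves parity, as desired.

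Induction on $\ell(\uw)$ thus yields that $\mathsf{Av}_J(\cE_\K(\uw))$ is a parity complex for every expression $\uw$. To pass from Bott--Samelsons to a general parity complex $\cE \in \Par_{\BKM}(\Flag,\K)$, I would invoke Lemma~\ref{lem:parity-Karoubi}, which (at least when $\K$ is complete local) realizes $\Par_{\BKM}(\Flag,\K)$ as the Karoubi envelope of the additive hull of the shifted Bott--Samelson objects; since parity complexes are closed under finite direct sums, shifts, and direct summands, and $\mathsf{Av}_J$ is an additive functor, the conclusion follows. In the remaining generality one reduces to the complete local case by standard change-of-coefficient arguments (or by directly verifying that, in any event, the class of objects $\cE$ for which $\mathsf{Av}_J(\cE)$ is parity is stable under direct sums, shifts and summands and contains all Bott--Samelsons).

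The main obstacle will be the rigorous verification of the Whittaker analog of Lemma~\ref{lem:qJ-parity}\eqref{it:qJ1}, namely the isomorphism $(-) \star^\BKM \cE_s \cong (q^s)^!(q^s)_*[-1]$ inside $\Db_{\Whit,J}(\Flag,\K)$. All ingredients are formal (base change, adjunction, the smooth/proper pair $(a^1_s, a^2_s)$), but one must carefully track that the Whittaker equivariance is preserved at each step of the natural chain of isomorphisms. Once this compatibility is in place, the rest of the argument is a routine reduction via Bott--Samelsons.
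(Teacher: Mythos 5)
Your argument is correct and is essentially the paper's proof: both reduce to Bott--Samelson objects (the paper phrases this as repeated application of $(q^s)^*(q^s)_*$ to $\Sta_1$), use that $\mathsf{Av}_J(\Sta_1)=\Sta^J_{1,\circ}\cong\Cos^J_{1,\circ}$ is parity, and conclude via Lemma~\ref{lem:parity-IW-inverse} together with the identification of $(-)\star^\BKM\cE_s$ with $(q^s)^{!}(q^s)_*[-1]$. The "main obstacle" you flag is exactly what the paper disposes of via~\eqref{eqn:Av-convolution} and the remark preceding Lemma~\ref{lem:parity-IW-inverse}, so nothing further is needed.
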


\begin{proof}
Since the functor $(q^s)^* (q^s)_*$ commutes with $\mathsf{Av}_{J}$ (see~\eqref{eqn:Av-convolution}) and since any object in $\Par_{\BKM}(\Flag, \K)$ is isomorphic to a direct sum of shifts of direct summands of objects obtained from $\Sta_1$ by repeated application of functors $(q^s)^* (q^s)_*$ for various $s$, the corollary follows from the observation that $\mathsf{Av}_{J}(\Sta_1)=\Sta_{1,\circ}^J$ is parity (since $\Sta_{1,\circ}^J \cong \Cos_{1,\circ}^J$) and Lemma~\ref{lem:parity-IW-inverse}.
\end{proof}

\begin{rmk}
\label{rmk:indec-IW-parity}
For any $w \in \JW$,
the general theory of parity complexes of~\cite{jmw} guarantees the uniqueness (up to isomorphism) of an indecomposable parity complex supported on $\overline{\Flag_{w,J}}$ and whose restriction to $\Flag_{w,J}$ is $\cL^J_{w}[\dim(\Flag_{w,J})]$, but not its existence. This existence follows from Corollary~\ref{cor:Av-parity}: in fact $\mathsf{Av}_{J}(\cE_w)$ is a parity complex supported on $\overline{\Flag_{w,J}}$ and whose restriction to $\Flag_{w,J}$ is $\cL^J_{w}[\dim(\Flag_{w,J})]$; hence it admits an indecomposable direct summand whose restriction to $\Flag_{w,J}$ is $\cL^J_{w}[\dim(\Flag_{w,J})]$. Similar comments apply to the variety $\Flag^s$. Using this, as in Lemma~\ref{lem:qs^*-parity} one can check that the functors $(q^s)^*$ and $(q^s)^!$ send parity complexes to parity complexes.
\end{rmk}

\begin{lem}
\label{lem:Av-Ew-zero}
If $w \in W \smallsetminus \JW$, then $\Av_J(\cE_w)=0$.
\end{lem}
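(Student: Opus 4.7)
The strategy is to reduce the statement to the single case where $w$ is a simple reflection $s$ in $\{s_j : j \in J\}$, and then to identify $\Av_J(\cE_s)$ with the pullback via $q^s$ of an object already known to vanish by Lemma~\ref{lem:qs-standard-costandard}. The reduction uses the compatibility of $\Av_J$ with convolution on the right, together with the multiplicativity of Bott--Samelson parity complexes from Lemma~\ref{lem:convolution-Ew}.

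The first step is the reduction. Since $w \notin \JW$, there exists $s \in \{s_j : j \in J\}$ with $sw < w$, and hence $w$ admits a reduced expression $\uw$ starting with $s$, say $\uw = s\,\uv$ with $\uv$ a reduced expression for $sw$. By Lemma~\ref{lem:convolution-Ew} we have a canonical isomorphism $\cE_\K(\uw) \cong \cE_s \star^\BKM \cE_\K(\uv)$, and $\cE_w$ is a direct summand of $\cE_\K(\uw)$. Applying~\eqref{eqn:Av-convolution} we deduce that $\Av_J(\cE_w)$ is a direct summand of $\Av_J(\cE_s) \star^\BKM \cE_\K(\uv)$, so it suffices to prove $\Av_J(\cE_s) = 0$ for every $s \in \{s_j : j \in J\}$.

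The second step is the computation of $\Av_J(\cE_s)$. The same reasoning as in Lemma~\ref{lem:qJ-parity}\eqref{it:qJ1} (applied with $J' = \{i_s\}$) gives, in the Whittaker setting as well, a canonical isomorphism
\[
(-) \star^\BKM \cE_s \cong (q^s)^!(q^s)_*(-)[-1]
\]
of endofunctors of $\Db_{\Whit,J}(\Flag,\K)$; this uses only that $(q^s)^*$ commutes with convolution on the right (which does not interact with the Whittaker equivariance on the left) together with~\eqref{eqn:inverse-image-skyscraper}. Consequently $\Av_J(\cE_s) \cong (q^s)^!(q^s)_*\Sta_{1,\circ}^J[-1]$.

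The third step concludes. The element $1 \in W$ lies in $\JW$ and in $W^s$; on the other hand, when $s \in \{s_j : j \in J\}$, we have $s \in W_J$, and since $1 < s$ and $1 \in W_J \cdot s$, the element $s = 1 \cdot s$ does not lie in $\JW$. Hence case~(1) of Lemma~\ref{lem:qs-standard-costandard} applies and gives $(q^s)_!\Sta_{1,\circ}^J = 0$. Since $q^s$ is proper, $(q^s)_! \cong (q^s)_*$, so $(q^s)_*\Sta_{1,\circ}^J = 0$ and $\Av_J(\cE_s) = 0$. The main obstacle, if any, is really only bookkeeping: one has to convince oneself that the convolution formula of Lemma~\ref{lem:qJ-parity}\eqref{it:qJ1} transports verbatim to the Whittaker context, but this is a direct consequence of the fact that the $\BKM$-action used to define $\star^\BKM$ on the right commutes with the $(\UKM_J^-\UKM^J, (\chi_J)^*\cL_\psi)$-equivariance on the left.
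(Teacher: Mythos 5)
Your proof is correct, and it is worth noting that it diverges from the paper's argument precisely at the point where the vanishing is actually established. The paper also begins with a reduced expression $\uw$ for $w$ starting with some $s = s_j$, $j \in J$, and uses that $\cE_w$ is a direct summand of $\cE_{\uw}$; but it then observes that $\cE_{\uw}$ comes from the $\PKM_{s}$-equivariant derived category and invokes an external reference (Bezrukavnikov--Yun, Lemma~4.4.6) for the fact that the Whittaker averaging functor annihilates such objects. You instead peel off the leftmost factor via Lemma~\ref{lem:convolution-Ew} and~\eqref{eqn:Av-convolution} to reduce to $\Av_J(\cE_s)=0$, identify $\Av_J(\cE_s) \cong (q^s)^!(q^s)_*\Sta^J_{1,\circ}[-1]$ through the Whittaker analogue of~\eqref{eqn:Upsilon-convolution} (an identification the paper itself already uses implicitly in the proof of Corollary~\ref{cor:Av-parity}), and then conclude from case~(1) of Lemma~\ref{lem:qs-standard-costandard}, noting that $1 \in W^s \cap \JW$ while $s \notin \JW$. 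Both arguments ultimately rest on the same geometric fact --- the Artin--Schreier local system has no (compactly supported) cohomology along the $\mathbb{A}^1$-fibers of $q^s$ over the open cell --- but your version has the advantage of being entirely self-contained within the paper, at the cost of a slightly longer chain of reductions; the paper's version is shorter but outsources the key vanishing to the literature.
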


\begin{proof}
Let $\uw$ be a reduced expression for $w$ starting with a simple reflection $s_j$ for some $j \in J$. Then $\cE_w$ is isomorphic to a direct summand of $\cE_{\uw}$. On the other hand $\cE_{\uw}$ is the image in $\Db_{\BKM}(\Flag,\K)$ of an object in the equivariant derived category of the parabolic subgroup $\PKM_{s_j}$. Therefore we have $\Av_J(\cE_{\uw})=0$ (see e.g.~\cite[Lemma~4.4.6 and its proof]{by}), which implies that $\Av_J(\cE_w)=0$.
\end{proof}

\subsection{Sections of the $!$-flag for Whittaker parity complexes}
\label{ss:section-Whit}

In this subsection we assume that $\K$ is a field. Our goal is to explain how to develop a theory of ``sections of the $!$-flag" for Whittaker parity complexes. Since the constructions are close to those performed in Section~\ref{sec:parity}, we leave most of the details to the reader.

We will denote by $(\cE^J_{w,\circ} : w \in \JW)$ the (normalized) indecomposable objects in the category $\Par_{\Whit,J}(\Flag,\K)$ and by $(\cE^{s,J}_{w,\circ} : w \in \JW^s_\circ)$ the similar objects in $\Par_{\Whit,J}(\Flag^s,\K)$. Then one can define the sections of the $!$-flag for $!$-parity objects in $\Db_{\Whit,J}(\Flag, \K)$ and $\Db_{\Whit,J}(\Flag^{s}, \K)$ in the obvious way. (In this case, one replaces the Bruhat order which appears in the condition for the morphisms $\varphi^\cF_\pi$ by the order on $W$ induced by inclusions of orbit closures in $\Flag$ or in $\Flag^s$; the corresponding open ind-subvarieties of $\Flag$ will be denoted $\Flag_{\geq w, J}$.)

Next, we fix some split embeddings and projections
\[
\cE_{w,\circ}^{s,J} \to (q^s)_* \cE^J_{w,\circ} \to \cE_{w,\circ}^{s,J}
\]
for $w \in \JW^s_\circ$, and a split embedding and split projection
\[
\cE^J_{ws,\circ} \to (q^s)^! \cE^{s,J}_{w,\circ}[-1], \quad (q^s)^* \cE^{s,J}_{w,\circ}[1] \to \cE^J_{ws, \circ}
\]
for $w \in \JW^s_\circ$
respectively. This allows to obtain analogues of Proposition~\ref{prop:section-!-push} and Proposition~\ref{prop:section-!-pull}. Namely, if $(\Pi,e,d,(\varphi^\cF_\pi)_{\pi \in \Pi})$ is a section of the $!$-flag of $\cF \in \Db_{\Whit,J}(\Flag,\K)$, then one can define a section of the $!$-flag of $(q^s)_*\cF$ as follows. We set $\Pi':=\{\pi \in \Pi \mid e(\pi)s \in \JW\}$, and define $e' : \Pi' \to \JW^s_\circ$ as sending $\pi$ to the shortest element among $e(\pi)$ and $e(\pi) s$. Then we can define $d'$ and $(\varphi^{(q^s)_* \cF}_{\pi})_{\pi \in \Pi'}$ by the same procedure as in~\S\ref{ss:section-!-flag-push}. (The proof that this indeed defines a section of the $!$-flag of $(q^s)_* \cF$ is similar to the proof of Proposition~\ref{prop:section-!-push}; details are left to the reader.) On the other hand, if $(\Pi,e,d,(\varphi^\cF_\pi)_{\pi \in \Pi})$ is a section of the $!$-flag of $\cF \in \Db_{\Whit,J}(\Flag^s,\K)$, then one can define a section of the $!$-flag of $(q^s)^! \cF$ by exactly the same procedure as in~\S\ref{ss:section-!-flag-pull}.

Using these constructions one can prove the following analogue of Proposition~\ref{prop:morphisms-BS-parity}.

\begin{prop}
\label{prop:morphisms-BS-parity-Whit}
Let $\ux$ and $\uv$ be expressions, and assume that $\ux$ is a reduced expression for some element $x \in \JW$. Let also $s \in S$, and assume that $xs \in \JW$.
\begin{enumerate}
\item
\label{it:morphisms-BS-parity-Whit-1}
Assume that $x \in W^s$, so that $\ux s$ is a reduced expression for $xs \in \JW$. Let $(f_i)_{i \in \mathfrak{I}}$ be a family of homogeneous elements in 
\[
\Hom^\bullet_{\Db_{\Whit,J}(\Flag,\K)}(\Av_J(\cE_{\ux}), \Av_J(\cE_{\uv})) 
\]
whose images span 
\[
\Hom^\bullet_{\Db_{\Whit,J}(\Flag_{\geq x,J},\K)}(\Av_J(\cE_{\ux}), \Av_J(\cE_{\uv})),
\]
and let $(g_j)_{j \in \mathfrak{J}}$ be a family of homogeneous elements in 
\[
\Hom^\bullet_{\Db_{\Whit,J}(\Flag,\K)}(\Av_J(\cE_{\ux s}), \Av_J(\cE_{\uv}))
\]
whose images span the vector space
\[
\Hom^\bullet_{\Db_{\Whit,J}(\Flag_{\geq xs,J},\K)}(\Av_J(\cE_{\ux s}), \Av_J(\cE_{\uv})).
\]
Then there exist integers $n_i$ and morphisms $f'_i \colon \Av_J(\cE_{\ux}) \to \Av_J(\cE_{\ux s})[n_i]$ (for $i \in \mathfrak{I}$) and integers $m_j$ and morphisms $g'_j \colon \Av_J(\cE_{\ux}) \to \Av_J(\cE_{\ux s s})[m_j]$ (for $j \in \mathfrak{J}$) such that the images of the compositions
\[
\Av_J(\cE_{\ux}) \xrightarrow{f'_i} \Av_J(\cE_{\ux s})[n_i] \xrightarrow{f_i \star^\BKM \cE_s [n_i]} \Av_J(\cE_{\uv s}) [n_i + \deg(f_i)]
\]
together with the images of the compositions
\[
\Av_J(\cE_{\ux}) \xrightarrow{g'_j} \Av_J(\cE_{\ux s s})[m_j] \xrightarrow{g_j \star^\BKM \cE_s[m_j]} \Av_J(\cE_{\uv s}) [m_j + \deg(g_j)]
\]
span the vector space $\Hom^\bullet_{\Db_{\Whit,J}(\Flag_{\geq x,J},\K)}(\Av_J(\cE_{\ux}), \Av_J(\cE_{\uv s}))$.
\item
\label{it:morphisms-BS-parity-Whit-2}
Assume that $\ux=\uy s$ for some expression $\uy$ (which is automatically a reduced expression for $xs$). Let $(f_i)_{i \in \mathfrak{I}}$ be a family of homogeneous elements in
\[
\Hom^\bullet_{\Db_{\Whit,J}(\Flag,\K)}(\Av_J(\cE_{\ux}), \Av_J(\cE_{\uv}))
\]
whose images span
\[
\Hom^\bullet_{\Db_{\Whit,J}(\Flag_{\geq x,J},\K)}(\Av_J(\cE_{\ux}), \Av_J(\cE_{\uv})),
\]
and let $(g_j)_{j \in \mathfrak{J}}$ be a family of homogeneous elements in
\[
\Hom^\bullet_{\Db_{\Whit,J}(\Flag,\K)}(\Av_J(\cE_{\uy}), \Av_J(\cE_{\uv}))
\]
whose images span
\[
\Hom^\bullet_{\Db_{\Whit,J}(\Flag_{\geq xs,J},\K)}(\Av_J(\cE_{\uy}), \Av_J(\cE_{\uv})).
\]
Then there exist integers $n_i$ and morphisms $f'_i \colon \Av_J(\cE_{\ux}) \to \Av_J(\cE_{\ux s})[n_i]$ (for $i \in \mathfrak{I}$) and integers $m_j$ and morphisms $g'_j \colon \Av_J(\cE_{\ux}) \to \Av_J(\cE_{\ux})[m_j]$ (for $j \in \mathfrak{J}$) such that the images of the compositions
\[
\Av_J(\cE_{\ux}) \xrightarrow{f'_i} \Av_J(\cE_{\ux s})[n_i] \xrightarrow{f_i \star^\BKM \cE_s[n_i]} \Av_J(\cE_{\uv s}) [n_i + \deg(f_i)]
\]
together with the images of the compositions
\[
\Av_J(\cE_{\ux}) \xrightarrow{g'_j} \Av_J(\cE_{\ux})[m_j] \xrightarrow{g_j \star^\BKM \cE_s[m_j]} \Av_J(\cE_{\uv s}) [m_j + \deg(g_j)]
\]
span the vector space $\Hom^\bullet_{\Db_{\Whit,J}(\Flag_{\geq x,J},\K)}(\Av_J(\cE_{\ux}), \Av_J(\cE_{\uv s}))$.
\end{enumerate}
\end{prop}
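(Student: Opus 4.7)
The plan is to mimic the proof of Proposition \ref{prop:morphisms-BS-parity} line by line, replacing $\Db_{(\BKM)}(\Flag,\K)$ by $\Db_{\Whit,J}(\Flag,\K)$ throughout, and using the Whittaker analogues of Propositions \ref{prop:section-!-push} and \ref{prop:section-!-pull} that are asserted in \S\ref{ss:section-Whit}. The three key ingredients available to us are: (a) the isomorphism $\Av_J(\cF) \star^{\BKM} \cG \cong \Av_J(\cF \star^{\BKM} \cG)$ from \eqref{eqn:Av-convolution}, which for $\cG = \cE_s$ yields $\Upsilon_s(\Av_J(\cE_{\uw})) \cong \Av_J(\cE_{\uw s})$ via \eqref{eqn:Upsilon-convolution}; (b) the Whittaker push/pull section-of-the-$!$-flag constructions from \S\ref{ss:section-Whit}; and (c) the fact, noted in Remark \ref{rmk:indec-IW-parity}, that for $x \in \JW$ the indecomposable parity complex $\cE^J_{x,\circ}$ occurs as a direct summand of $\Av_J(\cE_x)$, and hence of $\Av_J(\cE_{\ux})$ since $\cE_x$ is a summand of $\cE_{\ux}$ and summands $\cE_y$ with $y \notin \JW$ are killed by $\Av_J$ (Lemma \ref{lem:Av-Ew-zero}).

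For part \eqref{it:morphisms-BS-parity-Whit-1}: the hypothesis $x, xs \in \JW$ with $x \in W^s$ ensures both $\cE^J_{x,\circ}$ and $\cE^J_{xs,\circ}$ are well-defined, and that in $\Db_{\Whit,J}(\Flag_{\geq x,J},\K)$ (resp.~$\Db_{\Whit,J}(\Flag_{\geq xs,J},\K)$) the object $\Av_J(\cE_{\ux})$ (resp.~$\Av_J(\cE_{\ux s})$) restricts to $\cE^J_{x,\circ}$ (resp.~$\cE^J_{xs,\circ}$), since the remaining summands are supported on strictly smaller orbit closures. Fix split embeddings $\cE^J_{x,\circ} \hookrightarrow \Av_J(\cE_{\ux})$ and $\cE^J_{xs,\circ} \hookrightarrow \Av_J(\cE_{\ux s})$, and assume that the compositions of the $f_i$'s and $g_j$'s with these embeddings form part of a section of the $!$-flag of $\Av_J(\cE_{\uv})$. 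Applying the Whittaker version of Proposition \ref{prop:section-!-push} produces a section of the $!$-flag of $(q^s)_* \Av_J(\cE_{\uv})$ whose morphisms from $\cE^{s,J}_{e(\pi),\circ}$ factor through shifts of $(q^s)_*(f_i)$ or $(q^s)_*(g_j)$ (via the Whittaker analogue of Remark \ref{rmk:section-!-push}); applying the Whittaker version of Proposition \ref{prop:section-!-pull} then produces a section of the $!$-flag of $(q^s)^!(q^s)_* \Av_J(\cE_{\uv})$, whose relevant morphisms factor through shifts of $\Upsilon_s(f_i)[1]$ and $\Upsilon_s(g_j)[1]$. Composing with a chosen split projection $\Av_J(\cE_{\ux}) \twoheadrightarrow \cE^J_{x,\circ}$ and translating via the isomorphism $\Upsilon_s \cong (-) \star^\BKM \cE_s$ together with the identification $\Upsilon_s(\Av_J(\cE_{\uw})) \cong \Av_J(\cE_{\uw s})$ yields the desired spanning set of compositions $(f_i \star^\BKM \cE_s[n_i])\circ f'_i$ and $(g_j \star^\BKM \cE_s[m_j])\circ g'_j$.

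Part \eqref{it:morphisms-BS-parity-Whit-2} is entirely parallel, using instead the Whittaker analogue of Proposition \ref{prop:morphisms-BS-parity}\eqref{it:morphisms-BS-parity-2} from the same push/pull machinery; the only change is the combinatorics of the shifts (since one of the strands in the reduced word now already ends in $s$, the relevant pullback involves the endomorphism $\Av_J(\cE_{\ux})$ rather than $\Av_J(\cE_{\ux s s})$).

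The main technical obstacle I expect is not conceptual but bookkeeping: one must carefully verify the Whittaker analogues of Lemmas \ref{lem:section-!-push} and \ref{lem:section-!-pull} (the special case where the target is a direct sum of shifts of a single standard sheaf), since these underlie the push/pull propositions. In the Whittaker setting, the standard object $\Cos^J_{w,\circ}$ no longer exists when $w \notin \JW$, and Lemma \ref{lem:qs-standard-costandard} shows that pushforward under $q^s$ can either vanish, shift by $\pm 1$, or remain unchanged depending on the position of $w$ relative to $\JW^s_\circ$. So the argument from Lemma \ref{lem:section-!-push} must be adapted to check that, for the specific cases $e(\pi) = w \in \JW^s_\circ$ or $e(\pi) = ws \in \JW$ arising in our sections, the constructed morphism $\varphi_\pi^{(q^s)_* \cF}$ is indeed nonzero on the relevant stratum. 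Once that verification is in hand, the rest of the proof is a purely formal transcription of the one in \S\ref{ss:morphisms-BS-parity}, and does not require any further ingredients.
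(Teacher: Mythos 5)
Your proposal is correct and follows exactly the route the paper intends: the paper itself only asserts that the proposition follows from the Whittaker push/pull sections-of-the-$!$-flag constructions of \S\ref{ss:section-Whit} (leaving details to the reader), and your transcription of the proof of Proposition~\ref{prop:morphisms-BS-parity} via \eqref{eqn:Av-convolution} and \eqref{eqn:Upsilon-convolution} is precisely that argument. You have also correctly isolated the only genuinely new point — that some strata are killed by $(q^s)_*$ in the Whittaker setting — and correctly observed that the hypotheses $x, xs \in \JW$ with $x \in W^s$ guarantee the relevant labels lie in $\JW^s_\circ$, so the spanning statement over $\Flag_{\geq x,J}$ is unaffected.
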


\subsection{Surjectivity}

We drop the assumption that $\K$ is a field.
Corollary~\ref{cor:Av-parity} shows that the functor $\mathsf{Av}_J$ restricts to a functor
\[
\Par_{\BKM}(\Flag, \K) \to \Par_{\Whit,J}(\Flag, \K);
\]
we will denote this restriction also by $\mathsf{Av}_J$.

\begin{prop}
\label{prop:morphism-IW-surjective}
For any $\cE, \cF$ in $\Par_{\BKM}(\Flag, \K)$, the morphism
\[
\Hom^\bullet_{\Par_{\BKM}(\Flag, \K)}(\cE,\cF) \to \Hom^\bullet_{\Par_{\Whit,J}(\Flag, \K)}(\mathsf{Av}_J(\cE), \mathsf{Av}_J(\cF))
\]
induced by the functor $\mathsf{Av}_J$ is surjective.
\end{prop}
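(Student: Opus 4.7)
The plan is to mimic the strategy of Proposition~\ref{prop:Delta-surjective} and Proposition~\ref{prop:surjectivity-parity}, with $\Av_J$ playing the role of the forgetful functor and with Proposition~\ref{prop:morphisms-BS-parity-Whit} replacing Proposition~\ref{prop:morphisms-BS-parity}. Since surjectivity on graded Hom spaces is preserved under taking direct summands (a point-set check using that $\Av_J(i) \circ \Av_J(p) = \Av_J(ip)$), and every parity complex is a direct summand of a finite direct sum of shifts of Bott--Samelson parity complexes $\cE_\K(\uw)$, one reduces to the case $\cE = \cE_\K(\ux)$ and $\cF = \cE_\K(\uv)$. A freeness-and-base-change argument analogous to the one in~\S\ref{ss:Diag-parity-ff} (using Lemma~\ref{lem:morphisms-parityBS} and its evident Whittaker analogue) then further reduces one to the case where $\K$ is a field $\F$.

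Next, strengthen the statement as in the passage from $\gamma_{\uw,\uv}$ to $\delta_{\uw,\uv}$: for any reduced expression $\ux$ of $x \in W$ and any expression $\uv$, prove that the composition
\[
\delta^{\Whit}_{\ux,\uv} \colon \Hom^\bullet(\cE_\F(\ux),\cE_\F(\uv)) \to \Hom^\bullet_{\Db_{\Whit,J}(\Flag_{\geq x,J},\F)}(\Av_J(\cE_\F(\ux)),\Av_J(\cE_\F(\uv)))
\]
of $\Av_J$ with restriction to $\Flag_{\geq x,J}$ is surjective. Note that when $x \notin \JW$ the codomain vanishes, so the content is entirely concentrated on $x \in \JW$.

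I would then induct on $\ell(\uv)$. For the base case $\uv = \varnothing$, the identity $\Av_J(\cE_\F(\varnothing)) = \Sta^J_{1,\circ}$ shows the codomain vanishes unless $x = 1$ (forcing $\ux = \varnothing$); in that case both Hom-spaces can be identified as equivariant cohomology of a point for the two equivariance conditions, and one checks directly that the comparison map is surjective (sending $1$ to $1$). For the inductive step, write $\uv = \uu s$. Using the compatibility~\eqref{eqn:Av-convolution}, apply Proposition~\ref{prop:morphisms-BS-parity-Whit} in the appropriate variant (depending on whether $xs > x$ or $xs < x$, and whether $xs \in \JW$) to express the target as the span of images of compositions $(f_i \star^\BKM \cE_s) \circ f_i'$ and $(g_j \star^\BKM \cE_s) \circ g_j'$, where the $f_i, g_j$ are provided by the inductive hypothesis applied to $\uu$. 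One is then reduced to a Whittaker analogue of Lemma~\ref{lem:surjectivity-parity}: for $\uy$ a reduced expression of $y$ with $ys > y$ (and $ys \in \JW$ where needed), the maps $\delta^{\Whit}_{\uy s, \uy s}$, $\delta^{\Whit}_{\uy s, \uy s s}$, $\delta^{\Whit}_{\uy, \uy s}$, $\delta^{\Whit}_{\uy, \uy s s}$ are surjective.

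The main obstacle will be this last lemma. The double-$s$ cases reduce to the single-$s$ cases via the splitting $\cE_\F(ss) \cong \cE_\F(s)[1] \oplus \cE_\F(s)[-1]$ of Lemma~\ref{lem:Ess} (which is compatible with $\Av_J$ by~\eqref{eqn:Av-convolution}). The case $\delta^{\Whit}_{\uy s, \uy s}$ is immediate since the restricted Hom-space is concentrated in degree $0$ and spanned by the identity. The remaining and most delicate case $\delta^{\Whit}_{\uy, \uy s}$ requires a concrete computation along the locally-closed union $\Flag_y \sqcup \Flag_{ys}$: exploiting that $\nu_{\uy s}$ is an isomorphism there, one obtains explicit descriptions of the restrictions $(i'_y)^* \Av_J(\cE_\F(\uy))$ and $(i'_y)^* \Av_J(\cE_\F(\uy s))$, and then Lemmas~\ref{lem:qs-standard-costandard} and~\ref{lem:qs-inverse-standard-costandard} pin down the Hom space to be one-dimensional (concentrated in a single degree) while Lemma~\ref{lem:Av-Ew-zero} eliminates the pieces of the support lying outside $\JW$. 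A direct check shows that the image under $\Av_J$ of the lower dot morphism restricts to a nonzero class there, proving surjectivity and closing the induction.
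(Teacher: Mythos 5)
Your proposal follows essentially the same route as the paper's sketch: Nakayama and base change to reduce to a field, reduction to Bott--Samelson objects, strengthening to a $\delta$-type statement about restriction to $\Flag_{\geq w,J}$, induction on $\ell(\uv)$ via Proposition~\ref{prop:morphisms-BS-parity-Whit}, and an auxiliary lemma modeled on Lemma~\ref{lem:surjectivity-parity} for the generating one- and two-letter cases. Your elaboration of the base case and of the crucial $\delta^{\Whit}_{\uy, \uy s}$ computation is more explicit than the paper's sketch but consistent with it.

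There is, however, one real gap in the inductive step. With $\uv = \uu s$ and $w \in \JW$, you say you will ``apply Proposition~\ref{prop:morphisms-BS-parity-Whit} in the appropriate variant (depending on whether $xs > x$ or $xs < x$, and whether $xs \in \JW$).'' But both parts of Proposition~\ref{prop:morphisms-BS-parity-Whit} carry the standing hypothesis $xs \in \JW$; when $ws > w$ and $ws \notin \JW$ there is no ``appropriate variant'' to invoke. (When $ws < w$ one does get $ws \in \JW$ for free, so only the case $ws>w$, $ws\notin\JW$ is at issue.) You need a separate argument there, and the one the paper uses is a direct vanishing: since $\Av_J(\cE_\K(\uv))$ is, up to shift, $(q^s)^!$ applied to an object of $\Db_{\Whit,J}(\Flag^s,\K)$, its $!$-restriction to a stratum $\Flag_{x,J}$ can be nonzero only if both $x$ and $xs$ lie in $\JW$. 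Since here $ws \notin \JW$, the codomain of $\delta^J_{\uw,\uv}$ vanishes and that branch of the induction is vacuous. Add this observation and your induction closes.
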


\begin{proof}[Sketch of proof]
Using the Nakayama lemma and the appropriate analogue of Lem\-ma~\ref{lem:morphisms-parityBS}, it suffices to consider the case when $\K$ is a field, to which we restrict from now on.

In this setting, the surjectivity
can be proved by using once again the technique of the proof of~\S\ref{ss:proof-thm} and~\S\ref{ss:Diag-parity-ff}. Namely, one can assume that $\cE$ and $\cF$ are of Bott--Samelson type, and then using adjunction that $\cE=\cE_\K(\varnothing)$ and $\cF=\cE_\K(\uv)$ for some expression $\uv$. To treat this case we prove more generally that if $\uw$ is a reduced expression for $w \in \JW$ and $\uv$ is any expression, the composition
\begin{multline*}
\delta^J_{\uw,\uv} \colon \Hom^\bullet_{\Par_{\BKM}(\Flag,\K)}(\cE_\K(\uw), \cE_\K(\uv)) \xrightarrow{\Av_J} \\
\Hom^\bullet_{\Par_{\Whit,J}(\Flag, \K)}(\mathsf{Av}_J(\cE_\K(\uw)), \mathsf{Av}_J(\cE_\K(\uv))) \to \\
\Hom^\bullet_{\Par_{\Whit,J}(\Flag_{\geq w, J}, \K)}(\mathsf{Av}_J(\cE_\K(\uw)), \mathsf{Av}_J(\cE_\K(\uv)))
\end{multline*}
(where the second map is induced by restriction) is surjective. For this we first consider as in Lemma~\ref{lem:surjectivity-parity} the case of morphisms $\delta^J_{\uy s, \uy s}$, $\delta^J_{\uy s, \uy s s}$, $\delta^J_{\uy, \uy s}$, $\delta^J_{\uy, \uy s s}$ where $\uy$ is a reduced expression for an element $y \in \JW^s_\circ$.
Then we treat the general case by induction on $\ell(\uv)$ as follows.

If $\ell(\uv)=0$ the claim is obvious. If $\ell(\uv)>0$, we write $\uv=\uu s$. If $ws<w$, then $ws \in \JW$, and it suffices to treat the case when $\uw=\ux s$ for some reduced expression $\ux$ for $ws$. In this case we invoke Proposition~\ref{prop:morphisms-BS-parity-Whit}\eqref{it:morphisms-BS-parity-Whit-2} as in the proof of Proposition~\ref{prop:surjectivity-parity}. If $ws>w$ and $ws \in \JW$, then we can invoke Proposition~\ref{prop:morphisms-BS-parity-Whit}\eqref{it:morphisms-BS-parity-Whit-1}.

Finally, it remains to consider the case when
$ws>w$ and $ws \notin \JW$. In this case, since $\Av_J(\cE_\K(\uv))$ is the image under $(q^s)^!$ of an object in $\Db_{\Whit,J}(\Flag^s,\K)$, its co-restriction to a stratum $\Flag_{x,J}$ can be nonzero only if $x$ and $xs$ are both in $\JW$. In particular, its co-restriction to $\Flag_{w,J}$ vanishes, so that the codomain of $\delta^J_{\uw,\uv}$ vanishes. Hence there is nothing to prove in this case.
%
\end{proof}

\begin{cor}
If $w \in \JW$, then $\mathsf{Av}_J(\cE_w)$ is a nonzero indecomposable parity complex.
\end{cor}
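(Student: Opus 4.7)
\medskip

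\noindent\textbf{Proof plan.} The statement has three assertions to verify: that $\mathsf{Av}_J(\cE_w)$ is (a) a parity complex, (b) nonzero, and (c) indecomposable. The first two are essentially free from the preceding development; the real content is indecomposability, which will be extracted from the surjection in Proposition~\ref{prop:morphism-IW-surjective}.

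For (a), I would simply invoke Corollary~\ref{cor:Av-parity}, which already tells us that $\mathsf{Av}_J$ carries $\Par_{\BKM}(\Flag,\K)$ into $\Par_{\Whit,J}(\Flag,\K)$. For (b), I would argue as in Remark~\ref{rmk:indec-IW-parity}: since $w \in \JW$ and $\cE_w$ has restriction to $\Flag_w$ equal to $\underline{\K}_{\Flag_w}[\ell(w)]$, base change through the convolution defining $\mathsf{Av}_J$ identifies $(i^J_{w,\circ})^* \mathsf{Av}_J(\cE_w)$ with a nonzero shift of the Whittaker local system $\cL^J_w$ on $\Flag_{w,J}$. In particular $\mathsf{Av}_J(\cE_w)\neq 0$ and its support contains $\Flag_{w,J}$.

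For (c), the key step is to apply Proposition~\ref{prop:morphism-IW-surjective} with $\cE=\cF=\cE_w$. This gives a surjection of graded rings
\[
\Hom^\bullet_{\Par_{\BKM}(\Flag,\K)}(\cE_w,\cE_w) \twoheadrightarrow \Hom^\bullet_{\Par_{\Whit,J}(\Flag,\K)}(\mathsf{Av}_J(\cE_w),\mathsf{Av}_J(\cE_w)),
\]
and in particular a surjection on degree-$0$ endomorphism rings $\End(\cE_w) \twoheadrightarrow \End(\mathsf{Av}_J(\cE_w))$. Because $\cE_w$ is indecomposable in the Krull--Schmidt category $\Par_{\BKM}(\Flag,\K)$, the ring $\End(\cE_w)$ is local. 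A surjective image of a local ring is either zero or local; since part (b) guarantees $\mathsf{Av}_J(\cE_w)\neq 0$, the endomorphism ring $\End(\mathsf{Av}_J(\cE_w))$ is nonzero, hence local, so $\mathsf{Av}_J(\cE_w)$ is indecomposable.

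There is no real obstacle here: (a) and (b) are direct citations of the preceding two subsections, and (c) is the immediate payoff of the hard work invested in Proposition~\ref{prop:morphism-IW-surjective}. The only minor point requiring care is that $\Par_{\Whit,J}(\Flag,\K)$ is Krull--Schmidt (so that locality of $\End$ implies indecomposability), which holds under the standing assumption that $\K$ is complete local; this is the same context in which the analogous statement for $\cE_w$ itself was established in~\cite{jmw}.
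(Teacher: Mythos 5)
Your proposal is correct and follows exactly the paper's argument: nonvanishing comes from the restriction to $\Flag_{w,J}$ being a shift of $\cL^J_w$ (as in Remark~\ref{rmk:indec-IW-parity}), and indecomposability follows from the surjection of Proposition~\ref{prop:morphism-IW-surjective} together with the fact that a (nonzero) quotient of a local ring is local. Your additional remarks on parity and on the Krull--Schmidt property are consistent with the paper's standing assumptions and add nothing that conflicts with its proof.
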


\begin{proof}
If $w \in \JW$, then $\mathsf{Av}_J(\cE_w)$ is supported on $\overline{\Flag_{w,J}}$, and its restriction to the orbit $\Flag_{w,J}$ is $\cL^J_{w}[\dim(\Flag_{w,J})]$, see Remark~\ref{rmk:indec-IW-parity}. It is indecomposable by Proposition~\ref{prop:morphism-IW-surjective} and the fact that a quotient of a local ring is local.
\end{proof}

\subsection{Description of the antispherical diagrammatic category in terms of Whittaker sheaves}
\label{ss:Dasph-parity}

By Corollary~\ref{cor:Av-parity}, we can define $\Par_{\Whit,J}^{\mathrm{BS}}(\Flag, \K)$ as the full (but not strictly full) subcategory of $\Par_{\Whit,J}(\Flag, \K)$ whose objects are the parity complexes
\[
\Av_J(\cE_\K(\uw))[n]
\]
for $\uw$ an expression and $n \in \Z$. Using~\eqref{eqn:Av-convolution}, we see that this category admits a natural action of the monoidal category $\Par^{\mathrm{BS}}_{\BKM}(\Flag, \K)$ on the right. Moreover, using Remark~\ref{rmk:indec-IW-parity}, one can easily check that the Karoubi envelope of the additive hull of $\Par^{\mathrm{BS}}_{\Whit,J}(\Flag, \K)$ is $\Par_{\Whit,J}(\Flag, \K)$.

On the other hand, consider the antispherical category $\Diag^{\mathrm{asph},\K}_{\mathrm{BS},J}(\GKM)$ defined as in Remark~\ref{rmk:asph-general}, for the Coxeter group $(W,S)$, its realization $\fh^\K_\GKM$, and the subset $\{s_j : j \in J\} \subset S$. We also denote by $\Diag^{\mathrm{asph},\K}_J(\GKM)$ the Karoubi envelope of the additive hull of $\Diag^{\mathrm{asph},\K}_{\mathrm{BS},J}(\GKM)$.

\begin{thm}
\label{thm:Dasph-parities-Whit}
Assume that there exists a ring morphism $\Z' \to \K$. Then
there exists a canonical equivalence of categories
\[
\Delta_{\mathrm{BS}}^J \colon \Diag^{\mathrm{asph},\K}_{\mathrm{BS},J}(\GKM) \simto \Par^{\mathrm{BS}}_{\Whit,J}(\Flag, \K)
\]
which is compatible with the right actions of $\DiagBS^\K(\GKM)$ and $\Par^{\mathrm{BS}}_{\BKM}(\Flag, \K)$ through the equivalence of Theorem~{\rm \ref{thm:main-parity-BS}}. As a consequence, we obtain an equivalence of categories
\[
\Delta^J \colon \Diag^{\mathrm{asph},\K}_J(\GKM) \simto \Par_{\Whit,J}(\Flag, \K)
\]
compatible with the right actions of $\Diag^\K(\GKM)$ and $\Par_{\BKM}(\Flag, \K)$ through the equivalence of Theorem~{\rm \ref{thm:main-parity}}.
\end{thm}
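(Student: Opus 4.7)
My plan is to define $\Delta^J_{\mathrm{BS}}$ as the composition $\mathsf{Av}_J \circ \Delta_{\mathrm{BS}}$, where $\Delta_{\mathrm{BS}}$ is the equivalence of Theorem~\ref{thm:main-parity-BS}, check that it descends to the antispherical quotient, and then verify that the resulting functor is an equivalence. The compatibility with the right $\DiagBS^\K(\GKM)$/$\Par^{\mathrm{BS}}_{\BKM}(\Flag,\K)$-action is then automatic from the monoidality of $\Delta_{\mathrm{BS}}$ and the natural isomorphism~\eqref{eqn:Av-convolution}. The second assertion about Karoubi envelopes will follow from the first by universal properties, once one uses that $\Par_{\Whit,J}(\Flag,\K)$ is the Karoubi envelope of the additive hull of $\Par^{\mathrm{BS}}_{\Whit,J}(\Flag,\K)$ (established in~\S\ref{ss:Dasph-parity}) and the parallel statement for the antispherical diagrammatic category holds by construction.

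I would first check that $\mathsf{Av}_J \circ \Delta_{\mathrm{BS}}$ kills the two classes of morphisms used in the definition of the antispherical quotient (see Remark~\ref{rmk:asph-general}). For a morphism factoring through some $B_{\uv} \langle n \rangle$ with $\uv = s_j \uv'$ and $j \in J$, associativity of convolution and~\eqref{eqn:Av-convolution} reduce the task to showing $\mathsf{Av}_J(\cE_\K(s_j)) = 0$, which is Lemma~\ref{lem:Av-Ew-zero} since $s_j \notin \JW$. For a morphism of the form $a \cdot \varphi$ with $a \in \fh^*_{\GKM}$, the corresponding morphism under $\Delta_{\mathrm{BS}}$ is obtained by composing with the multiplication-by-$a$ endomorphism of $\cE_\K(\varnothing)$; under $\mathsf{Av}_J$ this induces a morphism $\Sta^J_{1,\circ} \to \Sta^J_{1,\circ}[2]$. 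But the $\Hom$-vanishing displayed in~\S\ref{ss:IW}, combined with the isomorphism $\Sta^J_{1,\circ} \cong \Cos^J_{1,\circ}$, yields $\End^\bullet(\Sta^J_{1,\circ}) = \K$ concentrated in degree $0$, so any such morphism vanishes.

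Essential surjectivity is immediate from the definition of $\Par^{\mathrm{BS}}_{\Whit,J}(\Flag,\K)$, and fullness follows from Proposition~\ref{prop:morphism-IW-surjective} combined with the fully-faithfulness of $\Delta_{\mathrm{BS}}$. The main task is faithfulness. Following the pattern of~\S\ref{ss:Diag-parity-ff}, the appropriate Whittaker analogue of Lemma~\ref{lem:morphisms-parityBS} (provable via parity-vanishing of $\mathsf{Av}_J(\cE_\K(\uw))$ guaranteed by Corollary~\ref{cor:Av-parity}, Lemma~\ref{lem:parity-IW-direct} and Lemma~\ref{lem:parity-IW-inverse}) reduces this to comparing graded ranks over a field $\F$ extending $\Z'$; by adjunction it suffices to treat the case $\uv = \varnothing$. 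On the antispherical side, the generalization of Proposition~\ref{prop:ASLL} stated in Remark~\ref{rmk:asph-general} bounds $\dim_\F \Hom^\bullet_{\Diag^{\mathrm{asph},\F}_{\mathrm{BS},J}(\GKM)}(\oB_{\uw}, \oB_\varnothing)$ above by the number of subexpressions $\ue \subset \uw$ expressing $1$ and avoiding $W \smallsetminus \JW$; by a straightforward adaptation of Lemma~\ref{lem:number-subexpr-avoids} this equals the total graded rank of the coefficient of $N_1$ in $N_1 \cdot \uH_{\uw}$ in the antispherical module attached to $(W,W_J)$. On the Whittaker side, the isomorphism $\Sta^J_{1,\circ} \cong \Cos^J_{1,\circ}$ rewrites the $\Hom$-space in terms of the $*$-restriction $(i^J_{1,\circ})^* \mathsf{Av}_J(\cE_\F(\uw))$, and since this is a parity sheaf on the single orbit $\Flag_{1,J}$ its graded rank can be computed inductively on $\ell(\uw)$ by applying $(-) \star^\BKM \cE_\F(s)$ and using the standard triangles of Lemma~\ref{lem:qs-inverse-standard-costandard} (upgraded via~\eqref{eqn:Av-convolution}), yielding the same Hecke-algebraic count.

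The main obstacle will be this dimension matching on the Whittaker side: surjectivity and the light-leaves upper bound easily show both sides are bounded above by the antispherical Hecke count, but proving the Whittaker upper bound is actually attained (which then forces saturation of the light-leaves bound as well) requires a careful inductive argument using the sections of the $!$-flag developed in~\S\ref{ss:section-Whit}, in particular Proposition~\ref{prop:morphisms-BS-parity-Whit}, in the spirit of the proof of Proposition~\ref{prop:surjectivity-parity}.
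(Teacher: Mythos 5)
Your overall strategy matches the paper's: post-compose $\Delta_{\mathrm{BS}}$ with $\mathsf{Av}_J$, check the descent via Lemma~\ref{lem:Av-Ew-zero} and the degree-zero concentration of $\End^\bullet(\Sta^J_{1,\circ})$, obtain fullness from Proposition~\ref{prop:morphism-IW-surjective}, and settle faithfulness by a rank comparison reduced to $\uv = \varnothing$ via adjunction. This is correct.

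However, your final paragraph misattributes where the technical weight lies. You suggest that computing the rank of the Whittaker $\Hom$-space --- showing the ``Whittaker upper bound is actually attained'' --- requires an inductive argument via the sections-of-the-$!$-flag machinery and Proposition~\ref{prop:morphisms-BS-parity-Whit}. That machinery is already spent in proving Proposition~\ref{prop:morphism-IW-surjective}, which you have cited as an external ingredient for fullness; it is not needed again. The $\K$-module $\bigoplus_n \Hom(\mathsf{Av}_J(\cE_\K(\uw)), \mathsf{Av}_J(\cE_\K(\varnothing))[n])$ is free of a rank computable directly in the antispherical module of the Hecke algebra, by exactly the same long-exact-sequence/parity argument used in Lemma~\ref{lem:graded-ranks}, together with the standard/costandard triangles of Lemmas~\ref{lem:qs-standard-costandard} and~\ref{lem:qs-inverse-standard-costandard}. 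There is no ``attainment'' subtlety; it is an exact, elementary calculation. Once one knows the target is free of rank $m$, the source is generated by $m$ elements (by Proposition~\ref{prop:ASLL} in the form of Remark~\ref{rmk:asph-general}), and the map is surjective, the conclusion follows from the elementary fact that any surjective $\K$-linear map from a module generated by $m$ elements to $\K^m$ is an isomorphism --- this also removes your reduction to a field $\F$, which the paper does not need. In short, the hard geometric input is surjectivity (already proved as Proposition~\ref{prop:morphism-IW-surjective}), not the rank of the target; your proof goes through once this emphasis is corrected.
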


\begin{proof}
We consider the diagram
\[
\xymatrix@C=2cm@R=0.5cm{
\DiagBS^\K(\GKM) \ar[r]^-{\Delta_{\mathrm{BS}}}_-{\sim} \ar[d] & \Par^{\mathrm{BS}}_{\BKM}(\Flag, \K) \ar[d]^-{\mathsf{Av}_J} \\
\Diag^{\mathrm{asph},\K}_{\mathrm{BS},J}(\GKM) & \Par^{\mathrm{BS}}_{\Whit,J}(\Flag, \K),
}
\]
where the left vertical arrow is the natural quotient functor, and the horizontal arrow is the equivalence of Theorem~\ref{thm:main-parity-BS}. Since $\mathsf{Av}_\chi(\cE_\K(\uw))=0$ for any expression $\uw$ starting with a simple reflection in $\{s_j : j \in J\}$ (see the proof of Lemma~\ref{lem:Av-Ew-zero}), and $\Av_J(\Delta_{\mathrm{BS}}(f))=0$ for any $f \in (\fh^\K_\GKM)^*$ (because $\Hom_{\Par^{\mathrm{BS}}_{\Whit,J}(\Flag, \K)}(\Sta^J_{1,\circ},\Sta^J_{1,\circ}[2])=0$), the composition $\Av_J \circ \Delta_{\mathrm{BS}}$ factors through a functor 
\begin{equation}
\label{eqn;functor-DiagBS}
\Delta_{\mathrm{BS}}^J \colon \Diag^{\mathrm{asph},\K}_{\mathrm{BS},J}(\GKM) \simto \Par^{\mathrm{BS}}_{\Whit,J}(\Flag, \K)
\end{equation}
which is the natural bijection on objects. It follows from Theorem~\ref{thm:main-parity-BS} and Proposition~\ref{prop:morphism-IW-surjective} that this functor induces surjections on morphisms, and what remains is to prove that these surjections are in fact isomorphisms. 

To prove this property, using adjunction (as e.g.~in~\S\ref{ss:proof-thm}), it suffices to consider morphisms (of any degree) from $\overline{B}_{\uw}$ to $\oB_\varnothing$ where $\uw$ is any expression. In this case, the version of Proposition~\ref{prop:ASLL} in our present setting (see Remark~\ref{rmk:asph-general}) provides a finite generating family for the $\K$-module $\Hom^\bullet_{\Diag^{\mathrm{asph},\K}_{\mathrm{BS},J}(\GKM)}(\oB_{\uw}, \oB_\varnothing)$. Now, as in Lemma~\ref{lem:graded-ranks}, one can prove that the $\K$-module
\[
\bigoplus_{n \in \Z} \Hom(\mathsf{Av}_J(\cE_\K(\uw)),\mathsf{Av}_J(\cE_\K(\varnothing))[n])
\]
is free, and compute its rank in terms of the antispherical module for the Hecke algebra of $(W,S)$ associated with $J$ (based on Lemma~\ref{lem:qs-standard-costandard} and Lemma~\ref{lem:qs-inverse-standard-costandard}). The rank one obtains in this way is precisely the cardinality of the generating family of $\Hom^\bullet_{\Diag^{\mathrm{asph},\K}_{\mathrm{BS},J}(\GKM)}(\oB_{\uw}, \oB_\varnothing)$ considered above (by the appropriate generalization of Lemma~\ref{lem:number-subexpr-avoids}). Since any surjective morphism from a $\K$-module generated by $m$ elements to $\K^m$ must be an isomorphism, we deduce the desired claim, and the first equivalence of the theorem.
The second equivalence follows by taking the Karoubi envelope of the additive hull of each of these categories.
\end{proof}

\subsection{Application to the light leaves basis in the antispherical category}
\label{ss:LL-basis-Dasph}

As noticed in the course of the proof of Theorem~\ref{thm:Dasph-parities-Whit}, it follows from the constructions of the present section that the generating family for the $\K$-module $\Hom^\bullet_{\Diag^{\mathrm{asph},\K}_{\mathrm{BS},J}(\GKM)}(\oB_{\uw}, \oB_\varnothing)$ considered in Proposition~\ref{prop:ASLL} actually forms a basis of this $\K$-module, when the \'etale derived category with coefficients in $\K$ makes sense.
In particular, this implies that for any expressions $\uw$ and $\uv$, the graded $\K$-module $\Hom^\bullet_{\Diag^{\mathrm{asph},\K}_{\mathrm{BS},J}(\GKM)}(\oB_{\uw}, \oB_{\uv})$ is free. We conclude the paper with the following lemma, which allows to deduce that the same property holds for more general rings of coefficients.

\begin{lem}
\label{lem:LLbasis-asph}
Let $\K$ and $\K'$ be integral domains such that there exists a ring morphism $\Z' \to \K$,
and consider a ring morphism $\varphi \colon \K \to \K'$.
\begin{enumerate}
\item
\label{it:LLbasis-asph-1}
Assume that $\varphi$ is injective, and that the family of Proposition~{\rm \ref{prop:ASLL}} forms a basis of $\Hom^\bullet_{\Diag^{\mathrm{asph},\K'}_{\mathrm{BS},J}(\GKM)}(\oB_{\uw}, \oB_\varnothing)$ for coefficients $\K'$. Then the same property holds for $\K$.
\item
\label{it:LLbasis-asph-2}
Assume that $\K'$ is free over $\K$, and that the family of Proposition~{\rm \ref{prop:ASLL}} forms a basis of $\Hom^\bullet_{\Diag^{\mathrm{asph},\K}_{\mathrm{BS},J}(\GKM)}(\oB_{\uw}, \oB_\varnothing)$ for coefficients $\K$. Then the same property holds for $\K'$.
\end{enumerate}
\end{lem}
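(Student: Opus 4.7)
The plan is to reduce both statements to a single base-change identity for the antispherical morphism spaces, and then to conclude by elementary linear algebra. Let us abbreviate
\[
 M_{\K} := \Hom^\bullet_{\mathcal{D}^{\mathrm{asph}}_{\mathrm{BS},J,\K}(\GKM)}(\oB_{\uw}, \oB_{\varnothing}),
\]
and let $\{\varphi_i\}_{i \in E}$ denote the (finite) light leaves family of Proposition~\ref{prop:ASLL}, indexed by subexpressions of $\uw$ expressing $1$ and avoiding $W \smallsetminus {}^f\hspace{-1pt}W$. By Proposition~\ref{prop:ASLL} (or rather its antispherical analogue) the $\varphi_i$ span $M_{\K}$ as a $\K$-module, and similarly the $\varphi_i$ span $M_{\K'}$ as a $\K'$-module.

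The main step is to establish the base-change isomorphism
\begin{equation}
\label{eqn:bc-asph}
M_{\K} \otimes_{\K} \K' \simto M_{\K'}
\end{equation}
(where we use $\varphi$ to view $\K'$ as a $\K$-algebra). To prove it, set $N_{\K} := \Hom^\bullet_{\DiagBS^{\K}(\GKM)}(B_{\uw}, B_{\varnothing})$ and let $I_{\K} \subset N_{\K}$ be the $\K$-submodule spanned by morphisms factoring through an object $B_{\uu}\langle n \rangle$ with $\uu$ starting with a reflection in $\{s_j : j \in J\}$, so that $M_{\K} = N_{\K}/I_{\K}$. The base-change isomorphism~\eqref{eqn:Diag-K-K'} gives $N_{\K} \otimes_{\K} \K' \simto N_{\K'}$. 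Since any generator of $I_{\K'}$ is a $\K'$-linear combination of compositions $\beta \circ \alpha$ with $\alpha, \beta$ obtained from $\DiagBS^{\K}$-morphisms by extension of scalars (and such compositions lie in the image of $I_{\K}$), the image of $I_{\K} \otimes_{\K} \K'$ in $N_{\K'}$ is exactly $I_{\K'}$. Applying right-exactness of $-\otimes_{\K} \K'$ to the surjection $N_{\K} \twoheadrightarrow M_{\K}$ then yields~\eqref{eqn:bc-asph}.

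Granted~\eqref{eqn:bc-asph}, the two parts are immediate. For \eqref{it:LLbasis-asph-1}, suppose $\sum_i a_i \varphi_i = 0$ in $M_{\K}$ with $a_i \in \K$. Applying the canonical morphism $M_{\K} \to M_{\K} \otimes_{\K} \K' \cong M_{\K'}$ gives $\sum_i \varphi(a_i)\, \varphi_i = 0$ in $M_{\K'}$; since $\{\varphi_i\}$ is a $\K'$-basis of $M_{\K'}$ we deduce $\varphi(a_i) = 0$, hence $a_i = 0$ by injectivity of $\varphi$. Combined with the already known generation, this shows $\{\varphi_i\}$ is a $\K$-basis of $M_{\K}$. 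For \eqref{it:LLbasis-asph-2}, freeness of $\K'$ over $\K$ implies that if $M_{\K}$ is $\K$-free with basis $\{\varphi_i\}$ then $M_{\K} \otimes_{\K} \K'$ is $\K'$-free with basis $\{\varphi_i \otimes 1\}$; using~\eqref{eqn:bc-asph} this is the desired statement for $M_{\K'}$.

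The main obstacle is the verification of~\eqref{eqn:bc-asph}, since a priori the formation of the subquotient defining $\DasphBS$ need not commute with base change; the point is that the submodule $I_{\K}$ is generated by morphisms of a very explicit geometric form, which behave well under the base-change isomorphism~\eqref{eqn:Diag-K-K'} coming from the double leaves theorem~\cite[Theorem~6.11]{ew}. Everything else is a straightforward linear algebra exercise.
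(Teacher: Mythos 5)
Your argument is correct, and for part~\eqref{it:LLbasis-asph-2} it is genuinely different from (and stronger than) the paper's. The paper only extracts from the commutative square the \emph{surjection}~\eqref{eqn:Dasph-Hom-K-K'}; it then proves~\eqref{it:LLbasis-asph-1} by observing that a surjection carrying a spanning family onto a basis must be bijective, and proves~\eqref{it:LLbasis-asph-2} by a separate argument: it takes the adapted basis of $\Hom^\bullet_{\DiagBS^{\K}(\GKM)}(B_{\uw},B_\varnothing)$ from (the analogue of) Lemma~\ref{lem:morphisms-Dasph-integral}, and checks that the kernel of the quotient to the antispherical category over $\K'$ is spanned by the expected elements by expanding along a $\K$-basis of $\K'$ — this is exactly where the freeness hypothesis enters. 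You instead upgrade the surjection to an isomorphism $M_\K\otimes_\K\K'\simto M_{\K'}$ unconditionally, by noting that every generator $\beta\circ\alpha$ of $I_{\K'}$ decomposes, via~\eqref{eqn:Diag-K-K'} applied to $\alpha$ and $\beta$ separately, as a $\K'$-combination of base changes of elements of $I_\K$. This is valid (extension of scalars is a functor, so compositions of base-changed morphisms are base changes of compositions), and once the isomorphism is in hand both parts follow by linear algebra; in particular the freeness hypothesis in~\eqref{it:LLbasis-asph-2} becomes superfluous, since base change of a free module is free for \emph{any} ring morphism. The only inaccuracy is your description of the ideal: in the generality of Remark~\ref{rmk:asph-general} (which is the setting of this lemma) the antispherical quotient is taken not only by morphisms factoring through $B_{\uu}\langle k\rangle$ with $\uu$ starting in $\{s_j: j\in J\}$, but also by the morphisms $a\cdot\varphi$ with $a\in(\fh^{\K}_{\GKM})^*$ (the coroots need not generate $\fh_\GKM^\K$ here, so these cannot be omitted). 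These extra generators base-change in exactly the same way, since $(\fh^{\K'}_{\GKM})^*=(\fh^{\K}_{\GKM})^*\otimes_\K\K'$, so your argument goes through once they are added to $I_\K$.
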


\begin{proof}
Consider the following diagram:
\[
\xymatrix@C=1.5cm@R=0.6cm{
\K' \otimes_{\K} \Hom^\bullet_{\DiagBS^\K(\GKM)}(B_{\uw}, B_\varnothing) \ar[r]^-{\sim} \ar@{->>}[d] & \Hom^\bullet_{\DiagBS^{\K'}(\GKM)}(B_{\uw}, B_\varnothing) \ar@{->>}[d] \\
\K' \otimes_{\K} \Hom^\bullet_{\Diag^{\mathrm{asph},\K}_{\mathrm{BS},J}(\GKM)}(\oB_{\uw}, \oB_\varnothing) & \Hom^\bullet_{\Diag^{\mathrm{asph},\K'}_{\mathrm{BS},J}(\GKM)}(\oB_{\uw}, \oB_\varnothing).
}
\]
Here the upper horizontal morphism is as in~\eqref{eqn:Diag-K-K'}, and the vertical arrows are induced by the respective quotient functors. From this diagram we deduce a natural surjective morphism
\begin{equation}
\label{eqn:Dasph-Hom-K-K'}
\K' \otimes_{\K} \Hom^\bullet_{\Diag^{\mathrm{asph},\K}_{\mathrm{BS},J}(\GKM)}(\oB_{\uw}, \oB_\varnothing) \twoheadrightarrow \Hom^\bullet_{\Diag^{\mathrm{asph},\K'}_{\mathrm{BS},J}(\GKM)}(\oB_{\uw}, \oB_\varnothing).
\end{equation}

Now we can prove~\eqref{it:LLbasis-asph-1}. In fact if the property is known for $\K'$ then the surjection in~\eqref{eqn:Dasph-Hom-K-K'} must be an isomorphism. Hence the image of our family is free (over $\K'$) in $\K' \otimes_{\K} \Hom^\bullet_{\Diag^{\mathrm{asph},\K}_{\mathrm{BS},J}(\GKM)}(\oB_{\uw}, \oB_\varnothing)$, which implies that the family is free (over $\K$) in $\Hom^\bullet_{\Diag^{\mathrm{asph},\K}_{\mathrm{BS},J}(\GKM)}(\oB_{\uw}, \oB_\varnothing)$ since $\varphi$ is injective.

Finally, we prove~\eqref{it:LLbasis-asph-2}. Consider the $\K$-basis of $\Hom^\bullet_{\DiagBS^\K(\GKM)}(B_{\uw}, B_\varnothing)$ constructed as in the proof of Lemma~\ref{lem:morphisms-Dasph-integral}. The image of this basis in $\Hom^\bullet_{\DiagBS^{\K'}(\GKM)}(B_{\uw}, B_\varnothing)$ is a $\K'$-basis of this $\K'$-module. As in the proof of Lemma~\ref{lem:morphisms-Dasph-integral}, what we have to prove is that any homogeneous element in $\Hom^\bullet_{\DiagBS^{\K'}(\GKM)}(B_{\uw}, B_\varnothing)$ which factors through an object $B_{\ux} \langle k \rangle$ where $\ux$ starts with a simple reflection $s_j$ with $j \in J$ is a linear combination of the elements $\varphi_{\ue}$ where $\ue$ does not avoid $\Waff \smallsetminus \JW$ and the elements $f_i \cdot \varphi_{\ue}$. Choosing a basis for $\K'$ over $\K$, from the knowledge of this property over $\K$ one easily deduces the property over $\K'$, and the proof is complete.
\end{proof}

Using Lemma~\ref{lem:LLbasis-asph}\eqref{it:LLbasis-asph-2} one obtains that the property holds over any field 
admitting a ring morphism from $\Z'$, and
then using~\eqref{it:LLbasis-asph-1} one deduces that it holds over any integral domain satisfying this condition.

\subsection{Iwahori--Whittaker sheaves on the affine flag variety}
\label{ss:Whit-affine}

Let us consider the setting of~\S\ref{ss:affine-flag-variety} (in the \'etale context), and let us assume in addition that $\mathrm{char}(\bbL)>0$, that
$\K$ satisfies~\eqref{eqn:assumption-pairing}, that there exists a ring morphism $\Z' \to \K$
(so that~\eqref{eqn:assumption-Dem-surjectivity} is satisfied), and that there exists a nontrivial (additive) character $\psi$ from the prime subfield of $\bbL$ to $\K^\times$ (which we fix once and for all). Then we can consider
the categories $\DasphBS$ and $\Dasph$ of~\S\ref{ss:cat-Masph}, with our present choice of ring of coefficients $\K$.

Let $\Iw_\circ \subset \Gw(\mathscr{O})$ be the inverse image under the evaluation map $\Gw(\mathscr{O}) \to \Gw$ of the unipotent radical of the Borel subgroup which is opposite to $\Bw$ (with respect to $\Tw$). Then after choosing isomorphisms between $\mathbb{G}_{\mathrm{a}, \bbL}$ and the appropriate root subgroups, we obtain a morphism $\chi \colon \Iw_\circ \to \mathbb{G}_{\mathrm{a}, \bbL}$, and we can consider the corresponding category $\Par_{(\Iw_\circ, \chi)}(\Fl^\wedge, \K)$ of Whittaker parity complexes. (Of course, in this setting we can also consider more general parahoric subgroups, but we restrict to this special case for simplicity and concreteness.)

The proof of the following result is very similar to the proof of Theorem~\ref{thm:Dasph-parities-Whit}; see the remarks in~\S\ref{ss:affine-flag-variety}.

\begin{thm}
\label{thm:Dasph-parities}
Assume that there exists a ring morphism $\Z' \to \K$. Then
there exists a canonical equivalence of categories
\[
\DasphBS \simto \Par^{\mathrm{BS}}_{(\Iw_\circ, \chi)}(\Fl^\wedge, \K)
\]
which is compatible with the right actions of $\DiagBS$ and $\Par^{\mathrm{BS}}_{\Iw}(\Fl^\wedge, \K)$ through the first equivalence in Theorem~{\rm \ref{thm:Diag-parity-affine}}. As a consequence, we obtain an equivalence of categories
\[
\Dasph \simto \Par_{(\Iw_\circ, \chi)}(\Fl^\wedge, \K)
\]
compatible with the right actions of $\Diag$ and $\Par_{\Iw}(\Fl^\wedge, \K)$ through the second equivalence in Theorem~{\rm \ref{thm:Diag-parity-affine}}.
\end{thm}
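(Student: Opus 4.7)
The plan is to imitate the proof of Theorem~\ref{thm:Dasph-parities-Whit} verbatim, with $\Fl^\wedge$ playing the role of $\Flag$, $\Iw$ the role of $\BKM$, and the pair $(\Iw_\circ, \chi^* \cL_\psi)$ the role of $(\UKM_J^- \UKM^J, (\chi_J)^* \cL_\psi)$ (the subset $\Sf \subset \Saff$ of ``finite'' simple reflections plays the role of $J$, since the stabilizer of the base point in $\Gw(\mathscr{O})$ under the right $\BKM$-action does have $\Wf$ as Weyl group). All the general constructions of Section~\ref{sec:Whittaker} (averaging functors, Whittaker parity complexes, sections of the $!$-flag, Proposition~\ref{prop:morphisms-BS-parity-Whit}) were formulated only using the Bruhat decomposition, Bott--Samelson resolutions and convolution, so they transpose without modification to the affine flag variety described in~\S\ref{ss:affine-flag-variety}.

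First I would define the averaging functor $\mathsf{Av} \colon \Par^{\mathrm{BS}}_{\Iw}(\Fl^\wedge,\K) \to \Par^{\mathrm{BS}}_{(\Iw_\circ,\chi)}(\Fl^\wedge,\K)$ by convolution on the left with the Whittaker standard sheaf supported on the neutral $\Iw_\circ$-orbit; the analog of Corollary~\ref{cor:Av-parity} guarantees this lands in parity complexes. Composing with the equivalence $\Delta_{\mathrm{BS}} \colon \DiagBS \simto \Par^{\mathrm{BS}}_\Iw(\Fl^\wedge,\K)$ of Theorem~\ref{thm:Diag-parity-affine} yields a functor $\DiagBS \to \Par^{\mathrm{BS}}_{(\Iw_\circ,\chi)}(\Fl^\wedge,\K)$ which I would then factor through $\DasphBS$. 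For this factorization, I need two vanishings: (i) $\mathsf{Av}(\cE_\K(\uw))=0$ whenever $\uw$ starts with a simple reflection in $\Sf$ (proved as in Lemma~\ref{lem:Av-Ew-zero}, since $\cE_\K(\uw)$ is then pulled back from the partial affine flag variety $\Fl^{\wedge,s}$, which is annihilated by $\mathsf{Av}$); and (ii) the image of every polynomial generator $\Delta_{\mathrm{BS}}(f)$ with $f \in (\fh^\bk_{\Gw})^*$ vanishes, because $\Hom(\mathsf{Av}(\cE_\K(\varnothing)),\mathsf{Av}(\cE_\K(\varnothing))[2])=0$. This produces the essentially surjective, bijective-on-objects functor $\DasphBS \to \Par^{\mathrm{BS}}_{(\Iw_\circ,\chi)}(\Fl^\wedge,\K)$.

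Next I would prove full faithfulness. Surjectivity on morphisms follows from the affine analog of Proposition~\ref{prop:morphism-IW-surjective}, whose proof only uses the general machinery of Proposition~\ref{prop:morphisms-BS-parity-Whit} together with the base case for reduced expressions $\un{y}$ of elements $y \in {}^{\Sf}\hspace{-2pt}W^s_\circ$; this is a purely local computation on stratum closures, identical to the one in Section~\ref{sec:Whittaker}. For injectivity I would compare ranks on both sides. Using adjunction (the pair $(B_s \cdot -, B_s \cdot -)$ on both sides) one reduces to comparing the $\K$-module $\Hom^\bullet_{\DasphBS}(\overline{B}_{\uw}, \overline{B}_\varnothing)$ with $\Hom^\bullet(\mathsf{Av}(\cE_\K(\uw)),\mathsf{Av}(\cE_\K(\varnothing)))$. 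The former is spanned, via Proposition~\ref{prop:ASLL}, by light leaves indexed by subexpressions of $\uw$ expressing $1$ and avoiding $W \smallsetminus \fW$. The latter is free, and its graded rank can be computed stratum by stratum using the analogs of Lemmas~\ref{lem:qs-standard-costandard} and~\ref{lem:qs-inverse-standard-costandard} (which hold verbatim in the affine setting); expressing this rank in terms of the antispherical module for the affine Hecke algebra matches exactly Lemma~\ref{lem:number-subexpr-avoids}. Since the surjection must then be between free modules of the same rank, it is an isomorphism, giving the first equivalence of the theorem. Passing to Karoubi envelopes of additive hulls yields the second equivalence, and the compatibility with the right action of $\DiagBS \simeq \Par^{\mathrm{BS}}_\Iw(\Fl^\wedge,\K)$ is built into the construction of $\mathsf{Av}$.

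The main obstacle is largely bureaucratic rather than conceptual: one must verify that the geometric inputs to Section~\ref{sec:Whittaker} (in particular the good behaviour of $(q^s)_*$, $(q^s)^!$ and of the Artin--Schreier-twisted local systems $\cL^{\Sf}_w$ on $\Iw_\circ$-orbits) transfer to $\Fl^\wedge$ even though this ind-variety is not literally constructed as a Mathieu Kac--Moody flag variety in the text. As noted in Remark~\ref{rmk:p-can-basis-realization}, the geometric structures agree (cf.~\cite{pappas-rapoport}), so this verification amounts to rerunning the arguments of~\S\ref{ss:IW}--\S\ref{ss:section-Whit} with the pro-unipotent radical of $\Iw_\circ$ in place of $\UKM_{\Sf}^- \UKM^{\Sf}$; no genuinely new idea is needed.
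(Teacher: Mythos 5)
Your proposal is correct and takes essentially the same approach as the paper, which states that the proof is ``very similar to the proof of Theorem~\ref{thm:Dasph-parities-Whit}'' and then relies on the remarks in~\S\ref{ss:affine-flag-variety}; you have simply (and correctly) unpacked that reference, including the right identification of $\Iw_\circ$ with $\UKM_{\Sf}^- \UKM^{\Sf}$ and the right caveat (cf.~Remark~\ref{rmk:p-can-basis-realization}) about $\Fl^\wedge$ not being literally a Mathieu Kac--Moody flag variety in the text.
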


 }
\else { \newpage }
\fi

\newpage

\addtocontents{toc}{\protect\addvspace{1.5em}}





\section*{List of notations}


Section~\ref{sec:tilting-hw}
\vspace{1mm}

\begin{tabular}{|l|l|l|}
\hline
Notation & \S & Description \\
\hline
$\mathcal{A}_{\Omega}$ & \ref{ss:hwcat} & Subcategory of $\mathcal{A}$ generated by simples with label in $\Omega$ \\
$\mathcal{A}^{\Omega}$ &  \ref{ss:hwcat} & Quotient $\mathcal{A}/\mathcal{A}_{\Lambda \smallsetminus \Omega}$ \\
$\Tilt(\mathcal{A})$ & \ref{ss:tiltings} & Category of tilting objects in $\mathcal{A}$ \\
\hline
\end{tabular}
\bigskip

Section~\ref{sec:blocks}
\vspace{1mm}

\begin{tabular}{|l|l|l|}
\hline
Notation & \S & Description \\
\hline
$\bk$ & \ref{ss:definitions-G} & Algebraically closed field of characteristic $p$ \\
$G$ & \ref{ss:definitions-G} & Reductive group with simply-connected derived subgroup \\
$T \subset B$ & \ref{ss:definitions-G} &  Maximal torus and Borel subgroup \\
$h$ & \ref{ss:definitions-G} & Coxeter number \\
$\bX$, $\bX^+$ & \ref{ss:definitions-G} & Weight lattice and dominant weights \\
$\Phi$, $\Phi^\vee$ & \ref{ss:definitions-G} & Roots and coroots \\
$\Sigma \subset \Phi^+$ & \ref{ss:definitions-G} & Simple and positive roots \\
$\rho$ & \ref{ss:definitions-G} & half sum of positive roots \\
$\Sf \subset \Wf$ & \ref{ss:definitions-G} & (Finite) Weyl group and simple reflections \\
$\Saff \subset \Waff$ & \ref{ss:definitions-G} & Affine Weyl group and simple reflections \\
$C_\Z$ & \ref{ss:definitions-G} & Intersection of $\bX$ with fundamental alcove \\
$\lambda_0$ & \ref{ss:definitions-G} & Fixed weight in $C_\Z$ \\
$\mu_s$ & \ref{ss:definitions-G} & Fixed weight on the $s$-wall \\
$\bX_0^+$ & \ref{ss:definitions-G} & Intersection of $\bX^+$ and $\Waff \hdot \lambda_0$ \\
$\bX_s^+$ & \ref{ss:definitions-G} & Intersection of $\bX^+$ and $\Waff \hdot \mu_s$ \\
$\Rep(G)$ & \ref{ss:definitions-G} & Category of finite-dimensional algebraic $G$-modules \\
$\Sta(\lambda)$, $\Cos(\lambda)$ & \ref{ss:definitions-G} & Standard and costandard modules in $\Rep(G)$ \\
$\Sim(\lambda)$, $\Til(\lambda)$ & \ref{ss:definitions-G} & Simple and tilting modules in $\Rep(G)$ \\
$\Rep_0(G)$ & \ref{ss:definitions-G} & Serre subcategory generated by the $\Sim(\lambda)$'s with $\lambda \in \bX^+_0$ \\
$\Rep_s(G)$ & \ref{ss:definitions-G} & Serre subcategory generated by the $\Sim(\lambda)$'s with $\lambda \in \bX^+_s$ \\
$\Trans^s$, $\Trans_s$ & \ref{ss:translation-functors} & Fixed functors isomorphic to translation functors \\
$\Theta_s$ & \ref{ss:BStilting} & Wall-crossing functor $\Trans_s \Trans^s$ \\
$\Til(\uw)$ & \ref{ss:BStilting} & Bott--Samelson-type tilting module \\
\hline
\end{tabular}
\bigskip

Section~\ref{sec:Diag}
\vspace{1mm}

\begin{tabular}{|l|l|l|}
\hline
Notation & \S & Description \\
\hline
$\Haff$, $\Hf$ & \ref{ss:Haff-Masph} & Hecke algebras of $(\Waff,\Saff)$ and $(\Wf, \Sf)$ \\
$H_w$, $\uH_w$ & \ref{ss:Haff-Masph} & Standard and Kazhdan--Lusztig basis elements in $\Haff$ \\
$ \uH_{\uw}$ & \ref{ss:Haff-Masph} & Bott--Samelson-type element in $\Haff$ \\
$\Masph$ &  \ref{ss:Haff-Masph} & Antispherical $\Haff$-module \\
$N_w$, $\uN_w$ & \ref{ss:Haff-Masph} & Standard and Kazhdan--Lusztig basis elements in $\Masph$ \\
$\fh$ & \ref{ss:diag-SB} & Realization of $(\Waff,\Saff)$ over $\bk$ \\
$R$ & \ref{ss:diag-SB} & Symmetric algebra of $\fh^*$ \\
$\DiagBS$, $\Diag$ & \ref{ss:diag-SB} & Diagrammatic Hecke categories \\
$B_{\uw}$, $B_w$ & \ref{ss:diag-SB} & Bott--Samelson and indecomposable objects in $\Diag$ \\
$\imath$, $\tau$ & \ref{ss:diag-SB} & Autoequivalences of $\DiagBS$ \\
$\LL_{\uw,\ue}$ & \ref{ss:diag-SB} & Elements of the light leaves basis \\
$\DasphBS$, $\Dasph$ & \ref{ss:cat-Masph} & Antispherical diagrammatic Hecke categories \\
$\oB_{\uw}$, $\oB_w$ & \ref{ss:cat-Masph} & Bott--Samelson and indecomposable objects in $\Dasph$ \\
\hline
\end{tabular}
\bigskip

Section~\ref{sec:main-conj}
\vspace{1mm}

\begin{tabular}{|l|l|l|}
\hline
Notation & \S & Description \\
\hline
$\Psi$ & \ref{ss:tilting-antispherical} & Functor in Theorem~\ref{thm:main} \\
$\alpha_{\ux,\uy}$, $\beta_{\ux,\uy}$ & \ref{ss:surjectivity} & Morphisms induced by the functor $\Psi$ \\
$\mathcal{D}_{\mathrm{BS}, \mathfrak{R}}$ & \ref{ss:intforms} & Version of $\DiagBS$ over $\mathfrak{R}$ \\
$\mathcal{D}_{\mathrm{BS}, \mathfrak{R}}^{\mathrm{asph}}$ & \ref{ss:intforms} & Version of $\Dasph$ over $\mathfrak{R}$ \\
\hline
\end{tabular}
\bigskip


Section~\ref{sec:2rep-GLn}
\vspace{1mm}

\begin{tabular}{|l|l|l|}
\hline
Notation & \S & Description \\
\hline
$\hgl_N$ & \ref{slp} & Affine Lie algebra associated with $\gl_N(\C)$ \\
$K$, $d$ & \ref{slp} & Elements in $\hgl_N$ \\
$e_i$, $h_i$, $f_i$ & \ref{slp} & Chevalley elements in $\hgl_N$ \\
$\varepsilon_i$ & \ref{slp} & Basis of the dual of diagonal matrices in $\gl_N(\C)$ \\
$P$ & \ref{slp} & Weight lattice of $\hgl_N$ \\
$\alpha_i$ & \ref{slp} & Simple roots for $\hgl_N$ \\
$\nat_N$ & \ref{sec:natglN} & Natural module for $\hgl_N$ \\
$m_\lambda$ & \ref{sec:natglN} & Basis of $\nat$ \\
$G$ & \ref{ss:cat-GLn-Groth-group} & Group $\mathrm{GL}_n(\bk)$ \\
$B$ & \ref{ss:cat-GLn-Groth-group} & Borel subgroup of lower triangular matrices \\
$T$ & \ref{ss:cat-GLn-Groth-group} & Maximal torus of diagonal matrices \\
$\chi_i$ & \ref{ss:cat-GLn-Groth-group} & Standard basis of $\bX=X^*(T)$ \\
$\varsigma$ & \ref{ss:cat-GLn-Groth-group} & Element in $\bX$ \\
$V$ & \ref{ss:cat-GLn-Groth-group} & Natural representation of $V$ \\
$E$, $F$ & \ref{ss:cat-GLn-Groth-group} & Functors of tensoring with $V$ and $V^*$ \\
$\eta$, $\epsilon$ & \ref{ss:cat-GLn-Groth-group} & Adjunction morphisms for $(E,F)$ \\
$\Xbb$ & \ref{ss:cat-GLn-Groth-group} & Endomorphism of $E$ and $F$ \\
$E_a$, $F_a$ & \ref{ss:cat-GLn-Groth-group} & Generalized eigenspaces of $X$ on $E$ and $F$ \\
$\Repp_c(G)$ & \ref{ss:cat-GLn-Groth-group} & Direct factor of $\Rep(G)$ associated with $c \in \bX/(W,\hdot)$ \\
$\imath_n$ & \ref{ss:cat-GLn-Groth-group} & Bijection between weights of $\wedge^n \nat$ and $\bX/(W,\hdot)$ \\
$\partial_i$ & \ref{Rep-2-rep} & Demazure operators \\
$\overline{H}_m$ & \ref{Rep-2-rep} & Degenerate affine Hecke algebra \\
$\Gamma$ & \ref{Rep-2-rep} & Prime ring of the base field, considered as a quiver \\
$\overline{\mathcal{C}}_\Gamma$ &  \ref{Rep-2-rep} & Category of $\overline{H}_m$-modules supported on $\Gamma^m$ \\
$t_{ij}$ & \ref{Rep-2-rep} & Scalars in the definition of the KLR algebra \\
$H_m(\Gamma)$ & \ref{Rep-2-rep} & KLR algebra \\
$H_m(\Gamma)\Mod_0$ & \ref{Rep-2-rep} & Category of locally nilpotent modules for $H_m(\Gamma)$ \\
$\mathcal{U}(\hgl_p)$ & \ref{Rep-2-rep} & KLR 2-category attached to $\hgl_p$ \\
$\Tbb$ & \ref{Rep-2-rep} & Endomorphism of $EE$ \\
$\omega$ & \ref{Rep-2-rep} & Weight in $P$ \\
$\mu_{s_j}$, $\mu_{s_\infty}$ & \ref{Rep-2-rep} & Choice of weights for $G$ \\
\hline
\end{tabular}
\bigskip

Section~\ref{sec:restriction}
\vspace{1mm}

\begin{tabular}{|l|l|l|}
\hline
Notation & \S & Description \\
\hline
$\Rep^{[n]}(G)$ & \ref{ss:categorify-combinatorics} & Subcategory of $\Rep(G)$ \\
$Y_m$ & \ref{ss:first-relations} & Subset of $P$ \\
$\mathcal{U}_+(\hgl_p)$ & \ref{ss:first-relations} & Quotient of $\mathcal{U}(\hgl_p)$ \\
\hline
\end{tabular}
\bigskip
\bigskip

Section~\ref{sec:g2D}
\vspace{1mm}

\begin{tabular}{|l|l|l|}
\hline
Notation & \S & Description \\
\hline
$\mathcal{U}^{[n]}(\hgl_n)$ & \ref{ss:strategy} & Quotient of $\mathcal{U}(\hgl_n)$ \\
$\sigma$ & \ref{ss:strategy} & Functor from $\DiagBS$ to endomorphisms of $\omega$ \\
\hline
\end{tabular}
\vspace{5cm}


Section~\ref{sec:parity}
\vspace{1mm}

\begin{tabular}{|l|l|l|}
\hline
Notation & \S & Description \\
\hline
$A$, $I$ & \ref{ss:KMgroups} & Generalized Cartan matrix and set of parameters \\
$(\Lambda, \{\alpha_i\}, \{\alpha_i^\vee\})$ & \ref{ss:KMgroups}  & Kac--Moody root datum \\
$W$, $S$ & \ref{ss:KMgroups} & Weyl group and simple reflections \\
$\GKM$, $\BKM$, $\TKM$ & \ref{ss:KMgroups} & Kac--Moody group, Borel subgroup, maximal torus \\
$\Flag$ & \ref{ss:KMgroups} & Flag variety of $\GKM$ \\
$\Flag_w$, $i_w$ & \ref{ss:KMgroups} & Bruhat cell and embedding \\
$\BSvar(\uw)$ & \ref{ss:KMgroups} & Bott--Samelson resolution \\
$\nu_{\uw}$ & \ref{ss:KMgroups} & Morphism from $\BSvar(\uw)$ to $\Flag$ \\
$\PKM_J$ & \ref{ss:partial-flags} & Parabolic subgroup attached to $J \subset I$ \\
$\Flag^J$ & \ref{ss:partial-flags} & Partial flag variety attached to $J$ \\
$\Flag^J_w$, $i^J_w$ & \ref{ss:KMgroups} & Bruhat cell in $\Flag^J$ and embedding \\
$q^J$ & \ref{ss:KMgroups} & Projection $\Flag \to \Flag^J$ \\
$\K$ & \ref{ss:DbX} & ring of coefficients \\
$\Db_\BKM(\Flag, \K)$ & \ref{ss:DbX} & $\BKM$-equivariant derived category of $\Flag$ \\
$\Db_\BKM(\Flag^J, \K)$ & \ref{ss:DbX} & $\BKM$-equivariant derived category of $\Flag^J$ \\
$\Db_{(\BKM)}(\Flag, \K)$ & \ref{ss:DbX} & $\BKM$-constructible derived category of $\Flag$ \\
$\Db_{(\BKM)}(\Flag^J, \K)$ & \ref{ss:DbX} & $\BKM$-constructible derived category of $\Flag^J$ \\
$\Cos_w$, $\Cos_w^J$ & \ref{ss:DbX} & Costandard perverse sheaves on $\Flag$ and $\Flag^J$ \\
$\Sta_w$, $\Sta_w^J$ & \ref{ss:DbX} & Standard perverse sheaves on $\Flag$ and $\Flag^J$ \\
$\mu$ & \ref{ss:DbX} & Projection from $\GKM$ to $\Flag$ \\
$\star^\BKM$ & \ref{ss:DbX} & Convolution action of $\Db_\BKM(\Flag, \K)$ \\
$\Par_{\BKM}(\Flag, \K)$ & \ref{ss:parity-flag} & Category of $\BKM$-equiv.~parity complexes on $\Flag$ \\
$\Par_{(\BKM)}(\Flag, \K)$ & \ref{ss:parity-flag} & Category of $\BKM$-const.~parity complexes on $\Flag$ \\
$\Par_{\BKM}(\Flag^J, \K)$ & \ref{ss:parity-flag} & Category of $\BKM$-equiv.~parity complexes on $\Flag^J$ \\
$\Par_{(\BKM)}(\Flag^J, \K)$ & \ref{ss:parity-flag} & Category of $\BKM$-const.~parity complexes on $\Flag^J$ \\
$\cE_{\uw}$ & \ref{ss:parity-flag} & Bott--Samelson-type parity complex on $\Flag$ \\
$\cE_w$ & \ref{ss:parity-flag} & Indecomposable parity complex on $\Flag$ \\
$\cE^s_w$ & \ref{ss:parity-flag} & Indecomposable parity complex on $\Flag^s$ \\
$\Upsilon_s$ & \ref{ss:morphisms-BS-parity} & Functor on $\Db_{(\BKM)}(\Flag, \F)$ \\
\hline
\end{tabular}
\vspace{2cm}

Section~\ref{sec:parity-Hecke}
\vspace{1mm}

\begin{tabular}{|l|l|l|}
\hline
Notation & \S & Description \\
\hline
$\DiagBS^\K(\GKM)$, $\Diag^\K(\GKM)$ & \ref{ss:Diag-GKM} & Diagrammatic categories attached to $\GKM$ \\
$\cE_\K(\uw)$ & \ref{ss:BS-parity} & Canonical Bott--Samelson parity complex on $\Flag$ \\
$\Par_{\BKM}^{\mathsf{BS}}(\Flag, \K)$ & \ref{ss:BS-parity} & Category of Bott--Samelson parity complexes on $\Flag$ \\
$\Delta$, $\Delta_{\mathsf{BS}}$ & \ref{ss:statement-Diag-parity} & Functors from diagrams to parity complexes \\
$\mathbb{H}$ & \ref{ss:verification-Diag-parity} & Functor from parity complexes to $R$-bimodules \\
$B_s^B$ & \ref{ss:verification-Diag-parity} & Soergel bimodule attached to $s$ (over $\Q$) \\
$\gamma_{\uw,\uv}$, $\delta_{\uw,\uv}$ & \ref{ss:Diag-parity-ff} & Morphisms induced by the functor $\Delta_{\mathsf{BS}}$ \\
$\Gv$ & \ref{ss:affine-flag-variety} & Reductive group Langlands-dual to $G$ \\
$\Tv$, $\Bv$ & \ref{ss:affine-flag-variety} & Maximal torus and Borel subgroup in $\Gv$ \\
$\Gw$ & \ref{ss:affine-flag-variety} & S.-c.~cover of the derived subgroup of $\Gv$ \\
$\Tw$, $\Bw$ & \ref{ss:affine-flag-variety} & Maximal torus and Borel subgroup in $\Gw$ \\
$\mathscr{K}$, $\mathscr{O}$ & \ref{ss:affine-flag-variety} & Laurent series and power series over $\bbL$ \\
$\Iv$, $\Iw$ & \ref{ss:affine-flag-variety} & Iwahori subgroups for $\Gv$ and $\Gw$ \\
$\Fl^\vee$, $\Fl^\wedge$ & \ref{ss:affine-flag-variety} & Affine flag variety for $\Gv$ and $\Gw$ \\
$\Fl^\wedge_w$ & \ref{ss:affine-flag-variety} & Bruhat cell in $\Fl^\wedge$ \\
$\Db_{\Iw}(\Fl^\wedge, \K)$ & \ref{ss:affine-flag-variety} & $\Iw$-equivariant derived category of $\Fl^\wedge$ \\
$\star^{\Iw}$ & \ref{ss:affine-flag-variety} & Convolution bifunctor \\
$\Par^{\mathsf{BS}}_{\Iw}(\Fl^\wedge, \K)$ & \ref{ss:affine-flag-variety} & $\Iw$-equ. Bott--Samelson parity compl.~on $\Fl^\wedge$ \\
$\Par_{\Iw}(\Fl^\wedge, \K)$ & \ref{ss:affine-flag-variety} & $\Iw$-equ. parity complexes on $\Fl^\wedge$ \\
\hline
\end{tabular}
\bigskip
\bigskip

Section~\ref{sec:Whittaker}
\vspace{1mm}

\begin{tabular}{|l|l|l|}
\hline
Notation & \S & Description \\
\hline
$\psi$ & \ref{ss:IW} & Nontrivial character of the prime subfield of $\mathbb{F}$ \\
$\mathcal{L}_\psi$ & \ref{ss:IW} & Artin--Schreier local system attached to $\psi$ \\
$\UKM^J$ & \ref{ss:IW} & Pro-unipotent radical of $\PKM_J$ \\
$\LKM_J$ & \ref{ss:IW} & Levi factor of $\PKM_J$ \\
$\UKM^-_J$ & \ref{ss:IW} & Unip.~rad.~of the Borel subgr.~opp.~to $\LKM_J \cap \BKM$ \\
$\Flag_{w,J}$ & \ref{ss:IW} & $\UKM_J^- \UKM^J$-orbit on $\Flag$ \\
$\chi_J$ & \ref{ss:IW} & Character of $\UKM_J^- \UKM^J$ \\
$\Db_{\Whit,J}(\Flag, \K)$ & \ref{ss:IW} & Whittaker derived category of $\Flag$ \\
$\cL_w^J$ & \ref{ss:IW} & Local system on $\Flag_{w,J}$ \\
$\Sta^J_{w,\circ}$ & \ref{ss:IW} & Standard perverse sheaf in $\Db_{\Whit,J}(\Flag, \K)$ \\
$\Cos^J_{w,\circ}$ & \ref{ss:IW} & Costandard perverse sheaf in $\Db_{\Whit,J}(\Flag, \K)$ \\
$\mathsf{Av}_J$ & \ref{ss:IW} & Averaging functor \\
$\Db_{\Whit,J}(\Flag^s, \K)$ & \ref{ss:IW} & Whittaker derived category of $\Flag$ \\
$\Par_{\Whit,J}(\Flag, \K)$ & \ref{ss:IW-parity} & Parity complexes in $\Db_{\Whit,J}(\Flag, \K)$ \\
$\Par_{\Whit,J}(\Flag^s, \K)$ & \ref{ss:IW-parity} & Parity complexes in $\Db_{\Whit,J}(\Flag^s, \K)$ \\
$\cE_{w,\circ}^J$ & \ref{ss:section-Whit} & Indec.~Whittaker parity complexes on $\Flag$ \\
$\cE_{w,\circ}^{s,J}$ & \ref{ss:section-Whit} & Indec.~Whittaker parity complexes on $\Flag$ \\
$\Delta^J$, $\Delta^J_{\mathsf{BS}}$ & \ref{ss:Dasph-parity} & Functors from diag.~to Whittaker par.~compl. \\
$\Iw_\circ$ & \ref{ss:Whit-affine} & Opposite Iwahori subgroup for $\Gw$ \\
$\Par_{(\Iw_\circ, \chi)}(\Fl^\wedge, \K)$ & \ref{ss:Whit-affine} & Iwahori--Whittaker parity complexes on $\Fl^\wedge$ \\
\hline
\end{tabular}

\newpage


\begin{thebibliography}{BSWW}

\bibitem[AMRW]{amrw}
P.~Achar, S.~Makisumi, S.~Riche, G.~Williamson, \emph{Modular Koszul duality for Kac--Moody groups}, in preparation.

\bibitem[AR1]{modrap1}
P.~Achar, S.~Riche, {\em Modular perverse sheaves on flag varieties I:
  tilting and parity sheaves} (with a joint appendix with G.~Williamson), Ann. Sci. \'{E}c. Norm. Sup\'{e}r. \textbf{49} (2016), 325--370.
  
\bibitem[AR2]{modrap2}
P.~Achar, S.~Riche, {\em Modular perverse sheaves on flag varieties II: Koszul duality and formality},  Duke Math. J. \textbf{165} (2016), 161--215.

\bibitem[AR3]{modrap3}
P.~Achar, S.~Riche, {\em Modular perverse sheaves on flag varieties III:
  positivity conditions}, preprint arXiv:1408.4189, to appear in Trans. Amer. Math. Soc.
  
\bibitem[AR4]{ar}
P.~Achar, S.~Riche, \emph{Reductive groups, the loop Grassmannian, and the Springer resolution}, preprint arXiv:1602.04412.
  

\bibitem[A1]{andersen2}
H.~H.~Andersen, \emph{Tilting modules for algebraic groups}, in \emph{Algebraic groups and their representations (Cambridge, 1997)}, 25--42,
NATO Adv. Sci. Inst. Ser. C Math. Phys. Sci., 517, Kluwer Acad. Publ., Dordrecht, 1998. 

\bibitem[A2]{andersen-sum}
H.~H.~Andersen, \emph{A sum formula for tilting filtrations}, in
\emph{Commutative algebra, homological algebra and representation theory (Catania/Genoa/Rome, 1998)},
J. Pure Appl. Algebra~\textbf{152} (2000), 
17--40. 
  
\bibitem[AJS]{ajs}
H.~H.~Andersen, J.~C.~Jantzen, W.~Soergel,
  \emph{Representations of quantum groups at a $p$-th root of unity and
  of semisimple groups in characteristic $p$: independence of $p$},
Ast{\'e}risque \textbf{220} (1994), 1--321.

\bibitem[AK]{ak}
H.~H.~Andersen, M.~Kaneda, \emph{Rigidity of tilting modules},
Mosc.~Math.~J.~\textbf{11} (2011), 
1--39, 181.

\bibitem[AST]{ast}
H.~H.~Andersen, C.~Stroppel, D.~Tubbenhauer,
\emph{Cellular structures using $\mathbf{U}_q$-tilting modules},
preprint arXiv:1503.00224.

\bibitem[AT]{AT}
H.~H.~Andersen, D.~Tubbenhauer, \emph{Diagram categories for
  $U_q$-tilting modules at roots of unity}, preprint arXiv:1409.2799, to appear in Transformation groups.

\bibitem[AB]{ab}
S.~Arkhipov, R.~Bezrukavnikov, \emph{Perverse sheaves on affine flags and Langlands dual group}, with an appendix by R.~Bezrukavnikov and I~Mirkovi{\'c},
Israel J.~Math~\textbf{170} (2009), 135--183. 

\bibitem[ABG]{abg}
S.~Arkhipov, R.~Bezrukavnikov, V.~Ginzburg, {\em Quantum groups, the loop
  Grassmannian, and the Springer resolution}, J. Amer. Math. Soc. {\bf 17}
  (2004), 595--678.
  
\bibitem[BSWW]{bsww}
H.~Bao, P.~Shan, W.~Wang, B.~Webster, \emph{Categorification of quantum symmetric pairs I}, preprint arXiv:1605.03780.
  
\bibitem[BBD]{bbd}
A.~Be{\u\i}linson, J.~Bernstein, P.~Deligne, \emph{Faisceaux pervers}, in
\emph{Analyse et topologie sur les espaces singuliers, I (Luminy, 1981)}, 5--171,
Ast\'erisque \textbf{100} (1982).

\bibitem[BBM]{bbm}
A.~Be{\u\i}linson, R.~Bezrukavnikov, I.~Mirkovi\'c,
\emph{Tilting exercises},
Mosc. Math. J. 4 (2004), 
547--557, 782. 

\bibitem[BGS]{bgs}
A.~Be{\u\i}linson, V.~Ginzburg, W.~Soergel, {\em Koszul duality patterns in
  representation theory}, J.~Amer.~Math.~Soc.~{\bf 9} (1996), 473--527.
  
\bibitem[B1]{bez}
R.~Bezrukavnikov, \emph{Quasi-exceptional sets and equivariant coherent sheaves on the nilpotent cone}, Represent. Theory~\textbf{7} (2003), 1--18.

\bibitem[B2]{be}
R.~Bezrukavnikov, {\em Cohomology of tilting modules over quantum
  groups and $t$-structures on derived categories of coherent
  sheaves}, Invent. Math. {\bf 166} (2006), 
  327--357.
  
\bibitem[BY]{by}
R.~Bezrukavnikov, Z.~Yun, \emph{On Koszul duality for Kac--Moody groups}, Represent.~Theory~\textbf{17} (2013), 1--98.
 
\bibitem[Br]{Brundan}
J.~Brundan, \emph{On the definition of Kac--Moody $2$-category},  Math. Ann. \textbf{364} (2016), 
353--372.

\bibitem[BK]{BK}
J.~Brundan and A.~Kleshchev, \emph{Blocks of cyclotomic Hecke algebras and Khovanov--Lauda algebras}, Invent.~Math.~\textbf{178} (2009), 451--484.

\bibitem[BM]{bm}
A.~Buch, L.~Mihalcea, \emph{Curve neighborhoods of Schubert varieties},
J. Differential Geom.~\text{99} (2015), 
255--283.

\bibitem[CR]{CR}
J.~Chuang and R.~Rouquier, \emph{Derived equivalences for symmetric groups and $\mathfrak{sl}_2$-categori\-fication}, Ann.~of Math.~\textbf{167} (2008), 245--298.

\bibitem[CPS]{cps}
E.~Cline, B.~Parshall, L.~Scott, \emph{Finite-dimensional algebras and highest weight categories}, J. Reine Angew. Math. \textbf{391} (1988), 85--99.

\bibitem[CI]{collingwood-irving}
D.~Collingwood, R.~Irving, \emph{A decomposition theorem for certain self-dual modules in the category $\mathscr{O}$}, Duke Math. J. \textbf{58} (1989), 89--102. 

\bibitem[Do]{donkin}
S.~Donkin, \emph{On tilting modules for algebraic groups},
Math. Z. \textbf{212} (1993), 39--60.


\bibitem[EK]{ek}
B.~Elias, M.~Khovanov, \emph{Diagrammatics for Soergel categories},
Int.~J.~Math.~Math. Sci.~\textbf{2010}, Art. ID 978635, 58 pp.

\bibitem[EL]{el}
B.~Elias, I.~Losev, \emph{Modular representation theory in type A via Soergel bimodules}, preprint arXiv:1701.00560.

\bibitem[EW1]{ew1}
B.~Elias, G.~Williamson, \emph{The Hodge theory of Soergel bimodules},
Ann. of Math.~\textbf{180} (2014), 1089--1136. 
  
\bibitem[EW2]{ew}
B.~Elias, G.~Williamson, \emph{Soergel calculus}, Represent. Theory~\textbf{20} (2016), 295--374.

\bibitem[Er]{erdmann}
K.~Erdmann, \emph{Tensor products and dimensions of simple modules for symmetric groups},
Manuscripta Math. \textbf{88} (1995), 357--386. 

\bibitem[F1]{fiebig-moment-graph}
P.~Fiebig, \emph{Lusztig's conjecture as a moment graph problem},
Bull. Lond. Math. Soc.~\textbf{42} (2010), 957--972.

\bibitem[F2]{fiebig}
P.~Fiebig, \emph{An upper bound on the exceptional characteristics for Lusztig's character formula}, J. Reine Angew. Math. \textbf{673} (2012), 1--31.

\bibitem[FM]{fm}
M.~Finkelberg, I.~Mirkovi\'{c}, {\em Semi-infinite flags I. Case of global
  curve $\mathbb{P}^1$}, in \emph{Differential topology, infinite-dimensional {L}ie
  algebras, and applications}, Amer.
  Math. Soc., 1999, pp.~81--112.

\bibitem[GG]{gg}
R.~Gordon, E.~Green, \emph{Graded Artin algebras},
  J. Algebra \textbf{76} (1982), 111--137.
  
\bibitem[G\"o]{goertz}
U.~G\"ortz, \emph{Affine Springer fibers and affine Deligne--Lusztig varieties}, in \emph{Affine flag manifolds and principal bundles} (A.~Schmitt, Ed.), 1--50,
Trends Math., Birkh\"auser,
2010.
  

\bibitem[J1]{jantzen-darstellungen}
J.~C.~Jantzen, \emph{Darstellungen halbeinfacher Gruppen und ihrer Frobenius-Kerne}, J. Reine Angew. Math. \textbf{317} (1980), 157--199. 

\bibitem[J2]{jantzen}
J.~C.~Jantzen, \emph{Representations of algebraic
groups, second edition}, Mathematical surveys and monographs 107, Amer. Math. Soc., 2003.

\bibitem[JW]{jw}
L.~T.~Jensen, G.~Williamson, \emph{The $p$-canonical basis for Hecke algebras}, preprint arXiv:1510.01556.

\bibitem[JMW]{jmw}
D.~Juteau, C.~Mautner, G.~Williamson, \emph{Parity sheaves},
J. Amer. Math. Soc. \textbf{27} (2014), 1169--1212.

\bibitem[KS1]{ks1}
M.~Kashiwara, P.~Schapira, \emph{Sheaves on manifolds}, 
Springer, 1990.

\bibitem[KS2]{ks}
M.~Kashiwara, P.~ Schapira, \emph{Categories and sheaves}, Grundlehren
der Mathematischen Wissenschaften, {\textbf 332}. Springer-Verlag, Berlin, 2006.

\bibitem[KT]{kt}
M.~Kashiwara, T.~Tanisaki, \emph{Kazhdan--Lusztig
conjecture for affine Lie algebras with negative level I}, Duke
Math. J. \textbf{77} (1995), 21--62; \emph{II, non-integral case}, Duke Math. J. \textbf{84} (1996), 771--813.

\bibitem[KL]{kl} 
D.~Kazhdan, G.~Lusztig, \emph{Tensor structures
arising from affine Lie algebras I}, J. Amer. Math. Soc. \textbf{6} (1993),
905--947; \emph{II}, J. Amer. Math. Soc. \textbf{6} (1993), 949--1011; \emph{III}, J. Amer. Math. Soc. \textbf{7} (1994), 335--381; \emph{IV}, J. Amer. Math. Soc. \textbf{7} (1994), 383--453.

\bibitem[Ki]{kildetoft}
T.~Kildetoft, \emph{Decomposition of tensor products involving a Steinberg module}, preprint arXiv:1606.06080.

\bibitem[Ku]{kumar}
S.~Kumar, \emph{Kac--Moody groups, their flag varieties and representation theory}, Progress in Mathematics 204, Birkh{\"a}user, Boston, 2002.

\bibitem[Li]{libedinsky}
N.~Libedinsky, \emph{Sur la cat{\'e}gorie des bimodules de Soergel}, J.~Algebra~\textbf{320} (2008), 2675--2694.

\bibitem[LiW]{libedinsky-williamson}
N.~Libedinsky, G.~Williamson,
\emph{The anti-spherical category}, preprint arXiv:1702.00459.

\bibitem[L1]{lusztig}
G.~Lusztig, \emph{Some problems in the representation theory of finite Chevalley groups}, in \emph{The Santa Cruz Conference on Finite Groups}, pp. 313--317,
Proc. Sympos. Pure Math. 37, Amer. Math. Soc., 1980.

\bibitem[L2]{lusztig-q}
G.~Lusztig, \emph{Modular representations and quantum groups}, in \emph{Classical groups and related topics (Beijing, 1987)}, 59--77,
Contemp. Math.~82, Amer. Math. Soc., 1989. 

\bibitem[L3]{LUSMon}
G. Lusztig, \emph{Monodromic systems on affine flag manifolds}, Proc. Roy. Soc. London \textbf{445} (1994), 231--246. Errata in \textbf{450} (1995), 731--732.

\bibitem[LW]{lw}
G.~Lusztig, G.~Williamson,
\emph{On the character of certain tilting modules}, preprint arXiv:1502.04904.

\bibitem[Ma]{m}
R.~Maksimau,
\emph{Categorical representations, KLR algebras and Koszul duality}, preprint arXiv:1512.04878.

\bibitem[MSV]{msv}
M.~Mackaay, M.~Sto{\v s}i{\'c}, P.~Vaz, \emph{A diagrammatic categorification of the $q$-Schur algebra}, Quantum Topol.~\textbf{4} (2013), 1--75.

\bibitem[MT1]{mt1}
M.~Mackaay, A.-L.~Thiel, \emph{Categorifications of the extended affine Hecke algebra and the affine $q$-Schur algebra $\widehat{\mathbf{S}}(n,r)$ for $2 < r < n$}, preprint arXiv:1302.3102.

\bibitem[MT2]{mt2}
M.~Mackaay, A.-L.~Thiel,
\emph{A diagrammatic categorification of the affine $q$-Schur algebra $\widehat{\mathbf{S}}(n,n)$ for $n \geq 3$}, 
Pacific J. Math.~\textbf{278} (2015), 
201--233.

\bibitem[Mat1]{mathieu0}
 O.~Mathieu, \emph{Formules de caract\`eres pour les alg\`ebres de Kac--Moody g\'en\'erales}, Ast\'erisque \textbf{159}--\textbf{160} (1988), 267 pp.
 
\bibitem[Mat2]{mathieu-KM}
O.~Mathieu, \emph{Construction d'un groupe de Kac--Moody et applications}, Compos. Math.~\textbf{69} (1989), 
37--60.

\bibitem[Mat3]{mathieu}
O.~Mathieu, \emph{On the dimension of some modular irreducible representations of the symmetric group},
Lett. Math. Phys. \textbf{38} (1996), 23--32. 

\bibitem[MR]{mr}
C.~Mautner and S.~Riche, {\em Exotic tilting sheaves, parity sheaves on affine
  Grassmannians, and the Mirkovi{\'c}--Vilonen conjecture}, preprint arXiv:1501.07369, to appear in J. Eur. Math. Soc.
  
\bibitem[Mi]{milne}
J.~S.~Milne, \emph{\'Etale cohomology},
Princeton Mathematical Series 33, Princeton University Press, 
1980.
  
\bibitem[Na]{nadler}
D.~Nadler, \emph{Perverse sheaves on real loop Grassmannians}, Invent.~Math.~\textbf{159} (2005), 1--73.

\bibitem[Ne]{neeman}
A.~Neeman, \emph{Triangulated categories},
Annals of Mathematics Studies 148, Princeton University Press, 
2001.

\bibitem[PR]{pappas-rapoport}
G.~Pappas, M.~Rapoport, \emph{Twisted loop groups and their affine flag varieties},
with an appendix by T.~Haines and ~M.Rapoport,
Adv. Math. \textbf{219} (2008), 
118--198.

\bibitem[Ro]{R2KM}
R.~Rouquier, \emph{$2$-Kac--Moody algebras}, preprint arXiv:0812.5023 (2008).

\bibitem[Rs]{rousseau}
G.~Rousseau, \emph{Groupes de Kac--Moody d\'eploy\'es sur un corps local, II. Masures ordonn\'ees}, Bull. Soc. Math. France~\textbf{144} (2016), 613-692.

\bibitem[S1]{soergel-garben}
W.~Soergel, \emph{Kategorie $\mathscr{O}$, perverse Garben und Moduln \"uber den Koinvarianten zur Weylgruppe},
J. Amer. Math. Soc. \textbf{3} (1990), 
421--445.

\bibitem[S2]{soergel-icc}
W.~Soergel, \emph{Gradings on representation categories}, in \emph{Proceedings of the International Congress of Mathematicians, Vol. 1, 2} (Z{\"u}rich, 1994), 800--806, Birkh{\"a}user, 1995. 

\bibitem[S3]{soergel-comb-tilting}
W.~Soergel, \emph{Kazhdan--Lusztig polynomials and a combinatoric[s] for tilting modules}, Represent. Theory \textbf{1} (1997), 83--114.

\bibitem[S4]{soergel-char-tilting}
W.~Soergel, \emph{Character formulas for tilting modules over Kac--Moody algebras}, Represent. Theory \textbf{2} (1998), 432--448.

\bibitem[S5]{soergel}
W.~Soergel, {\em On the relation between intersection cohomology and
  representation theory in positive characteristic}, in \emph{Commutative algebra,
  homological algebra and representation theory (Catania/Genoa/Rome, 1998)}, J.
  Pure Appl. Algebra {\bf 152} (2000), 311--335.
  
\bibitem[S6]{soergel-Langlands}
W.~Soergel, \emph{Langlands' philosophy and Koszul duality}, in \emph{Algebra---representation theory (Constanta, 2000)}, 379--414,
NATO Sci. Ser. II Math. Phys. Chem. 28, Kluwer, Dordrecht, 2001. 

\bibitem[S7]{soergel-bim}
W.~Soergel, \emph{Kazhdan--Lusztig-Polynome und unzerlegbare Bimoduln {\"u}ber Polynomringen}, J. Inst. Math. Jussieu~\textbf{6} (2007), 501--525.

\bibitem[Sp]{springer}
T.~A.~Springer, \emph{Quelques applications de la cohomologie d'intersection}, in
\emph{S\'eminaire N.~Bourbaki, Vol. 1981/82},
Ast\'erisque \textbf{92--93} (1982), Exp. 589, 249--273.


\bibitem[Ti]{tits}
J.~Tits, \emph{Groupes associ\'es aux alg\`ebres de Kac--Moody}, in
\emph{S\'eminaire N.~Bourbaki, Vol. 1988/89},
Ast\'erisque \textbf{177--178} (1989), Exp. 700, 7--31. 

\bibitem[W1]{williamson-IC}
G.~Williamson, \emph{Modular intersection cohomology complexes on flag varieties}, with an appendix by T.~Braden,
Math. Z. \textbf{272} (2012), 
697--727. 

\bibitem[W2]{williamson}
G.~Williamson, \emph{Schubert calculus and torsion explosion}, with an appendix joint with A.~Kontorovich and P.~J.~McNamara, preprint arXiv:1309.5055, to appear in J. Amer. Math. Soc.

\bibitem[W3]{williamson-takagi}
G.~Williamson, \emph{Algebraic representations and constructible sheaves}, preprint arXiv:1610.06261.

\bibitem[We]{weidner}
J.~Weidner, \emph{Modular equivariant formality}, preprint arXiv:1312.4776.


\end{thebibliography}
\end{document}